\documentclass[a4paper]{amsart}

\usepackage{amsmath,amssymb,amsthm,stmaryrd,mathrsfs}

\usepackage{color,graphicx}
\usepackage[dvipsnames]{xcolor}
\usepackage{accents}
\usepackage{enumerate}
\usepackage{varwidth}
\usepackage{enumitem}

\usepackage{fancyhdr, array,calc,graphicx,url,tabularx}
\usepackage{geometry}
\usepackage{latexsym}
\usepackage{amssymb}
\usepackage{amsmath}
\usepackage{enumerate}
\usepackage{verbatim}
\usepackage{tikz}
\usepackage{tikz-cd}
\usepackage[colorlinks=true,linkcolor=blue]{hyperref}
\usepackage[T1]{fontenc}

\usepackage{changepage}

\usepackage[utf8]{inputenc}
\usepackage{tikz}
\tikzset{
     block/.style={rectangle, draw, fill=red!40, text width=6em,
                   text centered, rounded corners, minimum height=3em},
     arrow/.style={-{Stealth[]}}
     }
\usetikzlibrary{positioning,arrows.meta, calc, decorations.pathreplacing, decorations.pathmorphing, shapes.misc}

\def\undertilde#1{\math{\sf{Ord}}{\vtop{\ialign{##\crcr
$\hfil\displaystyle{#1}\hfil$\crcr\noalign{\kern1.5pt\nointerlineskip}
$\hfil\tilde{}\hfil$\crcr\noalign{\kern1.5pt}}}}}

\makeindex

{
   \end{minipage}
   \vspace*{\stretch{3}}
   \clearpage
}

\def\undertilde#1{{\baselineskip=0pt\vtop
  {\hbox{$#1$}\hbox{$\scriptscriptstyle\sim$}}}{}}

\newcommand{\ADR}{\mathsf{AD}_{\mathbb{R}}}

\newcommand{\treg}{\mathsf{\Theta_{reg}}}
\newcommand{\rref}{\mathsf{ref}}

\newcommand{\tStr}{\mathtt{SLW}}
\newcommand{\tW}{\mathtt{Woodin}}

\newcommand{\bls}{\vspace{\baselineskip}}
\newcommand{\tc}{\mathrm{tc}}

\newcommand{\less}{\mathord{<}}

\newcommand{\NTbase}{\sf{NT_{base}}}

\newcommand{\IH}{\mathsf{IH}}

\newcommand{\uB}{\mathrm{uB}}

\renewcommand{\gg}{\gamma}

\newcommand{\barn}{\bar{\nu}}

\newcommand{\bR}{{\mathbb{R}}}

\newcommand{\rest}{\restriction}

\newcommand{\Add}{\mathrm{Add}}
\newcommand{\Coll}{\mathrm{Coll}}
\newcommand{\ord}{\mathsf{Ord}}
\newcommand{\rg}{\mathsf{RefGen}}
\newcommand{\rel}{\mathsf{rel}}
\newcommand{\its}{\mathsf{its}}

\newcommand{\ts}{\mathtt{slw}}

\newcommand{\rmd}{\mathrm{d}}
\newcommand{\rms}{\mathrm{s}}

\newcommand{\hd}{{\mathfrak{d}}}

\newcommand{\hp}{{\mathfrak{p}}}
\newcommand{\hn}{{\mathfrak{n}}}
\newcommand{\hq}{{\mathfrak{q}}}
\newcommand{\hr}{{\mathfrak{r}}}
\newcommand{\hs}{{\mathfrak{s}}}

\newcommand{\hw}{{\mathfrak{w}}}

\newcommand{\hh}{{\mathfrak{h}}}
\newcommand{\hm}{{\mathfrak{m}}}
\newcommand{\hk}{{\mathfrak{k}}}
\newcommand{\hx}{{\mathfrak{x}}}
\newcommand{\hy}{{\mathfrak{y}}}

\newcommand{\card}[1]{{\vert #1 \vert} }

\newcommand{\forces}{\Vdash}

\renewcommand{\models}{\vDash}
\newcommand{\powerset}{\mathscr{P}}
\newcommand{\hull}{{\mathsf{Hull}}}
\newcommand{\chull}{{\mathsf{cHull}}}

\newcommand{\dom}{{\rm dom}}
\newcommand{\rge}{{\rm rge}}

\newcommand{\cp}{{\rm crit }}

\newcommand{\cf}{{\rm cf}}
\newcommand{\lh}{{\rm lh}}

\newtheorem*{theorem*}{Theorem}
\newtheorem{theorem}{Theorem}[section]
\newtheorem{proposition}[theorem]{Proposition}
\newtheorem{definition}[theorem]{Definition}

\newtheorem{lemma}[theorem]{Lemma}
\newtheorem{corollary}[theorem]{Corollary}
\newtheorem{claim}[theorem]{Claim}
\newtheorem{subclaim}[theorem]{Subclaim}
\newtheorem{conjecture}[theorem]{Conjecture}
\newtheorem{question}[theorem]{Question}

\newtheorem{remark}[theorem]{Remark}
\newtheorem{notation}[theorem]{Notation}
\newtheorem{terminology}[theorem]{Terminology}
\numberwithin{figure}{section}

\newcommand{\rcl}[1]{Claim~\ref{#1}}

\newcommand{\rthm}[1]{Theorem~\ref{#1}}
\newcommand{\rlem}[1]{Lemma~\ref{#1}}

\newcommand{\rcor}[1]{Corollary~\ref{#1}}
\newcommand{\rdef}[1]{Definition~\ref{#1}}
\newcommand{\rfig}[1]{Figure~\ref{#1}}

\newcommand{\rsec}[1]{Section~\ref{#1}}

\newcommand{\rrem}[1]{Remark~\ref{#1}}
\newcommand{\rnot}[1]{Notation~\ref{#1}}
\newcommand{\rter}[1]{Terminology~\ref{#1}}

\newcommand{\ZFC}{\mathsf{ZFC}}

\newcommand{\AD}{\mathsf{AD}}
\newcommand{\AC}{\mathsf{AC}}
\newcommand{\GCH}{\mathsf{GCH}}
\newcommand{\AHA}{\mathsf{Axiom\ of\ Harmony}}
\newcommand{\AH}{\mathsf{AH}}
\newcommand{\nN}{\mathsf{N}}
\newcommand{\reg}{\mathsf{reg}}

\newcommand{\bbQ}{\mathbb{Q}}
\newcommand{\lmi}{\mathcal{M}^l_\infty}

\def\inseg{\trianglelefteq}

\def\k{\kappa}
\def\a{\alpha}
\def\b{\beta}
\def\d{\delta}

\def\l{\lambda}

\def\P{{\mathcal{P} }}
\def\W{{\mathcal{W} }}
\def\Q{{\mathcal{ Q}}}
\def\mH{{\mathcal{ H}}}

\def\R{{\mathcal R}}

\def\H{{\rm{HOD}}}

\def\M{{\mathcal{M}}}
\def\N{{\mathcal{N}}}

\def\T {{\mathcal{T}}}
\def\U{{\mathcal{U}}}
\def\S{{\mathcal{S}}}
\def\V{{\mathcal{V}}}
\def\X{{\mathcal{X}}}
\def\Y{{\mathcal{Y}}}

\def\card#1{\left|#1\right|}

\def\cof{\mathop{\rm cof}\nolimits}
\def\iff{\mathrel{\leftrightarrow}}

\def\and{\mathrel{\kern1pt\&\kern1pt}}

\def\inseg{\triangleleft}
\def\insegeq{\trianglelefteq}

\def\<#1>{\langle\,#1\,\rangle}

 \input xy
 \xyoption{all}

\newcommand{\DC}{\mathsf{DC}}

\newcommand{\cP}{\mathscr{P}}
\newcommand{\bbP}{\mathbb{P}}

\newcommand{\pmax}{\mathbb{P}_{\mathrm{max}}}

\newcommand{\ZF}{\mathsf{ZF}}

 \newcommand{\gen}{{\rm gen}}
\newcommand{\ind}{{\rm ind}}

 \def\c{{\mathrm{N}}}

\begin{document}

\title[The failure of square at all uncountable cardinals]{The failure of square at all uncountable cardinals is weaker than a Woodin limit of Woodin cardinals}

\author{Douglas Blue, Paul Larson, and Grigor Sargsyan}

\thanks{The second author's work is partially funded by National Science Foundation Grant DMS-2452139. The third author's work is funded by the National Science Center, Poland under the Maestro Call, registration number UMO-2023/50/A/ST1/00258.}

\begin{abstract}
    We force the Axiom of Choice over the least initial segment of a Nairian model satisfying ZF.
    In the forcing extension, \(\square_{\kappa}\) fails at all uncountable cardinals \(\kappa\), and every regular cardinal is \(\omega\)-strongly measurable in HOD, as witnessed by the \(\omega\)-club filter.
    Thus
    \begin{enumerate}
    	\item the failure of square everywhere is within the current reach of inner model theory, which resolves \cite[Question 5.6]{foreman2005some} and conjectures in \cite{steel2005pfa,zeman2017two}, and
	
	\item the HOD Hypothesis is not provable in ZFC.
    \end{enumerate}

\end{abstract}

\maketitle
\tableofcontents
\section{Introduction}

Given an ordinal \(\xi\), the Nairian model at \(\xi\) is the structure \(L_{\xi}(\mH, \bigcup_{\alpha<\xi}\alpha^{\omega})\), where \(\mH\) is a predicate for the HOD of a model of the Axiom of Determinacy. Let $\treg$ be the theory $\ADR+``\Theta$ is a regular cardinal$"$.

\begin{definition}\label{def: minnm} \normalfont Assume $\ZF+\treg+V=L(\powerset(\bR))$. Then $N$ is a \textbf{minimal Nairian Model} if, letting $\xi=\ord\cap N$, 
\begin{enumerate}
    \item $N\models \ZF$,

    \item  $N$ is the Nairian Model at $\xi$, 

    \item $\Theta^N<\Theta$ and $\Theta^N$ is a Solovay cardinal,\footnote{$\k$ is a Solovay cardinal if for every $\b<\k$, there is no ordinal definable surjection $f:\powerset(\b)\rightarrow \k$.}

    \item letting $\d$ be the least Solovay cardinal above $\Theta^N$, $\xi$ is an inaccessible cardinal of $\H$ and is a limit of ${<}\d$-strong cardinals of $\H$, and

    \item  if $\l<\xi$ is an inaccessible cardinal of $\H$, then $\l$ is not a limit of ${<}\xi$-strong cardinals of $\H$.
\end{enumerate}
\end{definition}

In this paper, we force over a $\pmax$ extension of a minimal Narian model to prove the following theorem.\footnote{See \textsection\ref{sec: square} for background on $\square(\kappa)$ and its relatives. It may help readers of this introduction to keep in mind that $\square(\kappa^{+})$ is a weak form of $\square_{\kappa}$.}

\begin{theorem}\label{square and measurability in hod}
   $\ZF+\treg+V=L(\powerset(\bR))$. Let \(N\) be a minimal Nairian model.
    There is a forcing extension of \(N\) satisfying
    \begin{enumerate}
        \item \(\mathrm{ZFC}\),
        
        \item for all ordinals $\gamma$ of cofinality greater than $\omega_{1}$, $\square(\gamma)$ fails, and

        \item every regular cardinal is \(\omega\)-strongly measurable in \(\mathrm{HOD}\).

    \end{enumerate}
\end{theorem}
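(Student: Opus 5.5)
The plan is to force in two stages over $N$: first with $\pmax$, then with $\Add(\omega_3,1)$ as computed in $N^{\pmax}$. This mirrors the Caicedo--Larson--Sargsyan--Schindler--Steel--Zeman approach to $\neg\square(\omega_3)$ and $\neg\square_{\omega_2}$ over $\ADR+\Theta$-regular models. The first stage gives $\mathrm{ZFC}$ fragments, $2^{\omega}=\omega_2$ and $\mathsf{MM}^{++}(\mathfrak c)$. The second stage well-orders $\powerset(\omega_2)$ and so yields full $\mathrm{ZFC}$ in the final model $N[G][H]$.

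For (1), first check that $N\models \AD^+$ together with enough structure that the standard $\pmax$ analysis applies. In particular, $\Theta^N$ should be regular in $N$; this follows from $\Theta^N$ being a Solovay cardinal together with clause (4) of Definition~\ref{def: minnm}. The key additional point is that $N$ contains $\bigcup_{\alpha<\xi}\alpha^\omega$. Hence every set of reals in $N$ is coded by countable sequences of ordinals, so $N[G]$ already has a well-ordering of $\powerset(\bR)$ of length $\Theta^N=\omega_3^{N[G]}$. After adding $H$, this gives a well-order of $V_{\omega_3+1}$. To obtain choice for all sets, I would then show $N[G][H]=L_\xi(\mH,\bigcup\alpha^\omega)[G][H]$ satisfies ``every set is a surjective image of an ordinal''. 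For this I would use the fine structure of the Nairian hierarchy: each level is definable from $\mH$ and an $\omega$-sequence of ordinals, and sequences of length $<\xi$ are generated from $\omega$-sequences. Since $\mH$ carries a definable well-order and the $\omega$-sequences can be well-ordered stagewise using $G\times H$ together with the regularity facts from (4)--(5), this gives choice. I expect this to be routine modulo the Nairian analysis.

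For (2), I would split by cofinality and work below $\xi$. Every cardinal of $N[G][H]$ above $\omega_2$ is a cardinal of $\mH$, so I would compare square sequences with coherent sequences in $\mH$. For $\gamma=\omega_2$, the failure of $\square(\omega_2)$ should come from $\mathsf{MM}(\mathfrak c)$ in $N[G]$, which is preserved because $\Add(\omega_3,1)$ is $\omega_3$-closed. For $\gamma=\omega_3$, I would run the CLSSSZ argument: a $\square(\omega_3)$-sequence would, by homogeneity of $\Add(\omega_3,1)$, have a name in $N[G]$. Using the determinacy of $N$ and a $\Theta$-strong-like embedding of the HOD-analysis type, one reflects this to a thread. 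For larger $\gamma$, with $\cf(\gamma)>\omega_1$, I would use the strong cardinals of $\mH$ below $\xi$, which exist as a limit of ${<}\d$-strongs by (4). Take the derived model at such a cardinal, or more precisely an elementary embedding $j:\mH\to M$ lifting to $N$ via the $\omega$-sequence closure. Then $j``\gamma$ is $\omega$-closed, and $\sup j``\gamma$ has uncountable cofinality in $M$ with a point of cofinality $\omega_1$. The coherent sequence's entry at $\sup j``\gamma$ threads the original. \textbf{This is the main obstacle}: one must lift embeddings of $\mH$ to the Nairian model and then through $G\times H$. I would first try to do this via Nairian-level covering by $\omega$-closed hulls, where the closure of $N$ under $\omega$-sequences makes hulls of size $\aleph_1$ isomorphic to Nairian models at smaller ordinals, and then apply the standard stationary-reflection argument on $\omega$-cofinal points.

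For (3), the point is that $\mathrm{HOD}^{N[G][H]}=\mathrm{HOD}^N$, by homogeneity of $\pmax$ and $\Add(\omega_3,1)$ and ordinal definability of these forcings. This HOD is essentially $\mH$ restricted below $\xi$. For a regular $\kappa$ of $N[G][H]$, the $\omega$-club filter on $\kappa$ restricted to $\mathrm{HOD}$ should be in $\mathrm{HOD}$. Moreover, $\mathrm{HOD}$ cannot split $S^\kappa_\omega$ into $<\kappa$ pieces that are each stationary, because $\kappa$ is $\omega$-strongly measurable. This is witnessed by the fact that in $N$, $\omega$-club filters on regular cardinals are ultrafilters, by the $\omega$-closure of $N$ and $\AD$-style partition arguments (Martin/Steel measurability under $\AD^+$ extended along the Nairian hierarchy). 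These ultrafilters are OD, so their restrictions lie in $\mathrm{HOD}$. Finally, the forcing adds no new $\omega$-clubs beyond those covered by ground-model $\omega$-clubs, by $\omega_1$-closure and the properness-like properties of $\pmax$.
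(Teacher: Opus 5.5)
The forcing you propose is too short, so the argument for (1) fails. Since (2) and (3) are asserted in the same model, everything downstream is affected.

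$\pmax*\Add(\omega_3,1)$ gives $\ZFC$ over $L(\powerset(\bR))$ because there every set is generated from $\powerset(\bR)$ and ordinals. $N$ is not of that form: above $\Theta^N$ it carries the long hierarchy $\kappa_0=\Theta^N$, $\kappa_{\alpha+1}=\Theta_{\kappa_\alpha^\omega}$ of Theorem~\ref{thm: the theta seq}, i.e.\ the levels $\nN_\beta$ of the $\nN$-hierarchy. Your $H$ is just the first stage $\mathbb{Q}(0)=\Coll(\Theta^N,\nN_2[G])$ of the paper's iteration. It well-orders $\powerset(\omega_2)=\powerset(\bR)$, and with it the level $\nN_2$ and $\kappa_0^\omega$; that well-order comes from $H$, not from $\pmax$.

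Your step ``the $\omega$-sequences can be well-ordered stagewise using $G\times H$'' has nothing behind it. To well-order, say, $\kappa_1^\omega$ stagewise you must choose, uniformly in $\beta<\kappa_1$, bijections between $\omega_3$ and $\beta$. That is a choice problem one level up (about $\nN_3$, which has size $\powerset(\omega_3)$ after $H$), and $G\times H$ does not address it. The same problem recurs at every $\kappa_\alpha$ with $\alpha<\xi$.

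This is why the paper forces with the full-support class iteration $\mathbb{Q}$ of Theorem~\ref{thm:forcing choice}, which collapses each $\nN_{\alpha+\beta}$ onto $\kappa_{\alpha+\beta}$. The real work is showing that this iteration preserves the $\kappa_{\alpha+\beta}$ and adds no short sequences, which requires $\DC_\kappa$ at every stage. The paper gets this from $\mathsf{NT_{base}}$ (Definition~\ref{nairian theory}): the Axiom of Harmony plus the reflection principle $\rref(X,\nN_\alpha,\nN_{\alpha+1})$. That principle lifts $\DC$ (Lemmas~\ref{lem:lifting dc1} and~\ref{lem:lifting dc2}) and is preserved by the relevant forcings (Theorem~\ref{preserrving reflection}). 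Verifying $\rref$ in $N$ (Theorems~\ref{thm: basic reflection} and~\ref{thm: closed hulls above theta}) is most of the paper, and nothing in your proposal plays this role.

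Your treatment of $\gamma=\omega_2$ via $\mathrm{MM}^{++}(\mathfrak{c})$ agrees with the paper. Above $\omega_2$, your plan of lifting $\mH$-embeddings through $N$ and $G\times H$ is, as you say, not carried out, and it is doubtful: a thread of the form $j^{-1}[D]$ lives wherever $j$ lives, not necessarily in the final model. The paper does not go this way. It uses that $N$ sits inside the ambient determinacy model $V$ with $\Theta^N<\Theta^V$, and applies \cite[Theorem 3.3]{blue2025nairian}, a relativization of the CLSSSZ argument. This yields $\neg\square(\gamma,\omega)$ in $N[G*H]$ simultaneously for all $\gamma\in[\Theta^N,\Theta^V)$ with $\cof^V(\gamma)>\omega_1$. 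It requires the choice forcing to be ${<}\omega_2$-directed closed in $V[G]$, not merely in $N[G]$. That transfer is Corollary~\ref{cor: omega1 closure} ($N[G]$ is closed under $\omega_1$-sequences in $V[G]$), proved from the reflection embeddings $\sigma_u$ and their uniqueness (Corollary~\ref{cor: uniqueness}, Theorem~\ref{thm: embedding is in n}). This ingredient is also missing from your proposal.

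For (3), the shape of your argument matches the paper: the $\omega$-club filter restricted to $\mathrm{HOD}$ is a normal ultrafilter in $\mathrm{HOD}$, this survives countably closed forcing, then apply Lemma~\ref{lem: easy lem 1}. But ``AD-style partition arguments extended along the Nairian hierarchy'' supplies no proof above $\Theta^N$. The paper proves $F_\kappa(\mH)=E_\kappa$, the least Mitchell-order-$0$ measure of $\mH$ on $\kappa$ (Theorem~\ref{thm: omega club over nm 1}). It does so by showing that a tail of the $\omega$-club of points $\tau=\kappa\cap\hull^{\mH|\lambda}(\tau\cup X)$ that are non-measurable in $\mH$ coincides with the critical points on the main branch of $\T_{\hq,\hq^\kappa}$.
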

The forcing is gentle or ``minimal'', the goal being to preserve as much of the determinacy structure of the underlying Nairian model as possible in the $\ZFC$ extension. It consists of $\pmax$ followed by a full-support iteration of partial orders successively wellordering the powersets of the members of a proper class of cardinals. 
This iteration is very similar to the standard one for forcing $\GCH$ to hold above $\omega_2$.

Towards proving \rthm{square and measurability in hod}, we prove some new fundamental properties of a minimal Nairian model assuming Steel's \textit{Hod Pair Capturing} (${\sf{HPC}}$, see \cite{SteelCom}).
Theorem \ref{thm: summary} summarizes the properties of the minimal Nairian Model proved in this paper as well as in \cite{blue2025nairian}.
Given a set $X$, let $\Theta_X$ be the supremum of the ordinals that are the surjective image of $X$. If $M$ is a transitive model of $\ZFC$, then $M|\k=V_\k^M$.

\begin{theorem}\label{thm: summary} Assume $\ZF+\ADR + \treg +{\sf{HPC}}+V=L(\powerset(\bR))$. Let $N$ be a minimal Nairian Model, $\xi=\ord\cap N$, and $\langle\k_\a: \a<\xi\rangle$ be the increasing enumeration of the ${<}\xi$-strong cardinals of $\H$ and their limits. Let $\k_{-1}=\omega$. Then the following holds.
\begin{enumerate}
\item $\k_0=\Theta^N$.
\item For every $\a\in [-1,\xi)$, $N\models \k_{\a+1}=\Theta_{\k_\a^\omega}$.
\item For every  $\a\in [-1, \xi]$, $N\models \k_{\a+2}=\k_{\a+1}^+$. 
\item For every  $\a\in [-1, \xi]$, $N\models ``\k_{\a+1}$ is a regular cardinal''. 
\item For every $\a\in [-1, \xi]$, $\cf(\k_{\a+1})=\k_0$.
\item For every $\a\in [-1, \xi]$, $\powerset(\k_\a^\omega)\cap N\subseteq N|\k_{\a+1}$.
\item For every limit ordinal $\a<\xi$, $(\k_\a^+)^N=(\k_\a^+)^W$ where $W=L(\H|\k_\a, \k_\a^{\cf(\k_\a)})$.
\item For every ordinal $\a\in [-1, \xi)$ and for every $\nu<\k_{\a+1}$, there is no cofinal $f:\nu^\omega\rightarrow \k_{\a+1}$ in $N$.
\item For every $\a\in [-1, \xi)$, for  every $\b\in [\a+1, \xi)$ and for every $X\in N|\k_{\b+1}$ there exist $\tau<\k_{\a+1}$ and  an elementary embedding $\sigma:N|\tau\rightarrow N|\k_{\b+1}$ such that 
\begin{enumerate}
    \item $\sigma\rest \kappa_{\a+1}=id$, and 
    \item $X\in \rge(\sigma)$.
\end{enumerate}
\item For every $\a\in [-1, \xi)$, for  every $\b\in [\a+1, \xi)$ and for every $X\in N|\k_{\b+1}$ there exist a transitive $W\in N|\k_{\a+2}$ and an elementary embedding $\sigma:W\rightarrow N|\k_{\b+1}$ such that 
\begin{enumerate}
    \item $W^{\k_\a^\omega}\subseteq W$,
    \item $\sigma\rest \kappa_{\a+1}=id$, and 
    \item $X\in \rge(\sigma)$.
\end{enumerate}
\item If $G\subseteq \pmax$ is generic, then $N[G]$ is $\omega_1$-closed in $V[G]$.
\item If $\k\leq\Theta^N$ is a Suslin cardinal, then $N\models ``\k$ is supercompact to ordinals''. \footnote{That is, for every $\a$ there is a normal fine $\k$-complete ultrafilter on $\powerset_{\k}(\a)$.}
\item $N\models ``\omega_1$ is supercompact''. \footnote{That is, for every set $X$ there is a normal fine countably complete ultrafilter on $\powerset_{\omega_1}(X)$.}
\end{enumerate}
\end{theorem}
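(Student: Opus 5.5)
This is a summary theorem, so the proof will be an assembly. Items (1)--(8) and (11)--(13) are largely established in \cite{blue2025nairian}, and the new work concentrates on the condensation-type clauses (9) and (10). I would organize the argument around the HOD analysis available under ${\sf{HPC}}$. Working in $V=L(\powerset(\bR))$ with $\ADR+\treg$, $\H$ is a hod premouse, and $N$, being the Nairian model at $\xi$, is determined by $\H|\xi$ together with the sets $\k_\a^\omega$. The first step is to compute $\Theta^N$. Here $\Theta^N$ is a Solovay cardinal below $\Theta$. Minimality (clause (5) of Definition \ref{def: minnm}) implies that no ${<}\xi$-strong cardinal of $\H$ lies below $\Theta^N$ while $\Theta^N$ is a limit of $\H$-Woodins. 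From this I get $\k_0=\Theta^N$, which is item (1).

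Items (2)--(8) I would prove simultaneously by induction on $\a$, via a "local HOD analysis". The key observation is that every element of $N|\k_{\a+1}$ is definable in $L(\H|\k_{\a+1},\k_\a^\omega)$. For the upper bound in (2), any surjection from $\k_\a^\omega$ onto an ordinal in $N$ can be captured by a hod pair whose strategy is coded below $\k_{\a+1}$. For the lower bound in (2), I would use the $<\xi$-strongness of $\k_{\a+1}$ in $\H$ to produce surjections onto every smaller ordinal.

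The remaining items in this block then follow from the induction:
\begin{itemize}
\item Item (6) follows by a condensation argument: subsets of $\k_\a^\omega$ are reflected into $N|\k_{\a+1}$.
\item Regularity (4) and item (8) come from a Solovay-type argument: a cofinal map $\nu^\omega\to\k_{\a+1}$ would give an ordinal definable surjection contradicting the strongness of $\k_{\a+1}$ in $\H$.
\item Item (3) uses that $\k_{\a+2}$ is the next strong cardinal, together with (2) applied at $\a+1$.
\item Item (5) holds because every such $\k_{\a+1}$ is reached by $\H$-iterations of cofinality $\Theta^N$ in $V$.
\item Item (7) follows from a similar localization at limit $\a$.
\end{itemize}

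Items (9) and (10) are the main new work. For (9), given $X\in N|\k_{\b+1}$, I would take a hull of $N|\k_{\b+1}$ containing $\k_{\a+1}\cup\{X\}$, formed in $V$ and of size less than $\k_{\a+1}$ in the relevant sense. By comparison under ${\sf{HPC}}$, its transitive collapse is $N|\tau$ for some $\tau<\k_{\a+1}$: the collapse is the Nairian model of an iterate of $\H|\k_{\b+1}$ by its strategy, and fullness forces that iterate to be an initial segment of $\H$ itself. The inverse collapse $\sigma$ fixes $\k_{\a+1}$ pointwise.

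For (10), I would instead close the hull under $\k_\a^\omega$-sequences. Using $\k_{\a+1}$ regular and $\k_{\a+2}=\k_{\a+1}^+$, the collapse $W$ then has size at most $\k_{\a+1}$ in $N$, so $W\in N|\k_{\a+2}$ by item (6) at $\a+1$. I expect the main obstacle to be identifying the collapse with a level of $N$ in (9). This requires showing that the hod mouse of the hull is a genuine initial segment of $\H$ and not merely an iterate, which is where minimality and ${\sf{HPC}}$ are used.

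Finally, for (11) I would show that $N$ is closed under $\omega$-sequences in $V$ and that $\pmax$ adds no new $\omega$-sequences of ordinals. Then any $\omega_1$-sequence in $V[G]$ of elements of $N[G]$ is coded by reals and ordinals lying in $N[G]$. This uses $\Theta^N<\Theta$ and the fact that $N$ contains $\bigcup_{\a<\xi}\a^\omega$. Items (12) and (13) come from the standard Solovay and Woodin measures on $\powerset_{\omega_1}$. I would define them in $N$ using the sequences available there, and use (8) and (6) to verify that the ultrafilters on $\powerset_{\k}(\a)$ restrict properly for $\a<\xi$.
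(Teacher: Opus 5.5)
Your proposal has genuine gaps at exactly the items this paper proves: (9), (10), (11), and the lower bound in (2).

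For (9)--(10), everything rests on forming an elementary hull of $N|\k_{\b+1}$ and then closing it under $\k_\a^\omega$-sequences. But $N$ does not satisfy choice and has no Skolem functions. So Tarski--Vaught witnesses cannot be chosen inside $N$, let alone through a closure process of length $\k_{\a+1}$; this is the obstacle the paper names at the start of its ``Reflection III'' section. If you form the hull in $V$ instead (say via $\ADR$-uniformization), nothing puts the collapse $W$ or the map $\sigma$ into $N$. Then ``$W$ has size at most $\k_{\a+1}$ in $N$, hence $W\in N|\k_{\a+2}$ by (6)'' assumes what must be shown, and $\rref$ needs $\sigma\in N$ in any case. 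Identifying the collapse with a level $N|\tau$ ``by comparison'' also has no mechanism. The collapsed $\H$-part is a preimage of $\H|\k_{\b+1}$, not an iterate of it. Even if it were an initial segment of $\H$, you would still have to show that the collapse contains exactly the $\omega$-sequences of the corresponding level of $N$. Note too that a hull containing $\k_{\a+1}$ collapses to height at least $\k_{\a+1}$, so it cannot be $N|\tau$ with $\tau<\k_{\a+1}$; the case the paper proves, \rthm{thm: basic reflection}, has $\tau=\l_u<\Theta^N$.

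The paper never takes a hull. It defines $\sigma'_u(x)=\pi_{\hr,\infty}(x_{\hr|\nu_\hr})$ on $\mH|\l_u$, where $\l_u=\pi_{\hq|\nu,\infty}(\l)$, so the domain $N|\l_u$ is a level of $N$ by construction. Elementarity of the induced $\sigma_u$ comes from $\Delta_{\hq,\nu}$-genericity iterations that realize $N|\l_u$ and $N|\l^u$ as local Nairian models (\rthm{thm: realizing as a nairian model}). Membership $\sigma_u\in N$ comes from uniqueness of realizations (\rcor{cor: uniqueness of realizability witnesses}). For (10), closure is not obtained by closing a hull. The paper builds a canonical, $N$-definable chain $(W_i,\sigma_i: i<\eta)$ with $\eta=\k_{\a+1}$, whose terms are directed-system limits attached to Woodin cardinals $\nu_i$ cofinal in some $\nu$ with $\mH\models\cf(\nu)=\eta$. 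These are arranged so that $W_i\in W_j$ and $\sigma_i\in\rge(\sigma_j)$ for $i<j$ (\rlem{lem: agreement of hulls}, resting on the three-Woodin argument of \rlem{lem: defining smaller hulls}). Closure of the direct limit under $\zeta^\omega$-sequences then follows from strong regularity of $\eta$ in $N$.

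For (11), $\omega$-closure of $N$ plus the fact that $\pmax$ adds no $\omega$-sequences of ordinals does not give $\omega_1$-closure of $N[G]$. Elements of $N$ of rank at least $\Theta^N$ are not coded by reals inside $N$, since there is no surjection of $\bR$ onto $\Theta^N$ in $N$. So an $\omega_1$-sequence of such elements in $V[G]$ is not ``coded by reals and ordinals in $N[G]$''. Indeed, an $\omega_1$-sequence of ordinals in $[\Theta^N,\xi)$ is exactly the kind of object whose membership in $N[G]$ must be proved. The missing idea (\rthm{thm: embedding is in n}, \rcor{cor: omega1 closure}) is to reflect each term below $\Theta^N$ via some $\sigma_{u_\a}$, choosing a single iterate $\hs$ below which all the $\sigma_{u_\a}$ are generated (\rcor{cor: tech cor iii}). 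Then the sequence $(\sigma_{u_\a}:\a<\omega_1)$ is coded by an $\omega_1$-sequence of reals, so it lies in $L(\bR)[G]\subseteq N[G]$. The preimages lie in $N|\gamma$ for some $\gamma<\Theta^N$, where real codes exist in $N$.

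The lower bound in (2) likewise does not come from $\k_{\a+1}$ being strong in $\H$, since strongness in $\H$ produces no surjections in $N$. The paper builds $f(Y,a,b)=\sigma_{Y,a}(\a_b)$ on $\its(Z)\times A\times B$, where $A$ codes iterates and $B$ codes countable ordinals, and gets $f\in N$ again from uniqueness of realizations (\rthm{thm: the theta seq}). Your one-line arguments for (4) and (8), that a cofinal map in $N$ would contradict strongness, are also not arguments as they stand. In the paper these are strong-regularity facts imported from \cite{blue2025nairian}.
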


In a subsequent publication, we will show that $W$ of clause (10) of \rthm{thm: summary} can be taken to be of the form $N|\nu$. 

Theorem \ref{square and measurability in hod} rules out approaches to two central conjectures in set theory.
The first we will discuss is the conjecture that the Proper Forcing Axiom is equiconsistent with the existence of a supercompact cardinal.
Baumgartner's consistency proof of PFA collapses a supercompact cardinal to \(\omega_2\), and Viale and Weiss \cite{VW11} showed that in any model resembling the known models of PFA, there is an inner model in which \(\omega_2\) is a strongly compact cardinal.
If the model is constructed by a proper forcing iteration, then \(\omega_2\) is supercompact in the inner model.
Thus if one could show that every model of PFA has the properties required by Viale and Weiss's theorem, then the PFA conjecture would be solved.
In the absence of such a proof, inner model theorists have used the core model induction to compute consistency strength lower bounds of PFA.
The best bound to date is in the short extender region of large cardinals, below a Woodin cardinal which is a limit of Woodin cardinals.

The lower bounds of PFA computed to date are really lower bounds of failures of square principles.
Such failures are characteristic consequences of both supercompact cardinals and PFA, which suggests they are quite strong indeed.


\begin{theorem}[Todor\v{c}evi\'c \cite{To84}]\label{thm: Todorcevic}
    \(\mathrm{PFA}\) implies \(\neg\square_{\kappa}\) for every uncountable cardinal \(\kappa\).
\end{theorem}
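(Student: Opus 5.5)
The plan is to prove Todor\v{c}evi\'c's theorem directly. Suppose $\vec{C}=\langle C_\alpha : \alpha<\kappa^+ \text{ limit}\rangle$ is a $\square_\kappa$-sequence, with $\kappa$ uncountable. We will build a proper poset $\mathbb{P}$ and a family of $\omega_1$ dense sets such that a filter meeting all of them produces a thread through $\vec{C}$, which is impossible.

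The key reduction is the following. Let $\mathbb{Q}=\mathrm{Add}(\omega_1,1)$, countable conditions ordered by extension; it is $\sigma$-closed and hence proper. After forcing with $\mathbb{Q}$, either $\mathrm{cf}(\kappa^+)$ becomes $\omega_1$, or $\kappa^+$ keeps uncountable cofinality. In either case we want a proper poset $\dot{\mathbb{R}}$ in $V^{\mathbb{Q}}$ adding a club $D\subseteq\kappa^+$ of order type $\omega_1$ that threads $\vec{C}$ at its limit points. Concretely, $D$ should satisfy $D\cap\alpha=C_\alpha$ for every limit point $\alpha$ of $D$ of uncountable cofinality. The coherence of $\vec{C}$ is what should make this possible.

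Here is how I would get $\dot{\mathbb{R}}$. If $\kappa$ has cofinality $\omega$, or more generally if $\mathbb{Q}$ collapses $\kappa^+$ to cofinality $\omega_1$, then we can shoot a club of type $\omega_1$ through $\kappa^+$ with countable approximations, using a ccc/proper club-shooting. The alternative is a direct argument. Since $\mathrm{otp}(C_\alpha)\le\kappa$ for all $\alpha$, in $V^{\mathbb{Q}}$ we have $|\kappa|=\omega_1$ whenever $\kappa\le 2^{\omega}$. In general, though, I would instead use the Baumgartner-style trick of first collapsing $\kappa^+$ to $\omega_1$ with $\mathrm{Coll}(\omega_1,\kappa^+)$, which is $\sigma$-closed. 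In the extension, $\vec{C}$ still has no thread, because a thread would be a club $E\subseteq\kappa^+$ with $E\cap\alpha=C_\alpha$ at limit points. The standard lemma is that a $\sigma$-closed forcing cannot add a thread to a $\square_\kappa$-sequence. I would prove this by building a tree of conditions of height $\omega$ and using $\mathrm{otp}(C_\alpha)\le\kappa<\kappa^+$: two incompatible extensions deciding different initial segments of the thread must converge to distinct ordinals of countable cofinality, and iterating $\omega_1$ times yields a limit $\alpha$ where $\mathrm{otp}(C_\alpha)$ is too large. I expect this lemma to be the main obstacle, and it is where the hypothesis $\kappa$ uncountable is used.

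After the collapse, $\kappa^+$ has cofinality $\omega_1$. Let $\langle\beta_i:i<\omega_1\rangle$ be a continuous cofinal sequence, and consider the countable-cofinality limit points. I would then apply PFA to the two-step proper iteration $\mathrm{Coll}(\omega_1,\kappa^+)*\dot{\mathbb{S}}$, where $\dot{\mathbb{S}}$ is trivial, together with $\omega_1$ dense sets. These dense sets decide the first $\omega_1$ elements of a name $\dot{E}$ for the cofinal sequence. They also decide, for each $\alpha$ among these points, the relevant initial segment $C_\alpha$, or rather witness the coherence $C_{\beta_j}\cap\beta_i=C_{\beta_i}$ for limit points. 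A filter $G$ meeting these sets gives, in $V$, a set $E=\{\beta_i:i<\omega_1\}$. By coherence and the properness of the names, the union $\bigcup_{i<\omega_1} C_{\beta_i}$ is a club in $\sup E$ coherent with $\vec{C}$. Taking $\gamma=\sup E<\kappa^+$ of cofinality $\omega_1$, we obtain that $\mathrm{otp}(C_\gamma)$ must exceed $\kappa$, because the names, interpreted via $G$, force unboundedly many ordinals below $\kappa^+$ into the thread, giving the contradiction. The delicate point is arranging the dense sets so that the reflected $\gamma$ genuinely violates the order-type bound. I would handle this by adding dense sets forcing $\mathrm{otp}$ of the generic thread restricted to $\gamma$ to exceed each $\eta<\kappa$ in a fixed $\omega_1$-sized cofinal subset of $\kappa$ when $\mathrm{cf}(\kappa)=\omega_1$. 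When $\mathrm{cf}(\kappa)\neq\omega_1$, I would fall back on MM-style stationary reflection arguments. In that case I would instead cite that PFA implies the failure of $\square(\kappa^+)$ via the Todor\v{c}evi\'c $P$-ideal dichotomy-free argument above.
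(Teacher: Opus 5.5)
There is a genuine gap, and it is in the decisive step. (The paper does not reprove this theorem; it only cites \cite{To84}, so your argument is judged on its own.) Applying PFA to $\mathrm{Coll}(\omega_1,\kappa^+)*\dot{\mathbb{S}}$ with $\dot{\mathbb{S}}$ trivial cannot produce a contradiction. A filter meeting $\omega_1$ dense sets of a $\sigma$-closed poset only gives you, in $V$, a closed cofinal $\omega_1$-sequence $E$ in some $\gamma<\kappa^+$ of cofinality $\omega_1$. Nothing about such an $E$ conflicts with $\vec C$.

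Two things go wrong with your intended contradiction. First, the union $\bigcup_i C_{\beta_i}$ need not be coherent, since nothing makes $\beta_i\in\lim C_{\beta_j}$. Second, even a genuine local thread is harmless. If $E\subseteq\gamma$ is club with $E\cap\alpha=C_\alpha$ for all $\alpha\in\lim E$, then $E$ agrees with $C_\gamma$ on the club $\lim E\cap\lim C_\gamma$. Hence $E=C_\gamma$, whose order type is $\le\kappa$. So ``$\mathrm{otp}(C_\gamma)$ must exceed $\kappa$'' can never be reached this way. Moreover, by your own lemma the collapse adds no thread, so there is no generic thread to reflect.

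The fallbacks do not help. Reflection of stationary subsets of $S^{\kappa^+}_\omega$ comes from MM (or PFA$^+$), not from PFA alone. Invoking ``PFA implies $\neg\square(\kappa^+)$'' is circular, since that is Todor\v{c}evi\'c's theorem in a stronger form.

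Your sketch of the no-thread lemma is also off. The correct proof builds a binary tree $\langle q_s : s\in 2^{<\omega}\rangle$ of conditions and ordinals $\alpha_n$ such that $q_{s^\frown 0}$ and $q_{s^\frown 1}$ disagree about whether some $\xi<\alpha_{|s|+1}$ lies in $\dot E$. Lower bounds for two different branches both force $\sup_n\alpha_n\in\lim\dot E$, hence both force $\dot E\cap\sup_n\alpha_n=C_{\sup_n\alpha_n}$, a contradiction. This uses no order types. The bound $\mathrm{otp}(C_\alpha)\le\kappa$ is needed only to see that a $\square_\kappa$-sequence has no thread in $V$.

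The missing idea is a nontrivial ccc second step that specializes a tree. Let $\mathbb{P}$ be the $\sigma$-closed poset of countable closed subsets of $\kappa^+$ with a maximum, ordered by end-extension; it adds a club $D\subseteq\kappa^+$ of order type $\omega_1$. In $V^{\mathbb{P}}$, order $\lim D$ by $\alpha<_T\beta$ iff $\alpha\in\lim C_\beta$. Coherence makes this a tree of height at most $\omega_1$ and size $\aleph_1$. An uncountable branch $B$ would give the thread $\bigcup_{\beta\in B}C_\beta$, so by the no-thread lemma $T$ has no uncountable branch. Hence Baumgartner's poset $\dot{\mathbb{Q}}$ of finite partial specializing maps $\dot f$ is ccc.

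Now apply PFA to the proper poset $\mathbb{P}*\dot{\mathbb{Q}}$, using $\omega_1$ dense sets that decide longer and longer initial segments of $\dot D$ and the value of $\dot f$ at the $\xi$-th element of $\dot D$ for limit $\xi$. Compatible conditions of $\mathbb{P}$ are end-extensions of one another. So you get in $V$ a closed $D'\subseteq\kappa^+$ of order type $\omega_1$ with supremum $\gamma$, and $\gamma<\kappa^+$ because $\kappa^+\ge\omega_2$; this is where $\kappa$ uncountable is used. You also get $f\colon\lim D'\to\omega$ with $f(\alpha)\ne f(\beta)$ whenever $\alpha\in\lim C_\beta$. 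But $\lim C_\gamma\cap\lim D'$ is club in $\gamma$, hence uncountable. Any $\alpha<\beta$ in it satisfy $\alpha\in\lim C_\gamma\cap\beta=\lim C_\beta$. So $f$ is injective from an uncountable set into $\omega$, which is the contradiction.
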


To derive consistency strength from PFA, one builds, assuming an anti-large cardinal hypothesis, a fine structural core model satisfying a weak form of covering.
Below the level of subcompact cardinals, which is presently beyond the reach of core model theory, \(\square_{\kappa}\) holds for all \(\kappa\) in core models by work of Schimmerling and Zeman \cite{schimmerling2001square}.
\rthm{thm: Todorcevic} then implies that the core model fails radically to approximate the universe, hence PFA is at least as strong as the negation of the anti-large cardinal hypothesis used to define the core model.

Schimmerling used \rthm{thm: Todorcevic} to show that PFA implies \(\Delta^1_2\)-Determinacy \cite{Sch95}.
Steel showed that PFA implies \(\mathrm{AD}^{L(\mathbb{R})}\) by applying the core model induction to \(\neg\square_{\kappa}\) for \(\kappa\) a singular strong limit \cite{steel2005pfa}.
Schimmerling showed that PD holds if $\neg\square_{\kappa} + \neg\square(\kappa)$ and \(\kappa\) is larger than the continuum \cite{Schim07}.
This in turn allows an improvement of Steel's theorem: the restriction of \(\mathrm{PFA}\) to partial orders of size at most \( (2^{\omega})^+ \) implies \(\mathrm{AD}^{L(\mathbb{R})}\).
Jensen, Schimmerling, Schindler, and Steel \cite{JSSS} showed that if \(\kappa\geq\omega_2\) is countably closed and \(\square(\kappa)\) and \(\square_{\kappa}\) each fail, then there is an inner model with arbitrarily large strong cardinals and Woodin cardinals.
Going further, Sargsyan showed that the failure of $\square_{\kappa}$ at a singular strong limit cardinal $\kappa$ implies the existence of a nontame mouse \cite{Sa14}, and Sargsyan and Trang showed that a certain configuration of failures of square implies that a model of the Largest Suslin Axiom exists \cite{LSA}.
The Sargsyan-Trang lower bound is the best to date.

The expectation has been that these lower bounds are far from optimal.
Steel
\cite{steel2005pfa} conjectured that the failure of $\square_{\kappa}$ at a singular strong limit cardinal $\kappa$ is at least as strong as a superstrong cardinal.
Question 5.6 of \cite{foreman2005some} asks to determine the consistency strength ordering of superstrong cardinals versus the failure of $\square_{\kappa}$ at a singular strong limit cardinal $\kappa$.
The best known upper bound for \(\neg\square_{\aleph_{\omega}}\) was computed by Zeman, who forced it from a measurable subcompact cardinal \cite{zeman2017two}.
(It is unclear whether Zeman's method can be used to make square fail at three consecutive regular cardinals, or at a limit of singular cardinals at which square also fails.)
This led Zeman to conjecture that ``subcompactness is the right candidate for the consistency strength of the failure of \(\square_{\lambda}\) at a singular cardinal \(\lambda\)'' \cite[p.~411]{zeman2017two}.

However, the consistent existence of minimal Nairian models follows from hypotheses weaker than the existence of a Woodin cardinal which is a limit of Woodin cardinals.

\begin{corollary}\label{failures of square are weak}
    The theory \(\mathrm{ZFC}\) + ``$\square(\gamma)$ fails for all ordinals $\gamma$ of cofinality greater than $\omega_{1}$'' is strictly weaker consistency-wise than the theory ZFC + ``there is a Woodin limit of Woodin cardinals.''
\end{corollary}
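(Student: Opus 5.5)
The plan is to derive the corollary from \rthm{square and measurability in hod}, combined with a consistency-strength upper bound for the hypothesis of that theorem. Strict weakness needs two things. First, from a Woodin limit of Woodin cardinals we must produce, inside a model with no more large cardinal strength, a model of $\ZF+\treg+V=L(\powerset(\bR))$ that contains a minimal Nairian model. Second, applying \rthm{square and measurability in hod} inside that model gives a model of $\ZFC$ in which $\square(\gamma)$ fails for all $\gamma$ of cofinality greater than $\omega_1$. The theorem's forcing extension $N[G][H]$ is a set-sized, class-of-$N$ construction, so it is a model of $\ZFC$ provided $N$ exists.

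\textbf{Step 1.} Let $\lambda$ be a Woodin limit of Woodin cardinals, and work in a suitable inner model such as the minimal hod mouse or the least fine-structural model with such a $\lambda$. Using the derived model construction at a limit of Woodin cardinals (Woodin's derived model theorem), or more precisely at a level below $\lambda$ where the derived model satisfies $\ADR$ with $\Theta$ regular, we obtain a model of $\treg$. The natural choice is the derived model at $\lambda$ itself: Woodinness of $\lambda$ gives that $\Theta$ is regular in $D(V,\lambda)$, by the standard argument of Woodin and Steel. Inside it, $L(\powerset(\bR))$ is the required model.

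\textbf{Step 2.} We must show that this model contains a minimal Nairian model, i.e.\ an ordinal $\xi$ satisfying clauses (1)--(5) of \rdef{def: minnm}. This is where the main work lies. The HOD of the derived model is a hod mouse which, below $\Theta$, has many ${<}\delta$-strong cardinals, inherited from the Woodin cardinals below $\lambda$ via the direct limit system. I would find the least $\xi$ that is inaccessible in $\H$ and a limit of ${<}\d$-strong cardinals of $\H$. Minimality then gives clause (5). For clauses (1) and (3), I would appeal to the results of \cite{blue2025nairian}, which show that such Nairian models satisfy $\ZF$ and that $\Theta^N$ is Solovay. \emph{I expect this step to be the obstacle.} I would first try reflecting a $\ZF$-model inside the derived model using the regularity of $\Theta$ together with a condensation argument on $\H$.

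\textbf{Step 3.} To get strictness, observe that a Woodin limit of Woodin cardinals yields, for instance, a countable transitive model or mouse containing such a $\lambda$ together with a smaller Woodin limit of Woodin cardinals. Alternatively, we can run Steps 1--2 inside $V_\kappa$ for $\kappa$ inaccessible below the top cardinal, or inside the model truncated at an appropriate level. Either way, ZFC + ``Woodin limit of Woodin cardinals'' proves $\mathrm{Con}(\ZFC+\neg\square(\gamma)\text{ for all }\cf(\gamma)>\omega_1)$. By Gödel's second incompleteness theorem, the converse implication of consistency cannot hold, so the theory is strictly weaker.
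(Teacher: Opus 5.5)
Your skeleton matches the paper's: apply Theorem~\ref{square and measurability in hod} to a determinacy model containing a minimal Nairian model, then use G\"odel. However, the step that makes the bound \emph{strictly} weaker does not work as written.

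The first justification in Step 3 is false. $\ZFC$ + ``there is a Woodin limit of Woodins'' cannot prove that there is a transitive model or mouse of $\ZFC$ + ``there is a Woodin limit of Woodins'' (let alone one with a second such cardinal), since it would then prove its own consistency.

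The second justification does not fit your Step 1. There you put the Woodin limit of Woodins $\lambda$ at the top of the derived model and used its Woodinness. So $V_\kappa$ for an inaccessible $\kappa<\lambda$ need not contain anything to which Step 1 applies.

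The missing observation is that Woodinness of the top cardinal is not needed. In the paper's setup (Notation~\ref{not: set up}):
\begin{itemize}
\item the derived model is taken at an inaccessible limit of Woodin cardinals $\varkappa$;
\item there is a Woodin cardinal $\hd<\varkappa$;
\item $\varsigma\leq\hd$ is the least inaccessible limit of Woodin and ${<}\hd$-strong cardinals;
\item $N$ is the Nairian model at $\varsigma_\infty$.
\end{itemize}
This configuration already occurs below any Woodin limit of Woodins $\delta$. The Woodin cardinals and the ${<}\delta$-strong cardinals are both unbounded in $\delta$, so their common limit points form a club in $\delta$. Since $\delta$ is Mahlo, this club contains inaccessibles $\varsigma<\varkappa$ with a Woodin cardinal $\hd$ between them, because a ${<}\delta$-strong cardinal is ${<}\hd$-strong. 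Moreover, $\varkappa$ can be chosen to be a limit of ${<}\varkappa$-strong cardinals, so nothing is lost for $\treg$ in the derived model.

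Hence $V_\delta$ is a set model of the needed hypothesis. So $\ZFC$ + ``there is a Woodin limit of Woodins'' proves $\mathrm{Con}(\ZFC+\neg\square(\gamma)\text{ for all }\cf(\gamma)>\omega_1)$, which is exactly what the G\"odel argument needs. Alternatively, you can keep your Step 1 and simply note that $N[G*H]$ lives in a set-generic extension and that consistency statements are arithmetical, so no truncation is needed. In either version one works, as the paper does, under the standing assumption that the ambient model is a hod premouse.

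Step 2 should not be a search inside an arbitrary derived model. Your selection of $\xi$ is not well defined as stated, because the $\delta$ in clause (4) of Definition~\ref{def: minnm} is defined from $\Theta^N$, which depends on $\xi$. No reflection or condensation argument is needed here. The paper fixes $\varsigma$ in the hod premouse before passing to the derived model and takes $\xi=\varsigma_\infty=\pi_{\hp,\infty}(\varsigma)$. It then obtains $\ZF$ in $N$, and the identification of $\Theta^N$ with the least strong cardinal of $\mH|\varsigma_\infty$, from \cite{blue2025nairian} and Theorem~\ref{thm: the theta seq}.
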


If PFA implies that there is an inner model with a long-extender large cardinal, then Corollary \ref{failures of square are weak} precludes showing this using failures of square as a proxy.\footnote{
Rado's Conjecture and the Product Measure Extension Axiom are apparently strong principles whose known consistency lower bounds are those of failures of square.}

Corollary \ref{failures of square are weak} solves \cite[Question 5.6]{foreman2005some}: the theory ZFC + ``there is a singular strong limit cardinal \(\kappa\) such that \(\square_{\kappa}\) fails'' does \emph{not} imply the consistency of ZFC + ``there is a superstrong cardinal.''
This should be contrasted with the following lower bound.

\begin{theorem}[Neeman-Steel \cite{ESC}]\label{ESC}
    Suppose that \(\delta\) is a Woodin cardinal such that \(\neg\square(\delta)\) and \(\neg\square_{\delta}.\)
    Suppose that the Strategic Branch Hypothesis holds at \(\delta\).\footnote{The \emph{Strategic Branch Hypothesis holds at} 
    \(\delta\) if for every countable elementary substructure \(M\) of \(V_{\delta}\), player II has a winning strategy in the iteration game on the transitive collapse of \(M\), allowing only short extenders mapping their critical points strictly above their strength and linear compositions of normal, non-overlapping, plus-2 iteration trees \cite{ESC}.}    
    Then there is an inner model with a subcompact cardinal.
\end{theorem}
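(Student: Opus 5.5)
The plan is to argue by contraposition. We assume that there is no inner model with a subcompact cardinal and use the Strategic Branch Hypothesis at \(\delta\) to build a fine-structural model \(K\) below \(\delta\) that is iterable enough to support core model arguments. We then use the Schimmerling--Zeman square constructions \cite{schimmerling2001square} in \(K\) to produce either a \(\square(\delta)\)-sequence or a \(\square_{\delta}\)-sequence in \(V\), contradicting the hypotheses.

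The steps, in order:

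\begin{enumerate}
\item \textbf{Background construction.} Run a full background extender construction \(\langle N_\xi : \xi\le\delta\rangle\) inside \(V_\delta\), using only short extenders of the type allowed in the Strategic Branch Hypothesis. Countable elementary substructures of the levels \(N_\xi\) embed into collapses of countable hulls of \(V_\delta\). So the winning strategies given by the Strategic Branch Hypothesis (for linear compositions of normal, non-overlapping, plus-2 trees) pull back to give the iterability needed for the comparison and condensation lemmas. This also lets us show the construction never breaks down.

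\item \textbf{The anti-large-cardinal hypothesis in \(N_\delta\).} Since there is no inner model with a subcompact cardinal, no level of the construction reaches a subcompact. Hence the Schimmerling--Zeman argument applies inside \(N_\delta\): it yields a \(\square_\kappa\)-sequence for every \(\kappa\), and a threadless coherent sequence on every inaccessible of \(N_\delta\).

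\item \textbf{Woodinness passes down.} Because \(\delta\) is Woodin and the construction is a full background construction, \(\delta\) is Woodin in \(N_\delta\). We then form a stable core \(K\) of height \(\delta\) inside \(V_\delta\), in the manner of Steel's \(K\) below a measurable or Woodin. Using the universality of \(N_\delta\), we prove weak covering at \(\delta\): \(\mathrm{cf}((\delta^+)^K)\ge\delta\).
\end{enumerate}

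The two square hypotheses are then used as follows.

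\textbf{Failure of \(\square_\delta\).} We aim to show that \((\delta^+)^K=\delta^+\). If this fails, then \((\delta^+)^K\) has cofinality exactly \(\delta\), and I would try a Jensen-style hull argument using Woodinness to contradict this. Granting \((\delta^+)^K=\delta^+\), the \(K\)-\(\square_\delta\)-sequence on \((\delta^+)^K\) is a genuine \(\square_\delta\)-sequence in \(V\): coherence and the order-type bound are absolute. This contradicts \(\neg\square_\delta\).

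\textbf{Failure of \(\square(\delta)\).} Consider the threadless coherent sequence \(\vec C\) on \(\delta\) in \(K\). By \(\neg\square(\delta)\) it has a thread \(D\subseteq\delta\) in \(V\). Using the extender algebra at \(\delta\) in \(K\) and a genericity iteration (justified by the Strategic Branch Hypothesis), we make \(D\), or a countable-hull copy of it, generic over an iterate \(K^*\) whose iteration map fixes \(\vec C\) up to \(\delta\). By homogeneity of the forcing and the fact that the square sequence is defined from the extender sequence, the thread would then already exist in \(K^*\), and hence in \(K\) by elementarity. This contradicts the fact that \(\vec C\) is threadless in \(K\).

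The main obstacle is the iterability step (the background construction): converting the restricted Strategic Branch Hypothesis, which covers only certain linear compositions of plus-2 normal trees, into enough iterability to run both the square constructions (condensation) and the genericity iteration. The genericity iteration produces trees that need not obviously be of the allowed form. I would first try to arrange that every comparison and genericity iteration used is a linear stack of normal, non-overlapping plus-2 trees, replacing arbitrary trees by such stacks, and to check that the extenders appearing all send critical points strictly above their strength.
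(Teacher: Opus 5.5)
The paper does not prove this theorem; it only quotes it from \cite{ESC}. Judged on its own terms, your outline has a genuine gap. You try to get a contradiction from each square failure separately, and neither half goes through.

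\emph{The $\neg\square(\delta)$ half.} You claim that the Schimmerling--Zeman analysis gives a threadless coherent sequence on every inaccessible of the model. This is false. At a cardinal that is weakly compact in the model there is no threadless coherent sequence at all, so if $\delta$ is weakly compact there, $\neg\square(\delta)$ gives you nothing. As a sanity check, $\neg\square(\delta)$ holds whenever $\delta$ is weakly compact, and a weakly compact Woodin cardinal is far weaker than a subcompact. So $\neg\square(\delta)$ cannot be expected to do the work on its own.

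Even when such a $\vec C$ exists, the transfer of the thread fails, for three reasons:
\begin{itemize}
\item a genericity iteration uses critical points cofinal in $\delta$, so the map moves $\vec C$, and $D$ need not thread $i(\vec C)$;
\item the extender algebra is not homogeneous;
\item a generic extension can perfectly well contain a thread through a sequence that is threadless in the ground model, so genericity of $D$ over $K^*$ puts no thread into $K^*$.
\end{itemize}
What actually rules out threads through a canonical fine-structural coherent sequence is condensation. A thread assembles into a new sound mouse, whose iterability (this is where SBH is really needed) then contradicts a maximality property of the model.

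\emph{The $\neg\square_\delta$ half and the missing case split.} In this half everything rests on the model computing $\delta^+$ correctly. For that you offer only ``a Jensen-style hull argument using Woodinness,'' and this is exactly the step where $\neg\square(\delta)$ has to enter. There are also two setup problems. Steel's $K$ and weak covering are not available here, since $N_\delta$ already has a Woodin cardinal. And a model of height $\delta$ has no $\delta^+$ and no sequence ``on $\delta$'' that could be threadless in it. The relevant object is the stack $W$ of sound, SBH-iterable mice over $N_\delta$ projecting to $\delta$.

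The two hypotheses must be used in complementary cases; this is the pattern of the stacking-mice argument of \cite{JSSS}, which \cite{ESC} carries up to the subcompact level.
\begin{itemize}
\item If $(\delta^+)^W=\delta^+$ and $\delta$ is not subcompact in $W$, then the canonical $\square_\delta$-sequence of $W$ from \cite{schimmerling2001square} is a $\square_\delta$-sequence in $V$. Being club, the order-type bound, and coherence are all absolute. This contradicts $\neg\square_\delta$.
\item If $(\delta^+)^W<\delta^+$, then, using that $\delta$ is regular, one builds roughly a coherent sequence on $\delta$ in $V$ from the fine structure of hulls of $W$, with coherence coming from condensation. It is shown threadless by the fine-structural argument above: a thread would yield a sound iterable mouse extending $W$ and projecting to $\delta$, contradicting the definition of $W$. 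So $\square(\delta)$ holds, contradicting $\neg\square(\delta)$.
\end{itemize}
Without this case analysis, and without a fine-structural threading argument in place of the genericity one, your sketch does not reach the conclusion.
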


Since the square principles fail at uncountable cardinals in the forcing extension of our minimal \(N\), either the Strategic Branch Hypothesis holding at \(\delta\) is false, or there are no Woodin cardinals in the forcing extension of \(N\).

As it happens, there are no Woodin cardinals in \(N[G]\), but the previous paragraph raises a possibility which to our knowledge has not been considered.
The results of this paper and \cite{blue2025nairian} strongly suggest that the \(K^c\) approach using partial backgrounds will not succeed in constructing mice with superstrong cardinals.
The only plausible hope is that fully backgrounded constructions will.
But the fully backgrounded approach will also break down if a Woodin cardinal can exist in a forcing extension of a Nairian model assuming  less than a superstrong cardinal.
It could be that mouse existence is eventually just a hypothesis, rather than a provable consequence of large cardinals.

\begin{question}
    Can there exist a Woodin cardinal in a forcing extension of a Nairian model?
\end{question}

Corollary \ref{failures of square are weak} also raises the possibility that there is a profound difference between bare PFA and PFA with large cardinals.
Apart from the Viale-Weiss theorem, the expectation that failures of square are strong has largely constituted the evidence that PFA is equiconsistent with a supercompact cardinal.
There could be a method of constructing models of PFA which is not subject to the Viale-Weiss theorem and whose consistency upper bound is closer to the upper bound of square failing everywhere.
In contrast, Neeman and Trang have independently shown that if PFA holds and there exists a Woodin cardinal, then there is an inner model with a Woodin limit of Woodin cardinals.

The following conjecture says that the expectation that failure of square everywhere reaches supercompactness is true in spirit, just in the determinacy sense of supercompactness rather than the ZFC sense.
Theory (2) holds in the Nairian model used to establish \rthm{square and measurability in hod} \cite{blue2025nairian}.

\begin{conjecture}
    The following theories are equiconsistent.
    \begin{enumerate}
        \item \(\mathrm{ZFC} + \forall\kappa \neg\square_{\kappa}\).

        \item \(\mathrm{ZF} + \mathrm{AD}_{\bR} + \text{``\(\omega_1\) is a supercompact cardinal.''}\)
    \end{enumerate}
\end{conjecture}

The second conjecture that Theorem \ref{square and measurability in hod} bears on is Woodin's HOD conjecture.
Let $S^{\kappa}_{\omega}$ denote the set of $\alpha<\kappa$ having countable cofinality.
A cardinal $\kappa$ is $\omega$-\emph{strongly measurable in} HOD if for some $\gamma<\kappa$ such that $(2^{\gamma})^{HOD}<\kappa$, there is no ordinal definable partition of $S^{\kappa}_{\omega}$ into $\gamma$-many stationary sets.
Woodin's HOD Dichotomy Theorem, as optimized by Goldberg \cite{goldberg2024strongly}, states that if there is a strongly compact cardinal $\delta$, then either (1) no regular cardinal greater than $\delta$ is $\omega$-strongly measurable in HOD or (2) every regular cardinal greater than $\delta$ is $\omega$-strongly measurable in HOD.
The \emph{HOD hypothesis} is that there are arbitrarily large regular cardinals which are not $\omega$-strongly measurable in HOD, and the \emph{HOD conjecture} is that ZFC + ``there is a supercompact cardinal'' proves the HOD hypothesis.
It has been conceivable that ZFC alone could do so.

When Woodin formulated the HOD conjecture, it was unknown how to obtain models with more than three regular cardinals which are \(\omega\)-strongly measurable in HOD or with a successor of a singular cardinal of uncountable cofinality which is \(\omega\)-strongly measurable in HOD \cite{woodin2016hod}.
Ben Neria and Hayut showed that it is consistent relative to a surprisingly mild large cardinal hypothesis that every successor of a regular cardinal is $\omega$-strongly measurable in HOD \cite{HayutBenNeria}.
If the HOD hypothesis were provable in ZFC, the proof would thus have to utilize the combinatorics of successors of singular cardinals.
Theorem \ref{square and measurability in hod} eliminates this possibility.

\begin{corollary}\label{cor: hod hypo not provable in zfc}
    The \(\H\) hypothesis is not provable in \(\mathrm{ZFC}\).
\end{corollary}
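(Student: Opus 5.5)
Corollary~\ref{cor: hod hypo not provable in zfc} will follow from \rthm{square and measurability in hod}, together with the consistency of the hypotheses under which a minimal Nairian model exists. The plan is to produce a model of ZFC in which the HOD hypothesis fails. The HOD hypothesis says there are arbitrarily large regular cardinals that are not $\omega$-strongly measurable in $\H$. So it suffices to have a model of ZFC in which \emph{every} regular cardinal is $\omega$-strongly measurable in $\H$. Clause (3) of \rthm{square and measurability in hod} provides exactly this in the forcing extension $N[G][H]$ of the minimal Nairian model $N$. Clause (1) of that theorem gives that $N[G][H]\models\ZFC$.

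The steps, in order, are as follows.
\begin{enumerate}
\item Work in a model of $\ZF+\treg+V=L(\powerset(\bR))$ containing a minimal Nairian model $N$. As indicated before Corollary~\ref{failures of square are weak}, such a model is obtained (as in \cite{blue2025nairian}) from a consistency hypothesis below a Woodin limit of Woodin cardinals. For the present corollary we only need that this hypothesis is consistent relative to ZFC plus some large cardinal.
\item Apply \rthm{square and measurability in hod} to obtain the extension of $N$ satisfying $\ZFC$ in which every regular cardinal $\kappa$ is $\omega$-strongly measurable in $\H$. There is no room for definability subtleties in the notion: it is evaluated inside the extension, and the theorem asserts it there.
\item Conclude that in this model no regular cardinal witnesses the HOD hypothesis, so $\neg$HOD hypothesis holds in a model of ZFC. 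Hence ZFC does not prove the HOD hypothesis, assuming the consistency of the background large cardinal theory. Since the corollary is an unprovability statement, it is implicitly relative to that consistency assumption (any unprovability statement implicitly assumes Con(ZFC) anyway). The precise formulation is: if ZFC plus a Woodin limit of Woodin cardinals is consistent, then ZFC does not prove the HOD hypothesis.
\end{enumerate}

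The only real point needing care is the metamathematical one: the existence of a minimal Nairian model is a consequence of a large cardinal hypothesis, so the corollary is an unprovability result relative to that hypothesis's consistency. All of the mathematical content lies in clause (3) of \rthm{square and measurability in hod}, which we are entitled to assume here.
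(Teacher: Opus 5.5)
Your proposal is correct and matches how the paper gets the corollary. The paper reads it off from clauses (1) and (3) of \rthm{square and measurability in hod}, whose substance is \rcor{cor: omega-strmes} (via \rlem{lem: easy lem 1} and \rthm{thm: omega club over nm 1}): $N[G*H]$ is a model of $\ZFC$ in which every uncountable regular cardinal is $\omega$-strongly measurable in HOD, so the HOD hypothesis fails there. As you say, the unprovability is relative to the consistency of the background theory that produces a minimal Nairian model. For the paper's argument that theory should also include $\mathsf{HPC}$ and $\mathsf{NLE}$, since \rthm{thm: omega club over nm 1} and \rcor{cor: omega-strmes} assume them.
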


Corollary \ref{cor: hod hypo not provable in zfc} bears out the need---supposing that the HOD hypothesis is true---to posit a hypothesis beyond ZFC in order to prove it, presumably one strong enough to establish a version of the HOD Dichotomy, such as the existence of a supercompact cardinal.
We emphasize that the present work does not refute the HOD conjecture, as there is obviously no supercompact cardinal in our ZFC model.

Supercompact cardinals are at the heart of both the PFA and HOD conjectures.
Supercompactness in the measure sense is central to the present work.
We extend work of Becker and Jackson \cite{becker2001supercompactness} and Jackson \cite{jackson2001weak} which shows that the projective ordinals are supercompact to their supremum and Suslin cardinals are supercompact to ordinals below \(\Theta\), respectively.

\begin{theorem}
    In the Nairian model, \(\omega_2\) is supercompact to ordinals, and the witnessing measures are ordinal definable.
    In fact, all Suslin cardinals are supercompact to ordinals, and so is \(\Theta\).
\end{theorem}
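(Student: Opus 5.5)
The plan is to follow Becker--Jackson and Jackson: build supercompactness measures on $\powerset_\kappa(\lambda)$ from the strong partition properties and the fine structure of $\H$, working inside the minimal Nairian model $N$. The argument is organized by \rthm{thm: summary}, which presents $N$ level by level along the ${<}\xi$-strong cardinals $\k_\a$ of $\H$. The Suslin cardinals below $\Theta^N$ come first, since these include $\omega_1$ and $\omega_2$. Fix a Suslin cardinal $\kappa$ of $N$ and an ordinal $\lambda$. We want a normal, fine, $\kappa$-complete ultrafilter on $\powerset_\kappa(\lambda)$ that is ordinal definable in $N$.

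For $\lambda<\Theta^N$, Jackson's theorem already gives such measures, obtained from the club filters and $\kappa$-Suslin norms. Ordinal definability comes from the canonical choice of the least such measure in the natural wellorder of measures that the prewellordering structure of a Suslin cardinal provides. The task is therefore to push past $\Theta^N$, up through $\xi=\ord\cap N$.

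The main tool is clause (9) of \rthm{thm: summary}, with $\a=-1$. Given $\lambda$, choose $\b$ with $\lambda<\k_{\b+1}$ and put $X=\lambda$. This gives $\tau<\k_0=\Theta^N$ and an elementary $\sigma:N|\tau\to N|\k_{\b+1}$ with $\lambda=\sigma(\bar\lambda)$ for some $\bar\lambda<\tau$.

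We then define the measure $U$ on $\powerset_\kappa(\lambda)$ by pushing forward along $\sigma$. Start from a measure $\bar U$ on $\powerset_\kappa(\bar\lambda)$ obtained from the previous step, since $\bar\lambda<\Theta^N$. For $A\subseteq\powerset_\kappa(\lambda)$, set $A\in U$ iff $\{a\in\powerset_\kappa(\bar\lambda):\sigma[a]\in A\}\in\bar U$. This is the image of $\bar U$ under $a\mapsto\sigma[a]$. Fineness and $\kappa$-completeness transfer immediately.

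Normality is the delicate point, because $\sigma[\bar\lambda]\neq\lambda$ in general. The fix is to choose $\sigma$ uniformly, so that the hulls $\sigma[N|\tau]$ cover $\lambda$ along a $\bar U$-measure-one family. Concretely, we take a directed system of such embeddings (the comparison of hulls of $N|\k_{\b+1}$ over $\k_0$) and form the limit measure. A regressive function on a measure-one set then reflects to some $N|\tau$, where normality of $\bar U$ applies.

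To make $U$ ordinal definable, we choose $\sigma$ canonically: the least $\tau$, and the embedding coming from the $\H$-iteration maps, which are ordinal definable because $\mH$ is a predicate of $N$. The case $\kappa=\Theta^N=\k_0$ is handled the same way, using clause (8) (no cofinal maps from $\nu^\omega$) in place of Suslinity to obtain completeness.

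The expected obstacle is normality together with uniformity, that is, showing the limit measure does not depend on the choices of $\sigma$. First I would try to show that any two such hull embeddings factor through a common one, using the comparison and iterability of $\H|\k_{\b+1}$ that ${\sf HPC}$ provides. Failing that, I would define $U$ directly as $\{A:\H\text{-iteration images of }A\text{ contain }\sigma[\bar\lambda]\text{ on a cone}\}$, which is a Martin-measure style definition.
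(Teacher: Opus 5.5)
There is a genuine gap at the central step. The measure $U$ you define is the image of $\bar U$ under $a\mapsto\sigma[a]$, so it concentrates on $\powerset_\kappa(\sigma[\bar\lambda])$. Since $\sigma[\bar\lambda]$ has order type $\bar\lambda<\tau<\Theta^N\le\lambda$, some $\gamma<\lambda$ lies outside $\rge(\sigma)$, and then $\{b:\gamma\in b\}$ is $U$-null. So fineness, which you say ``transfers immediately'', is exactly what fails. Normality, by contrast, does transfer: if $f(\sigma[a])\in\sigma[a]$ for $\bar U$-almost all $a$, then $a\mapsto\sigma^{-1}(f(\sigma[a]))$ is regressive, hence constant on a $\bar U$-large set.

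Your repair, a ``limit measure'' along a directed system of hulls, does not decide every $A\subseteq\powerset_\kappa(\lambda)$ unless you also fix an ultrafilter $\mu$ on the index set and put $A\in U$ iff $\{\sigma:A\in\sigma_*\bar U_\sigma\}\in\mu$. For this to be fine, $\mu$ must concentrate, for each $\gamma<\lambda$, on hulls with $\gamma\in\rge(\sigma)$. For $\kappa$-completeness and normality, $\mu$ must itself be $\kappa$-complete and normal in the appropriate sense. That is a supercompactness-type measure on the small hulls of $N|\k_{\b+1}$, essentially the object you are trying to build, and the proposal does not produce it. The ``Martin-measure'' fallback is not meaningful as stated: $A$ is an element of $N$, not of $\mH$, so iteration maps do not act on it.

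Nor can the hulls be used through elementarity instead, i.e.\ by applying $\sigma$ to a measure lying in $N|\tau$. If $f\in N|\tau$ maps $\bR$ onto $\gamma$, then $\sigma(\gamma)=\sigma[\gamma]$, so $\sigma$ fixes every ordinal that $N|\tau$ sees as a surjective image of $\bR$. Hence $\bar\lambda=\sigma^{-1}(\lambda)$ lies above the $\Theta$ of $N|\tau$, and ``$\kappa$ is $\bar\lambda$-supercompact in $N|\tau$'' is the original problem one level down, not a consequence of Jackson's theorem. Relatedly, clause (9) of Theorem~\ref{thm: summary} read literally with $\a=-1$ (that is, $\tau<\k_0$ and $\sigma\restriction\k_0=\mathrm{id}$) cannot place any $\lambda\ge\Theta^N$ in $\rge(\sigma)$.

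The remaining cases lack even a starting point.
\begin{itemize}
\item For $\kappa=\Theta^N$, every $\Theta^N$-complete fine ultrafilter on $\powerset_{\Theta^N}(\bar\lambda)=\powerset(\bar\lambda)$ with $\bar\lambda<\Theta^N$ is principal at $\bar\lambda$, so there is nothing to push forward. Clause (8) is a cofinality statement and yields no ultrafilter.
\item $\omega_2$ is not a Suslin cardinal under $\mathsf{AD}$: the $\omega_2$-Suslin sets are already $\omega_1$-Suslin. So Jackson's theorem gives no base measures for the first assertion of the statement, and the Becker--Jackson result on projective ordinals only reaches $\sup_n\delta^1_n$.
\item Ordinal definability is not secured. ``The least $\tau$'' does not pin down $\sigma$, and the maps $\sigma_u$ of Theorem~\ref{thm: basic reflection} depend on a countable iteration set $\pi_{\hq,\infty}[\M^\hq|\nu]$ that is not in general OD. An OD measure would again require averaging over hulls with an OD index measure.
\end{itemize}

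This paper does not itself prove the statement; it is only recorded, cf.\ clauses (12)--(13) of Theorem~\ref{thm: summary}. So there is no in-paper argument to compare against. What your approach is missing is an independent source of fine measures above $\Theta^N$. One possibility would be a direct proof that some canonical club-type filter on $\powerset_\kappa(\lambda)$, restricted to $N$, is an ultrafilter, using determinacy in the ambient model and the $\mH$-based representation of sets in $N$. Reflection along hulls cannot supply this by itself.
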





Finally, Woodin has conjectured that Nairian models defined using \(\omega\)-sequences are in fact closed under \({<}\Theta\)-sequences, the intuition being that the set of reals of Wadge rank \(\theta_{\alpha}\) in the parent \(\mathrm{AD}^+\) model should be too complex to define from below even granting generous parameters.
We confirm this for the minimal Nairian model we force over.

\begin{theorem}\label{thm: closure under <Theta sequences}
    Let \(N\) be the Nairian model defined at the least inaccessible limit \(\varsigma_\infty\) of cardinals which are \({<}\varsigma_\infty\)-strong in \(\mH\).
    Let \(\zeta<\Theta^N\).
    Then for all \(\beta<\varsigma_\infty\), \[(N|\beta)^{\zeta} \subseteq N.\]
\end{theorem}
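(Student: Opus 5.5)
The plan is to prove the closure by induction on $\zeta<\Theta^N$ and on $\beta$, working in the parent model $V=L(\powerset(\bR))$ of $\ADR+\treg$. We may take $\beta$ of the form $\k_{\a+1}$, since $N|\beta\in N|\k_{\a+1}$ for some $\a<\varsigma_\infty$ and $\zeta$-sequences from a transitive set in $N$ are also $\zeta$-sequences from any larger level. The base of the induction is built into the definition of a Nairian model: $N$ contains every $\omega$-sequence of ordinals below $\varsigma_\infty$, so $(N|\beta)^\omega\subseteq N$. This holds because each $x\in N|\beta$ is definable over $N|\beta$ from an ordinal and an $\omega$-sequence of ordinals, so an $\omega$-sequence of elements is coded by an $\omega$-sequence of ordinals together with a single formula choice, and using $\ADR$-style uniformization in $V$ the coding can be chosen uniformly.

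For the general $\zeta<\Theta^N$, fix $s=\langle x_\eta:\eta<\zeta\rangle\in V$ with each $x_\eta\in N|\k_{\a+1}$. I apply the condensation clause of \rthm{thm: summary}(9) with the bottom index chosen so that $\zeta<\k_0=\Theta^N$. This gives that each $x_\eta$ lies in the range of some $\sigma_\eta:N|\tau_\eta\to N|\k_{\a+1}$ with $\tau_\eta<\Theta^N$ and $\sigma_\eta\rest\Theta^N=\mathrm{id}$. The aim is to replace the whole sequence by a single hull. Since $\Theta^N$ is regular in $N$ and, by \rthm{thm: summary}(5), has cofinality $\k_0$ in $V$ as well, and since $\zeta<\Theta^N$, I would take the elementary hull of $\Theta^N\cup\{x_\eta:\eta<\zeta\}$ inside $N|\k_{\a+1}$. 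The question is whether its transitive collapse $\bar M$ is an element of $N|\Theta^N$. Each $x_\eta$ is captured by a small $N|\tau_\eta$, and $\sup_\eta\tau_\eta<\Theta^N$ by the cofinality computation, so the collapse should be some $N|\tau$ with $\tau<\Theta^N$. The preimages $\bar x_\eta$ then form a $\zeta$-sequence inside $N|\Theta^N$.

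Inside $N|\Theta^N$ I would use the $\AD$-structure below $\Theta^N$. There, $\zeta$-sequences of sets of reals of bounded Wadge rank are coded by single sets of reals via Moschovakis coding and Solovayness of $\Theta^N$ (Definition~\ref{def: minnm}(3)). This gives that $\langle\bar x_\eta\rangle\in N$. The sequence is then transported back by the embedding, because the collapse map $\pi:N|\tau\to N|\k_{\a+1}$ is determined by its action on $\Theta^N$ and on the generators of the hull. Arguing as in \rthm{thm: summary}(10), $\pi$ is definable in $N$ from $\tau$ and the choice of hull, using that $N|\k_{\a+1}$ is a model constructed from $\mH$ and $\omega$-sequences. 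Hence $s=\pi\circ\langle\bar x_\eta\rangle\in N$.

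The main obstacle is showing that the hull and its collapse, as an object, lie in $N$ rather than merely in $V$; that is, choosing the $\sigma_\eta$'s uniformly. I would address this by using the ordinal-definable choice available in $\mH$. From $\mH$-indices for the extenders witnessing ${<}\varsigma_\infty$-strength, one can define canonical (least) hulls in $N$, so that the selection $\eta\mapsto\sigma_\eta$ reduces to a $\zeta$-sequence of ordinals below $\Theta^N$. The remaining step is then to show that $N$ contains all $\zeta$-sequences of ordinals below $\Theta^N$. For this I would use the fact that such sequences are coded by sets of reals in $N|\Theta^N$, which holds since $\Theta^N<\Theta$ and $N\cap\powerset(\bR)$ is a Wadge-initial segment.
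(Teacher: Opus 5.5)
\textbf{There is a genuine gap, at the step you yourself flag as the main obstacle; the remedy you propose does not work.}

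\emph{The merging step fails.} A hull containing $\Theta^N$ collapses to a structure of height at least $\Theta^N$, so it cannot be an element of $N|\Theta^N$. Without choice there are no Skolem functions, so the definable closure of $\Theta^N\cup\{x_\eta:\eta<\zeta\}$ need not be elementary in $N|\kappa_{\alpha+1}$; the paper names this absence of Skolem functions as the primary obstacle to building hulls. Distinct reflection maps also need not cohere on $\Theta^N$. If you read clause (9) of Theorem~\ref{thm: summary} with $\tau_\eta<\Theta^N$ and $\sigma_\eta\restriction\Theta^N=\mathrm{id}$, then $\sigma_\eta$ fixes every ordinal of its domain, and the actual maps do not behave that way. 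The maps $\sigma_u$ of Theorem~\ref{thm: basic reflection} have domain $N|\lambda_u$ with $\lambda_u<\Theta^N$. By Lemma~\ref{lem: pullback consistency for sigma} they send $\pi_{\hq|\nu,\infty}(\eta)<\Theta^N$ to $\pi_{\hq,\infty}(\eta)\geq\Theta^N$ for strong $\eta<\lambda$. Your transport-back step is circular as well: the collapse map is defined from the hull, and the hull is generated by the $x_\eta$'s.

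\emph{Canonical hulls cannot reduce the selection to ordinals below $\Theta^N$.} Suppose some family in $N$ of elementary maps $\sigma_i\colon N|\tau_i\to N|\lambda$, indexed by ordinals $i<\Theta^N$ with $\tau_i<\Theta^N$, had ranges covering $N|\lambda$. Then $(i,\bar\gamma)\mapsto\sigma_i(\bar\gamma)$ would map a subset of $\Theta^N\times\Theta^N$ onto $\lambda$ in $N$. This is impossible once $\lambda\geq\kappa_1$, the successor of $\Theta^N$ in $N$. Indexing by larger ordinals of $\mH$ instead only produces a $\zeta$-sequence of ordinals below $\varsigma_\infty$, which is an instance of the theorem itself. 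In the paper each reflection map is determined by countable data above $\Theta^N$, namely $\rel(Z)$ for an iteration set $Z=\pi_{\hq,\infty}[\M^\hq|\nu]$ (Theorem~\ref{thm: basic reflection}(2)). Choosing such a $Z$ separately for each $\eta<\zeta$ is again a $\zeta$-sequence from some $N|\nu$ with $\nu>\Theta^N$, which is exactly what you are trying to prove.

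\textbf{The missing idea is to use one parameter for the whole sequence, together with a uniqueness theorem.}
\begin{enumerate}
\item Take the target to be $N|\pi_{\hp,\infty}(\lambda)$ for a strong cardinal $\lambda$ of $\M^\hp|\varsigma_\hp$, and let $\kappa$ be the next strong cardinal.
\item Let $\lambda_\eta<\Theta^N$ be the least domain height of a reflection map $\sigma_u$ with $x_\eta\in\rge(\sigma_u)$. Since $\Theta^N$ is regular in $M$ and $\zeta<\Theta^N$, the sequence $(\lambda_\eta:\eta<\zeta)$ is bounded below $\Theta^N$, so it lies in $N$ by the Coding Lemma.
\item In a $\pmax$-extension, using $\DC_{\omega_2}$ there, choose $u_\eta=(\hr_\eta,\lambda_{\hr_\eta},\nu_{\hr_\eta})$ realizing these heights, with $\gen(\T_{\hp,\hr_\eta})\subseteq\lambda_{\hr_\eta}$.
\item Then choose a single complete iterate $\hs$ of $\hp$ with $\T_{\hp,\hs}$ above $\kappa$, and an inaccessible proper cutpoint $\xi$ of $\M^\hs$, such that $\gen(\T_{\hp,\hs})\subseteq\xi$ and $\sup\pi_{\hr_\eta|\kappa_{\hr_\eta},\infty}[\kappa_{\hr_\eta}]<\pi_{\hs|\xi,\infty}(\ts(\hs|\xi))$ for every $\eta<\zeta$.
\item Now every $\sigma_{u_\eta}$ is generated below $\hs|\nu_\hs$. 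By Corollary~\ref{cor: uniqueness}, which rests on Corollary~\ref{cor: tech cor iii} and Theorem~\ref{thm: uniqueness of realizability witnesses}, it is the unique such map with domain $N|\lambda_\eta$.
\item Hence $(\sigma_{u_\eta}:\eta<\zeta)$ is definable in $N$ from the single countable set $Z=\pi_{\hs,\infty}[\M^\hs|\nu_\hs]$, from $\rel(Z)\in N$, and from $(\lambda_\eta:\eta<\zeta)$. This holds even though the choices were made outside $N$.
\end{enumerate}
Only at this point does your closing step apply: the preimages $\sigma_{u_\eta}^{-1}(x_\eta)$ form a $\zeta$-sequence from $N|\sup_\eta\lambda_\eta$, which is in $N$ by the Coding Lemma, and $s$ is recovered.

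You located the obstacle and the final coding step correctly. What your sketch lacks is the single dominating iterate combined with uniqueness.
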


\subsection{Problems}
Of course, Theorem \ref{square and measurability in hod} motivates the following question.

\begin{question}
    Which natural combinatorial principles imply there is an inner model with a superstrong cardinal?
\end{question}

Here ``natural'' principles include failures of approachability, the tree property at all points in an interval of cardinals, saturated or dense ideals, etc.
The question is as much about whether useful principles are actually strong as it is about developing new arguments for proving consistency lower bounds at superstrong in light of the above discussion.
For any principle, like \(\neg\square_{\kappa}\), which turns out to be weaker than superstrong, it is of interest whether it is forceable over a ZFC ground model from optimal hypotheses.

In the following question, \(\uB\) denotes the family of universally Baire sets of reals, and \(\Theta^{\uB}\) is the supremum of the ordinals which the reals can be surjected onto in \(L(\uB,\bR)\).

\begin{question}
    Is there a natural theory which implies that \(\Theta^{\uB} <\omega_3\) and \(\delta^1_2 = \omega_2\)?
    Is there one which extends \(\mathrm{ZFC} + \mathrm{MM}^{++}\)?
\end{question}

Viale \cite{Vi2016} defined the axiom \(\mathrm{MM}^{+++}\) and showed it implies that the theory of the \(\omega_1\)-Chang model is absolute for stationary set preserving forcings which preserve it.
Woodin defined \(\mathrm{CM}^{+}\), the Chang model augmented with the club filter on \(\powerset_{\omega_1}(\lambda)\) for all ordinals \(\lambda\), and proved it exists assuming class many Woodin limits of Woodin cardinals \cite{Wo21}.
Assuming the determinacy of a certain class of long games, he also showed that there exists a sharp for \(\mathrm{CM}^+\) \cite[Theorems 7.37, 7.39]{Wo21}.

\begin{question}
    Does absoluteness of the theory of \(\mathrm{CM}^{+}\) imply that the theory of \(L(\ord^{\omega_1})\) is absolute for stationary set preserving forcings?
\end{question}

The cardinal structure above \(\Theta\) in Nairian models, and the control one can maintain over it in their ZFC forcing extensions, raise the prospect that there may be ``determinacy forcing axioms'' like Axiom  \((*)\) but for larger powersets than \(\powerset(\omega_1)\) or \(\powerset(\bR)\).

\begin{question}
    Does Axiom \((*)\) generalize to larger cardinals?
\end{question}

\subsubsection*{Acknowledgments}
The authors would like to thank W.~Hugh Woodin for conversations about the content of this paper and for his encouragement.

\subsection{Reader's guide}

In this paper we only directly investigate minimal Nairian Models (though our results hold for a broader class of Nairian Models).
We summarize the main technical contributions as follows.
\vspace{1em}

\noindent
(1) Part 1 of the paper isolates a natural theory $\NTbase$ (\rdef{nairian theory}) and shows how to force the Axiom of Choice over its models via a very simple poset obtained by iteratively well-ordering the powerset of cardinals (\rthm{thm:forcing choice}).

\begin{theorem}\label{thm: part 1}
    Suppose \(M\models\mathrm{ZF} + \mathsf{NT_{base}}\).
    Then there is a homogeneous countably closed class iteration \(\mathbb{P}\) such that if \(G\subseteq\mathbb{P}\) is \(M\)-generic, then \[M[G]\models\mathrm{ZFC}.\]
\end{theorem}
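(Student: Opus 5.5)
The plan is to let \(\mathbb{P}\) be the class-length, full-support iteration \(\langle \mathbb{P}_\alpha, \dot{\mathbb{Q}}_\alpha : \alpha\in\ord\rangle\) that well-orders, one at a time, the sets \(\kappa^\omega\) for the cardinals \(\kappa\) singled out by \(\NTbase\). I will write these as \(\langle\kappa_\alpha:\alpha\in\ord\rangle\), with \(\kappa_{\alpha+1}=\Theta_{\kappa_\alpha^\omega}\), as in the pattern of \rthm{thm: summary}. I am assuming \rdef{nairian theory} supplies the following:
\begin{enumerate}
\item \(\DC\), or at least enough dependent choice;
\item a closed unbounded class of such \(\kappa_\alpha\);
\item every set is a surjective image of some \(\kappa_\alpha^\omega\times\gamma\);
\item \(\powerset(\kappa_\alpha^\omega)\) is controlled below \(\kappa_{\alpha+1}\), so that \(\kappa_{\alpha+1}\) is regular and is not the image of a cofinal map from any \(\nu^\omega\) with \(\nu<\kappa_{\alpha+1}\).
\end{enumerate}
At stage \(\alpha\), \(\dot{\mathbb{Q}}_\alpha\) is the poset of countable partial functions \(p:\kappa_{\alpha+1}\rightharpoonup \kappa_\alpha^\omega\), ordered by reverse inclusion; this is the analogue of the usual \(\mathrm{Coll}\)-style poset in the \(\GCH\)-forcing. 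Each \(\dot{\mathbb{Q}}_\alpha\) is definable without parameters and is obviously countably closed. It is also weakly homogeneous, since permutations of \(\kappa_{\alpha+1}\) and of \(\kappa_\alpha^\omega\) act transitively enough. These properties pass to the full-support iteration, because countable supports of countable sequences of conditions can be handled using \(\DC\). This gives countable closure and homogeneity of \(\mathbb{P}\).

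The key steps, in order, are as follows.
\begin{enumerate}
\item Show by induction on \(\alpha\) that \(\mathbb{P}_\alpha\) adds no new \(\omega\)-sequences of ordinals or of elements of \(M\). This uses countable closure plus \(\DC\). Consequently the \(\kappa_\beta^\omega\) computed in \(M^{\mathbb{P}_\alpha}\) is the ground-model one for every \(\beta\).
\item Prove a density argument at stage \(\alpha\): for each \(x\in\kappa_\alpha^\omega\) and each \(\eta<\kappa_{\alpha+1}\), the sets of conditions putting \(x\) into the range, respectively \(\eta\) into the domain, are dense. Hence the generic \(g_\alpha\) is a surjection of \(\kappa_{\alpha+1}\) onto \(\kappa_\alpha^\omega\), so \(\kappa_\alpha^\omega\) becomes well-orderable.
\item Show that the tail iteration \(\mathbb{P}/\mathbb{P}_{\alpha+1}\) does not destroy this and adds no new subsets of \(\kappa_\alpha^\omega\). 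It is natural to argue this by showing that the tail is \({\leq}\kappa_\alpha^\omega\)-distributive in the appropriate choiceless sense: any \(\kappa_\alpha^\omega\)-indexed family of dense open sets has dense intersection. The argument builds descending sequences along a well-order of \(\kappa_\alpha^\omega\), which exists in \(M[G_{\alpha+1}]\). It also uses that the tail posets are closed under sequences of length less than \(\kappa_{\alpha+1}\), using the regularity of \(\kappa_{\alpha+1}\) from the anti-cofinality clause of \(\NTbase\).
\item Use the distributivity of tails to get that \(M[G]\) satisfies \(\ZF\) as a class-forcing extension: the forcing relation is definable, and Replacement holds because every set is added at a set stage. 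This is the usual tameness argument for Easton-type iterations.
\item Conclude \(\AC\): every \(X\in M[G]\) has a name in \(M\) coded by a surjection from some \(\kappa_\alpha^\omega\times\gamma\). Composing with \(g_\alpha\) and the well-orders of \(\gamma\) and of \(\mathbb{P}_{\alpha+1}\)-names evaluated on \(\kappa_\alpha^\omega\) yields a well-order of \(X\).
\end{enumerate}

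The main obstacle is step 3, the distributivity of the tail and the correct closure of the later stages, without \(\AC\) in the ground. In \(\ZF\) one cannot simply say that \(\dot{\mathbb{Q}}_\beta\) is \(\kappa_\alpha^+\)-closed. What I would try first is to do the iteration inductively: establish simultaneously that \(M[G_{\beta}]\models \DC_{\kappa_\beta}\) (dependent choices of length \(\kappa_\beta\), which is now well-ordered) and that \(M[G_\beta]\) has the same \(\kappa_\beta^\omega\)-sequences as \(M\). Then \(\dot{\mathbb{Q}}_\beta\) and the further tail are genuinely \(\kappa_\alpha^{\omega}\)-closed in \(M[G_\beta]\), and the standard product/quotient analysis applies. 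A secondary concern is limit stages of full support, where one needs the regularity and cofinality facts from \(\NTbase\) (as in clauses (4) and (8) of \rthm{thm: summary}) to rule out collapse of \(\kappa_{\alpha+1}\) to something of countable cofinality.
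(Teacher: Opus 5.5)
Your proposal has two genuine gaps: the forcing as defined collapses everything to \(\omega_1\), and the argument never says where dependent choice beyond \(\omega\) comes from.

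\textbf{The poset.} Take \(\dot{\mathbb{Q}}_\alpha\) to be the \emph{countable} partial functions \(\kappa_{\alpha+1}\rightharpoonup\kappa_\alpha^\omega\). For each \(x\in\kappa_\alpha^\omega\), the set of conditions \(p\) with \(x\in\rge(p\rest\omega_1)\) is dense, since a condition has countable domain and so leaves some \(\xi<\omega_1\) free. Hence \(g_\alpha\rest\omega_1\) maps onto \(\kappa_\alpha^\omega\supseteq\kappa_\alpha\). So every \(\kappa_\alpha\) is collapsed to \(\omega_1\), a proper class of ordinals ends up of size \(\omega_1\), and \(M[G]\) cannot satisfy \(\ZFC\).

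The stage-\(\gamma\) collapse must use conditions of size \(<\kappa_\gamma\), as in the paper's \(\Coll(\kappa_{\alpha+\beta},\nN_{\alpha+\beta})\). This is also what your step 3 tacitly assumes, which contradicts ``countable''.

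Limit stages need more than you allow. Since \(\nN_{\alpha+\beta+1}=H_{\powerset(\nN_{\alpha+\beta})}\) and \((\kappa_{\alpha+\beta}^+)^{V[G*H_{<\beta}]}<\kappa_{\alpha+\beta+1}\), the paper first collapses \(\powerset(\nN_{\alpha+\beta})^V\) to \(\eta=\kappa_{\alpha+\beta}^+\) of the current extension. Only then does it collapse \(\nN_{\alpha+\beta+1}\) to \(\kappa_{\alpha+\beta+1}\). It also uses the hypothesis of Theorem~\ref{thm:forcing choice} that no regular \(\nN\)-cardinal is a limit of \(\nN\)-cardinals, so that \(\DC_{\kappa_{\alpha+\beta}}\) at a limit follows from \(\DC\) below.

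\textbf{Dependent choice.} By Definition~\ref{nairian theory}, \(\mathsf{NT_{base}}\) is \(\ZF+\AH\) plus the reflection clause \(\rref(X,\nN_\alpha,\nN_{\alpha+1})\). It does not give you \(\DC_\kappa\) for the relevant \(\kappa\); for a proper class of \(\kappa\) that would already be \(\AC\), by Remark~\ref{rem:dcac}. You also never use \(\rref\).

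Yet your steps 1 and 3 need \(\DC_\kappa\) in the intermediate extensions for \(\kappa\) far above \(\omega\). To show that a \(\kappa^+\)-closed tail adds no new \(\kappa\)-sequences, you must build descending sequences of length \(\kappa\) through tail posets. Their conditions take values in later levels \(\nN_\gamma\), which are still not well-orderable, so a well-order of \(\kappa_\alpha^\omega\) does not help. Your plan to ``establish simultaneously that \(M[G_\beta]\models\DC_{\kappa_\beta}\)'' has no mechanism behind it.

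In the paper the mechanism is as follows.
\begin{enumerate}
\item After the stage-\(\beta\) collapse, \(\nN_{\alpha+\beta+1}[G*H_{\leq\beta}]\) satisfies \(\AC\) and, using Lemma~\ref{preserving complet of1}, is \(\eta\)-full for \(\eta<\kappa_{\alpha+\beta}\).
\item Lemma~\ref{lem:lifting dc2}, using the strong regularity supplied by \(\AH\), then yields \(\DC_{\kappa_{\alpha+\beta}}(Y)\) for every \(Y\in\nN_{\alpha+\beta+2}[G*H_{\leq\beta}]\).
\item The reflection clause survives the small forcing (Theorem~\ref{preserrving reflection}, Remark~\ref{switchX}). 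For any \(\kappa\)-closed relation \(T\), it produces a \(\kappa\)-closed transitive hull lying in \(\nN_{\alpha+\beta+2}[G*H_{\leq\beta}]\) with \(T\) in the range of its embedding.
\item Lemma~\ref{lem:lifting dc1} pushes a chain through the preimage of \(T\) back up, giving \(\DC_{\kappa_{\alpha+\beta}}\) in \(V[G*H_{\leq\beta}]\).
\end{enumerate}
Only with this in hand do Lemma~\ref{lem: dc through closure} and Claim~\ref{claim: closure of q} show that the tail preserves these cardinals and adds no new subsets of them. Even the base case (Claim~\ref{ihalpha0}) is obtained this way rather than assumed.

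Your step 5, well-ordering each set via a name in some \(\nN_{\gamma+1}\) and the stage-\(\gamma\) generic, is fine once \(M[G]\models\ZF\) is known. But that is exactly what rests on the missing \(\DC\) induction.
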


\noindent
(2) The $\sf{Axiom\ of\ Harmony}$ (\rdef{def: ah}), one component of $\NTbase$, has already been shown to hold in Nairian models in \cite{blue2025nairian}. The main new technical property of minimal Nairian Models established here is the reflection principle $\rref$ (\rdef{def: kappa reflection}), which is the second clause of $\NTbase$ (\rdef{nairian theory}). $\rref$ allows us to prove $\DC_\kappa$ using the results of \rsec{sec: lifting dc}.
\vspace{1em}

\noindent
(3) Part 2 of the paper uses heavy descriptive inner model theoretic machinery, and it assumes \(\mathsf{HPC}\). It starts by proving a simple reflection property (\rthm{thm: basic reflection}) that can be used to show that if $N$ is a minimal Nairian Model, then in the parent determinacy model, $N$ is closed under ${<}\Theta^N$-sequences (\rthm{thm: closure under theta sequences}). 
\vspace{1em}

\noindent
(4) The reflection principle established in \rthm{thm: basic reflection} also implies that if $G\subseteq \pmax$ is generic over $V$ (the parent determinacy model of the Nairian Model in question), then in $V[G]$, $N[G]$ is closed under ${<}\omega_1$-sequences (\rcor{cor: omega1 closure}). This fact is used to show that the square principles $\square(\gamma)$ fail in the $\ZFC$-extension of $N$ by applying \cite[Theorem 3.3]{blue2025nairian}, which in turn comes from \cite{CLSSSZ}.
\vspace{1em}

\noindent
(5) The final sections of the paper are devoted to proving $\rref$ in minimal Nairian Models. 

\begin{theorem}\label{thm: part 2}
    Let \(N\) be a minimal Nairian model. Then $N\models \mathsf{NT_{base}}$.
\end{theorem}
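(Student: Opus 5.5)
By \rdef{nairian theory}, $\NTbase$ has two clauses: the $\AHA$ (\rdef{def: ah}) and the reflection principle $\rref$ (\rdef{def: kappa reflection}). The plan is to verify each clause separately in a minimal Nairian model $N$, working throughout in the parent model $V\models \ZF+\ADR+\treg+{\sf HPC}+V=L(\powerset(\bR))$.

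The first clause needs little new work. The $\AHA$ for Nairian models was established in \cite{blue2025nairian}. I would check that the hypotheses used there are met by $N$: $\Theta^N$ is a Solovay cardinal, and $\xi$ is an inaccessible limit of ${<}\d$-strong cardinals of $\H$. Then I would cite that result, using the description of the cardinals $\k_{\a}$ from \rthm{thm: summary}, clauses (1)--(5), to match the indexing of cardinals in the statement of $\AHA$.

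The substance lies in $\rref$. It asserts that for the relevant cardinals $\k$ of $N$ (the $\k_{\a+1}$ and their successors), any set $X$ in a higher level $N|\k_{\b+1}$ reflects to an elementary substructure whose transitive collapse is small and sufficiently closed. The plan is to derive this from the condensation/reflection clauses (9) and (10) of \rthm{thm: summary}. Those clauses come from the basic reflection theorem (\rthm{thm: basic reflection}), which is proved by comparing $\H$ with hod mice via ${\sf HPC}$. Concretely, given $X\in N|\k_{\b+1}$, I would take a hull $\sigma:W\to N|\k_{\b+1}$ with $X\in\rge(\sigma)$, $\sigma\rest\k_{\a+1}=id$, $W\in N|\k_{\a+2}$ and $W^{\k_\a^\omega}\subseteq W$. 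Then I would verify that $W$ has exactly the closure and definability features demanded by $\rref$.

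The definability matters most. The hull must be chosen inside $N$, uniformly enough that $\rref$ can later feed the $\DC_\k$ argument of \rsec{sec: lifting dc}. The natural approach is to build $\sigma$ from an iterate of a hod mouse representing $\H|\k_{\b+1}$. One takes a hull of that mouse below a ${<}\xi$-strong cardinal and lifts it to $N|\k_{\b+1}$ using the fact that $N$ is generated by $\H$ together with $\bigcup_{\gamma<\xi}\gamma^\omega$. The closure $W^{\k_\a^\omega}\subseteq W$ would then follow because the $\omega$-sequences of ordinals below $\k_\a$ are absorbed, by clause (6), into $N|\k_{\a+1}$. That set is fixed pointwise by $\sigma$.

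I expect the main obstacle to be the limit case, where $\k_\a$ is singular in $N$. Here the cofinality computation of clause (7), $(\k_\a^+)^N=(\k_\a^+)^W$ for the appropriate $W=L(\H|\k_\a,\k_\a^{\cf(\k_\a)})$, is needed to see that the reflected structure is still correctly closed. My first attempt would be to handle it by a directed-system argument over the strong cardinals below $\k_\a$, using minimality (clause (5) of \rdef{def: minnm}) to rule out an inaccessible limit of strongs below $\xi$ that would break the reflection.
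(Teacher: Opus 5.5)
Your treatment of the $\AHA$ is acceptable. The paper notes it was obtained in \cite{blue2025nairian}, and its final section re-derives it from \rthm{thm: the theta seq}. The reflection clause, however, has a genuine gap.

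\textbf{Circularity.} Clause (10) of \rthm{thm: summary} does not come from \rthm{thm: basic reflection}, which only yields single embeddings $\sigma_u$. In substance, clause (10) is \rthm{thm: closed hulls above theta}, the main new content needed for $\rref$ above $\Theta^N$. Citing it therefore assumes what is to be proved.

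\textbf{The closure step fails.} Your own sketch breaks at the decisive point. The closure $W^{\k_\a^\omega}\subseteq W$ does not follow from $\sigma$ fixing $N|\k_{\a+1}$ pointwise. That only gives $N|\k_{\a+1}\subseteq W$, so $\k_\a^\omega$ and its subsets lie in $W$. It says nothing about an arbitrary $f\colon \k_\a^\omega\to W$ in $N$ whose values lie above $\k_{\a+1}$ in $W$. A single hull has no reason to be closed, e.g.\ one $\sigma_{x,\infty}\colon W_{x,\infty}\to N|\l$ from a directed system $\mathcal{F}_x$. Since $\AC$ fails in $N$, you also cannot close it off by iterating Skolem hulls. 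The singular-limit case via clause (7) and minimality, which you flag as the main obstacle, is not where the difficulty lies.

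\textbf{The missing idea: a chain of nested canonical hulls.}
\begin{enumerate}
\item Fix strong cardinals $\zeta<\eta<\l$ of $\mH|\varsigma$. Choose a proper cutpoint $\nu$ of $\mH$ with $\mH\models\cf(\nu)=\eta$ that is the limit of Woodin cardinals $\nu_\a$ ($\a<\eta$), separated by further Woodin cardinals.
\item Set $W_\a=W_{x,\infty}$ and $\sigma_\a=\sigma_{x,\infty}$ for any $(\a,Z)$-ds-generator $x$. This is independent of $x$, so the whole sequence lies in $N$.
\item The key input is \rlem{lem: agreement of hulls}, modeled on \rlem{lem: defining smaller hulls}. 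For $\a<\b$ it gives $W_\a\in W_\b$, $\sigma_\b\rest W_\a=\mathrm{id}$ and $\sigma_\a\in\rge(\sigma_\b)$. Here the three-Woodin configuration $\nu^0<\gg<\nu^1$ and the $\infty$-Borel codes $S_\xi,T_\xi$ of \rcor{cor: def its and rel} are used. They place $\its(\xi)$ and $\rel(Z)$, hence the smaller embedding, into the range of the larger one.
\item Let $W$ be the direct limit of the $W_\a$. Closure comes from the strong regularity of $\eta$ in $N$. Any $f\colon\zeta^\omega\to W$ has range inside $\rge(\sigma_{\a,\infty})$ for some $\a<\eta$. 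The pulled-back sequence is coded by a subset of $\b^\omega\times\zeta^\omega$, for some $\b<\eta$, lying in $N|\eta=W_{\a+1}|\eta$. Then $\sigma_{\a+1,\infty}(\sigma_{\a,\a+1})=\sigma_{\a,\infty}$ puts $f$ into $W$ (\rcl{cl: finishing proof}).
\end{enumerate}

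Without such a chain, in which each stage's embedding lies in the range of the next, your argument has no mechanism producing the $X$-closedness that $\rref$ demands.
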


\section{Threading coherent sequences on limit cardinals}\label{sec: square}

Let \(\gamma\) be an uncountable cardinal.
A \(\square_{\gamma}\)-\textbf{sequence} is a sequence \(\langle C_{\alpha} : \alpha<\gamma^+ \rangle\) such that for each $\alpha < \gamma^{+}$, \(C_{\alpha}\) is a club in \(\alpha\), \(C_{\beta} = C_{\alpha}\cap\beta\) whenever \(\beta\) is a limit point of \(C_{\alpha}\), and \(C_{\alpha}\) has order type at most \(\gamma\).
The principle \(\square_{\gamma}\) is the assertion that there is a \(\square_{\gamma}\)-sequence.

It is easier to work with the nonthreadability principles.

\begin{definition}[\(\square(\gamma,\delta)\)]\normalfont
    Let $\gamma$ be an ordinal and $\delta$ a cardinal.
    Then there is a sequence $\langle \mathcal{C}_{\alpha} \mid \alpha < \gamma \rangle$ such that
    \begin{enumerate}
        \item For each $\alpha < \gamma$,
        \begin{enumerate}
            \item $0 < |\mathcal{C}_{\alpha}| \leq \delta$,
            \item each element of $\mathcal{C}_{\alpha}$ is club in $\alpha$, and
            \item for each member $C$ of $\mathcal{C}_{\alpha}$, and each limit point $\beta$ of $C$, \[C \cap \beta \in\mathcal{C}_{\beta}.\]
        \end{enumerate}
        \item There is no club $E \subseteq \gamma$ such that $E \cap \alpha \in \mathcal{C}_{\alpha}$ for every limit point $\alpha$ of $E$.
\end{enumerate}
\end{definition}

The principle \(\square(\gamma,1)\) is the principle \(\square(\gamma)\).
Since for any uncountable cardinal \(\gamma\), \(\square_{\gamma}\) implies \(\square(\gamma^+)\), it suffices to show that \(\square(\gamma^+)\) fails for all \(\gamma\) in the extension by the forcing in Theorem \ref{thm:forcing choice}.

Apply \cite[Theorem 3.3]{blue2025nairian} to \(V\), the ambient determinacy model, and a minimal Nairian model \(N\).
Let \(\kappa = \Theta^N\).
Let \(G\subseteq\mathbb{P}_{\mathrm{max}}\) be \(V\)-generic.
Let \(\mathbb{Q}\) be the poset from Theorem \ref{thm:forcing choice} as defined in \(N[G]\).
In \(N[G]\), \(\mathbb{Q}\) is \({<}\omega_2\)-directed closed.
\rcor{cor: omega1 closure} implies that every \(\omega_1\)-sequence from \(\mathbb{Q}\) in \(V[G]\) exists in \(N[G]\), hence \(\mathbb{Q}\) is \({<}\omega_2\)-directed closed in \(V[G]\).
Letting \(H\subseteq\mathbb{Q}\) be \(V[G]\)-generic, \cite[Theorem 3.3]{blue2025nairian} implies that \[N[G*H]\models\neg\square(\gamma,\omega)\] for every \(\gamma\in [\kappa,\Theta^V)\) with \(\cof^V(\gamma)>\omega_1\).
Each of the ordinals \(\kappa_{\alpha} =_{def} (\kappa^{+\alpha})^N\) is such a \(\gamma\).
The \(N\)-cardinals \(\kappa,\kappa_1,\dots,\kappa_{\alpha},\dots\) are the cardinals \(\aleph_3,\aleph_4,\dots,\aleph_{3+\alpha},\dots\) in \(N[G*H]\).
Finally, that \[N[G*H]\models\neg\square(\aleph_2,\omega)\] follows from the fact that \(\mathrm{MM}^{++}(c)\) holds in \(N[G*H]\), via the proof of Theorem \ref{thm: Todorcevic}.

\section{\(\omega\)-strongly measurable cardinals in HOD}

For any cardinal \(\kappa\), let $F_\kappa$ be the $\omega$-club filter on \(\kappa\). We write $F_{\kappa}(W)$ for $F_{\kappa} \cap W$ for any set or class $W$. The following is an easy lemma.

\begin{lemma}[ZF]\label{lem: easy lem 1} Let $W$ be a transitive class containing \(\H\) and let $\kappa$ be a regular uncountable cardinal such that 
\begin{enumerate}
    \item $F_\kappa(W) \in W$, and

    \item  in $W$, $F_\k(W)$ is a $\k$-complete normal ultrafilter on $\k$.
\end{enumerate}
Then $\kappa$ is $\omega$-strongly measurable in $\H$. Moreover, 
if $S\in \H\cap \powerset(\k)$ is stationary, then $\kappa \setminus S$ is not stationary, and \begin{center} $\H\models ``F_\k(\H)$ is normal $\k$-complete ultrafilter.''\end{center}
\end{lemma}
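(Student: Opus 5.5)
We will first show that for every stationary $S\in \H\cap\powerset(\k)$ we have $S\cap S^\k_\omega\in F_\k(W)$; the rest of the lemma then follows quickly. Here "stationary" is read, as the $\omega$-club filter suggests, as meeting every $\omega$-club, i.e.\ $S\cap S^\k_\omega$ is stationary. Since $\H\subseteq W$, such an $S$ belongs to $W$. By hypothesis (2), $F_\k(W)$ is an ultrafilter on $\k$ in $W$. So either $S\in F_\k(W)$, or $\k\setminus S\in F_\k(W)$. In the second case $\k\setminus S$ contains an $\omega$-club, which is disjoint from $S$, so $S$ is not stationary; this contradicts the assumption. Hence $S\in F_\k(W)$. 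Note that $F_\k$ here is computed in $V$; we only ever use that elements of $F_\k(W)$ contain an $\omega$-club in $V$, and that $F_\k(W)$ is an ultrafilter on $\powerset(\k)\cap W$.

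\textbf{Complement not stationary.} Let $S\in\H$ be stationary. We just showed that $S$ contains an $\omega$-club $C$. Then $\k\setminus S$ is disjoint from $C$, so $\k\setminus S$ is not stationary. This gives the "moreover" clause.

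\textbf{$\omega$-strong measurability.} Take $\gamma=\omega$. We need $(2^\omega)^{\H}<\k$, which holds because $\k$ is a regular uncountable cardinal and $\k$ is inaccessible in $\H$. Alternatively, note that in $\H$, $(2^\omega)^{\H}$ is a wellorderable set of size at most $|\powerset(\omega)\cap\H|$, which is an ordinal below $\k$ when $\k\geq\omega_2$; if needed, use $\gamma=2$ or $\gamma=\omega$, whichever is small enough. (For $\gamma=2$, $(2^2)^{\H}=4<\k$ trivially, so this is safe.)

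Now suppose $\langle S_i:i<\gamma\rangle$ is an ordinal definable partition of $S^\k_\omega$ into $\gamma\ge2$ stationary sets. Ordinal definability puts the sequence in $\H$, so each $S_i\in\H$ is stationary. By the first step, each $S_i$ lies in $F_\k(W)$. But $S_0\cap S_1=\emptyset$ while the filter is proper, a contradiction. In fact two disjoint stationary sets from $\H$ already suffice for the contradiction, so choosing $\gamma=2$ is enough.

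\textbf{$F_\k(\H)$ in $\H$.} It remains to show that $F_\k(\H)=F_\k(W)\cap\H$ belongs to $\H$ and is a normal $\k$-complete ultrafilter there.

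\emph{Ultrafilter.} For $A\in\powerset(\k)\cap\H$, exactly one of $A$ and $\k\setminus A$ lies in $F_\k(W)$.

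\emph{$\k$-completeness and normality.} A sequence (or regressive function) in $\H$ also lies in $W$, so these transfer directly from $W$, with the diagonal intersection computed identically.

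\emph{Membership in $\H$.} This is the main point. We would argue that $F_\k(\H)$ is ordinal definable: $A\in F_\k(\H)$ iff $A\in\H$ and $A$ contains an $\omega$-club. The $\omega$-club filter $F_\k$ is definable from $\k$ in $V$, so $F_\k(\H)=F_\k\cap\H$ is OD. Since it is a subset of $\H$, it is hereditarily OD and therefore belongs to $\H$.

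Hypothesis (1) is thus only used to make the ultrafilter reasoning take place inside $W$. The expected obstacle is exactly this interplay between $V$'s $\omega$-club filter and $W$: we must make sure that the ultrafilter property in $W$ really speaks about the $V$-defined filter, which holds since $F_\k(W)$ is defined as $F_\k\cap W$.
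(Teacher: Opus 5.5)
Your argument is correct, and it is the natural proof of this lemma, which the paper states as easy without giving one. Reading ``stationary'' as $F_\k$-positive, the ultrafilter property of $F_\k(W)$ in $W\supseteq\H$ puts every stationary $S\in\H$ into $F_\k(W)$. This gives the complement clause and rules out an ordinal definable partition of $S^\k_\omega$ into $\gamma=2$ stationary pieces. $\k$-completeness and normality then transfer from $W$, and $F_\k(\H)=F_\k\cap\H\in\H$ by ordinal definability.

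The hedging about $\gamma=\omega$ should be dropped. The bound $(2^\omega)^{\H}<\k$ is not yet justified at that point; it only follows from the last clause, once $\k$ is measurable and hence inaccessible in $\H$. Your ``alternatively'' bound on $|\powerset(\omega)\cap\H|$ is unjustified. Neither is needed once you take $\gamma=2$.
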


We now show that if $\bbP$ is any countably closed homogeneous poset that forces $\ZFC$ over the Nairian Model, then $\bbP$ forces that every uncountable regular cardinal is $\omega$-strongly measurable in $\H$. Recall that $\mH$ is the hod premouse representation of $\H$. The notation we use is that of \cite{blue2025nairian}; the reader can review some of it in \textsection\ref{sec: prelim}.

\begin{theorem}\label{thm: omega club over nm 1} Assume $\treg+{\sf{HPC}}+{\sf{NLE}}$, and suppose that $\xi<\Theta$ is a Nairian cardinal. Let $N$ be the Nairian Model at $\xi$, $\bbP\in N$ be a countably closed poset, and $G\subseteq \bbP$ be $N$-generic. Then, in $N[G]$, for every uncountable regular cardinal $\k$ that is not a superstrong cardinal in $\mH$, \[F_\k(\mH)=E_\k,\] where $E_\k\in \vec{E}^\mH$ is the least total Mitchell order 0 ultrafilter on $\k$.\footnote{$E_\k$ is really an extender, but we treat it as an ultrafilter.}
\end{theorem}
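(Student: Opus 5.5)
Since \(\bbP\) is countably closed, it adds no new \(\omega\)-sequences of ordinals, so for every ordinal \(\k\) the \(\omega\)-closed unbounded subsets of \(\k\) and the cofinalities \({\le}\omega\) are the same in \(N\) and \(N[G]\). Hence \(F_\k(\mH)\) is computed identically in both models. Regularity of \(\k\) is not literally preserved, but countable closure shows that \(\cf^N(\k)\ge\omega_1\), which is all we need. So the plan is to work in \(N\), or even in the parent determinacy model \(V\), and show that for such \(\k\) every \(A\in\powerset(\k)\cap\mH\) satisfies
\[A\in E_\k \iff A \text{ contains an } \omega\text{-club}.\]
Because \(E_\k\) is an ultrafilter on \(\powerset(\k)\cap\mH\) and \(F_\k(\mH)\) is a filter, it suffices to prove the inclusion \(E_\k\subseteq F_\k(\mH)\). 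Indeed, if \(A\notin E_\k\), then \(\k\setminus A\in E_\k\subseteq F_\k\), so \(A\notin F_\k\).

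To prove \(E_\k\subseteq F_\k(\mH)\), I would use the standard argument: measure-one sets of a normal measure in an inner model contain the points of countable cofinality closed under an elementary hull. The steps are as follows.

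\textbf{Step 1.} Fix \(A\in E_\k\) and a large level \(\mH|\eta\) of \(\mH\) containing \(E_\k\) and \(A\). Consider the set \(C\) of \(\alpha<\k\) of countable cofinality such that \(\alpha=\k\cap\hull^{\mH|\eta}(\alpha\cup\{A,E_\k\})\). This set is \(\omega\)-closed and unbounded in \(\k\), using \(\cf(\k)>\omega\) and the fact that \(\mH|\eta\) has the usual fine-structural Skolem functions.

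\textbf{Step 2.} For \(\alpha\in C\), let \(\pi:\bar M\to\mH|\eta\) be the inverse collapse of the hull. Then \(\cp(\pi)=\alpha\), \(\pi(\alpha)=\k\), and \(\bar M\) is a hod premouse with \(\pi^{-1}(E_\k)=E_\alpha\), the least total order-0 measure on \(\alpha\) in \(\bar M\).

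\textbf{Step 3.} Show that \(A\cap\alpha=\pi^{-1}(A)\in E_\alpha^{\bar M}\), and that \(\alpha\in A\). This amounts to arguing that \(\bar M\) and \(\mH\) agree up to \(\alpha^{+}\), and that the extender on the \(\mH\)-sequence at \(\alpha\) is used to measure. Equivalently, \(\alpha\in j_{E_\alpha}(A\cap\alpha)\), which is \(\alpha\in A\) by coherence.

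This last step is the main obstacle. The difficulty is condensation: we need \(\bar M\inseg\mH\), or at least that \(\bar M\) is an initial segment of an iterate compatible with \(\mH\). This should follow from condensation for hod mice in the \(\sf{HPC}+{\sf{NLE}}\) setting of \cite{blue2025nairian}. The hypothesis that \(\k\) is not superstrong in \(\mH\) is what I expect to make \(\bar M\) agree with \(\mH\) below \(\alpha^{+\bar M}\). Without it, \(E_\k\) could be overlapped by extenders, and the least total measure on \(\alpha\) in \(\bar M\) would not cohere with \(\mH\).

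A cleaner alternative for Step 3 is to use the Nairian structure directly. Recall that \(N\) contains \(\bigcup_{\alpha<\xi}\alpha^\omega\), and that \(\mH\) is the direct limit of hod mice under their strategies. For \(\alpha\in C\) of countable cofinality, pick a countable cofinal sequence in \(N\) and realize \(\alpha\) as the image of a critical point under a countable iteration map into \(\mH\). Then \(\alpha\in A\) iff the preimage of \(A\) lies in the corresponding measure, which holds by elementarity. I would attempt the hull/condensation route first and fall back on the direct-limit representation if condensation at \(\alpha\) is problematic.
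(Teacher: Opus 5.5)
Your reduction to the single inclusion $E_\k\subseteq F_\k(\mH)$ is fine. However, Step~3 is false, and no condensation argument can repair it.

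\textbf{Why Step~3 fails.} All the parameters defining your set $C$ lie in $\mH$. So $C$ is just the set of countable-cofinality points of the club
\[C'=\{\alpha<\k:\alpha=\k\cap\hull^{\mH|\eta}(\alpha\cup\{A,E_\k\})\},\]
and $C'$ belongs to $\mH$. The supremum of any $\omega$-block of $C'$ lies in $C'$ and has $\mH$-cofinality $\omega$. It therefore lies in $C$ while being singular in $\mH$. Now take $A=\{\alpha<\k:\alpha \text{ is regular in }\mH\}$, which is in $E_\k$ by normality. Then $C\setminus A$ is unbounded in $\k$, so no tail of $C$ is contained in $A$.

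The error in Step~3 is a conflation of two measures. Elementarity gives $A\cap\alpha\in E^{\bar M}_\alpha$. But $\alpha\in A$ says that $A\cap\alpha$ belongs to the measure $\{B\in\bar M:\alpha\in\pi(B)\}$ derived from the uncollapse $\pi$. Nothing forces this derived measure to be $E^{\bar M}_\alpha$, and at the points just described the two visibly differ. The same objection kills any $\omega$-club obtained by intersecting a club lying in $\mH$ with the ordinals of countable cofinality.

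\textbf{What is missing.} You need an $\omega$-club built from a countable parameter outside $\mH$, whose points really are critical points of order-$0$ measures coherent with $E_\k$. Your fallback gestures at this, but you apply it to the points of $C$, which cannot all have that form, and you give none of the steps that make it work. The paper proceeds as follows.
\begin{enumerate}
\item Fix a countable hod pair $\hq$ with $\k_\hq$ defined, and let $b$ be the main branch of $\T_{\hq,\hq^\k}$.
\item A tail-branch lemma for normal trees shows that from some $\a\in b$ on, every extender used on $b$ has critical point $\mu_\b$ with $\pi_{\b,b}(\mu_\b)=\k$.
\item Let $X=\pi_{\hq,\hq^\k}[\M^\hq|\l]$, a countable set. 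Let $D$ be the set of $\tau<\k$ that are not measurable in $\mH$ and satisfy $\tau=\k\cap\hull^{\mH|\l}(\tau\cup X)$. Then $D\in N$, and $D$ is an $\omega$-club because non-measurable inaccessibles of $\mH$ have countable cofinality in $N$.
\item By induction on $\tau$, every $\tau\in D$ above $\mu_\a$ equals some $\mu_\b$. The hypothesis that $\k$ is not superstrong enters in the successor case, not in any condensation argument. If $\tau$ lay strictly between $\mu_\b$ and the next critical point, the Jensen completion $F$ of $E\rest\tau$ would not be of superstrong type. Then $\pi_F(\mu_\b)>\tau$, which puts an element of $\hull^{\mH|\l}(\tau\cup X)\cap\k$ above $\tau$, a contradiction.
\item Since such $\tau$ are non-measurable in $\mH$, the measure applied at each $\mu_\b$ has Mitchell order $0$.
\end{enumerate}
Consequently, for all large $\b$, whether $\mu_\b\in A$ is decided by $E_\k$. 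This gives $A\in E_\k$ if and only if $D\setminus\nu\subseteq A$ for some $\nu$.
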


\begin{proof} 
We will need the following basic claim.

\begin{claim}\label{clm: tail branch} Suppose that $\M$ is a transitive fine structural model of some kind (e.g. a mouse, hod mouse, etc), $\k$ is a measurable cardinal of $\M$, and $\T$ is a normal iteration tree\footnote{As in \cite{blue2025nairian}, we use Jensen indexing.} of length $\gg+1$ on $\M$ whose main branch $b=[0, \gg)_\T$ does not have a drop. Let $\W$ be the last model of $\T$. Suppose that for some $\iota_0\in b$, for every $\iota\in b-\iota_0$, \begin{center}$\sup(\pi^\T_{\iota, \gg}[\k])<\pi^\T(\k)$.\end{center} Then there is $\a \in b-\iota_0$, such that for all $\b\in b-\a$ \begin{center} $\pi_{\b, \gg}(\cp(E_\xi^\T))=\pi^\T(\k)$\end{center} where $\xi$ is such that $\T(\xi+1)=\b$ and $\xi+1\in b$.
\end{claim}
\begin{proof}
Let $b = [0, \gamma]_\mathcal{T}$ be the main branch of $\mathcal{T}$. For $\iota \in b$, let $\kappa_\iota = \pi_{0, \iota}^\T(\kappa)$. For any $\iota \in b$, let $\xi(\iota)$ be such that $\mathcal{T}(\xi(\iota)+1) = \iota$ and $\xi(\iota)+1 \in b$. Let $\mu_\iota= \cp(E_{\xi(\iota)}^\mathcal{T})$. 

Since $\mathcal{T}$ is normal, the sequence $\langle \mu_\beta \mid \beta \in b \rangle$ is non-decreasing. Since for every $\iota\in b-\iota_0$, $\pi_{\iota, \gg}^\T$ is discontinuous at $\kappa_\iota$, we have that for every $\iota\in b-\iota_0$, $\mu_\iota\leq \k_\iota$. 

It follows that there must be an $\iota\in b-\iota_0$ such that $\mu_\iota=\k_\iota$, as otherwise if $\mu_\iota<\k_\iota$ for all $\iota\in b-\iota_0$, then $\pi_{\iota_0, \gg}^\T$ is continuous at $\k_{\iota_0}$. Let $\a\in b-\iota_0$ be the least such that $\mu_\a=\k_\a$.

We now claim that for all $\beta \in b-\a$, $\mu_\beta= \kappa_\beta$. Otherwise we have $\b\in b-\a$ such that $\mu_\b<\k_\b$. Assuming $\b$ is the least with this property, the condition $\mu_\b<\k_\b$ violates normality. Thus, for all $\b\in b-\a$, $\mu_\beta= \kappa_\beta$.
\end{proof}

Suppose now $\hq$ is a hod pair such that $\k_\hq$ is defined. Let $\T=\T_{\hq, \hq^\k}$. Applying \rcl{clm: tail branch} and letting $b$ be the main branch of $\T$, we get some $\a\in b$ such that for every $\b\in b-\a$, letting $\xi(\b)$ and $\mu_\b$ be defined as in \rcl{clm: tail branch}, \[\pi_{\b, b}^\T(\mu_\b)=\k.\] 
We now need to identify the stages in $b$ that use the Mitchell-order $0$ measure. Let  $\l=(\k^+)^\mH$ and $X=\pi_{\hq, \hq^\k}[\M^\hq|\l]$. Put $\tau\in D$ if and only if 
\begin{itemize}
\item $\tau=\k\cap\hull^{\mH|\l}(\tau\cup X)$, and
\item $\mH\models ``\tau$ is not a measurable cardinal".
\end{itemize}
Since non-measurable inaccessible cardinals of $\mH$ have cofinality $\omega$, we have that $D$ is an $\omega$-club. Notice also that \[\{ \mu_\b: \b\in b-\a\}\subseteq D\ \ \ \ \ \ \ (*)\]
The following is a key claim.
\begin{claim}\label{clm: d is in b} Suppose $\tau\in D-\mu_\a$. Then for some $\b\in b-\a$, $\tau=\mu_\b$.
\end{claim}
\begin{proof} The claim is proved by induction. We assume that the claim is true for all $\tau'\in D\cap \tau$ and show that the claim holds for $\tau$. Notice that because $b$ is a club, if $\tau$ is a limit point of $D$, then $\tau=\mu_\b$ for some $\b\in b-\a$. We can then assume that $\tau$ is a successor point of $D$. Let $\tau'=\sup(D\cap \tau)$ and $\b\in b$ be such that $\tau'=\mu_\b$. Let $\xi=\min(b-(\b+1))$. We want to show that $\tau=\mu_\xi$. Let $E$ be the extender used at $\b$ in $b$. We then have that \[\pi_{\b, \xi}^\T(\mu_\b)=\mu_\xi.\] 

Suppose now that $\tau<\mu_\xi$ (because $\mu_\xi\in D$ and $\tau'=\sup(D\cap \tau)$, $\mu_\xi<\tau$ is not possible). Let $F$ be the Jensen-completion of $E\rest \tau$. We have that $F$ is not of superstrong type, and so \[\pi_F^{\M_\b^\T}(\mu_\b)>\tau,\] which implies that there is some $f\in \mH\cap \powerset(\mu_\b^{\mu_\b})$ and some $a\in \tau^{<\omega}$ such that $\pi_F^{\M_\b^\T}(f)(a)>\tau$. Because $F$ is a subextender of $E$, letting $\l_\b=\pi_{0, \b}^\T(\l)$, we have \[k: Ult(\M_\b^\T, F)\rightarrow \M^{\hq^\k}\]
such that \[\pi_{\b, b}^\T=k\circ \pi_F^{\M^\T_\b}.\] Hence, since \[k(f)\in\hull^{\mH|\l}(\mu_\b\cup  X),\]
we have that \[\tau<\sup(\k\cap \hull^{\mH|\l}(\tau\cup  X)),\]
which is a contradiction. 
\end{proof}
It is now easy to finish the proof of \rthm{thm: omega club over nm 1}. Notice first that $D\in N$, as it is easily definable from $X\in N$ and $\l$. If now $\tau\in D-\mu_\a$, $\b\in b$ is such that $\mu_\b=\tau$, and $E$ is the extender used at $\b$ in $b$, then $E$ is a Mitchell-order $0$ ultrafilter, as otherwise we would have that $\mH\models ``\tau$ is a measurable cardinal.'' It now follows that for every $A\in \mH\cap \powerset(\k)$, \[A\in E_\k \iff \exists \nu (D-\nu\subseteq A).\]
Clearly, the above equivalence implies that in $N[G]$, $F_\k(\mH)=E_\k$.
\end{proof}

Applying \rlem{lem: easy lem 1}, \rthm{thm: omega club over nm 1}, and the fact that every regular cardinal of a Nairian model is a measurable cardinal in $\mH$ \cite{SteelCom}, we get the following corollary. 

\begin{corollary}\label{cor: omega-strmes} Assume $\treg+{\sf{HPC}}+{\sf{NLE}}$, and suppose $N$ is a minimal Nairian model. Suppose $\bbP=\pmax*\bbQ\subseteq N$  is the iteration described in \rthm{thm:forcing choice}. Then in $N^\bbP$, every uncountable regular cardinal $\k$ is $\omega$-strongly measurable in $\H$, and in fact, $$\H\models \text{``$F_\k(\H)$ is a $\k$-complete normal ultrafilter.''}$$
\end{corollary}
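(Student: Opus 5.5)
Our plan is to apply \rlem{lem: easy lem 1} in $N^\bbP$ with $W=\H^{N^\bbP}$ itself (or more precisely $W=\mH$, which contains $\H^{N^\bbP}$ up to the standard identification), feeding in \rthm{thm: omega club over nm 1} to verify its two hypotheses. First we identify $\H$ of the extension. The poset $\bbP=\pmax*\bbQ$ is homogeneous, and the extension is computed over $N$. So $\H^{N^\bbP}\subseteq \H^N$, and $\H^N$ is (coded by) $\mH$, which is ordinal definable in $N^\bbP$ because the predicate $\mH$ is part of the Nairian structure $L_\xi(\mH,\bigcup_{\a<\xi}\a^\omega)$ and $N$ is definable in $N[G]$ from the ground-model predicate. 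Hence $\H^{N^\bbP}=\mH$ in the relevant sense, and in particular $E_\k\in\H^{N^\bbP}$ for each relevant $\k$.

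Next we fix an uncountable regular cardinal $\k$ of $N^\bbP$. It is regular in $N$ as well, so it is regular in $\mH$ and therefore measurable in $\mH$ by the cited fact \cite{SteelCom}, which makes $E_\k$ defined. We must also check that $\k$ is not superstrong in $\mH$. This follows from minimality, since $\mH$ below $\xi$ has no superstrong cardinals: the minimal Nairian model sits well below a Woodin limit of Woodin cardinals, and ${\sf NLE}$ is assumed.

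To apply \rthm{thm: omega club over nm 1} we need its hypotheses. The $\pmax$ part is not itself in $N$, so we argue in two stages. We work over $N[G]$ with $G\subseteq\pmax$, where $\pmax$ is countably closed; the theorem's proof is robust, since the $\omega$-club $D$ it produces is definable in $N$ and stays $\omega$-club in any extension preserving $\omega_1$-cofinality facts. The theorem then gives, in $N^\bbP$, that $F_\k(\mH)=E_\k$. Thus $F_\k(\H)=E_\k\in\H$, and inside $\H=\mH$, $E_\k$ is a $\k$-complete normal ultrafilter. These are exactly hypotheses (1) and (2) of \rlem{lem: easy lem 1} with $W=\H$, and the lemma yields both $\omega$-strong measurability and the displayed conclusion.

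The main obstacle is the first step, pinning down $\H^{N^\bbP}=\mH$, and specifically the inclusion $\mH\subseteq\H^{N^\bbP}$. This needs $\mH$ to be ordinal definable after forcing, which we would get by showing $N$ is a definable class in $N^\bbP$, as a ground defined with the parameter $\mH$ absorbed into the ordinal-definable structure. A second delicate point is that countable closure must be verified for the whole $\pmax*\bbQ$; we would handle this via the $\omega$-club $D$ already lying in $N$, whose $\omega$-closure is absolute.
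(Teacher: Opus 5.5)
Your three ingredients are the paper's. Its proof is the one-line combination of \rlem{lem: easy lem 1}, \rthm{thm: omega club over nm 1}, and the measurability in $\mH$ of the regular cardinals of $N$, implicitly with $W=\mH$. However, the step you call the main obstacle is unproved and unnecessary, and your write-up depends on it. In your third paragraph you take $W=\H$ and use $\H^{N^\bbP}=\mH$ to conclude $F_\k(\H)=E_\k$.

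The inclusion $\mH\subseteq\H^{N^\bbP}$ is never established. Defining $N$ inside $N^\bbP$ ``from the ground-model predicate'' uses $\mH$ as a parameter, so that argument is circular. The standard ground-definability theorems also concern ZFC grounds and set forcing, whereas $N$ satisfies only ZF and $\bbQ$ has length $\ord\cap N$. Without that inclusion you only get $F_\k(\H^{N^\bbP})=E_\k\cap\H^{N^\bbP}$. Moreover, with $W=\H$, hypothesis (2) of the lemma is just the conclusion you are after.

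The repair is the option you mention parenthetically. \rlem{lem: easy lem 1} only needs a transitive $W\supseteq\H$, so take $W=\mH$. Then the inclusion $\H^{N^\bbP}\subseteq\H^N\subseteq\mH$ from your first paragraph is all you need. The lemma's hypotheses become $F_\k(\mH)=E_\k\in\mH$ and $\mH\models$ ``$E_\k$ is a normal $\k$-complete ultrafilter''. The lemma itself then yields $\H\models$ ``$F_\k(\H)$ is a $\k$-complete normal ultrafilter''.

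There are also three smaller points.

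\emph{$\pmax$ is in $N$.} It lies in $\nN_2$; this is the case $\alpha=2$ of \rthm{thm:forcing choice}. What keeps $\pmax*\bbQ$ from being an element of $N$, as \rthm{thm: omega club over nm 1} requires, is the length of $\bbQ$.

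\emph{Why $D$ survives.} Rerunning the end of that proof with the $\omega$-club $D\in N$ does work, but not because $\omega$-closure is absolute; it is not in general. $D$ stays an $\omega$-club because $\pmax*\bbQ$ adds no $\omega$-sequences of ordinals. This follows from the $\sigma$-closure of $\pmax$ and of $\bbQ$ (cf.\ \rcl{claim: closure of q}), so no new ordinals of countable cofinality appear below $\k$. Alternatively, apply the theorem to a set-length initial segment $\pmax*\bbQ_{<\beta}$, and note that the tail adds no subsets of $\k$.

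\emph{Source of measurability.} Measurability of $\k$ in $\mH$ comes from $\k$ being regular in $N$, which is what the cited fact needs. The inference ``regular in $\mH$, therefore measurable in $\mH$'' is false, since successor cardinals of $\mH$ are regular there.
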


\part{Forcing choice over Nairian Models}

In \textsection\ref{NairianTheorySec} we introduce the Nairian Theory $\mathsf{NT_{base}}$ and show how to force the Axiom of Choice over models of this theory. The theory is stated in terms of the $\nN$-\emph{hierarchy}, a way of stratifying the universe of sets via the relation $|X| \lesssim |Y|$, which says that there is a surjection from $Y$ to $X$. The levels of this hierarchy are $H$-\emph{like sets}, as defined in \textsection\ref{sec: H-like sets}.

\section{$H$-like sets}\label{sec: H-like sets}

Since we will be working in models of set theory in which the Axiom of Choice fails, we will compare sets via the existence of surjections, as opposed to injections. 

\begin{definition}\label{def:cardinality}\normalfont
Suppose $X$ and $Y$ are sets.
Then
\begin{enumerate}
    \item $\card{X}\lesssim \card{Y}$ indicates that either $X = \emptyset$ or there is a surjection $f\colon Y\rightarrow X$.

    \item $\card{X} \approx \card{Y}$ indicates that $\card{X}\lesssim \card{Y}$ and $\card{Y}\lesssim \card{X}$.

    \item if $\kappa$ is a cardinal, then $\card{X}\lesssim \kappa$ indicates $\card{X}\lesssim \card{\kappa}$ and $\card{X} \approx \kappa$ indicates $\card{X} \approx \card{\kappa}$.
\end{enumerate}

\end{definition}


In this section we introduce the $H$-\emph{like sets}, analogues of the usual sets $H(\theta)$ via the relation $\lesssim$. First we define several related properties.

\begin{definition}\label{largest cardinal}\label{total}\label{regulardef}\normalfont Let \(M\) be a set.
\begin{enumerate}
    \item $M$ \textbf{has a maximal cardinality} if there is $X\in M$ such that $M\models \forall Y(\card{Y}\lesssim \card{X})$. In this case, we say that $X$ is a \textbf{maximal cardinality of} $M$.

    \item $M$ is of \textbf{successor type} if it has a maximal cardinality.

    \item $M$ is of \textbf{limit type} if it is nonempty and it does not have a maximal cardinality.
    \item $M$ is \textbf{transitively closed} if for each $Y\in M$ there is a transitive set $Z\in M$ such that $Z\subseteq M$ and $Y\in Z$.

     \item $M$ is \textbf{closed under products} if  $X\times Y\in M$ whenever $X, Y\in M$.

    \item $M$ is \textbf{closed under subsets} if for every $X\in M$, $\powerset(X)\subseteq M$.

    \item $M$ is \textbf{full} if $M$
    \begin{enumerate}
        \item is transitively closed and 
        \item is closed under products and subsets.
    \end{enumerate}

     \item $M$ is \textbf{strongly regular} if, whenever $X \in M$ and $f$ is a function from $X$ to $\ord \cap M$, the range of $f$ is bounded in $\ord \cap M$.
\end{enumerate} 
\end{definition}



%

Note that transitively closed sets are transitive, so being transitively closed is equivalent to being transitive and closed under the function sending each set $Y$ to the transitive closure of $\{Y\}$. Note also that if $X$ is a maximal cardinality of a set $M$ which is closed under products and subsets, then $M \models \card{X} \approx \card{X^{n}}$ for all positive $n \in \omega$. We record the following facts.

\begin{lemma}\label{lem: easy lem1} Suppose that $M$ is full and of successor type. Then there is a transitive $X\in M$ such that $X$ is a maximal cardinality of $M$.
\end{lemma}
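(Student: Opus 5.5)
Fix a maximal cardinality $Y \in M$, which exists because $M$ is of successor type. The plan is to replace $Y$ by a transitive set in $M$ that surjects onto $Y$ inside $M$, and then to check that this new set is still a maximal cardinality of $M$.

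By transitive closedness there is a transitive $Z\in M$ with $Z\subseteq M$ and $Y\in Z$. Since $Z$ is transitive and $Y \in Z$, we get $Y\subseteq Z$. Take $X = Z$ itself, or, to be safe, $X = Z\cup\{\emptyset\}$, which is still transitive. Using closure under subsets and products we check that $X\in M$: we have $\{\emptyset\}\subseteq Z$ when $Z\neq\emptyset$, and $Z$ is nonempty because $Y\in Z$. So in fact $Z$ already contains $\emptyset$ by transitivity, and we simply take $X=Z$.

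To see that $X$ is a maximal cardinality, we need $M\models \card{W}\lesssim\card{X}$ for every $W\in M$. We already know $M\models\card{W}\lesssim\card{Y}$, so it suffices to find in $M$ a surjection from $X$ onto $Y$ and compose. If $Y=\emptyset$, every $W$ is empty-comparable and the claim is trivial. Otherwise, fix $y_0\in Y$ and define $g\colon X\to Y$ by $g(z)=z$ for $z\in Y$ and $g(z)=y_0$ otherwise. This $g$ is a subset of $X\times Y$. Closure under products puts $X\times Y\in M$, and closure under subsets then puts $g\in M$. The composite of $g$ with a surjection $f\colon Y\to W$ lying in $M$ is a subset of $X\times W\in M$, so it also lies in $M$.

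The only real care needed is in checking that each auxiliary function is an element of $M$; this is exactly what closure under products and subsets provides. I expect no genuine obstacle.
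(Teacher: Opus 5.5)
Your proof is correct and follows the paper's argument: take a maximal cardinality $Y$, a transitive $X\in M$ with $Y\in X$ (so $Y\subseteq X$), and use closure under products and subsets to put the surjection $X\to Y$, and its composite with any surjection $Y\to W$, inside $M$. You make explicit the surjection and the composition step that the paper leaves implicit; the digression about $Z\cup\{\emptyset\}$ is harmless but unnecessary.
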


\begin{proof} Let $Y$ be a maximal cardinality of $M$, and let $X\in M$ be a transitive set such that $Y\in X$. We then have that $M\models \card{Y}\lesssim \card{X}$ (because $X\times Y\in M$, so $\powerset(X\times Y)\subseteq M$). Thus, $X$ is a maximal cardinality of $M$. 
\end{proof}

\begin{lemma}\label{closure under cartesian products} Suppose that $M$ is a full set. Then for any $n\in \omega$ and $\langle Y_i: i\leq n \rangle\in M$, $\powerset(\prod_{i\leq n}Y_i)\subseteq M$. Furthermore, if $X$ is transitive and a maximal cardinal of $M$, then $\card{M}\approx\card{\powerset(X)}$.
\end{lemma}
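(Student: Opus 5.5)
For the first assertion I will argue by induction on $n$, using only the three components of fullness. For $n=0$, $\langle Y_0\rangle\in M$, and I need $Y_0\in M$. Transitive closedness gives a transitive $Z\in M$ with $Z\subseteq M$ and $\langle Y_0\rangle\in Z$. Since $Z$ is transitive, every set appearing in the Kuratowski coding of the sequence lies in $Z$, and so does $Y_0$. Hence $Y_0\in M$, and closure under subsets gives $\powerset(Y_0)\subseteq M$.

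For the inductive step, suppose $\langle Y_i:i\leq n+1\rangle\in M$. The same transitivity argument shows that $\langle Y_i:i\leq n\rangle\in M$ and $Y_{n+1}\in M$. I then need $\prod_{i\leq n}Y_i\in M$ as a set, not just its subsets. To get this, I strengthen the induction hypothesis to ``$\prod_{i\leq n}Y_i\in M$'', taking $\prod_{i\leq n} Y_i$ to be the iterated product $(\cdots(Y_0\times Y_1)\times\cdots)\times Y_n$. Closure under products gives $\prod_{i\leq n}Y_i\times Y_{n+1}\in M$, and then closure under subsets gives $\powerset(\prod_{i\leq n+1}Y_i)\subseteq M$.

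If products are instead read as sets of functions with domain $n+1$, they are in canonical bijection with iterated products. In that case I argue as follows. Fix $W\in M$ with $\langle Y_i\rangle_{i\leq n}\in W$ and $\omega\cup\{Y_i\}\subseteq W$; such a $W$ is obtained by taking a transitive $Z\ni\langle Y_i\rangle$ and forming products with something containing $\omega$. Then $W\times(\bigcup_i Y_i)$ is a subset of $M$, and the function-style product is a subset of $\powerset(\omega\times\bigcup_iY_i)$. This is the step I expect to need the most care: one must make sure the relevant ambient product is a \emph{member} of $M$, so that closure under subsets applies. I will therefore fix the iterated-pair convention to avoid this issue.

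For the second assertion, let $X$ be transitive and a maximal cardinality of $M$. One direction: $\powerset(X)\subseteq M$ by closure under subsets, so the identity composed with any retraction gives a surjection $M\to\powerset(X)$. Concretely, send $Y\mapsto Y$ if $Y\subseteq X$ and $Y\mapsto\emptyset$ otherwise, which shows $\card{\powerset(X)}\lesssim\card{M}$.

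For the converse I build a surjection $\powerset(X)\to M$, following the proof of \rlem{lem: easy lem1}. Given $Y\in M$, choose a transitive $Z\in M$ with $Y\in Z$. Since $X$ is maximal in $M$, there is a surjection $f\colon X\to Z$, and $f\in M$ because $f\subseteq X\times Z$, $X\times Z\in M$, and $M$ is closed under subsets. Then $E=\{(a,b)\in X^2: f(a)\in f(b)\}$ codes $(Z,\in)$ up to the quotient by $\ker f$, and $Y$ is determined by $E$ together with a point $a$ with $f(a)=Y$. Transitivity of $Z$ lets the Mostowski collapse of the extensional quotient recover $Z$, with no choice needed. Coding the pair $(E,a)$ as a subset of $X$ uses the fact, noted above, that $\card{X}\approx\card{X^n}$ in $M$. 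This requires a surjection $X\to X\times X$ (and $X^3$) lying in $M$, which I fix once. Sending a code $C\subseteq X$ to the decoded set if $C$ codes a well-founded extensional structure (mod kernel) with a designated point, and to $\emptyset$ otherwise, gives a surjection $\powerset(X)\to M$. Together the two directions give $\card{M}\approx\card{\powerset(X)}$.
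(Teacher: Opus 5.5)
Your argument is essentially the paper's. The first part is the routine unwinding of fullness with iterated binary products. For the second part the paper likewise pulls $\in$ back along a surjection from $X$ onto a transitive member of $M$, and codes the resulting relations by a subset of $X$ via $|X|\approx|X^2|$. It differs only in detail: it uses $Z=\tc(\{Y\})$, so $Y$ is recoverable as the top element and no designated point is needed, and it codes the kernel as a separate equivalence relation.

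Two details of your decoding need fixing.

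First, the kernel of $f$ is not part of your code, so ``extensional (mod kernel)'' is not a condition you can check on $C$. Demanding that the coded relation itself be extensional would reject your own codes whenever $f$ is not injective, and without choice there may be no injection of $Z$ into $X$. No quotient is needed, though. The collapse $\pi(b)=\{\pi(c): c\mathrel{E}b\}$ of the well-founded relation $E$ equals $f$ by $E$-induction, since $f(b)\subseteq Z=\rge(f)$.

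Second, as stated your map need not take values in $M$. The collapse of an arbitrary coded well-founded relation is only guaranteed to lie in $H_X$, and for a merely full $M$ one can have $H_X\not\subseteq M$. For example, take $M$ to be the sets $Y$ with $\tc(\{Y\})$ countable and of rank below $\omega+\omega$, take $X=\omega$, and take a code for the ordinal $\omega+\omega$. So send $C$ to $\emptyset$ unless the decoded set belongs to $M$; with these two changes the map is a surjection from $\powerset(X)$ onto $M$.

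A smaller point: your aside on function-style products should not assume that some member of $M$ contains $\omega$, since the hereditarily finite sets already form a full set. This is moot once you fix the iterated-pair convention.
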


\begin{proof}
The first part of the lemma follows almost immediately from the definitions. To prove the second part, note that if $Y\in M$ and $Z$ is the transitive closure of $\{Y\}$, then there exist $E,F \subseteq X^2$ such that $E$ is an equivalence relation and $(Z, \in)$ is isomorphic to $(X/E, F/E)$, where $X/E$ is the set of $E$-equivalence classes and $F/E$ is the relation on $X/E$ induced by $F$. This observation allows us to define a surjection from $\powerset(X)$ to $M$, using the fact that $\card{X}\approx\card{X^2}$.
\end{proof}

The following property will be used crucially in conjunction with strong regularity. 

\begin{definition}\label{hierarchical}\normalfont 
A set $M$ is \textbf{hierarchical} if $M$ is transitive and, letting $\k=\ord \cap M$, there exists a $\subseteq$-increasing continuous sequence $\langle M_\a: \a<\k\rangle$ of transitive sets such that
\begin{enumerate}
    \item $M=\bigcup_{\a<\k}M_\a$ and, 
    \item for each $\a<\k$, $M_\a\in M$. 
\end{enumerate}
The sequence $\langle M_\a: \a<\k\rangle$ is a \textbf{hierarchical decomposition} of $M$.
\end{definition}


The property of being \(H\)-\emph{like} is the main notion introduced in this section. 
Levels of the $\nN$-hierarchy (introduced in Section \ref{NairianTheorySec}) will be \(H\)-like.

\begin{definition}\label{an for m}\normalfont Suppose that $M$ is a transitive set. Then $M$ is $H$-\textbf{like} if
\begin{enumerate}
\item  $M$ is full and hierarchical, and
\item  if $M$ is of successor type, then $M$ is strongly regular.
\end{enumerate}
\end{definition}

The following statement is a consequence of our theory $\mathsf{NT_{base}}$ (see Remark \ref{firstAHrem}). 

\begin{definition}\label{hlike axiom}\normalfont ``$V$ is $H$-like'' is the axiom asserting that every set belongs to an $H$-like set of successor type.
\end{definition}

The successor levels of the $\nN$-hierarchy will have the form \(H_X\), as in the following definition.  

\begin{definition}\label{h sets} \normalfont
Let $X$ be a set.
Then $H_X$ is the set of $Y$ that have hereditary size at most $\card{X}$, i.e. such that there is a surjection from $X$ to the transitive closure of $\{Y\}$. 
\end{definition}

Note that $H_{X} = H_{Y}$ whenever $|X| \approx |Y|$. 

\begin{definition}\label{true cardinal}\normalfont 
We say that $X$ is a \textbf{true cardinal} if $H_X$ is $H$-like. 
\end{definition}

The remainder of this section concerns $H$-like sets of successor type (which the successor levels of the $\nN$-hierarchy will be). We first note the connection between $H$-like sets and sets of the form $H_{X}$. 

\begin{theorem}\label{M=HXlem} If $M$ is $H$-like and $X$ is a maximal cardinality of $M$, then $M=H_X$. 
\end{theorem}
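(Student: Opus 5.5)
We prove the two inclusions $M\subseteq H_X$ and $H_X\subseteq M$ separately. By \rlem{lem: easy lem1} we may replace $X$ with a transitive maximal cardinality of $M$. This changes nothing, since $H_X$ depends only on the $\approx$-class of $X$ and any two maximal cardinalities of $M$ are $\approx$ in $M$, hence in $V$.

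\emph{The inclusion $M\subseteq H_X$.} Let $Y\in M$. Since $M$ is transitively closed, there is a transitive $Z\in M$ with $Y\in Z$ and $Z\subseteq M$. Then $\tc(\{Y\})\subseteq Z\cup\{Y\}$, and $Z\cup\{Y\}=Z$ if we enlarge $Z$ slightly (e.g. use the transitive set containing $\{Y\}\cup Z$ given by transitive closedness applied to a pair). Because $X$ is a maximal cardinality, $M\models\card{Z}\lesssim\card{X}$. Hence there is a surjection $X\to Z$ in $M$, and we compose it with a surjection $Z\to\tc(\{Y\})$; such a map exists, e.g. fixing a point of $\tc(\{Y\})$ (nonempty) and sending everything outside to it. This gives $Y\in H_X$.

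\emph{The inclusion $H_X\subseteq M$.} Let $Y\in H_X$ and fix a surjection $g\colon X\to T=\tc(\{Y\})$. We code $(T,\in)$ by the pair
$$E=\{(a,b)\in X^2: g(a)=g(b)\},\qquad F=\{(a,b)\in X^2: g(a)\in g(b)\}.$$
Both $E$ and $F$ are subsets of $X^2\in M$, so $E,F\in M$ by closure under products and subsets. The structure $(X/E,F/E)$ is well-founded and extensional, and its Mostowski collapse is $T$. It remains to show that this collapse can be carried out inside $M$.

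This is the main obstacle, since $M$ is only a set that is full and hierarchical, not a model of $\ZF$. We proceed by recursion along the well-founded relation $F$. Let $\pi(a)=\{\pi(b): b\,F\,a\}$, and show by induction on $F$-rank that $\pi(a)\in M$. Suppose $\pi(b)\in M$ for all $b\,F\,a$. The map $b\mapsto\pi(b)$ has domain $\subseteq X$. Let $\rho(b)$ be the least $\alpha<\kappa=\ord\cap M$ with $\pi(b)\in M_\alpha$, where $\langle M_\alpha\rangle$ is a hierarchical decomposition. Strong regularity requires the function to lie appropriately, and indeed it quantifies over all functions $f\colon X\to\ord\cap M$ (as stated in \rdef{regulardef}), so it applies here. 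Thus $\sup_b\rho(b)=\alpha<\kappa$, and so $\pi(a)\subseteq M_\alpha$.

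Now $M_\alpha\in M$ is transitive. Hence $\pi(a)\subseteq M_\alpha$ gives $\pi(a)\in\powerset(M_\alpha)\subseteq M$ by closure under subsets. This completes the induction. The same argument applied to the whole range shows $T=\pi[X]\subseteq M_\beta$ for some $\beta<\kappa$, so $T\in M$ and hence $Y\in T\subseteq M$.

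The delicate points are the two places that use the successor-type hypothesis. The first is the application of strong regularity to bound ranks. The second is verifying that the recursion defining $\pi$ makes sense in $V$ (ordinary $\ZF$ transfinite recursion on $F$), so that only membership in $M$ must be argued, not definability inside $M$.
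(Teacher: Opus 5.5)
Your proof is correct and is essentially the paper's argument: you show by induction on rank that each element of $\tc(\{Y\})$ lies in $M$, using strong regularity (applied to an arbitrary function on $X$) to bound the hierarchy levels and closure under subsets to conclude. The detour through $E$, $F$ and the Mostowski collapse is harmless but unnecessary: since $T$ is transitive, $F$-induction shows your $\pi$ is just $g$, and you never use $E,F\in M$. Likewise no enlargement of $Z$ is needed in the first inclusion, because $Y\in Z$ with $Z$ transitive already gives $\tc(\{Y\})\subseteq Z$.
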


\begin{proof} It follows almost immediately from the definitions of fullness and maximal cardinality that $M$ is contained in $H_{X}$. For the converse, it suffices by Lemma \ref{lem: easy lem1} to consider the case where $X$ is transitive and $\card{X} \approx \card{X^{n}}$ for all positive $n \in \omega$. Suppose then that $f$ is a surjection from $X$ to the transitive closure of $\{Y\}$, which we will call $Z$. Arguing by induction on the rank of the elements of $Z$, it suffices to consider the case where $Y \subseteq M$. Let $\langle M_{\alpha} : \alpha < \kappa \rangle$ witness that $M$ is hierarchical. Since there is a surjection from $X$ to $Y$, and since $M$ is strongly regular, there is an $\alpha < \kappa$ such that $Y \subseteq M_{\alpha}$. Since $M_{\alpha} \in M$ and $M$ is closed under subsets, it follows that $Y$ is in $M$. 
\end{proof} 

The proof of the previous theorem gives a bit more, which we record here. 

\begin{corollary}\label{cor: zf in s-like} Suppose that $M$ is an $H$-like set of successor type, $Y\in M$ and $\{ A_y: y\in Y \} \subseteq M$. Then $\langle A_y: y\in Y \rangle\in M$, and so $\bigcup_{y\in Y} A_y\in M$. 
\end{corollary}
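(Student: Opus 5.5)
We will work directly with the proof of Theorem~\ref{M=HXlem}, keeping track of what the strong regularity step actually produces. Fix a maximal cardinality $X$ of $M$; by Lemma~\ref{lem: easy lem1} we may take $X$ transitive, and then $\card{X}\approx\card{X^n}$ for all positive $n$. Fix a hierarchical decomposition $\langle M_\a:\a<\k\rangle$ of $M$, where $\k=\ord\cap M$. Since $Y\in M$, Theorem~\ref{M=HXlem} gives $Y\in H_X$. Hence there is a surjection from $X$ onto the transitive closure of $\{Y\}$, and so $\card{Y}\lesssim\card{X}$.

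The first step is to show that $A=\bigcup_{y\in Y}A_y$ lies in $M$. For each $y\in Y$ we have $A_y\in M=H_X$, so $A_y$ is contained in some $M_\a$. Consider the function $g\colon Y\to\k$ sending $y$ to the least $\a$ with $A_y\subseteq M_\a$; this requires no choice. Composing $g$ with a surjection $X\to Y$, which exists in $M$ because $\powerset(X\times Y)\subseteq M$, yields a function from $X$ into $\ord\cap M$. By strong regularity its range is bounded, so there is $\a<\k$ with $A\subseteq M_\a$. Since $M_\a\in M$ and $M$ is closed under subsets, $A\in M$.

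Next we show that the indexed family $\langle A_y:y\in Y\rangle$ lies in $M$. This family is the set of pairs $\{(y,A_y):y\in Y\}$, which is a subset of $Y\times\powerset(A)$ restricted to those $A_y$. It is convenient to pass to the set $W=\{B\in M_\b: B\subseteq A\}$ for a suitable $\b$. By the same bounding argument, every $A_y$ lies in some single $M_\b$, using the least $\b$ with $A_y\in M_\b$, which exists because the $M_\a$ are continuous and union to $M$. Then $Y\times M_\b\in M$ by closure under products, and the family is a subset of $Y\times M_\b$. Closure under subsets therefore puts $\langle A_y:y\in Y\rangle$ in $M$. Finally, $\bigcup_{y\in Y}A_y$ is recovered as well, either from the first step or by noting it is a subset of $\bigcup M_\b\subseteq M_\b$ by transitivity.

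The only delicate point is ensuring that each $A_y$ is an element of some $M_\b$, rather than merely a subset of one. This follows because $M=\bigcup_\a M_\a$ and $A_y\in M$. We also need the bounding function to be defined on a set admitting a surjection from the maximal cardinality. Both facts are immediate from the definitions, so no step is a serious obstacle.
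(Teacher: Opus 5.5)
Your argument is correct and is essentially the paper's own: the paper records this corollary as a byproduct of the proof of Theorem~\ref{M=HXlem}, where strong regularity bounds the hierarchy levels of the relevant sets below $\ord\cap M$, and closure under products and subsets (here applied to $Y\times M_\beta$) then gives membership in $M$. Two harmless redundancies: strong regularity applies directly to $g\colon Y\to\ord\cap M$ because $Y\in M$, so you do not need to compose with a surjection from $X$; and the set $W$ you introduce is never used.
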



\begin{theorem}\label{thrm: zf in s-like} If $M$ is an $H$-like set of successor type, then $M\models \ZF-\sf{Powerset}$. 
\end{theorem}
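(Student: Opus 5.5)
We verify the axioms of $\ZF$ minus Powerset in $M$ one at a time, using fullness, Theorem \ref{M=HXlem} (so $M=H_X$ for a transitive maximal cardinality $X$ of $M$, by Lemma \ref{lem: easy lem1}), and Corollary \ref{cor: zf in s-like} for the axioms that need unions or images.

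\textbf{Extensionality and Foundation.} Extensionality holds because $M$ is transitive. Foundation holds because $M$ is a transitive subset of $V$ and $V$ satisfies $\ZF$.

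\textbf{Pairing, Union, Separation.} For Pairing, given $a,b\in M$, note that $a\times b\in M$ when both are nonempty. A transitive $Z\in M$ containing $a$ gives $\{a\}\in\powerset(Z)\subseteq M$. Combining transitive sets containing $a$ and $b$ (their product is in $M$, and is surjected onto by the maximal cardinality), we get $\{a,b\}\in M$ directly from $M=H_X$: the transitive closure of $\{\{a,b\}\}$ is a surjective image of $X$, since $X\times X$ and $X$ have the same $\lesssim$-size. The empty and degenerate cases are handled separately.

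For Union, if $Y\in M$, apply Corollary \ref{cor: zf in s-like} with $A_y=y$ to get $\bigcup Y\in M$.

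Separation is immediate from closure under subsets: any subset of $Y\in M$, definable over $M$ or not, lies in $M$.

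\textbf{Infinity.} Infinity requires $\omega\in M$. Here I will use the hierarchical decomposition. Since $M$ is closed under products and subsets and is nonempty, it contains $\emptyset$, $\{\emptyset\}$, and so on, so every finite ordinal is in $M$. Thus $\ord\cap M$ is a limit ordinal at least $\omega$. If $\ord\cap M=\omega$, then the map $n\mapsto n$ from $\omega$ is unbounded, so we would need $\omega\in M$ to contradict strong regularity. Instead I will argue that $X$ must be infinite. If $X$ were finite, then $\card{X}\approx\card{X^2}$ forces $\card{X}\le 1$, so $H_X$ is tiny and not hierarchical with the right properties. I expect this degenerate case to be a small point to dispatch; otherwise, for infinite $X$, a surjection from $X$ onto $\omega$ exists by counting finite subsets of $X$ modulo size inside $\powerset(X)$, giving $\omega\in H_X=M$.

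\textbf{Replacement (main step).} Let $Y\in M$ and let $F$ be a function, definable over $M$, with domain $Y$ and values in $M$. Setting $A_y=\{F(y)\}\in M$, Corollary \ref{cor: zf in s-like} gives $\langle A_y:y\in Y\rangle\in M$, and hence $\rge(F)=\bigcup_y A_y\in M$. The real content is inside the Corollary, namely the strong regularity argument in the proof of Theorem \ref{M=HXlem}. The map $y\mapsto$ (least $\alpha$ with $F(y)\in M_\alpha$) is bounded below $\ord\cap M$, so $\rge(F)\subseteq M_\alpha\in M$, and closure under subsets then finishes. Note that strong regularity applies to arbitrary functions in $V$, so definability over $M$ is not even needed.

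I expect the main obstacle to be the bookkeeping for Infinity and the degenerate finite cases, rather than Replacement, which the earlier results essentially hand us.
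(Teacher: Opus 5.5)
Your treatment of Extensionality, Foundation, Separation, Union and Replacement is essentially the paper's argument. Replacement goes by applying strong regularity to $y\mapsto$ the least $\alpha$ with $F(y)\in M_\alpha$ and then using closure under subsets, and Union and Pairing go via Corollary~\ref{cor: zf in s-like}. Those parts are fine.

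\textbf{The gap is in Infinity,} which the paper dismisses as straightforward. Your key step is that an infinite $X$ maps onto $\omega$ ``by counting finite subsets of $X$ modulo size''. This fails: counting by size gives a surjection from $\powerset(X)$ onto $\omega$, not from $X$. Moreover $\powerset(X)\notin M$ by Cantor's theorem, because $X$ is a maximal cardinality of $M$. In $\mathsf{ZF}$ without choice, which is the setting here, an infinite set need not map onto $\omega$ at all (an amorphous set does not). So infiniteness of $X$ cannot by itself give $\omega\in H_X$.

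Infiniteness is immediate anyway: every finite ordinal lies in $M$, so $X$ maps onto each $n$, and your $|X|\le 1$ detour is unnecessary.

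\textbf{The missing ingredient is $|X|\approx|X\times X|$.}
\begin{enumerate}
\item Fix a surjection $s\colon X\to X\times X$ and distinct $a,b\in X$; these exist since $2\in M$.
\item Let $f\colon X\to X\cup\{*\}$ send $x$ to $z$ if $s(x)=(a,z)$, to $*$ if $s(x)=(b,b)$, and to $a$ otherwise. Then $f$ is onto.
\item Let $h(x)$ be the least $n\ge 1$ such that iterating $f$ from $x$ (while the values stay in $X$) reaches $*$ at step $n$, and set $h(x)=0$ if it never does.
\item Some $x$ has $h(x)=1$. If $h(x)=n\geq 1$, then every $y$ with $f(y)=x$ has $h(y)=n+1$. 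By induction, $h\colon X\to\omega$ has unbounded range, and no choice is used.
\item Since $\omega\subseteq M$, the map $h$ goes into $\mathsf{Ord}\cap M$. Strong regularity then gives an ordinal $\beta\in M$ with $\omega\subseteq\beta$, so $\omega\in M$ by transitivity.
\end{enumerate}
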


\begin{proof} The proof that  $M \models {\sf{Replacement}}$ is very similar to the proof of Theorem \ref{M=HXlem}. 
Let $\kappa=\ord\cap M$, and let $\langle M_\a: \a<\kappa \rangle$ be a hierarchical decomposition of $M$. 
Let $f$ be a function from $Y$ to $M$ for some $Y \in M$. 
Because $M$ is strongly regular, there exists a $\b<\kappa$ such that for all $y\in Y$, $f(y)\in M_\b$. Since $M_{\beta} \in M$ and $M$ is closed under subsets, the range of $f$ is in $M$. 

Corollary \ref{cor: zf in s-like} implies that $M$
 satisfies $\sf{Pairing}$ and $\sf{Union}$, and closure under subsets gives $\sf{Comprehension}$. 
The proofs for the other axioms are straightforward. 
\end{proof}

\rthm{preservation of normality}, the main theorem of this section, shows that forcing extensions of successor $H$-like sets are successor $H$-like sets. It follows that set-sized forcing preserves the statement ``$V$ is $H$-like''.

\begin{theorem}\label{preservation of normality} Suppose that $M$ is an $H$-like set, and that $X$ is transitive and a maximal cardinality of $M$. Suppose that $\mathbb{P}\in M$ is a poset and that $G\subseteq \bbP$ is a $V$-generic filter.
Then $M[G]=(H_X)^{V[G]}$ and \[V[G]\models \text{``$H_X$ is $H$-like''.}\]
\end{theorem}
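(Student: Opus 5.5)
The plan is to prove the two inclusions $M[G]\subseteq (H_X)^{V[G]}$ and $(H_X)^{V[G]}\subseteq M[G]$, and then to check the clauses of $H$-likeness for $M[G]$ directly in $V[G]$. Since $\bbP\in M$ and $M$ is full, we may replace $X$ by a transitive maximal cardinality with $\bbP\in X$ (or at least $\card{\bbP}\lesssim\card{X}$) and $\card{X}\approx\card{X^n}$, as in Lemma \ref{lem: easy lem1}. By Theorem \ref{M=HXlem}, $M=H_X$ in $V$. Fix a hierarchical decomposition $\langle M_\a:\a<\k\rangle$ of $M$ with $\k=\ord\cap M$. Since $M$ is transitive, closed under subsets and $\bbP\in M$, every $\bbP$-name in $V$ of hereditary size $\lesssim\card{X}$ lies in $M$. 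Conversely, the names in $M$ are exactly such names.

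For $M[G]\subseteq H_X^{V[G]}$, take $\dot y\in M$ a name. The transitive closure of $\{\dot y\}$ is in $M$, so it is a surjective image of $X$. The evaluation map sends $\mathrm{tc}(\{\dot y\})$ onto a set containing $\mathrm{tc}(\{\dot y^G\})$, which gives a surjection from $X$ onto $\mathrm{tc}(\{\dot y^G\})$ in $V[G]$.

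The converse is the main obstacle. Suppose $Y\in V[G]$ and $f:X\to\mathrm{tc}(\{Y\})$ is a surjection in $V[G]$. I would argue by $\in$-induction, exactly as in the proof of Theorem \ref{M=HXlem}, and so reduce to the case $Y\subseteq M[G]$. Pick a name $\dot f$ forced by some $p\in G$ to be such a surjection. In $V$, for each $x\in X$ and $q\le p$ choose (by the canonical-name trick, without choice) the set of names $\dot z\in M$ of least $M_\a$-rank such that some $r\le q$ forces $\dot f(\check x)=\dot z$. This defines a function from $X\times\bbP$, which has size $\lesssim\card{X}$, into $\ord\cap M$. Strong regularity bounds its range by some $\b<\k$, so every value of $\dot f$ is forced to have a name in $M_\b$. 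The set $\{(\dot z,r): \dot z\in M_\b,\ r\Vdash \dot z\in\dot Y\}$ is then a subset of $M_\b\times\bbP\in M$, and by fullness it is a name in $M$ for $Y$. The delicate point is organizing the induction so that the relevant names are uniformly bounded. I would handle this by working with the rank function of $\mathrm{tc}(\{Y\})$ pulled back to $X$ and bounding all stages at once via strong regularity.

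Finally, for $H$-likeness in $V[G]$ I would verify each clause. The set $M[G]$ is hierarchical via $\langle M_\a[G]:\a<\k\rangle$, where $M_\a[G]$ is the set of evaluations of names in $M_\a$. This sequence is continuous and increasing, each $M_\a[G]\in M[G]$, and $\ord\cap M[G]=\k$ since names of ordinals of bounded rank evaluate to ordinals below $\k$. Transitive closure and closure under products are routine. Closure under subsets follows from the converse inclusion just proved, since any subset of a member of $H_X$ is in $H_X$. $X$ remains a maximal cardinality, because every element of $M[G]$ is a surjective image of $X$. For strong regularity, given $g:Z\to\k$ in $V[G]$ with $Z\in M[G]$, the same bounding argument as above applies: $Z$ is an image of $X$, so consider the map on $X\times\bbP$ sending $(x,q)$ to the supremum of the ordinals $q$ can force as values. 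This map lies in $V$, and strong regularity of $M$ bounds it.
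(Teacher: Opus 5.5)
Your argument is correct, but it obtains the key inclusion $(H_X)^{V[G]}\subseteq M[G]$ differently from the paper.

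\textbf{The paper's route.} It first shows that every subset of $X$ in $V[G]$ has a nice name (a subset of $\bbP\times X$) lying in $M$, so $\powerset(X)^{V[G]}\subseteq M[G]$. This step uses only closure of $M$ under products and subsets, not strong regularity. It then codes a transitive $Y\in (H_X)^{V[G]}$ by $E=\{(a,b)\in X^2: f(a)\in f(b)\}$, gets $E\in M[G]$, and recovers $Y$ as the collapse of $E$. For this last step it appeals to a sketched induction showing that $M$ contains names for the collapses of names of well-founded extensional relations.

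\textbf{Your route.} You skip the coding and the collapse entirely. $\in$-induction in $V[G]$ reduces to the case $Y\subseteq M[G]$. One application of strong regularity of $M$, to the least-$M_\alpha$-rank map on $X\times\bbP$, then puts names for all elements of $Y$ inside a single $M_\beta$. After that, $\{(\dot z,r):\dot z\in M_\beta,\ r\Vdash\dot z\in\dot Y\}$ is a name in $M$ for $Y$.

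\textbf{What each buys.} Your version makes explicit exactly where strong regularity enters, which the paper leaves inside the collapse induction. The paper's version isolates the fact $\powerset(X)^{V[G]}\subseteq M[G]$, which holds for any full $M$. The remaining checks are the same as in the paper: the decomposition $\langle M_\alpha[G]:\alpha<\kappa\rangle$, fullness via $H_X$, and strong regularity by bounding decided values on $X\times\bbP$.

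\textbf{Two small remarks.}
\begin{enumerate}
\item The ``delicate point'' you flag is not really there. The plain $\in$-induction already suffices: at each step you only need $Y\subseteq M[G]$, and strong regularity supplies the bound $\beta$ at that step. No uniform bound across stages is required.
\item Take $\dot f$ to be forced onto $\dot Y$, not onto $\mathrm{tc}(\{\dot Y\})$. The value $Y$ itself does not yet have a name in $M$, so your least-rank map would be undefined at the corresponding $(x,q)$. A retraction of your surjection onto $\mathrm{tc}(\{Y\})$ gives one onto $Y$ when $Y\neq\emptyset$.
\end{enumerate}
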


\begin{proof} Let $\kappa = M \cap \ord$, and let $\langle M_\a: \a<\kappa\rangle$ be a hierarchical decomposition of $M$ with $\bbP \in M_{0}$. 

\begin{claim}\label{claim 1}
If $\sigma$ is a $\mathbb{P}$-name such that $\sigma_G$ is a subset of $X$, then there is $\tau\in M$ such that $\sigma_G=\tau_G$. Hence, $\powerset(X)^{V[G]}\subseteq M[G]$.
\end{claim}

\begin{proof}
Let $p\in G$ be such that $p\forces \sigma\subseteq \check{X}$, and let $\tau$ be the set of $(q, x)\in \bbP\times X$ such that $q\leq p$ and $q\forces \check{x}\in \sigma$. We then have that $\tau\in M$, and $\sigma_G=\tau_{G} =\{ x: \exists q \in G ((q, x)\in \tau)\}$. 
\end{proof}

\begin{claim}\label{claim 2} $M[G]=(H_X)^{V[G]}$. 
\end{claim}

\begin{proof} First, suppose that $Y = \sigma_{G}$, for some $\bbP$-name $\sigma$ in $M$. We want to see that $V[G]$ contains a surjection from $X$ onto the transitive closure of $\{Y\}$. The union of $\bbP \times \{\sigma\}$ with the transitive closure of $\{\sigma\}$ contains a $\bbP$-name for the transitive closure of $\{\sigma_{G}\}$. Since $X$ is a maximal cardinal of $M$, there is a surjection from $X$ onto this name, which induces a surjection from $X$ onto the realization of the name in $V[G]$. 

For the reverse inclusion, suppose that $Y\in (H_X)^{V[G]}$ is a transitive set and that $f \colon X\rightarrow Y$ is a surjection in $V[G]$. Let $E=\{ (a, b)\in X^2: f(a)\in f(b)\}$. Then $E\in M[G]$, by Claim \ref{claim 1}, so $E = \sigma_{G}$ for some $\bbP$-name $\sigma$ in $M$. Arguing by induction on $\alpha$ as in the proof of Theorem \ref{M=HXlem} (or using Theorem \ref{thrm: zf in s-like}), one can show that whenever $M$ contains a $\bbP$-name for a wellfounded extensional relation of rank $\alpha$, it contains a $\bbP$-name for a transitive set whose ordering by $\in$ is isomorphic to this relation. 
\end{proof}

It follows from Claim \ref{claim 2} that $\langle M_{\alpha}[G] : \alpha < \kappa\rangle$ is a hierarchical decomposition of $M[G]$. 

\begin{claim}\label{claim: preservation of strong regularity} In $V[G]$, $M[G]$ is strongly regular.
\end{claim}

\begin{proof} 
By Claim \ref{claim 2} it suffices to fix an $f\colon X\to \kappa$ in $V[G]$ and show that $f[X]$ is bounded below $\kappa$. Let $\dot{f}$ be a $\bbP$-name in $V$ such that $\dot{f}_{G} = f$, and let $p\in G$ be such that $p\forces \dot{f}\colon\check{X}\to \check{\kappa}$. 
Since $M$ is strongly regular and $\bbP \times X \in M$, there exists a $\tau < \kappa$ such that $\xi < \tau$ whenever some $r \leq p$ forces that $\dot{f}(\check{x}) = \xi$. 
\end{proof}
 

To show that $M[G]$ is $H$-like in $V[G]$, it remains only to show that it is full. 
Since each $M_\a[G]$ is transitive, we have that $M[G]$ is transitively closed. It follows from Claim \ref{claim 1} and Claim \ref{claim 2} that $M[G]$ is closed under subsets and products. 
\end{proof}

\section{On Dependent Choice}

Our forcing construction for producing a model of the Axiom of Choice will proceed by forcing successively stronger forms of the axiom of Dependent Choice. 

\begin{definition}\label{def: kappa closed}\normalfont Suppose that $T$ is a binary transitive relation on a set $X$. 
\begin{enumerate}
    \item Given an ordinal $\eta$, an $\eta$-\emph{chain through} $T$ is a function $f \colon \eta \to X$ such that \[(f(\alpha), f(\beta)) \in T\] for all $\alpha < \beta < \eta$.

    \item A $T$-\textbf{upper bound} for an $\eta$-chain $f$ is a $u \in X$ such that $(f(\alpha), u)$ for all $\alpha < \eta$.

    \item We say that $T$ is $\k$-\textbf{closed} if $f$ has a $T$-upper bound whenever $\eta<\k$ and $f\colon \eta\rightarrow X$ is an $\eta$-chain through $T$.
\end{enumerate}
\end{definition}

\begin{definition}\normalfont Let \(\kappa\) be a cardinal, and let $X$ be a set.
Then 
\begin{enumerate}
    \item $\kappa$-\textbf{Dependent Choice for} $X$ ($\DC_\k(X)$) is the statement that if $T$ is a $\k$-closed binary transitive relation on $X$, then there is a $\k$-chain through $T$, and

    \item $\kappa$-\textbf{Dependent Choice} ($\DC_{\kappa}$) is the statement that $\DC_{\kappa}(X)$ holds for all sets $X$. 
\end{enumerate}
\end{definition}

\begin{remark}\label{rem:dcac}\hspace{.1in}\normalfont
\begin{enumerate}
    \item It follows from the definitions that $\DC_{\lambda}$ implies $\DC_{\kappa}$ whenever $\lambda > \kappa$.

    \item\label{dcrem2} If $\kappa$ is a cardinal and $Y$ is a surjective image of $X$, then $\DC_{\kappa}(X)$ implies $\DC_{\kappa}(Y)$. 

    \item For a given cardinal $\kappa$, $\DC_{\kappa}$ implies that for every set $X$ there exists either a wellordering of $X$ or an injection from $\kappa$ to $X$ (consider the set of functions $f \colon \gamma \to X$ which are either surjective or injective, for some $\gamma < \kappa$, ordered by extension).

    \item When $\kappa$ is singular, $\DC_{\kappa}$ follows from $\bigwedge_{\lambda < \kappa}\DC_{\lambda}$.

     \item The Axiom of Choice is equivalent to the assertion that $\DC_{\kappa}$ holds for all cardinals $\kappa$, which is equivalent to the assertion that $\DC_{\kappa}$ holds for a proper class of $\kappa$.
\end{enumerate}
\end{remark} 

The next lemma will be used to show that $\DC_{\kappa}$ is preserved by terminal segments of our forcing iteration. 

\begin{lemma}[$\ZF$]\label{lem: dc through closure} If $\kappa$ is an infinite cardinal such that $\DC_\k$ holds, and $\bbP$ is a $\k^+$-closed poset, then $\bbP$ forces $\DC_\k$.  
\end{lemma}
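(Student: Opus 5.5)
We argue in $V$ with $\bbP$-names. Fix a condition $p$, a name $\dot X$ and a name $\dot T$ such that $p$ forces that $\dot T$ is a $\check\k$-closed transitive relation on $\dot X$. We need some $q\le p$ forcing that there is a $\k$-chain through $\dot T$. Since $\dot X$ may have no ground-model enumeration, we work with names for elements. Let $D$ be a set of $\bbP$-names large enough that every element of $\dot X$ is forced to have a name in $D$; one can take the names in the transitive closure of $\dot X$ of suitable rank. We then define a transitive relation $S$ on $\bbP\times D$ as follows: $(q,\sigma)\,S\,(q',\sigma')$ iff
\begin{enumerate}
\item $q'\le q\le p$,
\item $q'\forces \sigma'\in\dot X$, and
\item $q'\forces (\sigma,\sigma')\in \dot T$.
\end{enumerate}
Transitivity of $S$ follows from transitivity of $\dot T$ (forced) and transitivity of $\le$. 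To apply $\DC_\k$ directly we should first restrict to pairs with $q\forces \sigma\in\dot X$, starting from a pair below $p$.

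The main step is to check that $S$ is $\k$-closed. Let $\eta<\k$ and let $\langle (q_\a,\sigma_\a):\a<\eta\rangle$ be an $\eta$-chain through $S$. Since $\eta<\k<\k^+$ and $\bbP$ is $\k^+$-closed, the decreasing sequence $\langle q_\a\rangle$ has a lower bound $r$. This uses only that the chain is given, so no choice is needed. Now $r$ forces that $\langle \sigma_\a:\a<\eta\rangle$ evaluates to an $\eta$-chain through $\dot T$; this sequence lies in $V[G]$ because the chain of names is in $V$. By $\k$-closure of $\dot T$, $r$ forces that there is an upper bound. Choose a name $\sigma\in D$ and some $r'\le r$ with $r'\forces$ ``$\sigma$ is a $\dot T$-upper bound''. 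Finding $r'$ and $\sigma$ requires no choice: fullness of $D$ gives that some pair exists, and we need only existence, not a selection function. Then $(r',\sigma)$ is an $S$-upper bound.

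By $\DC_\k$ in $V$ there is a $\k$-chain $\langle (q_\a,\sigma_\a):\a<\k\rangle$ through $S$. We cannot use closure to get a single condition below all $q_\a$ for $\a<\k$, since $\k<\k^+$ but this needs $\k$-many steps. That is fine, because closure for sequences of length $\k$ is exactly what $\k^+$-closed provides: a lower bound $q$ of $\langle q_\a\rangle$ exists. Then $q\le p$, and $q$ forces that $\langle\sigma_\a:\a<\k\rangle$ names a $\k$-chain through $\dot T$. Density then yields the result.

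The expected obstacle is the choice of $D$ as a set, so that $S$ is a set relation in $V$, together with ensuring that every element of $\dot X$ is named in $D$ below any condition. I would take $D$ to consist of names whose hereditary content is bounded by that of $\dot X$, or more simply restrict to names $\sigma$ with $(\sigma,s)\in\dot X$ for some $s$. Every element of $\dot X^G$ is of this form, which suffices to find $\sigma$ and $r'$ in the closure step.
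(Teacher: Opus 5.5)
Your argument is correct and is essentially the paper's proof. Both use the same auxiliary relation $S$ on condition--name pairs below $p$, derive the $\kappa$-closure of $S$ from the $\kappa^+$-closure of $\bbP$ together with the forced $\kappa$-closure of $\dot T$, and then apply $\DC_\kappa$ in $V$ and take a single lower bound for the resulting length-$\kappa$ sequence of conditions.

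Restricting the names to $\dom(\dot X)$ is a useful precision that the paper omits: its $Y$, taken literally over all names $\sigma$ with $q\forces\sigma\in\dot X$, is a proper class. You should, however, delete the self-contradictory sentence claiming that closure cannot give a lower bound for the $\kappa$-sequence, since the next sentence correctly shows that it does.
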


\begin{proof} Let $\dot{T}$ be a $\bbP$-name for a tree on a set given by a $\bbP$-name $\dot{X}$. Let $p\in \bbP$ be a condition which forces that $\dot{T}$ is $\k$-closed. Let $Y$ be the set of pairs $(q, \sigma)$ such that $q\leq p$ and $q\forces \sigma\in \dot{X}$. Let $S$ be the transitive binary relation on $Y$ defined by setting $((q, \sigma), (q', \sigma'))$ to be in $S$ if and only if $q'\leq q$ and $q'\forces (\sigma, \sigma')\in \dot{T}$. 

We claim that $S$ is $\k$-closed. Indeed, if $\eta<\k$ and $h:\eta\rightarrow Y$ is a chain of length $\eta$ then, setting each value $h(\alpha)$ to be $(q_\alpha, \sigma_\alpha)$, we have that for all $\alpha < \beta < \eta$, $q_{\beta}\leq q_{\alpha}$. Since $\bbP$ is $\k$-closed, we can find $q$ such that $q \leq q_{\alpha}$ for all $\alpha < \eta$. We then have that for all $\alpha < \beta < \eta$, $q\forces (\sigma_\alpha, \sigma_{\beta})\in \dot{T}$. It follows from the choice of $p$ that there exist $\sigma\in V^\bbP$ and $r\leq q$ such that $r\forces \forall \alpha<\check{\eta}( \sigma_{\alpha},  \sigma)\in \dot{T}$. Hence, for all $\alpha < \eta$, $((q_\alpha, \sigma_\alpha), (r, \sigma))\in S$. This shows that $S$ is indeed $\k$-closed. 

Using $\DC_\k$, we can find a chain $((q_\alpha, \sigma_\alpha):\alpha<\k)$ through $S$. Because $\bbP$ is $\k^+$-closed, we can find some $q\in \mathbb{P}$ such that for every $\alpha < \eta$, $q\leq q_\alpha$. It then follows that $q\forces ``(\sigma_\alpha:\alpha<\check{\k})$ is a $\check{\k}$-chain through $\dot{T}"$. 
\end{proof}

Given sets $X$ and $Y$, $X^{Y}$ denotes the set of functions from $Y$ to $X$. Given an ordinal $\eta$ and a set $X$, we write $X^{\less\eta}$ for $\bigcup_{\alpha < \eta}X^{\alpha}$.

\begin{remark}\label{goodrem}\normalfont Let $\kappa$ be an infinite cardinal for which $\DC_{\kappa}$ holds. 
If $\langle X_{\alpha} : \alpha < \kappa\rangle$ is such that $\card{X_{\alpha}} \lesssim \kappa$ for all $\alpha < \kappa$, then $\card{\bigcup_{\alpha < \kappa}X_{\alpha}} \lesssim \kappa$. 
In particular, if $\card{\alpha^{\beta}} \lesssim \kappa$ for all $\alpha, \beta < \kappa$, then $\card{\bigcup_{\alpha < \kappa}\alpha^{\less\kappa}} \approx\kappa$.
\end{remark}

\begin{definition}\normalfont If $\kappa$ is a cardinal and $X$ is a set, then \(X\) is $\less\kappa$-\textbf{closed} if $X^{\less\kappa} \subseteq X$. 
\end{definition}

\begin{lemma}\label{lem: ls} Suppose that $\k$ is an infinite regular cardinal, $\ZF+\DC_\k$ holds, and for every $\a<\b<\k$, $\card{\a^\b}\lesssim \k$. Suppose that $W$ is a $\less\k$-closed transitive structure.
Then for each $A \subseteq \kappa$ with $\card{A} \lesssim \kappa$ there is an $X\preceq W$ such that $A \subseteq X$,  $\card{X}\lesssim\k$ and $X$ is $\less\k$-closed.\footnote{This lemma appears not to be used.}
\end{lemma}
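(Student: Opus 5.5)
The plan is a Löwenheim–Skolem argument: build $X$ as the union of a continuous increasing $\kappa$-sequence of elementary substructures, using $\DC_\kappa$ to make the choices along the way. The statement is oddly phrased, with $A\subseteq\kappa$ and $X\preceq W$; I read it as $A\subseteq W$ with $\card{A}\lesssim\kappa$, and the argument does not care which. The natural tool in the absence of full choice is to work with surjections from $\kappa$. So rather than building bare sets, we build pairs $(X_\alpha, g_\alpha)$ where $g_\alpha\colon\kappa\to X_\alpha$ is a surjection.

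\textbf{Successor step.} Given $X_\alpha$ with $g_\alpha$ and $\card{X_\alpha}\lesssim\kappa$, we produce $X_{\alpha+1}\supseteq X_\alpha$ which contains all ${<}\kappa$-sequences from $X_\alpha$ and contains witnesses for all existential formulas with parameters in $X_\alpha$.

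The sequences are easy. Every $s\in X_\alpha^{\beta}$ with $\beta<\kappa$ has the form $g_\alpha\circ t$ for some $t\colon\beta\to\kappa$. To see this, choose for each $\gamma<\beta$ the least preimage of $s(\gamma)$ under $g_\alpha$; no choice is needed here. Hence the set of such $s$ is a surjective image of $\bigcup_{\beta<\kappa}\kappa^\beta$. Since $\kappa$ is regular, this set is $\bigcup_{\gamma,\beta<\kappa}\gamma^\beta$. By the hypothesis $\card{\a^\b}\lesssim\k$ together with Remark~\ref{goodrem} (which uses $\DC_\kappa$), that union has size $\lesssim\kappa$. Because $W$ is ${<}\kappa$-closed, these sequences all lie in $W$.

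The witnesses are the hard part. There are $\lesssim\kappa$ pairs (formula, parameter tuple) from $X_\alpha$, but choosing a witness for each pair requires a choice of $\kappa$ many sets. Here I would use $\DC_\kappa$ directly. Enumerate the pairs via a surjection $h\colon\kappa\to\omega\times X_\alpha^{<\omega}$. Then consider the transitive relation on partial witness functions $w\colon\gamma\to W$, with $\gamma<\kappa$, where $w(\delta)$ witnesses the $h(\delta)$-th formula whenever a witness exists, ordered by end-extension. This relation is $\kappa$-closed, because unions of chains of length less than $\kappa$ are still partial witness functions of length less than $\kappa$. So $\DC_\kappa(W^{<\kappa})$ yields a $\kappa$-chain, and its union is a full witness function.

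\textbf{Iteration.} The same device handles the whole iteration. We apply $\DC_\kappa$ once, to the relation on sequences $\langle (X_\delta,g_\delta):\delta<\gamma\rangle$ of length $\gamma<\kappa$ that obey the successor recipe and take unions at limits. At a limit we need a surjection $\kappa\to\bigcup_{\delta<\gamma}X_\delta$. We get it from $\langle g_\delta\rangle_{\delta<\gamma}$ via $\kappa\times\gamma\to\kappa$, and because the $g_\delta$ are recorded in the sequence, no choice is required. To make the successor step uniform inside this chain, I would let the chain elements also carry the witness functions and sequence enumerations. The relation is then $\kappa$-closed, and any extension problem at a successor is solved by the inner $\DC_\kappa$ argument above. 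Since $\DC_\kappa$ gives chains of length $\kappa$ but is applied here to a relation whose closure itself uses $\DC_\kappa$, this is fine.

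Let $X=\bigcup_{\alpha<\kappa}X_\alpha$. Then $\card{X}\lesssim\kappa$ via $\kappa\times\kappa\to X$. By Tarski–Vaught, $X\preceq W$, and $A\subseteq X_0$. For ${<}\kappa$-closure: any $s\in X^\beta$ with $\beta<\kappa$ has range bounded in some $X_\alpha$ by regularity of $\kappa$, and this bound is obtained choicelessly by taking the least $\alpha$ for each coordinate. So $s\in X_{\alpha+1}$.

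The main obstacle is the bookkeeping needed to make all of this choiceless: each level must carry its surjection, so that limits, sizes, and least-index selections are canonical. All genuine choice is funnelled through $\DC_\kappa$ applied to relations that are $\kappa$-closed, which must be verified at limits of length less than $\kappa$.
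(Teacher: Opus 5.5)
Your proposal is correct and is essentially the paper's argument. The paper also applies $\DC_\k$ to a $\k$-closed transitive relation on pairs $(X,f)$ with $A\subseteq X\subseteq W$ and $f\colon\k\to X$ a surjection (so it too presupposes $A\subseteq W$), getting $X^{{<}\k}$ via Remark~\ref{goodrem} and the Tarski--Vaught witnesses via an inner use of $\DC_\k$. The only difference is cosmetic: the paper lets the union of a limit-length chain serve directly as its upper bound instead of building a tree of continuous sequences, so your extra bookkeeping (recording the witness functions in the chain) is harmless but unnecessary.
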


\begin{proof} We build $X$ as the union of a $\subseteq$-chain $\langle X_\a: \a<\k\rangle$, which we will obtain by applying $\DC_\k$ to a $\k$-closed transitive binary relation $U$. The domain of $U$ consists of those pairs $(X, f)$ such that $A \subseteq X \subseteq W$ and $f \colon \kappa \to X$ is a surjection. 
Given two such pairs $(X, f)$ and $(X', f')$, we set $((X,f),(X',f'))$ to be in $U$ if and only if 
\begin{enumerate}
\item $X \cup X^{\less\kappa}\subseteq X'$ and
\item whenever $s \in X^{<\omega}$ and $\varphi$ is a $(|s|+1)$-ary formula such that $W \models \exists x\varphi[x, s]$, there exists a $b \in X'$ such that $W \models \varphi[b, s]$. 
\end{enumerate}
Whenever $\langle (X_{\alpha}, f_{\alpha}) : \alpha < \eta \rangle$ is a $U$-increasing sequence, for some limit ordinal $\eta \leq \kappa$, there is a function $f$ such that $(\bigcup_{\alpha < \eta}X_{\alpha}, f)$ is in the domain of $U$, and, by the Tarski-Vaught criterion, $\bigcup_{\alpha < \eta}X_{\alpha} \preceq W$. It suffices then to see that for each $(X, f) \in \dom(U)$ there exists a pair $(X', f)$ with $((X, f), (X', f')) \in \dom(U)$. For the first condition on $X'$, Remark \ref{goodrem} implies that $\card{X \cup X^{\less\kappa}} \lesssim \kappa$ (applying the regularity of $\kappa$). For the second, fixing an enumeration $\langle (s_{\alpha}, \varphi_{\alpha}) : \alpha , \kappa\rangle$ of the pairs $(s, \varphi)$ in question (using $f$), an application of $\DC_{\kappa}$ gives a sequence $\langle b_{\alpha} : \alpha < \kappa\rangle$ such that $W \models \varphi[b_\alpha, s_\alpha]$ for each $\alpha < \kappa$. The set $X'$ can then be $X \cup X^{\less\kappa} \cup \{b_{\alpha} : \alpha < \kappa\}$. 
\end{proof} 

\section{A reflection principle}

We use the following notion of closure relative to an arbitrary set.

\begin{definition}\normalfont
Suppose $X$ is a set. Then $W$ is $X$-\textbf{closed} if $X\cup \{X\} \cup W^{X} \subseteq W$.
\end{definition}

It follows from the proof of Theorem \ref{thrm: zf in s-like} that if $M$ is a successor $H$-like set and $X \in M$, then $M$ is $X$-closed. Our proofs of the principles $\DC_{\kappa}$ will use the following reflection principle.

\begin{definition}[Reflection]\label{def: kappa reflection}\normalfont For sets $X$, $Y$ and $Z$, ${\sf{ref}}(X, Y, Z)$ is the statement that whenever 
$W$ is a transitive $X$-closed structure with $Y \in W$, and $a$ is an element of $W$, there exist a transitive set $M$ and an elementary embedding $\pi\colon M\to W$ such that 
\begin{itemize}
\item $\{X, Y\}\in M$, 
\item $a\in \rge(\pi)$, 
\item $M\in Z$, 
\item $\pi\rest (X \cup Y\cup \{X,Y\}) =\rm{id}$, and 
\item $M$ is $X$-closed.
\end{itemize}
When $Y=\emptyset$, we write $\rref(X, Z)$.
\end{definition}

Our main application of $\rref(X, Y, z)$ is Lemma \ref{lem:lifting dc1}. The rest of this section concerns properties of this principle. We start with an observation on replacing the first coordinate. 

\begin{remark}\label{switchX}\normalfont Suppose that $X$, $X'$ and $Y$ are sets such that $X, X' \in Y$ and $Y$ is a transitive model of $\ZF - \mathsf{Powerset}$ + $|X| = |X'|$. Then for any transitive structure $W$ with $Y \in W$, $W$ is $X$-closed if and only it if is $X'$-closed. It follows that, for any set $Z$, $\rref(X, Y, Z)$ holds if and only if $\rref(X', Y, Z)$ holds. 
\end{remark}

\begin{definition}\label{closed under definability}\normalfont Suppose that $M$ is a transitive set. Then $M$ is \textbf{definably closed} if whenever $N\in M$ is a transitive set and $X\subseteq N$ is definable over $N$ with parameters in $N$, then $X\in M$.
\end{definition}

Lemma \ref{def closed refl} shows that if one adds to the statement of $\rref(X,Y,Z)$ the condition that $V$ and $W$ are $H$-like, then one gets the additional property that $M$ is hierarchical and definably closed. Note that an $H$-like set $W$ is closed under subsets, but this property is not passed down to every structure which embeds elementarily into it. 

\begin{lemma}\label{def closed refl} Suppose that $V$ is $H$-like and that $X$, $Y$ and $Z$ are sets such that $Z$ is transitive and $\rref(X, Y, Z)$ holds. Then whenever $W$ is a transitive $X$-closed $H$-like set
such that $Y\in W$, and $a$ is an element of $W$, there exist a transitive, hierarchical, definably closed $M$ and an elementary embedding $\pi\colon M\to W$ such that 
\begin{itemize}
\item $\{X, Y\}\in M$, 
\item $a\in \rge(\pi)$, 
\item $M\in Z$, 
\item $\pi\rest X\cup Y\cup \{X, Y\}=\rm{id}$ and 
\item $M$ is $X$-closed.
\end{itemize} 
\end{lemma}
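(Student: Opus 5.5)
The plan is to apply $\rref(X,Y,Z)$ not to $W$ itself but to a larger $H$-like set of successor type that contains $W$ as an element. We then pull $W$ back along the resulting embedding. Properties that $W$ visibly has inside the big structure (being hierarchical, definably closed and $X$-closed) will transfer to its preimage by elementarity plus absoluteness for transitive models.

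\emph{Step 1 (choosing the big structure).} Since $V$ is $H$-like, fix an $H$-like set $W^*$ of successor type with $W\in W^*$. Then $X,Y\in W^*$ by transitivity, since $X\in W$ (as $W$ is $X$-closed) and $Y\in W$. By the remark following the definition of $X$-closedness (a consequence of the proof of Theorem~\ref{thrm: zf in s-like}), $W^*$ is $X$-closed. Also, $W^*\models \ZF-\mathsf{Powerset}$ by Theorem~\ref{thrm: zf in s-like}. Fix a hierarchical decomposition $\vec W=\langle W_\alpha:\alpha<\ord\cap W\rangle$ of $W$. Corollary~\ref{cor: zf in s-like} should give $\vec W\in W^*$. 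Apply $\rref(X,Y,Z)$ to $W^*$ with the parameter $a^*=(W,a,\vec W)$. This yields a transitive $X$-closed $M^*\in Z$ with $\{X,Y\}\in M^*$ and an elementary $\pi^*\colon M^*\to W^*$ fixing $X\cup Y\cup\{X,Y\}$, with $a^*\in\rge(\pi^*)$. Let $(M,\bar a,\vec M)=(\pi^*)^{-1}(a^*)$ and $\pi=\pi^*\rest M$. Since $M\in M^*\in Z$ and $Z$ is transitive, $M\in Z$.

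\emph{Step 2 (elementarity and the easy clauses).} In $W^*$ the satisfaction relation for the set-sized structure $W$ is definable, because $W^*\models\ZF-\mathsf{Powerset}$. The same holds for $M$ in $M^*$, and for transitive models this defined relation is the true one. Hence elementarity of $\pi^*$ yields $M\models\varphi[\vec b]$ iff $W\models\varphi[\pi(\vec b)]$, so $\pi\colon M\to W$ is elementary. Moreover $a=\pi(\bar a)\in\rge(\pi)$. Next, $\{X,Y\}\in W$ and $\pi^*$ fixes it, so $\{X,Y\}\in M$. Finally, $\pi$ fixes $X\cup Y\cup\{X,Y\}$ because $\pi^*$ does.

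\emph{Step 3 (transferring the structural properties).} Each of the following facts is first-order over $W^*$ in the parameters $W$, $\vec W$ and $X$:
\begin{enumerate}
\item $\vec W$ is a continuous $\subseteq$-increasing sequence of transitive sets in $W$ with union $W$;
\item $W$ is definably closed (true since $W$ is closed under subsets);
\item $W^X\subseteq W$.
\end{enumerate}
By elementarity, $M^*$ satisfies the corresponding statements about $M$, $\vec M$ and $X$. All the notions in (1) and (2) are absolute between the transitive $M^*\models\ZF-\mathsf{Powerset}$ and $V$, definability over $N\in M$ included, since satisfaction is $\Delta_1$. So $M$ is hierarchical, witnessed by $\vec M$, and definably closed. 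For $X$-closedness, $X\in M$ because $X\in W$ and $\pi$ fixes $X$. Now let $f\colon X\to M$ be arbitrary. Since $M^*$ is $X$-closed, $f\in M^*$, and $M^*\models M^X\subseteq M$, so $f\in M$.

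\emph{Main obstacle.} I expect the main difficulty to be Step 1 together with the bookkeeping behind the absoluteness claims. One must check that $W^*$ really is $X$-closed and contains $\vec W$, and that enough of $\ZF-\mathsf{Powerset}$ holds in $M^*$ for satisfaction and definability over elements of $M$ to be computed correctly. The latter follows from the elementarity of $\pi^*$ together with Theorem~\ref{thrm: zf in s-like} applied to $W^*$.
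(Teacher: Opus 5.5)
Your proposal is correct and follows essentially the same route as the paper. The paper applies $\mathsf{ref}(X,Y,Z)$ to $H_U$ for a true cardinal $U$ containing $X$, $Y$, $W$ and a hierarchical decomposition of $W$, takes $M$ to be the preimage of $W$, and transfers hierarchicality, $X$-closure and definable closure by elementarity, exactly as you do. You also spell out points the paper leaves implicit, namely the elementarity of $\pi\restriction M$, that $\{X,Y\}\in M$, and that $M\in Z$ follows from the transitivity of $Z$.
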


\begin{proof} Fix $X$, $Y$, $Z$, $W$ and $a$ as in the statement of the lemma. We have that $W$ is hierarchical. Let $\kappa = W \cap \ord$ and let $\langle W_\a: \a<\kappa\rangle$ be a hierarchical decomposition of $W$. Applying the assumption that $V$ is $H$-like, let $U$ be a true cardinal such that $\{X, Y, W, \langle W_\a: \a<\l\rangle\}\in H_U$. We now apply $\rref(X, Y, Z)$ to $H_U$ and $b= \{X, Y, W, a, \langle W_\a: \a<\l \rangle\}$ (we can do this because $H_U$ is $X$-closed and transitive). Let $\pi\colon M'\rightarrow H_U$ be an elementary embedding such that $M'\in Z$, 
$b \in \rge(\pi)$, $\{X, Y\}\in M'$, $\pi\rest X\cup Y\cup\{X, Y\}=\rm{id}$ and $M'$ is $X$-closed. Let $M=\pi^{-1}(W)$. We claim that $\pi\rest M: M\rightarrow W$ is as desired. We have that $\pi^{-1}(\langle W_\a: \a<\l\rangle)$ witnesses that $M$ is hierarchical. Since $M'$ is $X$-closed and $H_U\models ``W$ is $X$-closed'', $M$ is $X$-closed.

It remains to show that $M$ is definably closed. 
Let $N$ be an element of $M$, let $z$ be an element of $N$, and let $A\subseteq N$ and $\varphi$ be such that $A$ is the set of $y \in N$ such that $N\models \varphi[z, y]$. Let $B\subseteq \pi(N)$ be the set of $y$ such that $\pi(N)\models \phi[\pi(z), y)]$. Then $B\in W$, since $W$ is $H$-like. The elementarity of $\pi$ then implies that $\{ y \in A : N \models \varphi[z, y]\}$, i.e., $A$, is in $M$. 
\end{proof} 


\rthm{preserrving reflection} (which is used in the proof of Theorem \ref{thm:forcing choice}) shows that reflection is preserved in some forcing extensions.

\begin{theorem}\label{preserrving reflection} Assume $\ZF$+{$V$ \sf{is} $H$-\sf{like}}. Suppose $M_0\in M_1$ are two $H$-like sets of successor type such that, letting $X$ be a maximal cardinality of $M_0$, $\rref(X, M_0, M_1)$ holds. Let $\mathbb{P}\in M_0$ be a poset, and let $G\subseteq \bbP$ be a $V$-generic filter. Then \[V[G]\models\rref(X, M_0[G], M_1[G]).\]
\end{theorem}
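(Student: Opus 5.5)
We work in $V[G]$ and fix a transitive $X$-closed structure $W^*$ with $M_0[G]\in W^*$ and an element $a^*\in W^*$. We must find a transitive $X$-closed $M^*\in M_1[G]$ and an elementary $\pi^*\colon M^*\to W^*$ that has $a^*$ in its range, fixes $X\cup M_0[G]\cup\{X,M_0[G]\}$ pointwise, and has $\{X,M_0[G]\}\in M^*$. The plan is to pull the situation back to $V$ through names, apply $\rref(X,M_0,M_1)$ there, and then reinterpret the resulting hull in the extension.

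\textbf{Step 1: reduce to a structure in $V$.} Since $V$ is $H$-like, Theorem \ref{preservation of normality} shows that $V[G]$ is $H$-like. Choose a true cardinal $U$ in $V$ with $W^*$, $a^*$ and their names in $H_U^{V[G]}$. By Theorem \ref{preservation of normality}, $H_U^{V[G]}=H_U[G]$. Let $\dot W,\dot a\in H_U$ be names for $W^*$ and $a^*$.

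\textbf{Step 2: apply reflection in $V$.} Apply $\rref(X,M_0,M_1)$ in $V$ to the structure $H_U$, which is transitive and $X$-closed because it is successor $H$-like and $X\in H_U$. Use the parameter $b=\{\bbP,\dot W,\dot a, M_0\}$. This yields an $X$-closed $M\in M_1$ and an elementary $\pi\colon M\to H_U$ with $b\in\rge(\pi)$, $M_0\in M$ and $\pi\rest(X\cup M_0\cup\{X,M_0\})=\mathrm{id}$. Since $\bbP\in M_0$, we get $\pi(\bbP)=\bbP$ and $\pi\rest\bbP=\mathrm{id}$, and likewise every $\bbP$-name lying in $M_0$ is fixed.

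\textbf{Step 3: lift the embedding.} Because $\pi\rest\bbP$ is the identity, $\pi[G]=G$, and $\pi$ lifts to $\pi^+\colon M[G]\to H_U[G]$ via $\pi^+(\sigma_G)=\pi(\sigma)_G$. Well-definedness and elementarity follow from the forcing theorem, which holds inside $M$ by elementarity. The point $M[G]$ is transitive and lies in $M_1[G]$.

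Let $\bar W=\pi^{-1}(\dot W)_G$. Then $\pi^+(\bar W)=W^*$ and $a^*\in\rge(\pi^+)$. Moreover $\pi^+(M_0[G])=M_0[G]$, since $M_0$ and the canonical name for $M_0[G]$ are fixed by $\pi$. Also $\pi^+$ is the identity on $M_0[G]$ and on $X$, because every element of $M_0[G]$ has a name in $M_0$ by Theorem \ref{preservation of normality}. Set $M^*=\bar W$ and $\pi^*=\pi^+\rest\bar W$; this is elementary into $W^*$. Note that $\bar W$ is transitive but need not be in $M_1[G]$ as an element; however it is in $M[G]\in M_1[G]$, and transitivity of $M_1[G]$ gives $\bar W\in M_1[G]$. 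Finally, $H_U[G]\models$ ``$W^*$ is $X$-closed and contains $M_0[G]$'', so elementarity gives that $M[G]\models$ ``$\bar W$ is $X$-closed and contains $M_0[G]$''.

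\textbf{Main obstacle: $X$-closure of $M[G]$.} We need $M[G]$ to be genuinely $X$-closed in $V[G]$, so that the internal statement about $\bar W$ becomes true. Take $f\colon X\to M[G]$ in $V[G]$. For each $x\in X$, use the set of pairs $(p,\sigma)$ with $\sigma\in M$ and $p$ forcing $\dot f(\check x)=\sigma$. By $X$-closure of $M$ and $|\bbP\times X|\lesssim|X|$ (available since $\bbP\in M_0$ and $X$ is a maximal cardinality of $M_0$), this assignment is an element of $M$. From it we obtain, as in Claim \ref{claim 1}, a name in $M$ for $f$. I expect this bookkeeping, and keeping maximality of $X$ in $M_0[G]$ under control via Remark \ref{switchX} if needed, to be the delicate part.
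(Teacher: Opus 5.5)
Your overall architecture matches the paper's. You reflect an $H_U$ containing names for $W^*$ and $a^*$, lift the hull embedding to $M[G]$, restrict to the preimage of $\dot W$, and reduce everything to the $X$-closure of $M[G]$ in $V[G]$. The gap is exactly in that last step, which you flagged as delicate.

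\textbf{Why the closure step fails as written.} $X$-closure of $M$ only says that functions from $X$ into $M$ are elements of $M$. For your assignment $x\mapsto S_x$ to be such a function, each
$S_x=\{(p,\sigma):\sigma\in M,\ p\forces\dot f(\check x)=\sigma\}$
would itself have to be an element of $M$. In general it is not.
\begin{itemize}
\item Padding a witnessing name with pairs $(\tau,r)$, where $r$ is incompatible with $p$ and $\tau\in M$ has large rank, gives witnesses of rank cofinal in $M$. So $S_x$ is unbounded in $M$.
\item The hull $M$ is not closed under subsets; only $H_U$ is. So even bounded pieces of $S_x$ need not lie in $M$.
\item Without choice you cannot thin $S_x$ down to one name per $x$.
\end{itemize}
Appealing to Claim~\ref{claim 1} does not help, because that argument used closure under subsets of the ambient $H$-like set, which $M$ lacks.

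\textbf{What is missing.} You need a canonical, bounded selection of names that provably lands in $M$. The paper obtains this by first strengthening reflection via Lemma~\ref{def closed refl}. There $\rref(X,M_0,M_1)$ is applied to a larger $H_{U'}$ that contains $H_U$ together with a hierarchical decomposition of it, and one pulls back. The resulting hull $N$ is hierarchical, with decomposition $\langle N_\alpha:\alpha<\eta\rangle$, and definably closed. The argument then runs as follows.
\begin{itemize}
\item For $x\in X$ and $q\le p$ forcing $\dot f(\check x)=\rho$ for some $\rho\in N$, let $\alpha_{x,q}$ be least such that some such $\rho$ lies in $N_{\alpha_{x,q}}$.
\item Let $P_{x,q}$ be the set of all such $\rho\in N_{\alpha_{x,q}}$.
\item $P_{x,q}$ is definable over some level $N_\beta$ from one witness $\rho_0$ together with $N_{\alpha_{x,q}}$, $q$ and $\bbP$. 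So $P_{x,q}\in N$ by definable closure.
\item Only now does $X$-closure, combined with a surjection $X\to X\times\bbP$ lying in $M_0$, put the map $(x,q)\mapsto P_{x,q}$ into $N$.
\item From this map one builds a name $\dot g\in N$ with $p\forces\dot f=\dot g$.
\end{itemize}
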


\begin{proof} Let $W\in V[G]$ be an $X$-closed transitive structure such that $M_0[G]\in W$, and let $a$ be an element of $W$. Let $Z$ be a true cardinal of $V$ such that there are $\bbP$-names $\sigma, \tau\in H_Z$ with the property that $\sigma_G=W$ and $\tau_G=a$, and moreover $M_0\in H_Z$. 

Since $H_Z$ is $X$-closed, applying $\rref(X, M_0, M_1)$ and  \rlem{def closed refl} we get an elementary map $\pi\colon N\to H_Z$ and $\bbP$-names $\sigma', \tau'\in N$ such that 
\begin{itemize}
    \item $N\in M_1$,

    \item $M_0\in N$,

    \item $\{\sigma, \tau\}\in \rge(\pi)$,

    \item $\pi^{-1}(\sigma, \tau)=(\sigma', \tau')$,

    \item $\pi\rest M_0\cup \{M_0\}=\rm{id}$, and

    \item $N$ is $X$-closed, transitive, hierarchical and definably closed.
\end{itemize}

Letting $\pi^{+}(\rho_{G}) = \pi(\rho)_{G}$, for each $\bbP$-name $\rho \in N$, 
we get that $\pi$ extends to an elementary $\pi^+ \colon N[G]\to H_Z[G]$. 
Since $W = \pi(\sigma')_{G}$ and $a = \pi(\tau')_{G}$, we have that $(W, a)\in \rge(\pi^+)$. Let $K = \sigma'_{G}$ and $b = \tau'_{G}$. 
Then $(K, b)\in N[G]$, $\pi^+(K)=W$ and $\pi^+(b)=a$. Since $\pi\rest M_0\cup \{M_0\}=\rm{id}$, $\pi^+\rest M_0[G]\cup \{M_0[G]\}=\rm{id}$. Since $M_{0}[G] \in W \cap N[G]$, we have that $M_{0}[G] \in K$. 

We now claim that $\pi^+\rest K\colon  K\rightarrow W$ is as desired. Since
\begin{itemize}
    \item $K\in M_1[G]$, 

    \item $M_0[G]\in K$, 

    \item $\pi^+(b)=a$, and 

    \item $\pi^+\rest M_0[G]\cup \{M_0[G]\}=\rm{id}$,
\end{itemize}
\noindent
we will finish if we show that $N[G]$ is $X$-closed in $V[G]$. Indeed, given this, if $h\colon X\rightarrow K$ is a function in $V[G]$, then $h\in N[G]$, and, since $W$ is $X$-closed in $V[G]$, we have that $\pi^+(h)\in W$, which implies that $h\in K$. 

To show that $N[G]$ is $X$-closed in $V[G]$, fix $p\in G$ and a $\bbP$-name $\dot{f}$ such that $p\forces \dot{f}\colon \check{X}\rightarrow N[G]$. Let $\langle N_\a: \a<\eta\rangle$ be a hierarchical decomposition of $N$ with $\eta=\ord\cap N$. 
For each $x\in X$, let $P_x\subseteq \bbP$ be the set of $q\leq p$ such that for some $\bbP$-name $\rho\in N$, $q\forces \dot{f}(\check{x})=\rho$. Since $M_{0}$ is closed under subsets, $P_x\in M_0$, so $P_x\in N$. For each $x\in X$ and $q\in P_x$, let
\begin{itemize}
\item $P_{x, q}'$ be the set of $\bbP$-names $\rho\in N$ for which $q\forces \dot{f}(\check{x})=\rho$,
\item $\a_{x, q}$ be the least ordinal such that $N_\a\cap P_{x, q}'\neq\emptyset$, and 
\item $P_{x, q}$ be $P'_{x, q}\cap N_{\a_{x, q}}$. 
\end{itemize} 

Since $N$ is definably closed, each $P_{x, q}$ is in $N$. To see this, fix $\rho_0\in P_{x, q}$ and $\b<\eta$ such that $\{N_{\a_{x, q}}, M_0\}\in N_\b$. Then $P_{x, q}$ is definable over $N_{\b}$ from $(\rho_0, N_{\a_{x, q}}, q, \bbP)$ as follows: \[\rho\in P_{x, q} \text{ if and only if } N_\b\models \rho\in N_{\a_{x, q}} \wedge q\forces \rho=\rho_0.\] 

Because $N$ is $X$-closed and definably closed, and because there exists a surjection from $X$ to $X \times \bbP$ in $M_{0} \subseteq N$, we have that the function $(x, q)\mapsto P_{x, q}$ is in $N$. Using this function, we can easily define a name $\dot{g}\in N$ such that $p\forces \dot{f}=\dot{g}$. Hence, $\dot{f}_G\in N[G]$. 
\end{proof}

\section{Lifting $\DC$}\label{sec: lifting dc}


Lemma \ref{lem:lifting dc1} is our main tool for proving $\DC_{\kappa}$ from reflection.

\begin{lemma}\label{lem:lifting dc1} Suppose that $\ZF$+{$V$ \sf{is} $H$-\sf{like}}. Let $\k$ is an infinite regular cardinal, and $Y$ and $M$ be sets such that ${\sf{ref}}(\k, Y, M)$ holds and $\DC_\k(N)$ holds for all $N \in M$. Then $\DC_\k$ holds. 
\end{lemma}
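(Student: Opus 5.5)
Fix a set $X$ and a $\k$-closed transitive binary relation $T$ on $X$; we must produce a $\k$-chain through $T$. The plan is to reflect the pair $(X,T)$ into a small model lying in $M$, use $\DC_\k$ there, and push the chain back up by $\pi$.

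Since $V$ is $H$-like, choose a true cardinal $U$ with $X,T,Y\in H_U$ and $\k\in H_U$. By \rthm{thrm: zf in s-like} and the remark after the definition of $X$-closed sets, $W=H_U$ is a transitive $\k$-closed structure containing $Y$; that is, $W^\k\subseteq W$. Apply $\rref(\k,Y,M)$ to $W$ with $a=(X,T)$. This gives a transitive $\k$-closed $N\in M$ and an elementary $\pi\colon N\to W$ with $\pi\rest\k=\mathrm{id}$ and $(X,T)\in\rge(\pi)$. Let $(\bar X,\bar T)=\pi^{-1}(X,T)$.

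First check that $\bar T$ is $\k$-closed in $V$, not merely in $N$. Take $\eta<\k$ and an $\eta$-chain $f\colon\eta\to\bar X$ through $\bar T$. Since $N$ is $\k$-closed, $N^{\eta}\subseteq N$, so $f\in N$. Because $\pi\rest\k=\mathrm{id}$, we have $\pi(f)(\alpha)=\pi(f(\alpha))$ for $\alpha<\eta$, so $\pi(f)=\pi\circ f$ is an $\eta$-chain through $T$. This chain has a $T$-upper bound, so $W$ satisfies ``$\pi(f)$ has a $T$-upper bound.'' By elementarity, $f$ has a $\bar T$-upper bound in $N$, and this is a genuine upper bound because $N$ is transitive.

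Now $\bar X\in N$, and since $M$ is full and transitive (or by the hypothesis as literally stated, applied to the relevant element), $\DC_\k$ applies to $\bar X$. If needed, use \rrem{rem:dcac}(\ref{dcrem2}), or pass to $N$ itself, which is in $M$, with the relation $\bar T$ regarded on $N$ extended trivially. This yields a $\k$-chain $g\colon\k\to\bar X$ through $\bar T$. Then $\pi\circ g\colon\k\to X$ is a $\k$-chain through $T$, since $\pi$ preserves $\bar T$ pointwise by elementarity. Notice that this last step does not require $g\in N$.

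The main obstacle is the technical bookkeeping. We must confirm that $H_U$ is $\k$-closed, and then show that $\bar T$'s closure transfers. That transfer uses exactly that $\pi$ fixes $\k$ pointwise, which forces $\pi(f)=\pi\circ f$, together with the $\k$-closure of $N$. We also need to make sure $\DC_\k$ is available for $\bar X$: the hypothesis gives $\DC_\k(N)$ for $N\in M$, and applying it with the relation restricted to $\bar X\subseteq N$ handles this directly.
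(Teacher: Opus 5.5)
Your proof is correct and is essentially the paper's argument. You reflect $H_U$ via $\rref(\k,Y,M)$ to a transitive $\k$-closed $N\in M$, pull $T$ back to a $\k$-closed relation there, apply $\DC_\k(N)$, and push the chain forward by $\pi$; you spell out the closure transfer, using $\pi\rest\k=\mathrm{id}$, that the paper leaves to ``elementarity.'' One fix: $M$ is not assumed full or transitive, so drop that appeal and use your alternative via Remark~\ref{rem:dcac}(\ref{dcrem2}), which works because $\bar X\subseteq N$ is nonempty (the empty chain through $T$ has an upper bound) and is therefore a surjective image of the reflected $N$.
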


\begin{proof} Let $T$ is a $\k$-closed relation, and let $U$ be a true cardinal such that $\{T, Y, \k\}\in H_U$. Applying $\rref(\k, Y, M)$, fix an elementary embedding $\pi\colon  N\to H_U$ such that $N$ is transitive, $T\in \rge(\pi)$, $N$ is $\k$-closed and $N\in M$. Let $S=\pi^{-1}(T)$. Since $N$ is $\kappa$-closed, the elementarity of $\pi$ gives that $S$ is $\k$-closed. It follows from $\DC_\k(N)$ that there is a $\k$-chain $f\colon \kappa\rightarrow \dom(S)$. We then have that $\pi\circ f\colon \kappa\rightarrow \dom(T)$ is a $\k$-chain.
\end{proof}

Suppose that $\kappa$ is a cardinal and $M$ is a $\kappa$-closed structure. If $\DC_{\kappa}(N)$ holds for all $N \in M$, then $M \models \DC_{\kappa}$. Lemma \ref{lem:lifting dc2} shows that a version of the reverse implication holds. It requires the following $\GCH$-like assumption.

\begin{definition}\label{def: gch in m}\normalfont 
Let $M$ be a set. 
\begin{enumerate}
    \item Given $Y \in M$, $M$ is $Y$-\textbf{full} if, for all $Z \in M$, $Z^{Y} \in M$. 
    \item $M$ is \textbf{completely full} if $M$ is $Y$-full for each $Y \in M$ which is not a maximal cardinality of $M$.

    \item \(M\) is \textbf{completely ordinal-full} if for every ordinal $\eta\in M$, if $\eta$ is less than some cardinal of $M$, then $M$ is $\eta$-full. 
\end{enumerate}
\end{definition}

\begin{lemma}\label{lem:lifting dc2} Suppose that 
\begin{itemize}
\item $M$ is $H$-like of successor type, 
\item $\lambda$ is a regular cardinal of $M$, 
\item $M$ is $\eta$-full for each $\eta < \lambda$ and 
\item $M \models \DC_{\lambda}$. 
\end{itemize}
Let $Y$ be a surjective image of $M$. Then $\DC_\l(Y)$ holds. In particular, $H_{M} \models \DC_{\lambda}$. 
\end{lemma}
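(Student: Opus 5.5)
By Remark~\ref{rem:dcac}(\ref{dcrem2}), it suffices to prove $\DC_\lambda(M)$, since $Y$ is a surjective image of $M$. So we fix a $\lambda$-closed transitive relation $T$ on $M$. Our goal is a $\lambda$-chain through $T$. The difficulty is that $T$ need not be an element of $M$; it is only a subset of $M\times M$.

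We reduce to a relation that does live in $M$. Let $X$ be a transitive maximal cardinality of $M$, which exists by Lemma~\ref{lem: easy lem1}. By Lemma~\ref{closure under cartesian products} there is a surjection $g\colon\powerset(X)\to M$, and $\powerset(X)\subseteq M$ by fullness. The natural idea is to work with initial segments rather than individual points. Let $\mathcal{S}$ be the set of $\eta$-chains through $T$ for $\eta<\lambda$, ordered by proper end-extension. This relation is transitive.

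We first check that $\mathcal{S}\subseteq M$. A chain $s\colon\eta\to M$ with $\eta<\lambda$ is a function from $\eta$ into $M$. Since $M$ is $H$-like of successor type, it is strongly regular. So the range of $s$ lies in some level $M_\alpha$ of a hierarchical decomposition, and $M_\alpha\in M$. The $\eta$-fullness hypothesis gives $M_\alpha^\eta\in M$. Hence $s\in M$, because $M$ is closed under subsets of members; one can also invoke Corollary~\ref{cor: zf in s-like}.

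Next we check that the end-extension order on $\mathcal{S}$ is $\lambda$-closed. Take a chain of sequences of length $\eta'<\lambda$. Its union is an $\eta''$-chain through $T$. By regularity of $\lambda$ we have $\eta''<\lambda$, provided lengths are bounded; if we instead require strict extension and use $\lambda$-closedness of $T$ to add a top element, the union remains a legitimate element of $\mathcal{S}$. The main point is that successors always exist, since $T$ being $\lambda$-closed lets us extend any $\eta$-chain by a $T$-upper bound. This amounts to applying $\DC_\lambda$ to choose extensions, but only one extension is needed at a time.

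The main obstacle is that $\mathcal{S}$ is still only a subset of $M$, not an element of it, so $M\models\DC_\lambda$ does not apply directly. We plan to localize. Using strong regularity together with a reflection-free argument, we look for $\alpha$ such that $T\cap(M_\alpha\times M_\alpha)$ is $\lambda$-closed on $M_\alpha$. We build an increasing sequence $\alpha_0<\alpha_1<\cdots$ of length $\omega$, or of length $\lambda$, where $M_{\alpha_{n+1}}$ contains $T$-upper bounds for all ${<}\lambda$-chains from $M_{\alpha_n}$. The number of such chains is bounded by $|M_{\alpha_n}^{<\lambda}|$, an element of $M$, so by strong regularity the required bounds lie in some $M_\beta$ with $\beta<\ord\cap M$.

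We must choose $\beta$ definably, without using choice. We take $\beta$ to be the least ordinal such that every chain has an upper bound in $M_\beta$; its existence is a bounding argument applied to the function sending each chain to the least rank of an upper bound. After closing off under countably many steps, the resulting $M_{\alpha^*}$ can be made to have $\cf(\alpha^*)\ge\lambda$ by iterating $\lambda$ many times. Then $T\cap M_{\alpha^*}^2\in M$, since it is a subset of a member of $M$, and it is $\lambda$-closed. Applying $M\models\DC_\lambda$ yields a $\lambda$-chain through it, which is also a chain through $T$.

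For the final clause, $H_M$ has maximal cardinality $M$, so every element of $H_M$ is a surjective image of $M$. The first part then gives $\DC_\lambda(Z)$ for all $Z\in H_M$, and hence $H_M\models\DC_\lambda$.
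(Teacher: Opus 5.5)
Your argument is correct and is essentially the paper's proof. You localize $T$ to $T\cap M_{\alpha^*}^2$, where $\alpha^*$ has $\cf(\alpha^*)=\lambda$ and is a closure point of the ``least level containing an upper bound'' function; this $\alpha^*$ comes from a canonical $\lambda$-length iteration justified by strong regularity and $\eta$-fullness, so the restriction is a $\lambda$-closed element of $M$ to which $M\models\DC_\lambda$ applies. Two remarks: the detour through the end-extension order $\mathcal{S}$ is unnecessary, and for uncountable $\lambda$ you should drop the $\omega$-length option, since only the $\lambda$-length iteration guarantees that every ${<}\lambda$-chain in $M_{\alpha^*}$ already lies in an earlier level.
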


\begin{proof} By part (\ref{dcrem2}) of Remark \ref{rem:dcac}, it suffices to show that $\DC_{\lambda}(M)$ holds. Set $\k=\ord \cap M$ and let $\langle M_\a: \a<\k\rangle$ be a hierarchical decomposition of $M$. Let $T$ be a $\l$-closed relation on $M$. 
For each $\a<\k$ let $T_{\alpha}$ be the restriction of $T$ to $M_{\alpha}$. 
Since $M$ is closed under subsets and products, $S_\a\in M$. We have that for $\a<\b<\k$, $S_\a\subseteq S_\b$. We claim that there is $\a<\k$ such that $S_\a$ is $\l$-closed. This follows from the fullness assumption on $M$ and its the strong regularity. Indeed, for each $\a<\k$, let $h(\a)$ be the least ordinal below $\k$ such that whenever $\eta<\l$,
\begin{enumerate}
\item $\dom(S_\a)^{\eta}\in M_{h(\a)}$,
\item whenever $g\colon\eta\rightarrow \dom(S_\a)$ is an $S_\a$-chain, $S_{h(\a)}$ contains an upper bound for $g$. 
\end{enumerate}
Because $M$ is strongly regular and $\eta$-full for each $\eta < \lambda$, we have that each value $h(\a)$ is less than $\k$ as desired. Indeed, for each $\eta<\l$, since $\dom(S_\a)^{\eta}\in M$, it follows from \rcor{cor: zf in s-like} that there is $\b<\k$ such that $\bigcup_{\eta < \lambda}\dom(S_{\alpha})^{\eta}\in M_\b$. Next, because $T$ is $\eta$-closed, it follows from the strong regularity of $M$ that there is a $\gg\in [\b, \k)$ such that every chain $g\colon\eta \rightarrow \dom(S_\a)$ has an upper bound in $S_\gg$. Thus, $h(\a)<\k$.

Again applying the strong regularity of $M$, let $\nu<\k$ be such that $\cf(\nu)=\l$ and $h[\nu]\subseteq \nu$. It follows that $S_\nu =\bigcup_{\a<\nu}S_\a$ is $\l$-closed. 
It then follows from the fact that $M\models \DC_\l$ that there is a $\l$-chain $g\colon\l\rightarrow \dom(S_\nu)$ through $T$.  
\end{proof}

\section{A Nairian Theory}\label{NairianTheorySec}

In this section we introduce the $\nN$-hierarchy, 
and the corresponding $\sf{Axiom\ of\ Harmony}$, which says that the sets in the hierarchy are all $H$-like. We then introduce the theory $\mathsf{NT_{base}}$, which the Nairian models we produce will satisfy, and show how to force the Axiom of Choice over models of this theory.

\begin{definition}\label{h hierarchy}\normalfont  The $\nN$-\textbf{hierarchy} $\langle \nN_\a: \a\in \ord\rangle$ is defined as follows:
\begin{enumerate}
\item $\nN_0$ is the set of hereditarily finite sets.
\item $\nN_{1} = H_{\nN_{0}}$.
\item For every $\a$, $\nN_{\a+2}=H_{\nN_{\a+1}}$.
\item For every limit ordinal $\a$, $\nN_\a=\cup_{\b<\a}\nN_\b$, and 
$\nN_{\a+1}=H_{\powerset(\nN_\a)}$.
\end{enumerate}
\end{definition}


The $\nN_{\alpha}$'s are transitive and $\subseteq$-increasing in $\alpha$. Using the fact that $\omega \subseteq N_{0}$, for instance, it is not hard to see that whenever a set $X$ is a subset of some $\nN_{\alpha}$, $X$ is an element of $\nN_{\alpha + 1}$. It follows that every set is in some $\nN_{\alpha}$.
Observe also that $\nN_{1}$ is the collection of hereditarily countable sets, and that $\nN_{2} \cap \ord = \Theta$. 



\begin{terminology}\label{term: n term}\hspace{.1in}\normalfont
\begin{enumerate} 
\item A set $X$ is an \textbf{$\nN$-cardinal} if for some successor $\a$, $X$ is a maximal cardinality of $\nN_\a$. In this case, we also say that $X$ is an $\nN_{\alpha}$-cardinal.
\item When $\alpha$ is a successor ordinal, we write $\hn_{\alpha}$ for 
\begin{itemize}
    \item $\nN_{\alpha-1}$, if $\alpha-1$ is $0$ or a successor ordinal, and

    \item  $\cP(\nN_{\alpha - 1})$, if $\alpha-1$ is a limit ordinal.
\end{itemize}
We call $\hn_{\alpha}$ the \emph{canonical} $\nN$-cardinal of $\nN_{\alpha}$.  
\item We say $M$ is an \textbf{$\nN$-set} if for some $\a$, $M=\nN_\a$. 
\item We say that $M$ is a \textbf{successor-type} $\nN$-set if $M=\nN_{\a}$, for some successor ordinal $\alpha$, and a \textbf{limit-type} $\nN$-set if $M=\nN_\a$ for some limit ordinal $\a$. 
\item We say $M$ is a \textbf{limsuc} type $\nN$-set if $M=\nN_{\a+1}$ where $\a$ is a limit ordinal.

\item If $G$ is a generic filter and $\a\in \ord$, then we write $\nN_\a^G$ for $\nN_\a^{V[G]}$ and (if $\alpha$ is a successor ordinal) $\hn_{\alpha}^{G}$ for $\hn_{\alpha}^{V[G]}$.\footnote{We may not use this one or the next one.} 
\item Given any set $u$, we say $\a$ is the $\nN$-rank of $u$ if $\a$ is the least such that $u\in \nN_\a$.
\end{enumerate}
\end{terminology}


\begin{definition}[The $\sf{Axiom\ of\ Harmony}$\label{def: ah} ($\AH$)]\normalfont For every $\a$, $\nN_\a$ is $H$-like.
\end{definition}

\begin{remark}\label{firstAHrem}\normalfont
Since every set is in some $\nN_{\alpha}$, the $\AHA$ implies that $\sf{V}$ is $\sf{H}$-like. Since it also implies that each $\nN_{\alpha}$ is closed under products, 
it implies that $|\hn_{\alpha}|\approx |\hn_{\alpha}^{2}|$ for each successor ordinal $\alpha$. 
\end{remark}

In the proof of Theorem \ref{thm:forcing choice} we will force over the levels of the $\nN$-hierarchy. The following lemma shows that, assuming $\AH$, the levels of the $\nN$-hierarchy satisfy forms of ordinal fullness which are preserved in certain forcing  extensions. 




\begin{lemma}\label{preserving complet of1} Assume that $\AH$ holds. Suppose that $\alpha$ is an ordinal, $\bbP$ is a poset in $\nN_\a$ and $G\subseteq \bbP$ is a $V$-generic filter. Then for all $\b<\gg$ with $\a<\gg$, any $X\in \nN_\b$ and any $Y\in \nN_\gg$, \[(Y^{X})^{V[G]}\in \nN_\gg[G].\] In particular, $N_{\gamma}[G]$ is $\eta$-full for each $\eta < \kappa_{\beta}$. 
\end{lemma}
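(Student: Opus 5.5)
The plan is to reduce everything to Theorem \ref{preservation of normality} applied to the successor-type levels of the $\nN$-hierarchy. First, I will note that it suffices to treat the case where $\gamma$ is a successor ordinal. If $\gamma$ is a limit, then $Y\in \nN_{\gamma'}$ for some successor $\gamma'<\gamma$ with $\gamma'>\max(\alpha,\beta)$. Moreover $\nN_{\gamma'}[G]\subseteq \nN_\gamma[G]$, so the limit case follows from the successor case. Likewise I may enlarge $\beta$ to a successor ordinal below $\gamma$, or else note that $X\in\nN_\beta\subseteq \nN_{\gamma-1}$. In either case $X$ belongs to a level strictly below $\gamma$, so $X\in\nN_{\gamma-1}$ when $\gamma$ is a successor.

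Now fix $\gamma$ a successor with $\alpha<\gamma$ and $\beta<\gamma$. By $\AH$, $\nN_\gamma$ is $H$-like of successor type, with canonical $\nN$-cardinal $\hn_\gamma$. By Lemma \ref{lem: easy lem1}, we may take a transitive maximal cardinality $Z$ of $\nN_\gamma$. Since $\bbP\in\nN_\alpha\subseteq\nN_\gamma$, Theorem \ref{preservation of normality} gives $\nN_\gamma[G]=(H_Z)^{V[G]}$, and this set is $H$-like in $V[G]$. So it suffices to show that for $f\colon X\to Y$ in $V[G]$, the set $Y^X$ computed in $V[G]$ has hereditary size $\lesssim |Z|$ in $V[G]$.

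The key point is that $X$ is not a maximal cardinality of $\nN_\gamma$, because $X\in\nN_{\gamma-1}$. I plan to show that $|Y^X|\lesssim|\cP(X\times Z)|$, and that $X\times Z$ is $\lesssim|Z|$ since $X\in\nN_\gamma$. The step I worry about is that $\cP(Z)$ need not lie in $\nN_\gamma$, so I must avoid that bound. Instead I will argue with names. Each $f\in (Y^X)^{V[G]}$ has a name that is essentially a function from $X\times\bbP$ into a set of names for elements of $Y$. This name can be chosen inside $\nN_\gamma$, using Claim \ref{claim 1} and Claim \ref{claim 2} of Theorem \ref{preservation of normality}, provided that $\nN_\gamma$ is closed under $X$-sequences, that is, $\nN_\gamma$ is $X$-closed for $X$ in a lower level.

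This closure is where I expect the main obstacle. For the ground model I will derive it from hierarchicality of $\nN_\gamma$ over the decomposition through $\nN_{\gamma-1}$-sized pieces, together with strong regularity. Given $h\colon X\to M_\delta$ with $M_\delta\in\nN_\gamma$, I will code $h$ as a subset of $X\times M_\delta$, which lies in $\nN_\gamma$ by closure under subsets and products. Strong regularity bounds the range of any function on $X$ into the ordinals of $\nN_\gamma$, which lets me place the relevant names inside a single $M_\delta$; this is the argument of Corollary \ref{cor: zf in s-like}. For the names, I will argue as in the proof of Theorem \ref{preserrving reflection}. For each $x\in X$ and condition $q$, I take the set of names of least hierarchical rank that $q$ forces to equal $f(x)$. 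The function sending $(x,q)$ to this set lies in $\nN_\gamma$ by Corollary \ref{cor: zf in s-like}, which yields a name $\dot g\in\nN_\gamma$ with $\dot g_G=f$. Collecting all such names in $\nN_\gamma$ again by Corollary \ref{cor: zf in s-like}, I obtain a name for the set $Y^X$ itself, so $(Y^X)^{V[G]}\in\nN_\gamma[G]$.

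The ``in particular'' clause follows by taking $X=\eta$ for ordinals $\eta$ below the relevant $\nN_\beta$-cardinal $\kappa_\beta$. Any such $\eta$ lies in a level $\nN_\beta$ with $\beta<\gamma$, so $\nN_\gamma[G]$ is $\eta$-full.
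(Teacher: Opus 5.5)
There is a genuine gap in your last step. Your reductions are fine: passing to successor $\gamma$, and reducing via Theorem~\ref{preservation of normality} to showing that $(Y^X)^{V[G]}$ is small in $(H_Z)^{V[G]}=\nN_\gamma[G]$. The trouble comes after that. Finding, for each $f\in (Y^X)^{V[G]}$, a name $\dot g_f\in\nN_\gamma$ with $(\dot g_f)_G=f$ only shows $(Y^X)^{V[G]}\subseteq\nN_\gamma[G]$. That is just the $X$-closure of the successor $H$-like set $\nN_\gamma[G]$ in $V[G]$. To get $(Y^X)^{V[G]}$ as an \emph{element}, you need a set of names which is itself in $\nN_\gamma$.

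Corollary~\ref{cor: zf in s-like} cannot supply that set. It only assembles families indexed by an element of $\nN_\gamma$. Your family is indexed by the collection of all names for functions from $X$ to $Y$ (or by $(Y^X)^{V[G]}$ itself), which is exactly what has to be shown small. Taking names of least hierarchical rank gives no bound on those ranks that is uniform in $f$. Strong regularity cannot be applied to the rank map either, because its domain is not known to lie in $\nN_\gamma$.

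The only place you use $X\in\nN_{\gamma-1}$ is for the $X$-closure of $\nN_\gamma$, and that holds for every $X\in\nN_\gamma$. So your argument, if valid, would apply verbatim to $X=\hn_\gamma$ and $Y=2$. It would then give $\cP(\hn_\gamma)^{V[G]}\in(H_{\hn_\gamma})^{V[G]}$, contradicting Cantor's theorem.

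What is missing is a uniform bound that places all the relevant names in a single \emph{lower} level; this is where $\beta<\gamma$ is actually used. The paper does it as follows.
\begin{enumerate}
\item Reduce to $\gamma=\iota+1$, $X=\hn_\beta$ with $\beta\le\iota$ a successor, and $Y=\hn_\gamma$. When $\iota$ is a successor, $\hn_\gamma=\nN_\iota$.
\item Fix a surjection $h\colon\hn_\beta\to\bbP\times\hn_\beta$ in $\nN_\beta$.
\item Each nice name $\rho$ for a function from $X$ to $Y$ is coded by the partial function $f_\rho=\{((p,x),y):(p,(x,y))\in\rho\}$. Hence it is coded by $h$ together with $f_\rho\circ h$, a partial function from $\hn_\beta$ into $\nN_\iota$.
\item $\nN_\iota$ is a successor $H$-like set containing $\hn_\beta$, so it is $\hn_\beta$-closed. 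Therefore every such $\rho$ is an element of $\nN_\iota$.
\item The set of all these names is then a subset of $\nN_\iota$, hence an element of $\nN_{\iota+1}=\nN_\gamma$.
\item The set of their realizations is $(Y^X)^{V[G]}$, so $(Y^X)^{V[G]}\in\nN_\gamma[G]$.
\end{enumerate}
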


\begin{proof} It is enough to prove the claim in the case where $\alpha$, $\b$ and $\gg$ are successor ordinals. We may also assume that $\alpha \leq \beta$. Let $\gg=\iota+1$. It follows from \rthm{preservation of normality} that $\nN_\gg[G]$ is $H$-like, $\hn_{\gamma}$ is a maximal cardinality of $\nN_\gg[G]$ and $V[G] \models N_{\gamma}[G] = H_{n_{\gamma}}$. 

It suffices to consider the case where $X = \hn_{\beta}$ and $Y = \hn_{\gamma}$. 
Since $\bbP \in N_{\beta}$ and $|\hn_{\beta}| \approx |\hn_{\beta}^{2}|$, there is a surjection $h$ from $\hn_{\beta}$ to $\bbP \times \hn_{\beta}$ in $N_{\beta}$. 
Each (nice) $\bbP$-name $\rho$ for a function from $X$ to $Y$ is coded by the partial function $f_{\rho} = \{ ((p,x), y) : (p, (x,y)) \in \rho\}$ from $\bbP \times N_{\tau}$ to $N_{\iota}$ and therefore by the functions $h$ and $f_{\rho} \circ h$. 
It follows that each $\hn_{\beta}$-closed transitive model of $\ZF - \mathsf{Powerset}$ containing $Y$ (as a subset)---so in particular $N_{\iota}$---will contain each such $\bbP$-name $\rho$ (as an element).  
It follows that the set of such names is in $N_{\gamma}$, and the set of realizations of these names, i.e., $(Y^{X})^{V[G]}$, is then an element of $N_{\gamma}[G]$.
\end{proof}

\begin{definition}\label{nairian theory}\normalfont Let $\sf{NT_{base}}$ be the following theory: 
\begin{enumerate}
\item $\ZF+\AH$.
\item For every successor ordinal $\a$, whenever $X$ is a maximal cardinality of $\nN_\a$, then $\rref(X, \nN_\a, \nN_{\a+1})$ holds. 
\end{enumerate}
\end{definition}

Theorem \ref{thm:forcing choice} shows how to force the Axiom of Choice to hold over models of $\sf{NT_{base}}$.\footnote{In the main application in this paper, $\alpha$ will be $2$ and $\bbP$ will be $\pmax$. Another interesting case (also with $\alpha = 2$) is when $\bbP$ is $\Add(1, \omega_{1})$.} 
In its statement we use the following notation for (class-length) forcing iterations (with full support). Given an ordinal $\gg$, we let 
\begin{itemize}
    \item $\bbQ_{<\gg}=\bbQ\rest \gg$,

    \item $\bbQ_{\leq \gg}=\bbQ\rest(\gg+1)$,

    \item $\bbQ^{\geq \gg}$ be the iteration after and including stage $\gg$,

    \item $\bbQ^{>\gg}$ the iteration after and not including stage $\gg$, and

    \item $\bbQ(\gg)$ be the poset used at stage $\gg$.
\end{itemize}
Thus \[\bbQ=\bbQ_{<\gg}*\bbQ^{\geq\gg}=\bbQ_{< \gg}*\bbQ(\gg)*\bbQ^{>\gg}=\bbQ_{\leq \gg}*\bbQ^{>\gg}.\] 

For an ordinal $\gg$, we let 
\[H_{<\gg},\,
H_{\leq \gg},\,
H^{\geq\gg},\,
H^{>\gg}, \text{ and }
H(\gg)\]
be the natural factors of the generic filter $H$.
Thus \[H=H_{<\gg}*H^{\geq\gg}=H_{<\gg}*H(\gg)*H^{>\gg}=H_{\leq \gg}*H^{>\gg}.\]

Given a cardinal $\kappa$, the partial order $\Coll(\kappa, X)$ consists of the partial functions from $\kappa$ to $X$ of cardinality less than $\kappa$, ordered by inclusion. The partial order $\Add(1, \kappa)$ is the same as $\Coll(\kappa, 2)$. If $\cof(\gamma) < \kappa$ and $\DC_{\gamma}$ holds, then $\Add(1, \kappa)$ adds a surjection from $\kappa$ to $\cP(\gamma)$. 

\begin{theorem}\label{thm:forcing choice} Assume $\sf{NT_{base}}$ and that no regular $\nN$-cardinal is a limit of $\nN$-cardinals. Suppose 
that $\a$ is a successor ordinal, $\bbP\in \nN_\a$ is a poset, $G\subseteq \bbP$ is a $V$-generic filter, and $\nN_\a[G]$ is completely ordinal-full and satisfies $\AC$.

For each ordinal $\beta$, let $\k_\b$ denote $\ord\cap \nN_\b$. In $V[G]$, let $\mathbb{Q}$ be the full support class iteration defined as follows. 
\begin{itemize}
\item $\bbQ(0)=\Coll(\k_{\a},\nN_{\alpha}[G])$. 
\item For a limit ordinal $\b$, $\bbQ(\b)$ is the trivial poset and $\bbQ\rest \b$ is the full support iteration of $(\bbQ(\gg): \gg<\b)$. 
\item When $\beta$ is a successor of a nonlimit ordinal, $\bbQ(\b)=\Coll(\k_{\a+\b}, \nN_{\alpha + \beta})^{V^{\bbP*\bbQ\rest \b}}$.
\item When $\b$ is a successor of a limit ordinal, $\bbQ(\b)$ is the partial order \[\Coll(\eta, \cP(\nN_{\alpha + \beta})^{V})*\Coll(\k_{\alpha + \beta + 1}, \nN_{\alpha + \beta + 1}),\] as defined in $V^{\bbP*\bbQ\restriction \beta}$, where $\eta$ is the cardinal successor of $\kappa_{\alpha + \beta}$ in 
$V^{\bbP*\bbQ\restriction \beta}$.
\end{itemize}
Then the following hold in $V[G*H]$.
\begin{enumerate}
\item $\ZFC$.
\item\label{gchkappalpha} For each ordinal $\beta$, $\kappa_{\alpha + \beta}$ is a cardinal and $2^{\kappa_{\alpha + \beta}}= \kappa_{\alpha + \beta}^{+}$.
\item\label{succpres} For each nonlimit ordinal $\beta$, 
$\k_{\a+\b+1}= \k_{\a+\b}^{+}$. 
\item\label{type2card} For each limit ordinal $\beta$, 
$\k_{\a+\b+1}= \k_{\a+\b}^{++}$. 
\end{enumerate}
\end{theorem}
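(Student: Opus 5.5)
The plan is to prove, by induction on $\beta$, that at each stage of the iteration the relevant levels of the $\nN$-hierarchy have been well-ordered, that the posets $\bbQ(\beta)$ are sufficiently closed, and that the tails $\bbQ^{\geq\beta}$ add no new small sets. Concretely, the inductive hypothesis for $\beta$ is that in $V[G*H_{<\beta}]$:
\begin{itemize}
\item $\DC_{\k_{\a+\gamma}}$ holds for all $\gamma<\beta$;
\item $\nN_{\a+\gamma}[G*H_{<\gamma}]$ satisfies $\AC$ and is completely ordinal-full;
\item the cardinal arithmetic in clauses (\ref{gchkappalpha})--(\ref{type2card}) holds below $\k_{\a+\beta}$.
\end{itemize}
The base case $\beta=0$ is the hypothesis on $\nN_\a[G]$.

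At a successor stage, $\bbQ(\beta)$ collapses $\nN_{\a+\beta}$ (or $\cP(\nN_{\a+\beta})$) onto $\k_{\a+\beta}$ or its successor. To see that this well-orders the next level and preserves the cardinals $\k_{\a+\beta}$, I would argue in three steps.
\begin{enumerate}
\item By the inductive hypothesis, $\DC_{\k_{\a+\beta}}$ holds, together with $\eta$-fullness of $\nN_{\a+\beta}$ for $\eta<\k_{\a+\beta}$. The latter is preserved by Lemma \ref{preserving complet of1} and by Theorem \ref{preservation of normality}.
\item From $\DC_{\k_{\a+\beta}}$ and this fullness, $\Coll(\k_{\a+\beta},\nN_{\a+\beta})$ is ${<}\k_{\a+\beta}$-closed and adds no new sequences of length ${<}\k_{\a+\beta}$.
\item The generic surjection then makes $|\nN_{\a+\beta}|=\k_{\a+\beta}$, and hence well-orders $\nN_{\a+\beta+1}=H_{\nN_{\a+\beta}}$, through the surjection $\cP(\nN_{\a+\beta})\to \nN_{\a+\beta+1}$ from Lemma \ref{closure under cartesian products}.
\end{enumerate}
For the cardinal computations, $\k_{\a+\beta+1}$ is preserved because $\nN_{\a+\beta+1}$ remains strongly regular in the extension (Claim \ref{claim: preservation of strong regularity}), and the collapsing poset lies in $\nN_{\a+\beta+1}$. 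This gives $\k_{\a+\beta+1}=\k_{\a+\beta}^+$ and $2^{\k_{\a+\beta}}=\k_{\a+\beta}^+$. At a successor of a limit, the extra factor $\Coll(\eta,\cP(\nN_{\a+\beta}))$ accounts for $\k_{\a+\beta+1}=\k_{\a+\beta}^{++}$. The hypothesis that no regular $\nN$-cardinal is a limit of $\nN$-cardinals ensures that the singular $\k_{\a+\beta}$ at limit $\beta$ does not upset the counting.

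The main obstacle is obtaining $\DC_{\k_{\a+\beta}}$ in $V[G*H_{<\beta}]$, and hence for the whole class extension. Here I would use the reflection clause of $\NTbase$. By Theorem \ref{preserrving reflection}, $\rref(\hn_{\a+\beta},\nN_{\a+\beta},\nN_{\a+\beta+1})$ survives the forcing $\bbP*\bbQ_{<\beta}$, which is a set in $\nN_{\a+\beta}$. The generic well-ordering then gives $\DC_{\k_{\a+\beta}}(N)$ for every $N\in\nN_{\a+\beta+1}[G*H_{\leq\beta}]$, via Lemma \ref{lem:lifting dc2} applied inside the well-ordered level. Lemma \ref{lem:lifting dc1} then lifts this to full $\DC_{\k_{\a+\beta}}$ in the extension, after replacing $\hn_{\a+\beta}$ by $\k_{\a+\beta}$ using Remark \ref{switchX}. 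Lemma \ref{lem: dc through closure} shows that the tail $\bbQ^{>\beta}$, which is $\k_{\a+\beta}^+$-closed by full support and the closure of each factor, preserves this $\DC$. Limit stages use Remark \ref{rem:dcac}(4) for singular limits.

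Finally, since $\DC_\k$ holds for a proper class of $\k$, Remark \ref{rem:dcac}(5) gives $\AC$. $\ZF$ in the class extension follows from the tail closure: each set is added by an initial segment $\bbP*\bbQ_{\leq\beta}$, the tails are increasingly closed, and the extension is an $H$-like union of set-forcing extensions.
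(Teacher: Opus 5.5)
Your architecture is the paper's: collapse each level, extract $\DC$ from the reflection clause of $\mathsf{NT_{base}}$ via Lemmas~\ref{lem:lifting dc2} and~\ref{lem:lifting dc1}, and push it through tails with Lemma~\ref{lem: dc through closure}. But the step you single out as the main obstacle is off by one level and fails as written.

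In $V[G*H_{<\beta}]$ the set $\hn_{\alpha+\beta}$ has the size of the largest cardinal $\kappa$ of $\nN_{\alpha+\beta}[G*H_{<\beta}]$, and $\kappa<\kappa_{\alpha+\beta}$. So $\rref(\hn_{\alpha+\beta},\nN_{\alpha+\beta},\nN_{\alpha+\beta+1})$ only produces hulls closed under $\kappa$-sequences. Remark~\ref{switchX} cannot trade $\hn_{\alpha+\beta}$ for $\kappa_{\alpha+\beta}$: the cardinalities differ, and $\kappa_{\alpha+\beta}=\ord\cap\nN_{\alpha+\beta}$ is not even in $\nN_{\alpha+\beta}$. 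Nor can Theorem~\ref{preserrving reflection} carry this instance past $\bbQ(\beta)$, because $\bbQ(\beta)\notin\nN_{\alpha+\beta}$.

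What the instance does give is $\DC_\kappa$ in $V[G*H_{<\beta}]$, and that is all your step (2) needs. There $\kappa^+=\kappa_{\alpha+\beta}$, so $\bbQ(\beta)$ is $\kappa^+$-closed and, given $\DC_\kappa$, adds no new $\kappa$-sequences. $\DC_{\kappa_{\alpha+\beta}}$ itself is not obtainable before $H(\beta)$ by these means, and it is not in your inductive hypothesis.

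The correct route is as follows.
\begin{enumerate}
\item Obtain $\DC_{\kappa_{\alpha+\beta}}$ in $V[G*H_{\le\beta}]$ from the next instance, $\rref(\nN_{\alpha+\beta},\nN_{\alpha+\beta+1},\nN_{\alpha+\beta+2})$. It survives because $\bbP*\bbQ_{\le\beta}\in\nN_{\alpha+\beta+1}$.
\item To apply Lemma~\ref{lem:lifting dc1} to it, you need $\DC_{\kappa_{\alpha+\beta}}(N)$ for every $N\in\nN_{\alpha+\beta+2}[G*H_{\le\beta}]$. That is Lemma~\ref{lem:lifting dc2} for $M=\nN_{\alpha+\beta+1}[G*H_{\le\beta}]$, which requires $M$ to be $\eta$-full for all $\eta<\kappa_{\alpha+\beta}$.
\item This fullness comes from Lemma~\ref{preserving complet of1}, giving $\nN_{\alpha+\beta}^{\kappa}\in\nN_{\alpha+\beta+1}[G*H_{<\beta}]$, together with the fact that $\bbQ(\beta)$ adds no $\kappa$-sequences. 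This is Claim~\ref{succstep}.
\end{enumerate}
The fullness you assert instead, of $\nN_{\alpha+\beta}$ below $\kappa_{\alpha+\beta}$, is false: the set of functions from $\kappa$ to $2$ is not in $\nN_{\alpha+\beta}[G*H_{<\beta}]$.

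The successor-of-limit stage has a further gap. Let $\beta$ be a limit and $\eta=\kappa_{\alpha+\beta}^+$. Your inductive hypothesis tracks $\DC$ only at the $\kappa_{\alpha+\gamma}$. But after the first factor $\Coll(\eta,\cP(\nN_{\alpha+\beta})^V)$ of $\bbQ(\beta+1)$, you need $\DC_\eta$ before you can conclude that $\Coll(\kappa_{\alpha+\beta+1},\nN_{\alpha+\beta+1})$ preserves $\eta$ and adds no $\eta$-sequences; in $\ZF$, closure alone does not give this. That conclusion is what yields $\kappa_{\alpha+\beta+1}=\kappa_{\alpha+\beta}^{++}$ and the fullness of $\nN_{\alpha+\beta+2}$ needed for $\DC_{\kappa_{\alpha+\beta+1}}$. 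The paper proves $\DC_\eta$ in the subclaim of Claim~\ref{claim: ind step limit}. It uses $\rref(\cP(\nN_{\alpha+\beta}),\nN_{\alpha+\beta+1},\nN_{\alpha+\beta+2})$ and a separate check that $\nN_{\alpha+\beta+1}$, in the extension by the first factor, is $\kappa_{\alpha+\beta}$-full. Saying the extra factor ``accounts for'' the double successor skips exactly this.

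Similarly, the base case is not just the hypothesis on $\nN_\alpha[G]$. $\DC$ at the largest cardinal of $\nN_\alpha[G]$ must be derived in $V[G]$ by the reflection argument (Claim~\ref{ihalpha0}), using complete ordinal-fullness of $\nN_\alpha[G]$.
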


\begin{proof}  
To begin, we note the following. 
\begin{enumerate}[itemsep=0.3cm]
\item For each successor ordinal $\beta$, $\bbP*\bbQ_{<\beta}\in \nN_{\a+\beta}$, so, by \rthm{preservation of normality}, $N_{\alpha + \beta}[G*H_{<\beta}]$ is $H$-like and equal to $H_{\hn_{\alpha + \beta}}^{V[G*H_{<\beta}]}$, and therefore strongly regular in $V[G*H_{<\beta}]$.
\item For each successor ordinal $\beta$, $\bbQ(\beta)\not\in \nN_{\a+\beta}[G*H_{<\beta}]$.
\item For each limit ordinal $\beta$, since $\bbQ\restriction \beta \subseteq \nN_{\alpha + \beta}$, $(\kappa_{\alpha + \beta}^{+})^{V[G*H_{<\beta}]} < \kappa_{\alpha + \beta + 1}$. That is, the least nonzero ordinal which is not a surjective image of $\kappa_{\alpha + \beta}$ in $V[G*H_{<\beta}]$ is a surjective image of $\cP(\nN_{\alpha + \beta})$ in $V$.  
\end{enumerate}

For each successor ordinal $\beta$, let $(*)_{\beta}$ be the statement that the extension $V[G*H_{<\beta}]$ satisfies 
each of the following statements. 
\begin{itemize}[itemsep=0.3cm]
\item[$(*)^{\rmd}_{\beta}$] $\kappa_{\alpha + \beta-1}$ is a cardinal, and  $\DC_{\kappa_{\alpha + \beta-1}}$ holds, 
\item[$(*)^{\rms}_{\beta}$] if $\beta - 1$ is not a limit ordinal, and $\kappa$ is the largest cardinal of $N_{\alpha + \beta-1}[G]$, then $2^{\kappa} = \kappa_{\alpha + \beta-1}$ and  
there is a surjection from the set $\cP(\kappa_{\alpha + \beta - 1})$ to $\nN_{\alpha + \beta}[G*H_{<\beta}]$. 
\end{itemize}

Let $(*)$ be the statement that $(*)_{\beta}$ holds for all successor ordinals $\beta$. We will prove $(*)$ by induction on $\beta$. We show first that this will prove several of the conclusions of theorem (the other conclusions will be proved in the limit step of the induction proof). To begin with, the following claim, in conjunction with Remark \ref{rem:dcac} and Lemma \ref{lem: dc through closure}, shows that 
\begin{enumerate}[itemsep=0.3cm]
\setcounter{enumi}{3}
\item $(*)$ implies that $V[G*H] \models \AC$, and
\item for each successor ordinal $\gamma$, $(*)_{\gamma}$ implies that whenever $\kappa$ is a cardinal of $N_{\alpha + \gamma}[G*H_{\less\gamma}]$ and $\beta \geq \gamma$ is an ordinal, $\DC_{\kappa}$ holds in $V[G*H_{<\beta}]$. 
\end{enumerate}

\begin{claim}\label{claim: closure of q} Let $\beta$ be a successor ordinal, and suppose that $\k$ is a cardinal of $\nN_{\a+\beta}[G*H_{<\beta}]$ such that $V[G*H_{<\beta}]\models \DC_\k$. Then $\bbQ^{\geq \beta}$ is $\k^+$-closed in $V[G*H_{<\beta}]$.
\end{claim}

\begin{proof} It is enough to show that for each ordinal $\iota\geq \beta$, $\bbQ(\iota)$ is $\k^+$-closed in $V[G*H_{<\iota}]$. We show this by induction on $\iota$. By Lemma \ref{lem: dc through closure}, the induction hypothesis at $\iota$ will give that $\DC_{\kappa}$ holds in $V[G*H_{<\iota}]$. The limit steps of the induction are immediate.

If $\iota=\iota'+1$, where $\iota'$ is either $0$ or a successor ordinal, then since 
$\nN_{\alpha + \iota}[G*H_{<\iota}]$ is strongly regular by item (1) above, it follows that $\bbQ(\iota)=\Coll(\k_{\alpha + \iota}, \nN_{\alpha + \iota})$ is $\k^+$-closed in $V[G*H_{<\iota}]$. 

Suppose now that $\iota=\iota'+1$ where $\iota'$ is a limit ordinal, and that  $\bbQ_{<\iota}$ is $\k^+$-closed in the extension $V[G*H_{<\beta}]$.
Let $\eta$ be the cardinal successor of $\k_{\alpha + \iota'}$ (in $V$), which is less than $\kappa_{\alpha + \iota}$ by item (3) above.
Since $\DC_\k$ holds in $V[G*H_{<\iota}]$ by the induction hypothesis, we have that \[V[G*H_{<\iota}]\models \cf(\eta)>\k.\] Therefore, $\Coll(\eta,\cP(\nN_{\alpha + \iota'})^{V})$ is $\k^+$-closed in $V[H_{<\iota}]$. Since \[\bbP*\bbQ_{<\iota}*\Coll(\eta,\cP(\nN_{\alpha + \iota'})^{V})\in \nN_{\a+\iota},\] we once again have by Lemma \ref{lem: dc through closure} that, in $V^{\bbP*\bbQ_{<\iota}*\Coll(\eta,\cP(\nN_{\alpha + \iota'})^{V}}$, $\DC_{\kappa}$ holds and $\Coll(\k_{\a+\iota},\nN_{\alpha + \iota})$ is $\k^+$-closed. Therefore, $\bbQ(\iota)=\Coll(\eta,\cP(\nN_{\alpha + \iota'})^{V})*\Coll(\k_{\a+\iota}, \nN_{\alpha + \iota})$ is $\k^+$-closed in the extension $V[G*H_{<\iota'}]$.
\end{proof}

Claim \ref{claim: closure of q} and $(*)$ also imply that, for each successor ordinal $\beta$, and each cardinal $\kappa$ of 
the structure $N_{\alpha + \beta}[G*H_{<\beta}]$, 
$\cP(\kappa)^{V[G*H]} \subseteq V[G*H_{<\beta}]$. It follows that $(*)$ implies that for each ordinal $\beta$, $\kappa_{\alpha + \beta}$ is a cardinal in $V[G*H]$. It also implies (via the definitions of $\bbQ(\beta)$ and the $\nN$-hierarchy) that $2^{\kappa_{\alpha + \beta}} = \kappa_{\alpha + \beta + 1} = \kappa_{\alpha + \beta}^{+}$ holds in $V[G*H]$ whenever $\beta$ is not a limit ordinal, which establishes part (\ref{succpres}) of the theorem, and part (\ref{gchkappalpha}) in the case where $\beta$ is not a limit ordinal.  
We will prove part (\ref{type2card}), and part (\ref{gchkappalpha}) in the case where $\beta$ is a limit ordinal, during the corresponding limit step of our induction (i.e., in Claim \ref{claim: ind step limit}).

Before we start our inductive proof of $(*)$, we make some observations about $N_{\alpha}[G]$ (analogous to $(*)_{\beta}$ for $\beta = 0$). 
Note first that since $\bbP$ is in $\nN_{\alpha}$, which is strongly regular, $\kappa_{\alpha}$ is a regular cardinal in $V[G]$. 
The rest of our analysis of $N_{\alpha}[G]$ is given by the two following claims. 

\begin{claim}\label{ihalpha} For some $\nN_{\a}[G]$-cardinal $\k$, there is a surjection $$f:\powerset(\k)^{V[G]}\rightarrow \nN_{\a}[G]$$ in $V[G]$.\\
\end{claim}


\begin{proof} Let $X\in \nN_\a$ be a maximal cardinality of $\nN_\a$, and (applying the assumption that $N_{\alpha}[G] \models \AC$) let $\k$ be a cardinal of $\nN_\a[G]$ such that $\card{X}^{\nN_\a[G]}=\k$. By Lemma \ref{closure under cartesian products} there is a surjection $h\colon \powerset(X)\to \nN_\a$ in $V$, so we get a surjection $f\colon\powerset(\k)\to \nN_\a[G]$ in $V[G]$.
\end{proof}

\begin{claim}\label{ihalpha0} 
For every $N_{\alpha}[G]$-cardinal $\kappa$, $V[G] \models \DC_{\kappa}$. 
\end{claim}

\begin{proof}
Let $\kappa$ be the largest $\nN[G]$-cardinal (i.e., the cardinality of $\hn_{\alpha}$). It suffices to prove the claim for $\kappa$. We have the following facts. 
\begin{itemize}[itemsep=0.3cm]
\item $\nN_{\a}[G]\models \DC_\k$, since $\nN_{\a}[G]\models \AC$.
\item  \rlem{lem:lifting dc2} implies that $\nN_{\a+1}[G]\models \DC_\k$, since $\nN_{\a}[G]$ is completely ordinal-full by the hypotheses of the theorem. 
\item $\rref(\k, \nN_{\a}[G], \nN_{\a+1}[G])$ holds in $V[G]$, by \rthm{preserrving reflection} and Remark \ref{switchX}.
\end{itemize} 
The claim now follows from \rlem{lem:lifting dc1}. 
\end{proof}

Claim \ref{succstep} gives the following steps of our inductive proof. 
\begin{itemize}[itemsep=0.3cm]
\item $(*)_{1}$ (via  Claims \ref{ihalpha} and \ref{ihalpha0}); 
\item if $\beta$ is not a limit ordinal, and $(*)_{\beta+1}$ holds, then $(*)_{\beta + 2}$ holds (in this case the $\kappa$ from the claim is $\kappa_{\alpha + \beta}$).
\end{itemize}


\begin{claim}\label{succstep} Suppose that 
\begin{itemize}
\item $\beta$ is either $0$ or a successor ordinal, 
\item $\kappa$ is the largest cardinal of $N_{\alpha + \beta}[G*H_{<\beta}]$ and,
\item in $V[G*H_{\less\beta}]$, $\DC_{\kappa}$ holds and there exists a surjection from $\cP(\kappa)$ to $\nN_{\alpha + \beta}[G*H_{\less\beta}]$.
\end{itemize} 
Then $(*)_{\beta+1}$ holds and $\nN_{\alpha + \beta + 1}[G*H_{\leq\beta}]\models \AC$. 
\end{claim}

\begin{proof} Since $\beta$ is either $0$ or a successor ordinal, $\bbQ(\beta)$ is $\Coll(\kappa_{\alpha + \beta}, \nN_{\alpha + \beta})$ and $\nN_{\alpha + \beta + 1}$ is $H_{\nN_{\alpha + \beta}}$. Since $\DC_{\kappa}$ holds in $V[G*H_{<\beta}]$, forcing with $\bbQ(\beta)$ preserves $\kappa_{\alpha + \beta}$ as a cardinal and adds 
a wellordering of $\nN_{\alpha + \beta}[G*H_{\leq\beta}]$ in $V[G*H_{\leq\beta}]$ in ordertype $\kappa_{\alpha + \beta}$.
Since $\nN_{\alpha + \beta + 1}[G*H_{\leq\beta}]$ is $\kappa_{\alpha + \beta}$-closed (by Theorem \ref{preservation of normality}), and every element of $\nN_{\alpha + \beta + 1}$ is a surjective image of $\nN_{\alpha + \beta}$, this means that \[N_{\alpha + \beta + 1}[G*H_{\leq\beta}] \models \AC.\] 
Since there is a surjection from $\cP(\nN_{\alpha+\beta})$ to $\nN_{\alpha + \beta + 1}$ in $V$, there is a surjection from $\cP(\kappa_{\alpha + \beta})$ to $\nN_{\alpha + \beta + 1}[G*H_{\leq\beta}]$ in $V[G*H_{\leq\beta}]$.

Since $\rref(\nN_{\alpha + \beta}, \nN_{\alpha + \beta + 1}, \nN_{\alpha + \beta + 2})$ holds in $V$, by $\mathsf{NT_{base}}$, Remark \ref{switchX} and \rthm{preserrving reflection} imply that  \[V[G*H_{\leq\beta}]\models\rref(\k_{\alpha + \beta}, \nN_{\a+\beta+1}[G*H_{\leq\beta}], \nN_{\a+ \beta+2}[G*H_{\leq\beta}]).\] 
By Lemma \ref{preserving complet of1}, 
\[V[G*H_{<\beta}] \models \nN_{\alpha + \beta}^{\kappa} \in \nN_{\alpha + \beta + 1}[G*H_{<\beta}].\] 
Since $\bbQ(\beta)$ does not add $\kappa$-sequences, \[V[G*H_{\leq\beta}] \models \nN_{\alpha + \beta}^{\kappa} \in \nN_{\alpha + \beta + 1}[G*H_{\leq\beta}].\]
It follows that $\nN_{\alpha + \beta + 1}[G*H_{\leq\beta}]$ is $\eta$-full for all $\eta < \kappa_{\alpha + \beta}$. 
Since $N_{\alpha + \beta + 1}[G*H_{\leq\beta}]$ satisfies $\AC$, Lemma \ref{lem:lifting dc2} implies that \[V[G*H_{\leq\beta}]\models \DC_{\kappa_{\alpha + \beta}}(N_{\alpha + \beta + 1}[G*H_{\leq\beta}]).\] By Theorem \ref{preservation of normality}, \[V[G*H_{\leq\beta}]\models\text{ ``$V$ is $H$-like''.}\]  \rlem{lem:lifting dc1} then implies that  \[V[G*H_{\leq\beta}]\models\DC_{\kappa + \beta}.\]
\end{proof} 

For the limit step of our induction, we first note the following fact. 

\begin{claim}\label{limitindclaim} If $\beta$ is a limit ordinal and $(*)_{\gamma}$ holds for all successor ordinals $\gamma < \beta$, then the following hold in $V[G *H_{<\beta}]$. 
\begin{enumerate}
\item $\kappa_{\alpha + \beta}$ is a cardinal. 
\item $\DC_{\kappa_{\alpha + \beta}}$
\item There is a bijection between $\kappa_{\alpha + \beta}$ and $\nN_{\alpha + \beta}[G*H_{\less\beta}]$. 
\end{enumerate}
\end{claim} 

\begin{proof} Since $(*)_{\gamma}$ holds for all successor $\gamma < \beta$, $\kappa_{\alpha + \beta}$ is a limit of cardinals in $V[G*H_{<\beta}]$ and therefore a cardinal. By the hypotheses of the theorem, $\beta$ is singular. 
By Remark \ref{rem:dcac} and the second item just before the statement of Claim \ref{claim: closure of q}, we get \[V[G*H_{<\beta}]\models\DC_{\kappa_{\alpha + \beta}}.\] 
We have that for each successor ordinal $\gamma < \beta$, $H_{\gamma + 1}$ wellorders $\nN_{\alpha + \gamma + 1}[G*H_{\leq\gamma}]$ in ordertype $\kappa_{\alpha + \gamma + 1}$.
From $\DC_{\kappa_{\alpha + \beta}}$, we get that there is a bijection between $\kappa_{\alpha + \beta}$ and $\nN_{\alpha + \beta}[G * H_{<\beta}]$ in $V[G*H_{<\beta}]$. 
\end{proof} 

It follows from Claim \ref{limitindclaim} (using Lemma \ref{lem: dc through closure}) that when $\beta$ is a limit ordinal, 
\[\cP(\kappa_{\alpha + \beta})^{V[G*H_{<\beta}]} = 
\cP(\kappa_{\alpha + \beta})^{V[G*H_{\leq(\beta + 1)}]} = \cP(\kappa_{\alpha + \beta})^{V[G*H]},\]
so $\kappa_{\alpha + \beta}^{+}$ is the same whether computed in $V[G*H_{<\beta}]$ or $V[G*H]$. Claim \ref{limitindclaim} also gives the induction step for $(*)^{\rmd}_{\beta + 1}$ immediately.



Claim \ref{claim: ind step limit} shows that if $\beta$ is a limit ordinal such that $(*)_{\gamma}$ holds for all successor $\gamma < \beta$, then 
$(*)_{\beta + 2}$ holds, and, in $V[G*H_{\leq(\beta + 1)}]$, \[2^{\kappa_{\alpha + \beta}} = \kappa_{\alpha + \beta}^{+}\] and \[\kappa_{\alpha + \beta + 1} = \kappa_{\alpha + \beta}^{++}.\]  This finishes the proof of the theorem.

\begin{claim}\label{claim: ind step limit} 
Let $\beta$ be a limit ordinal, and suppose that $(*)_{\gamma}$ holds for all successor ordinals $\gamma <\beta$. 
Let $\eta=(\k_{\a+\beta}^+)^{V[G*H_{<\beta}]}$. Then in $V[G*H_{\leq(\beta + 1)}]$,
\begin{enumerate}
\item there is a surjection from $\cP(\kappa_{\alpha + \beta + 1})^{V[G*H_{\leq(\beta + 1)}]}$ to $\nN_{\alpha + \beta + 2}[G*H_{\leq(\beta + 1)}]$, 
\item $\eta$ is a regular cardinal and $\eta=\k_{\a+\beta}^+ = 2^{\kappa_{\alpha + \beta}}$, 
\item $\k_{\a+\beta+1}$ is a regular cardinal
and $\k_{\a+\beta+1}=\eta^+ = 2^{\eta}$, 
\item $\nN_{\a+\b+2}[G*H_{\leq (\b+1)}]\models \AC$, and
\item $\DC_{\kappa_{\alpha + \beta + 1}}$.
\end{enumerate}
\end{claim}

\begin{proof}
By \rthm{preservation of normality} (and the triviality of $\bbQ(\beta)$), we have that \[\nN_{\a+\b+1}[G*H_{<\b}] = (H_{\powerset(\nN_{\alpha + \beta})^{V}})^{V[G*H_{<\beta}]}\] 
and
\[\nN_{\a+\b+2}[G*H_{\less(\beta + 1)}] = (H_{\nN_{\alpha + \beta+1}})^{V[G*H_{\less(\beta+1)}]}\] are
both $H$-like in $V[G*H_{\leq\beta}]$. 

Let $K_{0}$ and $K_{1}$ be such that $H(\beta+1) = K_{0} * K_{1}$. Since \[V[G*H_{\leq\beta}] \models \DC_{\kappa_{\alpha + \beta}} \wedge \eta=\k_{\a+\beta}^+,\] 
we have that $\eta$ is a regular cardinal and the cardinal successor of $\kappa_{\alpha + \beta}$ in both $V[G*H_{\leq\beta}]$ and $V[G*H_{\leq\beta}*K_{0}]$. Since $\nN_{\alpha + \beta + 1}$ is strongly 
regular and $\Coll(\eta, \cP(\nN_{\alpha + \beta})^{V})$ is an element of $\nN_{\alpha + \beta + 1}[G*H_{<\beta}]$, $\kappa_{\alpha + \beta + 1}$ is a regular cardinal in $V[G*H_{\leq\beta}*K_{0}]$. 





The forcing $\Coll(\eta, \cP(\nN_{\alpha + \beta})^{V})$ adds a surjection 
from $\eta$ to $\cP(\nN_{\alpha + \beta})^{V}$. 
Since $\powerset(\nN_{\a+\beta})^V$ is a maximal cardinality of $\nN_{\a+\b+1}[G*H_{\leq\b}]$, we have (using Claim \ref{limitindclaim}) that $\eta$ is a maximal cardinality of $\nN_{\a+\b+1}[G*H_{\leq\b}*K_0]$. This then implies that  $\nN_{\a+\b+1}[G*H_{\leq\b}*K_0]\models \AC$, and that in $V[G*H_{\leq\b}*K_0]$, \[\eta = \k_{\a+\b}^{+}=2^{\kappa_{\alpha + \beta}}\] and \[\eta^{+} = \kappa_{\alpha + \beta + 1}.\] 



\begin{subclaim}
$V[G*H_{\leq\b}*K_0]\models \DC_\eta$.
\end{subclaim}

\begin{proof}
By the definition of $\NTbase$, $\rref(\cP(\nN_{\alpha + \beta}), \nN_{\alpha + \beta + 1}, \nN_{\alpha + \beta + 2})$ holds in $V$. By Lemma \ref{preserrving reflection}, \[V[G*H_{\leq\beta}*K_{0}]\models \rref(\cP(\nN_{\alpha + \beta})^{V}, \nN_{\alpha + \beta + 1}[G*H_{\leq\beta}*K_{0}], \nN_{\alpha + \beta + 2}[G*H_{\leq\beta}*K_{0}]).\]
By Remark \ref{switchX}, \[V[G*H_{\leq\beta}*K_{0}]\models\rref(\eta, \nN_{\alpha + \beta + 1}[G*H_{\leq\beta}*K_{0}], \nN_{\alpha + \beta + 2}[G*H_{\leq\beta}*K_{0}]).\] By Lemma \ref{lem:lifting dc1}, and the fact that 
\[\nN_{\alpha + \beta + 2}[G*H_{\leq\beta}*K_{0}] = 
(H_{\nN_{\alpha + \beta + 1}})^{V[G*H_{\leq\beta}*K_{0}]},
\]
it suffices to show that \[V[G*H_{\leq\beta}*K_{0}]\models\DC_{\eta}(\nN_{\alpha + \beta + 1}[G*H_{\leq\beta}*K_{0}]).\]  
Since \[\nN_{\alpha + \beta + 1}[G*H_{\leq\beta}*K_{0}]\models\AC,\]  and since $\kappa_{\alpha + \beta}$ is the largest cardinal of $N_{\alpha + \beta + 1}[G*H_{\leq\beta}*K_{0}]$ below $\eta$, it suffices by Lemma \ref{lem:lifting dc2} to show that the structure
 $\nN_{\alpha + \beta + 1}[G*H_{\leq\beta}*K_{0}]$ is $\kappa_{\alpha + \beta}$-full in $V[G*H_{\leq\beta}*K_{0}]$. This amounts to showing that, in $V[G*H_{\leq\beta}*K_{0}]$,  \[\cP(\nN_{\alpha + \beta})^{\kappa_{\alpha + \beta}} \in \nN_{\alpha + \beta + 1}[G*H_{\leq\beta}*K_{0}].\]
 Since $|\nN_{\alpha+\beta}| = \kappa_{\alpha + \beta}$ in $\nN_{\alpha + \beta + 1}[G*H_{\leq\beta}*K_{0}]$, and \[V[G*H_{\leq\beta}*K_{0}] \models \cP(\kappa_{\alpha + \beta}) \in \nN_{\alpha + \beta}[G*H_{\leq\beta}*K_{0}],\]
 and since functions from $\kappa_{\alpha + \beta}$ to $\cP(\kappa_{\alpha + \beta})$ can be coded by subsets of $\kappa_{\alpha + \beta + 1}$,
 we get that  $\nN_{\alpha + \beta + 1}[G*H_{\leq\beta}*K_{0}]$ is $\kappa_{\alpha + \beta}$-full in $V[G*H_{\leq\beta}*K_{0}]$, as desired. 
\end{proof}

Since
\begin{itemize}[itemsep=0.3cm]
    \item $\Coll(\k_{\a+\b+1}, \nN_{\alpha + \beta + 1})$ is $\k_{\a+\b+1}$-closed in $V[G*H_{\leq\b}*K_0]$,

    \item $(\eta^+)^{V[G*H_{\leq\beta}*K_{0}]}=\k_{\a+\b+1}$, and

    \item $V[G*H_{\leq\b}*K_0]\models \DC_\eta$,
\end{itemize}
\noindent
we get part (2) of the claim. 

Since $\cP(\nN_{\alpha + \beta + 1})^{V}$ maps onto $\nN_{\alpha + \beta + 2}$ (in $V$), we also get part (1).
Since $\nN_{\alpha + \beta + 1}[G*H_{\leq\beta}*K_{0}]$ is strongly regular, $\kappa_{\alpha + \beta + 1}$ is a regular cardinal in $V[G*H_{\leq(\beta + 1)}]$, and so we have part (3). 


Since $\eta$ is a maximal cardinality of $\nN_{\a+\b+1}[G*H_{\leq\b}*K_0]$, we have that \[\nN_{\alpha + \beta + 2}[G*H_{\leq(\b+1)}]\models\card{\nN_{\a+\b+1}[G*H_{\leq\b}*K_0]}=\k_{\a+\b+1}.\] Since $\nN_{\a+\b+1}[G*H_{\leq\b}*K_0]$ is a maximal cardinality of $\nN_{\a+\b+2}[G*H_{\leq (\beta + 1)}]$, we have that \[\nN_{\a+\b+2}[G*H_{\leq (\b+1)}]\models \AC.\]

It remains to show part (5).
Since $\rref(\nN_{\alpha + \beta + 1}, \nN_{\alpha + \beta + 2}, \nN_{\alpha + \beta + 3})$ holds in $V$, by $\mathsf{NT_{base}}$, Remark \ref{switchX} and \rthm{preserrving reflection} imply that \[V[G*H_{\leq(\beta+1)}]\models\rref(\k_{\alpha + \beta+1}, \nN_{\a+\beta+2}[G*H_{\leq(\beta+1)}], \nN_{\a+ \beta+3}[G*H_{\leq(\beta+1)}]).\]

By Lemma \ref{preserving complet of1}, 
\[V[G*H_{\leq\beta}*K_{0}] \models \nN_{\alpha + \beta+1}^{\eta} \in \nN_{\alpha + \beta + 2}[G*H_{\leq\beta*K_{0}}].\] 
Since $\Coll(\kappa_{\alpha + \beta + 1},\nN_{\alpha + \beta + 1})$ does not add $\eta$-sequences when applied to models of $\DC_{\eta}$, \[V[G*H_{\leq(\beta+1)}] \models \nN_{\alpha + \beta+1}^{\eta} \in \nN_{\alpha + \beta + 2}[G*H_{\leq(\beta+1)}].\]
It follows that $\nN_{\alpha + \beta + 2}[G*H_{\leq(\beta+1)}]$ is $\tau$-full for all $\tau < \kappa_{\alpha + \beta+1}$. 
Lemma \ref{lem:lifting dc2} then gives that \[V[G*H_{\leq(\beta+1)}]\models\DC_{\kappa_{\alpha + \beta + 1}}(N_{\alpha + \beta + 2}[G*H_{\leq(\beta+1)}]).\]

By Theorem \ref{preservation of normality}, \[V[G*H_{\leq(\beta+1)}]\models\text{ ``$V$ is $H$-like''.}\] 
\rlem{lem:lifting dc1} then implies that \[V[G*H_{\leq(\beta+1)}]\models \DC_{\kappa + \beta+1}.\]

\end{proof}
\end{proof}

\part{Nairian Models satisfy the Nairian Theory}
In this part of the paper, we develop the necessary machinery to prove Theorem \ref{thm: part 2}. We present the proof in \rsec{sec: the proof of ntbase}. 

\section{Preliminaries}\label{sec: prelim}
We import the set up of \cite{blue2025nairian} here with one difference: we will use $\varsigma$, $\hd$, and $\varkappa$ for the $\xi$, $\d$, and $\eta$ used in Chapter 10 of \cite{blue2025nairian} so that we can use $\xi$, $\d$, and $\eta$ elsewhere.

\begin{notation}[The set up]\label{not: set up}\normalfont
We assume as our working hypothesis that $\V\models {\sf{ZFC}}$ is a hod premouse\footnote{We tacitly assume that all large cardinal notions are witnessed by the extenders on the extender sequence of the hod premouse.} and $\varsigma\leq \hd<\varkappa$ are such that:
	\begin{enumerate}
		\item $\varkappa$ is an inaccessible limit of Woodin cardinals of $\V$,
		\item $\hd$ is a Woodin cardinal of $\V$,
		\item $\varsigma$ is the least inaccessible cardinal of $\V$ that is a limit of Woodin cardinals of $\V$ and also a limit of $<\hd$-strong cardinals of $\V$.
	\end{enumerate}
	Let $g\subseteq \Coll(\omega, {<}\varkappa)$ be $\V$-generic, and let $M$ be the derived model of $\V$ as computed by $g$. Set:
	\begin{enumerate}
		\item $\P=\V|(\varkappa^+)^\V$,
		\item $\Sigma\in \V$ be the iteration strategy of $\P$ indexed on the sequence of $\V$,
		\item $\hp=(\P, \Sigma)$,
		\item $\mathcal{F}=\mathcal{F}^g_\hp$ (see \cite[Notation 8.10]{blue2025nairian}),
		\item $\mH=\M_\infty(\hp)$,
		\item $\Omega=\Sigma_{\mH}$,
		\item $\varsigma_\infty=\pi_{\hp, \infty}(\varsigma)$ and $\hd_\infty=\pi_{\hp, \infty}(\hd)$.
	\end{enumerate}
	Let 
    \begin{center} 
    $\c^-=(\c^-_{\varsigma_\infty})^M,$ $\c=(\c_{\varsigma_\infty})^M,$ and $\c^+=(\c_{\varsigma_\infty}^+)^\M,$
    \end{center}
    as defined in Section 1.6 of \cite{blue2025nairian}. We treat $\hp$ as a hod pair in $\V$ and in $\V[g]$. Observe that if $\a<\varkappa$, then $\hp|\a\in M$.

When we say $\hq$ is a complete iterate of $\hp$, we mean that $\hq$ is a complete iterate of $\hp|\hd$. Also, we will often treat complete iterates of $\hp$ as members of $M$. The intended meaning here is that we treat $\hq$ as a complete iterate of $\hp|\hd$, and $\hp|\hd\in M$. 
\end{notation}

We use the following additional notation.

\begin{notation}\label{not: more not for reflection}\normalfont
Our notation will be that introduced in \cite[\textsection10.1]{blue2025nairian} together with the following.
\begin{enumerate}
\item A cardinal $\k$ is a:
\begin{itemize}
    \item ${<}\a$-\textbf{strong-limit-of-Woodins cardinal} (${<}\a$-\textbf{slw cardinal}) if it is a ${<}\a$-strong cardinal that is a limit of Woodin cardinals;

    \item $\a$-\textbf{strong-limit-of-Woodins cardinal} ($\a$-\textbf{slw cardinal}) if it is an $\a$-strong cardinal that is a limit of Woodin cardinals;

    \item \textbf{slw cardinal} if it is a ${<}\ord$-slw cardinal.
\end{itemize}

\item Suppose that $W$ is a transitive model of some fragment of $\ZFC$.
\begin{enumerate}
    \item We let $\ts(W)$ be the least slw cardinal of $W$.
    \begin{itemize}
        \item When we write $\ts(W)$, we mean that it exists.
        
        \item Here and elsewhere, if $\hp$ is a hod pair, we write $\ts(\hp)$ instead of $\ts(\M^\hp)$.
        \item We will use the same terminology with all of our other notations, using $\hp$ as supercript or within the parenthesis. 
    \end{itemize}

    \item We let $\tW(W)$ be the set of Woodin cardinals of $W$.

    \item If $\zeta_0\leq\zeta_1\leq \ord\cap W$, then we let $\tStr(W, {\leq}\zeta_0, \zeta_1)$ be the set of $\nu\leq\zeta_0$ such that:
    \begin{itemize}
        \item $\nu=0$, or

        \item $\nu$ is a ${<}\zeta_1$-slw cardinal of $W$, or

        \item $\nu$ is a limit of ${<}\zeta_1$-slw cardinals of $W$.
    \end{itemize}
    
    We define $\tStr(W, {<}\zeta_0, \zeta_1)$ similarly. If $\zeta_0=\zeta_1$, then we write $\tStr(W, {\leq} \zeta_0)$ or $\tStr(W, {<}\zeta_0)$ for $\tStr(W, {\leq}\zeta_0, \zeta_0)$ and $\tStr(W, {<}\zeta_0, \zeta_0)$ respectively. If $\zeta_0=\ord\cap W$, then we write $\tStr(W)$ for $\tStr(W, {<}\zeta_0)$.
\end{enumerate}

\item $\d$ is an \textbf{lsa cardinal} if $\d\in \tW(W)$, no cardinal $<\d$ is $\d+1$-strong, and $\ts(V_\d)$ is defined.\footnote{Equivalently, the least ${<}\d$-strong cardinal is a limit of Woodins, or it ends an lsa block in the sense of \cite[Terminology 9.8]{blue2025nairian}.}

\item Suppose that $\hq$ is any hod pair, $\nu$ is a Woodin cardinal of $\M^\hq$, and $\k<\nu$ is the least ${<}\nu$-strong cardinal of $\M^\hq$. We let
\[\Delta_{\hq, \nu}=\{ A\subseteq \bR: w(A)<\pi_{\hq|\nu, \infty}(\k)\},\]
where $w(A)$ is the Wadge rank of $A$.

\item Suppose that $\hq$ is any hod pair, $\k$ is an inaccessible cardinal of $\M^\hq$, and $g\subseteq \Coll(\omega, {<}\k)$ is $\M^\hq$-generic. Working in $\M^\hq$, we let $\lmi(\hq, \k)$, the local direct limit at $\k$, be the direct limit of the system $\mathcal{F}(\hq, \k)$ consisting of all complete iterates $\hr$ of $\hq$ such that:
\begin{enumerate}
    \item $\T_{\hq, \hr}\in \M^\hq[g]$ (although the limit is independent of $g$),
    \item $\lh(\T_{\hq, \hr})<\k$, and
    \item $\T_{\hq, \hr}$ is based on $\hq|\k$.
\end{enumerate}
We let $\hm(\hq, \k)=(\lmi(\hq, \k), (\Sigma^\hq)_{\lmi(\hq, \k)})$.
Setting $\gg=\pi_{\hq, \lmi(\hq, \k)}(\k)$, we let
\[N^l(\hq, \k)=(L_{\gg}(\lmi(\hq, \k)|\gg)^\omega))^{\M^\hq[g]}\]
be the local Nairian Model at $\k$.\footnote{As in \cite{blue2025nairian}, $N^l(\hq, \k)$ is defined in such a way that the ordinal height of $N^l(\hq, \k)$ is $\gg$.}
\end{enumerate}
\end{notation}

Here is some terminology we will use in this paper.

\begin{terminology}\label{term: cutpoints etc}\normalfont
Suppose $\hq=(\Q, \Lambda)$ is a hod pair and $\d$ is a cutpoint Woodin cardinal of $\Q$.

\begin{enumerate}
\item A normal iteration $\T$ of $\hq$ is \textbf{above} $\gamma$ if all extenders used in $\T$ have critical point at least $\gamma$, and \textbf{strictly above} $\gamma$
if for all $\a<\lh(\T)$, $\cp(E_\a^\T)>\gamma$.

\item We say that $\d$ \textbf{ends an lsa block} of $\Q$ if the least ${<}\d$-strong cardinal of $\Q$ is a limit of Woodin cardinals of $\Q$.\footnote{This terminology is justified by \cite{LSA}.}

\item Given $\nu<\d$, we say that $\nu$ is in the \textbf{$\d$-block} of $\Q$ if, letting $\k$ be the least ${<}\d$-strong cardinal of $\Q$, $\nu\in [\k, \d]$.

\item Assuming that $\d$ ends an lsa block of $\Q$, given $\nu$ in the $\d$-block of $\Q$, we say that $\nu$ is \textbf{big} in $\Q$ (relative to $\d$) if there is a $\tau\leq \nu$ such that $\Q\models ``\tau$ is a measurable cardinal that is a limit of ${<}\d$-strong cardinals.''

\item For $\nu<\d$ as above, we say that $\nu$ is \textbf{small} in $\Q$ (relative to $\d$) if $\nu$ is not big in $\Q$.

\item A $\Q$-cardinal $\nu$ is \textbf{properly overlapped} in $\Q$ if there is a $\d$ such that:
\begin{itemize}
    \item $\d$ is a Woodin cardinal of $\Q$,

    \item $\d$ ends an lsa block of $\Q$,

    \item $\nu$ is in the $\d$-block of $\Q$, and

    \item for all $\k\leq \nu$, $\Q|\nu\models ``\k$ is a strong cardinal'' if and only if $\Q\models ``\k$ is a ${<}\d$-strong cardinal.''
\end{itemize}
In this case, we say that $\nu$ is \textbf{properly overlapped relative to} $\d$.

\item A $\Q$-cardinal $\nu$ is a \textbf{proper cutpoint} in $\Q$ (relative to $\d$) if $\nu$ is properly overlapped (relative to $\d$) and is not a critical point of a total extender of $\Q$.
\end{enumerate}
\end{terminology}

We will use the following statement throughout this portion of the paper. $\rg$ stands for \textit{reflection generator}.

\begin{definition}\label{def: ref gen}\normalfont Let $\rg_0(\hq, \k, \nu)$ be the conjunction of the following statements:
\begin{enumerate}
\item $\hq$ is a complete iterate of $\hp$,\footnote{By this we usually mean countable iterate, i.e., $\T_{\hp, \hq}$ is countable in $M$.}
\item $\k<\nu\leq \hd_\hq$,
\item $\k<\varsigma$ is a strong cardinal of $M^\hq|\varsigma$,
\item $\nu$ is an inaccessible proper cutpoint of $\M^\hq$ (relative to $\hd_{\hq}$). 
\end{enumerate}
Let $\rg_1(\hq, \k, \nu)$ be the conjunction of the following statements:
\begin{enumerate}
\item $\rg_0(\hq, \k, \nu)$,
\item $(\k, \nu)\cap \tStr(\hq|\hd_\hq)=\emptyset$,
\item $\nu\in \tW(\Q)$ and $\sup(\tW(\Q|\nu))<\nu$,\footnote{Thus, $\nu$ is not a limit of Woodin cardinals.} and
\item $(\k, \nu)\cap \tW(\Q)\neq\emptyset$.
\end{enumerate}
Let $\rg_2(\hq, \l, \k, \nu)$ be the conjunction of the following statements:
\begin{enumerate}
\item $\rg_0(\hq, \k, \nu)$,
\item $\l\leq \k<\nu\leq \hd_\hq$,
\item $\l$ is a strong cardinal of $\M^\hq|\varsigma$,
\item $(\k, \nu)\cap \tStr(\hq|\hd_\hq)=\emptyset$,
\end{enumerate}
Let $\rg_3(\hq, \l, \k, \nu)$ be the conjunction of the following statements:
\begin{enumerate}
\item $\rg_1(\hq, \k, \nu)$ and
\item $\rg_2(\hq, \l, \k, \nu)$.
\end{enumerate}
\end{definition}

We will evaluate $\rg$ in $M$.

\begin{notation}[{\cite[Definitions 2.4.9 and 2.4.10]{LSA}}]\label{not: restriction}\normalfont Suppose that $W$ is a transitive set and $W'$ is a rank initial segment of $W$ such that $\ord\cap W'$ is an inaccessible cardinal of $W$.
\begin{enumerate}
    \item If $\T$ is an iteration tree on $W$, then we say that $\T$ is \textbf{based} on $W'$ if for every $\a<\lh(\T)$, either $[0, \a)_\T\cap D^\T\neq\emptyset$ or $E_\a^\T\in \pi_{0, \a}^\T(W')$.\footnote{Equivalently, $\T$ is below $\ord\cap W$.}

    \item Given $\T$ based on $W'$, let $\downarrow(\T, W')$ be the unique iteration tree $\U$ on $W'$ that has the same tree structure as $\T$ and uses the same extenders as $\T$.

    \item Conversely, if $\T$ is an iteration tree on $W'$, then we let $\uparrow(\T, W)$ be the unique iteration tree $\U$ on $W$ that has the same tree structure as $\T$ and uses the same extenders as $\T$.\footnote{Alternatively, $\uparrow(\T, W)$ is the copy of $\T$ onto $W$ via the identity map. Here, we are simply introducing a notation. Clearly, without extra iterability assumptions, $\uparrow(\T, W)$ may not have the same length as $\T$.}

    \item If $\hs$ is a hod pair and $\hs'\trianglelefteq \hs$, then we will use $\downarrow(\T, \hs')$ and $\uparrow(\T, \hs)$ instead of $\downarrow(\T, \M^{\hs'})$ and $\uparrow(\T, \M^\hs)$.\footnote{Notice that the results of \cite{SteelCom} show that $\downarrow(\T, \hs')$ and $\uparrow(\T, \hs)$ make sense as they are according to $\Sigma^{\hs'}$ and $\Sigma^{\hs}$ respectively.}
\end{enumerate}
\end{notation}

\begin{notation}\label{not: bounded powerset}\normalfont If $X$ is a set and $\beta = \ord\cap X$, then we write $\powerset_{\omega_1}^b(X)$ for the set of countable $Y\subseteq X$ such that, for some $\b'<\b$, $Y\subseteq X\cap \b'$.
\end{notation}

\begin{terminology}\label{term: long extender}\normalfont Suppose $j: K\rightarrow N$ is an elementary embedding between two transitive sets or classes, and $\nu\in \ord\cap N$. Then $E$ is the long extender derived from $j$ with space $\nu$ if $E$ consists of pairs $(a, A)$ such that $a\in \nu^{<\omega}$, $A\in K$ and $a\in j(A)$.

If $E$ is a $K$-extender, then let $K_E=Ult(K, E)$ and $\pi_E: K\rightarrow K_E$ be the ultrapower embedding.
\end{terminology}

We will also need the terminology introduced in \cite[Notation 8.4]{blue2025nairian}. In this paper, when we say that \textit{$\hq$ is a pair on the iteration tree $\T$} we mean that for some $\a<\lh(\T)$, $\hq=\hm_\a^\T$ (see clause 3 of \cite[Notation 8.4]{blue2025nairian}).

\section{Corollaries of \cite{blue2025nairian}}\label{sec: useful corollary}

This section contains technical lemmas that are used later on in the paper. The reader may skip it and return to it at a later point.

Both the present paper and \cite{blue2025nairian} investigate minimal Nairian Models assuming ${\sf{HPC}}$. We are able to analyze this portion of the universe because of certain important technical facts, namely \cite[Lemma 9.9, Corollary 9.10, Lemma 9.12 and Theorem 11.1]{blue2025nairian}.
We require consequences of \cite[Lemma 9.12]{blue2025nairian} that are implicit in \cite{blue2025nairian}.\footnote{E.g.~in the portion of the proof of \cite[Theorem 11.1]{blue2025nairian} showing that Clause 1 implies Clause 2.}
These consequences are isolated as Corollary \ref{cor: technical corollary}, which says roughly that if $\hs$ is an iterate of a hod pair $\hq$ above some cardinal $\k$ and $\hr$ is any other iterate of $\hq$, then in some circumstances we may argue that, essentially, in the comparison of $\hs$ with $\hr$, only $\hs$ moves.\footnote{Note that we follow the notation used in this part of the paper as well as in \cite{blue2025nairian}: in particular, $\hp, M, \mH$ have the same meaning that we have been using throughout this paper.}

\begin{corollary}\label{cor: technical corollary} Suppose that
\begin{itemize}
\item $\rg_0(\hq, \k, \nu)$ holds\footnote{We do not need that $\nu$ is a proper cutpoint of $\M^\hq.$}




    \item $\hr$ and $\hs$ are complete iterates of $\hq$ such that both $\T_{\hq, \hr}$ and $\T_{\hq, \hs}$ are based on $\hq|\nu$ and $\T_{\hq, \hs}$ is above $\k$ (but not necessarily strictly above), and

    \item $\zeta< \k_\hs$ is an inaccessible cardinal of $\M^\hs$ such that
    \[
    \sup(\pi_{\hr|\k_\hr, \infty}[\k_\hr])<\pi_{\hs|\zeta, \infty}(\ts(\hs|\zeta))
    \]
    and $\zeta$ is a proper cutpoint in $\hs|\k_\hs$.
\end{itemize}
      Suppose that $\hw'$ is the common iterate of $\hr$ and $\hs$ obtained via the least-extender-disagreement-coiteration.
      Then
\begin{enumerate}
\item $\T_{\hs, \hw'}$ is based on $\hs|\nu_{\hs}$ and $\T_{\hr, \hw'}$ is based on $\hr|\nu_\hr$,
\end{enumerate}
and there is a hod pair $\hw$ such that (see \rfig{fig:w interpolation})
\begin{enumerate}
\item[(2)] $\hw$ is an iterate of $\hs$,
\item[(3)] $\T_{\hs, \hw}$ is based on $\hs|\zeta$,
\item[(4)] $\hw|\zeta_{\hw}=\hw'|\zeta_{\hw'}$,
\item[(5)] $\T_{\hs, \hw}\insegeq \T_{\hs, \hw'}$,
\item[(6)] $(\T_{\hs, \hw'})_{\geq \hw}$ is strictly above $\zeta_{\hw}$, and
\item[(7)] $\sup(\pi_{\hr, \hw'}[\k_\hr])\leq \zeta_\hw$.
\end{enumerate}
\end{corollary}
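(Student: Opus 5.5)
We plan to analyze the least-extender-disagreement coiteration of $\hr$ and $\hs$ stage by stage, relying throughout on \cite[Lemma 9.12]{blue2025nairian}. That lemma, for pairs that agree up to a strong-limit-of-Woodins region, says the comparison proceeds block by block: disagreements first occur in the part of the models determined by their direct limit images below the least slw cardinal. Clause (1) comes first and is the easiest. Both $\T_{\hq,\hr}$ and $\T_{\hq,\hs}$ are based on $\hq|\nu$, and $\nu$ is an inaccessible cutpoint (we only need the cutpoint property below $\hd_\hq$). So $\hr$ and $\hs$ agree with $\hq$ above the images of $\nu$, and $\hr|\nu_\hr$, $\hs|\nu_\hs$ are iterates of $\hq|\nu$ via $\Sigma^{\hq|\nu}$. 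By \cite{SteelCom}, comparing $\hr$ with $\hs$ amounts to comparing $\hr|\nu_\hr$ with $\hs|\nu_\hs$ and lifting via $\uparrow$ as in Notation \ref{not: restriction}. Hence both sides of the coiteration are based on the $\nu$-images.

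For (2)--(7), we let $\hw$ be the model on the $\hs$-side of the coiteration at the first stage where the extender used on the $\hs$-side has critical point (equivalently, index) above the image of $\zeta$. If no such stage exists, we take $\hw=\hw'$. Then (2), (3), (5) and (6) hold by construction, using normality of the coiteration and the fact that $\zeta$ is a proper cutpoint in $\hs|\k_\hs$, so no overlapping extender straddles $\zeta_\hw$. The substantive claims are (4) and (7).

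For (4), we argue that after reaching $\hw$ nothing further changes below $\zeta_\hw$. On the $\hs$-side this is immediate from (6). On the $\hr$-side we must show that once the $\hs$-side passes $\zeta$, the $\hr$-side has already stabilized below $\zeta_\hw$; this is exactly what (7) provides. So the key step is (7): $\sup(\pi_{\hr,\hw'}[\k_\hr])\leq\zeta_\hw$. Here we use the hypothesis $\sup(\pi_{\hr|\k_\hr,\infty}[\k_\hr])<\pi_{\hs|\zeta,\infty}(\ts(\hs|\zeta))$. Its point is that $\hs|\zeta$ already contains, below its least slw cardinal, a segment whose direct limit image dominates the entire image of $\hr|\k_\hr$.

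The plan for (7) is as follows. Since $\T_{\hq,\hs}$ is above $\k$ and $\k$ is strong in $\M^\hq|\varsigma$, $\hs$ agrees with $\hq$ below $\k$. Apply \cite[Lemma 9.12]{blue2025nairian} (and \cite[Corollary 9.10]{blue2025nairian}) to the pairs $\hr|\k_\hr$ and $\hs|\zeta$. By the hypothesis on direct-limit images, $\hr|\k_\hr$ iterates into an initial segment of the iterate of $\hs|\zeta$ lying below the image of $\ts(\hs|\zeta)$. Because $\zeta$ is inaccessible and a proper cutpoint, the coiteration of these restrictions is an initial segment of the full coiteration. Hence the $\hr$-side sends $\k_\hr$ into $\zeta_\hw$, while $\hs$ only needs to move below $\zeta$ during this phase.

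The main obstacle is justifying that the full least-disagreement coiteration really decomposes this way. In particular, we must show that extenders on the $\hr$-side with critical points below $\k_\hr$ cannot be applied after the $\hs$-side moves past $\zeta$. I would first try to get this by comparing direct limit maps: any such late extender would yield $\pi_{\hr,\infty}$-images of $\k_\hr$ beyond $\pi_{\hs|\zeta,\infty}(\ts(\hs|\zeta))$, contradicting the hypothesis via commutativity of the maps to $\mH$, which holds because both iterations are according to the strategies of \cite{SteelCom}.
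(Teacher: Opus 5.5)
Your argument for clause (1) is fine. The argument for (2)--(7), however, has a genuine gap, and it starts from a misreading of \emph{proper cutpoint}. In this paper (\rter{term: cutpoints etc}) a proper cutpoint is not a cutpoint in the usual sense. It is \emph{properly overlapped}: the strong cardinals of $\M^\hs|\zeta$ are exactly the strong cardinals of the ambient block lying below $\zeta$, and such cardinals exist (e.g.\ $\ts(\hs|\zeta)$). Their extenders overlap $\zeta$; the definition only excludes $\zeta$ itself being the critical point of a total extender.

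Consequently, ``critical point (equivalently, index) above the image of $\zeta$'' is false, and (3) together with (6) does not hold by construction. The whole difficulty is to show that the $\hs$-side never applies an extender whose critical point is a preimage of a strong cardinal below $\zeta$ but whose index lies above the image of $\zeta$. Equivalently, you must show that the last node $\hx$ of $\T_{\hs,\hw'}$ whose tree is based on $\hs|\zeta$ already agrees with $\hw'$ up to $\zeta_{\hw'}$. The paper takes $\hw$ to be the least node with $\hw|\zeta_{\hw'}=\hw'|\zeta_{\hw'}$, which makes (4) and (5) immediate, and then has to prove $\hx=\hw$ to get (3). Incidentally, (4) does not depend on (7): once (6) holds, (4) follows on the $\hs$-side alone, since $\hw'$ is the last model of $\T_{\hs,\hw'}$.

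What you are missing is how the hypothesis that $\T_{\hq,\hs}$ is above $\k$ enters; you use it only to get agreement of $\hs$ with $\hq$ below $\k$. The paper combines it with \cite[Lemma 9.12]{blue2025nairian}, which says that for each strong cardinal $\l<\k_{\hw'}$ of $\M^{\hw'}|\hd_{\hw'}$ only one side uses main-branch extenders with critical point a preimage of $\l$. From this it proves \rcl{clm: only on one side}: the $\hr$-side never does so. The proof is a minimal-counterexample argument using an auxiliary iterate $\hq'$ of $\hq$, a lifted tree on $\hs'$, and \cite[Corollary 9.11]{blue2025nairian}.

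With that in hand, an induction on the strong cardinals below $\zeta_{\hw'}$ (\rcl{clm: crucial}) shows that $\hx$, the last node $\hy$ of $\T_{\hr,\hw'}$ based on $\hr|\k_\hr$, and $\hw'$ agree up to each of them. Proper overlap of $\zeta$ then gives $\hx=\hw$. It also gives that $(\T_{\hr,\hw'})_{\geq\hy}$ is above $\k_\hy$, which together with $\hy|\k_\hy\insegeq\hx|\zeta_\hx$ yields (7); the $\sup$ hypothesis is used only to get this last inclusion.

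Your intermediate picture, in which $\hr|\k_\hr$ is iterated into a segment lying below the image of $\ts(\hs|\zeta)$, is exactly the configuration this argument excludes. It would force the $\hr$-side to use an extender with critical point $\ts(\hy|\k_\hy)$ on its main branch.

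Nor can the obstacle you isolate be resolved by commutativity of direct limit maps. The hypothesis concerns the local maps $\pi_{\hr|\k_\hr,\infty}$ and $\pi_{\hs|\zeta,\infty}$. These are fixed by the pairs $\hr|\k_\hr$ and $\hs|\zeta$ themselves, independently of which extenders the coiteration applies, and they are not in general restrictions of $\pi_{\hr,\infty}$ and $\pi_{\hs,\infty}$. Moreover $\T_{\hr,\hw'}$ is not based on $\hr|\k_\hr$. So the contradiction you envisage does not arise without precisely the fine-structural analysis of the coiteration that Lemma 9.12 supplies.
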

\begin{proof}
\begin{figure}
\[\begin{tikzcd}[row sep=tiny, column sep=small]
    &&&&&&& \bullet \\
    &&&& {\kappa_\hw} \\
    &&&&&&& \bullet \\
    \bullet \\
    &&&& \bullet && {\T_{\hw, \hw'}} \\
    {\nu_\hs} \\
    &&&&&&& \bullet \\
    {\kappa_\hs} &&&& {\zeta_\hw} \\
    &&&& \bullet \\
    \zeta \\
    \bullet &&& {\T_{\hs, \hw}} \\
    {} \\
    &&&& \bullet \\
    \\
    \hs &&&& \hw &&& {\hw'}
    \arrow[dash, from=5-5, to=3-8]
    \arrow[dash, from=5-5, to=7-8]
    \arrow[dash, from=6-1, to=4-1]
    \arrow[dash, from=8-1, to=6-1]
    \arrow[dash, from=8-5, to=2-5]
    \arrow[dash, from=10-1, to=8-1]
    \arrow[dash, from=11-1, to=9-5]
    \arrow[dash, from=11-1, to=13-5]
    \arrow[dash, from=12-1, to=10-1]
    \arrow[dash, from=15-1, to=11-1]
    \arrow[dash, from=15-5, to=8-5]
    \arrow[dash, from=15-8, to=1-8]
\end{tikzcd}\]
\caption{Corollary \ref{cor: technical corollary}}
\label{fig:w interpolation}
\end{figure}

Clause 1 follows because $\T_{\hq, \hr}$ and $\T_{\hq, \hs}$ are based on $\hq|\nu$. Now, \cite[Lemma 9.12]{blue2025nairian} says that for each $\l\leq \k_{\hw'}$ that is a strong cardinal of $\M^{\hw'}|\hd_{\hw'}$, only one of $\T_{\hr, \hw'}$ and $\T_{\hs, \hw'}$ uses extenders on its main branch whose critical points are preimages of $\l$. Intuitively, because $\T_{\hq, \hs}$ is above $\k$, for each $\l<\k_{\hw'}$ that is a strong cardinal of $\M^{\hw'}|\hd_{\hw'}$, we do not use extenders on the main branch of $\T_{\hr, \hw'}$ with critical point a preimage of $\l$. Thus, the following must be true:
\begin{claim}\label{clm: only on one side} Suppose $\l<\k_{\hw'}$ is a strong cardinal of $\M^{\hw'}|\hd_{\hw'}$ and $\gamma+1=\lh(\T_{\hr, \hw'})$. Then if $\a\in [0, \gamma)_{\T_{\hr, \hw'}}$ is such that $\l\in \rge(\pi^{\T_{\hr, \hw'}}_{\a, \gamma})$ and $\b+1\in [0, \gamma)_{\T_{\hr, \hw'}}$ is such that $\T_{\hr, \hw'}(\b+1)=\a$, then
\[
\pi^{\T_{\hr, \hw'}}_{\a, \gamma}(\cp(E_\b^{\T_{\hr, \hw'}}))\neq\l.
\]
\end{claim}
\begin{proof} 
\begin{figure}[h]
\centering
\begin{tikzpicture}[xscale=1.2, yscale=0.8, font=\small]

    \begin{scope}[xshift=-3.5cm]
        \draw[thick] (0,0) node[below] {$\hs'$} -- (0,4.5);
        
        \draw[thick] (2.5,0) node[below] {$\hw'$} -- (2.5,4.5);
        
        \draw[dashed] (0,1.5) -- (2.5,1.5);
        \filldraw (0,1.5) circle (1pt) node[left] {$\tau$};
        \filldraw (2.5,1.5) circle (1pt) node[right] {$\tau$};
        \node[above, font=\scriptsize] at (1.25, 1.5) {$\hs'|\tau = \hw'|\tau$};
        
        \filldraw (2.5,3.5) circle (1pt) node[right] {$\lambda$};

        \draw[->, thick, blue, >=stealth] (0.1, 2.5) to[bend left=20] 
            node[midway, above, align=center, font=\scriptsize] {$(\mathcal{T}_{\hs, \hw'})_{\geq \hs'}$ \\ strictly above $\tau$} 
            (2.4, 2.5);
    \end{scope}

    \begin{scope}[xshift=0.5cm]
        \draw[thick] (0,0) node[below] {$\hr'$} -- (0,4.5);
        
        \draw[thick] (2.5,0) node[below] {$\hw'$} -- (2.5,4.5);
        
        \draw[dashed] (0,1.5) -- (2.5,1.5);
        \filldraw (0,1.5) circle (1pt) node[left] {$\tau$};
        \filldraw (2.5,1.5) circle (1pt) node[right] {$\tau$};
        \node[above, font=\scriptsize] at (1.25, 1.5) {$\hr'|\tau = \hw'|\tau$};
        
        \filldraw (2.5,3.5) circle (1pt) node[right] {$\lambda$};

        \draw[->, thick, red!80!black, >=stealth] (0.1, 2.5) to[bend left=20] 
            node[midway, above, align=center, font=\scriptsize] {$(\mathcal{T}_{\hr, \hw'})_{\geq \hr'}$ \\ strictly above $\tau$} 
            (2.4, 2.5);
    \end{scope}

\end{tikzpicture}
\caption{The iterations leading from $\hs'$ and $\hr'$ to $\hw'$.}
\label{fig:s_prime_r_prime_to_w_prime}
\end{figure}

Towards a contradiction, assume not. Let $\l<\k_{\hw'}$ be the least strong cardinal of $\M^{\hw'}|\k_{\hw'}$ violating the claim. It follows from \cite[Lemma 9.12]{blue2025nairian} that
 \vspace{0.3cm}
    \begin{enumerate}[label=(1), itemsep=0.3cm]
        \item $\T_{\hs, \hw'}$ does not use extenders on its main branch whose critical point is a preimage of $\l$.
    \end{enumerate}
    \vspace{0.3cm}
Let $\tau$ be the supremum of all strong cardinals of $\M^{\hw'}|\l$, and $\xi<\lh(\T_{\hs, \hw'})$ be the least such that setting $\hs'=\hm_{\xi}^{\T_{\hs, \hw'}}$, $\hs'|\tau=\hw'|\tau$ (see \rfig{fig:s_prime_r_prime_to_w_prime}). We then have that
\vspace{0.3cm}
    \begin{enumerate}[label=(2), itemsep=0.3cm]
        \item $(\T_{\hs, \hw'})_{\geq \hs'}$ is a normal iteration tree on $\hs'$ that is strictly above $\tau$.
    \end{enumerate}
    \vspace{0.3cm}
Similarly, let $\iota<\lh(\T_{\hr, \hw'})$ be the least such that letting $\hr'=\hm_\iota^{\T_{\hr, \hw'}}$, $\hr'|\tau=\hw'|\tau$. Again, we then have that
\vspace{0.3cm}
    \begin{enumerate}[label=(3), itemsep=0.3cm]
        \item $(\T_{\hr, \hw'})_{\geq \hr'}$ is a normal iteration tree on $\hr'$ that is strictly above $\tau$.
    \end{enumerate}
    \vspace{0.3cm}
Moreover, because $\l$ is a small cardinal, we have that
\vspace{0.3cm}
    \begin{enumerate}[label=(4), itemsep=0.3cm]
        \item for every $\iota'<\iota$ with $\iota'$ on the main branch of $\T_{\hr, \hw'}$, if $E$ is the extender used at $\iota'$ on the main branch of $\T_{\hr, \hw'}$, $\pi_{\iota'}^{\T_{\hr, \hw'}}(\cp(E))\neq\l$.
    \end{enumerate}
    \vspace{0.3cm}
Let now $\hq'$ be the normal iterate of $\hq$ such that $\hq'|\tau=\hw'|\tau$, $\gen(\T_{\hq, \hq'})\subseteq \tau$ and the strong cardinals of $\M^{\hq'}|\tau$ and $\M^{\hw'}|\tau$ coincide (see \rfig{fig:q_qprime_sprime_wprime}). 
\begin{figure}[h]
\centering
\begin{tikzpicture}[xscale=1.5, yscale=0.9, font=\small]

    \filldraw (0,0.5) circle (2pt) node[left=5pt] {$\hq$};

    \draw[thick] (2.5,0) node[below] {$\hq'$} -- (2.5,4.5);

    \draw[thick] (4.5,0) node[below] {$\hs'$} -- (4.5,4.5);

    \draw[thick] (6.5,0) node[below] {$\hw'$} -- (6.5,4.5);

    \draw[->, very thick, >=stealth, shorten >=2pt] (0.2,0.5) to[bend left=15] 
        node[midway, above] {$\mathcal{T}_{\hq, \hq'}$} 
        (2.4,0.2);

    \draw[dashed] (2.5,2) -- (6.5,2);
    
    \filldraw (2.5,2) circle (1pt) node[above left] {$\tau$};
    \filldraw (4.5,2) circle (1pt) node[above right] {$\tau$};
    \filldraw (6.5,2) circle (1pt) node[above right] {$\tau$};
    
    \node[below, font=\scriptsize] at (3.5, 2) {$\hq'|\tau = \hs'|\tau$};
    \node[below, font=\scriptsize] at (5.5, 2) {$\hs'|\tau = \hw'|\tau$};

\end{tikzpicture}
\caption{The relationship between $\hq$, $\hq'$, $\hs'$, and $\hw'$.}
\label{fig:q_qprime_sprime_wprime}
\end{figure}
Because $\T_{\hq, \hs}$ is above $\k$, we have that $\hq'|\l_{\hs'}=\hs'|\l_{\hs'}$. 
It follows that for some $\phi\leq \l_{\hr'}$, $\hr'|\phi$ is a complete iterate of $\hs'|\l=\hq'|\l$. Let then $\hs''$ be the last pair of $\uparrow(\T_{\hs'|\l, \hr|\phi}, \hs')$. We have that $\l_{\hs''}=\phi$ and $\hs''$ is on the main branch of $\T_{\hs, \hw'}$. Now, if $\phi<\l_{\hr'}$ then the extender used at $\hs''$ on the main branch of $\T_{\hs, \hw'}$ has a critical point $\phi$, which contradicts the fact that $\T_{\hs, \hw'}$ does not use any extender on its main branch whose critical point is a preimage of $\l$. Therefore, $\phi=\l_{\hr'}$. But then \cite[Corollary 9.11]{blue2025nairian} implies that neither $\T_{\hs'', \hw'}$ nor $\T_{\hr', \hw'}$ use extenders with critical point $\phi$. Since $\T_{\hr, \hw'}$ uses such an extender, the first use of such an extender must occur before stage $\iota$, and this contradicts (4).
\end{proof}

Now let $\hw$ be the least node on $\T_{\hs, \hw'}$ such that $\hw|\zeta_{\hw'}=\hw'|\zeta_{\hw'}$. It follows that $(\T_{\hs, \hw'})_{\geq \hw}$ is above $\zeta_{\hw'}=\zeta_{\hw}$.\footnote{This is because \(\k\) is small.}
Thus, $\hw$ satisfies all the clauses above except possibly clause 3 and 7 (see \rfig{fig:s_w_wprime_partial}).
\begin{figure}[h]
\centering
\begin{tikzpicture}[xscale=1.5, yscale=1.0, font=\small]

    \coordinate (Stop) at (0.2, 2.6);
    \coordinate (Sbot) at (0.2, 2.4);
    \coordinate (Wtop) at (3.5, 4.0);
    \coordinate (Wbot) at (3.5, 1.0);

    \filldraw (0,2.5) circle (2pt) node[left=5pt] {$\hs$};

    \draw[thick, gray, ->, >=stealth] (Stop) -- ($(Stop)!0.6!(Wtop)$);
    \draw[thick, gray, dotted] ($(Stop)!0.6!(Wtop)$) -- ($(Stop)!0.85!(Wtop)$);
    
    \draw[thick, gray, ->, >=stealth] (Sbot) -- ($(Sbot)!0.6!(Wbot)$);
    \draw[thick, gray, dotted] ($(Sbot)!0.6!(Wbot)$) -- ($(Sbot)!0.85!(Wbot)$);

    \node[gray] at (1.2, 2.5) {$\mathcal{T}_{\hs, \hw'}$};

    \draw[thick] (3.5, 1.0) node[below] {$\hw$} -- (3.5, 4.0);

    \draw[thick] (6.0, 1.0) node[below] {$\hw'$} -- (6.0, 4.0);

    \draw[dashed] (3.5, 2.0) -- (6.0, 2.0);
    
    \filldraw (3.5, 2.0) circle (1pt) node[above left] {$\zeta_\hw$};
    \filldraw (6.0, 2.0) circle (1pt) node[above right] {$\zeta_{\hw'}$};
    
    \node[below, font=\scriptsize] at (4.75, 2.0) {$\hw|\zeta_\hw = \hw'|\zeta_{\hw'}$};

    \draw[->, thick, blue, >=stealth] (3.6, 3.2) to[bend left=15] 
        node[midway, above, align=center, font=\scriptsize] {$(\mathcal{T}_{\hs, \hw'})_{>\hw}$ \\ strictly above $\zeta_\hw$} 
        (5.9, 3.2);

\end{tikzpicture}
\caption{The iteration $\mathcal{T}_{\hs, \hw'}$ projecting towards $\hw$, and the subsequent agreement between $\hw$ and $\hw'$.}
\label{fig:s_w_wprime_partial}
\end{figure}
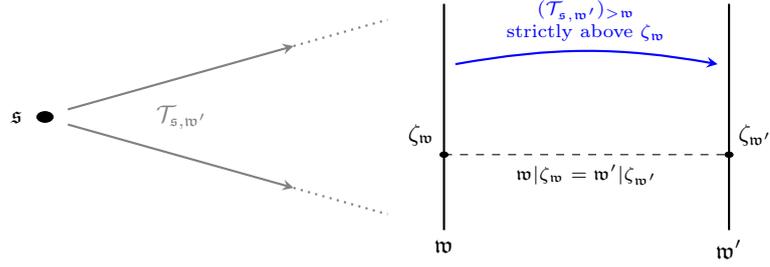

Let $\a<\lh(\T_{\hs, \hw'})$ be the largest ordinal such that, setting $\hx=\hm_\a^{\T_{\hs, \hw'}}$, $\T_{\hs, \hx}$ is based on $\hs|\zeta$, and let $\b$ be the largest ordinal such that, setting $\hy=\hm_\b^{\T_{\hr, \hw'}}$, $\T_{\hr, \hy}$ is based on $\hr|\k_\hr$. Then if $\X$ and $\Y$ are the iteration trees produced on $\hs|\zeta$ and $\hr|\k_\hr$ via the least-extender-disagreement-coiteration, then $\X\insegeq \downarrow(\T_{\hs, \hx}, \hs|\zeta)$ and $\Y=\downarrow(\T_{\hr, \hy}, \k_\hr)$.\footnote{See \rnot{not: restriction}.} $\X\neq\downarrow(\T_{\hs, \hx}, \hs|\zeta)$ is possible. Because $\sup(\pi_{\hr|\k_\hr, \infty}[\k_\hr])<\pi_{\hs|\zeta, \infty}(\ts(\hs|\zeta))$, we must have that
\vspace{0.3cm}
    \begin{enumerate}[label=(5), itemsep=0.3cm]
        \item $\hy|\k_\hy \insegeq\hx|\zeta_\hx$.
    \end{enumerate}
    \vspace{0.3cm}
The following is the crucial claim.
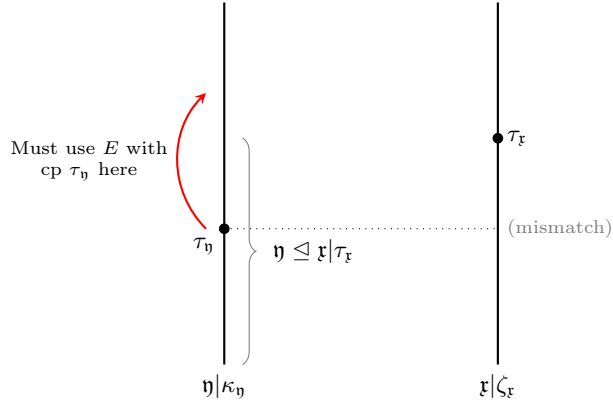
\begin{figure}[h]
\centering
\begin{tikzpicture}[scale=1.2, font=\small]
    \node[font=\scriptsize, blue, anchor=south, align=center] at (1.5, 4.2) {Definition: $\tau_\hx := \ts(\hx|\zeta_\hx)$ and $\tau_\hy := \ts(\hy|\kappa_\hy)$};

    \draw[thick] (0,0) node[below] {$\hy|\kappa_\hy$} -- (0,4);
    
    \draw[thick] (3,0) node[below] {$\hx|\zeta_\hx$} -- (3,4);
    
    \filldraw (0, 1.5) circle (1.5pt) node[below left] {$\tau_\hy$};
    \draw[dotted] (0, 1.5) -- (3, 1.5);
    \node[right, gray, font=\scriptsize] at (3, 1.5) {(mismatch)};
    
    \filldraw (3, 2.5) circle (1.5pt) node[right] {$\tau_\hx$};
    
    \draw[decorate, decoration={brace, amplitude=5pt, mirror}, gray] (0.2, 0) -- (0.2, 2.5)
        node[midway, right=8pt, align=left, black] {$\hy \trianglelefteq \hx|\tau_\hx$};

    \draw[->, red, thick, >=stealth] (-0.2, 1.5) to[bend left=45] 
        node[midway, left, align=center, font=\scriptsize, black] {Must use $E$ with\\cp $\tau_\hy$ here} 
        (-0.2, 3.0);
        
\end{tikzpicture}
\caption{The contradiction in Claim \ref{clm: crucial}: If the first strong cardinals differ ($\tau_\hx > \tau_\hy$), $\hy$ becomes a proper segment of $\hx$, forcing an extender usage that was previously ruled out.}
\label{fig:strong_cardinal_mismatch}
\end{figure}
\begin{claim}\label{clm: crucial} Suppose $\l<\zeta_{\hw'}$ is either a strong cardinal of $\M^{\hw'}|\k_{\hw'}$ or a limit of strong cardinals of $\M^{\hw'}|\k_{\hw'}$. Then $\hx|\l=\hw'|\l=\hy|\l$.
\end{claim}
\begin{proof}
The proof follows the inductive argument in \cite[Subsection 11.1.1]{blue2025nairian} so we only outline it. Suppose $\l=\ts(\hw'|\k_{\hw'})$.\footnote{So it is the least strong cardinal.} Because $\sup(\pi_{\hr|\k_\hr, \infty}[\k_\hr])<\pi_{\hs|\zeta, \infty}(\ts(\hs|\zeta))$, we must have that $\zeta>\ts(\hs|\k_\hs)$.

First we show that $\ts(\hx|\k_\hx)=\ts(\hy|\k_\hy)$. If not, then $\ts(\hx|\k_\hx)>\ts(\hy|\k_\hy)$ (see (5)), implying that 
\[
\hy|\k_\hy\inseg \hx|\ts(\hx|\k_{\hx}).
\]
But then we must use an extender with critical point $\ts(\hy|\k_\hy)$ on the main branch of $\T_{\hr, \hw'}$, which contradicts \rcl{clm: only on one side} (see \rfig{fig:strong_cardinal_mismatch}).

Now, since $\ts(\hx|\k_\hx)=\ts(\hy|\k_\hy)$, neither $(\T_{\hs, \hw'})_{\geq \hx}$ nor $(\T_{\hr, \hw'})_{\geq \hy}$ use an extender with critical point $\ts(\hx|\k_\hx)=\ts(\hy|\k_\hy)$ (see \cite[Corollary 9.11]{blue2025nairian}). Hence, we must have that 
\[
\ts(\hx|\k_\hx)=\ts(\hy|\k_\hy)=\l.
\]
One can repeat this inductively, as in \cite[Subsection 11.1.1]{blue2025nairian}, to conclude that the claim holds for any $\l$.
\end{proof}

Since $\zeta$ is a proper cutpoint, the claim above implies that $\zeta_\hx=\zeta_{\hw'}$, and so in fact, $\hx=\hw$. The claim also implies that $(\T_{\hr, \hw'})_{\geq \hy}$ is above $\k_\hy$. Hence, 
\[
\sup(\pi_{\hr, \hw'}[\k_\hr])=\sup(\pi_{\hr, \hy}[\k_\hr]).
\]
\end{proof}

The following is another version of \rcor{cor: technical corollary} that will be very useful in our calculations.

\begin{corollary}\label{cor: tech cor ii} 
Suppose that
\begin{itemize}
\item $\rg_2(\hq, \l, \k, \nu)$,
\item $\hr$ and $\hs$ are two complete iterates of $\hq$,
    \item $\T_{\hq, \hs}$ is above $\k$, 
    \item $\gen(\T_{\hq, \hr})\subseteq \l_\hr$,
    \item there is $\zeta<\k_\hs$ such that $\zeta$ is a proper cutpoint and an inaccessible cardinal of $\M^\hs$, $\gen(\T_{\hq, \hs})\subseteq \zeta$, and 
    \begin{center}$\sup(\pi_{\hr|\k_\hr, \infty}[\k_\hr])<\pi_{\hs|\zeta, \infty}(\ts(\hs|\zeta))$.\end{center}
\end{itemize}
Let $\hw$ be a complete iterate of $\hr$ and $\hs$ obtained via the least-extender-disagreement coiteration. Let $\hx$ be the least pair on $\T_{\hr, \hw}$ such that $\T_{\hr, \hx}$ is based on $\hr|\nu_\hr$. 

Then the following conditions hold (see \rfig{fig:cor_tech_ii_overview} and \rfig{fig:cor_tech_ii_details}):

\begin{figure}[ht]
\centering
\begin{tikzpicture}[scale=0.7, >=stealth, font=\small]
    \node (Q) at (0, 0) {$\hq$};
    \node (R) at (-3.0, 2.5) {$\hr$};
    \node (S) at (3.0, 2.5) {$\hs$};
    \node (W) at (0, 5.0) {$\hw$};

    \draw[->] (Q) -- (R) node[midway, left, font=\scriptsize] {$\gen \subseteq \lambda_\hr$};
    \draw[->] (Q) -- (S) node[midway, right, font=\scriptsize] {Above $\kappa$};

    \draw[->] (R) to[bend left=15] node[midway, left, font=\scriptsize] {$\T_{\hr, \hw}$} (W);
    \draw[->] (S) to[bend right=15] node[midway, right, font=\scriptsize] {$\T_{\hs, \hw}$ based on $\hs|\nu_\hs$} (W);
\end{tikzpicture}
\caption{The overview of the coiteration of $\hr$ and $\hs$ to $\hw$.}
\label{fig:cor_tech_ii_overview}
\end{figure}

\begin{figure}[ht]
\centering
\begin{tikzpicture}[scale=0.85, >=stealth, font=\small]
    \node (R) at (-4.0, 0) {$\hr$};
    \node (Xp) at (-1.0, 2.0) {$\hx'$}; 
    \node (X) at (-3.5, 4.0) {$\hx$};   
    \node (XE) at (2.0, 3.5) {$\hx'_E$};
    \node (W) at (4.5, 5.5) {$\hw$};    

    \draw[->] (R) to[bend right=10] (Xp);
    \draw[->, dotted, thick] (Xp) -- (X);
    \draw[->, red, thick] (Xp) to[bend right=20] node[midway, above, font=\scriptsize, black] {$E$} (XE);
    \draw[->] (XE) to[bend right=20] (W);

    \node[draw, align=left, font=\scriptsize, anchor=north west, rounded corners] at (0.5, 1.5) {
        \textbf{Key Properties:}\\[3pt]
        $\bullet$ $\T_{\hr, \hx}$ is based on $\hr|\nu_\hr$\\[3pt]
        $\bullet$ $\hx'$ and $\hx'_E$ are on the main branch of $\T_{\hr, \hw}$\\[3pt]
        $\bullet$ $\T_{\hx', \hx}$ is above $\k_{\hx'}$\\[3pt]
        $\bullet$ $\cp(E) = \kappa_{\hx'}$\\[3pt]
        $\bullet$ $(\T_{\hr, \hw})_{\geq \hx'_E}$ is above $\kappa_{\hx'_E}$
    };
\end{tikzpicture}
\caption{The structure of the $\hr$-side iteration: splitting at $\hx'$ and using $E$.}
\label{fig:cor_tech_ii_details}
\end{figure}
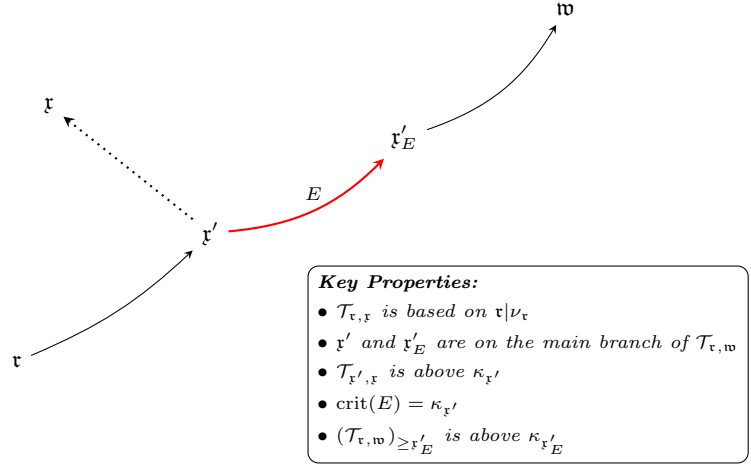

\begin{enumerate}
    \item $\gen(\T_{\hs, \hw})\subseteq \nu_\hw$ and $\gen(\T_{\hr, \hw})\subseteq \nu_\hw$,
    \item $\T_{\hs, \hw}$ is based on $\hs|\nu_\hs$,
    \item $\hx$ is a complete iterate of $\hr$,
    \item $\hx|\nu_\hx=\hw|\nu_\hx$,
    \item if $\hx'$ is the least node of $\T_{\hr, \hx}$ such that $\hx'|\k_{\hx'}=\hx|\k_\hx$, then $\hx'$ is on the main branch of $\T_{\hr, \hw}$,
    \item if $E$ is the extender used on the main branch of $\T_{\hr, \hx}$ at $\hx'$, then $\cp(E)=\k_{\hx'}$,
    \item $(\T_{\hr, \hw})_{\geq \hx'_E}$ is above $\k_{\hx'_E}$. 
\end{enumerate}
\end{corollary}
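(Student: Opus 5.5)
Clauses (1) and (2) come first, and they are nearly formal. The trees $\T_{\hq,\hr}$ and $\T_{\hq,\hs}$ are based on $\hq|\nu$, with generators below $\l_\hr$ and $\zeta\le \nu_\hs$ respectively. Since $\nu$ is an inaccessible proper cutpoint of $\M^\hq$, the least-extender-disagreement coiteration of $\hr$ and $\hs$ only sees disagreements below the images of $\nu$. Hence both sides of $\T_{\hr,\hw}$ and $\T_{\hs,\hw}$ are based on the $\nu$-images, exactly as in clause 1 of \rcor{cor: technical corollary}. Clause (3) then follows by defining $\hx$ as the least node of $\T_{\hr,\hw}$ beyond which the tree is based on $\hr|\nu_\hr$, and reading it as a complete iterate of $\hr$. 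For (4) I will argue that after stage $\hx$ the $\hr$-side only uses extenders with index above $\nu_\hx$, because of the cutpoint property; this gives $\hx|\nu_\hx=\hw|\nu_\hx$.

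The heart of the argument is clauses (5)--(7), and here I would apply \rcor{cor: technical corollary} directly. The hypotheses of $\rg_2(\hq,\l,\k,\nu)$ include $\rg_0(\hq,\k,\nu)$, and $\T_{\hq,\hs}$ is above $\k$. The cardinal $\zeta$ is an inaccessible proper cutpoint with the required sup inequality. So that corollary produces a node on the $\hs$-side, call it $\hw_0$, such that $\hw_0|\zeta_{\hw_0}=\hw|\zeta_{\hw_0}$, the rest of $\T_{\hs,\hw}$ is strictly above $\zeta_{\hw_0}$, and $\sup\pi_{\hr,\hw}[\k_\hr]\le\zeta_{\hw_0}<\k_\hw$. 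So $\pi_{\hr,\hw}$ is discontinuous at $\k_\hr$ along the main branch of $\T_{\hr,\hw}$, and indeed at every node of that branch past some point, which is where the relevant part of $\T_{\hr,\hw}$ lives. Applying \rcl{clm: tail branch} (from the proof of \rthm{thm: omega club over nm 1}) to this branch, I get a tail on which the critical points of the main-branch extenders map exactly to $\k_\hw$. The least such node is the candidate for $\hx'$, and its extender $E$ has $\cp(E)=\k_{\hx'}$, which is clause (6). Minimality of $\hx'$, namely that it is the first node with $\hx'|\k_{\hx'}=\hx|\k_\hx$, should be checked using normality: once a main-branch extender has critical point the current image of $\k$, the models below it have stabilized. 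This gives (5).

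For (7), I would use \cite[Corollary 9.11]{blue2025nairian} together with \rcl{clm: crucial} from the previous proof. Every strong cardinal of $\M^\hw|\k_\hw$ below $\zeta$, and every limit of such, is already fixed in $\hx'_E$. The hypothesis $\gen(\T_{\hq,\hr})\subseteq\l_\hr$ guarantees that no further extender on the $\hr$-side needs critical point below $\k_{\hx'_E}$. The remaining clause (1), $\gen\subseteq\nu_\hw$, then follows from (2) and (4), since $\nu$ is a cutpoint. The main obstacle is identifying $\hx'$ inside $\T_{\hr,\hx}$ rather than merely inside $\T_{\hr,\hw}$, since \rcl{clm: tail branch} only locates it on the main branch. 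To resolve this I would first show that $\hx$ itself lies on the main branch, by again using the cutpoint property of $\nu$.
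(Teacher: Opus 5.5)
Your first step is not a formality, and as stated it is false. By \rter{term: cutpoints etc}, a \emph{proper cutpoint} is only a properly overlapped cardinal that is not the critical point of a total extender. It is overlapped by the ${<}\hd_\hq$-strong cardinals below it, such as $\k$, so nothing confines the coiteration below the images of $\nu$.

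Nor are $\T_{\hq,\hr}$ and $\T_{\hq,\hs}$ assumed to be based on $\hq|\nu$; only their generators are bounded. For instance, a $\l$-bounded tree may apply an extender with critical point $\l$ that overlaps $\nu$. So \rcor{cor: technical corollary} is not available, and its clause (1) is precisely what fails here.

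The $\hr$-side does leave $\hr|\nu_\hr$, and that is why $\hx$ is singled out. Let $\a$ be the stage of $\hx$ and $\hy$ the $\hs$-side model there. Then $\T_{\hs,\hy}$ is still based on $\hs|\zeta$, and $\hy|\nu_\hx=\hx|\nu_\hx$ with $\nu_\hx<\zeta_\hy$. This gives $\k_\hx<\k_\hw$. Hence after $\hx$ the $\hr$-side uses an extender $E$ taken from $\hx$, outside $\hx|\nu_\hx$, with $\cp(E)=\k_{\hx'}$ (cf.\ (1.6) in the proof of \rcor{cor: tech cor iii}).

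Clause (2) is therefore one-sided, and it is the last thing the paper proves. The $\hr$-side main branch uses an extender whose critical point is an image of $\k_\hr$. So \cite[Lemma 9.12]{blue2025nairian} forbids the $\hs$-side from using one whose critical point is an image of $\k_\hs$. Combined with $\hy|\nu_\hx=\hx|\nu_\hx$ and the fact that the strong cardinals of $\hw|\k_\hw$ are exactly those of $\hx|\k_\hx$, this keeps $\T_{\hs,\hw}$ inside $\hs|\nu_\hs$.

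Clause (1) comes from full normalization, not from a cutpoint property. Both $(\T_{\hq,\hs})^\frown\T_{\hs,\hw}$ and $(\T_{\hq,\hr})^\frown\T_{\hr,\hw}$ normalize to a single normal tree on $\hq$ with generators below $\nu_\hw$.

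Your route to (5)--(6) also aims at a false intermediate claim: $\hx$ need not lie on the main branch of $\T_{\hr,\hw}$. The extender $E$ is taken from $\hx$ but applied to $\hx'$. So the main branch runs $\hr\to\hx'\to\hx'_E\to\hw$ and bypasses $\hx$ whenever $\T_{\hx',\hx}$ is nontrivial (see \rfig{fig:cor_tech_ii_details}).

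No appeal to \rcl{clm: tail branch} is needed. Since $\T_{\hq,\hs}$ is above $\k$, \cite[Lemma 9.12]{blue2025nairian} shows that along the main branch of $\T_{\hr,\hw}$ the embeddings are continuous at preimages of strong cardinals of $\hw|\k_\hw$. Together with $\hx|\nu_\hx=\hw|\nu_\hx$, these strong cardinals are exactly the strong cardinals of $\hx|\k_\hx$, all below $\k_\hx$. Given $\k_\hx<\k_\hw$, the main-branch extender used immediately after stage $\a$ must then have critical point $\k_\hx=\k_{\hx'}$. This yields (5) and (6) at once.

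Finally, $\gen(\T_{\hq,\hr})\subseteq\l_\hr$ constrains only $\T_{\hq,\hr}$, not $\T_{\hr,\hw}$, so it cannot justify (7). The paper derives (7) from the smallness of $\k$.
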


\begin{proof}
We begin with the simpler clauses:\\

\noindent\textbf{Clauses (1), (3), (4), and (7).} 
Clause (1) follows from full normalization and the fact that $\gen(\T_{\hq, \hs})\subseteq \nu_\hs$ and $\gen(\T_{\hq, \hr})\subseteq \nu_\hr$. Specifically, if $\U$ is the normal tree on $\hq$ via the coiteration with $\hw|\nu_\hw$, then $\hw$ is the last model of $\U$, and $\U$ fully normalizes the stacks $(\T_{\hq, \hs})^\frown \T_{\hs, \hw}$ and $(\T_{\hq, \hr})^\frown \T_{\hr, \hw}$. 
Clause (3) holds because the main branch of $\T_{\hr, \hw}$ does not drop, and $\hx$ is chosen specifically as a node on that tree. 
Clause (4) is an immediate consequence of (3). 
Clause (7) follows from the smallness of $\k$ ($\k<\hd_\hq$).

\medskip
\noindent\textbf{Clauses (2), (5), and (6).}
Here the key point that leads to a contradiction is that we show that the $\hr$-side of the iteration must use an extender on its main branch whose critical point is an image of $\k_\hr$, implying that no extender used on the main branch of $\hs$-side can have a critical point that is an image of $\k_\hs$. 

Let $\a < \lh(\T_{\hr, \hw})$ be such that $\hm_\a^{\T_{\hr, \hw}}=\hx$. Let $\hy=\hm_\a^{\T_{\hs, \hw}}$ be the corresponding model on the $\hs$-side\footnote{Recall from \cite{blue2025nairian} that in the least-extender-disagreement coiterations we allow padding.}. 
Note that $\T_{\hs, \hy}$ is based on $\hs|\zeta$ (where $\zeta < \k_\hs$) and $\T_{\hr, \hx}$ is based on $\hr|\nu_\hr$. Since these are produced via least-extender-disagreement coiteration, we have
\begin{center}$\hy|\nu_\hx = \hx|\nu_\hx$\ \ \ \ \ \ (*).\end{center}
The diagram \rfig{fig:four_lines_comparison} might be helpful to the reader. We note that
 \vspace{0.3cm}
    \begin{enumerate}[label=(1), itemsep=0.3cm]
        \item since $\T_{\hq, \hs}$ avoids extenders with critical points below $\k$, any extender $E$ with a critical point mapping to a strong cardinal of $\hw|\k_{\hw}$ can only appear in $\T_{\hs, \hw}$.
    \end{enumerate}
    \vspace{0.3cm}
Applying \cite[Lemma 9.12]{blue2025nairian} yields the following properties:
 \vspace{0.3cm}
    \begin{enumerate}[label=(2.\arabic*), itemsep=0.3cm]
        \item $\k_{\hx} < \k_{\hw}$.
\item  If $\gamma=\lh(\T_{\hr, \hw})+1$, and $\tau$ is a strong cardinal of $\hw|\k_\hw$, then for any $\b \in [0, \gamma]_{\T_{\hr, \hw}}$ such that $\tau \in \rge(\pi^{\T_{\hr, \hw}}_{\b, \gamma})$, we have:
    \[ \sup(\pi^{\T_{\hr, \hw}}_{\b, \gamma}[(\pi^{\T_{\hr, \hw}}_{\b, \gamma})^{-1}(\tau)]) = \tau. \]
\item For every $\tau < \k_\hw$, $\tau$ is a strong cardinal of $\hw|\k_\hw$ if and only if $\tau < \k_\hx$ and $\tau$ is a strong cardinal of $\hx|\k_\hx$.
    \end{enumerate}
    \vspace{0.3cm}

\begin{figure}[ht]
\centering
\begin{tikzpicture}[scale=1.1, >=stealth, font=\small]
    \def\xW{0}         
    \def\xX{-3.0}      
    \def\xXprime{-6.0} 
    \def\xY{3.5}       
    
    \def\hBase{0}
    \def\hKappaX{1.8}    
    \def\hNuPrime{2.6}   
    \def\hNu{3.4}        
    \def\hZeta{4.4}      
    \def\hKappaY{5.4}    
    \def\hKappaW{6.4}    
    \def\hTop{7.2}

    \draw[thick] (\xXprime, \hBase) -- (\xXprime, \hTop);
    \draw[thick] (\xX, \hBase) -- (\xX, \hTop);
    \draw[thick] (\xW, \hBase) -- (\xW, \hTop);
    \draw[thick] (\xY, \hBase) -- (\xY, \hTop);

    \node[below] at (\xXprime, \hBase) {\large $\hx'$};
    \node[below] at (\xX, \hBase) {\large $\hx$};
    \node[below] at (\xW, \hBase) {\large $\hw$};
    \node[below] at (\xY, \hBase) {\large $\hy$};

    \filldraw (\xXprime, \hNuPrime) circle (1.5pt) node[left] {$\nu_{\hx'}$};

    \draw[dotted, gray] (\xXprime, \hKappaX) -- (\xY, \hKappaX);

    \filldraw (\xXprime, \hKappaX) circle (1.5pt) node[left] {$\kappa_\hx$};
    
    \filldraw (\xX, \hKappaX) circle (1.5pt) node[right, xshift=2pt, font=\scriptsize] {$\kappa_\hx$};
    
    \filldraw (\xW, \hKappaX) circle (1.5pt) node[right, font=\scriptsize] {$\kappa_\hx$};
    
    \filldraw (\xY, \hKappaX) circle (1.5pt) node[left, font=\scriptsize] {$\kappa_\hx$};

    \draw[dashed, gray] (\xXprime, \hKappaX) -- (\xX, \hKappaX);
    
    \draw[->, very thick, violet] (\xXprime, \hKappaX+1.2) to[bend left=25] 
        node[midway, above, font=\scriptsize, align=center, violet] {$\T_{\hx', \hx}$\\(above $\kappa_\hx$)} 
        (\xX, \hKappaX+1.2);

    \draw[decorate, decoration={brace, amplitude=6pt}, thick] (\xW-0.15, \hBase+0.2) -- (\xW-0.15, \hKappaX)
        node[midway, left=8pt, align=right, font=\scriptsize] {Strong cardinals\\ of $\hw|\kappa_\hw$};

    \draw[dashed] (\xX, \hNu) -- (\xY, \hNu);
    \filldraw (\xX, \hNu) circle (1.5pt) node[left] {$\nu_\hx$};
    \filldraw (\xY, \hNu) circle (1.5pt) node[right] {$\nu_\hx$};
    \node[fill=white, inner sep=2pt, font=\scriptsize] at (\xW, \hNu) {Agreement $\le \nu_\hx$};

    \draw[densely dotted, thick] (\xY-0.4, \hZeta) -- (\xY+0.4, \hZeta);
    \node[left] at (\xY-0.1, \hZeta) {$\zeta_\hy$};
    
    \draw[decorate, decoration={brace, amplitude=4pt, mirror}, thick] (\xY+0.5, \hBase+0.2) -- (\xY+0.5, \hZeta)
        node[midway, right=6pt, align=left, font=\scriptsize] {$\T_{\hs, \hy}$ based\\on $\hs|\zeta_\hy$};

    \draw[gray!50] (\xY+0.1, \hZeta) -- (\xY+0.5, \hZeta);

    \filldraw (\xY, \hKappaY) circle (1.5pt) node[right] {$\kappa_\hy$};

    \filldraw (\xW, \hKappaW) circle (1.5pt) node[left] {$\kappa_\hw$};

\end{tikzpicture}
\caption{The ordinal hierarchy showing the iteration from $\hx'$ to $\hx$ acting above $\kappa_\hx$. The strong cardinals of $\hw$ are contained below $\kappa_\hx$.}
\label{fig:four_lines_comparison}
\end{figure}
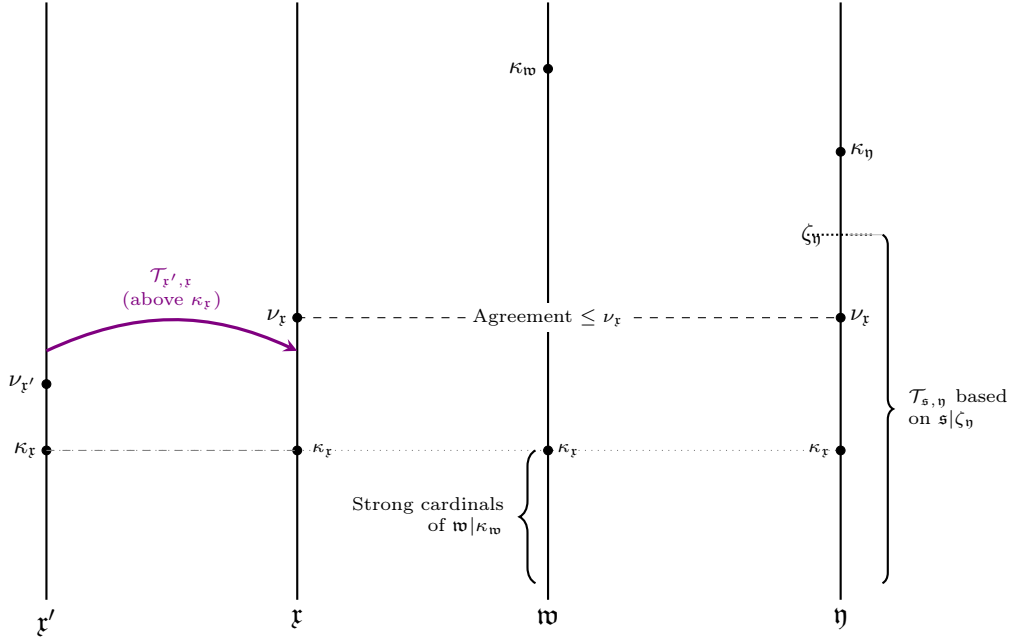
(2.1) follows because $\T_{\hs, \hy}$ is based on $\hs|\zeta$, $\zeta<\k_\hs$ and $(*)$ ((*) implies that $\k_\hx<\nu_\hx<\zeta_\hy$). (2.2) is a consequence of  \cite[Lemma 9.12]{blue2025nairian}, and follows from (1). (2.3) follows because we have that $\hx|\nu_\hx=\hw|\nu_\hx$ (see Clause 4 of the current theorem) and (2.2). 

Combining (2.2) and (2.3), we see that the extender used on the main branch of $\T_{\hr, \hw}$ immediately after stage $\a$ must have critical point $\k_{\hx}$. This confirms Clauses (5) and (6).

Finally, for Clause (2): Since $\T_{\hr, \hw}$ uses extenders with critical points in the image of $\k_\hr$, \cite[Lemma 9.12]{blue2025nairian} implies that $\T_{\hs, \hw}$ cannot use an extender whose critical point is an image of $\k_\hs$. Therefore, (2.3) and $(*)$ imply that $\T_{\hs, \hw}$ is based on $\hs|\nu_\hs$.
\end{proof}

We finish with the following corollary which is in the same spirit as those presented above.
\begin{corollary}\label{cor: comparing tau bounded iterates} Suppose $\rg_0(\hq, \l, \nu)$ holds, and $\hr$ and $\hs$ are two $\pi_{\hq, \infty}(\l)$-bounded complete iterates of $\hq$ such that
 $\T_{\hq, \hs}$ is based on $\hq|\nu$. Let $\hw$ be the least-extender-disagreement comparison of $\hr$ and $\hs$. Then $\T_{\hr, \hw}$ is based on $\hr|\nu_\hr$.
\end{corollary}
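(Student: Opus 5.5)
We argue by contradiction, following the pattern of \rcor{cor: technical corollary} and \rcor{cor: tech cor ii}. Write $\U=\T_{\hr,\hw}$ and let $\gamma+1=\lh(\U)$. If $\U$ is not based on $\hr|\nu_\hr$, let $\a<\lh(\U)$ be least such that $E^\U_\a\notin\pi^\U_{0,\a}(\hr|\nu_\hr)$ and $[0,\a)_\U$ does not drop. Put $\hx=\hm^\U_\a$, so $\T_{\hr,\hx}$ is based on $\hr|\nu_\hr$. Let $\hy=\hm^{\T_{\hs,\hw}}_\a$ be the corresponding model on the $\hs$-side, allowing padding.

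First I control the $\hs$-side. By \rcor{cor: technical corollary}(1), applied with the roles set up so that $\T_{\hq,\hs}$ is based on $\hq|\nu$, the tree $\T_{\hs,\hw}$ is based on $\hs|\nu_\hs$. Since the comparison is least-extender-disagreement, we get $\hx|\nu_\hx=\hy|\nu_\hx$ as in $(*)$ of the proof of \rcor{cor: tech cor ii}. The disagreement at stage $\a$ then lies at or above $\nu_\hx$.

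Next I use that $\nu$ is an inaccessible proper cutpoint of $\M^\hq$ relative to $\hd_\hq$. By elementarity, $\nu_\hx$ is a proper cutpoint of $\hx$ and $\nu_\hy$ is a proper cutpoint of $\hy$. Hence no extender overlapping $\nu_\hx$ can be used, and an extender used at $\a$ on the $\hr$-side must have critical point above $\nu_\hx$. This is where $\pi_{\hq,\infty}(\l)$-boundedness enters. I read it as saying that every extender on the main branch of $\T_{\hq,\hr}$ has image-critical point below $\pi_{\hq,\infty}(\l)$, i.e. the iteration only moves the part of $\hq$ generated below $\l$. From this I want two conclusions: $\pi_{\hr,\infty}$ and $\pi_{\hs,\infty}$ agree on $\hr|\nu_\hr$ and $\hs|\nu_\hs$, and the direct-limit images of $\nu_\hr$ and $\nu_\hs$ coincide. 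Then $\hx|\nu_\hx$ and $\hy|\nu_\hy$ are both initial segments of the common iterate $\hw$ below the image of $\nu$. Iterating $\hr$ above $\nu_\hx$ would therefore produce a model whose $\nu$-image is strictly larger than that of $\hs$'s side. This contradicts comparison, because the $\hs$-side is based on $\hs|\nu_\hs$ and cannot move the image of $\nu$. I will argue this by applying \cite[Lemma 9.12]{blue2025nairian} and \cite[Corollary 9.11]{blue2025nairian}, in the same way as in the proof of \rcl{clm: only on one side}. The sides that use extenders with critical points that are preimages of a given strong cardinal are mutually exclusive, and boundedness pins the $\hr$-side usage below $\l$.

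The main obstacle is precisely extracting from $\pi_{\hq,\infty}(\l)$-boundedness that $\pi_{\hr,\hw}(\nu_\hr)=\pi_{\hs,\hw}(\nu_\hs)$, or at least that $\nu_\hx=\nu_\hy$ at the relevant stage. My first attempt will be an induction on the strong cardinals of $\hw|\nu_\hw$ in the style of \rcl{clm: crucial}, showing $\hx|\tau=\hy|\tau$ for each such $\tau$. Properness of the cutpoint $\nu$ then yields $\nu_\hx=\nu_\hy$, and the least disagreement cannot occur above $\nu_\hx$ on the $\hr$-side alone. That would give the contradiction and show $\T_{\hr,\hw}$ is based on $\hr|\nu_\hr$.
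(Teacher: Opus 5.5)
Your argument rests on a premise that is false here. You cite clause (1) of \rcor{cor: technical corollary} to get that $\T_{\hs,\hw}$ is based on $\hs|\nu_\hs$. That clause assumes \emph{both} $\T_{\hq,\hr}$ and $\T_{\hq,\hs}$ are based on $\hq|\nu$, and if it applied it would give the present corollary outright. Here $\T_{\hq,\hr}$ is only $\l$-bounded ($\gen(\T_{\hq,\hr})\subseteq\l_\hr$), so it may use extenders overlapping $\nu$.

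For instance, take $\hs=\hq$ and $\hr=\operatorname{Ult}(\hq,E)$, where $E$ is a total extender on the $\M^\hq$-sequence with critical point a strong cardinal $\le\l$ and $\lh(E)>\nu$. Since $E$ is short, $\hr$ is $\l$-bounded. The coiteration ends as soon as $\hq$ applies $E$. So $\T_{\hr,\hw}$ is trivial, while $\T_{\hs,\hw}$ is not based on $\hs|\nu_\hs$ and does move $\nu$. The asymmetry therefore runs opposite to your picture: $\M_\infty(\hs|\nu_\hs)\insegeq\M_\infty(\hr|\nu_\hr)$, and it is the $\hs$-side that may have to leave its $\nu$-part. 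Your intended contradiction (``the $\hs$-side cannot move the image of $\nu$'') has no footing.

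Two further misreadings compound this. First, a proper cutpoint in the sense of \rter{term: cutpoints etc} is \emph{properly overlapped}. The ${<}\hd$-strong cardinals below $\nu$ are critical points of extenders of length above $\nu$, and these are exactly the extenders that could make $\T_{\hr,\hw}$ fail to be based on $\hr|\nu_\hr$. So the first offending extender does not have critical point above $\nu_\hx$; by boundedness its critical point lies below the image of $\l$. Second, $\pi_{\hr,\hw}(\nu_\hr)=\pi_{\hs,\hw}(\nu_\hs)$ holds automatically, since both equal $\pi_{\hq,\hw}(\nu)$. So the ``main obstacle'' you isolate is not where the difficulty lies.

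The missing idea is how $\l$-boundedness is actually used. All the relevant trees have generators below the images of $\l$. Hence the least nodes of $\T_{\hr,\hw}$, $\T_{\hs,\hw}$ and $\T_{\hq,\hw}$ that agree with $\hw$ below $\l_\hw$ all coincide with $\hw$. Let $\hm$ be the last node of $\T_{\hr,\hw}$ up to which the tree is based on $\hr|\nu_\hr$. It then suffices to show that the strong cardinals of $\M^\hm|\nu_\hm$ and of $\M^\hw|\nu_\hw$ coincide. This gives $\l_\hw<\nu_\hm$, and hence $\hm=\hw$.

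The paper proves this by induction on the strong cardinals $\zeta$ of $\M^\hw|\nu_\hw$. At each step, pass to the least nodes $\hm',\hs',\hq'$ of the three trees that agree with $\hw$ up to the supremum $\zeta'$ of the earlier strong cardinals. Since $\T_{\hq',\hs'}$ is above $\zeta'$ and based on $\hq'|\nu_{\hq'}$, you get $\M_\infty(\hs'|\nu_{\hs'})\insegeq\M_\infty(\hm'|\nu_{\hm'})$. Then \cite[Lemma 9.12]{blue2025nairian} decides which side uses extenders with critical point a preimage of $\zeta$, with cases according to how $\hm'|\zeta_{\hm'}$ and $\hs'|\zeta_{\hs'}$ compare.

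Your closing suggestion of an induction on strong cardinals in the style of \rcl{clm: crucial} points in this direction. However, it must bound where the $\hr$-side stops relative to $\nu$, assuming nothing about the $\hs$-side beyond $\T_{\hq,\hs}$ being based on $\hq|\nu$.
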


\begin{figure}[htbp]
    \centering
    \begin{tikzpicture}[>=stealth, thick]
        \node (hq) at (0, 0) {$\hq$};
        \node (hr) at (3, 2) {$\hr$};
        \node (hs) at (3, -2) {$\hs$};
        \node (hw) at (6, 0) {$\hw$};

        \draw[->] (hq) -- node[above left] {$\T_{\hq, \hr}$} (hr);
        \draw[->] (hq) -- node[below left] {$\T_{\hq, \hs}$} (hs);
        \draw[->] (hr) -- node[above right] {$\T_{\hr, \hw}$} (hw);
        \draw[->] (hs) -- node[below right] {$\T_{\hs, \hw}$} (hw);

        \node[draw=black, thick, fill=gray!10, rounded corners, inner sep=6pt, anchor=north] 
        at (3, -3.5) {
        \begin{minipage}{9cm}
            \centering
            \textbf{\underline{Key Properties}}
            \begin{itemize}[label={\tiny$\bullet$}, itemsep=1.2pt, topsep=3pt, leftmargin=1em, font=\scriptsize]
                \item $\hw$ is the least-extender-disagreement comparison of $\hr$ and $\hs$.
                \item $\M_\infty(\hs|\nu_\hs)\insegeq \M_\infty(\hr|\nu_\hr)$.
                \item $\T_{\hq, \hs}$ is based on $\hq|\nu$.
                \item \textbf{Goal:} Show $\T_{\hr, \hw}$ is based on $\hr|\nu_\hr$.
            \end{itemize}
        \end{minipage}
        };
    \end{tikzpicture}
    \caption{Comparison of bounded iterates $\hr$ and $\hs$.}
    \label{fig:comparing_tau_bounded}
\end{figure}

\begin{proof} The proof is very similar to the proofs we have already given, and so we will only outline the proof. We will again use \cite[Lemma 9.12]{blue2025nairian}. We have that  
\[\M_\infty(\hs|\nu_\hs)\insegeq \M_\infty(\hr|\nu_\hr).\] 
Let $\a$ be the largest such that $(\T_{\hr, \hw})_{\leq \a}$ is based on $\hr|\nu_\hr$, and let $\hm=\hm_\a^{\T_{\hr, \hw}}$. The goal is to verify the following.

\vspace{0.3cm}
\begin{enumerate}[label=(\alph*), itemsep=0.3cm]
    \item[(a)] $\nu_\hm=\nu_\hw$.
\end{enumerate}
\vspace{0.3cm}

To prove (a), we first establish the following.

\vspace{0.3cm}
\begin{enumerate}[label=(\alph*), itemsep=0.3cm]
    \setcounter{enumi}{1}
    \item[(b)] The set of strong cardinals of $\M^\hm|\nu_\hm$ and of $\M^\hw|\nu_\hw$ coincide.
\end{enumerate}
\vspace{0.3cm}

It is not hard to see that (b) implies (a). Indeed, let $\hx$, $\hy$ and $\hn$ be the least nodes of $\T_{\hr, \hw}$, $\T_{\hs, \hw}$, and $\T_{\hq, \hw}$ such that \[\hn|\l_\hw=\hx|\l_\hw=\hy|\l_\hw=\hw|\l_\hw.\]
Because 
$\gen(\T_{\hq, \hr})\subseteq \l_\hr$, $\gen(\T_{\hq, \hs})\subseteq \l_\hs$, $\gen(\T_{\hr, \hx})\subseteq \l_\hx$, and $\gen(\T_{\hs, \hy})\subseteq \l_\hy$, we have that $\hx=\hn=\hy$ (also, note that $\l_\hx=\l_\hy=\l_\hn=\l_\hw$). Hence, \[\hw=\hx=\hy=\hn.\] Because $\l_\hw<\nu_\hm$ (by (b)), we have that $\T_{\hr, \hw}$ is based on $\hr|\nu_\hr$. Thus, it is enough to prove (b).

The proof of (b) is by induction on strong cardinals of $\M^\hw|\nu_\hw$. Let $\zeta$ be a strong cardinal of $\M^\hw|\nu_\hw$ and set $\zeta'=\sup(\tStr(\M^\hw|\zeta))$. We assume that

\vspace{0.3cm}
\begin{enumerate}[label=(IH), itemsep=0.3cm]
    \item[(IH)] $\zeta'<\nu_\hm$,
\end{enumerate}
\vspace{0.3cm}

and want to show that $\zeta<\nu_\hm$. We have that 

\vspace{0.3cm}
\begin{enumerate}[label=(\arabic*), itemsep=0.3cm]
    \item[(1)] If $\zeta''\leq \zeta'$ is a strong cardinal of $\M^\hw|\nu_\hw$, then $\zeta''$ is a strong cardinal of $\M^\hm|\nu_\hm$.
\end{enumerate}
\vspace{0.3cm}

Let $\hm'$ be the least node of $\T_{\hr, \hw}$ such that $\hm'|\zeta'=\hw|\zeta'$, let $\hs'$ be the least node of $\T_{\hs, \hw}$ such that $\hs'|\zeta'=\hw|\zeta'$, and let $\hq'$ be the least node of $\T_{\hq, \hw}$ such that $\hq'|\zeta'=\hw|\zeta'$. We thus have that: 

\vspace{0.3cm}
\begin{enumerate}[label=(2.\arabic*), itemsep=0.3cm]
    \item If $\zeta''\leq \zeta'$ is a strong cardinal of $\M^\hw|\nu_\hw$, then $\zeta''$ is a strong cardinal of $\M^{\hs'}|\nu_{\hs'}$.
    \item If $\zeta''\leq \zeta'$ is a strong cardinal of $\M^\hw|\nu_\hw$, then $\zeta''$ is a strong cardinal of $\M^{\hq'}|\nu_{\hq'}$.
    \item $\T_{\hm', \hm}$ is based on $\hm'|\nu_{\hm'}$. 
    \item $\T_{\hq', \hm'}$ is above $\zeta$ (the iteration is defined because of full normalization).
    \item $\T_{\hq', \hs'}$ is above $\zeta'$ and $\T_{\hq', \hs'}$ is based on $\hq'|\nu_{\hq'}$. 
\end{enumerate}
\vspace{0.3cm}

It follows from (2.5) that 

\vspace{0.3cm}
\begin{enumerate}[label=(\arabic*), itemsep=0.3cm]
    \setcounter{enumi}{2}
    \item[(3)] $\M_\infty(\hs'|\nu_{\hs'})\insegeq\M_\infty(\hm'|\nu_{\hm'}).$
\end{enumerate}
\vspace{0.3cm} 

It follows from \cite[Lemma 9.12]{blue2025nairian} that in the least-extender-disagreement coiteration of $\hm'$ and $\hs'$ only one side uses extenders whose critical point is a preimage of $\zeta$. To determine which side, we analyze the least-extender-disagreement coiteration of $\hm'|\zeta_{\hm'}$ and $\hs'|\zeta_{\hs'}$. Letting $\hx$ and $\hy$ be the least nodes of $\T_{\hm', \hw}$ and $\T_{\hs', \hw}$ such that $\T_{\hm', \hx}$ and $\T_{\hs', \hy}$ are based on $\hm'|\zeta_{\hm'}$ and $\hs'|\zeta_{\hs'}$ respectively, we have three cases. 

\textbf{Case 1.} Suppose $\hx|\zeta_{\hx}=\hy|\zeta_{\hy}$. It then follows from \cite[Corollary 9.11]{blue2025nairian} that $\zeta=\zeta_{\hx}=\zeta_{\hy}$, and since $\T_{\hm', \hx}$ is based on $\hm'|\nu_{\hm'}$, we have that $\zeta<\nu_\hm$. 

\textbf{Case 2.} $\hx|\zeta_{\hx}\inseg \hy|\zeta_{\hy}$. In this case, we must use an extender whose critical point is a preimage of $\zeta$ on $\T_{\hx, \hw}$ but not on $\T_{\hy, \hw}$. It then follows that $\hw|\zeta$ is a complete iterate of $\hy|\zeta_\hy$, and (3) implies that the least-extender-disagreement coiteration of $\hx$ and $\hy|\zeta_\hy$ is based on $\hx|\nu_\hx$. Therefore, again $\zeta<\nu_\hm$.

\textbf{Case 3.} $\hy|\zeta_{\hy}\inseg \hx|\zeta_{\hx}$. In this case, we must use an extender whose critical point is a preimage of $\zeta$ on $\T_{\hy, \hw}$ but not on $\T_{\hx, \hw}$. It then follows that $\hw|\zeta$ is a complete iterate of $\hx|\zeta_\hx$, and therefore, $\zeta<\nu_\hm$. 
\end{proof}

\section{$\Delta$-genericity iterations and derived model representations}

We introduce $\Delta$-genericity iterations and use them to establish that some Nairian Models are realizable as derived models.\footnote{The reader may find reviewing \rnot{not: restriction} and \rdef{def: ref gen} helpful. This section is inspired by the Derived Model representations used in \cite{SargMu}, \cite{ESSealing}, \cite{SaTrB} and \cite{GenericGenerators}.}

\begin{definition}\label{def: delta-gen it}\normalfont In $M$, suppose $\rg_1(\hq, \k, \nu)$ holds. Let $G\subseteq \Coll(\omega_1, \bR)$ be $M$-generic, let $\vec{B}=(B_i: i<\omega_1)$ be an enumeration of $\Delta_{\hq, \nu}$ in $M[G]$, and $\vec{a}=(a_i: i<\omega_1)\in M[G]$ be an enumeration of $\bR$. Then $(\hq_i, \hq_i', \T_i, E_i: i<\omega_1)\in M[G]$ is a $\Delta_{\hq, \nu}$-\textbf{genericity iteration relative to} $(\vec{B}, \vec{a})$ if the following conditions hold:
\begin{enumerate}
\item $\hq_0=\hq$.
\item For $i<\omega_1$, $\T_i$ is an iteration tree on $\hq_i$ that is strictly above $\k_{\hq_i}$ and is based on $\hq_i|\nu_{\hq_i}$.
\item For $i<\omega_1$, $\hq_i'$ is the last pair of $\T_i$.
\item For $i<\omega_1$, $E_i\in \vec{E}^{\M^{\hq_i'}}$ is such that $\cp(E_i)=\k_{\hq_i}$ and $\gen(\T_i)\subseteq \lh(E_i)$.
\item For $i<\omega_1$, $\hq_{i+1}=Ult(\hq_i, E_i)$. 
\item For limit $i<\omega_1$, $\hq_i$ is the direct limit of $(\hq_j, \pi_{\hq_j, \hq_k}: j<k<i)$.
\item For $i<\omega_1$, for some $\zeta<\k_{\hq_{i+1}}$, $B_i$ is projective in ${\sf{Code}}(\Sigma^{\hq_{i+1}|\zeta})$, and for some $g\subseteq \Coll(\omega, \zeta)$ generic over $\M^{\hq_{i+1}}$, $a_i\in \M^{\hq_{i+1}}[g]$.
\end{enumerate}
Let $\hq_{\omega_1}$ be the direct limit of $(\hq_i, \pi_{\hq_i, \hq_j}: i<j<\omega_1)$. 
\end{definition}
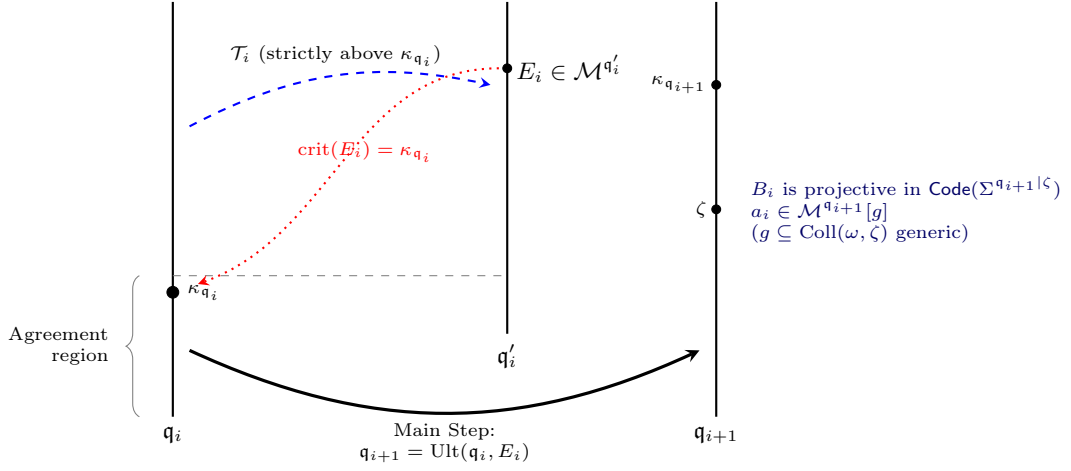
\begin{figure}[h]
\centering
\begin{tikzpicture}[scale=1.1, font=\small]

    \draw[thick] (0,0) node[below] {$\hq_i$} -- (0,5);
    \filldraw (0, 1.5) circle (2pt) node[right=2pt, font=\scriptsize] {$\k_{\hq_i}$};

    \draw[thick] (4,1.0) node[below] {$\hq_i'$} -- (4,5);
    
    \draw[->, thick, blue, >=stealth, dashed] (0.2, 3.5) to[bend left=20] 
        node[midway, above, align=center, font=\scriptsize, black] {$\T_i$ (strictly above $\k_{\hq_i}$)} 
        (3.8, 4.0);

    \draw[dashed, gray] (0, 1.7) -- (4, 1.7);
    \draw[decorate, decoration={brace, amplitude=5pt}, gray] (-0.4, 0) -- (-0.4, 1.7)
        node[midway, left=8pt, align=right, font=\scriptsize, black] {Agreement\\region};

    \filldraw (4, 4.2) circle (1.5pt) node[right] {$E_i \in \M^{\hq_i'}$};
    \draw[dotted, thick, red, ->, >=stealth] (3.9, 4.2) to[out=180, in=20] (0.3, 1.6);
    \node[red, font=\scriptsize] at (2.3, 3.2) {$\cp(E_i) = \k_{\hq_i}$};

    \draw[thick] (6.5,0) node[below] {$\hq_{i+1}$} -- (6.5,5);
    
    \filldraw (6.5, 4.0) circle (1.5pt) node[left, font=\scriptsize] {$\k_{\hq_{i+1}}$};
    \filldraw (6.5, 2.5) circle (1.5pt) node[left, font=\scriptsize] {$\zeta$};

    \draw[->, very thick, >=stealth] (0.2, 0.8) to[bend right=25] 
        node[midway, below, align=center, font=\scriptsize] {Main Step:\\$\hq_{i+1} = \operatorname{Ult}(\hq_i, E_i)$} 
        (6.3, 0.8);

    \node[right, align=left, font=\scriptsize, blue!40!black] at (6.8, 2.5) {
        $B_i$ is projective in $\mathsf{Code}(\Sigma^{\hq_{i+1}|\zeta})$\\
        $a_i \in \M^{\hq_{i+1}}[g]$\\
        ($g \subseteq \operatorname{Coll}(\omega, \zeta)$ generic)
    };

\end{tikzpicture}
\caption{The relationship between the models and trees in a single step of a $\Delta_{\hq, \nu}$-genericity iteration. Note that $B_i$ and $a_i$ are captured by $\hq_{i+1}$ below $\k_{\hq_{i+1}}$.}
\label{fig:genericity_step}
\end{figure}

We remark that the entire iteration \[(\hq_i, \hq_i', \T_i, E_i: i<\omega_1)\] is a normal iteration of $\hq$.
\begin{theorem}\label{thm: delta-gen it} In $M$, suppose $\rg_1(\hq, \k, \nu)$ holds. Let 
\begin{enumerate}
    \item $G\subseteq \Coll(\omega_1, \bR)$ be $M$-generic,

    \item $\vec{B}=(B_i: i<\omega_1)$ be an enumeration of $\Delta_{\hq, \nu}$ in $M[G]$,

    \item and $\vec{a}=(a_i: i<\omega_1)\in M[G]$ be an enumeration of $\bR$.
\end{enumerate}
Then there is $(\hq_i, \hq_i', \T_i, E_i: i<\omega_1)\in M[G]$ that is a $\Delta_{\hq, \nu}$-genericity iteration relative to $(\vec{B}, \vec{a})$.
\end{theorem}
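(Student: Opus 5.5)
We build the iteration by recursion on $i<\omega_1$ inside $M[G]$, which satisfies $\AC$ for the relevant objects because $G$ adds a wellordering of $\bR$ in type $\omega_1$ and hence of the countable trees on countable hod pairs we need. Set $\hq_0=\hq$. At limit stages $i$ we take $\hq_i$ to be the direct limit of the $\hq_j$, $j<i$. This is a complete iterate of $\hq$ via the strategy, since the sequence built so far is a normal tree of countable length on $\hq$. We also need that $\rg_1(\hq_i,\k_{\hq_i},\nu_{\hq_i})$ is preserved. This holds by elementarity of $\pi_{\hq,\hq_i}$: every tree used is above $\k$ except for the extenders $E_j$ with critical point $\k_{\hq_j}$, so the structure of $\hq|\k$ together with the strong cardinals, Woodins and proper cutpoints in $(\k,\nu]$ is moved correctly. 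Thus all of the work is in the successor step. Given $\hq_i$ satisfying $\rg_1$, we must produce $\T_i$, $E_i$ and $\hq_{i+1}$ capturing $B_i$ and $a_i$.

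For the successor step, fix the Woodin cardinal $\delta_i\in(\k_{\hq_i},\nu_{\hq_i})\cap\tW(\Q_i)$ provided by clause (4) of $\rg_1$. We first handle the set. Since $w(B_i)<\pi_{\hq_i|\nu_{\hq_i},\infty}(\k_{\hq_i})$, there is a complete iterate $\hr$ of $\hq_i$ via a tree strictly above $\k_{\hq_i}$ and based on $\hq_i|\nu_{\hq_i}$, together with an inaccessible cutpoint $\zeta'<\k_{\hq_i}$ of the image-type structure, such that $B_i$ is projective in ${\sf Code}(\Sigma^{\hr|\zeta'})$ after an ultrapower. Concretely, we will compare $\hq_i$ with a hod pair $\hs$ witnessing $B_i$ (using $\sf{HPC}$ and the Wadge-rank bound) and apply \rcor{cor: technical corollary} and \rcor{cor: tech cor ii} to arrange that only the $\hq_i$-side moves, strictly above $\k_{\hq_i}$ and below $\nu_{\hq_i}$. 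The fact that $(\k,\nu)\cap\tStr=\emptyset$, which is clause (2) of $\rg_1$, together with the fact that $\nu$ is not a limit of Woodins, is what keeps this comparison inside $\hq_i|\nu_{\hq_i}$.

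Next we handle the real. Following the comparison, we run Woodin's extender-algebra genericity iteration at the image of $\delta_i$ to make $a_i$ generic over the current model $\hq_i'$. This iteration is again strictly above $\k$ and based below $\nu$, and it has countable length, so it exists in $M[G]$.

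We let $\T_i$ be the concatenation of these two trees, normalized via full normalization as in the proof of \rcor{cor: tech cor ii}, and let $\hq_i'$ be its last pair. We then choose $E_i$ on the sequence of $\M^{\hq_i'}$ with $\cp(E_i)=\k_{\hq_i}$ and $\lh(E_i)$ above $\gen(\T_i)$. Such an extender exists because $\k$ is strong in $\M^\hq|\varsigma$, and hence $\k_{\hq_i}$ is $<\nu$-strong in $\hq_i'$ with total extenders of arbitrarily large length below $\nu$. Setting $\hq_{i+1}=Ult(\hq_i,E_i)$, the agreement between $\hq_{i+1}$ and $\hq_i'$ up to $\lh(E_i)$ transfers both the generic for $a_i$ and the code for $B_i$ below $\zeta:=$ the image of the relevant cutpoint, which is less than $\k_{\hq_{i+1}}=\pi_{E_i}(\k_{\hq_i})$. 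This gives clause (7).

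The main obstacle is the successor-step claim that $B_i$ can be captured by a tree that is \emph{strictly above} $\k_{\hq_i}$ and based on $\hq_i|\nu_{\hq_i}$. This requires showing that the Wadge bound defining $\Delta_{\hq,\nu}$ translates into the $\hs$-side staying put in the coiteration. For this we rely on \cite[Lemma 9.12]{blue2025nairian} in the form of \rcor{cor: comparing tau bounded iterates}.
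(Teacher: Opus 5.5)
Your outer structure matches the paper: recursion inside $M[G]$, direct limits at limit stages, and at successor stages a tree strictly above $\k_{\hq_i}$ inside $\hq_i|\nu_{\hq_i}$ followed by an ultrapower by an extender with critical point $\k_{\hq_i}$. However, the step you yourself flag as the main obstacle is left open, and the tools you propose cannot close it.

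The paper does not derive the capture of $B_i$ by a tree strictly above $\k_{\hq_i}$ from the corollaries of Section~\ref{sec: useful corollary}. It cites Clause~1 of \cite[Theorem 11.1]{blue2025nairian}. Applied to $B_\b$, $\hs|\nu_\hs$ and the interval $(\barn_\hs,\nu_\hs)$, that clause yields:
\begin{itemize}
\item a complete iterate $\hw$ of $\hs$ via a tree based on that interval, and
\item an $\M^\hw$-inaccessible $\zeta$ with $\gen(\T_{\hs,\hw})\subseteq\zeta$ and $B_\b$ projective in $\mathsf{Code}(\Lambda_{\hw|\zeta})$.
\end{itemize}

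The results you invoke do not do this job.
\begin{itemize}
\item \rcor{cor: technical corollary}, \rcor{cor: tech cor ii} and \rcor{cor: comparing tau bounded iterates} concern coiterating two complete iterates of the \emph{same} pair $\hq$. They give no control over comparing $\hq_i$ with an arbitrary hod pair witnessing $B_i$, and that comparison may well move $\hq_i$ below $\k_{\hq_i}$.
\item The first two take as a \emph{hypothesis} an iterate $\hs$ above $\k$ with a proper cutpoint $\zeta<\k_\hs$ satisfying $\sup(\pi_{\hr|\k_\hr,\infty}[\k_\hr])<\pi_{\hs|\zeta,\infty}(\ts(\hs|\zeta))$. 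In the paper such $\hs$ come from \cite[Theorem 11.1]{blue2025nairian} (proofs of \rlem{lem: u is wd} and \rthm{thm: embedding is in n}) or from the $\Delta$-genericity iteration itself (proof of \rlem{lem: j is onto}). Using them to build that iteration is therefore circular.
\item \rcor{cor: comparing tau bounded iterates} only concludes that a tree is based on $\hr|\nu_\hr$, not that it is strictly above $\k$.
\end{itemize}
The Wadge bound alone gives an iterate of $\hq_i|\nu_{\hq_i}$ with a level below the image of $\k$ whose strategy captures $B_i$. But the tree to it may use extenders below $\k_{\hq_i}$. Converting this into a tree strictly above $\k_{\hq_i}$ followed by an ultrapower with critical point $\k_{\hq_i}$ is exactly the content of the cited theorem, and that is the missing ingredient.

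There is also a secondary problem with your order of steps. The paper first fixes a Woodin $\barn\in(\k_{\hq_\b},\nu_{\hq_\b})$ and makes $a_\b$ generic by a tree strictly above $\k_{\hq_\b}$ based on $\hq_\b|\barn$. Only afterwards does it capture $B_\b$, by a tree based on $(\barn_\hs,\nu_\hs)$, so the second tree cannot disturb the genericity. Then $E_\b$ is the least-index extender with $\lh(E)>\zeta$.

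You capture $B_i$ first and then run the genericity iteration at the image of a Woodin $\delta_i$ chosen in advance. Since $\nu$ is not a limit of Woodin cardinals, the capture level may lie above every Woodin below $\nu_{\hq_i}$. In that case your genericity iteration moves the level at which $B_i$ was captured, and you owe an argument that the capture survives. Swapping the order, as the paper does, avoids this.

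Relatedly, the capture level sits above $\k_{\hq_i}$ in $\hq_i'$. It only falls below $\k_{\hq_{i+1}}=\pi_{E_i}(\k_{\hq_i})$ after the ultrapower, not below $\k_{\hq_i}$ as you wrote.
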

\begin{proof} The construction below is visualized in \rfig{fig:recursive_step}. \rthm{thm: delta-gen it} is a corollary of \cite[Theorem 11.1]{blue2025nairian}. 

Suppose we have defined $(\hq_i, \hq_i', \T_i, E_i: i<\b)$, and we want to define $(\hq_\b, \hq_\b', \T_\b, E_\b)$. If $\b=\gg+1$ then $\hq_\b=\operatorname{Ult}(\hq_\gg, E_\gg)$. If $\b$ is a limit ordinal then $\hq_\b$ is the direct limit of \[(\hq_\gg, \pi_{\hq_\gg, \hq_{\gg'}}: \gg<\gg'<\b).\] 
We now describe the steps for obtaining $\hq_\b'$. \\\\
\textbf{The construction of $\hq_\b'$.}\\

\textbf{Step 1.} Let $\barn\in (\k_{\hq_\b}, \nu_{\hq_\b})$ be a Woodin cardinal of $\M^{\hq_\b}$. Let $\hs$ be a complete iterate of $\hq_\b$ such that 
\begin{itemize}[itemsep=0.3cm]
\item $\T_{\hq_\b, \hs}$ is based on $\hq_\b|\barn$, 
\item $\T_{\hq_\b, \hs}$ is strictly above $\k_{\hq_\b}$, and 
\item $a_\b$ is generic over $\M^{\hs}$ for a poset of size $\barn_\hs$.
\end{itemize}

\textbf{Step 2.} Next apply Clause 1 of \cite[Theorem 11.1]{blue2025nairian} to $B_\b$, $\hs|\nu_\hs$, and to the interval $(\barn_\hs, \nu_\hs)$\footnote{This means that the iteration tree is above $\barn_\hs$ and below $\nu_\hs$.}, and get a complete iterate $\hw$ of $\hs$ and an $\M^{\hw}$-inaccessible cardinal $\zeta$ such that
\begin{itemize}[itemsep=0.3cm]
\item $\T_{\hs, \hw}$ is based on $\hs|(\barn_\hs, \nu_\hs)$, 
\item $\gen(\T_{\hs, \hw})\subseteq \zeta$, and 
\item $B_\b$ is projective in ${\sf{Code}}(\Lambda_{\hw|\zeta})$. 
\end{itemize}
Set $\hq_\b'=\hw$.\\

Let now $E\in \vec{E}^{\M^{\hw}}$ be the extender with least index such that $\zeta<\lh(E)$, and set
\begin{itemize}[itemsep=0.3cm]
\item $\T_\b=\T_{\hq_\b, \hw}$, 
\item $E_\b=E$, 
\item $\cp(E)=\k_{\hq_\b}$, and
\item $\hq_{\b+1}=\operatorname{Ult}(\hq_\b, E)$.
\end{itemize}
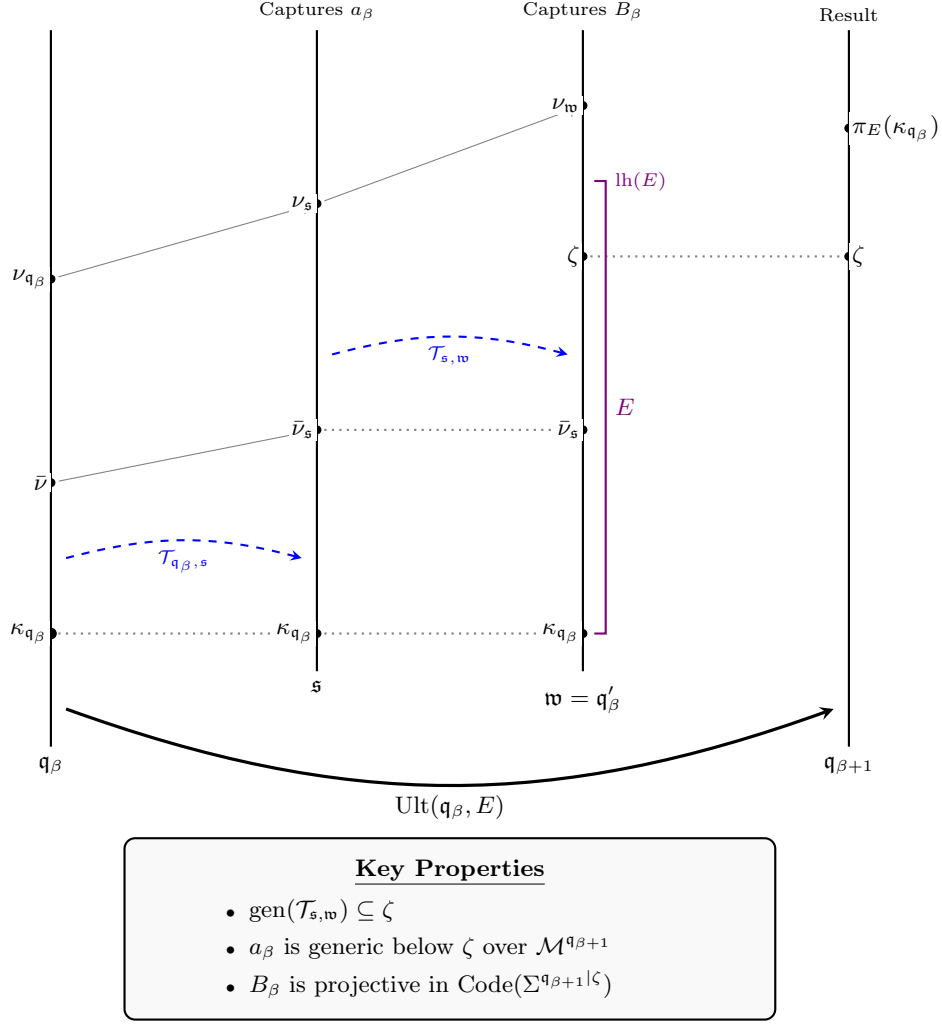
\begin{figure}[ht]
\centering
\begin{tikzpicture}[scale=1.0, font=\small]

    \def\xSep{3.5}   
    
    \def\xQ{0}
    \def\xS{\xSep}
    \def\xW{2*\xSep}
    \def\xNext{3*\xSep}

    \def\hBase{0}       
    \def\hStartSW{1.0}  
    \def\hTop{9.5}      
    \def\hKappa{1.5}    
    \def\hTreeOne{2.5}  
    \def\hBarn{3.5}     
    \def\hBarnS{4.2}    
    \def\hTreeTwo{5.2}  
    \def\hNuQ{6.2}      
    \def\hNuS{7.2}      
    \def\hNuW{8.5}      
    \def\hZeta{6.5}     
    \def\hLen{7.5}      
    \def\hKappaNext{8.2}     

    \draw[thick] (\xQ, \hBase) node[below] {$\hq_\beta$} -- (\xQ, \hTop);
    \draw[thick] (\xS, \hStartSW) node[below] {$\hs$} -- (\xS, \hTop);
    \draw[thick] (\xW, \hStartSW) node[below, yshift=-2pt] {$\hw = \hq_\beta'$} -- (\xW, \hTop);
    \draw[thick] (\xNext, \hBase) node[below] {$\hq_{\beta+1}$} -- (\xNext, \hTop);

    \node[above, font=\scriptsize, align=center] at (\xS, \hTop) {Captures $a_\beta$};
    \node[above, font=\scriptsize, align=center] at (\xW, \hTop) {Captures $B_\beta$};
    \node[above, font=\scriptsize, align=center] at (\xNext, \hTop) {Result};

    \draw[dotted, thick, gray] (\xQ, \hKappa) -- (\xS, \hKappa);
    \draw[dotted, thick, gray] (\xS, \hKappa) -- (\xW, \hKappa);
    \draw[dotted, thick, gray] (\xS, \hBarnS) -- (\xW, \hBarnS);
    \draw[dotted, thick, gray] (\xW, \hZeta) -- (\xNext, \hZeta);

    \draw[->, gray, thin, shorten <=3pt, shorten >=3pt] (\xQ, \hBarn) -- (\xS, \hBarnS);
    \draw[->, gray, thin, shorten <=3pt, shorten >=3pt] (\xQ, \hNuQ) -- (\xS, \hNuS);
    \draw[->, gray, thin, shorten <=3pt, shorten >=3pt] (\xS, \hNuS) -- (\xW, \hNuW);

    \draw[->, thick, blue, dashed, >=stealth] (\xQ+0.2, \hTreeOne) to[bend left=15] 
        node[midway, below, font=\scriptsize] {$\T_{\hq_\beta, \hs}$}
        (\xS-0.2, \hTreeOne);

    \draw[->, thick, blue, dashed, >=stealth] (\xS+0.2, \hTreeTwo) to[bend left=15] 
        node[midway, below, font=\scriptsize] {$\T_{\hs, \hw}$}
        (\xW-0.2, \hTreeTwo);

    \filldraw (\xQ, \hKappa) circle (2pt) node[left, fill=white, inner sep=1pt] {$\k_{\hq_\beta}$};
    \filldraw (\xQ, \hBarn) circle (1.5pt) node[left, fill=white, inner sep=1pt] {$\barn$};
    \filldraw (\xQ, \hNuQ) circle (1.5pt) node[left, fill=white, inner sep=1pt] {$\nu_{\hq_\beta}$};

    \filldraw (\xS, \hKappa) circle (1.5pt) node[left, fill=white, inner sep=1pt] {$\k_{\hq_\beta}$};
    \filldraw (\xS, \hBarnS) circle (1.5pt) node[left, fill=white, inner sep=1pt] {$\barn_\hs$};
    \filldraw (\xS, \hNuS) circle (1.5pt) node[left, fill=white, inner sep=1pt] {$\nu_\hs$};

    \filldraw (\xW, \hKappa) circle (1.5pt) node[left, fill=white, inner sep=1pt] {$\k_{\hq_\beta}$};
    \filldraw (\xW, \hBarnS) circle (1.5pt) node[left, fill=white, inner sep=1pt] {$\barn_\hs$};
    \filldraw (\xW, \hZeta) circle (1.5pt) node[left, fill=white, inner sep=1pt] {$\zeta$};
    \filldraw (\xW, \hNuW) circle (1.5pt) node[left, fill=white, inner sep=1pt] {$\nu_\hw$};

    \filldraw (\xNext, \hZeta) circle (1.5pt) node[right, fill=white, inner sep=1pt] {$\zeta$};
    \filldraw (\xNext, \hKappaNext) circle (1.5pt) node[right, fill=white, inner sep=1pt] {$\pi_E(\k_{\hq_\beta})$};

    \draw[thick, violet] (\xW+0.15, \hKappa) -- (\xW+0.3, \hKappa) -- (\xW+0.3, \hLen) -- (\xW+0.15, \hLen);
    \node[right, violet] at (\xW+0.3, {(\hKappa+\hLen)/2}) {$E$};
    \node[right, violet, font=\scriptsize] at (\xW+0.3, \hLen) {$\lh(E)$};

    \draw[->, very thick, >=stealth, preaction={draw=white, line width=4pt}] (\xQ+0.2, 0.5) to[bend right=20] 
        node[midway, below] {$\operatorname{Ult}(\hq_\beta, E)$}
        (\xNext-0.2, 0.5);

    \node[draw=black, thick, fill=gray!5, rounded corners, inner sep=8pt, anchor=north] 
        at ({(\xQ+\xNext)/2}, -1.2) {
        \begin{minipage}{8cm}
            \centering
            \textbf{\underline{Key Properties}}
            \begin{itemize}[label={\tiny$\bullet$}, itemsep=3pt, topsep=4pt, align=left]
                \item $\operatorname{gen}(\T_{\hs, \hw}) \subseteq \zeta$
                \item $a_\beta$ is generic below $\zeta$ over $\mathcal{M}^{\hq_{\beta+1}}$
                \item $B_\beta$ is projective in $\operatorname{Code}(\Sigma^{\hq_{\beta+1}|\zeta})$
            \end{itemize}
        \end{minipage}
    };

\end{tikzpicture}
\caption{The recursive step construction. The diagram illustrates the movement of cardinals and the agreement between models.}
\label{fig:recursive_step}
\end{figure}
\end{proof}

The next theorem blows up a local versions of $N|\k$ to $N|\k$ through a $\Delta_{\hq, \nu}$-genericity iteration. \rfig{fig:nairian_limit} shows how this happens. 
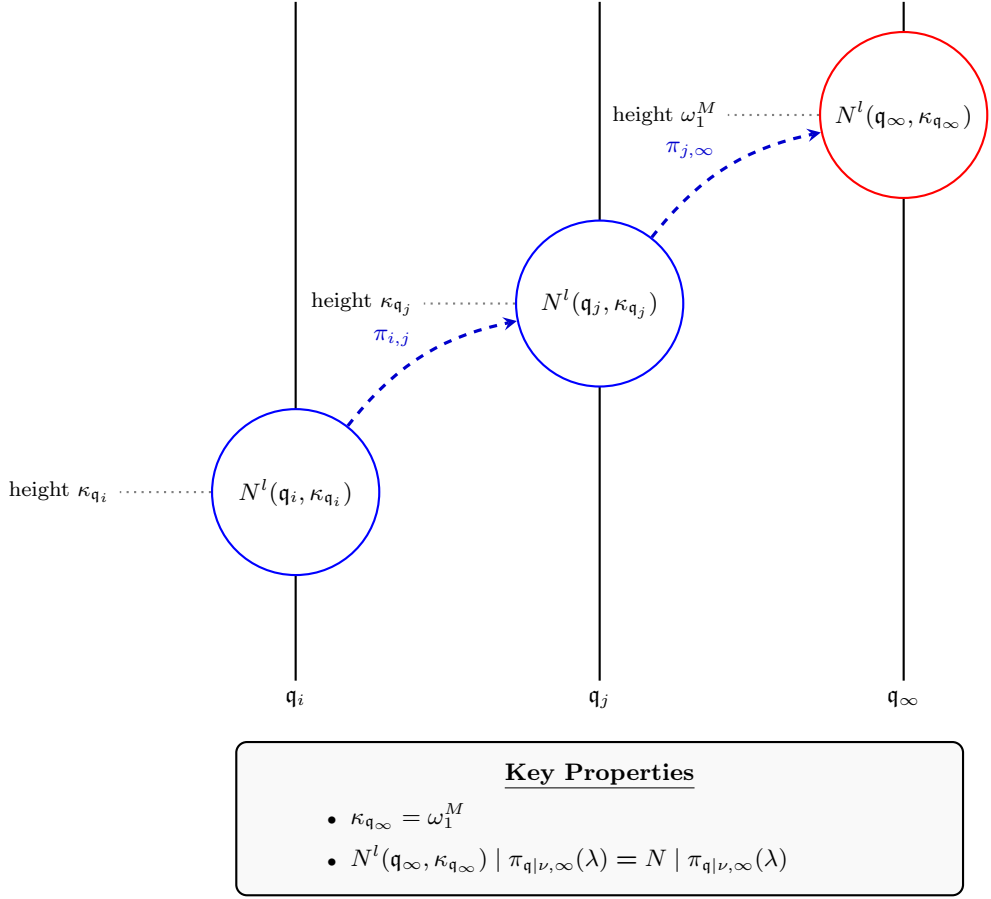
\begin{figure}[ht]
\centering
\begin{tikzpicture}[scale=1.0, font=\small]

    \def\xSep{4.0} 
    
    \def\xI{0}
    \def\xJ{\xSep}
    \def\xInf{2*\xSep}

    \def\yBase{0}
    \def\yTop{9.0}

    \def\hI{2.5}
    \def\hJ{5.0}
    \def\hInf{7.5}

    \draw[thick] (\xI, \yBase) node[below] {$\hq_i$} -- (\xI, \yTop);
    \draw[thick] (\xJ, \yBase) node[below] {$\hq_j$} -- (\xJ, \yTop);
    \draw[thick] (\xInf, \yBase) node[below] {$\hq_\infty$} -- (\xInf, \yTop);

    \node[circle, draw=blue, thick, fill=white, align=center, inner sep=1pt, minimum size=2.2cm] 
        (Ni) at (\xI, \hI) 
        {$N^l(\hq_i, \kappa_{\hq_i})$};

    \node[circle, draw=blue, thick, fill=white, align=center, inner sep=1pt, minimum size=2.2cm] 
        (Nj) at (\xJ, \hJ) 
        {$N^l(\hq_j, \kappa_{\hq_j})$};

    \node[circle, draw=red, thick, fill=white, align=center, inner sep=1pt, minimum size=2.2cm] 
        (Ninf) at (\xInf, \hInf) 
        {$N^l(\hq_\infty, \kappa_{\hq_\infty})$};

    \draw[->, very thick, blue!80!black, >=stealth, dashed] (Ni) to[bend left=20] 
        node[midway, above left] {$\pi_{i,j}$}
        (Nj);

    \draw[->, very thick, blue!80!black, >=stealth, dashed] (Nj) to[bend left=20] 
        node[midway, above left] {$\pi_{j,\infty}$}
        (Ninf);

    
    \draw[dotted, gray, thick] (Ni.west) -- ++(-1.2, 0) 
        node[left, black, font=\footnotesize] {height $\kappa_{\hq_i}$};

    \draw[dotted, gray, thick] (Nj.west) -- ++(-1.2, 0) 
        node[left, black, font=\footnotesize] {height $\kappa_{\hq_j}$};

    \draw[dotted, gray, thick] (Ninf.west) -- ++(-1.2, 0) 
        node[left, black, font=\footnotesize] {height $\omega_1^M$};

    \node[draw=black, thick, fill=gray!5, rounded corners, inner sep=8pt, anchor=north] 
        at ({(\xI+\xInf)/2}, -0.8) {
        \begin{minipage}{9cm}
            \centering
            \textbf{\underline{Key Properties}}
            \begin{itemize}[label={\tiny$\bullet$}, itemsep=5pt, topsep=4pt]
                \item $\k_{\hq_\infty} = \omega_1^M$
                \item $N^l(\hq_\infty, \k_{\hq_\infty}) \mid \pi_{\hq|\nu, \infty}(\l) \boldsymbol{=} N \mid \pi_{\hq|\nu, \infty}(\l)$
            \end{itemize}
        \end{minipage}
    };

\end{tikzpicture}
\caption{Visualization of Theorem \ref{thm: realizing as a nairian model}. The way the local N models get blown up to the full one.}
\label{fig:nairian_limit}
\end{figure}

\begin{theorem}\label{thm: realizing as a nairian model} 
Suppose $M\models \rg_1(\hq, \l, \k, \nu)$ holds. Let
\begin{itemize}
    \item $G\subseteq \Coll(\omega_1, \bR)$ be $M$-generic, and
    \item $(\hq_i, \hq_i', \T_i, E_i: i<\omega_1)\in M[G]$ be a $\Delta_{\hq, \nu}$-genericity iteration with respect to some $\vec{B}=(B_i: i<\omega_1)$ and $\vec{a}=(a_i: i<\omega_1)$.
\end{itemize} 
Let $\hq_{\infty}$ be the direct limit of $(\hq_i, \pi_{\hq_i, \hq_j}: i<j<\omega_1)$. Then 
\begin{enumerate}
    \item $\k_{\hq_\infty}=\omega_1^M$, 
    \item $\lmi(\hq_\infty, \k_{\hq_\infty})=\M_\infty(\hq|\nu)|\pi_{\hq|\nu, \infty}(\k)$,
    \item for every $i<\omega_1$, for every $\M^{\hq_i}$-generic $h\subseteq \Coll(\omega, <\k_{\hq_i})$, and for every $\hr\in \mathcal{F}_{\hq_i, \k_{\hq_i}, h}$, 
    \[ \pi_{\hr, \infty}\rest \k_\hr=\pi_{\hq_i, \hq_\infty}\circ\pi_{\hr, \hm(\hq_i, \k_{\hq_i})} \rest \k_\hr,\] and
    \item $N^l(\hq_\infty, \k_{\hq_\infty})|\pi_{\hq|\nu, \infty}(\l)=N|\pi_{\hq|\nu, \infty}(\l)$.
\end{enumerate}
\end{theorem}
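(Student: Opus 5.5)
We prove the four clauses in order; each uses the previous ones. Note that the $\Delta_{\hq,\nu}$-genericity iteration is one normal tree on $\hq$. Its main branch is $(\hq_i : i<\omega_1)$, and at each successor step it applies $E_i$ with $\cp(E_i)=\k_{\hq_i}$. Hence $\pi_{\hq_i,\hq_{i+1}}(\k_{\hq_i})>\k_{\hq_i}$, and $\hq_{i+1}$ agrees with $\hq_i'$ below $\lh(E_i)$.

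\textbf{Clause (1).} The ordinals $\k_{\hq_i}$ are strictly increasing and continuous at limits. Since each $\hq_i$ is countable in $M[G]$, we get $\k_{\hq_\infty}=\sup_i\k_{\hq_i}\le\omega_1$. For the reverse inequality we use that $\Coll(\omega_1,\bR)$ adds no reals, so $\omega_1^{M[G]}=\omega_1^M$. Every countable ordinal is coded by some real $a_i$. By clause (7) of the definition, $a_i$ is generic over $\M^{\hq_{i+1}}$ below $\k_{\hq_{i+1}}$, which forces $\k_{\hq_{i+1}}>\omega_1^{\M^{\hq_{i+1}}[a_i]}\ge$ the ordinal coded by $a_i$. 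Thus $\k_{\hq_\infty}=\omega_1^M$.

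\textbf{Clause (3).} This is the technical core. Fix $\hr\in\mathcal F_{\hq_i,\k_{\hq_i},h}$. Since $\hr$ is a complete iterate of $\hq_i$ via a tree of length $<\k_{\hq_i}$ based on $\hq_i|\k_{\hq_i}$, it is also a complete iterate of $\hq$. Compare $\hr$ with the tail $\hq_j$ ($j>i$) and with the iterates used to define $\pi_{\hr,\infty}$. Corollary~\ref{cor: tech cor ii} and Corollary~\ref{cor: technical corollary} apply: the tail $\T_{\hq_i,\hq_j}$ is above $\k_{\hq_i}$, and $\gen$ of $\T_{\hq_i,\hr}$ lies below $\k_{\hq_i}$. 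They show the following. The coiteration of $\hr$ against later $\hq_j$ takes place below the image of $\k$. The $\hr$-side main branch uses an extender with critical point the image of $\k$ exactly once. Below that point it factors through the local direct limit $\hm(\hq_i,\k_{\hq_i})$.

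From this we read off that $\pi_{\hr,\infty}\rest\k_\hr$ and $\pi_{\hq_i,\hq_\infty}\circ\pi_{\hr,\hm(\hq_i,\k_{\hq_i})}\rest\k_\hr$ agree. Both are computed by the same comparisons, by commutativity of the direct-limit maps and full normalization. We expect this step to be the main obstacle. The difficulty is to see that nothing in the comparisons used to compute $\pi_{\hr,\infty}$ moves the image of $\k$. Clause (7) of Corollary~\ref{cor: technical corollary}, together with the fact that $(\k,\nu)\cap\tStr=\emptyset$ in $\rg_1$, is exactly what we plan to use for this.

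\textbf{Clause (2).} Take the direct limit of clause (3) over $i$. The system $\mathcal F(\hq_\infty,\k_{\hq_\infty})$ is the direct limit of the systems $\mathcal F(\hq_i,\k_{\hq_i})$ along $\pi_{\hq_i,\hq_\infty}$. Clause (3) then gives an embedding of $\lmi(\hq_\infty,\k_{\hq_\infty})$ into $\M_\infty(\hq|\nu)|\pi_{\hq|\nu,\infty}(\k)$ that commutes with $\pi_{\cdot,\infty}$. Surjectivity comes from the genericity iteration. Every element of the target is $\pi_{\hs,\infty}(x)$ for some countable iterate $\hs$ of $\hq|\nu$, coded by a real. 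Every real is absorbed below some $\k_{\hq_{i+1}}$. Hence $\hs$, or a comparable iterate, lies in $\mathcal F(\hq_{i+1},\k_{\hq_{i+1}},h)$ for a suitable $h$.

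\textbf{Clause (4).} Both sides are $L$-type constructions over the same hod predicate, by clause (2), restricted below $\pi_{\hq|\nu,\infty}(\l)$. So it suffices to see that they have the same $\omega$-sequences of ordinals below each $\gamma<\pi_{\hq|\nu,\infty}(\l)$. The left side computes these in $\M^{\hq_\infty}[h]$ with $h\subseteq\Coll(\omega,{<}\omega_1)$. This model sees all reals of $M$, because the $a_i$ are absorbed, and every $B_i\in\Delta_{\hq,\nu}$, by clause (7). Therefore the two constructions build the same model level by level, by induction on the $L$-hierarchy.
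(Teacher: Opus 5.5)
Your clause (1) is fine. The detour through reals coding ordinals is unnecessary: a strictly increasing, continuous $\omega_1$-sequence of countable ordinals has supremum $\omega_1$.

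The genuine gap is in clauses (3) and (2), and the order in which you take them cannot work. Write $\hm_i=\hm(\hq_i,\k_{\hq_i})$ and $\hm_\infty=\hm(\hq_\infty,\k_{\hq_\infty})$. The local direct limit $\lmi(\hq_\infty,\k_{\hq_\infty})$ is computed in $\Q_\infty[g]$ from iterates of $\hq_\infty$. A priori it only embeds into $\M_\infty(\hq|\nu)$ via the natural map $j$, which sends $\pi_{\hw,\hm_\infty}(x)$ to $\pi_{\bar{\hw}|\nu_{\bar{\hw}},\infty}(x)$, where $\bar{\hw}\in\mathcal{F}_i$ is an $i$-preimage of $\hw$. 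What comparison and lifting arguments actually give is $\pi_{\hr|\nu_\hr,\infty}=j\circ\pi_{\hq_i,\hq_\infty}\circ\pi_{\hr,\hm_i}$ on the relevant ordinals below $\k_\hr$. Clause (3) is exactly the statement that $j$ fixes these ordinals. So (3) is a consequence of the surjectivity argument for (2), not an input to it.

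Your sketch also omits the mechanism that relates $\pi_{\hq_i,\hq_\infty}$ to the local systems at all, and your claim that ``$\mathcal F(\hq_\infty,\k_{\hq_\infty})$ is the direct limit of the $\mathcal F(\hq_i,\k_{\hq_i})$'' needs it. The paper's steps are:
\begin{enumerate}
\item Fix $g\subseteq\Coll(\omega,{<}\omega_1)$ generic over $\Q_\infty$ with $\bR^{\Q_\infty[g]}=\bR$.
\item Pass to the club $C$ of $i$ with $i=\k_{\hq_i}$ and $g_i=g\cap\Coll(\omega,{<}\k_{\hq_i})$ generic over $\Q_i$.
\item Lift $\pi_{\hq_i,\hq_k}$ to $\pi^+_{i,k}\colon\Q_i[g_i]\to\Q_k[g_k]$.
\item Show, by full normalization of $\T_{\hq_i,\hq_k}$ followed by the upward copy of $\downarrow(\T_{\hq_i,\hr},\hq_i|\zeta)$, that $\pi^+_{i,k}(\hr)$ is an iterate of $\hr$ above $\k_\hr$.
\end{enumerate}
This yields $\pi_{\pi^+_{i,k}(\hr),\hm_k}=\pi_{\hq_i,\hq_k}\circ\pi_{\hr,\hm_i}$ on $\M^\hr|\k_\hr$ and makes $j$ well defined (Lemma~\ref{lem: lifting iterations}, Corollary~\ref{cor: emb}, Lemma~\ref{lem: j is wd}).

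Your surjectivity step is where the real content lies, and as stated it fails. Making the real that codes $\T_{\hq,\hs}$ generic over $\M^{\hq_{i+1}}$ does not make $\hs$, an iterate of $\hq$, a member of $\mathcal{F}(\hq_{i+1},\k_{\hq_{i+1}},h)$, which consists of iterates of $\hq_{i+1}$ via trees based below $\k_{\hq_{i+1}}$. The phrase ``or a comparable iterate'' carries the whole argument. In Lemma~\ref{lem: j is onto} the paper proceeds as follows.
\begin{enumerate}
\item Use smallness of $\l$ to take $\hr$ with $\gen(\T_{\hq,\hr})\subseteq\l_\hr$ and $y=\pi_{\hr|\nu_\hr,\infty}(z)$, and set $\gamma=\sup(\pi_{\hr|\k_\hr,\infty}[\k_\hr])$.
\item Use clause (7) of the genericity iteration for the sets $B_i$ (the capture of $\Delta_{\hq,\nu}$, not merely of the reals $a_i$). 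This gives $i$ and a proper cutpoint inaccessible $\zeta<\k_{\hq_i}$ with $\pi_{\hq_i|\zeta,\infty}(\ts(\hq_i|\k_{\hq_i}))>\gamma$, which is exactly the hypothesis of Corollary~\ref{cor: technical corollary}.
\item Apply that corollary to the coiteration of $\hq_i$ with $\hr$. It produces an iterate $\hw$ of $\hq_i$ based on $\hq_i|\zeta$ with $u=\pi_{\hr,\hw'}(z)\in\M^{\hw}|\zeta_\hw$ and $\pi_{\hw|\nu_\hw,\infty}(u)=y$.
\item The tree $\downarrow(\T_{\hq_i,\hw},\hq_i|\zeta)$ then lies in some $\Q_k[g_k]$, and the lifting moves it into $\mathcal{F}_k$.
\end{enumerate}
Surjectivity plus elementarity gives $j=\mathrm{id}$, hence (2) and then (3).

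Clause (4) has the same problem. Knowing $\bR^M\subseteq\Q_\infty[g]$ does not put every countable subset of an ordinal $\gamma\ge\omega_1$ into $\Q_\infty[g]$. You must catch such an $X$ as $\pi_{\hr|\nu_\hr,\infty}[Z']$ for a single $\hr\in\mathcal{F}_i$ and a real-coded countable $Z'$. Then use (3) to get $X=\pi_{\pi^+_{i,\infty}(\hr),\hm_\infty}[Z']\in\Q_\infty[g]$.
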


\begin{proof} 
The first clause is a standard consequence of the fact that for each $i<\omega_1$, $\cp(E_i)=\k_{\hq_i}$. 

We introduce the following notation to be used in the rest of the proof. Working in $M[G]$, fix a $\M^{\hq_\infty}$-generic $g\subseteq \Coll(\omega, <\omega_1^M)$ such that $\bR^{\M^{\hq_\infty}[g]}=\bR$, and let
\begin{notation}\label{not: notation for the proof}\normalfont
\begin{enumerate}
    \item[]
    \item[(1.1)] $\tau=\omega_1^M=\k_{\hq_\infty}$, 
    \item[(1.2)] $\hq_\tau=\hq_\infty$,
    \item[(1.3)] for $i\leq \tau$, $\k_i=\k_{\hq_i}$,
    \item[(1.4)] for $i\leq \tau$, $\hq_i=(\Q_i, \Lambda_i)$,
    \item[(1.5)] for $i<m\leq \tau$, $\pi_{i, m}=\pi_{\hq_i, \hq_m}$,
    \item[(1.6)] for $i<m\leq \tau$, $\T_{i, m}=\T_{\hq_i, \hq_m}$,
    \item[(1.7)] for $i\leq \tau$, $\hm_i=\hm(\hq_i, \k_i)$ (see \rnot{not: more not for reflection}),
    \item[(1.8)] for $i\leq \tau$, $\M_i=\M^{\hm_i}=\lmi(\hq_i, \k_i)$,
    \item[(1.9)] for $i\leq \tau$, $g_i=g\cap \Coll(\omega, <\k_i)$, and
    \item[(1.10)] for $i\leq \tau$ such that $g_i$ is $\Q_i$-generic, $\mathcal{F}_i=\mathcal{F}(\Q_i, \k_i, g_i)$. 
\end{enumerate}
\end{notation}

We first establish the following useful lemma. Notice that, in $M[G]$,\\\\
(1) for a club of $i<\tau$, $i= \k_i$ and $g_i=g\cap \Coll(\omega, <\k_i)$ is generic over $\Q_i$.\\\\
Let $C$ be the above club.

\begin{lemma}\label{lem: lifting iterations} 
Suppose $i\in C$, $k\in C$ or $k=\tau$, and $i<k$. Then $\pi_{i, k}$ can be lifted to 
\[ \pi_{i, k}^+: \Q_i[g_i]\rightarrow \Q_k[g_k], \]
and if $\hr\in \mathcal{F}_i$ then $\pi^+_{i, k}(\hr)\in \mathcal{F}_k$ and $\hw=_{def}\pi^+_{i, k}(\hr)$ is an iterate of $\hr$ such that $\T_{\hr, \hw}$ is above $\k_\hr$. 
\end{lemma}

\begin{proof} 
The reader can consult \rfig{fig:lifting_lemma} and \rfig{fig:lifting_lemma_box_spaced}. Let $\zeta<\k_i$ be such that
\vspace{0.3cm}
\begin{enumerate}[label=(\alph*), itemsep=0.3cm]
    \item[(a1)] $\T_{\hq_i, \hr}$ is based on $\hq|\zeta$,
\item[(a2)] $\zeta$ is a proper cutpoint (see \rter{term: cutpoints etc}), and
\item[(a3)] $\zeta$ is an inaccessible cardinal of $\Q_i$.
\end{enumerate}
\vspace{0.3cm}

Let $\U=\downarrow(\T_{\hq_i, \hr},\hq|\zeta)$ (see \rnot{not: restriction}). We have that $\pi_{i, k}^+(\U)=\U$. Let $\X=\uparrow(\U,\hq_k)$.

\begin{figure}[h]
    \centering
    \begin{tikzpicture}[>=Stealth, node distance=3.5cm]
        \node (Qi) {$\hq_i$};
        \node (Qk) [right of=Qi] {$\hq_k$};
        \node (R) [above of=Qi] {$\hr$};
        \node (W) [above of=Qk] {$\hw$};

        \draw[arrow] (Qi) -- node[below] {$\pi_{i,k}$} (Qk);

        \draw[arrow] (Qi) -- node[left] {$\pi_{\hq_i, \hr}$} (R);

        \draw[arrow, dashed] (R) -- node[above] {$\pi_{i,k}^+$} (W);

        \draw[arrow, dashed] (Qk) -- node[right] {$\pi_{\hq_k, \hw}=\pi^+_{i, k}(\pi_{\hq_i, \hr})$} (W);
    \end{tikzpicture}
    \caption{$\hr$-to-$\hw$ diagram.}
    \label{fig:lifting_lemma}
\end{figure}

\begin{figure}[h]
    \centering
    \begin{tikzpicture}[
        scale=1.1,
        >=Stealth,
        x={(1cm,0cm)}, 
        y={(0cm,1cm)}, 
        z={(0.6cm,-0.35cm)} 
    ]

    \def\h{9.5}      
    \def\w{7.0}      
    \def\d{5}        
    
    \def\base{0}        
    \def\treeStart{1.5} 
    \def\treeEnd{3.5}   
    \def\zetaLow{3.0}   
    \def\zetaHigh{5.0}  
    
    \def\kiH{6.5}       
    \def\kkH{8.0}       
    
    \def\topTreeStart{7.5} 
    \def\topTreeEnd{9.0}    

    \coordinate (Qi_B) at (0,\base,0);      \coordinate (Qi_T) at (0,\h,0);
    \coordinate (Qk_B) at (\w,\base,0);     \coordinate (Qk_T) at (\w,\h,0);

    \coordinate (R_B) at (0,\base,\d);      \coordinate (R_T) at (0,\h,\d);
    \coordinate (W_B) at (\w,\base,\d);     \coordinate (W_T) at (\w,\h,\d);

    \draw[thick] (Qi_B) -- (Qi_T);
    \draw[thick] (Qk_B) -- (Qk_T);
    \draw[thick] (R_B) -- (R_T);
    \draw[thick] (W_B) -- (W_T);

    \node[below=8pt] at (Qi_B) {$\hq_i$};
    \node[below=8pt] at (Qk_B) {$\hq_k$};
    \node[below=8pt] at (R_B)  {$\hr$};
    \node[below=8pt] at (W_B)  {$\hw$};

    \draw[->, blue, thick] (0, \treeStart, 0) -- node[above, sloped, font=\small] {$\T_{\hq_i, \hr}$} (0, \treeEnd, \d);
    \draw[->, blue, thick, dashed] (\w, \treeStart, 0) -- node[above, sloped, font=\small] {$\T_{\hq_k, \hw}$} (\w, \treeEnd, \d);

    \draw[dotted, thick, gray] (0,\zetaLow,0) -- (\w,\zetaLow,0);
    \draw[dotted, thick, gray] (0,\zetaHigh,\d) -- (\w,\zetaHigh,\d);

    \filldraw (0,\zetaLow,0) circle (1.5pt) node[left, font=\small] {$\zeta$};
    \filldraw (\w,\zetaLow,0) circle (1.5pt) node[right, font=\small] {$\zeta_{\hq_k}=\zeta$};
    
    \filldraw (0,\zetaHigh,\d) circle (1.5pt) node[left, font=\small] {$\zeta_\hr$};
    \filldraw (\w,\zetaHigh,\d) circle (1.5pt) node[right, font=\small] {$\zeta_\hw=\zeta_\hr$};

    \draw[->, blue!40, thin, shorten >=3pt, shorten <=3pt] (0,\zetaLow,0) -- (0,\zetaHigh,\d);
    \draw[->, blue!40, thin, dashed, shorten >=3pt, shorten <=3pt] (\w,\zetaLow,0) -- (\w,\zetaHigh,\d);

    \filldraw (0,\kiH,0) circle (1.5pt) node[left] {$\kappa_i$};
    \filldraw (\w,\kkH,0) circle (1.5pt) node[right] {$\kappa_k$};
    \filldraw (0,\kiH,\d) circle (1.5pt) node[left] {$\kappa_\hr$};
    \filldraw (\w,\kkH,\d) circle (1.5pt) node[right] {$\kappa_\hw$};

    \draw[->, gray, thin] (0,\kiH,0) -- (\w,\kkH,0);
    \draw[->, gray, dashed, thin] (0,\kiH,\d) -- (\w,\kkH,\d);
    
    \draw[->, blue, shorten >=3pt] (0,\kiH,0) -- (0,\kiH,\d);
    \draw[->, blue, dashed, shorten >=3pt] (\w,\kkH,0) -- (\w,\kkH,\d);

    \draw[->, violet, thick, decorate, decoration={snake, amplitude=.4mm, segment length=2mm, post length=1mm}] 
        (0, \topTreeStart, \d) -- 
        node[above, sloped, font=\small, fill=white, inner sep=1pt] {$\T_{\hr, \hw}=\mathcal{Y}_{\geq \hr}$} 
        (\w, \topTreeEnd, \d);

    \draw[->, dotted, thick] (Qi_B) -- node[above, font=\small, fill=white, inner sep=1pt] {$\pi_{i,k}$} (Qk_B);
    
    \draw[->] (R_B) -- node[above, font=\small, fill=white, inner sep=1pt] {$\pi^+_{i,k}$} (W_B);

    \node[draw=black, thick, fill=gray!10, rounded corners, inner sep=8pt, anchor=north] 
        at (3.5, -2.5, 2.5) { 
        \begin{minipage}{10cm}
            \centering
            \textbf{\underline{Key Properties}}
            \begin{itemize}[label={\tiny$\bullet$}, itemsep=3pt, topsep=4pt, align=left]
                \item $\mathcal{Y}$ is the full normalization of $(\T_{\hq_i, \hq_k})^\frown \T_{\hq_k, \hw}$
                \item $\Y_{\geq \hr}$ is above $\k_{\hr}$.
                \item $\Y_{\geq \hr}$ is the $\pi_{\hq_i, \hr}$-minimal copy of $\T_{\hq_i, \hq_k}$.
            \end{itemize}
        \end{minipage}
    };

    \draw[gray!20, dotted] (Qi_T) -- (R_T) -- (W_T) -- (Qk_T) -- cycle;

    \end{tikzpicture}
    \caption{The $\hr$-to-$\hw$ iteration.}
    \label{fig:lifting_lemma_box_spaced}
\end{figure}

Notice now that if $\Y$ is the full normalization of $(\T_{i, k})^\frown \X$ then $\T_{\hq_i, \hr}\insegeq \Y$ and $\Y_{\geq \hr}$ is above $\k_{\hr}$ (in fact $\Y_{\geq \hr}$ is the minimal copy of $\T_{i, k}$ via $\pi_{\hq_i, \hr}$, see \cite[Definition 8.7]{blue2025nairian}). This finishes the proof of the lemma.
\end{proof}

We remind the reader that, following \cite{blue2025nairian}, if $\hw$ is a hod pair then $\hw=(\M^\hw, \Sigma^\hw)$, where $\M^\hw$ is a hod mouse and $\Sigma^\hw$ is an iteration stratgey with strong hull condensation.\footnote{See \cite[Definition 0.1]{MPSC} and  \cite[Theorem 1.4]{MPSC}.}

\begin{corollary}\label{cor: emb} 
Suppose $i\in C$, $k\in C$ or $k=\tau$, $i<k$, and $\hr\in \mathcal{F}_i$. Then letting $\hw=\pi^+_{i, k}(\hr)$\footnote{Notice that $\M^\hr|\k_\hr=\M^\hw|\k_\hw$.}, 
\[ \pi_{\hw, \hm_k}\circ \pi_{i, k}\rest (\M^\hr|\k_\hr)=\pi_{i, k}\circ \pi_{\hr, \hm_i}\rest (\M^\hr|\k_\hr). \]
\end{corollary}
\begin{proof} 
We have that $\pi_{i, k}^+(\pi_{\hr, \hm_i})=\pi_{\hw, \hm_k}$. Because $\pi_{i, k}^+\rest \k_\hr=id$, the claim follows.
\end{proof}

We now define an embedding 
\[ j: \M_\tau|\pi_{\hq_\tau, \hm_\tau}(\l)\rightarrow \M_\infty(\hq|\nu)|\pi_{\hq|\nu, \infty}(\l) \]
as follows. Fix $x\in \M_\tau|\pi_{\hq_\tau, \hm_\tau}(\l)$. We say $(\hr, \zeta, i)$ is \textit{good for} $x$ if the tuple $(\hr, \zeta, i)$ has the following properties: 
\vspace{0.3cm}
\begin{enumerate}[label=(\alph*), itemsep=0.3cm]
    \item[(p1)] (p1) $\hr\in \mathcal{F}_\tau$ and $x\in \rge(\pi_{\hr, \hm_\tau})$.
\item[(p2)] $\zeta<\tau$ and $\T_{\hq_\tau, \hr}$ is based on $\hq_\tau|\zeta$. 
\item[(p3)]  $\pi_{\hq_\tau, \hr}(\zeta)=\zeta$. 
\item[(p4)]  $\zeta$ is an inaccessible cardinal of $\Q_\tau$. 
\item[(p5)]  If $y\in \M^{\hr}$ is such that $\pi_{\hr, \hm_\tau}(y)=x$ then $y\in \M^{\hr}|\zeta$.
\item[(p6)] $i\in C$, $\zeta<\k_i$, and letting $\U=\downarrow(\T_{\hq_\tau, \hr},\hq_\tau|\zeta)$, $\U\in \Q_i[g_i\cap \Coll(\omega, <\zeta)]$.
\end{enumerate}
\vspace{0.3cm}

Fix now $(\hr, \zeta, i)$ that is good for $x$. Let  $\U^+=\uparrow (\U, \hq_i)$ (this makes sense because $\hq_i|\zeta=\hq_\tau|\zeta$). Let $\hr'$ be the last model of $\U^+$ (thus, $\hr'\in \mathcal{F}_i$), and set 
\[ j_{\hr, \zeta, i}(x)=\pi_{\hr'|\nu_{\hr'}, \infty}(y). \]

Below we will show that $j_{\hr, \zeta, i}(x)$ is independent of $(\hr, \zeta, i)$. To prove this, we will use the following observation, which is simply a restatement of \rcor{cor: emb}.
\vspace{0.3cm}
\begin{enumerate}[label=(2.\arabic*), itemsep=0.3cm]
    \item Suppose $k\in C$ or $k=\tau$, and $\hw\in \mathcal{F}_k$. Suppose $i\in k\cap C$ is such that for some $\Q_i$-inaccessible cardinal $\zeta<\k_i$, $\T_{\hq_k, \hw}$ is based on $\hq_k|\zeta$ and $\T_{\hq_k, \hw}\in \hq_k|\zeta[g_k\cap \Coll(\omega, <\zeta)]$. Let $\U=\downarrow(\T_{\hq_k, \hw}, \hq_k|\zeta)$ and $\hr$ be the last model of $\uparrow(\U, \hq_i|\zeta)$. Then
\item $\pi^+_{i, k}(\hr)=\hw$, 
\item  $\hw$ is an iterate of $\hr$,
\item  $\T_{\hr, \hw}$ is above $\k_\hr$, and
\item  $\T_{\hr, \hw}$ is based on $\hr|\nu_\hr$.
\end{enumerate}
\vspace{0.3cm}

Continuing with the notation of (2.1), we say $\hr$ is the $i$-preimage of $\hw$.

\begin{lemma}\label{lem: j is wd} 
Suppose $x\in \M_\tau|\pi_{\hq_\tau, \hm_\tau}(\l)$, and $(\hr, \zeta, i)$ and $(\hw, \a, k)$ are two tuples that are good for $x$. Then \[j_{\hr, \zeta, i}(x)=j_{\hw, \a, k}(x).\]
\end{lemma}
\begin{proof}  \rfig{fig:j_well_defined_combined} depicts the main points of the proof.
We have that
\vspace{0.3cm}
\begin{enumerate}[label=(3.\arabic*), itemsep=0.3cm]
\item $\hr\in \mathcal{F}_\tau$ and $\hw\in \mathcal{F}_\tau$.
\item $x\in \rge(\pi_{\hr, \hm_\tau})$ and $x\in \rge(\pi_{\hw, \hm_\tau})$.
\item $\zeta<\tau$ and $\a<\tau$.
\item $\T_{\hq_\tau, \hr}$ is based on $\hq_\tau|\zeta$, and $\T_{\hq_\tau, \hw}$ is based on $\hq_\tau|\a$.
\item $\pi_{\hq_\tau, \hr}(\zeta)=\zeta$ and $\pi_{\hq_\tau, \hw}(\a)=\a$.
\item $\zeta$ is an inaccessible cardinal of $\Q_\tau$, and $\a$ is an inaccessible cardinal of $\Q_\tau$.
\item If $y\in \M^{\hr}$ is such that $\pi_{\hr, \hm_\tau}(y)=x$ then $y\in \M^{\hr}|\zeta$, and if $z\in \M^{\hw}$ is such that $\pi_{\hw, \hm_\tau}(z)=x$ then $z\in \M^{\hw}|\a$.
\item $i\in C$, $\zeta<\k_i$, and letting $\U=\downarrow(\T_{\hq_\tau, \hr},\hq_\tau|\zeta)$, $\U\in \Q_i[g_i\cap \Coll(\omega, <\zeta)]$.
\item $k\in C$, $\a<\k_k$, and letting $\V=\downarrow(\T_{\hq_\tau, \hw},\hq_\tau|\a)$, $\V\in \Q_k[g_k\cap \Coll(\omega, <\a)]$.
\end{enumerate}
\vspace{0.3cm}

Let  $\hr'=(\pi_{i, \tau}^+)^{-1}(\hr)$ and $\hw'=(\pi_{k, \tau}^+)^{-1}(\hw)$. Notice that $\hr'$ is the $i$-preimage of $\hr$ and $\hw'$ is the $k$-preimage of $\hw$.

Applying (2.1) to $\hr$ and $\hw$ we get that
\vspace{0.3cm}
\begin{enumerate}[label=(4.\arabic*), itemsep=0.3cm]
\item $\hr$ is a complete iterate of $\hr'$,
\item $\T_{\hr', \hr}$ is above $\k_{\hr'}$,
\item $\T_{\hr', \hr}$ is based on $\hr'|\nu_{\hr'}$,
\item $\hw$ is a complete iterate of $\hw'$,
\item $\T_{\hw', \hw}$ is above $\k_{\hw'}$, and
\item $\T_{\hw', \hw}$ is based on $\hw'|\nu_{\hw'}$.
\end{enumerate}
\vspace{0.3cm}

Let $\b<\tau$ be such that $\T_{\hq_\tau, \hr}$ and $\T_{\hq_\tau, \hw}$ are based on $\hq_\tau|\b$, $\b$ is inaccessible in $\Q_\tau$ and $\pi_{\hq_\tau, \hr}(\b)=\pi_{\hq_\tau, \hw}(\b)=\b$. It follows that $\hr$ and $\hw$ have a common iterate $\hn\in \mathcal{F}_\tau$ such that $\T_{\hr, \hn}$ is based on $\hr|\b$ and $\T_{\hw, \hn}$ is based on $\hw|\b$ (this is because $\gen(\T_{\hq_\tau, \hr})\subseteq \b$ and $\gen(\T_{\hq_\tau, \hw})\subseteq \b$). It now follows that
\vspace{0.3cm}
\begin{enumerate}[label=(5.\arabic*), itemsep=0.3cm]
\item $\pi_{\hr, \hn}(y)=\pi_{\hw, \hn}(z)$.
\end{enumerate}
\vspace{0.3cm}
We can now find some $m\in [\max(i, k), \tau)\cap C$ such that $\b<\k_m$, and let \[\hr''=\pi_{i, m}^+(\hr') \ \text{and}\ \hw''=\pi_{k, m}^+(\hw').\] 
\begin{figure}[h]
\centering
\begin{tikzpicture}[scale=0.85, transform shape, >=stealth, font=\small]

    
    \node (N) at (0, 5.5) {$\hn$};

    \node[gray] (Rprime) at (-6.0, 3) {$\hr'$};
    \node[gray] (Rdoubleprime) at (-3.5, 3) {$\hr''$};
    \node (R) at (-1.5, 3) {$\hr$};
    \node[red, font=\scriptsize] at (-1.1, 3.0) {$y$}; 

    \node (W) at (1.5, 3) {$\hw$};
    \node[red, font=\scriptsize] at (1.1, 3.0) {$z$}; 
    \node[gray] (Wdoubleprime) at (3.5, 3) {$\hw''$};
    \node[gray] (Wprime) at (6.0, 3) {$\hw'$};

    \node (Qtau) at (0, 0) {$\hq_\tau$};
    \node (Qm_L) at (-3.5, 0) {$\hq_m$};
    \node (Qi) at (-6.0, 0) {$\hq_i$};
    \node (Qm_R) at (3.5, 0) {$\hq_m$};
    \node (Qk) at (6.0, 0) {$\hq_k$};

    \draw[->] (Qi) -- node[below, font=\scriptsize] {$\T_{i, m}$} (Qm_L);
    \draw[->] (Qm_L) -- node[below, font=\scriptsize] {$\T_{m, \tau}$} (Qtau);
    \draw[->] (Qk) -- node[below, font=\scriptsize] {$\T_{k, m}$} (Qm_R);
    \draw[->] (Qm_R) -- node[below, font=\scriptsize] {$\T_{m, \tau}$} (Qtau);

    \draw[->, gray] (Qi) -- (Rprime);
    \draw[->, gray] (Qm_L) -- (Rdoubleprime);
    \draw[->, gray] (Qk) -- (Wprime);
    \draw[->, gray] (Qm_R) -- (Wdoubleprime);
    \draw[->] (Qtau) -- (R);
    \draw[->] (Qtau) -- (W);

    \draw[->, gray] (Rprime) -- node[above, font=\scriptsize] {$\T_{\hr', \hr''}$} 
                                node[below, font=\tiny, yshift=-1pt] {above $\kappa_{\hr'}$} (Rdoubleprime);
    \draw[->, gray] (Wprime) -- node[above, font=\scriptsize] {$\T_{\hw', \hw''}$} 
                                node[below, font=\tiny, yshift=-1pt] {above $\kappa_{\hw'}$} (Wdoubleprime);
    
    \draw[->, gray, dotted] (Rdoubleprime) -- node[above, font=\scriptsize] {$\T_{\hr'', \hr}$} 
                                              node[below, font=\tiny, yshift=-1pt] {above $\kappa_{\hr''}$} (R);
    \draw[->, gray, dotted] (Wdoubleprime) -- node[above, font=\scriptsize] {$\T_{\hw'', \hw}$} 
                                              node[below, font=\tiny, yshift=-1pt] {above $\kappa_{\hw''}$} (W);

    \draw[->, dashed] (R) -- node[left, font=\scriptsize] {$\T_{\hr, \hn}$} (N);
    \draw[->, dashed] (W) -- node[right, font=\scriptsize] {$\T_{\hw, \hn}$} (N);
    
    \node[red, font=\scriptsize] at (0, 5.8) {$\pi_{\hr, \hn}(y)=\pi_{\hw, \hn}(z)$};

    \begin{scope}[yshift=-6.5cm]
    
        \node (Qm_Bottom) at (-3.5, 0) {$\hq_m$};
        \node (Rpp) at (-5.5, 2.5) {$\hr''$};
        \node (Wpp) at (-1.5, 2.5) {$\hw''$};
        \node (Np) at (-3.5, 5) {$\hn'$}; 

        \node (Qtau_Bottom) at (3.5, 0) {$\hq_\tau$};
        \node (R_Bottom) at (1.5, 2.5) {$\hr$};
        \node (W_Bottom) at (5.5, 2.5) {$\hw$};
        \node (N_Bottom) at (3.5, 5) {$\hn$};

        \draw[->] (Qm_Bottom) -- node[pos=0.3, left, font=\scriptsize] {$\T_{\hq_m, \hr''}$} (Rpp);
        \draw[->] (Qm_Bottom) -- node[pos=0.3, right, font=\scriptsize] {$\T_{\hq_m, \hw''}$} (Wpp);
        \draw[->, dashed] (Rpp) -- node[left, font=\scriptsize] {$\T_{\hr'', \hn'}$} (Np);
        \draw[->, dashed] (Wpp) -- node[right, font=\scriptsize] {$\T_{\hw'', \hn'}$} (Np);

        \draw[->] (Qtau_Bottom) -- node[pos=0.3, left, font=\scriptsize] {$\T_{\hq_\tau, \hr}$} (R_Bottom);
        \draw[->] (Qtau_Bottom) -- node[pos=0.3, right, font=\scriptsize] {$\T_{\hq_\tau, \hw}$} (W_Bottom);
        \draw[->, dashed] (R_Bottom) -- node[left, font=\scriptsize] {$\T_{\hr, \hn}$} (N_Bottom);
        \draw[->, dashed] (W_Bottom) -- node[right, font=\scriptsize] {$\T_{\hw, \hn}$} (N_Bottom);

        \draw[->] (Qm_Bottom) -- node[below, font=\scriptsize] {$\T_{m,\tau}$} (Qtau_Bottom);
        \draw[->, gray, dotted] (Rpp) to[bend right=35] 
            node[pos=0.15, above, font=\scriptsize] {$\T_{\hr'', \hr}$} (R_Bottom);
        \draw[->, gray, dotted] (Wpp) to[bend right=35] 
            node[pos=0.85, above, font=\scriptsize] {$\T_{\hw'', \hw}$} (W_Bottom);
        \draw[->] (Np) -- node[above, font=\scriptsize] {$\T_{\hn', \hn}$} 
                          node[below, font=\tiny, yshift=-1pt] {above $\kappa_{\hn'}$} (N_Bottom);

        
        \node[draw=black, thick, fill=gray!10, rounded corners, inner sep=6pt, anchor=north] 
            at (-3.6, -0.8) {
            \begin{minipage}{6cm}
                \centering
                \textbf{\underline{Key Facts: The $\hr$-side}}
                \begin{itemize}[label={\tiny$\bullet$}, itemsep=1pt, topsep=3pt, leftmargin=1em, font=\scriptsize]
                    \item $j_{r, \zeta, i}(x) = \pi_{\hr'|\nu_{\hr'}, \infty}(y)$
                    \item Since $\T_{\hr', \hr''}$ is above $\kappa_{\hr'}$: \\ 
                          $j_{r, \zeta, i}(x) = \pi_{\hr''|\nu_{\hr''}, \infty}(y)$
                    \item $\pi_{\hr''|\nu_{\hr''}, \infty}(y) = \pi_{\hn'|\nu_{\hn'}, \infty}(\pi_{\hr'', \hn'}(y))$
                    \item Thus, \[j_{r, \zeta, i}(x) =\pi_{\hn'|\nu_{\hn'}, \infty}(\pi_{\hr'', \hn'}(y))\]
                \end{itemize}
            \end{minipage}
        };

        \node[draw=black, thick, fill=gray!10, rounded corners, inner sep=6pt, anchor=north] 
            at (3.6, -0.8) {
            \begin{minipage}{6cm}
                \centering
                \textbf{\underline{Key Facts: The $\hw$-side}}
                \begin{itemize}[label={\tiny$\bullet$}, itemsep=1pt, topsep=3pt, leftmargin=1em, font=\scriptsize]
                    \item $j_{w, \alpha, k}(x) = \pi_{\hw'|\nu_{\hw'}, \infty}(z)$
                    \item Since $\T_{\hw', \hw''}$ is above $\kappa_{\hw'}$: \\ 
                          $j_{w, \alpha, k}(x) = \pi_{\hw''|\nu_{\hw''}, \infty}(z)$
                    \item $\pi_{\hw''|\nu_{\hw''}, \infty}(z) = \pi_{\hn'|\nu_{\hn'}, \infty}(\pi_{\hw'', \hn'}(z))$
                    \item Thus, \[j_{w, \alpha, k}(x)=\pi_{\hn'|\nu_{\hn'}, \infty}(\pi_{\hw'', \hn'}(z))\]
                \end{itemize}
            \end{minipage}
        };

        \node[draw=black, thick, fill=gray!10, rounded corners, inner sep=6pt, anchor=north] 
            at (0, -5.3) {
            \begin{minipage}{10.5cm}
                \centering
                \textbf{\underline{Conclusion: $j_{r, \zeta, i}(x) = j_{w, \alpha, k}(x)$}}
                \begin{align*}
                j_{\hr, \zeta, i}(x) &=  \pi_{\hn'|\nu_{\hn'}, \infty}(\pi_{\hr'', \hn'}(y)) \\
                &= \pi_{\hn|\nu_{\hn}, \infty}(\pi_{\hr'', \hn}(y))\ \text{(since $\T_{\hn', \hn}$ is above $\kappa_{\hn'}$)}\\
                &= \pi_{\hn|\nu_{\hn}, \infty}(\pi_{\hw'', \hn}(z))\ \text{(since $\pi_{\hr'', \hn}(y)=\pi_{\hw'', \hn}(z)$)}\\
                &=\pi_{\hn'|\nu_{\hn'}, \infty}(\pi_{\hw'', \hn'}(z))\  \text{(since $\T_{\hn', \hn}$ is above $\kappa_{\hn'}$)}\\
                &= j_{w, \alpha, k}(x)
                \end{align*}
            \end{minipage}
        };

    \end{scope}

\end{tikzpicture}
\caption{The diagrams show the reason behind the equality $j_{\hr, \zeta, i}(x)=j_{\hw, \alpha, k}(x)$}
\label{fig:j_well_defined_combined}
\end{figure}

We easily have that $\hn\in \rge(\pi^+_{m, \tau})$, and it also follows from (2) that $\hr''$ is the $m$-preimage of $\hr$ and $\hw''$ is the $m$-preimage of $\hw$.  Thus,
\vspace{0.3cm}
\begin{enumerate}[label=(6.\arabic*), itemsep=0.3cm]
\item $\hr''$ is a complete iterate of $\hr'$,
\item $\hw''$ is a complete iterate of $\hw$,
\item $\T_{\hr', \hr''}$ is above $\k_{\hr'}$,
\item $\T_{\hw', \hw''}$ is above $\k_{\hw'}$, 
\item  letting $\hn'=(\pi_{m, \tau}^+)^{-1}(\hn)$, $\hn'$ is a common iterate of both $\hr''$ and $\hw''$,
\item $\T_{\hr'', \hn'}$ is based on $\hr''|\b$ and $\T_{\hw'',\hn'}$ is based on $\hw''|\b$,
\item $\hn$ is a complete iterate of $\hn'$, and
\item $\T_{\hn', \hn}$ is above $\k_{\hn'}$.
\end{enumerate}
\vspace{0.3cm}
 It follows from (5) and (6.1)-(6.8) (especially (6.8)) that \[\pi_{\hr'', \hn'}(y)=\pi_{\hw'', \hn'}(z).\] It then follows from (4.3) and (4.6) that \[\pi_{\hr'|\nu_{\hr'}, \infty}(y)=\pi_{\hw'|\nu_{\hw'}, \infty}(z).\]
\end{proof}

Given $x\in \M_\tau|\pi_{\hq_\tau, \hm_\tau}(\l)$, we now set $j(x)=j_{\hr, \zeta, i}(x)$ where $(\hr, \zeta, i)$ is good for $x$. The next lemma finishes the proof of clause 2 of \rthm{thm: realizing as a nairian model}.

\begin{lemma}\label{lem: j is onto} $j$ is onto.
\end{lemma}
\begin{proof} 
Suppose \[y\in \M_\infty(\hq|\nu)|\pi_{\hq|\nu, \infty}(\l).\] We want to show that there is $x\in \M_\tau$ such that $j(x)=y$. Fix a complete iterate $\hr$ of $\hq$ such that $y\in \rge(\pi_{\hr|\nu_\hr, \infty})$, $\T_{\hq, \hr}$ is based on $\hq|\nu$ and $\lh(\T_{\hq, \hr})<\tau(=\omega_1^M)$. Because $\l$ is small, we can without loss of generality assume that $\gen(\T_{\hq, \hr})\subseteq \l_\hr$ (see \cite[Lemma 9.14]{blue2025nairian}). Let \[\gg=\sup(\pi_{\hr|\k_\hr, \infty}[\k_\hr]).\] It follows from our construction\footnote{See Clause 6g of \rnot{not: more not for reflection}.} that there is some successor $i<\tau$ and some $\zeta<\k_i$ such that
\vspace{0.3cm}
\begin{enumerate}[label=(7.\arabic*), itemsep=0.3cm]
\item $\zeta$ is a proper cutpoint of $\hq_i$ (see \rter{term: cutpoints etc}),
\item $\zeta$ is an inaccessible cardinal of $\Q_i$, and
\item $\pi_{\hq_i|\zeta, \infty}(\ts(\hq_i|\k_i))> \gg$ (see \rnot{not: more not for reflection}, recall that $\ts(W)$ is the least slw cardinal of $W$).
\end{enumerate}
\vspace{0.3cm}
It follows from \rcor{cor: technical corollary} that\footnote{Apply \rcor{cor: technical corollary} by setting $\hq=\hq_0$, $\hs=\hq_i$, $\hr=\hr$, $\zeta=\zeta$ and $\k=\k_{\hq_0}=\k_0$. Clauses 6 and 7 of \rcor{cor: technical corollary} are key.}
\vspace{0.3cm}
\begin{enumerate}[label=(8.\arabic*), itemsep=0.3cm]
\item  $\sup(\pi_{\hr|\nu_\hr, \infty}[\k_\hr])< \pi_{\hq_i|\zeta, \infty}(\ts(\hq_i|\k_i))$.
\end{enumerate}
\vspace{0.3cm}
Moreover, if $\hw'$ is a common iterate of $\hq_i$ and $\hr$ obtained via the least-extender-disagreement-coiteration then there is $\hw$ such that\footnote{Again we get (9.1)-(9.7) by applying \rcor{cor: technical corollary}.}
\vspace{0.3cm}
\begin{enumerate}[label=(9.\arabic*), itemsep=0.3cm]
\item $\T_{\hq_i, \hw'}$ is based on $\hq_i|\nu_{\hq_i}$ and $\T_{\hr, \hw'}$ is based on $\hr|\nu_\hr$,
\item $\hw$ is an iterate of $\hq_i$,
\item $\T_{\hq_i, \hw}$ is based on $\hq_i|\zeta$,
\item $\hw|\zeta_{\hw}=\hw'|\zeta_{\hw'}$,
\item $\T_{\hq_i, \hw}\insegeq \T_{\hq_i, \hw'}$,
\item $(\T_{\hq_i, \hw'})_{\geq \hw}$ is strictly above $\zeta_{\hw}$, and
\item $\sup(\pi_{\hr, \hw'}[\k_\hr])\leq \zeta_\hw$.
\end{enumerate}
\vspace{0.3cm}

Let $z=(\pi_{\hr|\nu_\hr, \infty})^{-1}(y)$. It follows from (9.7) that $\pi_{\hr, \hw'}(z)\in \M^{\hw}|\zeta_\hw$, and from (9.4), (9.5), and (9.6) that \[\pi_{\hr|\nu_\hr, \infty}(z)=\pi_{\hw|\nu_\hw, \infty}(\pi_{\hr, \hw'}(z)).\] Let $u=\pi_{\hr, \hw'}(z)$. We thus have that $\pi_{\hw|\nu_\hw, \infty}(u)=y$.

Recall that $\T_{\hq_i, \hw}$ is based on $\hq_i|\zeta$ (see (9.3)). Let then $\U=\downarrow(\T_{\hq_i, \hw}, \hq_i|\zeta)$. We can then find some $k\in C$ such that $\U\in \Q_k[g_k]$. Let then $\X$ be the full normalization of $(\T_{i, k})^\frown (\uparrow (\U, \hq_k))$. As in the proof of \rlem{lem: lifting iterations}, if $\hs$ is the last model of $\X$, then $\hs$ is a complete iterate of $\hw$ such that $\T_{\hw, \hs}$ is above $\k_\hw$. Moreover, $\hs\in \mathcal{F}_k$. It follows that $\pi_{\hs|\nu_\hs, \infty}(u)=y$. We now have that letting $x=\pi_{\hs, \hm_\tau}(u)$, we have that $j(x)=y$.
\end{proof}

This finishes the proof of Clause 1 and 2 of \rthm{thm: realizing as a nairian model}. Notice that a modification of \rlem{lem: j is onto} also gives that $j$ is elementary\footnote{This follows from (9.6). Because $\T_{\hw, \hw'}=(\T_{\hq_i, \hw'})_{\geq \hw}$ is strictly above $\zeta_\hw$, we have that $u$ satisfies the same formulas in $\M^\hw$ and $\M^{\hw'}$.}. Thus, we get that $j=id$. 

Clause 4 follows from the fact that if $\a<\k_\tau$, then 
\[ (\a^\omega)^{M[G]}=(\a^\omega)^M=(\a^\omega)^{\Q_\tau[g]}. \]
Indeed, if $X\in (\a^\omega)^M$ then using the proof of \rlem{lem: j is onto} we can find some $i<\tau$ and some $\hr\in \mathcal{F}_i$ such that $X\subseteq \rge(\pi_{\hr|\nu_\hr, \infty})$. Let now $z:\omega\rightarrow \M^\hr|\gg$ be a surjection where $\gg<\k_\hr$ is such that \[X\subseteq \pi_{\hr|\nu_\hr, \infty}[\gg],\] and let $w\subseteq \omega$ be such that \[X=\{ \pi_{\hr|\nu_\hr, \infty}(z(m)): m\in w\}.\] Finally, let $\hw=\pi_{i, \tau}^+(\hr)$. We then have that \[X=\{\pi_{\hw|\tau, \hm_\tau}(z(m)): m\in w\}\] and $(z, w)\in \Q_\tau[g]$. Hence, $X\in \Q_\tau[g]$. 
\end{proof}

The following is a useful fact used in the proof of \rlem{lem: j is onto}.

\begin{lemma}\label{lem: catching preimages} 
Suppose $\hq=(\Q, \Lambda)$ is a complete iterate of $\hp$, and $\k\in \tStr(\Q|\hd)$ is small relative to $\hd$. Suppose $\nu\in (\k, \hd)$ is a properly overlapped cardinal of $\Q$ and $\a<\pi_{\hq|\nu, \infty}(\k)$. Suppose further that $\hr$ and $\hs$ are two iterates of $\hq$ such that
\begin{enumerate}
    \item both $\T_{\hq, \hr}$ and $\T_{\hq, \hs}$ are based on $\hq|\nu$, 
    \item $\T_{\hq, \hs}$ is above $\k$ (but not necessarily strictly above),
    \item $\a_{\hr|\nu_\hr}$ is defined, and
    \item there is $\zeta<\l_\hs$ such that $\zeta$ is a properly overlapped inaccessible cardinal of $\M^\hs$ and $\sup(\pi_{\hr|\k_\hr, \infty}[\k_\hr])<\pi_{\hs|\zeta, \infty}(\ts(\hs|\zeta))$.
\end{enumerate}

Then there is a complete iterate $\hw$ of $\hs$ such that
\begin{enumerate}
    \item[(5)] $\T_{\hs, \hw}$ is based on $\hs|\zeta$,
    \item[(6)] $\a_{\hw|\nu_\hw}$ is defined, and 
    \item[(7)] $\pi_{\hw, \infty}(\a_{\hw|\nu_\hw})=\pi_{\hr, \infty}(\a_{\hr|\nu_\hr})$.
\end{enumerate}
\end{lemma}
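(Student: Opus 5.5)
The plan is to reduce the lemma to \rcor{cor: technical corollary}, with $\zeta$ playing the role of the cutpoint there. Applying that corollary to $\hr$ and $\hs$ (via clause 4 of the hypothesis, and since both trees are based on $\hq|\nu$ and $\T_{\hq,\hs}$ is above $\k$), we obtain the least-extender-disagreement coiterate $\hw'$ of $\hr$ and $\hs$. We also obtain an interpolating pair $\hw$ on $\T_{\hs,\hw'}$ such that:
\begin{itemize}
\item $\T_{\hs,\hw}$ is based on $\hs|\zeta$;
\item $\hw|\zeta_\hw=\hw'|\zeta_{\hw'}$;
\item $(\T_{\hs,\hw'})_{\geq\hw}$ is strictly above $\zeta_\hw$;
\item $\sup(\pi_{\hr,\hw'}[\k_\hr])\leq\zeta_\hw$.
\end{itemize}
This $\hw$ is my candidate, and it gives clause (5) immediately.

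For clauses (6) and (7), I will unpack $\a_{\hr|\nu_\hr}$ as the preimage of $\a$ under $\pi_{\hr|\nu_\hr,\infty}$, which is defined by (3). Since $\a<\pi_{\hq|\nu,\infty}(\k)$, we have $\a_{\hr|\nu_\hr}<\k_\hr$. Set $u=\pi_{\hr,\hw'}(\a_{\hr|\nu_\hr})$; by the last bullet above, $u<\zeta_\hw$. By the corollary both $\T_{\hr,\hw'}$ and $\T_{\hs,\hw'}$ are based on the $\nu$-parts, so $\pi_{\hr|\nu_\hr,\infty}=\pi_{\hw'|\nu_{\hw'},\infty}\circ\pi_{\hr,\hw'}$, and hence $\pi_{\hw'|\nu_{\hw'},\infty}(u)=\a$. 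The tail $(\T_{\hs,\hw'})_{\geq\hw}$ is strictly above $\zeta_\hw>u$. Therefore $\pi_{\hw,\hw'}(u)=u$, and $\pi_{\hw|\nu_\hw,\infty}(u)=\pi_{\hw'|\nu_{\hw'},\infty}(u)=\a$. This shows that $\a_{\hw|\nu_\hw}$ is defined and equals $u$, giving (6). Clause (7) follows by composing $\pi_{\hr,\infty}=\pi_{\hw',\infty}\circ\pi_{\hr,\hw'}$ and $\pi_{\hw,\infty}=\pi_{\hw',\infty}\circ\pi_{\hw,\hw'}$, the latter fixing $u$.

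I expect the main obstacle to be the hypothesis mismatch with \rcor{cor: technical corollary}. That corollary asks for $\rg_0(\hq,\k,\nu)$, with $\zeta<\k_\hs$ inaccessible and a proper cutpoint in $\hs|\k_\hs$. Here we only have that $\k$ is small and in $\tStr(\Q|\hd)$, and that $\zeta<\l_\hs$ is properly overlapped. My approach is to check that the corollary's proof uses exactly two things: smallness of $\k$, so that \rcl{clm: only on one side} and \cite[Lemma 9.12]{blue2025nairian} apply, and the fact that $\zeta$ is properly overlapped, which is what makes \rcl{clm: crucial} identify $\zeta_\hx$ with $\zeta_{\hw'}$. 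If properly overlapped but not a cutpoint causes trouble, I would replace $\zeta$ by the least properly overlapped inaccessible proper cutpoint above the relevant strong cardinals and below $\l_\hs$, and rerun the argument with that cutpoint.
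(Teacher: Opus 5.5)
Your proposal is correct and is essentially the paper's proof: the paper likewise takes $\hw$ and $\hw'$ from \rcor{cor: technical corollary} (exactly as in the proof of \rlem{lem: j is onto}), uses clause (7) there to get $\pi_{\hr,\hw'}(\a_{\hr|\nu_\hr})<\zeta_\hw$, and then uses that the tail from $\hw$ to $\hw'$ is strictly above $\zeta_\hw$ to conclude $\pi_{\hw,\infty}(\a_{\hw|\nu_\hw})=\pi_{\hw',\infty}(\a_{\hw|\nu_\hw})=\pi_{\hr,\infty}(\a_{\hr|\nu_\hr})$. The paper also glosses over the hypothesis mismatch you flag; its corollary wants $\zeta<\k_\hs$ to be a proper cutpoint, and uses this to identify $\hx$ with $\hw$. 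Your fallback of replacing $\zeta$ by another cutpoint needs care on two counts: clause (5) is stated for the given $\zeta$, so the new cutpoint should not exceed it, and hypothesis (4) would have to be re-verified for the new cutpoint.
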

\begin{proof} 
Let $\hw$ and $\hw'$ be as in the proof of \rlem{lem: j is onto} (they were produced using \rcor{cor: technical corollary}). As in that proof, we get that $\a_{\hw'}$\footnote{Denoted by $u$ in the proof of \rlem{lem: j is onto}.} is equal to $\a_\hw$. The additional information given by our current lemma is that $\pi_{\hw, \infty}(\a_{\hw|\nu_\hw})=\pi_{\hr, \infty}(\a_{\hr|\nu_\hr})$. This follows from the fact that $\T_{\hw, \hw'}$ is above $\zeta_\hw$, while $\a_{\hw|\nu_\hw}<\zeta_\hw$. We then have that $\pi_{\hw, \infty}(\a_{\hw|\nu_\hw})=\pi_{\hw', \infty}(\a_{\hw|\nu_\hw})=\pi_{\hr, \infty}(\a_{\hr|\nu_\hr})$.
\end{proof}

\section{Iteration sets and the uniqueness of realizations}\label{sec: it sets}

In this section, we define two important sets that will be useful in the calculations that follow. First we isolate the concept of \textit{iteration sets}.
The reader may wish to consult \rter{term: cutpoints etc} before reading the next definitions.

\begin{definition}\label{def: its}\normalfont Suppose $\nu<\hd_\infty$ is a proper cutpoint in $\mH$ relative to $\hd_\infty$ and is an inaccessible cardinal of $\mH$.  A set $Z\in \powerset_{\omega_1}(\mH|\nu)$ is an \textbf{iteration set at} $\nu$ if there is a hod pair $\hq$ such that 
\begin{enumerate}
    \item $\mH=\M_\infty(\hq)$, 

    \item $\nu_\hq$ is defined, and

    \item $Z(\hq, \nu)=\pi_{\hq, \infty}[\M^\hq|\nu_\hq]$.
\end{enumerate}
Let $\its(\nu)$ denote the set of iteration sets at $\nu$. If $\hq$ witnesses that $Z\in \its(\nu)$ then we say that $\hq$ is an $\its(\nu)$-certificate for $Z$. We will also write that $Z\in \its(\nu)$ as certified by $\hq$.
\end{definition}

The following is an easy corollary to the fact that $N$ is closed under ordinal definability (i.e., if $X, Y\in N$ and $Z\subseteq Y$ is $OD_X$ in $M$, then $Y\in N$).

\begin{proposition}\label{prop: its in n} Suppose $\nu<\hd_\infty$ is a proper cutpoint in $\mH$ relative to $\hd_\infty$ and is an inaccessible cardinal of $\mH$. Then $\its(\nu)\in N$.
\end{proposition}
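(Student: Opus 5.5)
The plan is to realize $\its(\nu)$ as a subset of a set already in $N$, show that it is ordinal definable in $M$ from parameters in $N$, and then invoke the closure of $N$ under ordinal definability stated just before the proposition. (That closure should read: if $X,Y\in N$ and $Z\subseteq Y$ is $OD_X$ in $M$, then $Z\in N$.)

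\textbf{Step 1: the ambient set.} Every $Z\in\its(\nu)$ is a countable subset of $\mH|\nu$, so
\[
\its(\nu)\subseteq \powerset_{\omega_1}(\mH|\nu).
\]
Since $\nu<\hd_\infty$ and $\mH$ is a predicate of $N$, we have $\mH|\nu\in N$. Because $\nu<\varsigma_\infty=\ord\cap N$ and $N$ contains all $\omega$-sequences of ordinals below its height, every countable $Z\subseteq\mH|\nu$ is coded by an $\omega$-sequence of ordinals below $\nu$ together with a fixed bijection between $\mH|\nu$ and $\nu$ lying in $N$. Hence $\powerset_{\omega_1}(\mH|\nu)\in N$, and we take this as $Y$.

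\textbf{Step 2: ordinal definability.} The definition of $\its(\nu)$ quantifies over hod pairs $\hq$ with $\M_\infty(\hq)=\mH$ and uses the maps $\pi_{\hq,\infty}$. Under the standard $\mathsf{HPC}$ setup, the direct limit system of such pairs, and the maps into $\mH$, are definable in $M$. The reason is that $\mH$ is $\H^M$ (or a definable level of it), and the iteration maps are determined by the strategies, which are uniformly definable in $M$ via comparison and strong hull condensation. Consequently $Z\mapsto$ ``some $\hq$ certifies $Z$'' is a definition over $M$ from the parameter $\nu$, and $\its(\nu)$ is $OD^M$ from $\nu$.

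\textbf{Step 3: conclusion.} Apply the closure of $N$ under ordinal definability with $X=\{\nu\}$ and $Y=\powerset_{\omega_1}(\mH|\nu)$. This gives $\its(\nu)\in N$.

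The main obstacle is Step 2: one has to verify that the collection of certificates, and the embeddings $\pi_{\hq,\infty}$, are definable in $M$ without extra parameters. Hod pairs themselves are not in $\H$, but the set of their images is still $OD$. I would cite \cite{blue2025nairian} for the definability of $\mathcal{F}$ and of the maps $\pi_{\hq,\infty}$ in $M$. Any real parameter this introduces can be eliminated by the homogeneity of the collapse.
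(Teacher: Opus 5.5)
Your argument is the paper's: $\its(\nu)$ is ordinal definable in $M$ and is a subset of $\powerset_{\omega_1}(\mH|\nu)\in N$, so it lies in $N$ because $N$ is closed under ordinal definability (and you correctly read that closure property as concluding $Z\in N$). Two small caveats: Step 1 tacitly uses $\nu<\varsigma_\infty$, which does not follow from $\nu<\hd_\infty$ but is clearly intended and is needed by the paper's one-line argument as well; and real parameters need no homogeneity argument, since the closure property already allows parameters from $N$.
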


\begin{notation}\label{not: its}\normalfont Suppose $\nu<\hd_\infty$ is a proper cutpoint in $\mH$ relative to $\hd_\infty$ and is an inaccessible cardinal of $\mH$. Suppose $Z\in \its(\nu)$. Let $\k=\sup(\tStr(\mH|\nu))$. Then let $\Q_Z$ be the transitive collapse of $Z$\footnote{Notice that $Z\prec \mH|\nu$.} and $\Q^Z$ be the transitive collapse of $\hull^{\mH|\nu}(Z\cup \k)$. Let $\tau_Z:\Q_Z\rightarrow \mH|\nu$ and $\tau^Z:\Q^Z\rightarrow \mH|\nu$ be the inverses of the transitive collapses, and let $\Lambda_Z$ and $\Lambda^Z$ be respectively the $\tau_Z$-pullback and $\tau^Z$-pullback of $\mH|\nu$. Set $\hq_Z=(\Q_Z, \Sigma_Z)$ and $\hq^Z=(\Q^Z, \Sigma^Z)$.
\end{notation}

The following is an easy lemma that follows from the results of \cite[Section 9.1]{blue2025nairian}. The reader may wish to consult \cite[Notation 9.3]{blue2025nairian} for the definition of notation $\hq^\k$ and point (3) in the proof of \cite[Theorem 10.13]{blue2025nairian}.

\begin{lemma}\label{lem: qz is qk} Suppose $\nu<\hd_\infty$ is a proper cutpoint in $\mH$ relative to $\hd_\infty$ and is an inaccessible cardinal of $\mH$. Suppose $Z\in \its(\nu)$ and $\k=\sup(\tStr(\mH|\nu))$. Then letting $\hq$ be any hod pair witnessing that $Z\in \its(\nu)$,
\begin{enumerate}
\item $\hq^Z=\hq^\k|\pi_{\hq, \infty}^\k(\nu_\hq)$,
\item $\hh|\nu$ is a complete iterate of $\hq^Z$, and
\item $\tau_Z=\pi_{\hq^Z, \hh|\nu}$.
\end{enumerate}
\end{lemma}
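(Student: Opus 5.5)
The plan is to reduce everything to the iteration $\T_{\hq, \hq^\k}$ from \cite[Notation 9.3]{blue2025nairian}. By definition, $\hq^\k$ is the pair through which $\pi_{\hq,\infty}$ factors: $\hq^\k$ is an iterate of $\hq$ whose tree uses only extenders with critical point below the preimage of $\k$, and $\pi^\k_{\hq,\infty}$ maps $\hq^\k$ to $\hh$ with critical point at least $\k$. Fix $\hq$ certifying $Z\in\its(\nu)$, so that $Z=\pi_{\hq,\infty}[\M^\hq|\nu_\hq]$. We then have the factorization
\[\pi_{\hq,\infty}=\pi^\k_{\hq,\infty}\circ\pi_{\hq,\hq^\k},\]
with $\pi^\k_{\hq,\infty}\rest\k=\mathrm{id}$. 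Because $\nu$ is a proper cutpoint, $\nu_\hq$ is an inaccessible proper cutpoint of $\M^\hq$. Hence the relevant trees restricted to $\hq|\nu_\hq$ are based on $\hq|\nu_\hq$, and the maps restrict to $\M^\hq|\nu_\hq$ and $\M^{\hq^\k}|\pi_{\hq,\hq^\k}(\nu_\hq)$.

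The main step is to compute the hull $\hull^{\mH|\nu}(Z\cup\k)$. I will show that it equals
\[\pi^\k_{\hq,\infty}\bigl[\M^{\hq^\k}|\pi_{\hq,\hq^\k}(\nu_\hq)\bigr].\]
For $\supseteq$: every element of $\M^{\hq^\k}$ has the form $\pi_{\hq,\hq^\k}(f)(a)$, where $a$ is a finite set of generators of $\T_{\hq,\hq^\k}$ and $f\in\M^\hq$. All generators lie below $\pi_{\hq,\hq^\k}(\k_\hq)=\k$; that is the content of \cite[Section 9.1]{blue2025nairian} and point (3) in the proof of \cite[Theorem 10.13]{blue2025nairian}. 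Applying $\pi^\k_{\hq,\infty}$, which fixes $a$, we get $\pi_{\hq,\infty}(f)(a)$, which lies in the hull. For $\subseteq$: the right-hand side is an elementary substructure of $\mH|\nu$ that contains $Z$ and contains $\k$ pointwise, since the critical point of $\pi^\k_{\hq,\infty}$ is at least $\k$.

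The delicate points are these. First, I must check that $\pi_{\hq,\hq^\k}(\nu_\hq)$ is mapped to $\nu$. Second, I must check that the generators of $\T_{\hq,\hq^\k}$ really lie below $\k=\sup(\tStr(\mH|\nu))$ and not merely below $\pi_{\hq,\hq^\k}(\k_\hq)$, where $\k_\hq$ is the preimage. This uses the fact that $\nu$ is properly overlapped, so the strong cardinals below $\nu$ are exactly the ${<}\hd$-strong cardinals. I expect this to be the main obstacle, and I would lean on \cite[Lemma 9.12, Corollary 9.11]{blue2025nairian} to handle it.

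Once the hull identity holds, the transitive collapse of the hull is $\M^{\hq^\k}|\pi_{\hq,\hq^\k}(\nu_\hq)$, and its uncollapse map is $\pi^\k_{\hq,\infty}$ restricted to that model. The strategy $\Sigma^Z$ is the pullback of the strategy of $\mH|\nu$ under this map. By pullback consistency of strategies with strong hull condensation, this is exactly the strategy of $\hq^\k$ restricted to this level. This gives (1) and (3), where $\pi^\k_{\hq,\infty}$ restricted to this level is the iteration map $\pi_{\hq^Z,\hh|\nu}$ since the tree is based below $\nu$. Clause (2) then follows, because $\mH|\nu$ is the last model of the restricted tree $\downarrow(\T_{\hq^\k,\infty},\hq^\k|\pi_{\hq,\hq^\k}(\nu_\hq))$, in the sense of \rnot{not: restriction}.
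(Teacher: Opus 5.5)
Your proposal is correct and follows the paper's route. The paper only cites Section 9.1 and point (3) of the proof of Theorem 10.13 of \cite{blue2025nairian}. Those results amount to your identification of $\hull^{\mH|\nu}(Z\cup\k)$ with the image of $\M^{\hq^\k}|\pi_{\hq,\hq^\k}(\nu_\hq)$ under the tail map $\hq^\k\to\hh$, with the tail iteration map as the uncollapse and the strategy as its pullback. The same computation reappears in the proof of Theorem~\ref{thm: uniqueness of realizability witnesses}.

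One point of notation: in the paper $\pi^\k_{\hq,\infty}$ denotes $\pi_{\hq,\hq^\k}$, not the tail map. Clause (3) should be read with $\tau^Z$, as you implicitly did.
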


\begin{definition}\label{def: z its}\normalfont Suppose $\nu<\hd_\infty$ is a proper cutpoint in $\mH$ relative to $\hd_\infty$ and is an inaccessible cardinal of $\mH$. Suppose $Z\in \its(\nu)$ and $\k=\sup(\tStr(\mH|\nu))$. A set $Y\in \powerset_{\omega_1}(\mH|\k)$ is a $Z$-\textbf{iteration set} if $Z[Y]\in \its(\nu)$ where
\begin{center}
$Z[Y]=\{ f(a): a\in Y \wedge f\in Z\}$.
\end{center}
Let $\its(Z)$ denote the set of $Z$-iteration sets. 
\end{definition} 

The following lemma shows that the realizable maps are unique. The reader may benefit from reviewing \cite[Section 9.1]{blue2025nairian}. In particular, recall that for $\hq$ a complete iterate of $\hp$, $\hq^\nu$ is the least node $\hm$ of $\T_{\hq, \hh}$ (this just means the node with least index) such that $\hm|\nu=\hh|\nu$. We then have that \[\pi^\nu_{\hq, \infty}=\pi_{\hq, \hq^\nu}\] and \[\T^\nu_{\hq, \infty}=(\T_{\hq, \hh})_{\leq \hq^\nu}.\]

\begin{theorem}\label{thm: uniqueness of realizability witnesses}  Suppose $\nu<\hd_\infty$ is a proper cutpoint in $\mH$ relative to $\hd_\infty$ and is an inaccessible cardinal of $\mH$. Suppose $Z\in \its(\nu)$, and $\hq$ and $\hr$ are two $\its(\nu)$-certificates for $Z$. Let $\hs$ be a complete iterate of $\hq_Z$, and let $\hx$ be the last pair of $\uparrow(\T_{\hq_Z, \hs}, \hq)$ and $\hy$ be the last pair of $\uparrow(\T_{\hq_Z, \hs}, \hr)$. Then (see \rfig{fig:uniqueness_witnesses_parabola}) \begin{center}$\pi_{\hx, \infty}\rest \M^\hs=\pi_{\hy, \infty}\rest \M^\hs$.\end{center}
\end{theorem}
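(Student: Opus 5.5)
The plan is to reduce everything to the iteration maps of the common initial segment $\M^{\hs}$ into $\mH|\nu$, and then to use the fact that such maps are uniquely determined by the strategy of the segment. The first step compares the two certificates. Since $\hq$ and $\hr$ both certify $Z$, we have
\[
\pi_{\hq,\infty}[\M^\hq|\nu_\hq]=Z=\pi_{\hr,\infty}[\M^\hr|\nu_\hr].
\]
Transitive collapse then gives $\M^\hq|\nu_\hq=\Q_Z=\M^\hr|\nu_\hr$, and
\[
\pi_{\hq,\infty}\rest(\M^\hq|\nu_\hq)=\tau_Z=\pi_{\hr,\infty}\rest(\M^\hr|\nu_\hr).
\]
The strategies should also agree. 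The strategy of $\hq|\nu_\hq$ (the tail of $\Sigma^\hq$ restricted to trees based on $\hq|\nu_\hq$) is the $\pi_{\hq,\infty}$-pullback of $\Omega_{\mH|\nu}$. This uses pullback consistency and strong hull condensation of hod pair strategies, as in \cite{blue2025nairian}. The same holds for $\hr$. Both pullbacks are therefore the $\tau_Z$-pullback $\Lambda_Z$, so $\hq|\nu_\hq=\hq_Z=\hr|\nu_\hr$ as pairs.

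The second step handles the lifted trees. Because $\nu_\hq$ is a proper cutpoint of $\M^\hq$, the tree $\uparrow(\T_{\hq_Z,\hs},\hq)$ is an iteration according to $\Sigma^\hq$ that is based on $\hq|\nu_\hq$; the same holds on the $\hr$ side. Hence $\hx|\nu_\hx=\hs=\hy|\nu_\hy$, again as pairs: the strategy of $\hx|\nu_\hx$ is the tail of $\Lambda_Z$ at $\hs$, and this computation is identical for $\hy$. Moreover $\mH=\M_\infty(\hx)=\M_\infty(\hy)$, and $\nu$ is the image of $\nu_\hx$ and of $\nu_\hy$.

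The third step analyses $\pi_{\hx,\infty}\rest\M^\hs$. Using the notation recalled before the theorem, we have $\pi^\nu_{\hx,\infty}=\pi_{\hx,\hx^\nu}$, and $\pi_{\hx,\infty}\rest(\M^\hx|\nu_\hx)=\pi^\nu_{\hx,\infty}\rest(\M^\hx|\nu_\hx)$, since the rest of $\T_{\hx,\mH}$ after $\hx^\nu$ is above $\nu$. Because $\nu$ is a proper cutpoint, $\T^\nu_{\hx,\infty}$ is based on $\hx|\nu_\hx$. Therefore $\downarrow(\T^\nu_{\hx,\infty},\hs)$ is a tree on $\hs$ according to the tail of $\Lambda_Z$, its last pair is $\mH|\nu$, and its iteration map agrees with $\pi_{\hx,\infty}$ on $\M^\hs$. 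By Lemma \ref{lem: qz is qk}(2), $\mH|\nu$ is a complete iterate of $\hq_Z$, and hence of $\hs$. The same reasoning on the $\hy$ side yields a $\Lambda_Z$-tail iteration of $\hs$ into $\mH|\nu$ whose map is $\pi_{\hy,\infty}\rest\M^\hs$.

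The final step invokes uniqueness of iteration maps. For a hod pair with strong hull condensation, the iteration embedding into a given complete iterate is independent of the tree chosen; this is the Dodd--Jensen-type fact underlying the definition of $\pi_{\hs,\infty}$ in \cite{SteelCom}. It follows that $\pi_{\hx,\infty}\rest\M^\hs=\pi_{\hs,\mH|\nu}=\pi_{\hy,\infty}\rest\M^\hs$.

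I expect the main obstacle to be the first step: justifying that the strategy restricted to $\hq|\nu_\hq$ depends only on $Z$, that is, that it equals the $\tau_Z$-pullback. I would argue this through Lemma \ref{lem: qz is qk}, which identifies $\hq^Z$ and $\tau_Z$ independently of the chosen certificate, combined with pullback consistency of $\Omega$.
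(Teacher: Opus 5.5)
Your Steps~1 and~2 are fine and match the paper's opening. With $p=\pi_{\hq_Z,\hs}$, you correctly get $\hq|\nu_\hq=\hq_Z=\hr|\nu_\hr$ and $q:=\pi_{\hq,\infty}\rest\Q_Z=\pi_{\hr,\infty}\rest\Q_Z$. The proof breaks in Step~3, at the claim that because $\nu$ is a proper cutpoint, $\T^\nu_{\hx,\infty}$ is based on $\hx|\nu_\hx$. In this paper a proper cutpoint is only required not to be a critical point. It is, by definition, \emph{properly overlapped}: it sits in the $\hd$-block, and the strong cardinals of $\M^\hs=\M^\hx|\nu_\hx$ are ${<}\hd_\hx$-strong in $\M^\hx$. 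They are therefore critical points of total extenders of $\M^\hx$ indexed above $\nu_\hx$. The iteration of $\hx$ to $\mH$ uses exactly this overlapping information. In the paper's own proof it appears as $\pi^\phi_{\hx,\infty}=\pi^{\M^\hx}_F$, where $F$ is the long extender with space $\phi$ derived from $\pi_{\hx,\infty}\rest\S|\l_\a$. So $\pi_{\hx,\infty}\rest\M^\hs$ is \emph{not} an internal iteration map of the pair $\hs$, and the Dodd--Jensen uniqueness you invoke at the end has nothing to apply to. This gap between internal and global maps is central to the paper. The reflection maps $\sigma_u$ of \rdef{def: reflection tuple} are nontrivial precisely because the direct limit map of $\hq|\nu$ and $\pi_{\hq,\infty}$ disagree on $\M^\hq|\nu_\hq$. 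If your Step~3 were right, certificates and $\rel(Z)$ would be superfluous.

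You also misread \rlem{lem: qz is qk}(2). It says $\hh|\nu$ is a complete iterate of $\hq^Z$, not of $\hq_Z$. Here $\hq^Z$ is the collapse of $\hull^{\mH|\nu}(Z\cup\k)$ with $\k=\sup(\tStr(\mH|\nu))$, i.e.\ $\hq^\k|\pi^\k_{\hq,\infty}(\nu_\hq)$. The map $\pi_{\hq,\hq^\k}\rest\Q_Z$ carrying $\hq_Z$ to $\hq^Z$ is exactly the part of the realization that comes from outside $\hq|\nu_\hq$.

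What is missing is an argument that this external contribution is the same for $\hx$ and $\hy$, even though they differ above $\nu_\hx$. A useful first step is available to you. Since $\pi_{\hq,\infty}=\pi_{\hx,\infty}\circ\pi_{\hq,\hx}$ and $\pi_{\hq,\hx}\rest\Q_Z=p$, you get $\pi_{\hx,\infty}\circ p=q=\pi_{\hy,\infty}\circ p$. So the two maps agree on $\rge(p)$, and it remains to get agreement on the ordinals (generators) of $\S$.

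The paper does this by induction along $\tStr(\S)=(\zeta_\a:\a\le\iota)$:
\begin{enumerate}
\item Below $\zeta_0$, both maps agree with $\pi_{\hs,\infty}$.
\item Write $\zeta_\a=p(f_\a)(s_\a)$ and $A=p(f_A)(s_A)$ with $f\in\Q_Z$ and $s\in[\zeta_\a]^{<\omega}$ (\cite[Corollary 9.10]{blue2025nairian}). This shows that agreement on $\S|\zeta_\a$ propagates to $\S|\l_\a$.
\item Let $\phi=\pi_{\hx,\infty}(\zeta_\a)$. The relevant segments of $\hx^\phi$ and $\hy^\phi$ are both $Ult(\S,F)$ for the same long extender $F$.
\item These segments coincide as pairs, since both are complete iterates of the common collapse of $\hull^{\mH|\nu}(\phi\cup Z)$ with its pullback strategy.
\item The remaining iteration to $\mH$ is then common, which gives agreement up to $\zeta_{\a+1}$.
\end{enumerate}
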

\begin{figure}[ht]
\centering
\begin{tikzpicture}[scale=0.85, transform shape, >=stealth, font=\small]

    \draw[thick] (0,0) coordinate (QZ_bottom) -- (0, 1.2) coordinate (QZ_top);
    \path (QZ_bottom) -- (QZ_top) coordinate[midway] (QZ_mid);
    \node[below] at (QZ_bottom) {$\hq_Z$};
    
    \draw[gray, thick] (QZ_top) parabola bend (QZ_top) (-2.5, 2.2) coordinate (Q_tip);
    \node[below] at (Q_tip) {$\hq$};
    
    \draw[gray, thick] (QZ_top) parabola bend (QZ_top) (2.5, 2.2) coordinate (R_tip);
    \node[below] at (R_tip) {$\hr$};

    \draw[thick] (0, 3.8) coordinate (S_bottom) -- (0, 5.0) coordinate (S_top);
    \path (S_bottom) -- (S_top) coordinate[midway] (S_mid);
    \node[right] at (S_bottom) {$\hs$};

    \draw[->, very thick, dotted, violet] (QZ_mid) to[out=45, in=-45] 
        node[midway, left, font=\scriptsize, xshift=-2pt] {$\T_{\hq_Z, \hs}$} 
        (S_mid);

    \draw[gray, thick] (S_top) parabola bend (S_top) (-2.5, 6.0) coordinate (HX_tip);
    \node[below left] at (HX_tip) {$\hx$};
    
    \draw[gray, thick] (S_top) parabola bend (S_top) (2.5, 6.0) coordinate (HY_tip);
    \node[below right] at (HY_tip) {$\hy$};

    \coordinate (Q_inner) at (-1.2, 1.45);
    \coordinate (HX_inner) at (-1.2, 5.25);
    \coordinate (R_inner) at (1.2, 1.45);
    \coordinate (HY_inner) at (1.2, 5.25);

    \draw[->, blue, dashed, thick] (Q_inner) -- 
        node[left, font=\tiny, xshift=-2pt] {$\T_{\hq, \hx} = \uparrow(\T_{\hq_Z, \hs}, \hq)$} 
        (HX_inner);

    \draw[->, blue, dashed, thick] (R_inner) -- 
        node[right, font=\tiny, xshift=2pt] {$\T_{\hr, \hy} = \uparrow(\T_{\hq_Z, \hs}, \hr)$} 
        (HY_inner);

    \node (HH) at (0, 8.5) {$\hh$};

    \draw[->, red, thick] (HX_tip) to[bend left=20] 
        node[midway, left, font=\scriptsize] {$\pi_{\hx, \infty}$} (HH);
    \draw[->, red, thick] (HY_tip) to[bend right=20] 
        node[midway, right, font=\scriptsize] {$\pi_{\hy, \infty}$} (HH);

    \node[draw=black, thick, fill=gray!10, rounded corners, inner sep=6pt, anchor=north west] 
        at (3.2, 2.2) {
        \begin{minipage}{5.5cm}
            \centering
            \textbf{\underline{Key Facts}}
            \begin{itemize}[label={\tiny$\bullet$}, itemsep=1pt, topsep=3pt, leftmargin=1em, font=\scriptsize]
                \item $\hq, \hr$ are certificates for $Z$
                \item $\hq_Z \unlhd \hq$ and $\hq_Z \unlhd \hr$
                \item $\hs \unlhd \hx$ and $\hs \unlhd \hy$
                \item $\T_{\hq_Z, \hs}$ lifts to $\hq$ and $\hr$
                \item \textbf{Result:}
                \[ \pi_{\hx, \infty}\rest \M^\hs = \pi_{\hy, \infty}\rest \M^\hs \]
            \end{itemize}
        \end{minipage}
    };

\end{tikzpicture}
\caption{Visualizing Theorem \ref{thm: uniqueness of realizability witnesses}. The base tree $\T_{\hq_Z, \hs}$ is lifted to both certificates.}
\label{fig:uniqueness_witnesses_parabola}
\end{figure}
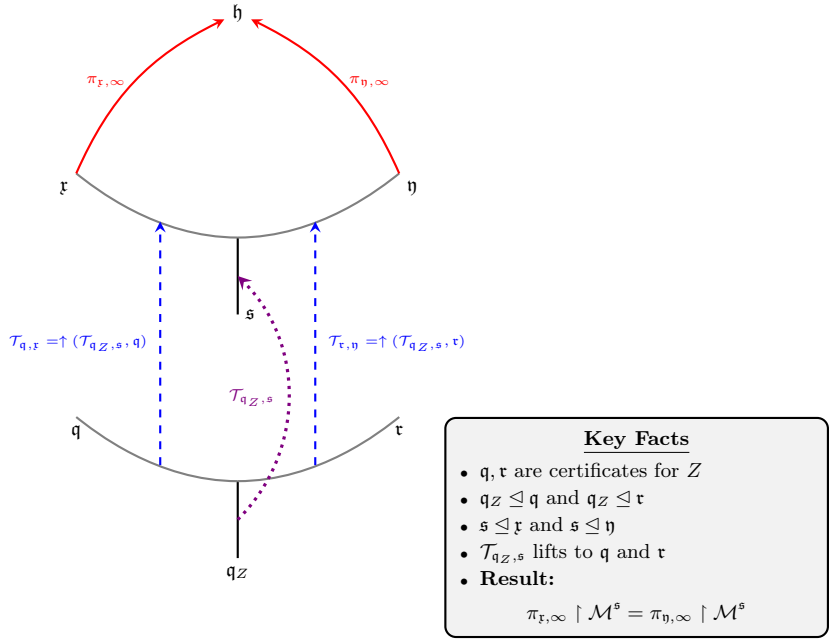
\begin{proof} 
The proof is a variation of arguments from \cite{blue2025nairian}. We have that \[\pi_{\hq|\nu_\hq, \hs}=\pi_{\hq_Z, \hs}=\pi_{\hr|\nu_\hr, \hs}\]
and \[\pi_{\hq, \infty}\rest \M^\hq|\nu_\hq=\pi_{\hr, \infty}\rest \M^\hr|\nu_\hr.\] We then let \[p=\pi_{\hq|\nu_\hq, \hs}=\pi_{\hr|\nu_\hr, \hs}\] and \[q=\pi_{\hq, \infty}\rest \M^\hq|\nu_\hq=\pi_{\hr, \infty}\rest \M^\hr|\nu_\hr.\]

Let $(\zeta_\a: \a\leq \iota)$ be the enumeration of $\tStr(\S)$, where $\hs=(\S, \Psi)$.  Let $\lambda_\a$ be the $\S$-successor of $\zeta_\a$. By induction on $\a\leq \iota$, we verify that\\
\begin{enumerate}[itemsep=0.3cm]
    \item[$(IH^0_\a)$]
    $\pi_{\hx, \infty}\rest \S|\zeta_\a=\pi_{\hy, \infty}\rest \S|\zeta_\a$, and
    \item[$(IH^1_\a)$] $\pi_{\hx, \infty}\rest \S|\l_\a=\pi_{\hy, \infty}\rest \S|\l_\a$.
\end{enumerate}
 We have that
 \vspace{0.3cm}
\begin{enumerate}[label=(1), itemsep=0.3cm]
    \item for every $\phi\in \tStr(\mH, \leq \k)$, letting $\psi$ be the successor of $\phi$ in $\mH$, $\M^{\hx^\phi}|\psi=\mH|\psi=\M^{\hy^\phi}|\psi$.
\end{enumerate}
\vspace{0.3cm}
 (1) follows from \cite[Lemma 9.4]{blue2025nairian}, and is a consequence of the fact that if $\phi$ is a small strong cardinal of $\mH|\hd_\infty$ then $\hx^\phi$ is on the main branch of $\T_{\hx, \infty}=\T_{\hx, \hh}$ and $\hy^\phi$ is on the main branch of $\T_{\hy, \infty}=\T_{\hy, \hh}$.
 
 For each $\a\leq \iota$ and $A\in \powerset(\zeta_\a)\cap S$, we have (e.g. see \cite[Corollary 9.10]{blue2025nairian})
 \begin{itemize}
 \item $f_\a\in \Q_Z$, 
 \item $f_A\in \Q_Z$, 
 \item $s_\a\in [\zeta_\a]^{<\omega}$, and 
 \item $s_A\in [\zeta_\a]^{<\omega}$
 \end{itemize}
 such that \[\zeta_\a=p(f_\a)(s_\a)\ \text{and}\ A=p(f_A)(s_A).\] It follows that
  \vspace{0.3cm}
\begin{enumerate}[label=(2.\arabic*), itemsep=0.3cm]
    \item for each $\a\leq \iota$ and $A\in \powerset(\zeta_\a)\cap S$, $\pi_{\hx, \infty}(\zeta_\a)$ and $\pi_{\hx, \infty}(A)$ are uniquely determined by $(f_\a, s_\a, \pi_{\hx, \infty}\rest \zeta_\a)$ and $(f_A, s_A, \pi_{\hx, \infty}\rest \zeta_\a)$ as \begin{center}$\pi_{\hx, \infty}(\zeta_\a)=q(f_\a)(\pi_{\hx, \infty}(s_\a))$ and $\pi_{\hx, \infty}(A)=q(f_A)(\pi_{\hx, \infty}(s_A))$,\end{center}
    \item and for each $\a\leq \iota$ and $A\in \powerset(\zeta_\a)\cap S$, $\pi_{\hy, \infty}(\zeta_\a)$ and $\pi_{\hy, \infty}(A)$ are uniquely determined by $(f_\a, s_\a, \pi_{\hy, \infty}\rest \zeta_\a)$ and $(f_A, s_A, \pi_{\hy, \infty}\rest \zeta_\a)$  as  \begin{center}$\pi_{\hy, \infty}(\zeta_\a)=q(f_\a)(\pi_{\hy, \infty}(s_\a))$ and $\pi_{\hy, \infty}(A)=q(f_A)(\pi_{\hy, \infty}(s_A))$.\end{center}
\end{enumerate}
\vspace{0.3cm}
It follows $(IH^0_\a)$ implies $(IH^1_\a)$. We now prove by induction that $(IH^0_\a)$ holds. The reader may find \rfig{fig:symmetric_agreement_final_v5} useful.
\begin{figure}[h]
\centering
\begin{tikzpicture}[scale=1.1, >=stealth, font=\small]

    \draw[thick] (0, 1.5) -- (0, 7.0);
    \node[below] at (0, 1.5) {$\mH$};

    \draw[dashed, gray] (-3.5, 2.5) -- (3.5, 2.5);
    \node[right, font=\scriptsize, gray] at (0.1, 2.3) {$\phi$};
    \filldraw (0, 2.5) circle (1pt);
    
    \node[left, font=\tiny] at (0, 6.2) {$\nu$};
    \filldraw (0, 6.2) circle (1pt);

    
    \draw[thick] (-3, 1.5) -- (-3, 4.5);
    \node[below] at (-3, 1.5) {$\hq^\phi$};
    
    \draw[thick] (-1.5, 1.5) -- (-1.5, 5.5);
    \node[below] at (-1.5, 1.5) {$\hx^\phi$};

    \node[left, font=\tiny] at (-3, 4.0) {$\nu_\hq^\phi$};
    \filldraw (-3, 4.0) circle (1pt);
    
    \node[left, font=\tiny] at (-1.5, 5.0) {$\nu_\hx^\phi$};
    \filldraw (-1.5, 5.0) circle (1pt);

    \draw[->, thick, violet] (-3, 3.2) -- (-1.5, 3.8)
        node[midway, above, sloped, font=\tiny] {$\T_{\hq^\phi, \hx^\phi}$};

    \draw[->, thick, violet] (-1.5, 4.2) -- (0, 4.8);

    \node (Q) at (-5.5, 0) {$\hq$};
    \node (X) at (-2.5, 0) {$\hx$};
    
    \draw[->, thick, violet] (Q) -- node[midway, below] {$\T_{\hq, \hx}$} (X);

    \draw[->, dashed, violet] (Q) .. controls (-5.5, 1.0) and (-3.5, 1.0) .. (-3.1, 1.8)
        node[midway, left, font=\tiny] {$\T_{\hq, \hq^\phi}$};

    \draw[->, dashed, violet] (X) .. controls (-2.5, 1.0) and (-2.0, 1.0) .. (-1.6, 1.8)
        node[midway, left, font=\tiny, xshift=2pt] {$\T_{\hx, \hx^\phi}$};

    
    \draw[thick] (3, 1.5) -- (3, 4.5);
    \node[below] at (3, 1.5) {$\hr^\phi$};
    
    \draw[thick] (1.5, 1.5) -- (1.5, 5.5);
    \node[below] at (1.5, 1.5) {$\hy^\phi$};

    \node[right, font=\tiny] at (3, 4.0) {$\nu_\hr^\phi$};
    \filldraw (3, 4.0) circle (1pt);
    
    \node[right, font=\tiny] at (1.5, 5.0) {$\nu_\hy^\phi$};
    \filldraw (1.5, 5.0) circle (1pt);

    \draw[->, thick, blue] (3, 3.2) -- (1.5, 3.8)
        node[midway, above, sloped, font=\tiny] {$\T_{\hr^\phi, \hy^\phi}$};

    \draw[->, thick, blue] (1.5, 4.2) -- (0, 4.8);

    \node (R) at (5.5, 0) {$\hr$};
    \node (Y) at (2.5, 0) {$\hy$};
    
    \draw[->, thick, blue] (R) -- node[midway, below] {$\T_{\hr, \hy}$} (Y);

    \draw[->, dashed, blue] (R) .. controls (5.5, 1.0) and (3.5, 1.0) .. (3.1, 1.8)
        node[midway, right, font=\tiny] {$\T_{\hr, \hr^\phi}$};

    \draw[->, dashed, blue] (Y) .. controls (2.5, 1.0) and (2.0, 1.0) .. (1.6, 1.8)
        node[midway, right, font=\tiny, xshift=-2pt] {$\T_{\hy, \hy^\phi}$};

    \node[draw=black, thick, fill=gray!10, rounded corners, inner sep=6pt, anchor=north] 
        at (0, -1.0) {
        \begin{minipage}{9cm}
            \centering
            \textbf{\underline{Key Facts}}
            \begin{itemize}[label={\tiny$\bullet$}, itemsep=1.2pt, topsep=3pt, leftmargin=1em, font=\scriptsize]
                \item $\hq, \hr$ are certificates for $Z$
                \item $\T_{\hq, \hx}=\uparrow(\T_{\hq_Z, \hs}, \hq)$ and $\T_{\hr, \hy}=\uparrow(\T_{\hq_Z, \hs}, \hr)$,
                \item $\hq^\phi|\nu_{\hq_\phi}=\hr^\phi|\nu_{\hr_\phi}$
                and $\hx^\phi|\nu_{\hx_\phi}=\hy^\phi|\nu_{\hy_\phi}$,
                \item $\T_{\hq^\phi,\hx^\phi}$ are $\T_{\hr^\phi, \hr^\phi}$ are above $\phi$ and are based on $\hq^\phi|\nu_{\hq_\phi}=\hr^\phi|\nu_{\hr_\phi}$,
                \item $\downarrow(\T_{\hq^\phi, \hx^\phi}, \hq^\phi|\nu_{\hq^\phi})=\downarrow(\T_{\hr^\phi, \hy^\phi}, \hr^\phi|\nu_{\hr^\phi})$,
                \item $\pi_{\hq, \hq^\phi}\rest \Q_Z=\pi_{\hr, \hr^\phi}\rest \Q_Z=_{def} m$,
                \item if $\hm$ is the least node of $\T_{\hq, \hx}$ such that \[\hm|\zeta_\a=\hx|\zeta_\a\] then $\T_{\hq^\phi, \hx^\phi}$ is the minimal $\pi_{\hm, \hm^\phi}$-copy of $\T_{\hm|\nu_\hm, \hs}$,
                \item if $\hn$ is the least node of $\T_{\hr, \hy}$ such that \[\hn|\zeta_\a=\hy|\zeta_\a\] then $\T_{\hr^\phi, \hy^\phi}$ is the minimal $\pi_{\hn, \hn^\phi}$-copy of $\T_{\hn|\nu_\hn, \hs}$.
            \end{itemize}
        \end{minipage}
    };

\end{tikzpicture}
\caption{The symmetric lifting structure. The trees $\T_{\hq, \hq^\phi}$ and $\T_{\hx, \hx^\phi}$ lift the certificates to the vertical models, where they agree with $\mH$ up to $\phi$.}
\label{fig:symmetric_agreement_final_v5}
\end{figure}

We have that $(IH^0_0)$ holds as \begin{center}
\begin{align*}
\pi_{\hx, \infty}\rest (\S|\zeta_0) &=\pi_{\hs, \infty}\rest (\S|\zeta_0)\\
&=\pi_{\hy, \infty}\rest (\S|\zeta_0)
\end{align*}
\end{center}
Clearly for $\a$ a limit ordinal, $(IH^0_\a)$ follows from $\forall \gg<\a(IH^0_\gg)$. It remains to show that $(IH^0_{\a+1})$ holds provided $(IH^1_\a)$ holds. This is just like $\a=0$ case. Indeed, let \[\phi=\pi_{\hx, \infty}(\zeta_\a)=\pi_{\hy, \infty}(\zeta_\a).\] We now have that
\vspace{0.3cm}
\begin{enumerate}[label=(3), itemsep=0.3cm]
    \item $\M^{\hx^\phi}|\pi^\phi_{\hx, \infty}(\nu_\hx)=\M^{\hy^\phi}|\pi^\phi_{\hy, \infty}(\nu_\hy)$ (this follows from $(IH^1_\a)$.
\end{enumerate}
\vspace{0.3cm}
To see (3), notice that \[\M^{\hx^\phi}|\pi^\phi_{\hx, \infty}(\nu_\hx)=Ult(\S, F_\hx)\] where $F_\hx$ is the long extender derived from $\pi_{\hx, \infty}\rest (\S|\l_\a)$ with space $\phi$, and similarly, \[\M^{\hy^\phi}|\pi^\phi_{\hy, \infty}(\nu_\hy)=Ult(\S, F_\hy)\] where $F_\hy$ is the long extender derived from $\pi_{\hy, \infty}\rest (\S|\l_\a)$ with space $\phi$ (see \rter{term: long extender}). But $(IH^1_\a)$ implies that $F_\hx=F_\hy$.

Next we have that
\vspace{0.3cm}
\begin{enumerate}[label=(4), itemsep=0.3cm]
    \item $\pi^\phi_{\hx, \infty}\rest \M^\hs=\pi^\phi_{\hy, \infty}\rest \M^\hs$.
\end{enumerate}
\vspace{0.3cm}
To see (4) notice that if $F=F_\hx=F_\hy$ then \[\pi^\phi_{\hx, \infty}=\pi^{\M^\hx}_F\ \text{and}\ \pi^\phi_{\hy, \infty}=\pi_F^{\M^\hy}\]
and \[\pi_F^{\M^\hx}\rest \M^\hs=\pi_F^{\M^\hy}\rest \M^\hs.\] 
Finally we have the following claim.
\begin{claim}\label{clm: hx is iterate of hq and hr}
  $\hq^\phi|\pi^\phi_{\hq, \infty}(\nu)=\hr^\phi|\pi^\phi_{\hr, \infty}(\nu)$, and $\hx^\phi|\pi^\phi_{\hx, \infty}(\nu_\hx)$ and $\hy^\phi|\pi^\phi_{\hy, \infty}(\nu_\hy)$ are complete iterates of $\hq^\phi|\pi^\phi_{\hq, \infty}(\nu)=\hr^\phi|\pi^\phi_{\hr, \infty}(\nu)$.
  \end{claim}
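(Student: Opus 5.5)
The claim has two parts. The first is the equality $\hq^\phi|\pi^\phi_{\hq,\infty}(\nu)=\hr^\phi|\pi^\phi_{\hr,\infty}(\nu)$. The second is that $\hx^\phi|\pi^\phi_{\hx,\infty}(\nu_\hx)$ and $\hy^\phi|\pi^\phi_{\hy,\infty}(\nu_\hy)$ are complete iterates of this common pair. I would handle the first part exactly as (3) was handled, using long extenders. Since $\hq$ and $\hr$ both certify $Z$, we have
\[\pi_{\hq,\infty}\rest \M^\hq|\nu_\hq=\pi_{\hr,\infty}\rest \M^\hr|\nu_\hr=q,\]
and $\M^\hq|\nu_\hq=\M^\hr|\nu_\hr=\Q_Z$ (both are the collapse of $Z$). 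Let $G$ be the long extender with space $\phi$ derived from $q\rest(\Q_Z|\pi^{-1}(\phi)\text{-part})$, i.e.\ from $\pi_{\hq,\infty}$ restricted below the preimage of $\phi$. It coincides with the one derived from $\pi_{\hr,\infty}$. By \cite[Lemma 9.4]{blue2025nairian}, $\hq^\phi$ lies on the main branch of $\T_{\hq,\hh}$ and $\pi^\phi_{\hq,\infty}=\pi_{\hq,\hq^\phi}$ is the ultrapower by this long extender, and likewise for $\hr$. Hence the models $\M^{\hq^\phi}|\pi^\phi_{\hq,\infty}(\nu)=Ult(\Q_Z,G)=\M^{\hr^\phi}|\pi^\phi_{\hr,\infty}(\nu)$ agree. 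The strategies agree because both are tails of the strategy of $\Q_Z$ induced by pulling back $\Omega$ along $q$ (strong hull condensation and positionality, as in \cite[Section 9.1]{blue2025nairian}). In this sense $\pi_{\hq,\hq^\phi}\rest\Q_Z=\pi_{\hr,\hr^\phi}\rest\Q_Z=m$.

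For the second part, I would use full normalization. Consider the stack $(\T_{\hq,\hx})^\frown\T_{\hx,\hx^\phi}$. Its full normalization agrees with $\T_{\hq,\hh}$ up to the relevant node, and it factors through $\hq^\phi$. This holds because, below $\phi$, the iteration $\T_{\hq,\hx}=\uparrow(\T_{\hq_Z,\hs},\hq)$ lives on $\hq|\nu_\hq$, and $\phi$ is a small strong cardinal. So, as in \cref{cor: technical corollary} and \cite[Lemma 9.12]{blue2025nairian}, only one side uses extenders with critical point a preimage of $\phi$, and that side is the $\hx$-side. This makes $\hx^\phi$ an iterate of $\hq^\phi$ via a tree above $\phi$ and based on $\hq^\phi|\pi^\phi_{\hq,\infty}(\nu)$. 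Concretely, that tree is the minimal $\pi_{\hm,\hm^\phi}$-copy of $\T_{\hm|\nu_\hm,\hs}$, where $\hm$ is the least node with $\hm|\zeta_\a=\hx|\zeta_\a$, using \cite[Definition 8.7]{blue2025nairian}. Restricting via $\downarrow$ to $\hq^\phi|\pi^\phi_{\hq,\infty}(\nu)$ yields the complete iterate claim for $\hx^\phi$. The symmetric argument works for $\hy^\phi$ over $\hr^\phi$.

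The main obstacle is verifying that the copied trees are literally the same tree on the common pair. This means showing
\[\downarrow(\T_{\hq^\phi,\hx^\phi},\hq^\phi|\cdot)=\downarrow(\T_{\hr^\phi,\hy^\phi},\hr^\phi|\cdot).\]
I would derive it from $\pi_{\hm,\hm^\phi}\rest\Q_Z=\pi_{\hn,\hn^\phi}\rest\Q_Z$ together with (4). The minimal copy depends only on the copy map restricted to $\Q_Z$ and on the base tree $\T_{\hq_Z,\hs}$, and both of these agree on the two sides. This sameness is then fed back to conclude $(IH^0_{\a+1})$.
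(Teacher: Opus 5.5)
Your argument is correct and is essentially the paper's. Your $Ult(\Q_Z,G)$ is exactly the transitive collapse of $\hull^{\mH|\nu}(\phi\cup Z)$ that the paper uses; the paper then gets equality of the strategies directly as the pullback of $\hh|\nu$ under the common uncollapse map, instead of via positionality of tails of the $q$-pullback. Your second half is the same full normalization yielding the minimal $\pi_{\hm,\hm^\phi}$-copy, where the operative facts are that $(\T_{\hq,\hx})_{\geq\hm}$ is strictly above $\zeta_\a$ and that $\hm^\phi=\hq^\phi$, not \cite[Lemma 9.12]{blue2025nairian}, which concerns coiterations.

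The identity of the two copied trees that you call the main obstacle is not needed here. The claim only asserts that each of $\hx^\phi|\pi^\phi_{\hx,\infty}(\nu_\hx)$ and $\hy^\phi|\pi^\phi_{\hy,\infty}(\nu_\hy)$ is a complete iterate of the common pair, and the paper derives their equality afterwards from (3), (4) and the claim. If you did want tree identity, the relevant maps are $\pi_{\hm,\hm^\phi}$ and $\pi_{\hn,\hn^\phi}$ restricted to $\M^\hm|\nu_\hm=\M^\hn|\nu_\hn$, not to $\Q_Z$.
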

  \begin{proof}
Notice first that \begin{align*} \M^{\hq^\phi}|\pi^\phi_{\hq, \infty}(\nu)&=\chull^{\mH|\nu}(\phi\cup Z)\\ &=\M^{\hr^\phi}|\pi^\phi_{\hr, \infty}(\nu) \end{align*}
and if \[\sigma: \M^{\hq^\phi}|\pi^\phi_{\hq, \infty}(\nu)\rightarrow \mH|\nu\] is the inverse of the transitive collapse then \begin{align*}
\pi_{\hq^\phi, \hh}\rest (\M^{\hq^\phi}|\pi^\phi_{\hq, \infty}(\nu)) &= \sigma \\
&= \pi_{\hr^\phi, \hh}\rest (\M^{\hr^\phi}|\pi^\phi_{\hr, \infty}(\nu))
\end{align*}
and \begin{align*}
\hq^\phi|\pi^\phi_{\hq, \infty}(\nu) &= \sigma\text{-pullback of}\ \hh|\nu\\
&= \hr^\phi|\pi^\phi_{\hr, \infty}(\nu)
\end{align*}
Next, to finish the proof of the claim we need to argue that\\\\
(a) $\hx^\phi|\nu_{\hx^\phi}$ is a complete iterate of $\hq^\phi|\nu_{\hq^\phi}$, and\\
(b) $\hy^\phi|\nu_{\hy^\phi}$ is a complete iterate of $\hr^\phi|\nu_{\hr^\phi}$.\\\\
Because the argument for (a) is the same as the argument for (b), we give the argument for (a). Let $\hm$ be the least node of $\T_{\hq, \hx}$ such that \[\hm|\zeta_\a=\hx|\zeta_\a=\hs|\zeta_\a.\]
Notice that $\hm|\nu_\hm$ is the least node $\hm'$ of $\T_{\hq_Z, \hs}$ such that
\[\hm'|\zeta_\a=\hs|\zeta_\a.\]
Moreover,
\vspace{0.3cm}
\begin{enumerate}[label=(5.\arabic*), itemsep=0.3cm]
\item $(\T_{\hq_Z, \hs})_{\geq \hm|\zeta_\a}$ is an iteration tree based on $\hm|\nu_\hm$ and is strictly above $\zeta_\a$,
\item $(\T_{\hq, \hx})_{\geq \hm}$ is an iteration tree based on $\hm|\nu_\hm$ and is strictly above $\zeta_\a$,
\item $\T_{\hm, \hx}=\uparrow(\T_{\hm|\nu_\hm, \hs}, \hm)$
\item $\hm^\phi=\hq^\phi$ and $\pi_{\hq, \hq^\phi}=\pi_{\hm, \hm^\phi}\circ \pi_{\hq, \hm}$.
\end{enumerate}
\vspace{0.3cm}
\noindent
The full normalization applied to $(\T_{\hm, \hx})^\frown \T_{\hx, \hx^\phi}$
implies that
\vspace{0.3cm}
\begin{enumerate}[label=(6.\arabic*), itemsep=0.3cm]
\item $\hx^\phi$ is a complete iterate of $\hq^\phi$,
\item $\T_{\hq^\phi, \hx^\phi}$ is above $\phi$,
\item $\T_{\hq^\phi, \hx^\phi}$ is the minimal $\pi_{\hm, \hm^\phi}$-copy of $\T_{\hm, \hx}$.
\end{enumerate}
\vspace{0.3cm}
This finishes the proof of the \rcl{clm: hx is iterate of hq and hr}.
\end{proof}

Finally, applying (3), (4) and \rcl{clm: hx is iterate of hq and hr} we have that
\vspace{0.3cm}
\begin{enumerate}[label=(7), itemsep=0.3cm]
\item $\hx^\phi|\pi^\phi_{\hx, \infty}(\nu_\hx)=\hy^\phi|\pi^\phi_{\hy, \infty}(\nu_\hy)=_{def}\hw$.
\end{enumerate}
\vspace{0.3cm}
Set \[k=\pi^\phi_{\hx, \infty}\rest \M^\hs=\pi^\phi_{\hy, \infty}\rest \M^\hs.\] 

 It follows that if $\gg=k(\zeta_{\a+1})$ and $\b$ is such that $\hh|\b$ is a complete iterate of $\hw|\gg$, then 
 \begin{align*}
 \pi_{\hx, \infty}\rest \S|\zeta_{\a+1} &= \pi_{\hw|\gg, \hh|\b} \circ k \rest \S|\zeta_{\a+1} \\
 &= \pi_{\hy, \infty}\rest \S|\zeta_{\a+1}.
 \end{align*}
 This finishes the proof of \rthm{thm: uniqueness of realizability witnesses}.
\end{proof}

Motivated by \rthm{thm: uniqueness of realizability witnesses}, we make the following definition.

\begin{definition}\label{def: rel sets}\normalfont Suppose $\nu<\hd_\infty$ is a proper cutpoint in $\mH$ relative to $\hd_\infty$ and is an inaccessible cardinal of $\mH$. Suppose further that $Z\in \its(\nu)$. Let $\rel(Z)$ be the set of pairs $(\hr, \tau_\hr)$ such that $\hr$ is a complete iterate of $\hq_Z$ and whenever $\hq$ is an $\its(\nu)$-certificate for $Z$, letting $\hs$ be the last pair of $\uparrow(\T_{\hq_Z, \hr}, \hq)$, \[\tau_\hr=\pi_{\hs, \infty}\rest \M^\hr.\] To emphasize the dependence on $Z$, we will write $\tau^Z_\hr$.
\end{definition}

\begin{corollary}\label{cor: uniqueness of realizability witnesses} Suppose $\nu<\hd_\infty$ is a proper cutpoint in $\mH$ relative to $\hd_\infty$ and is an inaccessible cardinal of $\mH$. Suppose further that $Z\in \its(\nu)$. Then $\rel(Z)\in N$.
\end{corollary}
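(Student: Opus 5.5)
The plan is to show that $\rel(Z)$ is ordinal definable in $M$ from parameters lying in $N$, namely from $Z$ and $\nu$ (and $\mH|\nu$, which is a parameter available in $N$). Then I will invoke the closure of $N$ under ordinal definability, in the same way as in \rprop{prop: its in n}: if $X, Y\in N$ and $W\subseteq Y$ is $OD_X$ in $M$, then $W\in N$. Since $Z\in \its(\nu)\in N$ by \rprop{prop: its in n}, $Z$ is an acceptable parameter.

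\textbf{Step 1 (a definition of $\rel(Z)$).} Working in $M$, I will use the definition given in \rdef{def: rel sets}. A pair $(\hr,\sigma)$ belongs to $\rel(Z)$ iff $\hr$ is a complete iterate of $\hq_Z$, and for every $\its(\nu)$-certificate $\hq$ for $Z$, letting $\hs$ be the last pair of $\uparrow(\T_{\hq_Z,\hr},\hq)$, we have $\sigma=\pi_{\hs,\infty}\rest\M^\hr$.

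\begin{itemize}
\item $\hq_Z$ is the transitive collapse of $Z$ together with the pulled-back strategy. It is definable from $Z$ and $\mH|\nu$, using the strategy $\Omega$ of $\mH$, which is ordinal definable in $M$.
\item Complete iterates of $\hq_Z$ and the trees $\T_{\hq_Z,\hr}$ are determined by the strategy $\Sigma_Z$, so this clause is definable.
\item The set of certificates is definable from $Z$ and $\nu$. Each certificate is a hod pair $\hq$ with $\M_\infty(\hq)=\mH$ and $\pi_{\hq,\infty}[\M^\hq|\nu_\hq]=Z$, and the direct limit maps $\pi_{\hs,\infty}$ are ordinal definable in $M$ (as in \cite{blue2025nairian}).
\end{itemize}

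\textbf{Step 2 (nonemptiness of each fibre).} \rthm{thm: uniqueness of realizability witnesses} says that the value $\pi_{\hs,\infty}\rest\M^\hr$ does not depend on the certificate $\hq$. Because a certificate exists, the universal quantifier over certificates is nonvacuous. Hence for each complete iterate $\hr$ there is exactly one $\tau^Z_\hr$, and $\rel(Z)$ is a well-defined set.

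\textbf{Step 3 (a bounding set $Y\in N$).} I need a set $Y\in N$ with $\rel(Z)\subseteq Y$.
\begin{itemize}
\item Each $\hr$ is countable in $M$, as are its iteration trees. Its strategy component is coded via $\tau_\hr$ as a pullback of $\Omega$, so $\hr$ is essentially a countable structure determined by a countable subset of $\mH|\nu$.
\item Each map $\tau_\hr$ is a function from a countable set into $\mH|\nu$.
\item Coding pairs $(\hr,\tau_\hr)$ by countable subsets of $\mH|\nu$, or by reals together with countable sequences from $\mH|\nu$, places $\rel(Z)$ inside a set $Y$ built from $(\mH|\nu)^\omega$ and $\bR$. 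Such a $Y$ belongs to $N$ because $N$ contains $\bigcup_{\a<\xi}\a^\omega$.
\end{itemize}

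\textbf{Step 4 (conclusion).} Apply OD-closure with $X=\{Z,\nu\}$ and this $Y$ to get $\rel(Z)\in N$.

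The main obstacle I expect is this bounding/coding step. One must make sure that the hod pairs $\hr$ themselves, including their strategies, are represented by objects of $N$, not merely their first-order parts; otherwise the result only shows that a coded version of $\rel(Z)$ lies in $N$. To handle this I would identify $\Sigma^\hr$ with the $\tau_\hr$-pullback of $\Omega$. Then the pair is recoverable inside $N$ from $(\M^\hr,\tau_\hr)$ and $\mH$'s strategy restricted to $\mH|\nu$. That restriction is in $N$ since $\mH$ is a predicate of the Nairian model.
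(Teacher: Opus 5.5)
Your proposal is correct and is essentially the paper's argument. Theorem~\ref{thm: uniqueness of realizability witnesses} makes $\tau^Z_\hr$ independent of the certificate, so $\rel(Z)$ is ordinal definable from $Z$ in $M$, and closure of $N$ under ordinal definability (as in Proposition~\ref{prop: its in n}) then gives $\rel(Z)\in N$; the paper leaves all of this implicit. Your bounding step is more careful than the paper, but the cleaner way to get the strategies into $N$ is to apply the same OD-closure to each $\Sigma^\hr$ (after coding, a set of reals that is ordinal definable in $M$ from $Z$ and a real coding $\T_{\hq_Z,\hr}$), rather than to rely on the bare appeal to $\mH$ being a predicate of $N$.
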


We now discuss the definability of $\its(\nu)$, $\its(Z)$, and $\rel(Z)$ in $N$. First recall the definition of $\infty$-Borel.

\begin{definition}\label{def: infty borel}\normalfont Suppose $\a$ is an ordinal and $A\subseteq \alpha^\omega$. Then $A$ is \textbf{$\infty$-Borel} if for some formula $\phi$ and some set of ordinals $S$, for all $X\in \a^\omega$,
\begin{center}
$X\in A\iff L[S, X]\models \phi[X]$,
\end{center}
and $S$ is an $\infty$-\textbf{Borel code} of $A$.
\end{definition}

\rcor{cor: def its and rel} shows that in fact we can define $\its(\nu)$ and $\rel(Z)$ in $N$ from  some $A\in \powerset(\nu)\cap \mH$. Applying the proof of \cite[Theorem 10.6]{blue2025nairian}, we get the following (see the outline of the proof after \rcor{cor: def its and rel}).

\begin{theorem}\label{thm: every set is infty borel} Suppose $\a<\varsigma$ and $A\in N\cap \powerset(\a^\omega)$. Then \[N\models ``A\ \text{is $\infty$-Borel''}.\] Moreover, assuming $A$ is ordinal definable from $X\in \varsigma^\omega$ in $M$, letting $\gg$ be the least Woodin cardinal of $\mH$ above $\sup(X)$, there is $S\in \mH|(\gg^+)^\mH$ such that the set $(S, X)$ is an $\infty$-Borel code of $A$. 
\end{theorem}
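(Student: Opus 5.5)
Our plan is to follow the proof of \cite[Theorem 10.6]{blue2025nairian}, replacing its use of the global direct limit by the derived model representation of \rthm{thm: realizing as a nairian model}. Fix $\a<\varsigma$ and $A\in N\cap\powerset(\a^\omega)$, with $A$ ordinal definable in $M$ from some $X\in\varsigma^\omega$. We first choose a complete countable iterate $\hq$ of $\hp$ and cardinals $\l\leq\k<\nu$ with $\rg_3(\hq,\l,\k,\nu)$ such that:
\begin{itemize}
\item $X\subseteq\pi_{\hq|\nu,\infty}(\l)$ and $X\in\rge(\pi_{\hq|\nu,\infty})$;
\item the ordinal parameters defining $A$ lie in $\rge(\pi_{\hq|\nu,\infty})$;
\item $\nu$ is the least Woodin cardinal of $\M^\hq$ above the preimage of $\sup(X)$, so that $\pi_{\hq|\nu,\infty}(\nu)=\gg$.
\end{itemize}
Since $\a<\varsigma$ and $\varsigma$ is a limit of the relevant strong cardinals, such $\hq$ exist by the usual iterate-to-capture arguments together with \cite[Lemma 9.14]{blue2025nairian}.

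Next we force with $\Coll(\omega_1,\bR)$ over $M$ and build a $\Delta_{\hq,\nu}$-genericity iteration by \rthm{thm: delta-gen it}, obtaining $\hq_\infty$. Clause 4 of \rthm{thm: realizing as a nairian model} gives
\[
N|\pi_{\hq|\nu,\infty}(\l)=N^l(\hq_\infty,\omega_1^M)|\pi_{\hq|\nu,\infty}(\l),
\]
and the proof of that theorem shows $(\b^\omega)^M=(\b^\omega)^{\Q_\infty[g]}$ for all relevant $\b$. Therefore the relevant initial segment of $N$ is a symmetric-collapse derived model of $\Q_\infty$. Because the collapse is homogeneous, the definition of $A$ inside this local Nairian model can be transferred into the forcing relation of $\Q_\infty|\nu_{\hq_\infty}$. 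This gives a set of ordinals $S_\infty\in \Q_\infty|(\nu_{\hq_\infty}^+)$ and a formula $\phi$ such that $Y\in A$ iff $L[S_\infty,Y]\models\phi[Y]$, for $Y\in\a^\omega$. Concretely, $S_\infty$ codes the relevant fragment of $\Q_\infty$ together with the term relation for $A$.

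The last step is to move $S_\infty$ into $\mH$ and show that membership in $A$ is decided by $L[S,X,Y]$. Here the key points are:
\begin{itemize}
\item $\hq_\infty|\nu_{\hq_\infty}$ iterates into $\mH|\gg$ via the realization maps, using the uniqueness of realizations from \rthm{thm: uniqueness of realizability witnesses} and $\rel(Z)\in N$ from \rcor{cor: uniqueness of realizability witnesses};
\item $\pi_{\hq_\infty|\nu,\infty}$ is the identity on the parameters below $\pi_{\hq|\nu,\infty}(\l)$, by clause 3 of \rthm{thm: realizing as a nairian model}.
\end{itemize}
We then let $S=\pi(S_\infty)\in\mH|(\gg^+)^\mH$ and decode $A$ from $(S,X)$ by the same formula, evaluated in $N$, so that $N\models$ ``$A$ is $\infty$-Borel''.

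The main obstacle is this final transfer. The generic $g$ and the iteration depend on the $\Coll(\omega_1,\bR)$-generic, so we must argue that the resulting code is independent of these choices. For this we plan to use \rcor{cor: comparing tau bounded iterates} and \rlem{lem: catching preimages}: they show that different genericity iterations, or different certificates, compare above the part of $\hq_\infty$ that carries the code, so $\pi(S_\infty)$ is the same. This also places the code in $N$.
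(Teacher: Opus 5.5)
Your proposal has a genuine gap in its core mechanism. The hypothesis only says that $A$ is ordinal definable from $X$ \emph{in $M$}, and the code must be $(S,X)$ with $S\in\mH$. Clause~4 of Theorem~\ref{thm: realizing as a nairian model} only identifies $N|\pi_{\hq|\nu,\infty}(\l)$ with an initial segment of a local Nairian model computed in $\Q_\infty[g]$. It gives you no handle on definitions carried out in $M$. So ``the definition of $A$ inside this local Nairian model'' presupposes a definition of $A$ over $N|\l_u$ from $X$ that you are not given. (The sets the theorem is later applied to, $\its(\nu)$ and $\rel(Z)$ in Corollary~\ref{cor: def its and rel}, are defined in $M$ via hod pairs.)

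More seriously, homogeneity over $\Q_\infty$ does not produce an $\infty$-Borel code. $\Q_\infty$ and $g$ live in $M[G]$ and depend on $G$. Nothing in your argument places an arbitrary $Y\in\a^\omega$ (or $X\oplus Y$) in a small generic extension of the model your $S$ codes. Hence $L[S,X,Y]$ has no way to evaluate your forcing relation at $Y$, and pushing $S_\infty$ forward by a realization map does not fix this.

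The argument the paper invokes is the proof of Theorem~10.6 of \cite{blue2025nairian}, sketched after Corollary~\ref{cor: def its and rel}. It rests on two ingredients absent from your plan.
\begin{enumerate}
\item[(i)] Since $M\models\ADR+V=L(\powerset(\bR))$, there is an elementary $j\colon M\to W$ into the derived model of $\mH$ at $\Theta$ with $j\restriction\mH\in\mH$. Then $Y\in A$ iff $W\models j(Y)\in j(A)$. Since $j(Y)=j\circ Y$ and $j\circ X$ are computable from $j\restriction\mH$, and $A$ is $\mathrm{OD}_X$, homogeneity of the collapse (using (ii)) turns this into a statement $\mH[X,Y]\models\phi[X,Y]$.
\item[(ii)] The relevant countable sets of ordinals are $\mH$-generic for a poset of size $\gamma$, where $\gamma$ is the least Woodin cardinal of $\mH$ above their supremum (Lemma~10.4 there).
\end{enumerate}
The code $S$ is then the collapse of a Skolem hull, of size $\gamma$, of a rank initial segment $\mH|\zeta$ with $\zeta$ far above $\Theta^M$, chosen so that the relevant forcing relation is preserved. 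That hull, not an iteration map, is where $S\in\mH|(\gamma^+)^\mH$ comes from.

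Your setup also breaks the bound directly. $\rg_1(\hq,\k,\nu)$ requires a Woodin cardinal of $\Q$ in $(\k,\nu)$, while your first bullet puts (the preimage of) $\sup X$ at or below $\l\le\k$. Hence $\nu$ is never the least Woodin cardinal above it, and $\pi_{\hq|\nu,\infty}(\nu)=\gamma$ cannot hold. Also, $\nu\notin\dom(\pi_{\hq|\nu,\infty})$, and $X$ is typically not in $\mH$, so you mean $X\subseteq\rge(\pi_{\hq|\nu,\infty})$.

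Moreover, the collapse defining the local Nairian model is $\Coll(\omega,{<}\k_{\hq_\infty})$. It lives at the image of the strong cardinal $\k\geq\l$, not at the Woodin cardinal $\gamma$. A code read off from that forcing relation has no reason to land in $\mH|(\gamma^+)^\mH$ without a separate hull argument.

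Finally, the ``main obstacle'' you single out (independence of the code from $G$ and from the genericity iteration) is an artifact of your route. In the paper's argument the code is defined outright from $\mH$, $j\restriction\mH$ and the ordinal parameters. No comparison argument via Corollary~\ref{cor: comparing tau bounded iterates} or Lemma~\ref{lem: catching preimages} is needed.
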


Because $\its(\nu)$ is ordinal definable and $(\its(Z), \rel(Z))$ is ordinal definable from $Z$ (in $M$), we have the following corollary. 

\begin{corollary}\label{cor: def its and rel} Suppose $\nu<\hd_\infty$ is a proper cutpoint in $\mH$ relative to $\hd_\infty$ and is an inaccessible cardinal of $\mH$. Let $\gg$ be the least Woodin cardinal of $\mH$ above $\nu$. Then there are $S, T\in \mH|(\gg^+)^\mH$ and formulas $\phi$ and $\psi$ such that for every $X, Y, Z, a, b\in (\mH|\nu)^\omega$ the following holds:
\begin{enumerate}
\item $Z\in \its(\nu)\iff L[S, Z]\models \phi[Z]$ (i.e. $S$ is an $\infty$-Borel code of $\its(\nu)$).
\item $Z\in \its(\nu)$, $Y\in \its(Z)$, $a\in \omega^\omega$ codes a complete iterate $\hr$ of $\hq_{Z[Y]}$, $b\in \omega^\omega$ codes a bijection $f: \omega\rightarrow \M^\hr$, and \begin{center} $X=\{\tau^{Z[Y]}_\hr(b(i)): i\in \omega\}$\end{center}
if and only if \begin{center}
$L[T, X, Y, Z, a, b]\models \psi[X, Y, Z, a, b]$.
\end{center}
\end{enumerate}
\end{corollary}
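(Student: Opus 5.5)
The corollary should follow from \rthm{thm: every set is infty borel}, once we check that the sets in question fall under it. I will treat $(\mH|\nu)^\omega$ as a subset of $\alpha^\omega$ for some $\alpha<\varsigma$. Since $\nu<\hd_\infty$ and $\mH|\nu$ has ordinal height $\nu$, every element of $\mH|\nu$ is coded by an ordinal below $\nu$ through the canonical wellorder of the hod premouse $\mH|\nu$, which is definable from $\mH|\nu$ and is ordinal definable in $M$. So countable sequences from $\mH|\nu$ correspond, definably and ordinal-definably, to elements of $\nu^\omega$. Because $\nu<\hd_\infty\le\varsigma_\infty$ computes as an ordinal below the height of $N$, the hypothesis $\a<\varsigma$ of \rthm{thm: every set is infty borel} (read in its $\infty$-image sense) is met with $\a=\nu$.

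For clause (1), the set $\its(\nu)$, coded as a subset of $\nu^\omega$, lies in $N$ by \rprop{prop: its in n}. It is ordinal definable in $M$, since it is defined from $\mH$ and $\nu$ alone. We apply \rthm{thm: every set is infty borel} with parameter $X=\emptyset$, or any trivial sequence whose supremum is below $\nu$. This yields an $\infty$-Borel code $S$ in $\mH|(\gg'^+)^\mH$, where $\gg'$ is the least Woodin cardinal of $\mH$ above the supremum of the parameter. We may take $\gg'\le\gg$, so $S\in\mH|(\gg^+)^\mH$. To get the exact Woodin $\gg$ named in the statement, we can instead use a parameter cofinal in $\nu$; since $\nu$ is inaccessible in $\mH$ and not Woodin in the relevant sense, the least Woodin above it is $\gg$.

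For clause (2), we form the single set $A$ of tuples $(X,Y,Z,a,b)$, coded as one element of $\nu^\omega$ by interleaving, that satisfy the displayed conjunction. We then show that $A$ is ordinal definable in $M$ with no parameters beyond $\nu$. Membership of $Z$ in $\its(\nu)$ is ordinal definable. So is membership of $Y$ in $\its(Z)$, via $Z[Y]\in\its(\nu)$. For the realization map $\tau^{Z[Y]}_\hr$, \rthm{thm: uniqueness of realizability witnesses} is the key input: $\rel(Z[Y])$ is defined by quantifying over all $\its(\nu)$-certificates, so it is ordinal definable from $Z[Y]$ uniformly in $Z[Y]$. Hence the whole relation $A$ is ordinal definable in $M$. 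By \rcor{cor: uniqueness of realizability witnesses} and closure of $N$ under $OD$, we also have $A\in N$. Applying \rthm{thm: every set is infty borel} to $A$ gives a code $T$ in the same bound.

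The main obstacle is twofold. First, \rthm{thm: every set is infty borel} is stated with a real-like parameter $X\in\varsigma^\omega$, so we must make sure the Woodin cardinal it produces is exactly $\gg$ and not something larger. We should be able to choose the parameter inside $\nu^\omega$, which keeps $\gg$ as the least Woodin cardinal above. Second, we must check uniform definability of $\tau^{Z[Y]}_\hr$ in the variables. This is where we use that $\rel$ is independent of the certificate, which removes any choice of certificate.
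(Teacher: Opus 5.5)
Your reduction is the paper's. You show $\its(\nu)$ is ordinal definable, show $\its(Z)$ and $\rel(Z)$ are ordinal definable from $Z$ uniformly (by independence from the certificate), and let the $\infty$-Borel representation behind \rthm{thm: every set is infty borel} do the rest. The gap is at the point you yourself flag as the main obstacle: showing the codes lie in $\mH|(\gamma^+)^\mH$. Neither of your two options settles it.

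Your trivial-parameter option reads the ``moreover'' clause as if the Woodin cardinal depended only on the supremum of the definability parameter. That reading cannot be correct. It would put an $\infty$-Borel code of every ordinal definable $A\subseteq\alpha^\omega$ into $\mH|(\gamma_0^+)^\mH$, where $\gamma_0$ is the least Woodin cardinal of $\mH$, however large $\alpha$ is. To see this fails, take $C\in\mH$ with $C\subseteq\alpha$ coding $\mH|(\gamma_0^+)^\mH$, and let $A=\{Z:\exists n\,(Z(n)\in C)\}$. Evaluating at constant sequences gives $C\in L[S]$, hence $S^\#\in L[S]$, which is absurd.

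Your cofinal-parameter option uses $X^*\in\nu^\omega$ cofinal in $\nu$, and produces the code $(S,X^*)$. But $X^*\notin\mH$, since $\nu$ is inaccessible in $\mH$. So you do not get the required form $L[S,Z]\models\phi[Z]$ with $S\in\mH$.

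What produces $\gamma$ is the genericity of the arguments, not the parameter. Every $Z\in(\mH|\nu)^\omega$, and likewise every tuple $(X,Y,Z,a,b)$ in clause (2), is generic over $\mH$ for a forcing of size $\gamma$, the least Woodin cardinal of $\mH$ above $\nu$ (\cite[Lemma 10.4]{blue2025nairian}). The paper runs the proof of \cite[Theorem 10.6]{blue2025nairian} directly. Let $j\colon M\to W$ be the embedding into the derived model of $\mH$, with $j\restriction\mH\in\mH$. Then $Z\in\its(\nu)\iff\mH[Z]\models\phi[Z]$, where $\phi$ expresses $W\models j[Z]\in\its(j(\nu))$. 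The code $S$ is a Skolem hull of a large rank initial segment $\mH|\zeta$, so there is an elementary $m\colon S\to\mH|\zeta$; it is taken of size $\gamma$, so $S\in\mH|(\gamma^+)^\mH$. The same argument applied to the ordinal definable relation of clause (2) gives $T$.

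Read this way, with the Woodin cardinal sitting above the ordinals in the arguments, your trivial-parameter application gives exactly the statement.

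Separately, your inequality $\hd_\infty\le\varsigma_\infty$ is backwards: the setup has $\varsigma\le\hd$. So for $\nu\ge\varsigma_\infty$ the sets in question are not in $N$, and \rthm{thm: every set is infty borel} does not apply as stated. The derived-model argument works for ordinal definable sets in $M$ and does not need membership in $N$.
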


\begin{notation}\label{def: the borel codes at nu}\normalfont Suppose $\nu<\hd_\infty$ is a proper cutpoint in $\mH$ relative to $\hd_\infty$ and is an inaccessible cardinal of $\mH$. Let $\gg$ be the least Woodin cardinal of $\mH$ above $\nu$. We let $S_\nu, T_\nu\in \mH\cap \powerset(\gg)$ be the $\mH$-least sets witnessing \rcor{cor: def its and rel}.
\end{notation}

The following lemma will be used later on.

\begin{lemma}\label{lem: catching t and s} Suppose $\nu<\hd_\infty$ is a proper cutpoint in $\mH$ relative to $\hd_\infty$ and is an inaccessible cardinal of $\mH$. Let $\gg$ be the least Woodin cardinal of $\mH$ above $\nu$. Suppose $\hq$ is a complete iterate of $\hp$ such that $\nu_\hq$ is defined. Then \[(S_\nu, T_\nu)\in \rge(\pi_{\hq, \infty}).\] 
\end{lemma}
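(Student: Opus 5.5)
The plan is to use the standard fact that the direct limit map $\pi_{\hq,\infty}$ has range containing every element of $\mH$ that is definable in $\mH$ from parameters already in $\rge(\pi_{\hq,\infty})$. By \rnot{def: the borel codes at nu}, $(S_\nu,T_\nu)$ is the $\mH$-least pair witnessing \rcor{cor: def its and rel}. So it suffices to show two things: first, that $\nu$, $\gg$ and $\hd_\infty$ lie in $\rge(\pi_{\hq,\infty})$; second, that the property ``$(S,T)$ witnesses \rcor{cor: def its and rel} at $\nu$'' can be expressed in $\mH$ from these parameters (together with the canonical predicate for $\Omega=\Sigma_\mH$, which is part of the structure $\mH$ and is moved correctly by $\pi_{\hq,\infty}$). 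Once both hold, elementarity of $\pi_{\hq,\infty}:\M^\hq\to\mH$ lets us pull the defining formula back to $\M^\hq$. There it names some pair $(S',T')$, and we get $\pi_{\hq,\infty}(S',T')=(S_\nu,T_\nu)$.

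For the parameters: $\nu_\hq$ is defined by hypothesis, which in the notation of the paper means $\nu=\pi_{\hq,\infty}(\nu_\hq)$. The least Woodin cardinal of $\M^\hq$ above $\nu_\hq$ maps to the least Woodin of $\mH$ above $\nu$, namely $\gg$. Also $\hd_\infty=\pi_{\hq,\infty}(\hd_\hq)$, since $\hq$ is a complete iterate of $\hp$.

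For definability: the sets $\its(\nu)$ and $\rel(Z)$ are defined in $M$ by reference to $\mH$, its strategy, and complete iterates of $\hp$. The clauses in \rcor{cor: def its and rel} quantify over $(\mH|\nu)^\omega$, which is not inside $\mH$. To get the statement ``$S$ is an $\infty$-Borel code for $\its(\nu)$ and $T$ codes $\rel$'' into $\mH$, I would use the derived-model representation. In $\mH$, collapse $\gg$ (or work in $\mH[h]$ for $h\subseteq\Coll(\omega,{<}\text{next Woodin})$) and use $\Omega$-restricted to $\mH|\gg^+$ to recompute complete iterates and iteration sets. This is the mechanism behind the proof of \cite[Theorem 10.6]{blue2025nairian}, which is what produced $S,T\in\mH|(\gg^+)^\mH$ in \rthm{thm: every set is infty borel}. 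The witnessing property then becomes a statement forced over $\mH$ about the symmetric reals, so it is definable over $\mH$ from $\nu,\gg$ and the strategy predicate. The $\mH$-least witness is therefore definable in $\mH$ from parameters in $\rge(\pi_{\hq,\infty})$.

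The main obstacle is this definability step: checking that the Borel-code property is genuinely first-order over $\mH$ (with its strategy predicate) and is not merely true in $M$. The point that needs care is that ``$\infty$-Borel code of $\its(\nu)$'' refers to all countable subsets in $M$. I would handle it by showing that for any candidate $(S,T)\in\mH|(\gg^+)^\mH$, correctness is equivalent to correctness in a collapse of $\mH$ by the next Woodin, via genericity iterations. Everything in that collapse is computed from $\mH$'s own strategy fragment.

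If this route proves too delicate, the fallback is to pick $\hq$-realizations directly. Following \rlem{lem: catching preimages}, iterate $\hq$ above $\nu_\hq$ to some $\hr$ so that $(S_\nu,T_\nu)\in\rge(\pi_{\hr,\infty})$. Then use the fact that $\T_{\hq,\hr}$ lies above $\gg_\hq$'s relevant part to transfer this back to $\pi_{\hq,\infty}$.
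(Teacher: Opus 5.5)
Your main argument is the paper's proof. By elementarity $\gg_\hq$ is defined, and $(S_\nu,T_\nu)$, being the $\mH$-least pair witnessing \rcor{cor: def its and rel}, is definable from $\nu$, so it lies in $\rge(\pi_{\hq,\infty})$. The paper simply asserts the definability step you flag as the main obstacle, implicitly relying on the derived-model mechanism behind \cite[Theorem 10.6]{blue2025nairian} that it outlines after \rcor{cor: def its and rel}; your extra discussion elaborates that step rather than taking a different route, and your fallback is not needed.
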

\begin{proof} Notice that $\gg_\hq$ is also defined. Let now $(S, T)$ be $\mH$-least pair satisfying \rcor{cor: def its and rel}. It then follows that $(S, T)$ is definable from $\nu$, and hence, $S_\hq$ and $T_\hq$ are defined.
\end{proof}

We give a very short outline of the proof of \rcor{cor: def its and rel}. The proof of  \cite[Theorem 10.6]{blue2025nairian} utilizes the fact that under $\ADR + V=L(\powerset(\bR))$, which holds in $M$, $V$ embeds into the derived model of $\mH$ at $\Theta$, and moreover, the embedding restricted to $\mH$ is in $\mH$. Thus, we have a formula $\phi$ such that for all $Z\in (\mH|\nu)^\omega$, \[Z\in \its(\nu)\ \text{if and only if}\ \mH[Z]\models \phi[Z].\] Letting $j: M\rightarrow W$ be the embedding of $M$ into the derived model of $\mH$, $\phi$ essentially expresses the statement \[W\models j[Z]\in \its(j(\nu)).\] The other key fact used in the proof of \cite[Theorem 10.6]{blue2025nairian} is that each such $Z$ is generic over $\mH$ by a poset of size $\gg$, where $\gg$ is the least Woodin cardinal of $\mH$ above $\nu$ (see \cite[Lemma 10.4]{blue2025nairian}). The $S$ itself is obtained as a Skolem Hull of a rank initial segment of $\mH$, and $S$ is such that there is an elementary embedding $m: S\rightarrow \mH|\zeta$ for some $\zeta$ much larger than $\Theta^M$.

\section{Reflection I}\label{sec: reflection i}

In this section we work towards establishing that the reflection property introduced in \rdef{def: kappa reflection} holds in Nairian Models. We first prove a basic form of it, which we can then generalize to obtain the version stated in \rdef{def: kappa reflection}. We start by describing the possible reflection points.

\begin{definition}\label{def: induced embeddings}\normalfont 
Suppose for some $\a\leq \b$, $\sigma': \mH|\a\rightarrow \mH|\b$ is a function.
Then $\sigma: N|\a\rightarrow N|\b$ is \textbf{induced} by $\sigma'$ if for every $X\in \powerset_{\omega_1}^b(\a)$, 
\[\sigma(X)=\sigma'[X]\] 
and for every $z\in N|\a$, $\sigma(z)=w$ if and only if whenever $(\phi, X)$ is such that:
\begin{itemize}
    \item $X\in \powerset_{\omega_1}^b(\a)$, 
    \item $\phi$ is a formula, and 
    \item $z$ is the unique $y$ such that $N|\a\models \phi[y, X]$,
\end{itemize} 
then $N|\b\models \phi[w, \sigma(X)]$.

If $\sigma'$ and $\sigma$ are as above then we write $\sigma=\ind(\sigma')$
\end{definition}

\begin{remark}\normalfont
If $\sigma\rest \powerset_{\omega_1}^b(\a)$ is elementary (i.e., for $X\in\powerset_{\omega_1}^b(\a)$, $N|\a\models \phi[X]$ if and only if $N|\b\models \phi[\sigma(X)]$), then $\sigma$ is well-defined and elementary.
\end{remark}

\begin{definition}\label{def: reflection tuple}\normalfont
Suppose $u=(\hq, \l, \nu)$ and \[M\models \rg_3(\hq, \l, \k, \nu).\]
Let \begin{center}$\sigma'_{u}: \mH|\pi_{\hq|\nu, \infty}(\l)\rightarrow \mH|\pi_{\hq,\infty}(\l)$\end{center} be the embedding given by $\sigma'_{u}(x)=y$ if and only if whenever $\hr$ is a complete iterate of $\hq$ such that 
\begin{itemize}
\item $\T_{\hq, \hr}$ is based on $\hq|\nu$, and
\item $x\in \rge(\pi_{\hr|\nu_\hr, \infty})$,
\end{itemize}
$y=\pi_{\hr, \infty}(x)$. Let 
\begin{center} $\sigma_{u}: N|\pi_{\hq|\nu, \infty}(\l)\rightarrow N|\pi_{\hq,\infty}(\l)$\end{center} be the embedding induced by $\sigma'_{u}$. We then let $\l_u=\pi_{\hq|\nu, \infty}(\l)$ and $\l^u=\pi_{\hq, \infty}(\l)$.
\end{definition}
\begin{figure}[ht]
\centering
\begin{tikzpicture}[scale=1.2, >=stealth, font=\small]

    \node (H_full) at (-2.5, 6.5) {$\mH$};
    \node (H_sub) at (-2.5, 4.5) {$\mH|\sup(\pi_{\hq|\nu, \infty}[\nu])$};
    \node at (-2.5, 5.5) {\rotatebox{90}{$\unlhd$}};

    \node (R_sub) at (4, 3.2) {$\hr|\nu_\hr$};
    \node (R_full) at (6.5, 3.2) {$\hr$};
    \node at (5.25, 3.2) {$\unlhd$};

    \node (Q_sub) at (0, 1) {$\hq|\nu$};
    \node (Q_full) at (2.5, 1) {$\hq$};
    \node at (1.25, 1) {$\unlhd$};

    \draw[->, thick, dashed, bend left=45] (H_sub) to 
        node[midway, left] {$\sigma'_u$} (H_full);

    \draw[->, thick, ForestGreen] (Q_sub) -- (R_sub)
        node[midway, sloped, above] {$\T_{\hq|\nu, \hr|\nu_\hr}$};

    \draw[->, thick] (Q_full) -- (R_full)
        node[midway, sloped, below, yshift=-2pt] {$\T_{\hq, \hr} = \uparrow(\T_{\hq|\nu, \hr|\nu_\hr})$};

    \draw[->, thick, violet] (Q_sub) .. controls (-2.5, 1.2) and (-3.5, 3.5) .. (H_sub)
        node[pos=0.8, left, xshift=-2pt] (pi_q_inf) {$\pi_{\hq|\nu, \infty}$};

    \draw[->, blue, thick] (R_sub) to[bend right=15] 
        node[midway, above, sloped] {$\pi_{\hr|\nu_\hr, \infty}$} (H_sub);

    \draw[->, red, thick] (R_full) to[bend right=30] 
        node (pi_r_inf) [midway, right, xshift=2pt] {$\pi_{\hr, \infty}$} (H_full);

    \node[blue] (x) at (-2.7, 3.8) {$x$};

    \node[red] (xr) at (3.2, 3.2) {$x_{\hr|\nu_\hr}$};

    \node[blue] (y) at (-1.8, 6.1) {$y = \sigma'_u(x)$};

    \draw[->, dotted, blue!70, thick] (x) to[bend left=15] 
        node[midway, below, sloped, font=\tiny] {pullback via $\pi_{\hr|\nu_\hr, \infty}$} (xr);
        
    \draw[->, dotted, red!70, thick] (xr) to[bend right=25] 
        node[midway, above, sloped, font=\tiny, yshift=1pt] {push forward via $\pi_{\hr, \infty}$} (y);

\end{tikzpicture}
\caption{The embedding $\sigma'_{u}$. The element $x$ is pulled back via $\pi_{\hr|\nu_\hr, \infty}$ to the iterate $\hr$ and then mapped forward via $\pi_{\hr, \infty}$ to its final value $y$.}
\label{fig:sigma_u_final}
\end{figure}
\begin{remark}\label{rem: also bounded by l}\normalfont Notice that because $\l$ is small, we can define $\sigma_u'$ by saying that $\sigma_u(x)=y$ if and only if whenever $\hr$ is a complete iterate of $\hq$ such that 
\begin{itemize}
\item $\T_{\hq, \hr}$ is based on $\hq|\nu$,
\item $\gen(\T_{\hq, \hr})\subseteq \l_\hq$, and
\item $x\in \rge(\pi_{\hr|\nu_\hr, \infty})$,
\end{itemize}
$y=\pi_{\hr, \infty}(x)$.
\end{remark}
Next we prove that $\sigma'_u$ is well-defined.\footnote{Notice that it is immediate from \rdef{def: reflection tuple} that $\sigma'_u(x)$ is independent of $\hr$.}
\begin{remark}\label{rem: the need for a woodin}\normalfont \rlem{lem: u is wd} does not require the Woodinness of $\nu$, and holds assuming just that $\nu$ is an inaccessible properly overlapped cardinal of $\M^\hq|\hd_\infty$.
\end{remark}
The definition of $\rg$ appears in \rdef{def: ref gen}.
\begin{lemma}\label{lem: u is wd} Suppose $u=(\hq, \l, \nu)$ and $\rg_3(\hq, \l, \k, \nu)$ holds. Then $\sigma'_u$ is well defined.
\end{lemma}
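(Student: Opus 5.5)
To show $\sigma'_u$ is well defined we must check three things: every $x\in\mH|\pi_{\hq|\nu,\infty}(\l)$ has a witness $\hr$ as in \rdef{def: reflection tuple}; the value $\pi_{\hr,\infty}(x_{\hr|\nu_\hr})$ does not depend on the choice of $\hr$; and it lies in $\mH|\pi_{\hq,\infty}(\l)$. Here $x_{\hr|\nu_\hr}$ denotes $(\pi_{\hr|\nu_\hr,\infty})^{-1}(x)$.

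Existence is the standard direct limit fact. $\mH|\sup\pi_{\hq|\nu,\infty}[\nu]$ is the direct limit of complete iterates of $\hq|\nu$. Such iterations lift via $\uparrow(\cdot,\hq)$ to complete iterates $\hr$ of $\hq$ based on $\hq|\nu$. By \rrem{rem: also bounded by l} we may also assume $\gen(\T_{\hq,\hr})\subseteq\l_\hr$, using that $\l$ is small, as in \cite[Lemma 9.14]{blue2025nairian}. The range condition holds because $x_{\hr|\nu_\hr}<\l_\hr$, so $\pi_{\hr,\infty}(x_{\hr|\nu_\hr})\in\mH|\pi_{\hr,\infty}(\l_\hr)=\mH|\pi_{\hq,\infty}(\l)$.

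For independence, fix two witnesses $\hr,\hs$, both based on $\hq|\nu$ with generators below $\l$. Let $\hw$ be their least-extender-disagreement coiteration. By \rcor{cor: comparing tau bounded iterates}, applied in both directions, $\T_{\hr,\hw}$ is based on $\hr|\nu_\hr$ and $\T_{\hs,\hw}$ is based on $\hs|\nu_\hs$. So $\hw$ is again a witness, and $\hw|\nu_\hw$ is the common iterate of $\hr|\nu_\hr$ and $\hs|\nu_\hs$. Commutativity of the direct limit system then gives
\[
\pi_{\hr|\nu_\hr,\hw|\nu_\hw}(x_{\hr|\nu_\hr})=x_{\hw|\nu_\hw}=\pi_{\hs|\nu_\hs,\hw|\nu_\hw}(x_{\hs|\nu_\hs}).
\]
Because $\T_{\hr,\hw}=\uparrow(\T_{\hr|\nu_\hr,\hw|\nu_\hw},\hr)$, the map $\pi_{\hr,\hw}$ extends $\pi_{\hr|\nu_\hr,\hw|\nu_\hw}$. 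Hence
\[
\pi_{\hr,\infty}(x_{\hr|\nu_\hr})=\pi_{\hw,\infty}(\pi_{\hr,\hw}(x_{\hr|\nu_\hr}))=\pi_{\hw,\infty}(x_{\hw|\nu_\hw}),
\]
and the same computation holds for $\hs$. This proves independence.

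The main obstacle is the claim that the coiteration of $\hr$ and $\hs$ never leaves the $\nu$-part, that is, that it uses no extenders overlapping $\nu$. If $\nu$ is not a proper cutpoint, the full trees might differ from the lifts of the restricted trees. This is exactly what \rcor{cor: comparing tau bounded iterates} guarantees, and it uses only that $\nu$ is an inaccessible properly overlapped cardinal. So I would verify its hypotheses ($\rg_0$ and $\pi_{\hq,\infty}(\l)$-boundedness) from $\rg_3$ and \rrem{rem: also bounded by l}. This matches \rrem{rem: the need for a woodin}.
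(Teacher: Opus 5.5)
Your route differs from the paper's. For $\l$-bounded witnesses it works: the hypotheses of \rcor{cor: comparing tau bounded iterates} follow from $\rg_3(\hq,\l,\k,\nu)$, applying it in both directions gives a common iterate $\hw$ with both trees based on the $\nu$-parts, and your commutation computation is then correct. The paper instead fixes two arbitrary witnesses $\hr,\hr'$ and uses \cite[Theorem 11.1]{blue2025nairian} to produce auxiliary iterates $\hs,\hs'$ above $\k$. It transfers the two values to iterates $\hw,\hw'$ of $\hs,\hs'$ via \rlem{lem: catching preimages}, and only then compares two iterates of $\hs'$ whose trees are based on $\hs'|\nu_{\hs'}$.

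The gap is the restriction to $\l$-bounded witnesses. \rdef{def: reflection tuple} quantifies over every complete iterate $\hr$ of $\hq$ with $\T_{\hq,\hr}$ based on $\hq|\nu$ and $x\in\rge(\pi_{\hr|\nu_\hr,\infty})$. The corollary you invoke needs both iterates to be $\l$-bounded, so your argument only shows that $\l$-bounded witnesses agree. Knowing that $\l$-bounded witnesses exist does not show that an arbitrary witness yields the same value. \rrem{rem: also bounded by l} just asserts that the two definitions coincide, which is exactly the missing step, so citing it here is circular; the paper's proof never uses it.

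The repair makes the corollary unnecessary. For arbitrary witnesses $\hr,\hs$, the $\nu$-parts $\hr|\nu_\hr$ and $\hs|\nu_\hs$ are complete iterates of the single pair $\hq|\nu$. Compare them to a common iterate $\hk$ via trees $\X,\Y$, and lift these with $\uparrow$. The stacks $(\T_{\hq,\hr})^\frown\uparrow(\X,\hr)$ and $(\T_{\hq,\hs})^\frown\uparrow(\Y,\hs)$ both fully normalize to $\uparrow(\T_{\hq|\nu,\hk},\hq)$. Hence they have a common last model $\hw$ with $\hw|\nu_\hw=\hk$ and both trees based on the $\nu$-parts. This is the same fact the paper relies on when it obtains $\hw'''$ from two iterates of $\hs'$ based on $\hs'|\nu_{\hs'}$. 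With it, your final computation goes through verbatim for all witnesses.
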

\begin{proof} Fix $x\in \mH|\l_u$. Also fix $\hr$ and $\hr'$ such that $x_{\hr|\nu_\hr}$ and $x_{\hr'|\nu_{\hr'}}$ are defined and both $\T_{\hq, \hr}$ and $\T_{\hq, \hr'}$ are both based on $\hq|\nu$. We want to see that
\[(a)\ \ \ \ \ \  \pi_{\hr, \infty}(x_{\hr|\nu_\hr})=\pi_{\hr', \infty}(x_{\hr'|\nu_{\hr'}}).\]
Using \cite[Theorem 11.1]{blue2025nairian} as we did in the proof of \rthm{thm: delta-gen it}, we can find $\hs$ and $\hs'$ such that
\vspace{0.3cm}
\begin{enumerate}[label=(1.\arabic*), itemsep=0.3cm]
\item $\hs$ is a complete iterate of $\hq$ such that $\T_{\hq, \hs}$ is based on $\hq|\nu$ and is above $\k$,
\item $\hs'$ is a complete iterate of $\hs$ such that $\T_{\hs, \hs'}$ is based on $\hs|\nu_\hs$ and is above $\k_\hs$,
\item for some $\zeta<\k_\hs$ that is properly overlapped inaccessible cardinal of $\M^\hs$, \[\sup(\pi_{\hr|\k_\hr, \infty}[\k_\hr])<\pi_{\hs|\zeta, \infty}(\ts(\hs|\zeta)),\] and
\item for some $\zeta'<\k_{\hs'}$ that is properly overlapped inaccessible cardinal of $\M^\hs$, \[\sup(\pi_{\hr'|\k_{\hr'}, \infty}[\k_{\hr'}])<\pi_{\hs'|\zeta', \infty}(\ts(\hs'|\zeta')).\]
\end{enumerate}
\vspace{0.3cm}
Applying \rlem{lem: catching preimages}, we get $\hw$ and $\hw'$ such that
\vspace{0.3cm}
\begin{enumerate}[label=(2.\arabic*), itemsep=0.3cm]
\item $\hw$ is a complete iterate of $\hs$ and $\hw'$ is a complete iterate of $\hs'$,
\item $\T_{\hs, \hw}$ is based on $\hs|\zeta$ and $\T_{\hs', \hw'}$ is based on $\hs'|\zeta'$,
\item $x_{\hw|\nu_\hw}$ and $x_{\hw'|\nu_{\hw'}}$ are defined,
\item $\pi_{\hw, \infty}(x_{\hw|\nu_\hw})=\pi_{\hr, \infty}(x_{\hr|\nu_\hr})$, and
\item $\pi_{\hw', \infty}(x_{\hw'|\nu_{\hw'}})=\pi_{\hr', \infty}(x_{\hr'|\nu_{\hr'}})$.
\end{enumerate}
\vspace{0.3cm}

\begin{figure}[ht]
\centering
\begin{tikzpicture}[node distance=2.5cm, auto, scale=1.2]
    \node (hs) at (0,0) {$\hs$};
    \node (hsprime) at (5,0) {$\hs'$};

    \node (hw) at (0,3) {$\hw$};
    \node (hwpp) at (3.5, 3) {$\hw''$}; 
    \node (hwp) at (6.5, 3) {$\hw'$};    

    \node (hwppp) at (5, 5.5) {$\hw'''$};

    \draw [arrow] (hs) -- node[left] {$\T_{\hs, \hw}$} (hw);

    \draw [arrow] (hsprime) -- (hwpp); 
    \draw [arrow] (hsprime) -- (hwp);
    
    \draw [arrow] (hw) -- node[above] {$\T_{\hw, \hw''}$} (hwpp);
    
    \draw [arrow] (hwpp) -- (hwppp);
    \draw [arrow] (hwp) -- (hwppp);

    \node[anchor=north, font=\footnotesize, text width=2.5cm, align=center] at (1.75, 3) {above $\kappa_{\hw}$};
    
\end{tikzpicture}
\caption{The relationship between $\hs, \hs', \hw, \hw', \hw''$ and $\hw'''$.}
\label{fig:lemma_13_5_diamond_final}
\end{figure}
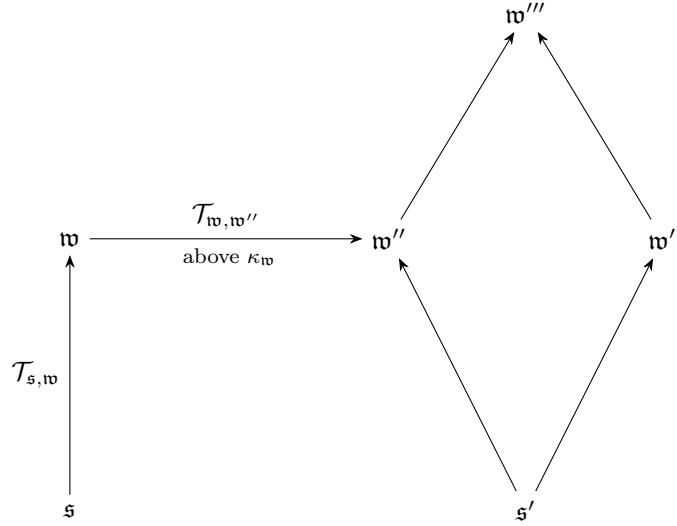
It follows from (2.4) and (2.5) that to show (a) it is enough to show
\[(b)\ \ \ \ \ \ \pi_{\hw, \infty}(x_{\hw|\nu_\hw})=\pi_{\hw', \infty}(x_{\hw'|\nu_{\hw'}}).\]
Let now $\hw''$ be the last model of $\uparrow(\U, \hs')$ where $\U=\downarrow(\T_{\hs, \hw}, \hs|\zeta)$. The \rfig{fig:lemma_13_5_diamond_final} might be useful to the reader. 

We have that
\vspace{0.3cm}
\begin{enumerate}[label=(3), itemsep=0.3cm]
\item $\hw''$ is a complete of $\hw$ such that $\T_{\hw, \hw''}$ is above $\k_\hw$ and is based on $\hw|\nu_\hw$.
\end{enumerate}
\vspace{0.3cm}
(3) follows easily from analyzing the full normalization of $(\T_{\hs, \hs'})^\frown \T_{\hs', \hw''}$ (we did such calculations in the proof of \rlem{lem: j is wd}). Because both $\T_{\hs', \hw'}$ and $\T_{\hs', \hw''}$ are based on $\hs'|\nu_{\hs'}$, we can find $\hw'''$ such that
\vspace{0.3cm}
\begin{enumerate}[label=(4.\arabic*), itemsep=0.3cm]
\item $\hw'''$ is a complete iterate of $\hw'$ and of $\hw''$,
\item $\T_{\hw', \hw'''}$ is based on $\hw'|\nu_{\hw'}$, and
\item $\T_{\hw'', \hw'''}$ is based on $\hw''|\nu_{\hw''}$.
\end{enumerate}
\vspace{0.3cm}
We then have that
\vspace{0.3cm}
\begin{enumerate}[label=(5.\arabic*), itemsep=0.3cm]
\item $\pi_{\hw, \hw'''}(x_{\hw|\nu_\hw})=x_{\hw'''|\nu_{\hw'''}}$, and
\item $\pi_{\hw', \hw'''}(x_{\hw'|\nu_{\hw'}})=x_{\hw'''|\nu_{\hw'''}}$.
\end{enumerate}
\vspace{0.3cm}
We now have that \begin{align*}
\pi_{\hw, \infty}(x_{\hw|\nu_\hw}) &= \pi_{\hw''', \infty}(x_{\hw'''|\nu_{\hw'''}})\\ &=
         \pi_{\hw', \infty}(x_{\hw'|\nu_\hw'}). \end{align*}
         Hence, (b) follows. 
\end{proof}

The following is the main theorem of this section. $\rel$ appeared in \rdef{def: rel sets}.

\begin{theorem}\label{thm: basic reflection} Suppose $u=(\hq, \l, \nu)$ and $\rg_3(\hq, \l, \k, \nu)$, and $X\in N|\l^u$ is definable from some countable $Y\subseteq \pi_{\hq, \infty}[\l]$ over $N|\l^u$. Then \begin{enumerate}
\item $\sigma_u$ is an  elementary embedding, 
\item $\sigma_u$ is definable over $N$ from $\rel(\pi_{\hq, \infty}[\M^\hq|\nu])$ (and hence, $\sigma_u\in N$),
\item $X\in \rge(\sigma_u)$.
\end{enumerate}
\end{theorem}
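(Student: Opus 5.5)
The plan is to realize $\sigma_u$ as the composition of two embeddings that we already control. The first comes from \rthm{thm: realizing as a nairian model}; the second is the embedding lifting the iteration map $\pi_{\hq_\infty,\infty}$ (or rather its restriction below $\l$).

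Fix $G\subseteq\Coll(\omega_1,\bR)$ and a $\Delta_{\hq,\nu}$-genericity iteration $(\hq_i,\hq_i',\T_i,E_i: i<\omega_1)$, as provided by \rthm{thm: delta-gen it}, with direct limit $\hq_\infty$. By clause (4) of \rthm{thm: realizing as a nairian model}, $N|\l_u=N^l(\hq_\infty,\k_{\hq_\infty})|\pi_{\hq|\nu,\infty}(\l)$. Since $\hq_\infty$ is a complete iterate of $\hq$, the map $\pi_{\hq_\infty,\infty}$ lifts, via $\Coll(\omega,{<}\omega_1)$-genericity as in \rlem{lem: lifting iterations}, to an elementary map from the local Nairian model of $\hq_\infty$ at $\l_{\hq_\infty}$ to the corresponding local model of $\mH$ at $\l^u$. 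The latter is $N|\l^u$ by the definition of $N$.

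\textbf{Step 1 (elementarity).} We show that this lifted map agrees with $\sigma_u$, i.e.\ that it is induced by $\sigma'_u$.
\begin{itemize}
\item On $\mH|\l_u$ this is clause (3) of \rthm{thm: realizing as a nairian model}: every $x$ is $\pi_{\hr|\nu_\hr,\infty}(\bar x)$ for some $\hr\in\mathcal{F}_i$, and pushing $\bar x$ along $\hq_i\to\hq_\infty\to\mH$ yields exactly $\pi_{\hr,\infty}(\bar x)$. Well-definedness is \rlem{lem: u is wd} together with \rrem{rem: also bounded by l}.
\item On $\powerset^b_{\omega_1}$ the lifted map acts pointwise, since countable sets are coded by reals in $\Q_\infty[g]$.
\end{itemize}
Definable elements are then sent correctly, so the remark after \rdef{def: induced embeddings} gives that $\sigma_u$ is well defined and elementary. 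This proves clause (1).

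\textbf{Step 2 (definability).} Let $Z=\pi_{\hq,\infty}[\M^\hq|\nu]\in\its(\nu_\infty)$, certified by $\hq$. For any complete iterate $\hr$ of $\hq|\nu=\hq_Z$, \rdef{def: rel sets} gives $\tau^Z_\hr=\pi_{\hr^+,\infty}\rest\M^\hr$, where $\hr^+$ is the last model of $\uparrow(\T_{\hq_Z,\hr},\hq)$. Moreover, $\pi_{\hr,\infty}$ on $\hr|\nu_\hr$ is recoverable from $\mH$ by \rthm{thm: uniqueness of realizability witnesses}. Hence
\[
\sigma'_u(x)=\tau^Z_\hr\bigl(\pi_{\hr,\infty}^{-1}(x)\bigr)
\]
for any such $\hr$, which is definable from $\rel(Z)$. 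The induced $\sigma_u$ is then definable over $N$ from $\rel(Z)$, because $\rel(Z)\in N$ by \rcor{cor: uniqueness of realizability witnesses}. This gives clause (2).

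\textbf{Step 3 (range).} Write $X$ as the unique $w$ with $N|\l^u\models\phi[w,Y]$, where $Y\subseteq\pi_{\hq,\infty}[\l]$ is countable. Let $\bar Y=\pi_{\hq,\infty}^{-1}[Y]\subseteq\l$, a countable subset of $\M^\hq$. Choose a countable complete iterate $\hr$ of $\hq$ with $\T_{\hq,\hr}$ based on $\hq|\nu$ and with $\pi_{\hq,\hr}[\bar Y]$ captured by $\hr|\nu_\hr$; the iterate $\hq$ itself works, since $\l<\nu$. Put $Y'=\pi_{\hq|\nu,\infty}[\bar Y]\in\powerset^b_{\omega_1}(\l_u)$, so that $\sigma_u(Y')=\sigma'_u[Y']=Y$. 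Elementarity then produces $w'$ with $N|\l_u\models\phi[w',Y']$, and $\sigma_u(w')=X$. This gives clause (3).

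The main obstacle is Step 1: checking that the abstract lift of $\pi_{\hq_\infty,\infty}$ coincides with $\sigma'_u$ and maps $N|\l_u$ onto $N|\l^u$. This needs the comparison arguments of \rcor{cor: technical corollary} and \rlem{lem: catching preimages}, used as in \rlem{lem: j is onto}. I would first try to reduce everything to the identity $j=\mathrm{id}$ established at the end of the proof of \rthm{thm: realizing as a nairian model}.
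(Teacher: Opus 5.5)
Your Steps 2 and 3 are essentially the paper's arguments for clauses (2) and (3). In particular, the paper also gets $Y\in\rge(\sigma_u)$ from $\sigma_u(\pi_{\hq|\nu,\infty}[\bar Y])=Y$.

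\textbf{The gap is in Step 1: the embedding you build does not land in $N|\l^u$.} This is more than the bookkeeping you flag at the end.
\begin{enumerate}
\item Since $\pi_{\hq_\infty,\infty}\rest\k_{\hq_i}=\pi_{\hq_i,\infty}\rest\k_{\hq_i}$, the map $\pi_{\hq_\infty,\infty}$ moves ordinals below $\omega_1^M=\k_{\hq_\infty}$. So it cannot be lifted with the generic $g$ as in \rlem{lem: lifting iterations}; that lemma only handles the maps $\pi_{i,k}$, whose critical point is $\k_{\hq_i}$.
\item A lift needs an $\mH$-generic $h\subseteq\Coll(\omega,{<}\pi_{\hq,\infty}(\k))$ in some further extension. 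By elementarity, the image of $N^l(\hq_\infty,\k_{\hq_\infty})$ is then the local Nairian model of $\mH$ computed in $\mH[h]$.
\item That model is built over $\lmi(\mH,\pi_{\hq,\infty}(\k))$, a direct limit of iterates of $\mH$ formed in $\mH[h]$, and over the reals of $\mH[h]$. Nothing identifies it with $N$, which is built in $M$ over $\mH$ itself and $\bR^M$.
\end{enumerate}
So ``the latter is $N|\l^u$ by the definition of $N$'' is false. Your first bullet fails for the same reason: the lift sends $x\in\lmi(\hq_\infty,\k_{\hq_\infty})$ into $\lmi(\mH,\pi_{\hq,\infty}(\k))$, and nothing ties the result to $\pi_{\hr,\infty}(\bar x)$. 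Two smaller slips: the relevant domain is $N^l(\hq_\infty,\k_{\hq_\infty})$ cut at $\l_u$, not a local model at $\l_{\hq_\infty}$ (which is just $\l$ when $\l<\k$); and $\sigma_u$ is not onto $N|\l^u$. Reducing to $j=\mathrm{id}$ will not help, since that identity only concerns the domain side.

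\textbf{The missing idea is to realize the target by a \emph{second} genericity iteration started at a countable stage, rather than lifting into $\mH$.} Here is how the paper does it.
\begin{enumerate}
\item Fix $Z\in\powerset^b_{\omega_1}(\l_u)$ with $N|\l_u\models\phi[Z]$. Pick $i\in C$, $\hr\in\mathcal{F}_i$ and $Z'\in\Q_i[g_i]$ with $Z=\pi_{\hr|\nu_\hr,\infty}[Z']$ and $\sigma_u(Z)=\pi_{\hr,\infty}[Z']$.
\item Put $Z''=\pi_{\hr,\hm_i}[Z']\in N^l_i=N^l(\hq_i,\k_{\hq_i})$. Since $\pi^+_{i,\tau}(Z'')=Z$, we get $N^l_i|\iota\models\phi[Z'']$, where $\iota=\pi_{\hq_i,\hm_i}(\l_{\hq_i})$.
\item Run a $\Delta_{\hq_i,\hd_{\hq_i}}$-genericity iteration from $\hq_i$ with direct limit $\hs_\tau$, and choose $h\supseteq g_i$ with $\bR^{\M^{\hs_\tau}[h]}=\bR^M$.
\item Because $\pi_{\hq_i,\hs_\tau}$ has critical point $\k_{\hq_i}$, it lifts to $\pi^+_{\hq_i,\hs_\tau}\colon\Q_i[g_i]\to\M^{\hs_\tau}[h]$.
\item \rthm{thm: realizing as a nairian model}, applied at $\hd$, identifies $\pi^+_{\hq_i,\hs_\tau}(N^l_i)$ with $N|\pi_{\hq_i,\infty}(\k_{\hq_i})$. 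The lift sends $\iota\mapsto\l^u$ and $Z''\mapsto\sigma_u(Z)$, so $N|\l^u\models\phi[\sigma_u(Z)]$.
\end{enumerate}
Thus the countable local model $N^l_i$ is the common source of two lifted embeddings, one realizing $N|\l_u$ and one realizing $N|\l^u$. This two-sided realization is what your single iteration lacks.
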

\begin{proof} Let $Y\subseteq \pi_{\hq, \infty}[\l]$ be such that $X$ is definable from $Y$ over $N|\l^u$. Then $Y\in \rge(\sigma_u)$ because if $Z=\pi_{\hq, \infty}^{-1}[Y]$, then $\sigma_u(\pi_{\hq|\nu, \infty}[Z])=Y$. It follows that $X\in \rge(\sigma_u)$. This shows Clause (3). Clause (2) follows immediately from the definition of $\sigma_u'$. Below we show Clause (1).

Let 
\begin{itemize}
\item $G\subseteq \Coll(\omega_1, \bR)$ be $M$-generic, 
\item $\vec{B}=(B_i: i<\omega_1^M)\in M[G]$ be an enumeration of $\Delta_{\hq, \nu}$, 
\item $\vec{a}=(a_i: i<\omega)\in M[G]$ be an enumeration of $\bR^M$, and 
\item $(\hq_i, \hq_i', \T_i, E_i: i<\omega_1)\in M[G]$ be a $\Delta_{\hq, \nu}$-genericity iteration relative to $(\vec{B}, \vec{a})$ (which exists because of \rthm{thm: delta-gen it}).
\end{itemize}
We set 
\begin{itemize}
\item for $i<j\leq \omega_1$, $\pi_{\hq_i, \hq_j}=\pi_{i, j}$,
\item for $i\leq \omega_1$, $\hm_i=\hm(\hq_i, \k_{\hq_i})$ (see \rnot{not: more not for reflection}),
\item for $i\leq\omega_1$, $N^l_i=N^l(\hq_i, \k_{\hq_i})$ (see \rnot{not: more not for reflection}).
\end{itemize}

We follow the proof of \rthm{thm: realizing as a nairian model}, and will use the notation introduced in \rnot{not: notation for the proof}. We let $C\subseteq \tau$ be as in the proof of \rthm{thm: realizing as a nairian model}. We will use the following observation (which easily follows from  \rdef{def: delta-gen it}, \rlem{lem: catching preimages}, and \rlem{lem: u is wd}).\\\\
(Obs) Suppose $Z\in \powerset_{\omega_1}^b(\l_u)$. Then there is $i_0\in C$ such that for every $i\in [i_0, \omega_1)$, there is \[\hr\in \mathcal{F}_i\ \text{and}\ Z'\in \powerset(\l_\hr)\cap \Q_i[g_i]\] such that \[\pi_{\hr|\nu_\hr, \infty}[Z']=Z\  \text{and}\ \sigma_u(Z)=\pi_{\hr, \infty}[Z'].\]\\

In fact, there is $i_0<\omega_1$, $\hr_0\in \mathcal{F}_{i_0}$, and $Z_0\in \powerset(\l_{\hr_0})\cap \Q_{i_0}[g_{i_0}]$ such that for every $i\in [i_0, \omega_1)$, setting \[\hr=\pi_{i_0, i}^+(\hr_0)\ \text{and}\ Z'=\pi_{i_0, i}^+[Z_0],\] $\sigma_u(Z)=\pi_{\hr, \infty}[Z']$.

\begin{claim} $\sigma_u$ is an elementary embedding.
\end{claim}
\begin{proof}
It is enough to show that if for some $\zeta<\l_u$, $Z\in (\mH|\zeta)^\omega$ and $\phi(x)$ is a formula, then \[N|\l_u\models \phi[Z]\ \text{implies that}\ N|\l^u\models \phi[\sigma_u(Z)].\] 

Fix then a pair $(Z, \phi)$ such that $N|\l_u\models \phi[Z]$. Applying (Obs), we can find some $i\in C$ and $\hr\in \mathcal{F}_i$ such that for some $Z'\in \powerset(\l_\hr)\cap \Q_i[g_i]$, \begin{center} $Z=\pi_{\hr|\nu_\hr, \infty}[Z']$ and $\sigma_u(Z)=\pi_{\hr, \infty}[Z']$. \end{center}
Let now  $z:\omega\rightarrow \M^\hr|\l_\hr$ be a surjection and $w\subseteq \omega$ be such that $(z, w)\in \Q_i[g_i]$ and \begin{center}$Z'=\{z(k): k\in w\}$.\end{center} It follows from \rthm{thm: realizing as a nairian model} that if \[Z''=\{ \pi_{\hr, \hm_i}(z(k)): k\in \omega\}=\pi_{\hr, \hm_i}[Z']\] then \begin{center}$Z''\in \Q_i[g_i]\cap N^l_i$ and $Z=\pi_{i, \tau}^+(Z'')$.\end{center}
$\pi^+_{i, k}$ appeared in \rlem{lem: lifting iterations}. It also follows from \rthm{thm: realizing as a nairian model} that since \begin{center}$\pi_{i, \tau}^+(N^l_i)=N|\pi_{\hr|\nu_\hr, \infty}(\k_\hr)$,\end{center} letting $\iota=\pi_{\hq_i, \hm_i}(\l_{\hq_i})$,
\begin{center}
$N^l_i|\iota\models \phi[Z'']$.
\end{center}
Working in $M[G]$, we can also find a $\Delta_{\hq_i, \hd_{\hq_i}}$-genericity iteration \[(\hs_m, \hs_m', \U_m, F_m: m<\tau)\] such that $\hs_0=\hq_i$. Let $\hs_\tau$ be the direct limit of \[(\hs_m, \pi_{\hs_m, \hs_k}: m<k<\tau),\] and let $h\subseteq \Coll(\omega, <\tau)$ be $\M^{\hs_\tau}$-generic such that \[g_i\subseteq h\ \text{and} \bR^{\M^{\hs_\tau}[h]}=\bR^M.\] We then have that $\pi_{\hq_i, \hs_\tau}$ can be lifted to \[\pi_{\hq_i, \hs_\tau}^+: \Q_i[g_i]\rightarrow \M^{\hs_\tau}[h].\] It then follows from \rthm{thm: realizing as a nairian model} that \[\pi^+_{\hq_i, \hs_\tau}(N^l_i)=N|\pi_{\hq_i, \infty}(\k_i),\] and since \[\pi_{\hq_i, \hs_\tau}^+(\iota)=\pi_{\hq, \infty}(\l)\] and \[\pi_{\hq_i, \hs_\tau}^+(Z'')=\sigma_u(Z),\] we have that \[N|\l^u\models \phi[\sigma(Z)].\]
\end{proof}
\end{proof}
We end this section with a sharper bound on the definability of $\sigma_u'$ (see \rcor{cor: sharper bound}). We will use the following notation in \rcor{cor: sharper bound}. 
\begin{terminology}\label{def: rest rel}\normalfont Suppose $\nu<\hd_\infty$ is a proper cutpoint in $\mH$ relative to $\hd_\infty$ and is an inaccessible cardinal of $\mH$. Suppose $U\in \its(\nu)$ and $\l\in U$ is a strong cardinal of $\mH|\nu$. We then let $\rel(U)\rest \l$ consist of pairs $(\hr, \tau_\hr)$ such that\footnote{Here, and elsewhere in the paper, we use the $x_\hw$ to denote the preimage of $x$ in $\M^\hw$, assuming the map is understood. So in the current case, $\l_\hr=(\tau'_\hr)^{-1}(\l)$.}
\begin{itemize}
\item for some $\tau'_\hr$, $(\hr, \tau'_\hr)\in \rel(U)$ and $\tau_\hr=\tau'_\hr\rest \M^\hr|\l_\hr$, and
\item $\gen(\T_{\hq_U, \hr})\subseteq \l_\hr$.
\end{itemize}
\end{terminology}
We remark that because $\tau_\hr'$ in the above definition is uniquely determined by $(U, \hr)$ (see \rdef{def: rel sets} and \rthm{thm: uniqueness of realizability witnesses}), $\tau_\hr$ is also uniquely determined by $(U, \hr)$. The following easy lemma will be used in the proof of \rcor{cor: sharper bound}.

\begin{lemma}\label{lem: characterizing rel rest}  Suppose $\nu<\hd_\infty$ is a proper cutpoint in $\mH$ relative to $\hd_\infty$ and is an inaccessible cardinal of $\mH$. Suppose
\begin{itemize}
\item $U\in \its(\nu)$ as certified by $\hq$, 
\item $\l\in U$ is a strong cardinal of $\mH|\nu$, and 
\item $(\hr, \tau_\hr)\in \rel(U)\rest \l$. 
\end{itemize}
There is then a complete $\l$-bounded iterate $\hs$ (so $\gen(\T_{\hq, \hs})\subseteq \l_\hs$) of $\hq$ such that
\begin{itemize}
\item $\T_{\hq, \hs}$ is based on $\hq|\nu$,
\item $\hr=\hs|\nu_\hs$, and
\item $\tau_\hr=\pi_{\hs, \infty}\rest \M^\hs|\l_\hs$.
\end{itemize}
\end{lemma}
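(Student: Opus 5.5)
We unwind the definition of $\rel(U)\rest\l$. By \rter{def: rest rel} there is $\tau'_\hr$ with $(\hr,\tau'_\hr)\in\rel(U)$, $\tau_\hr=\tau'_\hr\rest\M^\hr|\l_\hr$, and $\gen(\T_{\hq_U,\hr})\subseteq\l_\hr$. By \rdef{def: rel sets}, $\hr$ is a complete iterate of $\hq_U$. Since $\hq$ certifies $U\in\its(\nu)$, \rdef{def: rel sets} also gives $\tau'_\hr=\pi_{\hs,\infty}\rest\M^\hr$, where $\hs$ is the last pair of $\uparrow(\T_{\hq_U,\hr},\hq)$. We take this $\hs$ as our witness.

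The plan is to verify the three bullets and the $\l$-boundedness for this $\hs$.

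\begin{enumerate}
\item First, $\hq_U=\hq|\nu_\hq$. By \rdef{def: its} we have $U=\pi_{\hq,\infty}[\M^\hq|\nu_\hq]$, so the transitive collapse of $U$ is $\M^\hq|\nu_\hq$. Moreover $\Sigma_U$, being the $\tau_U$-pullback of $\mH|\nu$'s strategy, is the strategy of $\hq|\nu_\hq$, since $\tau_U=\pi_{\hq|\nu_\hq,\infty}$.
\item Next, $\T_{\hq,\hs}=\uparrow(\T_{\hq_U,\hr},\hq)$ is a copy via the identity of a tree on $\hq|\nu_\hq$, hence it is based on $\hq|\nu$. By \rnot{not: restriction} (and \cite{SteelCom}) it is according to $\Sigma^\hq$.
\item Because the lifted tree uses the same extenders and the same tree structure, $\hs|\nu_\hs$ is the last model of $\T_{\hq_U,\hr}$, which is $\hr$. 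Here $\nu$ is an inaccessible proper cutpoint, so the images of $\nu$ along the branch are preserved.
\item The generators of $\T_{\hq,\hs}$ are exactly those of $\T_{\hq_U,\hr}$, so $\gen(\T_{\hq,\hs})\subseteq\l_\hr=\l_\hs$. Thus $\hs$ is $\l$-bounded.
\item Finally, restricting $\tau'_\hr=\pi_{\hs,\infty}\rest\M^\hr$ to $\M^\hs|\l_\hs=\M^\hr|\l_\hr$ yields $\tau_\hr=\pi_{\hs,\infty}\rest\M^\hs|\l_\hs$.
\end{enumerate}

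The only potentially delicate point is the identification $\hq_U=\hq|\nu_\hq$ as pairs, specifically the agreement of strategies. If the intended $\hq_U$ is instead only elementarily embeddable, we would invoke \rthm{thm: uniqueness of realizability witnesses} to show that the choice of certificate does not matter. We would also check that the lift stays according to $\Sigma^\hq$ by appeal to \cite{SteelCom}, as the footnote to \rnot{not: restriction} indicates. Everything else is bookkeeping.
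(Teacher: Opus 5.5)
Your proposal is correct and follows the paper's argument. The paper's entire proof is to take $\hs$ to be the last model of $\uparrow(\T_{\hq|\nu,\hr},\hq)$, and your items (2)--(5) spell out why this choice works. One correction to the justification in item (1): $\tau_U$ is $\pi_{\hq,\infty}\rest\M^\hq|\nu_\hq$, not $\pi_{\hq|\nu_\hq,\infty}$. These maps generally differ, and the difference is exactly what $\sigma_u$ measures. The conclusion $\hq_U=\hq|\nu_\hq$ still holds, by pullback consistency of $\Sigma^\hq$ along $\pi_{\hq,\infty}$, and the paper implicitly assumes it.
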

To get $\hs$ as above, we simply let $\hs$ be the last model of $\uparrow(\T_{\hq|\nu, \hr}, \hq)$.
\begin{corollary}\label{cor: sharper bound} Suppose
\begin{itemize}
\item $u=(\hq, \nu, \l)$ is a reflection-generator, 
\item $\hr=\hq^{\l^u}$,
\item $\b$ is such that $\hh|\b$ is a complete iterate of $\hr|\nu_\hr$,
\item $Z=\pi_{\hr|\nu_\hr, \hh|\b}[\pi_{\hq, \hr}[\M^\hq|\nu]]$,
\item $\k=\min(\tStr(\hq|\nu)-(\l+1))$, and
\item $E\in \vec{E}^{\M^\hq}$ is the least with the property that $\lh(E)>\nu$ and $\cp(E)=\k$.
\end{itemize}
Then $Z\in \its(\b)$ as certified by $\hq_E$, and $\sigma_u$ is definable over $N|\b$ from $(\rel(Z)\rest \l^u, \l^u)$.
\end{corollary}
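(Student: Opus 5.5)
We will prove the two assertions separately, beginning with the certification claim. Write $\hq^{\l^u}$ for the least node $\hr$ of $\T_{\hq,\hh}$ with $\hr|\l^u=\hh|\l^u$. By \cite[Lemma 9.4]{blue2025nairian} and the smallness of $\l$, $\hr$ lies on the main branch of $\T_{\hq,\hh}$. Let $E'$ be the extender used there at $\hr$. The first step is to show that $E'$ is the $\pi_{\hq,\hr}$-image of $E$, i.e. that its critical point is $\k_\hr$. To see this, apply \rlem{lem: catching preimages} and the argument of \rcl{clm: d is in b}: the next strong cardinal of $\hh|\nu$ above $\l^u$ must be reached by an extender with critical point an image of $\k$, and $E$ is the least such extender with length above $\nu$.

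Full normalization of $\T_{\hq,\hr}^\frown\langle E'\rangle$ together with the remainder of the main branch then identifies $\hq_E$ as a hod pair with $\M_\infty(\hq_E)=\mH$. Moreover, $\pi_{\hq_E,\infty}\rest(\M^{\hq_E}|\nu)$ factors as
\[
\pi_{\hr|\nu_\hr,\hh|\b}\circ\pi_{\hq,\hr}\rest(\M^\hq|\nu),
\]
since $\pi_E\rest\nu=\mathrm{id}$ and the tail of $\T_{\hq_E,\hh}$ restricted below $\nu$ is exactly $\T_{\hr|\nu_\hr,\hh|\b}$. This gives $Z=Z(\hq_E,\b)$, which is clause (3) of \rdef{def: its}. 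Clauses (1) and (2) are immediate, since $\b=\pi_{\hq_E,\infty}(\nu)$. The step I expect to be the main obstacle is this factorization: one must check that after $\hr$ no extender on the main branch has critical point below $\nu_\hr$ except the one coming from $E$. This is a careful bookkeeping exercise with \cite[Lemma 9.12]{blue2025nairian} and \cite[Corollary 9.11]{blue2025nairian}.

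For definability, we use \rdef{def: reflection tuple} in the form of \rem{rem: also bounded by l}. For $x\in\mH|\l_u$ we have $\sigma'_u(x)=\pi_{\hs,\infty}(x_{\hs|\nu_\hs})$ for any $\l$-bounded iterate $\hs$ of $\hq$ based on $\hq|\nu$ with $x$ in the range. Passing through $\hq_E$ (whose restriction below $\nu$ agrees with $\hq$ up to the tree $\T_{\hq,\hr}$), such iterates correspond, via \rlem{lem: characterizing rel rest}, exactly to pairs $(\hr',\tau_{\hr'})\in\rel(Z)\rest\l^u$. Under this correspondence $\tau_{\hr'}=\pi_{\hs,\infty}\rest\M^{\hs}|\l_{\hs}$, and $\pi_{\hs|\nu_\hs,\infty}$ is recovered from the same data. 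Hence
\[
\sigma'_u(x)=y\iff \exists(\hr',\tau)\in\rel(Z)\rest\l^u\ \exists x'\ (\pi_{\hr'|\nu_{\hr'},\infty}(x')=x\wedge \tau(x')=y).
\]
This definition is independent of the witness by \rlem{lem: u is wd} and \rthm{thm: uniqueness of realizability witnesses}.

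Finally, $\sigma_u=\ind(\sigma'_u)$ is definable over $N|\b$ from $\sigma'_u$ and $\l^u$ by \rdef{def: induced embeddings}. The relevant satisfaction relations are those of $N|\l_u$ and $N|\l^u$, both of which are elements of $N|\b$. We also need that $\rel(Z)\rest\l^u\in N|\b$: this pair set consists of countable objects together with maps into $\mH|\l^u$, and it belongs to $N|\b$ by \rcor{cor: def its and rel} and \rlem{lem: catching t and s}.
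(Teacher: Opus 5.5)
\textbf{The certification half has a genuine gap.} Your first step claims that the extender $E'$ applied to $\hr=\hq^{\l^u}$ along the main branch of $\T_{\hq,\hh}$ is $F=\pi_{\hq,\hr}(E)$. This is not true in general, and what you cite does not support it. \rlem{lem: catching preimages} and \rcl{clm: d is in b} say nothing about which extender the comparison uses, and the minimality of $E$ on the sequence of $\hq$ is irrelevant to that question.

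In the normal tree $\T_{\hq,\hh}$, the extender applied to $\hr$ on the main branch is taken from a later model $\M_\xi$, and $\hh$ agrees with $\M_\xi$ below its index. By coherence, it can be an extender of $\hr$ of index $>\nu_\hr$ only if $\M_\xi$, and hence $\hh$, already agrees with $\hr$ past $\nu_\hr$. That is the degenerate case $\b=\nu_\hr$, $\pi_{\hr|\nu_\hr,\hh|\b}=\mathrm{id}$. In general $\hr|\nu_\hr$ must first be iterated inside the window above $\l^u$, and the main-branch extender with critical point $\k_\hr$ comes from the iterated model (compare $\hx'$, $\hx$ and $E$ in the proof of \rcor{cor: tech cor iii}).

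There are two related errors:
\begin{itemize}
\item The ``main obstacle'' you isolate (no main-branch extender after $\hr$ with critical point below $\nu_\hr$ other than the one coming from $E$) is false. If it held, it would give $Z=\pi_{\hq,\hr}[\M^\hq|\nu]$, so the factor $\pi_{\hr|\nu_\hr,\hh|\b}$ in the stated $Z$ would disappear.
\item $\pi_E\rest\nu\neq\mathrm{id}$, since $\cp(E)=\k<\nu$. What is true is $\M^{\hq_E}|\nu=\M^\hq|\nu$, because $\lh(E)>\nu$.
\end{itemize}

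The missing idea is to normalize a different stack, with no reference to the main branch of $\T_{\hq,\hh}$ past $\hr$. Let $\hs=\hq_E^{\l^u}$. The full normalization of $\langle E\rangle^\frown\T_{\hq_E,\hs}$ is $\T_{\hq,\hr}^\frown\langle F\rangle$. Hence $\hs=\hr_F$, $\pi_{\hq_E,\hs}\rest\M^{\hq_E}|\nu=\pi_{\hq,\hr}\rest\M^\hq|\nu$, and $\hs|\pi_{\hq_E,\hs}(\nu)=\hr|\nu_\hr$. Since $\T_{\hs,\hh}$ acts on $\hs|\nu_\hr=\hr|\nu_\hr$ as the iteration to $\hh|\b$, this yields both $\pi_{\hq_E,\infty}(\nu)=\b$ and $\pi_{\hq_E,\infty}[\M^{\hq_E}|\nu]=Z$. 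So clauses (1)--(2) of \rdef{def: its} are not ``immediate''; they come out of the same computation.

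\textbf{The definability half asserts its key step.} Your displayed formula is the right one. But the sentence saying that $\l$-bounded iterates of $\hq$ ``correspond exactly'' to pairs in $\rel(Z)\rest\l^u$ with matching maps is precisely what must be proved, namely $\rel(U)\rest\l^u=\rel(Z)\rest\l^u$ for $U=\pi_{\hq,\infty}[\M^\hq|\nu]$, and you give no argument for it. Applied to the certificate $\hq_E$, \rlem{lem: characterizing rel rest} only ties $\rel(Z)\rest\l^u$ to $\l$-bounded iterates of $\hq_E$.

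You must show the following. Let $\hs$ be an $\l$-bounded iterate of $\hq$ based on $\hq|\nu$, and let $\hs'$ be the last model of $\uparrow(\T_{\hq|\nu,\hs|\nu_\hs},\hq_E)$. Then $\pi_{\hs',\infty}\rest\M^{\hs'}|\l_{\hs'}=\pi_{\hs,\infty}\rest\M^\hs|\l_\hs$, and conversely starting from iterates of $\hq_E$. This is the same normalization fact, $\hs'=\hs_{F'}$ with $F'=\pi_{\hq,\hs}(E)$, combined with $\cp(F')=\k_\hs>\l_\hs$.

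Finally, \rcor{cor: def its and rel} and \rlem{lem: catching t and s} give codes in $\mH|(\gg^+)^\mH$, where $\gg$ is the least Woodin cardinal of $\mH$ above $\b$. So they do not place $\rel(Z)\rest\l^u$ in $N|\b$. The statement uses it only as a parameter, so that claim should be dropped.
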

\begin{proof}
We first show that $\hq_E$ is an $\its(\b)$-certificate for $Z$. We thus need to show that \[Z=\pi_{\hq_E, \infty}[\M^{\hq_E}|\nu].\]
The equality follows because letting $\hs=\hq_E^{\l^u}$ and applying full normalization to $(\T_{\hq, \hq_E})^\frown \T_{\hq_E, \hs}$ we get that \\\\
\begin{enumerate}[label=(1.\arabic*), itemsep=0.3cm]
\item $\hs=\hr_F$ where $F=\pi_{\hq, \hr}(E)$,
\item $\hq_E|\nu=\hq|\nu$,
\item $\hs|\pi_{\hq_E, \hs}(\nu)=\hr|\nu_\hr$,\footnote{Notice that $\pi_{\hq_E, \hs}(\nu)\neq\nu_\hs$ as $\nu_\hs=\pi_{\hq, \hs}(\nu)=\pi_{\hq_E, \hs}(\nu_{\hq_E})$ and $\nu<\nu_{\hq_E}$.}
\item $\pi_{\hq, \hr}\rest \M^\hq|\nu=\pi_{\hq_E, \hs}\rest \M^{\hq_E}|\nu$.
\end{enumerate}
We then have that 
\begin{align*}
(*)\ \ \ \ \ \ \ \ \pi_{\hq_E, \infty}[\M^{\hq_E}|\nu] &=\pi_{\hs|\pi_{\hq_E, \hs}(\nu), \hh|\b}\circ \pi_{\hq_E, \hs}[\M^{\hq_E}|\nu]\\
&= \pi_{\hr|\nu_\hr, \hh|\b}\circ \pi_{\hq, \hr}[\M^\hq|\nu]\\
&= Z
\end{align*}
Next, we need to verify that $\sigma_u'$ is definable over $N|\b$ from $(\rel(Z)\rest \l^u, \l^u)$. We know that $\sigma_u'$ can be easily defined from $\rel(U)$ where \[U=\pi_{\hq, \infty}[\M^\hq|\nu].\] Applying \rrem{rem: also bounded by l}, we see that we can define $\sigma_u'$ from $\rel(U)\rest \l^u$. The following claim finishes the proof.
\begin{claim} $\rel(U)\rest \l^u=\rel(Z)\rest \l^u$.
\end{claim} 
\begin{proof} 
The reader may find \rfig{fig:sharper_bound_final} useful. To show that \[\rel(U)\rest \l^u\subseteq \rel(Z)\rest \l^u\]
it is enough to show that (see \rlem{lem: characterizing rel rest})\\\\
(a) if $\hr$ is a complete $\l^u$-bounded iterate of $\hq$ such that $\T_{\hq, \hr}$ is based on $\hq|\nu$, then \[(\hr|\nu, \pi_{\hr, \infty}\rest \M^\hr|\l_\hr)\in \rel(Z)\rest \l^u.\]\\
The proof of (a) is just like the argument showing that $\hq_E$ is an $\its(\b)$-certificate for $Z$. Indeed, let 
\begin{itemize}
\item $F=\pi_{\hq, \hr}(E)$, and
\item $\hs$ be the last model of $\uparrow(\T_{\hq|\nu, \hr|\nu_\hr}, \hq_E)$.
\end{itemize}
Notice that $\uparrow(\T_{\hq|\nu, \hr|\nu_\hr}, \hq_E)$ makes sense as $\hq_E|\nu=\hq|\nu$. Applying full normalization to $(\T_{\hq, \hq_E})^\frown \T_{\hq_E, \hs}$ we get that\\\\
(2) $\hr_F=\hs.$\\\\
Following the argument for $(*)$, we see that (keeping in mind that $\hr|\l_\hr=\hr_F|\l_{\hr_F}$)
\[\pi_{\hr, \infty}\rest \M^\hr|\l_\hr=\pi_{\hr_F, \infty}\rest \M^{\hr_F}|\l_{\hr_F}.\]
We thus have that (applying (2))\\\\
(3) $\pi_{\hr, \infty}\rest \M^\hr|\l_\hr=\pi_{\hs, \infty}\rest \M^{\hs}|\l_{\hs}$.\\\\
Since $\hs$ is a complete $\l^u$-bounded iterate of $\hq_E$ and $\hq_E$ is an $\its(\b)$-certificate for $Z$, we have that
\[(\hs|\nu_\hs, \pi_{\hs, \infty}\rest \M^\hs|\nu_\hs)\in \rel(Z)\]
and because $\hr_F|\nu_\hr=\hr|\nu_\hr$, we get that 
\[(\hr|\nu_\hr, \pi_{\hr, \infty}\rest \M^\hr|\l_\hr)\in \rel(Z)\rest \l^u.\]

Conversely, to show that \[\rel(Z)\rest \l^u\subseteq \rel(U)\rest \l^u\]
it is enough to show that (see \rlem{lem: characterizing rel rest})\\\\
(b) if $\hs$ is a complete $\l^u$-bounded iterate of $\hq_E$ such that $\T_{\hq_E, \hs}$ is based on $\hq_E|\nu$, then \[(\hs|\nu, \pi_{\hs, \infty}\rest \M^\hs|\l_\hs)\in \rel(U)\rest \l^u.\]\\
Notice that $\nu=\b_{\hq_E}$ (and this might seem confusing the reader). The key point here is that, just like in the proof of (a), if $\hr$ is the last model of $\uparrow(\T_{\hq_E|\nu, \hs|\b_\hs}, \hq)$, then letting $F=\pi_{\hq, \hr}(E)$, \[\hs=\hr_F.\] We then conclude, as in the proof of (a), that $\hr|\nu_\hr=\hs|\b_\hs$, \[\pi_{\hr, \infty}\rest \M^\hr|\l_\hr=\pi_{\hs, \infty}\rest \M^\hs|\l_\hs,\] and \[(\hs|\b_\hs, \pi_{\hs, \infty}\rest \M^\hs|\l_\hs)\in \rel(U)\rest \l^u.\]
\begin{figure}[ht]
\centering
\begin{tikzpicture}[node distance=3cm, auto, scale=1.2]
    \node (hq) at (-2.5,3) {$\hq$};
    \node (hqE) at (-2.5,6) {$\hq_E$};
    
    \node (hr) at (2.5,3) {$\hr$};
    \node (hs) at (2.5,6) {$\hs$};
    
    \node (hrF) at (5, 4.5) {$\hr_F = \hs$};
    \node (H) at (7.5, 4.5) {$\mH$};

    \draw [arrow] (hq) -- node[left] {$E$} (hqE);
    \draw [arrow] (hq) -- node[below] {$\T_{\hq, \hr}$} node[above, pos=0.5, font=\tiny] {$\l^u$-bounded} (hr);
    \draw [arrow] (hqE) -- node[above] {$\T_{\hq_E, \hs}$} node[below, pos=0.5, font=\tiny] {$\l^u$-bounded} (hs);
    
    \draw [arrow, dashed] (hr) -- node[below right, font=\scriptsize] {$F = \pi_{\hq, \hr}(E)$} (hrF);
    \draw [arrow] (hs) -- (hrF);
    \draw [arrow] (hrF) -- node[above] {$\pi_{\hs, \infty}$} (H);

    \node[draw=black, thick, fill=gray!10, rounded corners, inner sep=6pt, anchor=north] 
        at (2.5, 1.5) {
        \begin{minipage}{9cm}
            \centering
            \textbf{\underline{Key Facts}}
            \begin{itemize}[label={\tiny$\bullet$}, itemsep=1.2pt, topsep=3pt, leftmargin=1em, font=\scriptsize]
                \item $\nu_{\hr} = \b_{\hs}$,
                \item $\hr|\nu_{\hr} = \hs|\b_{\hs}$,
                \item $\pi_{\hr, \infty} \restriction \M^{\hr}|\l_{\hr} = \pi_{\hs, \infty} \restriction \M^{\hs}|\l_{\hs}$.
            \end{itemize}
        \end{minipage}
    };
\end{tikzpicture}
\caption{The relationship between $\hq, \hq_E, \hr, \hr_F$ and $\hs$.}
\label{fig:sharper_bound_final}
\end{figure}
\end{proof}
\end{proof}

\section{Reflection II: Closure under $\omega_1$-sequences in the $\pmax$ extension}

Corollary \ref{cor: tech cor ii} allows us to show that if $\hr$ and $\hs$ are as in the statement of \ref{cor: tech cor ii}, then $\hs$ can be used to generate $\sigma_{(\hr, \l_\hr, \nu_\hr)}$. Passing from $\hr$ to $\hs$ is useful because given any $\omega_1$-sequence $((\hr_\a, \l_{\hr_\a}, \nu_{\hr_\a}): \a<\omega_1)$ we can find a single $\hs$ satisfying the conditions of \ref{cor: tech cor ii}, and this allows us to show that $N$ is closed under $\omega_1$-sequences in the $\pmax$-extension of $M$ (see \rcor{cor: omega1 closure}) and under $<\Theta^N$-sequences in $M$ (see \rthm{thm: closure under theta sequences}). We start with the exact statement we need. The reader may wish to review the notation introduced in \rdef{def: rel sets}.

\begin{corollary}\label{cor: tech cor iii} Suppose $M\models \rg_3(\hq, \l, \k, \nu)$, and $\hr$ and $\hs$ are two complete iterates of $\hq$ such that 
\begin{itemize}
\item $\k<\l$,
\item $\k$ is the largest strong cardinal of $\M^\hq|\nu$,
\item $\T_{\hq, \hs}$ is above $\k$, 
\item $\gen(\T_{\hq, \hr})\subseteq \l_\hr$,
\item there is $\zeta<\k_\hs$ that is a proper cutpoint of $\M^\hs$, an inaccessible cardinal of $\M^\hs$, 
\begin{center}$\sup(\pi_{\hr|\k_\hr, \infty}[\k_\hr])<\pi_{\hs|\zeta, \infty}(\ts(\hs|\zeta))$,\end{center}
and $\gen(\T_{\hq, \hs})\subseteq \zeta$.
\end{itemize}
Let $Z=\pi_{\hs, \infty}[\M^\hs|\nu_\hs]$, and $u=(\hr, \l_\hr, \nu_\hr)$. Then there is a complete iterate $\hk$ of $\hs|\nu_\hs$ such that for some $\xi$, 
\begin{enumerate}
\item $\hk|\xi$ is a complete iterate of $\hr|\nu_\hr$, 
\item $\xi$ is a proper cutpoint of $\M^\hk$ and a Woodin cardinal of $\M^\hk$, 
\item if $\l'$ is the supremum of all the strong cardinals of $\hs|\k_\hs$, then \[\pi_{\hs|\nu_\hs, \hk}(\l')<\xi<\pi_{\hs|\nu_\hs, \hk}(\k_\hs),\]
\item for $\b<\l_u$, letting $\hm$ be a complete iterate of $\hk$ such that $\T_{\hk, \hm}$ is based on $\hk|\xi$ and $\b_{\hm|\xi_\hm}$ is defined, \begin{center}$\sigma_u(\b)=\tau^Z_{\hm}(\b_{\hm|\xi_\hm})$.\end{center}
\end{enumerate}
\end{corollary}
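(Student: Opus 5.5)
The natural route is to obtain $\hk$ from the least-extender-disagreement coiteration of $\hr$ and $\hs$, using \rcor{cor: tech cor ii} to control its shape. The hypotheses of the present corollary are exactly those of \rcor{cor: tech cor ii} applied to $\rg_2(\hq,\l,\k,\nu)$ (which follows from $\rg_3$). So let $\hw$ be the common iterate and $\hx$ the least pair on $\T_{\hr,\hw}$ with $\T_{\hr,\hx}$ based on $\hr|\nu_\hr$. Clause (2) of \rcor{cor: tech cor ii} says that $\T_{\hs,\hw}$ is based on $\hs|\nu_\hs$. I would therefore set $\hk=\hw|\nu_\hw$, which is a complete iterate of $\hs|\nu_\hs$ via $\downarrow(\T_{\hs,\hw},\hs|\nu_\hs)$, and take $\xi=\nu_\hx$.

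\textbf{Clauses (1)–(3).} By clauses (3) and (4) of \rcor{cor: tech cor ii}, $\hk|\xi=\hw|\nu_\hx=\hx|\nu_\hx$, and this is a complete iterate of $\hr|\nu_\hr$; this gives (1). For (2), $\xi$ is the image of the Woodin proper cutpoint $\nu_\hr$, and agreement of $\hw$ with $\hx$ up to $\nu_\hx$ transfers Woodinness. The cutpoint property comes from clause (7) there: the remainder $(\T_{\hr,\hw})_{\geq\hx'_E}$ is above $\k_{\hx'_E}$ and moves no extender overlapping $\nu_\hx$, by the choice of $\hx$ as the least node based on $\hr|\nu_\hr$. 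For (3), use the facts (2.1)–(2.3) from the proof of \rcor{cor: tech cor ii}. They say that the strong cardinals of $\hw|\k_\hw$ are exactly those of $\hx|\k_\hx$, and that $\k_\hx<\k_\hw$. Since $\k$ is the largest strong cardinal of $\M^\hq|\nu$ and $\k<\l$, the image $\pi_{\hs|\nu_\hs,\hk}(\l')$ is the supremum of those strong cardinals, which lies below $\k_\hx<\nu_\hx=\xi$. On the other side, $\xi<\pi_{\hs|\nu_\hs,\hk}(\k_\hs)=\k_\hw$ holds because $\nu_\hx<\zeta_\hy<\k_\hw$, again by (2.1).

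\textbf{Clause (4).} This is the main obstacle: identifying $\sigma_u$ with $\tau^Z$ computed via iterates of $\hk$ above $\hk|\xi$. Fix $\b<\l_u$ and an $\hm$ as in the statement. I would lift $\downarrow(\T_{\hk,\hm},\hk|\xi)$ to trees on $\hr$ and on $\hw$, and take the common last model $\hn$ of the resulting full normalizations. Then $\hn|\xi_\hn$ is an iterate of $\hr|\nu_\hr$ in which $\b$ has a preimage. By \rdef{def: reflection tuple} and \rrem{rem: also bounded by l}, this gives $\sigma_u(\b)=\pi_{\hn',\infty}(\b_{\hm|\xi_\hm})$, where $\hn'$ is the lift to $\hr$.

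Next, $\hn'$ is also reached from $\hs$. Since $\hs$ certifies $Z\in\its$, \rthm{thm: uniqueness of realizability witnesses} and \rdef{def: rel sets} give that $\tau^Z_\hm$ equals $\pi_{\hn'',\infty}\rest\M^\hm$ for the lift $\hn''$ of the tree to $\hs$. It remains to check that $\pi_{\hn',\infty}$ and $\pi_{\hn'',\infty}$ agree on $\b_{\hm|\xi_\hm}$. The argument is the one used in \rlem{lem: u is wd}. Both lifts are iterates of a common model, namely the lift to $\hw$. The remaining trees act above $\xi$, by clause (7) of \rcor{cor: tech cor ii} and the fact that $\T_{\hs,\hw}$ is based on $\hs|\nu_\hs$. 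Hence the two maps agree below $\l$-images. I expect the delicate point to be showing that the $\hr$-side tail does not move $\b_{\hm|\xi_\hm}$, and I would handle it with \rlem{lem: catching preimages} together with the smallness of $\l$.
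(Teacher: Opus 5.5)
Your choice of $\hk=\hw|\nu_\hw$ and $\xi=\nu_\hx$, taken from the least-extender-disagreement coiteration of $\hr$ and $\hs$, is exactly the paper's. Clauses (1) and (3) go through as you describe.

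For clause (2), agreement of $\hw$ and $\hx$ only up to $\nu_\hx$ does not transfer Woodinness or the cutpoint property. You need that the extender $E$ applied at $\hx'$ has $\lh(E)>\nu_\hx$; this gives $\powerset(\xi)\cap\M^\hw\subseteq\M^\hx$.

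The genuine gap is in clause (4). Let $\hy$ be the last model of the lift of $\T_{\hk,\hm}$ to $\hw$. Then $\T_{\hs,\hy}$ is based on $\hs|\nu_\hs$, and $\tau^Z_\hm=\pi_{\hy,\infty}\rest\M^\hm$, as you say. A tree on $\hk|\xi=\hx|\nu_\hx$ cannot be lifted to $\hr$, only to $\hx$; call the last model of that lift $\hy'$. Since $\T_{\hr,\hy'}$ is based on $\hr|\nu_\hr$, you do get $\sigma_u(\b)=\pi_{\hy',\infty}(\b_{\hm|\xi_\hm})$. But your bridge between $\hy'$ and $\hy$ fails, for three reasons.
\begin{itemize}
\item The ``lift to $\hw$'' is $\hy$ itself, and $\hy'$ is not an iterate of it: the image of $\k$ in $\hy'$ is $\pi_{\hw,\hy}(\k_\hx)<\xi_\hy<\k_\hy$.
\item Conversely, $\T_{\hr,\hy}$ is not based on $\hr|\nu_\hr$, because its branch already applies $E$ at $\hx'$. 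So $\sigma_u$ cannot be evaluated through $\hy$ directly.
\item The relevant tails do not act above $\xi$. The critical point that matters is the image of $\k_\hx$, which lies below $\xi$.
\end{itemize}
Finally, \rlem{lem: catching preimages} does not close this. It yields \emph{some} iterate of $\hs$ below $\zeta$ realizing $\sigma_u(\b)$, not your given $\hm$, and comparing that iterate with $\hy$ is the same problem again.

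The missing idea is to fully normalize $(\T_{\hx',\hw})^\frown\T_{\hw,\hy}$. This makes sense because $\hw$ is an iterate of $\hx'$ above $\k_{\hx'}$.
\begin{itemize}
\item The longest initial segment of this normalization based on $\hx'|\nu_{\hx'}$ is $\T_{\hx',\hy'}$.
\item Let $\hy''$ be the least node of that segment with $\hy''|\k_{\hy''}=\hy'|\k_{\hy'}$. Then $\hy''$ lies on the main branch leading to $\hy$.
\item That branch leaves $\hy''$ via an extender $F$ with $\cp(F)=\k_{\hy''}=\pi_{\hw,\hy}(\k_\hx)$ and $\lh(F)>\nu_{\hy'}$. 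Everything after $F$ is above $\k_{\hy''_F}$.
\end{itemize}
Since $\b_{\hm|\xi_\hm}<\l_{\hy''}<\k_{\hy''}$ and $\hy''|\l_{\hy''}=\hy|\l_\hy$, this gives $\pi_{\hy'',\infty}(\b_{\hm|\xi_\hm})=\pi_{\hy,\infty}(\b_{\hm|\xi_\hm})$. Because $\T_{\hr,\hy''}$ is based on $\hr|\nu_\hr$, also $\sigma_u(\b)=\pi_{\hy'',\infty}(\b_{\hm|\xi_\hm})$, and clause (4) follows.

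So the common ancestor you need is the node $\hy''$ on the $\hr$-side tree, below the point where the branch to $\hy$ departs. It is not the lift to $\hw$.
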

\begin{proof} The reader may find \rfig{fig:sigma_u_minimalist_final_boxed} useful.
Let $\hw$ be the result of the least-extender-disagreement coiteration of $\hs$ and $\hr$. Let $\hx$ be the least pair on $\T_{\hr, \hw}$ such that $\T_{\hr, \hx}$ is based on $\hr|\nu_\hr$. We then have that (see \rcor{cor: tech cor ii})
\vspace{0.3cm}
\begin{enumerate}[label=(1.\arabic*), itemsep=0.3cm]
\item $\gen(\T_{\hs, \hw})\subseteq \nu_\hw$ and $\gen(\T_{\hr, \hw})\subseteq \nu_\hw$
\item $\T_{\hs, \hw}$ is based on $\hs|\nu_\hs$,
\item $\hx$ is a complete iterate of $\hr$,
\item $\l_{\hw}=\l_{\hx}$ and $\hx|\nu_\hx=\hw|\nu_\hx$,
\item if $\hx'$ is the least node of $\T_{\hr, \hx}$ such that $\hx'|\k_{\hx'}=\hx|\k_\hx$, then $\hx'$ is on the main branch of $\T_{\hr, \hw}$,
\item if $E$ is the extender used on the main branch of $\T_{\hr, \hx}$ at $\hx'$ then \[\cp(E)=\k_{\hx'}\ \text{and}\ \lh(E)>\nu_\hx,\]
\item $(\T_{\hr, \hw})_{\geq \hx'_E}$ is above $\k_{\hx'_E}$, and 
\item $(\T_{\hr, \hw})_{\geq \hx'}$ is an iteration tree on $\hx'$, and so $\hw$ is a complete iterate of $\hx'$ and $\T_{\hx', \hw}$ is above $\k_{\hx'}$.
\end{enumerate}
\vspace{0.3cm}

\usetikzlibrary{fit, shapes.geometric, calc}

\begin{figure}[ht]
    \centering
    \begin{tikzpicture}[node distance=2.5cm, auto, >=Stealth]
        
        \node (hq) at (0,0) {$\hq$};
        \node (hw) at (12.5, 0) {$\hw$};
        
        \node (hr) at (3.5, 3) {$\hr$};
        \node (hs) at (3.5, -3) {$\hs$};
        
        \node (hxp) at (6.5, 2.1) {$\hx'$};
        \node (hxe) at (9.5, 1.1) {$\hx'_E$};
        
        \node (hx) at (9.5, 4.2) {$\hx$};

        
        \path[->, thick] 
            (hq) edge node[above left, font=\scriptsize, xshift=-4pt] {$\T_{\hq,\hr}$} (hr)
            (hq) edge node[below left, font=\scriptsize, xshift=-4pt] {$\T_{\hq,\hs}$} (hs);
            
        \path[->, dashed] (hs) edge node[below right, font=\scriptsize, yshift=-2pt] {$\T_{\hs,\hw}$} 
            node[midway, sloped, above, font=\tiny, yshift=2pt] {based on $\hs|\nu_{\hs}$} (hw);

        \draw[thick] (hr) -- (hxp) node[midway, sloped, above, font=\scriptsize] {$\T_{\hr, \hx'}$};
        
        \draw[thick] (hxp) -- (hxe) 
            node[midway, sloped, above, font=\scriptsize, yshift=2pt] {$E \in \hx$}
            node[midway, sloped, below, font=\tiny, yshift=-4pt] {$\text{cp}(E)=\k_{\hx'}=\k_{\hx}$};
        
        \draw[thick, dashed] (hxe) -- (hw) 
            node[midway, sloped, above, font=\scriptsize] {$\T_{\hx'_E, \hw}$}
            node[midway, sloped, below, font=\tiny, yshift=-2pt] {above $\k_{\hx'_E}$};
        
        \draw[->, thick] (hxp) -- (hx) 
            node[midway, sloped, above, font=\scriptsize] {$\T_{\hx', \hx}$}
            node[midway, sloped, below, font=\tiny, yshift=-4pt] {strictly above $\k_{\hx'}$};

        \draw[blue!70, thick, dashed] 
            ($(hr) + (-0.6, 0.4)$) .. controls ($(hx) + (-3, 2.5)$) and ($(hx) + (2.5, 2)$) .. 
            ($(hw) + (0.6, 0.4)$) .. controls ($(hw) + (-0.2, -0.4)$) and ($(hs)!0.5!(hw) + (0, 0.3)$) ..
            ($(hr) + (1.2, -1.0)$) .. controls ($(hr) + (0.4, -0.6)$) .. 
            cycle;
        
        \node[text=blue!90, font=\scriptsize, anchor=south] at ($(hx.north) + (0, 1.4)$) {$\T_{\hr, \hw}$ based on $\hr|\nu_{\hr}$};

        \node[draw=black, thick, fill=gray!5, rounded corners, inner sep=8pt, anchor=north] 
        (keyfacts) at (6.25, -3.8) {
            \begin{minipage}{8.5cm}
                \centering
                \textbf{\underline{Key Facts}}
                \begin{itemize}[label={\tiny$\bullet$}, itemsep=3pt, topsep=4pt, leftmargin=1.5em, font=\small]
                    \item $\hx|\nu_{\hx} = \hw|\nu_{\hw}$
                    \item $\text{cp}(E) = \k_{\hx'}$ and $\text{lh}(E) > \nu_{\hx}$
                \end{itemize}
            \end{minipage}
        };

    \end{tikzpicture}
    \caption{The coiteration of $\hr$ and $\hs$.}
    \label{fig:sigma_u_minimalist_final_boxed}
\end{figure}
 Let $\xi=\nu_{\hx}$. Notice first that if $\b<\l_{u}$ then there is a complete iterate $\hy$ of $\hw$ such that $\T_{\hw, \hy}$ is based on $\hw|\xi$ and $\b_{\hy|\xi_\hy}$ is defined (here $\xi_\hy=\pi_{\hw, \hy}(\xi)$ and $\xi_\hy<\nu_\hy$). This is because $\hw|\xi=\hx|\xi$, and $\hx|\xi$ is a complete iterate of $\hr|\nu_\hr$.

 The following is our key claim.
\begin{claim}\label{clm: key clm catching ralpha} Suppose $\b<\l_{u}$ and $\hy$ is an iterate of $\hw$ such that $\T_{\hw, \hy}$ is based on $\hw|\xi$ and $\b_{\hy|\xi_\hy}$ is defined (here $\xi_\hy=\pi_{\hw, \hy}(\xi)$). Then \[\sigma_u(\b)=\pi_{\hy, \infty}(\b_{\hy|\xi_\hy}).\] 
\end{claim}
\begin{proof} The reader may find \rfig{fig:claim_11_5_relations} and \rfig{fig:claim_11_5_wide_angle_boxed} useful. Notice that because $\T_{\hr, \hx'}$ is based on $\hr|\nu_\hr$, \\\\
(2) if $\hm$ is a complete iterate of $\hx'$ such that 
\begin{itemize}
\item $\T_{\hx', \hm}$ is based on $\hx'|\nu_{\hx'}$, and 
\item $\b_{\hm|\nu_\hm}$ is defined,
\end{itemize}
then \[\sigma_u(\b)=\pi_{\hm, \infty}(\b_{\hm|\nu_\hm}).\]\\
We consider the full normalization $\T_{\hx', \hy}$ of $(\T_{\hx', \hw})^\frown \T_{\hw, \hy}$ (this makes sense because of (1.4) and (1.8)). Let $\hy'$ be the last model of $\uparrow(\T_{\hw|\xi, \hy|\xi_\hy}, \hx)$. We then have that\\
\begin{enumerate}[label=(3.\arabic*), itemsep=0.3cm]
\item $\T_{\hx', \hy'}$ is the longest initial segment of $\T_{\hx', \hy}$ that is based on $\hx'|\nu_{\hx'}$,
\item if $\hy''$ is the least node of $\T_{\hx', \hy'}$ such that $\hy''|\k_{\hy''}=\hy'|\k_{\hy'}$, then $\hy''$ is on the main branch of $\T_{\hx', \hy}$,
\item $\k_{\hy''}=\pi_{\hw, \hy}(\k_{\hx})$ and $\nu_{\hy'}=\pi_{\hw, \hy}(\xi)$ (recall, $\xi=\nu_{\hx})$),
\item if $F$ is the extender used on the main branch of $\T_{\hx', \hy}$ at $\hy''$, then \[\cp(F)=\k_{\hy''}\ \text{and}\ \lh(F)>\nu_{\hy'},\]
\item $\T_{\hy''_F, \hy}$ is above $\k_{\hy''_F}$, and 
\item $\hy$ is a complete iterate of $\hy''$ such that $\T_{\hy'', \hy}$ is above $\k_{\hy''}$.
\end{enumerate}
\begin{figure}[ht]
    \centering
    \begin{tikzpicture}[node distance=2.5cm, auto, >=Stealth]
        
        \node (hxp) at (0, 0) {$\hx'$};
        
        \node (hx) at (3, 2) {$\hx$};
        \node (hyp) at (8, 2) {$\hy'$};
        
        \node (hxe) at (3, -1) {$\hx'_E$};
        \node (hw) at (6, -1) {$\hw$};
        \node (hy) at (10, -1) {$\hy$};

        
        \draw[->, thick] (hxp) -- (hx) 
            node[midway, sloped, above, font=\scriptsize] {$\T_{\hx', \hx}$};
            
        \draw[thick] (hxp) -- (hxe) 
            node[midway, sloped, above, font=\scriptsize] {$E$}
            node[midway, sloped, below, font=\tiny, yshift=-2pt] {$\text{cp}(E)=\k_{\hx'}$};
            
        \draw[thick, dashed] (hxe) -- (hw) 
            node[midway, sloped, above, font=\scriptsize] {$\T_{\hx'_E, \hw}$}
            node[midway, sloped, below, font=\tiny] {above $\k_{\hx'_E}$};
            
        \path[->, dashed, thick] (hw) edge 
            node[midway, sloped, above, font=\scriptsize] {$\T_{\hw, \hy}$} 
            node[midway, sloped, below, font=\tiny, yshift=-2pt] {based on $\hw|\nu_{\hx}$} (hy);

        \path[->, dashed, thick] (hx) edge 
            node[midway, sloped, above, font=\scriptsize] {lift of $\T_{\hw, \hy}$} 
            node[midway, sloped, below, font=\tiny] {(based on $\hx|\nu_\hx$)} (hyp);

    \end{tikzpicture}
    \caption{The relationship between the iterates in Claim 11.5. The tree $\T_{\hw, \hy}$ is lifted to $\hx$ (because $\hx|\nu_\hx = \hw|\nu_\hx$) to produce $\hy'$.}
    \label{fig:claim_11_5_relations}
\end{figure}
\begin{figure}[ht]
    \centering
    \begin{tikzpicture}[auto, >=Stealth]
        
        \node (hr) at (-2, 0) {$\hr$};
        \node (hxp) at (0, 0) {$\hx'$};
        
        \node (hx) at (3, 2.5) {$\hx$};
        \node (hypp) at (7, 2.5) {$\hy''$};
        \node (hyp) at (12, 2.5) {$\hy'$};
        
        \node (hxe) at (3, -2.5) {$\hx'_E$};
        \node (hw) at (7, -2.5) {$\hw$};
        \node (hy) at (12, -2.5) {$\hy$};
        
        \node (hyppF) at (9.5, 0) {$\hy''_F$};

        
        \draw[->, thick] (hr) -- (hxp) 
            node[midway, sloped, above, font=\scriptsize] {$\T_{\hr, \hx'}$}
            node[midway, sloped, below, font=\tiny, yshift=-1pt] {based on $\hr|\nu_{\hr}$};

        \draw[->, thick] (hxp) -- (hx) 
            node[midway, sloped, above, font=\scriptsize] {$\T_{\hx', \hx}$}
            node[midway, sloped, below, font=\tiny, yshift=-1pt] {strictly above $\k_{\hx'}$};
            
        \draw[thick] (hxp) -- (hxe) 
            node[midway, sloped, above, font=\scriptsize] {$E$}
            node[midway, sloped, below, font=\tiny, yshift=-1pt] {cp$(E)=\k_{\hx'}$};
            
        \draw[thick, dashed] (hxe) -- (hw) 
            node[midway, sloped, above, font=\scriptsize] {$\T_{\hx'_E, \hw}$}
            node[midway, sloped, below, font=\tiny] {above $\k_{\hx'_E}$};
            
        \path[->, dashed, thick] (hw) edge 
            node[midway, sloped, above, font=\scriptsize] {$\T_{\hw, \hy}$} 
            node[midway, sloped, below, font=\tiny, yshift=-1pt] {based on $\hw|\nu_{\hx}$} (hy);

        \draw[->, dashed, thick] (hx) -- (hypp) 
            node[midway, sloped, above, font=\scriptsize] {lift of $\T_{\hw, \hy}$}
            node[midway, sloped, below, font=\tiny, yshift=-1pt] {based on $\hx|\nu_{\hx}$};
            
        \draw[->, dashed, thick] (hypp) -- (hyp)
            node[midway, sloped, above, font=\scriptsize] {$\T_{\hy'', \hy'}$}
            node[midway, sloped, below, font=\tiny, yshift=-1pt] {strictly above $\k_{\hy''}$};

        \draw[thick] (hypp) -- (hyppF) 
            node[midway, sloped, above, font=\scriptsize] {$F \in \hy'$}
            node[midway, sloped, below, font=\tiny, yshift=-1pt] {cp$(F)=\k_{\hy''}$};

        \draw[->, dashed, thick] (hyppF) -- (hy) 
            node[midway, sloped, above, font=\scriptsize] {$\T_{\hy''_F, \hy}$}
            node[midway, sloped, below, font=\tiny, yshift=-1pt] {above $\k_{\hy''_F}$};

        \node[draw=black, thick, fill=gray!10, rounded corners, inner sep=6pt, anchor=north] 
            at (5, -3.5) {
            \begin{minipage}{10cm}
                \centering
                \textbf{\underline{Key Facts}}
                \begin{itemize}[label={\tiny$\bullet$}, itemsep=1.2pt, topsep=3pt, leftmargin=1em, font=\scriptsize]
                    \item Since $\T_{\hy'', \hy'}$ is strictly above $\k_{\hy''}$, we have 
                    \[\pi_{\hy'', \infty}(\b_{\hy|\xi_\hy}) = \pi_{\hy, \infty}(\b_{\hy|\xi_\hy})\]
                    \item Since $\T_{\hr, \hy''}$ is based on $\hr|\nu_\hr$, we have 
                    \[\sigma_u(\b_{\hy|\xi_\hy}) = \pi_{\hy'', \infty}(\b_{\hr|\xi_\hr}).\]
                    \item Hence, \[\sigma_u(\b_{\hy|\xi_\hy}) =\pi_{\hy, \infty}(\b_{\hy|\xi_\hy})\]
                \end{itemize}
            \end{minipage}
        };

    \end{tikzpicture}

    \caption{The relationship between various hod pairs.}
    \label{fig:claim_11_5_wide_angle_boxed}
\end{figure}
Because $\b_{\hy|\xi_\hy}<\l_\hy=\l_{\hy''}<\k_{\hy''}$ (here $\xi_\hy=\pi_{\hw, \hy}(\xi)$) and $\hy''|\l_{\hy''}=\hy|\l_{\hy}$, (3.1)-(3.6) imply that\\\\
(3) $\pi_{\hy''_F, \infty}(\b_{\hy|\xi_\hy})=\pi_{\hy'', \infty}(\b_{\hy|\xi_\hy})=\pi_{\hy, \infty}(\b_{\hy|\xi_\hy})$.\\\\
Moreover, $\hy''$ is an iterate of $\hr$ such that $\T_{\hr, \hy''}$ is based on $\hr|\nu_\hr$. (2) then implies that \[\sigma_{u}(\b)=\pi_{\hy'', \infty}(\b_{\hy|\xi_\hy}),\] and so (4) implies the claim.
\end{proof}
Set now $\hk=\hw|\nu_\hw$. Recall that we set $\xi=\nu_\hx$. The next claim finishes the proof.
\begin{claim} $(\hk, \xi)$ satisfy clauses (1)-(4) of \rcor{cor: tech cor iii}. 
\end{claim}
\begin{proof} We verify all the clauses.\\\\
\textbf{Clause 1.} This is a consequence of (1.4). We have that $\hx|\xi$ is a complete iterate of $\hr|\nu_\hr$, and $\hx|\xi=\hw|\xi$.\\
\textbf{Clause 2.} This is a consequence of (1.6) and the fact that $\nu_\hx$ is a Woodin cardinal of $\M^\hx$. (1.6) implies that \[\powerset(\xi)\cap \M^\hw\subset \M^\hx\]\\
\textbf{Clause 3.} This is a consequence of (1.2), (1.5) and (1.6). We have that \[\hx'|\k_{\hx'}=\hw|\k_{\hx'},\]
which implies that \[\pi_{\hs, \hw}(\l')<\k_{\hx'}.\]
Since $\k_{\hx'}<\nu_\hx$, we have that 
\[\pi_{\hs, \hw}(\l')<\xi.\]
Applying (1.2) we get that 
\[\pi_{\hs, \hw}(\l', \k_\hs)=\pi_{\hs|\nu_\hs, \hk}(\l', \k_\hs),\]
and so \[\pi_{\hs|\nu_\hs, \hk}(\l')<\xi.\]
Notice next that (1.6) implies that $\xi<\k_\hw$, and since $\k_\hw=\pi_{\hs|\nu_\hs, \hk}(\k_\hs)$, we have that
\[\pi_{\hs|\nu_\hs, \hk}(\l')<\xi<\pi_{\hs|\nu_\hs, \hk}(\k_\hs).\]\\
\textbf{Clause 4.}
Let now $Z=\pi_{\hs, \infty}[\M^\hs|\nu_\hs]$. It follows from \rcl{clm: key clm catching ralpha} that\\\\
(4) for each $\b<\l_{u}$, letting $\xi=\nu_{\hx}$ and $\hy$ be any complete iterate of $\hw$ such that $\T_{\hw, \hy}$ is based on $\hw|\xi$ and $\b_{\hy|\xi_{\hy}}$ is defined where $\xi_{\hy}=\pi_{\hw, \hy}(\xi)$, \begin{center} $\sigma_u(\b)=\tau_{\hy|\nu_{\hy}}^Z(\b_{\hy|\xi_\hy})$.\end{center}
This is because
\begin{enumerate}[label=(5.\arabic*), itemsep=0.3cm]
\item $\hs$ is an $\its(\pi_{\hq, \infty}(\nu))$-certificate for $Z$,
\item $\xi=\nu_\hx<\k_\hw<\nu_\hw$ (see (1.6)), and
\item $\T_{\hs, \hy}$ is based on $\hs|\nu_\hs$.
\end{enumerate}
So (see \rthm{thm: uniqueness of realizability witnesses} and \rdef{def: rel sets})
\[\tau_{\hy|\nu_{\hy}}^Z=\pi_{\hy, \infty}\rest \M^\hy|\nu_\hy.\]
\end{proof}
\end{proof}

\begin{notation}\label{not: sigmazkxi}\normalfont Suppose $\nu<\hd_\infty$ is a proper cutpoint in $\mH$ relative to $\hd_\infty$ and is a Woodin cardinal of $\mH$ that is not a limit of Woodin cardinals of $\mH$. Suppose 
\begin{itemize}
\item $Z\in \its(\nu)$, 
\item $\hk$ is a complete iterate of $\hq_Z$, and
\item $\xi$ is an inaccessible proper cutpoint cardinal of $\M^\hk$.
\end{itemize}
Let $v=(Z, \hk, \xi)$. We then let \[\sigma_v: \mH|\pi_{\hk|\xi+1, \infty}(\xi)\rightarrow \mH|\gamma\] be the embedding given by \[\sigma_v(x)=\tau^Z_{\hr}(x_\hr),\] where 
\begin{itemize}
\item $\hr$ is a complete iterate of $\hk$ such that $\T_{\hk, \hr}$ is based on $\hk|\xi$, 
\item $x = \pi_{\hr, \infty}(x_\hr)$, and 
\item $\gamma=\sup(\sigma_v[\pi_{\hk|\xi+1, \infty}(\xi)])$.
\end{itemize}
\end{notation}

\begin{remark}\normalfont Given the results of \rsec{sec: it sets} (in particular, \rthm{thm: uniqueness of realizability witnesses}) and \rsec{sec: reflection i} (in particular, \rlem{lem: u is wd}), it is clear that $\sigma_v$ is well-defined. 
\end{remark}

\begin{terminology}\label{term: embeddings generated below}\normalfont Suppose $M\models \rg_3(\hq, \l, \k, \nu)$, and suppose $\hs$ is a complete iterate of $\hq$ such that $\T_{\hq, \hs}$ is above $\k$. Let \[\sigma: N|\a\rightarrow N|\pi_{\hq, \infty}(\l)\] be an elementary embedding. We say that $\sigma$ is \textbf{generated below} $\hs|\nu_\hs$ if there are
\begin{itemize}
\item a complete iterate $\hr$ of $\hq$, and
\item an inaccessible cardinal $\zeta<\k_\hs$ of $\M^\hs$ that is a proper cutpoint in $\M^\hs$,
\end{itemize}
such that \[\sup(\pi_{\hr|\k_\hr, \infty}[\k_\hr])<\pi_{\hs|\zeta, \infty}(\ts(\hs|\zeta)),\]
$\gen(\T_{\hq, \hr})\subseteq \l_\hr$, $\gen(\T_{\hq, \hs})\subseteq \zeta$, and $\sigma=\sigma_{(\hr, \l_\hr, \nu_\hr)}$.
\end{terminology}

We now have the following easy corollary. 

\begin{corollary}\label{cor: genrated by z} Suppose 
\begin{itemize}
\item $M\models \rg_3(\hq, \l, \k, \nu)$, 
\item $\hs$ is a complete iterate of $\hq$ such that $\T_{\hq, \hs}$ is above $\k$, and 
\item $\sigma: N|\a\rightarrow N|\pi_{\hq, \infty}(\l)$ is generated below $\hs|\nu_\hs$.
\end{itemize}
Letting $Z=\pi_{\hq, \infty}[\M^\hq|\nu]$, there is a complete iterate $\hk$ of $\hq_Z$ and an inaccessible proper cutpoint of $\M^\hk$ such that, letting\footnote{$\ind$ is defined in \rdef{def: induced embeddings}.} $v=(Z, \hk, \xi)$, \[\sigma=\ind(\sigma_{v}).\]
\end{corollary}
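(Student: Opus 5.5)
The plan is to unpack the definition of ``generated below $\hs|\nu_\hs$'' and feed its witnesses straight into \rcor{cor: tech cor iii}. By Terminology~\ref{term: embeddings generated below}, fix a complete iterate $\hr$ of $\hq$ and an inaccessible proper cutpoint $\zeta<\k_\hs$ of $\M^\hs$ with
\[\sup(\pi_{\hr|\k_\hr, \infty}[\k_\hr])<\pi_{\hs|\zeta, \infty}(\ts(\hs|\zeta)),\]
$\gen(\T_{\hq,\hr})\subseteq\l_\hr$, $\gen(\T_{\hq,\hs})\subseteq\zeta$, and $\sigma=\sigma_u$ for $u=(\hr,\l_\hr,\nu_\hr)$. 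These are exactly the hypotheses of \rcor{cor: tech cor iii}, together with $\T_{\hq,\hs}$ being above $\k$. The side conditions there ($\k<\l$, $\k$ the largest strong cardinal of $\M^\hq|\nu$) are not part of the definition. I would first check that they hold, or can be arranged by renaming, inside $\rg_3$; in particular $\k$ can be replaced by $\max(\tStr(\M^\hq|\nu))$ since $(\k,\nu)\cap\tStr=\emptyset$. Alternatively, I would check that the proof of \rcor{cor: tech cor iii} only uses \rcor{cor: tech cor ii}, which does not need them.

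Applying \rcor{cor: tech cor iii} yields a complete iterate $\hk$ of $\hs|\nu_\hs$ and a proper cutpoint Woodin $\xi$ of $\M^\hk$, hence an inaccessible proper cutpoint. With $Z_\hs=\pi_{\hs,\infty}[\M^\hs|\nu_\hs]$, clause (4) gives, for each $\b<\l_u$,
\[\sigma_u(\b)=\tau^{Z_\hs}_{\hm}(\b_{\hm|\xi_\hm})\]
for any iterate $\hm$ of $\hk$ based on $\hk|\xi$ in which $\b$ is caught. Comparing with Notation~\ref{not: sigmazkxi}, this says $\sigma_u'$ agrees with $\sigma_{v}$ on ordinals below $\l_u$. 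Here $v=(Z_\hs,\hk,\xi)$, and $\pi_{\hk|\xi+1,\infty}(\xi)=\l_u$ because $\hk|\xi$ is a complete iterate of $\hr|\nu_\hr$ by clause (1).

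The statement uses $Z=\pi_{\hq,\infty}[\M^\hq|\nu]$ rather than $Z_\hs$, so next I would replace $Z_\hs$ by $Z$. Since $\hs$ is an iterate of $\hq$ with $\T_{\hq,\hs}$ based on $\hq|\nu$ (it is generated below $\zeta<\k_\hs$), $\hs|\nu_\hs$ is a complete iterate of $\hq_Z$. Any certificate for $Z$ lifts $\T_{\hq_Z,\hs|\nu_\hs}$ to give a certificate for $Z_\hs$, and by full normalization $\hk$ is also a complete iterate of $\hq_Z$. Then \rthm{thm: uniqueness of realizability witnesses} should give $\tau^{Z}_\hm=\tau^{Z_\hs}_\hm$ for complete iterates $\hm$ of $\hs|\nu_\hs$, because both equal $\pi_{\hx,\infty}\rest\M^\hm$ for the lifted pair $\hx$. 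So $\sigma_v$ computed with $Z$ agrees with $\sigma_u'$ on ordinals.

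Finally, I would pass from ordinals to the full embedding. Both $\sigma_u=\ind(\sigma_u')$ and $\ind(\sigma_v)$ are determined by their action on $\powerset^b_{\omega_1}(\l_u)$ via pointwise images, and that action depends only on the restriction to ordinals, which agree. Hence $\sigma=\sigma_u=\ind(\sigma_v)$. The elementarity needed for $\ind$ to be well defined comes from \rthm{thm: basic reflection}.

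I expect the main obstacle to be the bookkeeping in the previous step: identifying $Z$ with $Z_\hs$, or more precisely showing that $\rel$ relative to $Z$ and relative to $Z_\hs$ give the same $\tau$ on iterates of $\hk$. I would handle it by choosing the certificate $\hs$ itself and lifting trees along $\T_{\hq,\hs}$, checking via full normalization that the lifted iterates coincide.
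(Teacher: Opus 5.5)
This is correct and is essentially the paper's argument. The paper treats the statement as an immediate consequence of clause (4) of Corollary~\ref{cor: tech cor iii}, read against Notation~\ref{not: sigmazkxi} and the definition of $\ind$. Your extra step through Theorem~\ref{thm: uniqueness of realizability witnesses} is a sound way to reconcile the stated $Z=\pi_{\hq,\infty}[\M^\hq|\nu]$ with the $Z=\pi_{\hs,\infty}[\M^\hs|\nu_\hs]$ that Corollary~\ref{cor: tech cor iii} (and the proof of Corollary~\ref{cor: uniqueness}) actually uses.

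One small slip: since $\hk|\xi$ is a complete iterate of $\hr|\nu_\hr$, the ordinal $\pi_{\hk|\xi+1,\infty}(\xi)$ is the image of $\nu_\hr$, not $\l_u$. It lies above $\l_u=\pi_{\hr|\nu_\hr,\infty}(\l_\hr)$, so $\sigma_v$ is defined on a longer initial segment of $\mH$. What you actually prove, and what the statement must mean, is $\sigma=\ind(\sigma_v\rest\mH|\l_u)$.
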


\begin{corollary}\label{cor: uniqueness} Suppose $M\models \rg_3(\hq, \l, \k, \nu)$, and suppose $\hs$ is a complete iterate of $\hq$ such that $\T_{\hq, \hs}$ is above $\k$. Suppose \[\sigma: N|\a\rightarrow N|\pi_{\hq, \infty}(\l)\ \text{and}\ \sigma': N|\a\rightarrow N|\pi_{\hq, \infty}(\l)\] are generated below $\hs|\nu_\hs$. Then $\sigma=\sigma'$.
\end{corollary}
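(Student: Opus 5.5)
The plan is to reduce both embeddings to a common description via \rcor{cor: tech cor iii} and the uniqueness of realizations. First unpack the hypothesis that $\sigma$ and $\sigma'$ are generated below $\hs|\nu_\hs$. This gives complete iterates $\hr$ and $\hr'$ of $\hq$, together with inaccessible proper cutpoints $\zeta,\zeta'<\k_\hs$ of $\M^\hs$, such that:
\begin{itemize}
\item $\sigma=\sigma_u$ and $\sigma'=\sigma_{u'}$, where $u=(\hr,\l_\hr,\nu_\hr)$ and $u'=(\hr',\l_{\hr'},\nu_{\hr'})$;
\item $\gen(\T_{\hq,\hr})\subseteq\l_\hr$ and $\gen(\T_{\hq,\hr'})\subseteq\l_{\hr'}$;
\item the sup bounds below $\pi_{\hs|\zeta,\infty}(\ts(\hs|\zeta))$ and $\pi_{\hs|\zeta',\infty}(\ts(\hs|\zeta'))$ hold.
\end{itemize}
Replacing $\zeta,\zeta'$ by their maximum keeps all the hypotheses, so I will assume $\zeta=\zeta'$. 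Since both embeddings have the same domain $N|\a$, we have $\l_u=\l_{u'}=\a$. By \rdef{def: induced embeddings} an induced embedding is determined by its action on $\powerset^b_{\omega_1}(\a)$, that action is pointwise given by $\sigma'_u$, and $\sigma'_u$ is determined by its values on ordinals together with its values on the (fixed) elements of $\mH|\a$. So it suffices to show that $\sigma_u(\b)=\sigma_{u'}(\b)$ for every $\b<\a$, or more precisely that $\sigma'_u=\sigma'_{u'}$ on $\mH|\a$.

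Next, apply \rcor{cor: tech cor iii} twice with the same $\hs$ and $Z=\pi_{\hs,\infty}[\M^\hs|\nu_\hs]$. This yields iterates $\hk,\hk'$ of $\hs|\nu_\hs$ and ordinals $\xi,\xi'$ such that for every $\b<\a$,
\[
\sigma_u(\b)=\tau^Z_\hm(\b_{\hm|\xi_\hm})\quad\text{and}\quad \sigma_{u'}(\b)=\tau^Z_{\hm'}(\b_{\hm'|\xi'_{\hm'}}),
\]
whenever $\hm,\hm'$ are suitable iterates of $\hk,\hk'$ based on $\hk|\xi$ and $\hk'|\xi'$ respectively. The next step is to compare $\hk$ and $\hk'$, which are both iterates of $\hs|\nu_\hs$, to a common iterate $\hn$. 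I will then show that $\hn|\pi_{\hk,\hn}(\xi)=\hn|\pi_{\hk',\hn}(\xi')$. This should follow because $\hk|\xi$ and $\hk'|\xi'$ are complete iterates of $\hr|\nu_\hr$ and $\hr'|\nu_{\hr'}$ respectively, and both of these have direct limit $\mH|\a$ (and $\a$ is the same for both). Clause (3) of \rcor{cor: tech cor iii} places both $\xi$ and $\xi'$ strictly between the image of $\l'$ and the image of $\k_\hs$, and both are Woodin proper cutpoints. I would then argue, as in the comparison arguments of \rcor{cor: comparing tau bounded iterates}, that the lowest Woodin proper cutpoint in this window whose direct-limit image is $\a$ is unique. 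With that in hand, taking $\hm=\hm'$ to be a common further iterate of $\hn$ gives the same value for both formulas, because $\tau^Z$ is unambiguous by \rthm{thm: uniqueness of realizability witnesses} and coheres along iterations.

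An alternative route, which I would keep as a fallback, avoids identifying $\xi$ with $\xi'$. By \rcor{cor: genrated by z}, $\sigma=\ind(\sigma_v)$ and $\sigma'=\ind(\sigma_{v'})$ with $v=(Z,\hk,\xi)$ and $v'=(Z,\hk',\xi')$. It would then suffice to show that $\sigma_v$ and $\sigma_{v'}$ agree on $\mH|\a$. Both are computed by pulling back along $\pi_{\cdot,\infty}$ and pushing forward with the same $\tau^Z$, so they agree once a single iterate captures both pullbacks; that single iterate is again obtained by comparison.

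I expect the main obstacle to be this matching step, in either form: showing that the pullback points (or the ordinals $\xi$ and $\xi'$) correspond in a common iterate. It is here that one must use that $\a$ pins down the relevant level, using the strong-cardinal-by-strong-cardinal induction from \cite[Lemma 9.12]{blue2025nairian} in the style of \rcl{clm: crucial}.
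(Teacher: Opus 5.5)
Your proposal follows essentially the paper's proof. The paper applies \rcor{cor: tech cor iii} to $(\hr,\hs)$ and to $(\hr',\hs)$, coiterates the resulting $\hk$ and $\hk'$ to a common $\hx$, and gets $\pi_{\hk,\hx}(\xi)=\pi_{\hk',\hx}(\xi')=\phi$, with $\hx|\phi$ a common complete iterate of $\hk|\xi$ and $\hk'|\xi'$, directly from \cite[Lemma 9.12]{blue2025nairian}; it then concludes $\sigma=\ind(\sigma_v)=\ind(\sigma_x)=\ind(\sigma_{v'})=\sigma'$ via \rcor{cor: genrated by z}, which is your fallback route, while your reduction to a common $\zeta$ is unnecessary (each application of \rcor{cor: tech cor iii} uses its own $\zeta$) and your use of the shared domain $N|\a$ to force the alignment of $\xi$ and $\xi'$ makes explicit a point the paper leaves implicit.
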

\begin{proof} Let \(\hr\) and \(\hr'\) witness that \(\sigma\) and \(\sigma'\) are generated below \(\hs|\nu_\hs\). Let \((\hk, \xi)\) and \((\hk', \xi')\) be as in \rcor{cor: tech cor iii} applied to \((\hr, \hs)\) and \((\hr', \hs)\). Let \(\hx\) be a complete iterate of \(\hk\) and \(\hk'\) produced via the least-extender-disagreement coiteration. It follows from \cite[Lemma 9.12]{blue2025nairian} that \[\pi_{\hk, \hx}(\xi)=\pi_{\hk', \hx}(\xi')=_{\text{def}}\phi\] and \(\hx|\phi\) is a common complete iterate of both \(\hk|\xi\) and \(\hk'|\xi'\). 

Let $Z=\pi_{\hs, \infty}[\M^\hs|\nu_\hs]$. Let now $v=(Z, \hk, \xi)$, $v'=(Z, \hk', \xi')$ and $x=(Z, \hx, \phi)$. 

\begin{figure}[htbp]
    \centering
    \begin{tikzpicture}[>=stealth, thick]
        \node (hk) at (0, 2) {$\hk$};
        \node (hkp) at (0, -2) {$\hk'$};
        \node (hx) at (4, 0) {$\hx$};
        
        \node (v) at (8, 2) {$v=(Z, \hk, \xi)$};
        \node (vp) at (8, -2) {$v'=(Z, \hk', \xi')$};
        \node (x) at (8, 0) {$x=(Z, \hx, \phi)$};

        \draw[->] (hk) -- node[above right] {$\pi_{\hk, \hx}$} (hx);
        \draw[->] (hkp) -- node[below right] {$\pi_{\hk', \hx}$} (hx);
        
        \draw[|->, dashed] (hk) -- (v);
        \draw[|->, dashed] (hkp) -- (vp);
        \draw[|->, dashed] (hx) -- (x);

        \node[draw=black, thick, fill=gray!10, rounded corners, inner sep=6pt, anchor=north] 
        at (4, -3.5) {
        \begin{minipage}{9cm}
            \centering
            \textbf{\underline{Uniqueness via Coiteration}}
            \begin{itemize}[label={\tiny$\bullet$}, itemsep=1.2pt, topsep=3pt, leftmargin=1em, font=\scriptsize]
                \item $\hx$ is the common iterate of $\hk$ and $\hk'$.
                \item $\pi_{\hk, \hx}(\xi) = \pi_{\hk', \hx}(\xi') = \phi$.
                \item $\sigma = \ind(\sigma_v) = \ind(\sigma_x) = \ind(\sigma_{v'}) = \sigma'$.
            \end{itemize}
        \end{minipage}
        };
    \end{tikzpicture}
    \caption{Least-extender-disagreement coiteration aligning the extraction tuples.}
    \label{fig:uniqueness_coiteration}
\end{figure}

Applying \rcor{cor: genrated by z} we get that 
\begin{align*}
\sigma&=\ind(\sigma_{v})\\
&=\ind(\sigma_{x})\\
&=\ind(\sigma_{v'})\\
&=\sigma'.
\end{align*}
\end{proof}

\begin{theorem}\label{thm: embedding is in n} Suppose $u=(\hq, \nu, \l)$, $\rg_3(\hq, \l, \k, \nu)$ holds, and $G\subseteq \pmax$ is $M$-generic. Suppose that $(\hr_\a: \a<\omega_1)\in M[G]$ is a sequence of complete iterates of $\hq$ such that for every $\a<\omega_1$, \[\gen(\T_{\hq, \hr_\a})\subseteq \l_{\hr_\a}.\] 
For $\a<\omega_1$, let 
\begin{enumerate}
    \item $\l_\a=\l_{\hr_\a}$, 
    \item $\nu_\a=\nu_{\hr_\a}$,
    \item $u_\a=(\hr_\a,\l_\a, \nu_\a)$, and 
    \item $\sigma_\a=\sigma_{u_\a}: N|\l_{u_\a}\rightarrow N|\l^{u_\a}$.
\end{enumerate}
Then \[(\sigma_\a: \a<\omega_1)\in N[G].\] 
\end{theorem}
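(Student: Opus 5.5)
The plan is to find a single complete iterate $\hs$ of $\hq$ that is good for every $\hr_\a$ simultaneously, and then use \rcor{cor: tech cor iii} to recover every $\sigma_\a$ uniformly from one set in $N$ together with a sequence of countable objects.

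\textbf{Choosing $\hs$.} Each $\hr_\a$ is a countable iterate, so $\gamma_\a=\sup(\pi_{\hr_\a|\k_{\hr_\a},\infty}[\k_{\hr_\a}])$ is an ordinal below $\pi_{\hq,\infty}(\k)$. Since $\k$ is small, $\pi_{\hq|\nu,\infty}(\k)$ has cofinality greater than $\omega_1$ in $M[G]$, because $\pmax$ adds no reals and preserves the relevant cofinalities. So $\gamma^*=\sup_\a\gamma_\a$ is still below it. Using \cite[Theorem 11.1]{blue2025nairian}, exactly as in Step 2 of the proof of \rthm{thm: delta-gen it}, I would obtain a complete iterate $\hs$ of $\hq$ such that:
\begin{itemize}
\item $\T_{\hq,\hs}$ is above $\k$;
\item there is a proper cutpoint inaccessible $\zeta<\k_\hs$ with $\gen(\T_{\hq,\hs})\subseteq\zeta$;
\item $\pi_{\hs|\zeta,\infty}(\ts(\hs|\zeta))>\gamma^*$.
\end{itemize}
I expect to build $\hs$ already in $M$, from a bound on $\gamma^*$ available in $M$. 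Then each $\sigma_\a$ is generated below $\hs|\nu_\hs$, as in \rter{term: embeddings generated below}.

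\textbf{Uniform definition.} Let $Z=\pi_{\hs,\infty}[\M^\hs|\nu_\hs]$. Then $Z\in\its(\pi_{\hq,\infty}(\nu))$ is certified by $\hs$, and $\rel(Z)\in N$ by \rcor{cor: uniqueness of realizability witnesses}. Apply \rcor{cor: tech cor iii} to $(\hr_\a,\hs)$ for each $\a$; this gives pairs $(\hk_\a,\xi_\a)$ with $\hk_\a$ a countable complete iterate of $\hs|\nu_\hs$. By \rcor{cor: genrated by z} and \rcor{cor: uniqueness}, $\sigma_\a=\ind(\sigma_{v_\a})$ where $v_\a=(Z,\hk_\a,\xi_\a)$. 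By \rdef{def: induced embeddings} and \rnot{not: sigmazkxi}, the map $v\mapsto\ind(\sigma_v)$ is definable in $N$ from $\rel(Z)$, uniformly in the countable parameter $(\hk,\xi)$. So it suffices to show that the sequence $(\hk_\a,\xi_\a:\a<\omega_1)$ lies in $N[G]$.

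\textbf{Placing the sequence in $N[G]$.} Each $(\hk_\a,\xi_\a)$ is coded by a real, so the whole sequence is coded by an $\omega_1$-sequence of reals in $M[G]$. Since $\pmax$ adds no reals and $\bR\subseteq N$, such a sequence is coded by a subset of $\omega_1$ in $M[G]$. Every subset of $\omega_1$ in $M[G]$ lies in $L(\bR)[G]\subseteq N[G]$, because $\pmax$ makes $\powerset(\omega_1)\subseteq L(\bR)[G]$. Hence $(\sigma_\a:\a<\omega_1)$ is definable in $N[G]$ from $\rel(Z)$ and this code.

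\textbf{Main obstacles.} I expect two delicate points. The first is choosing $\hs$ so that it works uniformly for all $\a$; this needs the cofinality bound on $\gamma^*$ and the ability to do the \cite[Theorem 11.1]{blue2025nairian} iteration with $\zeta$ large. The second is checking that the choice of $(\hk_\a,\xi_\a)$ is harmless. It does not matter which pair is picked, because \rcor{cor: uniqueness} makes $\sigma_\a$ independent of that choice. So in $N[G]$ I can define $\sigma_\a$ as the common value of $\ind(\sigma_v)$ over all admissible $v$ compatible with the coded data, and no choice is needed.
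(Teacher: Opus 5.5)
Your proposal is correct and takes essentially the same route as the paper. Both arguments use one iterate $\hs$ from \cite[Theorem 11.1]{blue2025nairian} that works simultaneously for every $\hr_\a$, the single set $Z=\pi_{\hs,\infty}[\M^\hs|\nu_\hs]$ with $\rel(Z)\in N$, and \rcor{cor: tech cor iii} to write $\sigma_\a=\ind(\sigma_{v_\a})$ with $v_\a=(Z,\hk_\a,\xi_\a)$; the $v_\a$ are then coded by reals whose $\omega_1$-sequence lies in $L(\bR)[G]\subseteq N[G]$.

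There are two small differences. First, the paper separately checks $(\l_{u_\a}:\a<\omega_1)\in N[G]$ using $\cf^{M[G]}(\Theta^N)=\omega_2^{M[G]}$, so that the domains $N|\l_{u_\a}$ of the $\sigma_\a$ are available; you only gesture at this with ``compatible with the coded data''. Second, your justification for a uniform $\hs$ is faulty as written: the cofinality claim about $\pi_{\hq|\nu,\infty}(\k)$ is unproved, and it does not match the bound $\gamma_\a<\pi_{\hq,\infty}(\k)$ you actually give. The paper simply invokes \cite[Theorem 11.1]{blue2025nairian} for this step. The natural bound to use is $\Theta^N$, since each $\pi_{\hs|\zeta,\infty}(\ts(\hs|\zeta))$ lies below it and $\Theta^N$ has cofinality $\omega_2$ in $M[G]$.
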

\begin{proof} Set $\k_\a=\k_{\hr_\a}$. First notice that \[(\l_{u_\a}: \a<\omega_1)\in N[G],\] and for every $\a<\omega_1$, $\l^{u_\a}=\pi_{\hq, \infty}(\l)=\l_\infty$. The first claim follows because \[\cf^{M[G]}(\Theta^N)=\omega_2^{M[G]},\] and so if 
\begin{itemize}
    \item $\gg<\Theta^N$, 
    \item $(x_\a: \a<\omega_1)\subseteq \bR$, and 
    \item $f:\bR\rightarrow \gg$
\end{itemize}
are such that for every $\a<\omega_1$, \[\l_{u_\a}<\gg, \quad f\in N, \quad \text{and} \quad f(x_\a)=\l_{u_\a},\] and \[(x_\a: \a<\omega_1)\in M[G],\] then \[(x_\a: \a<\omega_1)\in L(\bR)[G]\subseteq N[G],\] and so \[(\l_{u_\a}: \a<\omega_1)\in N[G].\]

Working in $M[G]$ and using \cite[Theorem 11.1]{blue2025nairian}, we can find
\begin{itemize}
    \item a complete iterate $\hs$ of $\hq$ and 
    \item $\zeta<\k_\hs$
\end{itemize}
such that $\T_{\hq, \hs}$ is above $\k$, and $\zeta$ is an inaccessible proper cutpoint cardinal of $\M^\hs$ with the property that \[\gen(\T_{\hq, \hs})\subseteq \zeta,\] and for every $\a<\omega_1$, \[\sup(\pi_{\hr_\a|\k_\a, \infty}[\k_\a])<\pi_{\hs|\zeta, \infty}(\ts(\hs|\zeta)).\] 

Applying \rcor{cor: tech cor iii} in $M[G]$ and setting \[Z=\pi_{\hs, \infty}[\M^\hs|\nu_\hs],\] for each $\a<\omega_1$, we can find
\begin{itemize}
    \item a complete iterate $\hw_\a$ of $\hs|\nu_\hs$, and
    \item $\xi_\a$ that is an inaccessible proper cutpoint of $\M^{\hw_\a}$
\end{itemize}
such that letting $v_\a=(Z, \hw_\a, \xi_\a)$, \[\sigma_\a=\ind(\sigma_{v_\a}).\] 

Because for each $\a<\omega_1$, $v_\a\in M$ is coded by a real $y_\a \in \bR$, the sequence of coding reals $(y_\a: \a<\omega_1) \in M[G]$ falls into $L(\bR)[G] \subseteq N[G]$. Thus, we again have that \[(v_\a: \a<\omega_1)\in N[G],\] and so \[(\sigma_\a: \a<\omega_1)\in N[G].\] 
\end{proof}

\begin{corollary}\label{cor: omega1 closure} 
Suppose $G\subseteq \pmax$ is $M$-generic. Then in $M[G]$, for any $\l<\varsigma_\infty$, 
\[ {}^{\omega_1}(N|\l[G]) \subseteq N[G]. \]
\end{corollary}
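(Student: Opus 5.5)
We reduce the statement to Theorem \ref{thm: embedding is in n}, using Theorem \ref{thm: basic reflection} to code arbitrary elements of $N|\l$ by countable data together with reflection embeddings. Fix $\l<\varsigma_\infty$ and a sequence $\vec{x}=(x_\a:\a<\omega_1)\in M[G]$ with each $x_\a\in N|\l[G]$. Each $x_\a$ has the form $(\dot x_\a)_G$ for a $\pmax$-name $\dot x_\a\in N|\l$. Since $\pmax\subseteq H(\omega_1)$ and $\pmax$ is $\sigma$-closed and homogeneous, we may pick such names in $M[G]$ by a uniformization argument in $M[G]$, which satisfies $\AC$. So it suffices to show that every $\omega_1$-sequence $(y_\a:\a<\omega_1)\in M[G]$ of elements of $N|\l$ lies in $N[G]$; once this is established, $(\dot x_\a)$ lies in $N[G]$, and hence so does $\vec x$.

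\emph{Step 1 (choosing a reflection generator).} Enlarging $\l$, we may assume it has the form $\l_\infty=\pi_{\hq,\infty}(\l')$ for some $\hq$ and $\k<\nu$ with $\rg_3(\hq,\l',\k,\nu)$; such generators are cofinal below $\varsigma_\infty$ by the choice of $\varsigma$. Write $\l$ again for $\l'$. By Theorem \ref{thm: basic reflection}, every $y\in N|\l_\infty$ is definable over $N|\l_\infty$ from some countable $Y\subseteq \pi_{\hq,\infty}[\l]$, since every element of a Nairian level is definable from a countable set of ordinals. Moreover, $y$ lies in $\rge(\sigma_u)$ for the generator $u$ attached to a suitable complete iterate $\hr$ of $\hq$. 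Concretely, for each $\a$ we choose in $M[G]$ a complete iterate $\hr_\a$ of $\hq$ with $\gen(\T_{\hq,\hr_\a})\subseteq\l_{\hr_\a}$, together with a preimage $\bar y_\a\in N|\l_{u_\a}$ satisfying $\sigma_{u_\a}(\bar y_\a)=y_\a$. Remark \ref{rem: also bounded by l} and the smallness of $\l$ permit the $\l$-bounded choice. The preimage $\bar y_\a$ lies below $\l_{u_\a}<\Theta^N$.

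\emph{Step 2 (the countable pieces).} By Theorem \ref{thm: embedding is in n}, $(\sigma_{u_\a}:\a<\omega_1)\in N[G]$. It remains to see that $(\bar y_\a:\a<\omega_1)\in N[G]$. Each $\bar y_\a$ lies in $N|\gamma$ for some $\gamma<\Theta^N$, since $\sup_\a \l_{u_\a}<\Theta^N$ by the cofinality computation $\cf^{M[G]}(\Theta^N)=\omega_2$ used in the proof of Theorem \ref{thm: embedding is in n}. Below $\Theta^N$, each element of $N$ is the image of a real under a surjection $f\in N$ from $\bR$, as in that proof. Fixing such an $f$ onto $N|\gamma$, we choose reals $z_\a$ with $f(z_\a)=\bar y_\a$. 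Then $(z_\a)\in L(\bR)[G]\subseteq N[G]$, and so $(\bar y_\a)\in N[G]$. Finally, $y_\a=\sigma_{u_\a}(\bar y_\a)$ is computed in $N[G]$ from the two sequences.

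\emph{Main obstacle.} The delicate point is Step 1: showing that every element of $N|\l_\infty$ has a preimage under some $\sigma_u$ with $\l_u<\Theta^N$, uniformly enough that the choice can be made in $M[G]$. Clause (3) of Theorem \ref{thm: basic reflection} gives $X\in\rge(\sigma_u)$ only for $X$ definable from a countable $Y\subseteq\pi_{\hq,\infty}[\l]$. We therefore need that every element of $N|\l_\infty$ is definable from a countable subset of ordinals lying in the range of some $\pi_{\hr,\infty}$. We would establish this by varying $\hr$ over iterates, so that the union of the ranges $\pi_{\hr,\infty}[\l_\hr]$ covers $\l_\infty$, and by using the definition of $N$ as $L$ of $\mH$ together with $\omega$-sequences. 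A secondary check is that the names $\dot x_\a$ can be taken inside $N|\l$. This follows from Theorem \ref{preservation of normality} applied to an $H$-like level.
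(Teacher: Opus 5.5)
Your proposal is correct and follows essentially the paper's own proof. You pass to names in $N|\l$, use Theorem~\ref{thm: basic reflection} to place each name in the range of some $\sigma_{u_\a}$ (with $\l$-bounded $\hr_\a$, so that Theorem~\ref{thm: embedding is in n} applies and gives $(\sigma_{u_\a}:\a<\omega_1)\in N[G]$), and recover the preimages, which lie in some $N|\gamma$ with $\gamma<\Theta^N$, from an $\omega_1$-sequence of reals in $L(\bR)[G]\subseteq N[G]$; the point you flag as the main obstacle (catching each element of $N|\l_\infty$ via a countable set of ordinals lying in the range of a single countable iterate) is left implicit in the paper, and your sketch of it is the natural argument.
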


\begin{proof} 
Because $\varsigma_\infty$ is a limit of strong cardinals of $\mH|\varsigma_\infty$, it is enough to prove the claim assuming $\l$ is a strong cardinal of $\mH|\varsigma_\infty$. So suppose $\l$ is a ${<}\varsigma_\infty$-strong cardinal of $\mH$ and $f: \omega_1\rightarrow N|\l[G]$ is a function in $M[G]$. Because $M[G]\models \DC_{\aleph_2}$, we can find $h: \omega_1\rightarrow N|\l$ such that for every $\a<\omega_1$, $h(\a)$ is a $\pmax$-name such that $h(\a)_G=f(\a)$. It suffices to show that $h\in N[G]$. 

Let $\k$ be the least ${<}\varsigma_\infty$-strong cardinal of $\mH$ such that $\l<\k$, and let $\nu<\varsigma_\infty$ be a Woodin cardinal of $\mH$ such that for some complete iterate $\hq$ of $\hp$, $(\l_\hq, \k_\hq, \nu_\hq)$ are defined and 
\[ M\models \rg_3(\hq, \l_\hq, \k_\hq, \nu_\hq). \]
Using \rthm{thm: basic reflection} and the fact that $M[G]\models \DC_{\aleph_2}$, we can find a sequence $(\hr_\a: \a<\omega_1)\in M[G]$ of complete iterates of \(\hq\) such that, letting \(u_\a=(\hr_\a, \l_{\hr_\a}, \nu_{\hr_\a})\), for every \(\a<\omega_1\), \(h(\a)\in \rge(\sigma_{u_\a})\). 

It follows from \rthm{thm: embedding is in n} that \((\sigma_{u_\a}: \a<\omega_1)\in N[G]\). To finish the proof, we make the following observations:

\vspace{0.3cm}
\begin{enumerate}[label=(1.\arabic*), itemsep=0.3cm]
    \setcounter{enumi}{0}
    \item Letting $\gamma=\sup_{\a<\omega_1}\l_{u_\a}$, we have $\gamma<\Theta^N$ and \(((\sigma_{u_\a})^{-1}(h(\a)): \a<\omega_1)\subseteq N|\gamma\).
    \item We have a surjection $k: \bR\rightarrow N|\gamma$ with $k\in N$.
    \item Since $\powerset(\omega_1)\cap M[G]=\powerset(\omega_1)\cap L(\bR)\subseteq N[G]$, we obtain that \(((\sigma_{u_\a})^{-1}(h(\a)): \a<\omega_1)\in N[G]\).
\end{enumerate}
\vspace{0.3cm}

Because both the embeddings and the preimages of the names are in $N[G]$, we conclude that the sequence itself, $h$, is in $N[G]$.
\end{proof}

\begin{corollary}\label{cor: omega1 closure2} In $M$, for any $\b<\varsigma_\infty$, $$^{\omega_1}N|\b\subseteq N.$$
\end{corollary}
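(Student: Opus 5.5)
The plan is to reduce this statement to \rcor{cor: omega1 closure} by working with the trivial-generic version of the argument, or more directly by repeating that argument inside $M$ itself. As in the proof of \rcor{cor: omega1 closure}, it suffices to take $\b=\l$ a ${<}\varsigma_\infty$-strong cardinal of $\mH$, since these are cofinal in $\varsigma_\infty$. So fix $h\colon\omega_1\rightarrow N|\l$ in $M$.

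First choose $\k$ to be the least ${<}\varsigma_\infty$-strong cardinal of $\mH$ above $\l$. Then choose a Woodin $\nu<\varsigma_\infty$ and a complete iterate $\hq$ of $\hp$ with $M\models\rg_3(\hq,\l_\hq,\k_\hq,\nu_\hq)$. Since $M\models\ADR$, we have $\DC$ in $M$. Using \rthm{thm: basic reflection}, each $h(\a)$ is in the range of some $\sigma_{u}$ with $u=(\hr,\l_\hr,\nu_\hr)$. Here $\hr$ is a countable complete iterate of $\hq$ with $\gen(\T_{\hq,\hr})\subseteq\l_\hr$, using Remark \ref{rem: also bounded by l} and \cite[Lemma 9.14]{blue2025nairian}. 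With $\DC$ (countably many choices suffice for reals, but we need $\omega_1$), pick the sequence $(\hr_\a:\a<\omega_1)$. Each $\hr_\a$ is coded by a real, so this is really a choice from an $\omega_1$-sequence of nonempty sets of reals. Such a choice should be available in $M$ by uniformization under $\ADR$: code the relation "$x$ codes an $\hr$ with $h(\a)\in\rge(\sigma_u)$" as a set of pairs in $\omega_1\times\bR$. I expect this to go through via the standard $\ADR$ fact that $\omega_1$-sequences of nonempty sets of reals admit choice functions, since $\omega_1$-sequences can be coded by sets of reals.

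Next I would rerun the proof of \rthm{thm: embedding is in n} with $G$ trivial. Applying \cite[Theorem 11.1]{blue2025nairian}, get a single $\hs$ and $\zeta$ dominating all $\sup(\pi_{\hr_\a|\k_\a,\infty}[\k_\a])$. This is possible because $\omega_1$ many countable ordinals below $\Theta^N$ are bounded, as $\cf(\Theta^N)>\omega_1$ in $M$; it is the same cofinality fact used there. Then apply \rcor{cor: tech cor iii} to get $v_\a=(Z,\hw_\a,\xi_\a)$ with $\sigma_{u_\a}=\ind(\sigma_{v_\a})$. The sequence of $v_\a$'s is coded by an $\omega_1$-sequence of reals in $M$, hence lies in $L(\bR)\subseteq N$, because $\powerset(\omega_1)^M\subseteq L(\bR)$. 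By \rcor{cor: uniqueness of realizability witnesses} and \rcor{cor: def its and rel}, $\rel(Z)\in N$, so $(\sigma_{u_\a}:\a<\omega_1)\in N$.

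Finally, the preimages $\sigma_{u_\a}^{-1}(h(\a))$ lie in $N|\gamma$ with $\gamma=\sup_\a\l_{u_\a}<\Theta^N$. Here $N$ has a surjection $k\colon\bR\rightarrow N|\gamma$, and choosing reals $x_\a$ with $k(x_\a)$ the preimage puts the sequence in $N$ again via $\powerset(\omega_1)\subseteq L(\bR)$. Composing gives $h\in N$. The main obstacle is the choice steps without the $\pmax$ generic. The $\pmax$ argument used $\DC_{\aleph_2}$ in $M[G]$, whereas here I must ensure that $\omega_1$-choice from sets of reals (and the cofinality $\cf^M(\Theta^N)>\omega_1$) hold in $M$. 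If direct uniformization is awkward, the fallback is to force with $\pmax$. Then $h\in N[G]$ by \rcor{cor: omega1 closure}, and since $h\in M$, homogeneity of $\pmax$ over $N$ should give $h\in N$: $h$ is definable in $N[G]$ from parameters in $N$ and its values are forced independently of $G$.
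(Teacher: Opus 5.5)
Your primary route, rerunning the proof of \rcor{cor: omega1 closure} directly in $M$, has a genuine gap at the choice step, and the $\ADR$ fact you appeal to is false. Under $\AD$ there is no injection of $\omega_1$ into $\bR$. So already the family $(A_\alpha:\omega\le\alpha<\omega_1)$, with $A_\alpha$ the set of reals coding a wellorder of type $\alpha$, has no choice function, since any choice function would be injective.

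Uniformization under $\ADR$ concerns relations on $\bR\times\bR$. It does not transfer to $\omega_1\times\bR$, because ordinals have no canonical real codes. If you index by codes instead, the boundedness argument breaks: $\Theta^N<\Theta^M$, so an $\bR$-indexed family of ordinals below $\Theta^N$ can be cofinal in $\Theta^N$ in $M$. For the same reason $\DC_{\omega_1}$ fails in $M$, and $\DC$ there only gives countable sequences.

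Consequently, none of the following can be obtained inside $M$ as you propose: the sequence $(\hr_\alpha:\alpha<\omega_1)$, the single $\hs$ dominating it, or the reals $x_\alpha$ chosen in your last step. This is exactly why the paper passes to the $\pmax$ extension, where $\DC_{\aleph_2}$ holds. It does so even in \rthm{thm: closure under theta sequences}, whose conclusion is about $M$.

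Your fallback is the paper's proof. Since $h\in M\subseteq M[G]$, \rcor{cor: omega1 closure} gives $h\in N[G]$, and then one concludes $h\in N$.

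For this last step, homogeneity of $\pmax$ is not what is needed, and the claim that $h$ is definable in $N[G]$ from parameters in $N$ is unjustified. What matters is that $h$ lies in the model $M$ over which $G$ is generic. Take a name $\dot h\in N$ with $\dot h_G=h$, and a condition $p\in G$ with $p\Vdash_M \dot h=\check h$. Two facts then transfer the decision to $N$:
\begin{itemize}
\item $\pmax$ is the same poset in $N$ and $M$, since they have the same reals.
\item The forcing relation for atomic formulas is computed by a recursion that is absolute between $N$ and $M$.
\end{itemize}
Hence $h=\{z\in\omega_1\times N|\beta : p\Vdash_N \check z\in\dot h\}\in N$. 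With this justification the fallback is a complete proof, and the direct-in-$M$ route should be dropped.
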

\begin{proof} Let $f: \omega_1\rightarrow N|\b$ with $f\in M$. Then whenever $G\subseteq \pmax$ is $M$-generic, $f\in N[G]$. It follows that $f\in N$.
\end{proof}

\begin{theorem}\label{thm: closure under theta sequences} Suppose $\zeta<\Theta^N$ and $\l<\varsigma_\infty$. Then $(N|\l)^\zeta\subseteq N$. 
\end{theorem}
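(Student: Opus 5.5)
We generalize the proof of \rthm{thm: embedding is in n} and \rcor{cor: omega1 closure}, working directly in $M$ and replacing $\omega_1$ by an arbitrary $\zeta<\Theta^N$.

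Fix $f\colon\zeta\to N|\l$ in $M$. As in \rcor{cor: omega1 closure}, we may assume $\l$ is a ${<}\varsigma_\infty$-strong cardinal of $\mH$. We then choose $\k,\nu$ and a complete iterate $\hq$ of $\hp$ with $M\models\rg_3(\hq,\l_\hq,\k_\hq,\nu_\hq)$. Since $M\models\ADR$, we may fix a surjection $s\colon\bR\to\zeta$ in $N$; this is possible because $\zeta<\Theta^N$. It then suffices to show that $F=f\circ s\colon\bR\to N|\l$ lies in $N$.

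\textbf{Step 1.} By \rthm{thm: basic reflection}, for each real $x$ there are a complete iterate $\hr_x$ of $\hq$ with $\gen(\T_{\hq,\hr_x})\subseteq\l_{\hr_x}$ and an element $y_x\in N|\l_{u_x}$ such that $\sigma_{u_x}(y_x)=F(x)$. The set of such triples $(x,\hr_x,y_x)$ has a uniformization in $M$ by $\ADR$ (uniformization over reals, after coding $\hr_x$ by a real and $y_x$ via a surjection from $\bR$ onto $N|\gamma$). The needed bound $\sup_x\l_{u_x}<\Theta^N$ should come from the same cofinality argument used in \rthm{thm: embedding is in n}, run with $\Theta^N$ regular in $N$ and the wellordered image of $\bR$ bounded.

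\textbf{Step 2, the main obstacle.} We need a single $\hs$ and $\zeta'<\k_\hs$ dominating all $\sup(\pi_{\hr_x|\k_{\hr_x},\infty}[\k_{\hr_x}])$. The supremum over $x\in\bR$ of these ordinals is below $\pi_{\hq,\infty}(\k)$: the map $x\mapsto$ this ordinal is a map from $\bR$ in $M$, so its supremum is below $\Theta^M$, and the smallness of $\k$ should bound it below $\pi_{\hq|\nu,\infty}(\k)$. We then apply \cite[Theorem 11.1]{blue2025nairian} once to obtain $\hs$ above $\k$ with such a $\zeta'$, exactly as in \rthm{thm: embedding is in n}. I expect the real difficulty to be proving this boundedness for $\bR$-many iterates rather than $\omega_1$-many. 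If the direct argument fails, the fallback is to use \rthm{thm: realizing as a nairian model}'s genericity iterations of length $\omega_1$ combined with $\Delta_{\hq,\nu}$-capturing.

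\textbf{Step 3.} Apply \rcor{cor: tech cor iii} and \rcor{cor: genrated by z}, with $Z=\pi_{\hs,\infty}[\M^\hs|\nu_\hs]$, to write $\sigma_{u_x}=\ind(\sigma_{v_x})$ where $v_x=(Z,\hk_x,\xi_x)$ and $\hk_x$ is a countable iterate coded by a real. By \rcor{cor: uniqueness}, $\sigma_{u_x}$ depends only on the generated embedding, and it is definable in $N$ from $\rel(Z)$ (\rcor{cor: uniqueness of realizability witnesses}) together with the real code. Consequently the map $x\mapsto\sigma_{u_x}$ is definable in $N$ from $Z$, $\rel(Z)$, and the function $x\mapsto$ (code of $v_x$), which is a set of reals and hence in $N$. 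Likewise $x\mapsto y_x$ is a function from $\bR$ into $N|\gamma$ with $\gamma<\Theta^N$, so it is coded by a set of reals and lies in $N$. Composing these shows $F\in N$, and therefore $f\in N$.
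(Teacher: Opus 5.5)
There is a genuine gap. It is located in Step 3 and also affects Steps 1--2. You conclude that the function $x\mapsto(\text{code of }v_x)$ and the function $x\mapsto y_x$ lie in $N$ because each is coded by a set of reals. That inference fails, because $N$ does not contain all sets of reals of $M$. Since $\Theta^N<\Theta^M$, $\powerset(\bR)\cap N$ is a proper initial segment of the Wadge hierarchy of $M$; if every prewellordering of $\bR$ in $M$ were in $N$, we would get $\Theta^N=\Theta^M$. Both of your functions come from uniformizations carried out in $M$, so nothing places them in $N$.

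The same problem undermines your bounding arguments. A map from $\bR$ into $\Theta^N$ lying in $M$ may be cofinal in $\Theta^N$, so ``its supremum is below $\Theta^M$'' gives nothing. Regularity of $\Theta^N$ in $N$ is irrelevant for a map not known to be in $N$. Nothing prevents your uniformization from making the ordinals $\l_{u_x}$, or the ordinals $\sup(\pi_{\hr_x|\k_{\hr_x},\infty}[\k_{\hr_x}])$ you must dominate, cofinal in $\Theta^N$. Within one fiber $s^{-1}(\alpha)$ the chosen iterates can vary freely, and by \rlem{lem: pullback consistency for sigma1} further $\l$-bounded iterates still capture $f(\alpha)$. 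In that case the single $\hs$ of Step 2 need not exist.

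Passing from $f$ to $F=f\circ s$ discards the structure the paper's proof relies on: the index set is an ordinal $\zeta<\Theta^N$.
\begin{itemize}
\item The paper lets $\l_\alpha$ be the least height $\l_u$ of a reflection domain with $f(\alpha)\in\rge(\sigma_u)$.
\item Because this is a $\zeta$-sequence and $\cf^M(\Theta^N)>\zeta$, one gets $\sup_{\alpha<\zeta}\l_\alpha<\Theta^N$.
\item The Coding Lemma then puts $(\l_\alpha:\alpha<\zeta)$ into $N$. For $\zeta$-indexed families it yields a choice set that is $\Sigma^1_1$ in a prewellordering of length $\zeta$ lying in $N$; there is no such statement for $\bR$-indexed families in $M$.
\item The $u_\alpha$ are then chosen in $M[G]$, and the sequence of choices is never required to be in $N$.
\end{itemize}

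The idea you are missing is the real role of \rcor{cor: uniqueness}. Once $\hs$ is fixed, $\sigma_{u_\alpha}$ is the unique embedding with domain $N|\l_\alpha$ that is generated below $\hs|\nu_\hs$. So $\alpha\mapsto\sigma_{u_\alpha}$ is definable in $N$ from $Z$, $\rel(Z)$ and $(\l_\alpha:\alpha<\zeta)$ alone. Uniqueness removes the codes of the $v_\alpha$ as parameters, whereas your Step 3 still carries them. Finally, $(\sigma_{u_\alpha}^{-1}(f(\alpha)):\alpha<\zeta)$ is a $\zeta$-sequence into $N|\sup_\alpha\l_\alpha$. It is therefore in $N$ by the Coding Lemma, which again needs the wellordered index set of length below $\Theta^N$.
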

\begin{proof} Again, it is enough to assume that $\l$ is a strong cardinal of $\mH|\varsigma_\infty$. Let now 
\begin{itemize}
\item $\l$ be a strong cardinal of $\M^\hp|\varsigma_\hp$,
\item $\k=\min(\tStr(\M^\hp|\varsigma_\hp)-(\l+1))$, and
\item $\nu\in \tW(\M^\hp|\varsigma_\hp)$ such that $\tW(\M^\hp)\cap (\k, \nu)\not =\emptyset$ and $\nu$ is not a limit of Woodin cardinals of $\M^\hp$.
\end{itemize}
We want to see that 

\vspace{0.3cm}
\begin{enumerate}[label=(\alph*), itemsep=0.3cm]
    \item[(a)] if \(f:\zeta\rightarrow N|\l_\infty\), then $f\in N$.
\end{enumerate}
\vspace{0.3cm}

Working in $M$, for each $\a<\zeta$, let $\l_\a$ be the least such that for some complete iterate $\hr$ of $\hp$, if $u=(\hr, \l_\hr, \nu_\hr)$, then \[f(\a)\in \rge(\sigma_u)\ \text{and}\ \l_u=\l_\a.\] 

Because \[M\models \cf(\Theta^N)>\zeta,\] we must have that \[\sup_{\a<\zeta}\l_\a<\Theta^N.\] Therefore, by the Coding Lemma, \[(\l_\a: \a<\zeta)\in N.\] 

Let $G\subseteq \pmax$ be $M$-generic. Because $M[G]\models \DC_{\omega_2}$ and because \[M[G]\models \card{\zeta}=\omega_2,\] in $M[G]$, applying \rthm{thm: basic reflection}, we can find a sequence \[(u_\a=(\hr_\a, \l_{\hr_\a}, \nu_{\hr_\a}): \a<\zeta)\] such that for every $\a<\zeta$, 

\vspace{0.3cm}
\begin{enumerate}[label=(1.\arabic*), itemsep=0.3cm]
    \item[(1.1)] $\l_\a=\l_{u_\a}$,
    \item[(1.2)] $\gen(\T_{\hp, \hr_\a})\subseteq \l_{\hr_\a}$, and
    \item[(1.3)] \(f(\a)\in \rge(\sigma_{u_\a}).\)
\end{enumerate}
\vspace{0.3cm}

We have that

\vspace{0.3cm}
\begin{enumerate}[label=(2), itemsep=0.3cm]
    \item[(2)] $\sigma_{u_\a}\in N$.
\end{enumerate}
\vspace{0.3cm}

Now let $\hs$ be a complete iterate of $\hp$ such that $\T_{\hp, \hs}$ is above $\k$ and for some $\M^\hs$-inaccessible $\xi\in (\l_\hs, \k_\hs)$ that is an inaccessible proper cutpoint of \(\M^\hs\), 

\vspace{0.3cm}
\begin{enumerate}[label=(3.\arabic*), itemsep=0.3cm]
    \item[(3.1)] for every \(\a<\zeta\), \[\sup(\pi_{\hr_\a|\k_{\hr_\a}, \infty}[\k_{\hr_\a}])<\pi_{\hs|\xi, \infty}(\ts(\hs|\xi)),\]
    \item[(3.2)] $\gen(\T_{\hp, \hs})\subseteq \xi$.
\end{enumerate}
\vspace{0.3cm}
\begin{figure}[htbp]
    \centering
    \begin{tikzpicture}[>=stealth, thick]
        \node (hp) at (0, 0) {$\hp$};
        \node (hra) at (3, 2) {$\hr_\a$};
        \node (hs) at (3, -2) {$\hs$};
        \node (Nla) at (7, 2) {$N|\l_\a$};
        \node (Nlinf) at (9, 0) {$N|\l_\infty$};

        \draw[->] (hp) -- node[above left] {$\gen \subseteq \l_{\hr_\a}$} (hra);
        \draw[->] (hp) -- node[below left] {$\gen \subseteq \xi$} (hs);
        
        \draw[|->, dashed] (hra) -- node[above] {extracts via $u_\a$} (Nla);
        
        \draw[->] (Nla) -- node[above right] {$\sigma_{u_\a}$} (Nlinf);
        
        \node[draw=black, thick, fill=gray!10, rounded corners, inner sep=6pt, anchor=north] 
        at (4.5, -3.5) {
        \begin{minipage}{9cm}
            \centering
            \textbf{\underline{Key Properties of the Reflection}}
            \begin{itemize}[label={\tiny$\bullet$}, itemsep=1.2pt, topsep=3pt, leftmargin=1em, font=\scriptsize]
                \item Domain bound: $\zeta < \Theta^N \implies \sup_{\a<\zeta} \l_\a < \Theta^N$.
                \item Disjointness: $\xi$ is a proper cutpoint.
                \item Unique decoding: $\sigma_{u_\a}$ uniquely defined from $(Z, \hk, \xi)$.
                \item $f \in N$ since $(\sigma_{u_\a}: \a < \zeta) \in N$ and $f(\a) \in \rge(\sigma_{u_\a})$.
            \end{itemize}
        \end{minipage}
        };
    \end{tikzpicture}
    \caption{Reflection of $f(\a)$ via the embeddings $\sigma_{u_\a}$ extracted from $\hr_\a$ using the tuple $u_\a$.}
    \label{fig:closure_under_theta}
\end{figure}

Set now \[Z=\pi_{\hs, \infty}[\M^\hs|\nu].\] It follows that for each \(\a<\zeta\), \(\sigma_{u_\a}\) is generated below \(\hs|\nu_\hs\). It then follows that (see \rcor{cor: uniqueness}) \[\sigma_{u_\a}: N|\l_\a\rightarrow N|\l_\infty\] is the unique embedding of the form \(\sigma_{(Z, \hk, \xi)}\), where \((Z, \hk, \xi)\) witness \rcor{cor: tech cor iii} applied to \((\hp, \hr_\a, \hs, \l, \k, \nu)\). Hence, \[(\sigma_{u_\a}: \a<\zeta)\in N\] as it can be defined in \(N\) from \(Z\), \(\rel(Z)\), and \((\l_\a: \a<\zeta)\). Since \(f(\a)\in \rge(\sigma_{u_\a})\) for all \(\a < \zeta\), $f$ can be recovered in $N$, thus $f \in N$.\footnote{Here, by the Coding Lemma, the set $(\sigma_{u_\alpha}^{-1}(f(\a)): \a<\zeta)\in N$.}
\end{proof}

\section{Reflection III: useful lemmas}

In the next few sections, we will establish the reflection principle for critical points strictly greater than $\Theta^N$. Recall that our primary objective is to construct sufficiently closed hulls of a fixed initial segment $N|\l$.
We proceed toward establishing $\rref(X, \nN_\a, \nN_{\a+1})$ within $N$. Our aim is to implement the canonical construction of a continuous chain of elementary submodels of $N|\l$ in the setting of Nairian Models. The primary obstacle is the absence of Skolem functions, which necessitates a more delicate treatment of the successor steps in this construction. The subsequent lemmas are designed to perform the successor step of the continuous chain construction in a constructive manner that supports iteration.

\begin{lemma}\label{lem: pullback consistency for sigma} 
Suppose \[M\models \rg_3(\hq, \l, \k, \nu)\] and $\eta<\l$ is a strong cardinal of $\M^\hq|\l$. Let 
\begin{itemize}
    \item $u=(\hq, \l, \nu)$,
    \item $\xi\in [\k, \nu)$ be an inaccessible proper cutpoint cardinal of $\M^\hq|\nu$,
    \item $\a=\pi_{\hq|\nu, \infty}(\eta)$ and $\a'=\eta_\infty=\pi_{\hq, \infty}(\eta)$,
    \item $\hs=\hq^\a$ and $\hs'=\hq^{\a'}$, 
    \item $j=\pi_{\hq|\nu, \infty}^\a$ and $j'=\pi_{\hq, \infty}^{\a'}$.
\end{itemize}
Then 
\begin{enumerate}
    \item $\sigma_u(\M^\hs|j(\nu))=\M^{\hs'}|j'(\nu)$, 
    \item $\sigma_u(\hs|j(\xi))=\hs'|j'(\xi)$, and 
    \item $\sigma_u(j\rest \M^\hq|\xi)=j'\rest \M^\hq|\xi$ (see \rfig{fig: sigma_precise_gap}).
\end{enumerate}

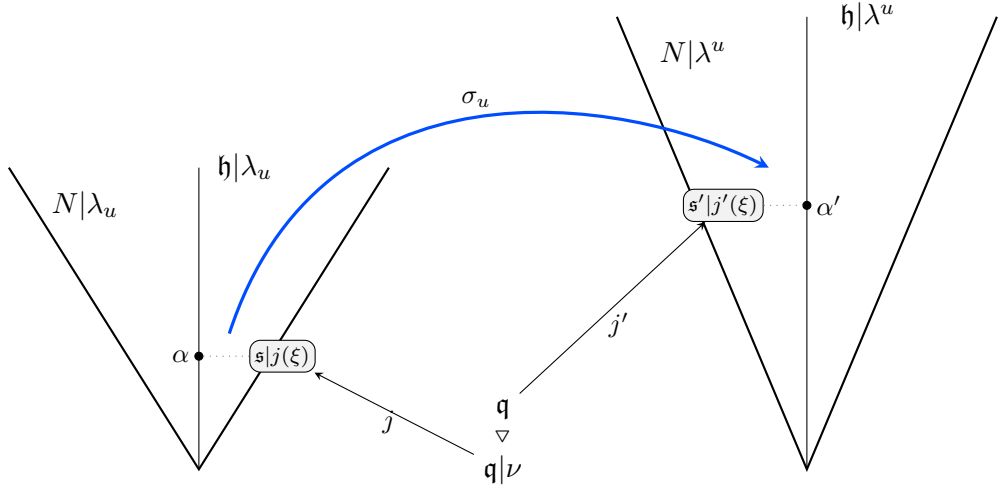
\begin{figure}[ht]
\centering
\begin{tikzpicture}[>=stealth]

    \draw[thick] (-2.5, 4) -- (0, 0) -- (2.5, 4);
    \node at (-1.5, 3.5) {\large $N|\lambda_u$};
    
    \draw[thin] (0, 0) -- (0, 4);
    \node at (0.6, 4) {\large $\hh|\lambda_u$};
    
    \coordinate (alpha) at (0, 1.5);
    \filldraw (alpha) circle (1.5pt) node[left] {$\alpha$}; 
    
    \node (HS_left) at (1.1, 1.5) [draw, fill=gray!10, rounded corners, inner sep=2pt] {\footnotesize $\hs|j(\xi)$};
    \draw[dotted, gray] (HS_left) -- (alpha);

    \draw[thick] (5.5, 6) -- (8, 0) -- (10.5, 6);
    \node at (6.5, 5.5) {\large $N|\lambda^u$};
    
    \draw[thin] (8, 0) -- (8, 6);
    \node at (8.8, 6) {\large $\hh|\lambda^u$};
    
    \coordinate (alpha_prime) at (8, 3.5);
    \filldraw (alpha_prime) circle (1.5pt) node[right] {$\alpha'$};
    
    \node (HS_right) at (6.9, 3.5) [draw, fill=gray!10, rounded corners, inner sep=2pt] {\footnotesize $\hs'|j'(\xi)$};
    \draw[dotted, gray] (HS_right) -- (alpha_prime);

    \node (Q) at (4, 0.8) {\large $\hq$};
    \node (Qnu) at (4, 0) {\large $\hq|\nu$};
    \path (Qnu) -- node[sloped] {$\inseg$} (Q);

    \draw[->] (Qnu) -- node[left, pos=0.4, xshift=-2pt] {$j$} (HS_left);
    \draw[->] (Q) -- node[right, pos=0.4, xshift=2pt] {$j'$} (HS_right);

    \draw[->, very thick, color=blue!70!cyan] 
        (0.4, 1.8) 
        .. controls (1.6, 5.5) and (5.5, 5.0) .. 
        node[midway, above, text=black, yshift=0pt] {$\sigma_u$} 
        (7.5, 4.0);

\end{tikzpicture}
\caption{The reflection map $\sigma_u$ passing precisely through the gap of $N|\lambda_u$.}
\label{fig: sigma_precise_gap}
\end{figure}
\end{lemma}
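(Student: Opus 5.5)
The plan is to compute $\sigma_u$ on the relevant objects through the definition of $\sigma'_u$ and its induced extension (\rdef{def: reflection tuple}, \rdef{def: induced embeddings}), using one iterate $\hr$ for both sides. The objects $\M^\hs|j(\nu)$, $\hs|j(\xi)$ and $j\rest\M^\hq|\xi$ are defined in $N|\l_u$ in a uniform way from $\a$ and $\mH|\l_u$. Specifically, $\hs=\hq^\a$ is the least node of $\T_{\hq|\nu,\infty}$ agreeing with $\mH$ up to $\a$, and by \rthm{thm: uniqueness of realizability witnesses} and \rdef{def: rel sets} it is recovered inside $N$ from the iteration set $\pi_{\hq|\nu,\infty}[\M^\hq|\nu]$, or more precisely from $\rel$ restricted below $\a$. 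The primed objects have the same definitions in $N|\l^u$ from $\a'$ and the set $\pi_{\hq,\infty}[\M^\hq|\nu]$. Since $\sigma_u$ is elementary (\rthm{thm: basic reflection}), it suffices to establish two things: first, that $\sigma_u(\a)=\a'$; second, that $\sigma_u$ carries the countable parameter $X=\pi_{\hq|\nu,\infty}[\M^\hq|\nu]\cap\mH|\a$ to $X'=\pi_{\hq,\infty}[\M^\hq|\nu]\cap\mH|\a'$, or to whatever countable parameter defines these structures.

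The first step is to choose a complete iterate $\hr$ of $\hq$ with $\T_{\hq,\hr}$ based on $\hq|\nu$ and $\gen(\T_{\hq,\hr})\subseteq\l_\hr$ (this is allowed by \rrem{rem: also bounded by l}) such that $\a$ is in the range of $\pi_{\hr|\nu_\hr,\infty}$. Since $\eta$ is a small strong cardinal, $\hr$ can be taken to be $\hq$ itself: $\eta$ has preimage $\eta$ in $\hq|\nu$, so $\sigma'_u(\a)=\pi_{\hq,\infty}(\eta)=\a'$ by definition.

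Next, for $x\in\M^\hq|\nu$ we want $\sigma'_u(\pi_{\hq|\nu,\infty}(x))=\pi_{\hq,\infty}(x)$, which again follows directly from the definition with $\hr=\hq$. Combining this with \rdef{def: induced embeddings} gives $\sigma_u(X)=\sigma'_u[X]=X'$ for countable pieces.

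It remains to express $\hs$ and $j$ definably from $\a$ and $X$. Here \cite[Lemma 9.4]{blue2025nairian} and \rlem{lem: qz is qk} give that $\M^\hs|j(\nu)$ is the transitive collapse of $\hull^{\mH|\a^{+}}(\a\cup\pi_{\hq|\nu,\infty}[\M^\hq|\nu])$, that $j$ is the corresponding collapse map composed with $\pi_{\hq|\nu,\infty}$, and that the strategy of $\hs|j(\xi)$ is the pullback of that of $\mH$. Each of these is first-order over $N|\l_u$ from $(\a,X)$, and the same formula over $N|\l^u$ from $(\a',X')$ defines the primed versions, using that $\hs'=\hq^{\a'}$ satisfies $\M^{\hs'}|j'(\nu)=\chull^{\mH}(\a'\cup\pi_{\hq,\infty}[\M^\hq|\nu])$, as in the proof of \rcl{clm: hx is iterate of hq and hr}. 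Elementarity then gives (1); restricting to $j(\xi)$ together with the pullback strategy gives (2); and (3) follows because $j\rest\M^\hq|\xi$ is definable from the collapse together with $X$.

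I expect the main obstacle to be that $X$ is not in $N|\l_u$ as a single countable subset of ordinals bounded below $\l_u$ in the way \rdef{def: induced embeddings} requires. It is countable, but its intersection with $\mH|\a^+$ might not lie in $\mH|\a'$ on the target side. The first thing I would try is to code $X$ by its countable preimage sequence together with a real, and then check, using the agreement $\hq|\l=\hq^{\a}|\l$ for the small $\eta$, that the hull computed in $N|\l^u$ really is $\hs'$ rather than some intermediate node.
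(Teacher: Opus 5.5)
Your plan (compute $\sigma_u$ on bounded countable parameters via $\sigma'_u$, then transfer by elementarity) has the right shape, but the step that carries the proof fails as written.

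\textbf{The hull you use for clauses (1) and (3).} The hull cannot live in $\mH|(\a^+)^{\mH}$: its collapse would have height at most $(\a^+)^{\mH}=j((\eta^+)^{\Q})<j(\nu)$. The correct description (cf.\ \rlem{lem: qz is qk} and the claim in the proof of \rthm{thm: uniqueness of realizability witnesses}) is the collapse of a hull of $\mH|\sup\pi_{\hq|\nu,\infty}[\nu]$ generated by $\a\cup\pi_{\hq|\nu,\infty}[\M^\hq|\nu]$. Since $\nu>\l$, this structure and its generating set reach above $\l_u$. So the definition is not first-order over $N|\l_u$, and \rdef{def: induced embeddings} says nothing about where $\sigma_u$ sends the generating set; the same happens above $\l^u$ on the primed side.

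The bounded parameter you do control, $X=j[\M^\hq|\eta]$, is too weak. The map $j$ need not be continuous at $\eta$, so $j\rest\M^\hq|\eta$ does not determine $j$ on $\powerset(\eta)\cap\M^\hq$, and hence does not determine $\hs$.

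\textbf{The missing idea: go up one cardinal and use the ultrapower representation.} Let $\zeta=(\eta^+)^{\Q}$. Since $\hs,\hs'$ are $\eta$-bounded iterates of $\hq$, we have $\M^\hs=Ult(\M^\hq,G)$ and $\M^{\hs'}=Ult(\M^\hq,G')$, where $G,G'$ are the extenders with spaces $\a,\a'$ derived from $j\rest\Q|\zeta=\pi_{\hq|\nu,\infty}\rest\Q|\zeta$ and $j'\rest\Q|\zeta=\pi_{\hq,\infty}\rest\Q|\zeta$. The set $Z=j[\Q|\zeta]$ is a countable subset of $\mH|(\a^+)^{\mH}$, hence bounded in $\l_u$. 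The definition of $\sigma'_u$ with $\hr=\hq$ then gives $\sigma_u(Z)=\sigma'_u[Z]=j'[\Q|\zeta]$. So $\sigma_u$ sends the collapse map and $G$ to their primed versions. Since each $\M^\hq|\iota$ with $\iota<\omega_1$ is coded by a real fixed by $\sigma_u$, elementarity gives clauses (1) and (3). This is what the paper does.

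\textbf{Clause (2) is a separate gap.} The strategy of $\hs|j(\xi)$ is the pullback of $\mH$'s strategy under $p=\pi_{\hs|j(\xi),\infty}$. Its target $\mH|\pi_{\hq|\nu,\infty}(\xi)$ lies above $\l_u$ because $\xi\geq\k\geq\l$. So the strategy is not first-order over $N|\l_u$ from $(\a,X)$. What must be shown is that $\sigma'_u$ (which makes sense on all of $\mH|\sup\pi_{\hq|\nu,\infty}[\nu]$) intertwines $p$ and $p'$, i.e.\ $\sigma'_u(p(x))=p'(\sigma_u(x))$ for $x\in\M^\hs|j(\xi)$.

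The paper does this with an auxiliary iterate. Let $E\in\vec{E}^{\Q}$ be least with $\cp(E)>\eta$ and $\lh(E)>\xi$, and let $\hr=Ult(\hq,E)$.
\begin{enumerate}
\item Full normalization gives $\hr^\a=Ult(\hs,j(E))$ and $\hr^{\a'}=Ult(\hs',j'(E))$. These agree with $\hs,\hs'$ below $j(\xi),j'(\xi)$, and $\pi^\a_{\hr|\nu_\hr,\infty}\rest\M^\hr|\xi=j\rest\M^\hq|\xi$.
\item Write $x=j(f)(a)$ with $f\in\M^\hr|\xi$ and $a\in[\a]^{<\omega}$. Then $p(x)=\pi_{\hr|\nu_\hr,\infty}(f)(a)$.
\item Since $\T_{\hq,\hr}$ is based on $\hq|\nu$, the definition of $\sigma'_u$ at $\hr$ gives $\sigma'_u(p(x))=\pi_{\hr,\infty}(f)(\sigma_u(a))=p'(j'(f)(\sigma_u(a)))$.
\item Clause (3), together with $\cp(p')>\a'$, turns this into $p'(\sigma_u(x))$.
\end{enumerate}
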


\begin{proof}
We start by letting 
 \begin{itemize}
 \item $\hq=(\Q, \Lambda)$,
 \item $\zeta=(\eta^+)^{\Q}$,
 \item $Z=j[\Q|\zeta]$ and $Z'=j'[\Q|\zeta]$,
  \item $i=j\rest (\Q|\zeta)$ be the inverse of the transitive collapse of $Z$, 
 \item  $i'=j'\rest (\Q|\zeta)$ be the inverse of the transitive collapse of $Z'$,
 \item $G=\{ (a, A): A\in \Q|\zeta \wedge a\in [\a]^{<\omega} \wedge a\in i(A)\}$, and
 \item $G'=\{ (a, A): A\in \Q|\zeta \wedge a\in [\a']^{<\omega} \wedge a\in i'(A)\}$.
\end{itemize}
 Notice that
 \vspace{0.3cm}
 \begin{enumerate}[label=(\arabic*), itemsep=0.3cm]
    \item[(1.1)] $\sigma_u(\a)=\a'$,
    \item[(1.2)] $j\rest \Q|\zeta=\pi_{\hq|\nu, \infty}\rest \Q|\zeta$ (see \cite[Chapter 9.1]{blue2025nairian}),
    \item[(1.3)] $j'\rest \Q|\zeta=\pi_{\hq, \infty}\rest \Q|\zeta$ (see \cite[Chapter 9.1]{blue2025nairian}),
    \item[(1.4)] for each $z$, $z\in Z$ if and only if $j'(z_{\hq|\nu})\in Z'$,
    \item[(1.5)] $Z'=\sigma_u[Z]$ (see \rdef{def: reflection tuple}),
    \item[(1.6)] $\sigma_u(Z)=Z'$ (this follows from (1.5) and the fact that $Z$ is countable), and
    \item[(1.7)] $\sigma_u(i)=i'$ and $\sigma_u(G)=G'$.
 \end{enumerate}
 \vspace{0.3cm}
Because $\hs=Ult(\hq, G)$, $\hs'=Ult(\hq, G')$, $j=\pi_G^{\M^\hq}$, and $j'=\pi_{G'}^{\M^\hq}$, we have that
\vspace{0.3cm}
\begin{enumerate}[label=(\arabic*), itemsep=0.3cm]
    \setcounter{enumi}{1}
    \item[(2)] for every $\iota\in (\eta, \omega_1^M)$, 
    \[\sigma_u(\M^\hs|j(\iota))=\M^{\hs'}|j'(\iota)\ \text{and}\ \sigma_u(j\rest \M^\hq|\iota)=j'\rest \M^\hq|\iota.\]
\end{enumerate}
\vspace{0.3cm}
Notice that (2) verifies clauses $\textit{(1)}$ and $\textit{(3)}$ of \rlem{lem: pullback consistency for sigma}.
The following claim finishes the proof of \rlem{lem: pullback consistency for sigma} by verifying the remaining clause $\textit{(2)}$.

\begin{figure}[ht]
\centering
\begin{tikzpicture}[>=stealth, node distance=2.5cm]
    \node (Q) at (0, 5) {$\hq$};
    \node (Qnu) at (0, 0) {$\hq|\nu$};

    \node (Sprime_seg) at (5, 3.5) {$\hs'|j'(\xi)$};
    \node (S_seg) at (5, 1.5) {$\hs|j(\xi)$};
    
    \node (Sprime) at (5, 5) {$\hs'$};
    \node (S) at (5, 0) {$\hs$};

    \node (Hiota_prime) at (9, 3.5) {$\hh|\iota'$};
    \node (Hiota) at (9, 1.5) {$\hh|\iota$};

    \path (Qnu) -- node[sloped] {$\inseg$} (Q);
    \path (S_seg) -- node[sloped] {$\inseg$} (S);
    \path (Sprime_seg) -- node[sloped] {$\inseg$} (Sprime);

    \draw[->] (Qnu) -- node[above] {\footnotesize $j$} (S);
    \draw[->] (Q) -- node[above] {\footnotesize $j'$} (Sprime);

    \draw[->] (S_seg) -- node[below] {\footnotesize $p$} (Hiota);
    \draw[->] (Sprime_seg) -- node[above] {\footnotesize $p'$} (Hiota_prime);

    \draw[->, dashed] (S_seg) -- node[left] {\footnotesize $\sigma_u$} (Sprime_seg);
    
    \draw[->, dashed] (Hiota) -- node[right] {\footnotesize $\sigma_u$} (Hiota_prime);

    \node[draw=black, thick, fill=gray!10, rounded corners, inner sep=6pt, anchor=north] 
        at (4.5, -1.5) {
        \begin{minipage}{7cm}
            \centering
            \textbf{\underline{Key Facts}}
            \begin{itemize}[label={\tiny$\bullet$}, itemsep=1.2pt, topsep=3pt, leftmargin=1em, font=\scriptsize]
                \item $\sigma_u(j \restriction \M^\hq|\iota) = j' \restriction \M^\hq|\iota$ \text{ for } $\iota \in (\eta, \omega_1^M)$,
                \item $\sigma_u(\hs|j(\xi)) = \hs'|j'(\xi)$,
                \item $\sigma_u(p) = p'$.
            \end{itemize}
        \end{minipage}
    };
    \end{tikzpicture}
\caption{The action of $\sigma_u$.}
\label{fig: sigma_action}
\end{figure}

\begin{claim}\label{subcl: in the range 1} $\sigma_u(\Sigma^{\hs|j(\xi)})=\Sigma^{\hs'|j'(\xi)}$.
\end{claim}
\begin{proof} The reader may benefit from \rfig{fig: sigma_action}. Let $\iota$ and $\iota'$ be such that $\hh|\iota$ is the complete iterate of $\hs|j(\xi)$ and  $\hh|\iota'$ is the complete iterate of $\hs'|j'(\xi)$. Let \begin{center}$p=\pi_{\hs|j(\xi), \hh|\iota}\rest M^\hs|j(\xi)$ and $p'=\pi_{\hs'|j'(\xi), \hh|\iota'}\rest \M^{\hs'}|j'(\xi)$.\end{center}
Because $\hs|j(\xi)$ is the $p$-pullback of $\hh|\iota$ and $\hs'|j'(\xi)$ is the $p'$-pullback of $\hh|\iota'$, it is enough to show that $\sigma_u(p)=p'$.

Let $E\in \vec{E}^{\Q}$ be the extender with the least index such that $\cp(E)>\eta$ and $\lh(E)>\xi$. Let 
\begin{itemize}
\item $\hr=Ult(\hq, E)$, 
\item $F=j(E)$, 
\item $F'=j'(E)$, 
\item $\hx=Ult(\hs, F)$, and 
\item $\hx'=Ult(\hs', F')$.
\end{itemize}
Applying (2) we get that \[\sigma_u(F)=F'.\] It follows from the full normalization that
\vspace{0.3cm}
\begin{enumerate}[label=(\arabic*), itemsep=0.3cm]
    \setcounter{enumi}{2}
    \item[(3)] $\hr^{\a}=\hx$ and $\hr^{\a'}=\hx'$.
\end{enumerate}
\vspace{0.3cm}
Moreover, we have that
\vspace{0.3cm}
\begin{enumerate}[label=(\arabic*), itemsep=0.3cm]
    \item[(4.1)] $\hs|j(\xi)=\hx|j(\xi)$ and $\hs'|j'(\xi)=\hx'|j'(\xi)$,
    \item[(4.2)] $p=\pi_{\hx|j(\xi), \hh|\iota}$ and $p'=\pi_{\hx'|j'(\xi), \hh|\iota'}$,
    \item[(4.3)] $\pi^\a_{\hr|\nu_\hr, \infty}\rest \M^\hr|\xi=j\rest \M^\hq|\xi$, and
    \item[(4.4)] $\pi^{\a'}_{\hr, \infty}\rest \M^\hr|\xi=j'\rest \M^\hq|\xi$.
\end{enumerate}
\vspace{0.3cm}
Suppose now that $x\in \M^{\hx}|j(\xi)$. We have that for some $f\in \M^\hr|\xi$ and $s\in [\a]^{<\omega}$, \[x=\pi^\a_{\hr|\nu_\hr, \infty}(f)(a).\] Then applying (4.2) and the results of \cite[Chapter 9.1]{blue2025nairian}, we get that
\vspace{0.3cm}
\begin{enumerate}[label=(\arabic*), itemsep=0.3cm]
    \setcounter{enumi}{4}
    \item[(5)] $p(x)=\pi_{\hr|\nu_\hr, \infty}(f)(a)$. 
\end{enumerate}
\vspace{0.3cm}
Applying (4.2) and (4.3) we get that 
\begin{align*}
\sigma_u(p(x)) &= \sigma_u(\pi_{\hr|\nu_\hr, \infty}(f)(a))\\
& =  \sigma_u(\pi_{\hr|\nu_\hr, \infty}(f))(\sigma_u(a)) \\
& = \pi_{\hr, \infty}(f)(\sigma_u(a)) \\
& = p'(j'(f))(\sigma_u(a)))\ \text{(see \rdef{def: reflection tuple})}\\
& = p'(j'(f)(\sigma_u(a)))\ \text{(because $\cp(p')>\a'$)}\\
& = p'(\sigma_u(j(f))(\sigma_u(a))) \\
& = p'(\sigma_u(j(f)(a)))\\
& = p'(\sigma_u(x)).
\end{align*}
Thus, $\sigma_u(p)=p'$.
\end{proof}
\end{proof}

We will also need the following lemma.

\begin{lemma}\label{lem: pullback consistency for sigma1} 
Suppose \[M\models \rg_3(\hq, \l, \k, \nu)\] and $\l$ is a strong cardinal of $\M^\hq|\l$. Let $\hs$ be a complete $\l$-bounded iterate of $\hq$ and let $u=(\hq, \l, \nu)$ and $v=(\hs, \l_\hs, \nu_\hs)$. Then $\rge(\sigma_u)\subseteq \rge(\sigma_v)$.
\end{lemma}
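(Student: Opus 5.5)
Since $\hs$ is a complete $\l$-bounded iterate of $\hq$, we have $\gen(\T_{\hq,\hs})\subseteq \l_\hs$. The tree $\T_{\hq,\hs}$ is therefore based on $\hq|\nu$, and $\hs|\nu_\hs$ is a complete iterate of $\hq|\nu$. Both embeddings land in the same target: $\sigma_u,\sigma_v: \cdot \to N|\pi_{\hq,\infty}(\l)$, using $\pi_{\hs,\infty}(\l_\hs)=\pi_{\hq,\infty}(\l)$. For the domains we have $\l_v=\pi_{\hs|\nu_\hs,\infty}(\l_\hs)$ and $\l_u=\pi_{\hq|\nu,\infty}(\l)$. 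I expect to show that $\sigma_u$ factors through $\sigma_v$: there should be an elementary $k: N|\l_u\to N|\l_v$ with $\sigma_u=\sigma_v\circ k$. This gives $\rge(\sigma_u)\subseteq\rge(\sigma_v)$ directly.

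The first step is to work at the level of $\mH$ via the maps $\sigma'_u,\sigma'_v$ of \rdef{def: reflection tuple}, and to define $k'$ on $\mH|\l_u$ as follows. Given $x$, pick a complete iterate $\hr$ of $\hq$ with $\T_{\hq,\hr}$ based on $\hq|\nu$, $\gen\subseteq\l_\hr$ (\rrem{rem: also bounded by l}), and $x\in\rge(\pi_{\hr|\nu_\hr,\infty})$. Compare $\hr$ with $\hs$ by least-extender-disagreement to get a common $\l$-bounded iterate $\hw$. By \rcor{cor: comparing tau bounded iterates} both sides are based on $|\nu$, so $\hw$ is an iterate of $\hs$ of the type allowed in the definition of $\sigma'_v$. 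Set $k'(x)=\pi_{\hw|\nu_\hw,\infty}(\pi_{\hr,\hw}(x_{\hr|\nu_\hr}))$.

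Now compute the composite. Since $\hw$ is an iterate of $\hs$ based on $\hs|\nu_\hs$ and $k'(x)$ has preimage $\pi_{\hr,\hw}(x_{\hr|\nu_\hr})$ in $\hw|\nu_\hw$, we get
\[
\sigma'_v(k'(x))=\pi_{\hw,\infty}(\pi_{\hr,\hw}(x_{\hr|\nu_\hr}))=\pi_{\hr,\infty}(x_{\hr|\nu_\hr})=\sigma'_u(x),
\]
where the middle equality is commutativity of iteration maps under $\Sigma$. Well-definedness of $k'$, i.e. independence from the choices of $\hr$ and $\hw$, follows in the same way as \rlem{lem: u is wd}: coiterate two choices further. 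This is the main technical point. The difficulty is that $\pi_{\hw|\nu_\hw,\infty}\circ\pi_{\hr|\nu_\hr,\hw|\nu_\hw}$ need not equal $\pi_{\hr|\nu_\hr,\infty}$, because $\T_{\hr,\hw}$ may use extenders overlapping $\nu_\hr$. That is exactly why $k'$ is not the identity. I would handle it using the $\l$-boundedness, which keeps the relevant generators below $\l$, so the composite is well-defined via the uniqueness argument of \rthm{thm: uniqueness of realizability witnesses}.

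Finally, let $k=\ind(k')$. Its elementarity follows as in \rthm{thm: basic reflection}. Alternatively, one can avoid elementarity altogether: $\sigma_v$ is elementary and $\sigma'_u=\sigma'_v\circ k'$ on $\mH|\l_u$. By \rdef{def: induced embeddings}, each $z\in N|\l_u$ is the unique solution of a formula with a parameter $X\in\powerset^b_{\omega_1}(\l_u)$, and $\sigma_u(X)=\sigma'_u[X]=\sigma_v(k'[X])$. Hence $\sigma_u(z)$ is definable in $N|\pi_{\hq,\infty}(\l)$ from $\sigma_v(k'[X])$ by that same formula. It therefore equals $\sigma_v$ applied to the element of $N|\l_v$ defined from $k'[X]$, and such an element exists by elementarity of $\sigma_v$. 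Thus $\sigma_u(z)\in\rge(\sigma_v)$.
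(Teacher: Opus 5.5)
Your core argument is the paper's own. You pick $\hr$ based on $\hq|\nu$ catching $x$, coiterate $\hr$ with $\hs$ to $\hw$, and apply \rcor{cor: comparing tau bounded iterates} with your $\hr$ as the iterate based on $\hq|\nu$ to see that $\T_{\hs,\hw}$ is based on $\hs|\nu_\hs$. You then conclude $\sigma'_u(x)=\pi_{\hr,\infty}(x_{\hr|\nu_\hr})=\pi_{\hw,\infty}(\pi_{\hr,\hw}(x_{\hr|\nu_\hr}))=\sigma'_v(k'(x))$. Choosing $\hr$ to be $\l$-bounded via \rrem{rem: also bounded by l} is exactly what the corollary requires. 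Your last paragraph transfers the inclusion from $\mH$ to $N$ via \rdef{def: induced embeddings}, filling in a step the paper only asserts ("it is then enough to show"). There you should add that $k'$ is order preserving on ordinals, because $\sigma'_u$ and $\sigma'_v$ are; this is what makes $k'[X]$ bounded in $\l_v$, so that $\sigma_v(k'[X])=\sigma'_v[k'[X]]$.

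Two side remarks are wrong, though the main line does not use them.

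First, $\l$-boundedness does not make $\T_{\hq,\hs}$ based on $\hq|\nu$. Generators below $\l_\hs$ still permit extenders with critical point at most $\l$ whose index lies above the current image of $\nu$. The paper lists the two conditions separately, and \rcor{cor: comparing tau bounded iterates} assumes basedness for only one of the two iterates. If $\T_{\hq,\hs}$ were based on $\hq|\nu$, every iterate of $\hs$ based on $\hs|\nu_\hs$ would be an iterate of $\hq$ based on $\hq|\nu$. That would give $\l_u=\l_v$ and $\sigma_u=\sigma_v$ outright, and your $k'$ would be the identity, contradicting your own next paragraph. The corollary gives only the $\hs$-side conclusion, which is all your computation needs.

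Second, well-definedness of $k'$ is not the main technical point; it is immediate. $\sigma'_v$ is injective: any two points are caught by a single iterate of $\hs$, and the relevant $\pi$-maps are injective. So the identity $\sigma'_v(k'(x))=\sigma'_u(x)$, which holds for every choice of $\hr$ and $\hw$, already forces $k'=(\sigma'_v)^{-1}\circ\sigma'_u$. No appeal to \rthm{thm: uniqueness of realizability witnesses} is needed.
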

\begin{proof} We have that \[\sigma_u: N|\pi_{\hq|\nu, \infty}(\l)\rightarrow N|\l_\infty\] and 
\[\sigma_v: N|\pi_{\hs|\nu_\hs, \infty}(\l_\hs)\rightarrow N|\l_\infty.\]
It is then enough to show that \[\rge(\sigma_u\rest \mH|\pi_{\hq|\nu, \infty}(\l))\subseteq \rge(\sigma_v\rest \mH|\pi_{\hs|\nu_\hs, \infty}(\l_\hs)).\]
Fix then $x\in \mH|\pi_{\hq|\nu, \infty}(\l)$. Let $\hr$ be a complete iterate of $\hq$ such that $\T_{\hq, \hr}$ is based on $\hq|\nu_\hq$ and for some $y\in \M^\hr|\l_\hr$, $\pi_{\hr|\l_\hr, \infty}(y)=x$. Let $\hw$ be the result of the least-extender-disagreement comparison of $\hr$ and $\hs$. It follows from \rcor{cor: comparing tau bounded iterates} that $\T_{\hs, \hw}$ is based on $\hs|\nu_\hs$. Let then $z=\pi_{\hw|\nu_\hw, \infty}(\pi_{\hr, \hw}(y))$. We then have that
\begin{align*}
\sigma_u(x) & =\pi_{\hr, \infty}(y)\\
&= \pi_{\hw, \infty}(\pi_{\hr, \hw}(y))\\
&= \sigma_v(\pi_{\hw|\nu_\hw, \infty}(\pi_{\hr, \hw}(y)))\\
&= \sigma_v(z)
\end{align*}
It follows that $\rge(\sigma_u)\subseteq \rge(\sigma_v)$. 
\end{proof}

Lemma \ref{lem: defining smaller hulls} is our first step towards showing that we can indeed build a continuous sequence of elementary chains. The key point here is that $\sigma_0^-\in \rge(\sigma_1^-)$, which goes beyond \rlem{lem: pullback consistency for sigma}. The cost is that we must have three Woodin cardinals. 

\begin{lemma}\label{lem: defining smaller hulls}
Suppose
\begin{itemize}
    \item $\eta<\l<\k<\varsigma$ are ${<}\varsigma$-strong cardinals of $\hh$,
    \item $\nu^0<\gg<\nu^1\in (\k,\varsigma_\infty)$ are three Woodin cardinals of $\hh$ such that $\hh$ has a Woodin cardinal in the interval $(\k, \nu^0)$ and $\gg$ is the least Woodin cardinal of $\hh$ above $\nu^0$,
    \item $\hr=(\R, \Psi)$ is a complete iterate of $\hp$ such that (see \rfig{fig:model_timelines})
    \[\{\eta, \l, \k, \nu^0, \gg, \nu^1\}\subseteq \rge(\pi_{\hr, \infty}),\]
    \item $\hq_0$ and $\hq_1$ are two $\eta$-bounded\footnote{See \cite[Definition 9.1]{blue2025nairian}.} complete iterates of $\hr$ such that 
    \[\pi_{\hq_0, \infty}[\eta_{\hq_0}]\subseteq \pi_{\hq_1, \infty}[\eta_{\hq_1}].\]
\end{itemize}
For $i\in 2$, let
\begin{itemize}
    \item $u_i=(\hq_i, \l_{\hq_i}, \nu^i_{\hq_i})$, 
    \item $u_i^-=(\hq_i, \eta_{\hq_i}, \nu^i_{\hq_i})$, 
    \item $\sigma_i=\sigma_{u_i}$, and 
    \item $\sigma_i^-=\sigma_{u_i^-}$. 
\end{itemize}
Then (see \rfig{fig:nested_hulls_with_box}) \[\rge(\sigma_0^-)\subseteq \rge(\sigma_1^-)\] and \[\sigma_0^-\in \rge(\sigma_1^-).\]

\begin{figure}[ht]
    \centering
    \begin{tikzpicture}[>=Stealth, auto, node distance=2cm, font=\small]
        
        \node (HR) at (0, 0) {$\hr$};

        \tikzstyle{modelline} = [thick, -]
        \tikzstyle{tick} = [thin, -]

        \draw[modelline] (-4, 2.0) -- (-4, 5.1);
        \node[below] at (-4, 2.0) {$\hq_0$};
        
        \draw[tick] (-4.1, 2.5) -- (-3.9, 2.5) node[left=2pt, font=\scriptsize] {$\eta_{\hq_0}$};
        \draw[tick] (-4.1, 3.1) -- (-3.9, 3.1) node[left=2pt, font=\scriptsize] {$\l_{\hq_0}$};
        \draw[tick] (-4.1, 3.7) -- (-3.9, 3.7) node[left=2pt, font=\scriptsize] {$\nu^0_{\hq_0}$};
        \draw[tick] (-4.1, 4.3) -- (-3.9, 4.3) node[left=2pt, font=\scriptsize] {$\gg_{\hq_0}$};
        \draw[tick] (-4.1, 4.9) -- (-3.9, 4.9) node[left=2pt, font=\scriptsize] {$\nu^1_{\hq_0}$};

        \draw[modelline] (4, 2.5) -- (4, 6.6);
        \node[below] at (4, 2.5) {$\hq_1$};

        \draw[tick] (3.9, 3.2) -- (4.1, 3.2) node[right=2pt, font=\scriptsize] {$\eta_{\hq_1}$};
        \draw[tick] (3.9, 4.0) -- (4.1, 4.0) node[right=2pt, font=\scriptsize] {$\l_{\hq_1}$};
        \draw[tick] (3.9, 4.8) -- (4.1, 4.8) node[right=2pt, font=\scriptsize] {$\nu^0_{\hq_1}$};
        \draw[tick] (3.9, 5.6) -- (4.1, 5.6) node[right=2pt, font=\scriptsize] {$\gg_{\hq_1}$};
        \draw[tick] (3.9, 6.4) -- (4.1, 6.4) node[right=2pt, font=\scriptsize] {$\nu^1_{\hq_1}$};

        \draw[modelline] (0, 3.5) -- (0, 8.1);
        \node[below] at (0, 3.5) {$\hh$};

        \draw[tick] (-0.1, 4.3) -- (0.1, 4.3) node[right=2pt, font=\scriptsize] {$\eta_{\hh}$};
        \draw[tick] (-0.1, 5.2) -- (0.1, 5.2) node[right=2pt, font=\scriptsize] {$\l_{\hh}$};
        \draw[tick] (-0.1, 6.1) -- (0.1, 6.1) node[right=2pt, font=\scriptsize] {$\nu^0_{\hh}$};
        \draw[tick] (-0.1, 7.0) -- (0.1, 7.0) node[right=2pt, font=\scriptsize] {$\gg_{\hh}$};
        \draw[tick] (-0.1, 7.9) -- (0.1, 7.9) node[right=2pt, font=\scriptsize] {$\nu^1_{\hh}$};

        \draw[->, thick] (HR) to[bend left=15] (-3.5, 1.8);
        \draw[->, thick] (HR) to[bend right=15] (3.5, 2.3);

        \draw[->, dashed, thick] (-3.5, 2.2) to[bend right=20] node[below, font=\scriptsize] {$\pi_{\hq_0, \hh}$} (-0.5, 3.7);
        
        \draw[->, dashed, thick] (3.5, 2.7) to[bend left=20] node[below, font=\scriptsize] {$\pi_{\hq_1, \hh}$} (0.5, 3.7);

        \node[draw=black, dashed, rounded corners, fill=gray!5, inner sep=6pt] at (0, -1.5) {
            $\pi_{\hq_0, \infty}[\eta_{\hq_0}] \subseteq \pi_{\hq_1, \infty}[\eta_{\hq_1}]$
        };

    \end{tikzpicture}
    \caption{The relationship between $\hr$, $\hq_0$, $\hq_1$ and $\hh$.}
    \label{fig:model_timelines}
\end{figure}
\end{lemma}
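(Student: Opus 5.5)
The first claim, $\rge(\sigma_0^-)\subseteq\rge(\sigma_1^-)$, is a comparison argument in the style of \rlem{lem: pullback consistency for sigma1}. The second claim, $\sigma_0^-\in\rge(\sigma_1^-)$, I would get by locating inside $N|\eta_{u_1^-}$ a definable code for $\sigma_0^-$ whose $\sigma_1^-$-image is exactly $\sigma_0^-$. I would proceed in three steps.

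\textbf{Step 1 (range inclusion).} Fix $x$ in the domain of $\sigma_0^-$. By \rrem{rem: also bounded by l} choose a complete $\eta$-bounded iterate $\hr'$ of $\hq_0$, based on $\hq_0|\nu^0_{\hq_0}$, with $x=\pi_{\hr'|\nu^0_{\hr'},\infty}(y)$ for some $y\in\M^{\hr'}|\eta_{\hr'}$. Then $\sigma_0^-(x)=\pi_{\hr',\infty}(y)$. Let $\hw$ be the least-extender-disagreement coiterate of $\hr'$ and $\hq_1$. Since both are $\eta$-bounded iterates of $\hr$, \rcor{cor: comparing tau bounded iterates} applies; the hypothesis $\pi_{\hq_0,\infty}[\eta_{\hq_0}]\subseteq\pi_{\hq_1,\infty}[\eta_{\hq_1}]$ is what guarantees that the $\hq_1$-side of the coiteration is based on $\hq_1|\nu^1_{\hq_1}$ and does not move $\eta_{\hq_1}$ unboundedly. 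Exactly as in \rlem{lem: pullback consistency for sigma1}, this gives
\[\sigma_0^-(x)=\pi_{\hw,\infty}(\pi_{\hr',\hw}(y))=\sigma_1^-\bigl(\pi_{\hw|\nu^1_\hw,\infty}(\pi_{\hr',\hw}(y))\bigr).\]
This step reduces to citing those two results.

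\textbf{Step 2 (coding $\sigma_0^-$).} By \rthm{thm: basic reflection}(2), $\sigma_0^-$ is definable from $\rel(U_0)$, where $U_0=\pi_{\hq_0,\infty}[\M^{\hq_0}|\nu^0_{\hq_0}]$. Using \rcor{cor: sharper bound} and \rrem{rem: also bounded by l}, it is in fact definable over $N|\b_0$ from $(\rel(Z_0)\rest\eta_\infty,\eta_\infty)$, where $Z_0\in\its(\b_0)$ and $\b_0$ is, roughly, the image of $\nu^0$. By \rcor{cor: def its and rel} and \rlem{lem: catching t and s}, the $\infty$-Borel codes $S_{\b_0},T_{\b_0}\subseteq\gg_\infty$ lie in $\rge(\pi_{\hq_1,\infty})$. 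These codes define $\its$ and $\rel$ uniformly, which is why $\gg$, the least Woodin above $\nu^0$, appears in the hypotheses. Consequently $\sigma_0^-$ is definable over $N|\l^{u_1}$-level structure from the parameters $(Z_0,S_{\b_0},T_{\b_0},\eta_\infty)$. Here $Z_0$ is countable and, via the iterate $\hq_0$ of $\hr$, is the image of a countable set in $\M^{\hq_1}|\nu^1_{\hq_1}$ after comparing $\hq_0$ with $\hq_1$. This uses $\nu^0<\gg<\nu^1$: everything needed sits below $\nu^1_{\hq_1}$, so it is moved correctly by $\hq_1$-iterations based on $\hq_1|\nu^1_{\hq_1}$.

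\textbf{Step 3 (pulling back).} Take the preimages $\bar Z_0,\bar S,\bar T$ under $\pi_{\hq_1|\nu^1_{\hq_1},\infty}$, so that they lie in the domain of $\sigma_1^-$. Define $\bar\sigma$ over $N|\eta_{u_1^-}$ by the same formula. Elementarity of $\sigma_1^-$ (\rthm{thm: basic reflection}(1)) then gives $\sigma_1^-(\bar\sigma)=\sigma_0^-$.

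\textbf{Main obstacle.} Step 3 is where I expect the real difficulty. $\sigma_1^-$ is only defined on $N|\eta_{u_1^-}$ and fixes nothing above its critical point, yet $\sigma_0^-$ has domain and target near $\eta_\infty$. I therefore need $\sigma_1^-$ to send the relevant $\eta$-level objects (the $\its$ and $\rel$ codes, and $\eta_{u_0^-}$) to their correct counterparts. I would handle this as \rlem{lem: pullback consistency for sigma} handles $\Sigma^{\hs|j(\xi)}$: express each object via an extender $G$ derived from $j\rest\Q|(\eta^+)$, and check $\sigma_1^-(G)=G'$ coordinatewise through full normalization.
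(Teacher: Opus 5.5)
Your Step~1 is fine. It is the argument of \rlem{lem: pullback consistency for sigma1} run over the base $\hr$ with the Woodin cardinal $\nu^1$. Note that \rcor{cor: comparing tau bounded iterates} alone makes the $\hq_1$-side of the coiteration based on $\hq_1|\nu^1_{\hq_1}$; the inclusion hypothesis $\pi_{\hq_0,\infty}[\eta_{\hq_0}]\subseteq\pi_{\hq_1,\infty}[\eta_{\hq_1}]$ is not what drives it.

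The genuine gap is in Steps~2--3, at the point you flag yourself, and your proposed repair does not close it. The codomain of $\sigma_1^-$ is $N|\eta$, but nothing you use to define $\sigma_0^-$ lies in $N|\eta$:
\begin{itemize}
\item the definition is over $N|\b_0$ with $\b_0>\eta$;
\item $Z_0$ contains ordinals above $\eta$;
\item $\rel(Z_0)$ consists of maps into $\mH|\b_0$;
\item $\eta$ itself is a parameter.
\end{itemize}
None of these is in $\rge(\sigma_1^-)$, so elementarity of $\sigma_1^-$ cannot carry a definition over $N|\eta_{u_1}$ to yours. ``Taking preimages under $\pi_{\hq_1|\nu^1_{\hq_1},\infty}$'' yields elements of $\M^{\hq_1}$, not of $N|\eta_{u_1}$. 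Knowing $S_{\b_0},T_{\b_0}\in\rge(\pi_{\hq_1,\infty})$ says nothing about $\rge(\sigma_1)$ or $\rge(\sigma_1^-)$. Those ranges contain only values $\pi_{\hr',\infty}(z)$ with $z$ below $\l_{\hr'}$ (resp.\ $\eta_{\hr'}$), for iterates $\hr'$ of $\hq_1$ based on $\hq_1|\nu^1_{\hq_1}$. Nor can \rlem{lem: pullback consistency for sigma} be run with $\sigma_1^-$. That lemma concerns $\sigma_u$ for $u=(\hq,\l,\nu)$ with $\eta<\l$, and the objects it moves (e.g.\ $G'$ and $\M^{\hs'}|j'(\nu)$ there) are not elements of $N|\eta$.

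The missing idea is to work with $\sigma_1=\sigma_{u_1}$; this is why $\l$ appears in the hypotheses at all. Its codomain $N|\l$ contains all these parameters. Once they are in $\rge(\sigma_1)$, you get $\sigma'$ with $\sigma_1(\sigma')=\sigma_0^-$. Since $\sup\sigma_0^-[\eta_{u_0}]<\eta$, we have $\sigma_0^-\in N|\eta$, so elementarity puts $\sigma'\in N|\eta_{u_1}$, where $\sigma_1$ and $\sigma_1^-$ agree.

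Showing the parameters lie in $\rge(\sigma_1)$ is the real content, and it is where the inclusion hypothesis and $\gg$ enter. The paper sets $\hw=\hr^\eta=\hq_0^\eta=\hq_1^\eta$, $\a=(\gg^+)^{\mH}$, $\zeta=\pi_{\hr|\nu^1_\hr,\infty}(\eta_\hr)$ and $\hs=\hq_1^\zeta$. Then:
\begin{itemize}
\item The inclusion hypothesis plus $\eta$-boundedness give $k=\pi_{\hq_1,\infty}^{-1}\circ\pi_{\hq_0,\infty}$ with $\pi^\eta_{\hq_0,\infty}=\pi^\eta_{\hq_1,\infty}\circ k$ on $\M^{\hq_0}|\a_{\hq_0}$.
\item \rlem{lem: pullback consistency for sigma} applied to $u_1$ gives $\sigma_1(\hs|\a_\hs)=\hw|\a_\hw$ and $\sigma_1(\pi^\zeta_{\hq_1|\nu^1_{\hq_1},\infty}\rest\M^{\hq_1}|\a_{\hq_1})=\pi^\eta_{\hq_1,\infty}\rest\M^{\hq_1}|\a_{\hq_1}$.
\item Hence $Y=\pi^\eta_{\hq_0,\infty}[\M^{\hq_0}|\a_{\hq_0}]\subseteq\rge(\sigma_1)$, and $Z=\pi_{\hw|\a_\hw,\hh|\b}[Y]\subseteq\rge(\sigma_1)$, where $\hh|\b$ is the complete iterate of $\hw|\a_\hw$.
\item Let $\xi=\pi_{\hw|\a_\hw,\hh|\b}(\nu^0_\hw)$; this is your $\b_0$. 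Then $Z\cap\xi\in\its(\xi)$.
\item Because $Z$ reaches up to the image of $\gg^+$, the codes $S_\xi,T_\xi$ are elements of $Z$. This, not a bound by $\gg_\infty$, is the role of $\gg$.
\item So $\rel(Z)\in\rge(\sigma_1)$ by \rcor{cor: def its and rel}, and $\sigma_0^-$ is definable from $\rel(Z)$.
\end{itemize}
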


\begin{proof}
We set 
\begin{itemize}
    \item $\hw=\hr^\eta$, 
    \item $\hy=\hq_1$ and $\hx=\hq_0$,
    \item $\zeta=\pi_{\hr|\nu^1_{\hr}, \infty}(\eta_{\hr})$, 
    \item $\hs=\hy^{\zeta}$, 
    \item $\a=(\gg^+)^\hh$.
\end{itemize}

\begin{figure}[ht]
    \centering
    \begin{tikzpicture}[>=Stealth, auto, node distance=2cm]
        
        
        \node (L0) at (-5, 0) {$N|\l_{u_0}$};
        \node (T0) at (-2, 0) {$N|\eta_{u_0}$};
        \path (L0) -- node[font=\large] {$\unrhd$} (T0);

        \node (L1) at (5, 0) {$N|\l_{u_1}$};
        \node (T1) at (2, 0) {$N|\eta_{u_1}$};
        \path (T1) -- node[font=\large] {$\unlhd$} (L1);

        
        \node (L) at (0, 5) {$N|\l$};
        \node (T) at (0, 2.5) {$N|\eta$};
        \path (T) -- node[font=\large, rotate=90] {$\unlhd$} (L);


        \draw[->, thick] (L0) to[bend left=10] node[left, pos=0.7] {$\sigma_0$} (L);
        \draw[->, thick] (L1) to[bend right=10] node[right, pos=0.7] {$\sigma_1$} (L);

        \draw[->, thick, blue] (T0) -- node[above, xshift=-2pt] {$\sigma_0^-$} (T);
        \draw[->, thick, blue] (T1) -- node[above, xshift=2pt] {$\sigma_1^-$} (T);

        \draw[->, dashed, gray, thick] (T0) -- node[below, font=\scriptsize] {$(\sigma_1^-)^{-1}\circ \sigma_0^-$} (T1);

        \node[draw=black, thick, fill=gray!10, rounded corners, inner sep=6pt] 
            at (0, -2.8) {
            \begin{minipage}{10cm}
                \centering
                \textbf{\underline{Key Properties}}
                \begin{itemize}[label={\tiny$\bullet$}, itemsep=1.2pt, topsep=3pt, leftmargin=1em, font=\scriptsize]
                    \item $\sigma_0^- = \sigma_0 \restriction N|\eta_{u_0}$,
                    \item $\sigma_1^- = \sigma_1 \restriction N|\eta_{u_1}$,
                    \item \textbf{Goal:} $\rge(\sigma_0^-) \subseteq \rge(\sigma_1^-)$ and $\sigma_0^- \in \rge(\sigma_1^-)$,
                    \item \textbf{Key Point:} $\gg$ is needed for an application of \rcor{cor: def its and rel}.
                \end{itemize}
            \end{minipage}
        };

    \end{tikzpicture}
    \caption{The $\sigma$ embeddings.}
    \label{fig:nested_hulls_with_box}
\end{figure}

\begin{figure}[p]
    \centering
    
    \begin{minipage}{\textwidth}
        \centering
        \textbf{(a) $\hx$ to $\hw$ to $\hh$}
        \vspace{0.3cm}
        
        \begin{tikzpicture}[>=Stealth, auto, node distance=2cm, font=\small]
            \tikzstyle{modelline} = [thick, -]
            \tikzstyle{tick} = [thin, -]
            \tikzstyle{maparrow} = [->, gray, densely dashed, thin]
            \tikzstyle{iterarrow} = [->, black, dashed, thick]
            \tikzstyle{agree} = [dashed, gray, thin]

            \draw[modelline] (-4, 0) -- (-4, 4.5);
            \node[below] at (-4, 0) {$\hx$};
            \draw[tick] (-4.1, 1.2) -- (-3.9, 1.2) node[left=2pt, font=\scriptsize] {$\eta_{\hx}$};
            \draw[tick] (-4.1, 2.5) -- (-3.9, 2.5) node[left=2pt, font=\scriptsize] {$\nu^0_{\hx}$};
            \draw[tick] (-4.1, 3.5) -- (-3.9, 3.5) node[left=2pt, font=\scriptsize] {$\gg_{\hx}$};
            \draw[tick] (-4.1, 4.2) -- (-3.9, 4.2) node[left=2pt, font=\scriptsize] {$\a_{\hx}=\gg_\hx^+$};

            \draw[modelline] (0, 0) -- (0, 6.0);
            \node[below] at (0, 0) {$\hw = \hx^\eta$};
            \draw[tick] (-0.1, 1.5) -- (0.1, 1.5) node[below right=-2pt, font=\scriptsize] {$\eta$};
            \draw[tick] (-0.1, 2.8) -- (0.1, 2.8) node[below right=-2pt, font=\scriptsize] {$\nu^0_\hw$};
            \draw[tick] (-0.1, 3.8) -- (0.1, 3.8) node[below right=-2pt, font=\scriptsize] {$\gg_\hw$};
            \draw[tick] (-0.1, 4.8) -- (0.1, 4.8) node[below right=-2pt, font=\scriptsize] {$\a_\hw$};

            \draw[modelline] (4, 0) -- (4, 7.5);
            \node[below] at (4, 0) {$\hh$};
            \draw[tick] (3.9, 1.5) -- (4.1, 1.5) node[right=2pt, font=\scriptsize] {$\eta$};
            \draw[tick] (3.9, 3.5) -- (4.1, 3.5) node[right=2pt, font=\scriptsize] {$\xi$};
            \draw[tick] (3.9, 6.5) -- (4.1, 6.5) node[right=2pt, font=\scriptsize] {$\b$};

            \draw[agree] (0.1, 1.5) -- (3.9, 1.5);

            \draw[->, thick] (-3.5, 0.5) -- node[above, font=\scriptsize] {$\pi_{\hx, \hw}$} (-0.5, 0.5);
            \draw[->, thick] (0.5, 0.5) -- node[above, font=\scriptsize] {$\pi_{\hw, \hh}$} (3.5, 0.5);

            \draw[maparrow] (-3.9, 2.5) to[out=10, in=170] (-0.1, 2.8);
            \draw[maparrow] (-3.9, 3.5) to[out=10, in=170] (-0.1, 3.8);
            \draw[maparrow] (-3.9, 4.2) to[out=10, in=170] (-0.1, 4.8);
            \draw[maparrow] (0.1, 2.8) to[out=10, in=170] (3.9, 3.5);

            \draw[iterarrow] (0.2, 5.0) to[out=30, in=150] node[above, font=\scriptsize, align=center, pos=0.3, yshift=20pt] {iteration of\\$\hw|\a_\hw$ to $\hh|\b$} (3.8, 6.5);
        \end{tikzpicture}
    \end{minipage}

    \vspace{0.8cm}
    \hrule
    \vspace{0.8cm}

    \begin{minipage}{\textwidth}
        \centering
        \textbf{(b) $\hy$ to $\hw$ to $\hh$}
        \vspace{0.3cm}
        
        \begin{tikzpicture}[>=Stealth, auto, node distance=2cm, font=\small]
            \tikzstyle{modelline} = [thick, -]
            \tikzstyle{tick} = [thin, -]
            \tikzstyle{maparrow} = [->, gray, densely dashed, thin]
            \tikzstyle{iterarrow} = [->, black, dashed, thick]
            \tikzstyle{agree} = [dashed, gray, thin]

            \draw[modelline] (-4, 0) -- (-4, 4.5);
            \node[below] at (-4, 0) {$\hy$};
            \draw[tick] (-4.1, 1.2) -- (-3.9, 1.2) node[left=2pt, font=\scriptsize] {$\eta_{\hy}$};
            \draw[tick] (-4.1, 2.5) -- (-3.9, 2.5) node[left=2pt, font=\scriptsize] {$\nu^0_{\hy}$};
            \draw[tick] (-4.1, 3.5) -- (-3.9, 3.5) node[left=2pt, font=\scriptsize] {$\gg_{\hy}$};
            \draw[tick] (-4.1, 4.2) -- (-3.9, 4.2) node[left=2pt, font=\scriptsize] {$\a_{\hy}=\gg_\hy^+$};

            \draw[modelline] (0, 0) -- (0, 6.0);
            \node[below] at (0, 0) {$\hw = \hy^\eta$};
            \draw[tick] (-0.1, 1.5) -- (0.1, 1.5) node[below right=-2pt, font=\scriptsize] {$\eta$};
            \draw[tick] (-0.1, 2.8) -- (0.1, 2.8) node[below right=-2pt, font=\scriptsize] {$\nu^0_\hw$};
            \draw[tick] (-0.1, 3.8) -- (0.1, 3.8) node[below right=-2pt, font=\scriptsize] {$\gg_\hw$};
            \draw[tick] (-0.1, 4.8) -- (0.1, 4.8) node[below right=-2pt, font=\scriptsize] {$\a_\hw$};

            \draw[modelline] (4, 0) -- (4, 7.5);
            \node[below] at (4, 0) {$\hh$};
            \draw[tick] (3.9, 1.5) -- (4.1, 1.5) node[right=2pt, font=\scriptsize] {$\eta$};
            \draw[tick] (3.9, 3.5) -- (4.1, 3.5) node[right=2pt, font=\scriptsize] {$\xi$};
            \draw[tick] (3.9, 6.5) -- (4.1, 6.5) node[right=2pt, font=\scriptsize] {$\b$};

            \draw[agree] (0.1, 1.5) -- (3.9, 1.5);

            \draw[->, thick] (-3.5, 0.5) -- node[above, font=\scriptsize] {$\pi_{\hy, \hw}$} (-0.5, 0.5);
            \draw[->, thick] (0.5, 0.5) -- node[above, font=\scriptsize] {$\pi_{\hw, \hh}$} (3.5, 0.5);

            \draw[maparrow] (-3.9, 2.5) to[out=10, in=170] (-0.1, 2.8);
            \draw[maparrow] (-3.9, 3.5) to[out=10, in=170] (-0.1, 3.8);
            \draw[maparrow] (-3.9, 4.2) to[out=10, in=170] (-0.1, 4.8);
            \draw[maparrow] (0.1, 2.8) to[out=10, in=170] (3.9, 3.5);

            \draw[iterarrow] (0.2, 5.0) to[out=30, in=150] node[above, font=\scriptsize, align=center, pos=0.3, yshift=20pt] {iteration of\\$\hw|\a_\hw$ to $\hh|\b$} (3.8, 6.5);
        \end{tikzpicture}
    \end{minipage}

    \caption{The relationship between $\hx$, $\hy$, $\hw$, and $\hh$, showing the agreement at $\eta$.}
    \label{fig:separated_ordinal_flows_v4}
\end{figure}

Below we will write $\a_\hw$ and $\a_\hs$ for the preimage of $\a$ in those pairs, and will use this notation for all other relevant ordinals and objects. Notice that because $\hy$ and $\hx$ are $\eta$-bounded iterates of $\hr$ (see \cite[Lemma 9.6]{blue2025nairian}),
\vspace{0.3cm}
\begin{enumerate}[label=(\arabic*), itemsep=0.3cm]
    \item[(1.1)] $\hw=\hy^\eta=\hx^\eta$,
    \item[(1.2)] $\M^{\hy}=\{\pi_{\hr, \hy}(f)(a): f\in \P \wedge a\in [\eta_{\hy}]^{<\omega}\}$,
    \item[(1.3)] $\M^{\hx}=\{\pi_{\hr, \hx}(f)(a): f\in \P \wedge a\in [\eta_{\hx}]^{<\omega}\}$,
    \item[(1.4)] letting $k=(\pi_{\hy, \infty})^{-1}\circ \pi_{\hx, \infty}$, \[k: \M^{\hx}\rightarrow \M^\hy\] is an elementary embedding and \[\pi_{\hx, \infty}=\pi_{\hy, \infty}\circ k,\] and
    \item[(1.5)] $\pi^\eta_{\hx, \infty}\rest (\M^\hx|\nu^0_\hx)=\pi^\eta_{\hy, \infty}\circ k \rest (\M^\hx|\nu^0_\hx)$.
\end{enumerate}
\vspace{0.3cm}

\begin{figure}[ht]
    \centering
    \begin{tikzpicture}[>=Stealth, auto, node distance=3.5cm, font=\small]

        \node (MX) at (0, 0) {$\mathcal{M}^\mathcal{X}$};
        \node (MY) at (5, 0) {$\mathcal{M}^\mathcal{Y}$};
        \node (H) at (2.5, 3) {$\mathcal{H}$};

        \draw[->, thick] (MX) -- node[below] {$k$} (MY);
        \draw[->, thick] (MX) to[out=60, in=210] node[above left] {$\pi_{\hx, \infty}$} (H);
        \draw[->, thick] (MY) to[out=120, in=330] node[above right] {$\pi_{\hy, \infty}$} (H);

        \node[draw=black, thick, fill=gray!10, rounded corners, inner sep=5pt] 
            at (2.5, -3.2) {
            \begin{minipage}{9.5cm}
                \footnotesize
                \textbf{\underline{Claim:}} $\rge(\pi_{\hx, \infty}) \subseteq \rge(\pi_{\hy, \infty})$.
                
                \vspace{0.1cm}
                \textit{Proof:} 
                Recall that $\hx=\hq_0$ and $\hy=\hq_1$ are $\eta$-bounded iterates of $\hr$. 
                Any $z \in \rge(\pi_{\hx, \infty})$ can be written as:
                \[ z = \pi_{\hr, \infty}(f)(s) \]
                for some $f \in \M^\hr$ and a finite $s\subseteq \pi_{\hx, \infty}[\eta_{\hx}]$.
                
                \vspace{0.1cm}
                By hypothesis, $\pi_{\hq_0, \infty}[\eta_{\hq_0}] \subseteq \pi_{\hq_1, \infty}[\eta_{\hq_1}]$. 
                Thus, there exists a finite $t \subseteq \eta_{\hy}$ such that $s= \pi_{\hy, \infty}(t)$. Substituting this back:
                \[ z = \pi_{\hr, \infty}(f)(\pi_{\hy, \infty}(t)) = \pi_{\hy, \infty}(\pi_{\hr, \hy}(f)(t)). \]
                Therefore, $z \in \rge(\pi_{\hy, \infty})$. 
            \end{minipage}
        };

    \end{tikzpicture}
    \caption{$\rge(\pi_{\hx, \infty}) \subseteq \rge(\pi_{\hy, \infty})$}
    \label{fig:mice_commutes_proof}
\end{figure}

It follows from \rlem{lem: pullback consistency for sigma} that
\vspace{0.3cm}
\begin{enumerate}[label=(\arabic*), itemsep=0.3cm]
    \setcounter{enumi}{1}
    \item[(2.1)] $\sigma_1(\hs|\a_\hs)=\hw|\a_\hw$, 
    \item[(2.2)] $\sigma_1(\hs|\nu^0_\hs)=\hw|\nu^0_\hw$, and 
    \item[(2.3)] $\pi^\eta_{\hy, \infty}\rest \M^\hy|\a_\hy=\sigma_1(\pi^\zeta_{\hy|\nu^1_\hy, \infty}\rest \M^\hy|\a_\hy)$.
\end{enumerate}
\vspace{0.3cm}

Let $\b$ be such that $\hh|\b$ is a complete iterate of $\hw|\a_\hw$, and set 
\[Y= \pi_{\hx, \infty}^\eta[\M^\hx|\a_\hx].\] 
Let 
\[Z=\pi_{\hw|\a_\hw, \hh|\b}[Y].\]
We now have that
\vspace{0.3cm}
\begin{enumerate}[label=(\arabic*), itemsep=0.3cm]
    \setcounter{enumi}{2}
    \item[(3)] $Y\subseteq \rge(\sigma_1)$.
\end{enumerate}
\vspace{0.3cm}
Indeed, we have that for $x\in Y$, letting $y\in \M^\hx|\a_\hx$ be such that $x=\pi_{\hx, \infty}^\eta(y)$, 
\begin{align*}
x&=\pi^\eta_{\hx, \infty}(y)\\ &=\pi^\eta_{\hy, \infty}(k(y))\\
&=\sigma_1(\pi^\zeta_{\hy|\nu^1_\hy, \infty}(k(y)) \quad \text{(see (2.3))}.
\end{align*}
Applying (2.1) and (3) we get that
\vspace{0.3cm}
\begin{enumerate}[label=(\arabic*), itemsep=0.3cm]
    \setcounter{enumi}{3}
    \item[(4)] $Z\subseteq \rge(\sigma_1)$.
\end{enumerate}
\vspace{0.3cm}

We now have the following claim. The reader may wish to review \rnot{def: the borel codes at nu}. Set \[\xi=\pi_{\hw|\a_\hw, \hh|\b}(\nu^0_\hw).\]
\begin{claim}\label{clm: z is rge sigma1}
$Z\cap \xi\in \its(\xi)$, $(S_{\xi}, T_{\xi})\in \rge(\sigma_1)$ and \[\rel(Z)\in \rge(\sigma_1).\]
\end{claim}
\begin{proof}
The proof that $Z\cap \xi\in \its(\xi)$ is just like the proof of \rcor{cor: sharper bound}. Because of this we give a quick outline. Let $E\in \vec{E}^{\M^\hx}$ be the extender with the least index such that $\lh(E)>\a$ and $\cp(E)>\eta$. We then have that \[Z=\pi_{\hx_E, \infty}[\M^{\hx_E}|\a_\hx].\]
It now follows that $Z\cap \xi\in \its(\xi)$ as certified by $\hx$, and it also follows from \rlem{lem: catching t and s} that \[(S_{\xi}, T_{\xi})\in Z.\] Since $Z\subseteq \rge(\sigma_1)$ (see (4)) we get that \[(S_{\xi}, T_{\xi})\in \rge(\sigma_1).\] Finally, \rcor{cor: def its and rel} (specifically clause $\textit{(2)}$) implies that \[\rel(Z)\in \rge(\sigma_1).\]
\end{proof}

Because $\sigma_0^-$ is definable from $\rel(Z)$ over $N|\l^u$, we have that there is \[\sigma': N|\eta_{u_0}\rightarrow N|\zeta\] such that $\sigma'\in N|\l_{u_1}$ and \[\sigma_1(\sigma')=\sigma_0^-.\] But because\footnote{This follows because $\sup(\sigma_0^-[\eta_{u_0]})<\eta_\infty$ (see \cite[Theorem 10.20]{blue2025nairian}) and because of \cite[Theorem 10.6]{blue2025nairian}.} \[\sigma_0^-\in N|\eta_\infty,\] we have that \[\sigma'\in \dom(\sigma_1^-).\] Thus, $\sigma_1^-(\sigma')=\sigma_0^-$.
\end{proof}

We will also need the following corollary which follows from the proof of \rcl{clm: z is rge sigma1}.

\begin{corollary}\label{cor: defining smaller hulls}
Suppose
\begin{itemize}
    \item $\eta<\l<\k<\varsigma$ are ${<}\varsigma$-strong cardinals of $\hh$,
    \item $\nu^0<\gg<\nu^1\in (\k,\varsigma_\infty)$ are three Woodin cardinals of $\hh$ such that $\hh$ has a Woodin cardinal in the interval $(\k, \nu^0)$ and $\gg$ is the least Woodin cardinal of $\hh$ above $\nu^0$,
    \item $\hr=(\R, \Psi)$ is a complete iterate of $\hp$ such that (see \rfig{fig:model_timelines})
    \[\{\eta, \l, \k, \nu^0, \gg, \nu^1\}\subseteq \rge(\pi_{\hr, \infty}),\]
    \item $\hq_0$ and $\hq_1$ are two $\eta$-bounded\footnote{See \cite[Definition 9.1]{blue2025nairian}.} complete iterates of $\hr$ such that 
    \[\pi_{\hq_0, \infty}[\eta_{\hq_0}]\subseteq \pi_{\hq_1, \infty}[\eta_{\hq_1}].\]
\end{itemize}
For $i\in 2$, let
\begin{itemize}
    \item $u_i=(\hq_i, \l_{\hq_i}, \nu^i_{\hq_i})$, 
    \item $u_i^-=(\hq_i, \eta_{\hq_i}, \nu^i_{\hq_i})$, 
    \item $\sigma_i=\sigma_{u_i}$, 
    \item $\sigma_i^-=\sigma_{u_i^-}$,
    \item $\hw=\hr^\eta$,
    \item $\a=(\gg^+)^\mH$,
    \item $Y= \pi_{\hr, \hw}[\M^\hr|\a_\hr]$, 
    \item $\b$ is such that $\hh|\b$ is a complete iterate of $\hw|\a_\hw$,
    \item $Z=\pi_{\hw|\a_\hw, \hh|\b}[Y]$, and
    \item $\xi=\pi_{\hw|\a, \hh|\b}(\nu^0_\hw)$.
\end{itemize}
Then \begin{enumerate}
 \item $\rge(\sigma_0^-)\subseteq \rge(\sigma_1^-)$, 
 \item $\sigma_0^-\in \rge(\sigma_1^-)$,
 \item $\its(\xi)\in \rge(\sigma_1)$, and 
 \item $\its(Z)\in \rge(\sigma_1)$.
 \end{enumerate}
\end{corollary}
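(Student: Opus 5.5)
Clauses (1) and (2) are verbatim the conclusions of \rlem{lem: defining smaller hulls}, whose hypotheses coincide with those of the corollary; I would simply cite that lemma and spend the effort on (3) and (4). The plan is to go back to the proof of \rcl{clm: z is rge sigma1}. That proof already shows that $Z\cap\xi\in\its(\xi)$ with certificate $\hx_E$, where $E$ is the least extender of $\M^{\hx}$ with $\lh(E)>\a$ and $\cp(E)>\eta$. It also shows that $Z\subseteq\rge(\sigma_1)$, by (3) and (4) of that proof, using \rlem{lem: pullback consistency for sigma} to get $\sigma_1(\hs|\a_\hs)=\hw|\a_\hw$. Here the $Z$ and $Y$ of the corollary are defined from $\hr$ rather than from $\hq_0$. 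Since $\hq_0$ is an $\eta$-bounded iterate of $\hr$, (1.1)--(1.5) of that proof give $\pi^\eta_{\hr,\infty}=\pi^\eta_{\hq_0,\infty}\circ\pi_{\hr,\hq_0}$ on $\M^\hr|\a_\hr$. So the present $Z$ is contained in the old one and remains an iteration set at $\xi$, certified by $\hr_E$ via the argument of \rcor{cor: sharper bound}. In particular $Z\subseteq\rge(\sigma_1)$.

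For (3): $\xi$ is an ordinal of $\mH$ below $\sigma_1$'s target, and $\xi\in Z$ because $\nu^0_\hw\in Y$. Hence $\xi\in\rge(\sigma_1)$, say $\xi=\sigma_1(\bar\xi)$. By \rprop{prop: its in n} and \rcor{cor: def its and rel}, $\its(\xi)$ is definable in $N$ from $\xi$ and the $\infty$-Borel code $S_\xi$. Take $(S_\xi,T_\xi)$ to be the $\mH$-least pair, as in \rnot{def: the borel codes at nu}; it is then definable in $\mH$ from $\xi$. By \rlem{lem: catching t and s}, $(S_\xi,T_\xi)$ lies in $Z$, and so in $\rge(\sigma_1)$. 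Now apply elementarity of $\sigma_1$ (\rthm{thm: basic reflection}) to the formula defining $\its$ via the Borel code over a level of $N$ below the target height. This gives $\its(\xi)=\sigma_1(\its^{N|\l_{u_1}}(\bar\xi))$, and so $\its(\xi)\in\rge(\sigma_1)$. To make this legitimate I must check that $\its(\xi)$ belongs to $N|\pi_{\hq_1,\infty}(\l)$. It is a set of countable subsets of $\mH|\xi$, and $\xi<\l_\infty$ fails in general, so I would instead argue using the level $N|\eta_\infty$ or a larger level in the codomain. The expected fix is to note that $\sigma_1$ is the restriction of the bigger embedding, and that the reflection theorem is applied with the appropriate height containing $\mH|(\gg^+)$.

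For (4): $Z$ itself is in $\rge(\sigma_1)$, because $Z$ is countable, $Z\subseteq\rge(\sigma_1)$, and $\sigma_1(X)=\sigma'_{u_1}[X]$ for countable bounded $X$ (\rdef{def: induced embeddings}). Then $\its(Z)$ is defined from $Z$, $\its(\xi)$ and the map $Y\mapsto Z[Y]$, all of which are in $\rge(\sigma_1)$, so elementarity again gives $\its(Z)\in\rge(\sigma_1)$.

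The main obstacle is the height issue just noted: the objects $\its(\xi)$ and $\its(Z)$ live around $(\gg^+)^\mH$, which must lie inside the codomain of $\sigma_1$. I expect this is why the hypothesis insists on $\gg<\nu^1$ (and $\k<\nu^0$): $\sigma_1$ is generated at $\nu^1$, so everything is computed inside $N|\l_\infty$ with $\l$ large enough relative to $\xi$. My first move would be to verify that $\xi<\l$-level objects suffice, re-reading \rlem{lem: pullback consistency for sigma} with its $\xi$ replaced by $\a$.
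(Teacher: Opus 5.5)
Your handling of (1)--(2) by citing \rlem{lem: defining smaller hulls} matches what the paper intends when it says the corollary follows from the proof of \rcl{clm: z is rge sigma1}. So does your mechanism for (3)--(4): from $Z\subseteq\rge(\sigma_1)$, the ordinal $\xi$, the countable set $Z$, and (via \rlem{lem: catching t and s}) $(S_\xi,T_\xi)$ all lie in $\rge(\sigma_1)$. Since $\its(\xi)$ and $\its(Z)$ are definable from these, elementarity of $\sigma_1$ pulls them back. Getting $\its(Z)$ directly from $Z$ and $\its(\xi)$, rather than from clause (2) of \rcor{cor: def its and rel} with $T_\xi$, is a harmless variation.

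The gap is exactly at the step you call the main obstacle.

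\emph{What is missing.} Elementarity yields $\its(\xi),\its(Z)\in\rge(\sigma_1)$ only once you know these sets are elements of the codomain of $\sigma_1\colon N|\l_{u_1}\to N|\l$; note that $\l^{u_1}=\pi_{\hq_1,\infty}(\l_{\hq_1})=\l$. You never establish this. Instead you assert that $\xi<\l$ fails in general and propose passing to a larger level, or to a ``bigger embedding'' of which $\sigma_1$ is a restriction. That workaround cannot succeed. The conclusion concerns $\rge(\sigma_1)$ for this fixed $\sigma_1$, so if $\its(\xi)\notin N|\l$ the clause would simply be false.

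\emph{Why the assertion is wrong.} It conflates $\xi$ with $\nu^0$, and $Z$ with an object near $(\gg^+)^{\mH}$. In fact $\xi$ is the image of $\nu^0_\hw$ under the local map $\pi_{\hw|\a_\hw,\hh|\b}$, where $\hw=\hr^\eta$, and $Z\subseteq\mH|\b$.
\begin{itemize}
\item By (2.1) in the proof of the lemma (coming from \rlem{lem: pullback consistency for sigma}), $\hw|\a_\hw=\sigma_1(\hs|\a_\hs)$ is already an element of $N|\l$.
\item Its direct limit $\hh|\b$ is, as in the computation in the proof of \rthm{thm: the theta seq}, a surjective image of $\eta^\omega$ in $N$.
\item Hence $\xi<\b$ both lie below the least ${<}\varsigma$-strong cardinal of $\mH$ above $\eta$, and so below $\l$.
\end{itemize}
This is also what the paper's claim uses tacitly when it places $(S_\xi,T_\xi)$ and $\rel(Z)$ in $\rge(\sigma_1)$.

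\emph{The repair.} With $\b<\l$, the sets $\its(\xi)$ and $\its(Z)$ are coded by subsets of $\eta^\omega$ in $N$. They therefore belong to $N|\l$ by clause (6) of \rthm{thm: summary}. Your elementarity argument then goes through with $\sigma_1$ exactly as given.

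Relatedly, $\gg$ plays no role in height bookkeeping. It is there so that $Y$ reaches $\a=(\gg^+)^{\mH}$. That is what puts $S_\xi$ and $T_\xi$, which are subsets of the least Woodin cardinal of $\mH$ above $\xi$, inside $Z$.
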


\section{Reflection IV: Hulls above $\Theta^N$, a directed system}

Suppose 
\begin{itemize}
\item $\eta<\l<\k$ are strong cardinals of $\mH|\varsigma$, 
\item $\nu\in (\k, \varsigma)$ is a Woodin cardinal of $\mH$ such that $\nu$ is a proper cutpoint of $\mH|\varsigma$ and there is a Woodin cardinal of $\mH$ in the interval $(\k, \nu)$, and
\item $Z\in \its(\nu)$.
\end{itemize}
Set $x=(\eta, \l, \k, \nu, Z)$ and set \[\its(x)=\its(Z)\cap \powerset(\eta).\] In the sequel, we may write $\eta^x$ for $\eta$, etc. Thus, $x=(\eta^x, \l^x, \k^x, \nu^x, Z^x)$. We call such an $x$ a \textbf{directed-system generator} or just a \textbf{ds-generator}.

Given $Y, Y'\in \its(x)$, we write \[Y=_x Y'\] if $Z[Y]=Z[Y']$. Let $[Y]=[Y]_{=_x}$ be the equivalence class of $Y$. We remark that for every $Y\in \its(x)$, \[Y=_x Z[Y]\cap \eta.\]

Let $\mathcal{I}_x$ be the set of $=_x$-equivalence classes. Given $p\in \mathcal{I}_x$, we let \[Z_p=Z[Y]\] where $p=[Y]_{=_x}$.  Also for $p=[Y]\in\mathcal{I}_x$, letting $\hs$ be an $\its(\nu)$-certificate for $Z_p$ \[W_p=N|\pi_{\hs|\nu_\hs, \infty}(\l)\] and \[\sigma_p=\sigma_u: W_p\rightarrow N|\l\] where $u=(\hs, \l_\hs, \nu_\hs)$. To stress the dependence on $x$, we will write $W_{p,x}$ instead of $W_p$ and $\sigma_{p, x}$ instead of $\sigma_p$.

Suppose now that $p, q\in \mathcal{I}_x$. We write \[p\leq_x q\] if and only if there is an $\its(\nu)$-certificate $\hs$ of $Z_p$ such that some complete $\eta$-bounded iterate $\hs'$ of $\hs$ is an $\its(\nu)$-certificate for $Z_q$. Applying \rlem{lem: pullback consistency for sigma1} we get that if \[p\leq_x q\] then \[\rge(\sigma_p)\subseteq \rge(\sigma_q).\] Assuming now that $p\leq_x q$, we let \[\sigma_{p, q}=\sigma_q^{-1}\circ \sigma_p.\] 



We also have the following lemma.

\begin{lemma}\label{lem:Z_directed}
    $\leq_x$ is countably directed.
\end{lemma}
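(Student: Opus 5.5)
To prove that $\leq_x$ is countably directed, fix a countable family $\{p_n: n<\omega\}\subseteq \mathcal{I}_x$ (working in $M$, where $\DC$ holds). The plan is to find a single $q\in\mathcal{I}_x$ with $p_n\leq_x q$ for every $n$. For each $n$, choose an $\its(\nu)$-certificate $\hs_n$ for $Z_{p_n}$. By \rlem{lem: qz is qk}, applied at $\nu$ to $Z$, each $\hs_n|\nu_{\hs_n}$ is the restriction of a complete iterate of $\hq_Z$ along an $\eta$-bounded tree. More precisely, we would like the certificates $\hs_n$ all to be obtainable as $\eta$-bounded complete iterates of one common certificate $\hs$ of $Z$. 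That is the content of $Z_{p_n}=Z[Y_n]$ with $Y_n\subseteq\eta$: the tree from a certificate of $Z$ to one of $Z_{p_n}$ only needs to catch the new generators $Y_n\subseteq \eta$. So step one is to fix a certificate $\hs$ of $Z$ and realize each $p_n$ by an $\eta$-bounded complete iterate $\hs_n$ of $\hs$ with $\gen(\T_{\hs,\hs_n})\subseteq \eta_{\hs_n}$. For this we would use \rthm{thm: uniqueness of realizability witnesses} and the description of $Z_{p_n}$ as a hull of $Z\cup Y_n$.

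Step two is to coiterate the countably many $\hs_n$ simultaneously, via least-extender-disagreement. This is possible because all are countable iterates of $\hs$ and the strategy is available in $M$. Let $\hs^*$ be the resulting common iterate. By the $\eta$-boundedness of each $\hs_n$, and by the arguments of \rcor{cor: comparing tau bounded iterates} (only extenders with critical points below the image of $\eta$ are needed, so each $\T_{\hs_n,\hs^*}$ is again $\eta$-bounded), $\hs^*$ is an $\eta$-bounded complete iterate of every $\hs_n$. We then set $Z^*=\pi_{\hs^*,\infty}[\M^{\hs^*}|\nu_{\hs^*}]$, which lies in $\its(\nu)$ with certificate $\hs^*$. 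We also set $Y^*=Z^*\cap\eta$, which is countable. We check that $Z[Y^*]=Z^*$: $\hs^*$ is an $\eta$-bounded iterate of $\hs$, so $\M^{\hs^*}$ is generated by $\pi_{\hs,\hs^*}$-images together with ordinals below $\eta_{\hs^*}$, in the same way as (1.2) in the proof of \rlem{lem: defining smaller hulls}. Hence $Y^*\in\its(x)$. Let $q=[Y^*]$. By construction $\hs_n$ certifies $Z_{p_n}$ and its $\eta$-bounded iterate $\hs^*$ certifies $Z_q$, so $p_n\leq_x q$ for all $n$.

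The main obstacle is step two: ensuring that the simultaneous comparison keeps every tree $\eta$-bounded and based on $\hs_n|\nu_{\hs_n}$. Otherwise the common iterate might use extenders with critical point at or above the image of $\eta$ (e.g.\ overlapping $\l$ or $\k$), and then it would fail to be an $\eta$-bounded iterate. To handle this I would first try to reduce to the pairwise case of \rcor{cor: comparing tau bounded iterates} together with \cite[Lemma 9.12]{blue2025nairian}, arguing by induction on the strong cardinals below $\eta$ exactly as in the proof of clause (b) there. The countable case then follows by iterating the pairwise comparison along $\omega$ and taking a direct limit, which stays countable and $\eta$-bounded. A secondary point is the well-definedness of $\leq_x$ on equivalence classes, which \rthm{thm: uniqueness of realizability witnesses} provides.
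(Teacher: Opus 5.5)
Your argument rests on Step one, and that step is not established. You need every $p_n$ to be certified by an $\eta$-bounded complete iterate of one fixed certificate $\hs$ of $Z$. If $\hq_n$ is any certificate of $Z_{p_n}=Z[Y_n]$, this amounts to showing that the natural embedding $\tau_{Z[Y_n]}^{-1}\circ\tau_Z\colon\Q_Z\to\Q_{Z[Y_n]}$ is (the $\nu$-part of) an $\eta$-bounded iteration map of $\hs$.

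\rlem{lem: qz is qk} does not give this: it concerns the collapse $\hq^Z$ of the hull of $Z\cup\sup(\tStr(\mH|\nu))$ and says nothing about trees on $\hq_Z$ catching $Y_n$. Nor does \rthm{thm: uniqueness of realizability witnesses}, which only compares two certificates of the \emph{same} set. That $\Q_{Z[Y_n]}$ is generated by the image of $\Q_Z$ together with ordinals below the image of $\eta$ does not by itself make the embedding an iteration map, since shift-type embeddings between iterates have the same property. A genuine comparison or Dodd--Jensen argument would be needed, and you do not supply one.

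Step two is a second unproved claim. \rcor{cor: comparing tau bounded iterates} is a two-iterate statement under its own hypotheses ($\rg_0(\hq,\l,\nu)$, one tree based on $\hq|\nu$). Its stated conclusion is only that the comparison is based on $\hr|\nu_\hr$, not that it is $\eta$-bounded. Extending it to a simultaneous comparison of $\omega$ many iterates is exactly the work you defer.

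The missing observation is that no common iterate, and hence no comparison, is needed. The definition of $\leq_x$ only asks that some certificate of $Z_{p_n}$ have an $\eta$-bounded iterate certifying $Z_q$. Different $n$ may use different hod pairs, as long as they certify the same set. So take arbitrary certificates $\hq_n$ of $Z_{p_n}$ and run a back-and-forth:
\begin{enumerate}
\item Set $\hs^0_n=\hq_n$.
\item Given all $\hs^k_n$, let $X_k=\bigcup_{n}\pi_{\hs^k_n,\infty}[\eta_{\hs^k_n}]$, a countable subset of $\eta$.
\item Choose $\hs^{k+1}_n$ to be an $\eta$-bounded complete iterate of $\hs^k_n$ with $X_k\subseteq\pi_{\hs^{k+1}_n,\infty}[\eta_{\hs^{k+1}_n}]$. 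This is possible because $\eta$-bounded iterates of any complete iterate catch any countable subset of $\mH|\eta$.
\item Let $\hs_n$ be the direct limit of $(\hs^k_n:k<\omega)$.
\end{enumerate}
Then $\hs_n$ is an $\eta$-bounded complete iterate of $\hq_n$, and $\pi_{\hs_n,\infty}[\eta_{\hs_n}]=\bigcup_k X_k=:Y$ for every $n$. Since $\M^{\hs_n}$ is generated by the image of $\M^{\hq_n}$ together with ordinals below $\eta_{\hs_n}$, and $Y_n\subseteq X_0\subseteq Y$, every $\hs_n$ certifies $Z[Y]$. Hence $q=[Y]$ is the required bound. This is the paper's proof; it needs neither your Step one nor any $\eta$-boundedness analysis of comparisons.
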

\begin{proof} 
    We prove that if 
    \[
        (p_i: i<\omega)\subseteq \mathcal{I}_x,
    \] 
    then there is $p\in \mathcal{I}_x$ such that for every $i<\omega$, $p_i\leq_x p$. For each $i<\omega$, let $\hq_i$ be an $\its(\nu)$-certificate for $Z_{p_i}$. We can then find for each $i<\omega$, a complete $\eta$-bounded iterate $\hs_i$ of $\hq_i$ such that for every $i<j<\omega$, 
    \[
        \pi_{\hs_i, \infty}[\M^{\hs_i}|\nu_{\hs_i}]=\pi_{\hs_j, \infty}[\M^{\hs_j}|\nu_{\hs_j}].
    \] 
    To find such a sequence we inductively construct a sequence $(\hs_i^k: k<\omega, i<\omega)$ such that
    
    \vspace{0.3cm}
    \begin{enumerate}[label=(1.\arabic*), itemsep=0.3cm]
        \item for each $k<\omega$, letting \[X_k=\bigcup_{i<\omega}\pi_{\hs_i^k, \infty}[\M^{\hs^k_i}|\eta_{\hs^k_i}],\] for each $i<\omega$, \[X_k\subseteq \pi_{\hs_i^{k+1}, \infty}[\M^{\hs^{k+1}_i}|\eta_{\hs^{k+1}_i}],\]
        \item for each $i<\omega$, $\hs^0_i=\hq_i$, and
        \item for each $i<\omega$ and $k<\omega$, $\hs_i^{k+1}$ is an $\eta$-bounded complete iterate of $\hs_i^k$.
    \end{enumerate}
    \vspace{0.3cm}
    
    We then let for $i<\omega$, $\hs_i$ be the direct limit of $(\hs_i^k: k<\omega)$. 
    
    Set now 
    \[
        Y=\pi_{\hs_0, \infty}[\M^{\hs_0}|\eta_{\hs_0}].
    \] 
    It follows that if $p=[Y]$, then for every $i<\omega$, $p_i\leq_x p$.
\end{proof}

Let now $\mathcal{F}_x$ be the directed system consisting of models $W_p$ and embeddings $\sigma_{p, q}$ for $p \leq_x q$. Notice first that the following lemma holds.

\begin{lemma}\label{lem: quasi transitive}
    Suppose $q \leq_x r \leq_x s$ and $q\leq_x s$. Then \[\sigma_{q, s}=\sigma_{r, s}\circ \sigma_{q, r}.\]
\end{lemma}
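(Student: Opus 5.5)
The identity should follow almost formally from the definition $\sigma_{p,q}=\sigma_q^{-1}\circ\sigma_p$ together with the range inclusions $\rge(\sigma_p)\subseteq\rge(\sigma_q)$ whenever $p\leq_x q$. The only real content is that each $\sigma_p$ is injective and that the relevant compositions are defined on all of $W_q$. First I fix notation. For $p\in\mathcal{I}_x$, the embedding $\sigma_p\colon W_p\to N|\l$ is elementary by \rthm{thm: basic reflection}, hence injective. Its value does not depend on the chosen $\its(\nu)$-certificate for $Z_p$: by \rthm{thm: uniqueness of realizability witnesses} and \rdef{def: rel sets}, $\sigma'_u$ is computed from $\rel(Z_p)$, as in clause (2) of \rthm{thm: basic reflection}. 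So $\sigma_p^{-1}$ is a well-defined partial map on $\rge(\sigma_p)$, and $\sigma_{p,q}$ is a total elementary map $W_p\to W_q$ whenever $\rge(\sigma_p)\subseteq\rge(\sigma_q)$. That inclusion is given for $p\leq_x q$ by \rlem{lem: pullback consistency for sigma1}.

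Now take $q\leq_x r\leq_x s$. I will compute for $w\in W_q$:
\[\sigma_{r,s}(\sigma_{q,r}(w))=\sigma_s^{-1}\bigl(\sigma_r(\sigma_r^{-1}(\sigma_q(w)))\bigr)=\sigma_s^{-1}(\sigma_q(w))=\sigma_{q,s}(w).\]
The middle equality holds because $\sigma_q(w)\in\rge(\sigma_q)\subseteq\rge(\sigma_r)$, so $\sigma_r\circ\sigma_r^{-1}$ is the identity there. The outer expression is defined because $\rge(\sigma_r)\subseteq\rge(\sigma_s)$. The hypothesis $q\leq_x s$ is used only to guarantee that $\sigma_{q,s}$ is already defined in $\mathcal{F}_x$; it is not needed for the equation itself.

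The main point to check is the well-definedness issue in the first paragraph. The relation $\leq_x$ is stated through a particular certificate $\hs$ and an $\eta$-bounded iterate $\hs'$, whereas \rlem{lem: pullback consistency for sigma1} is phrased with $u=(\hq,\l,\nu)$ where $\hq$ satisfies $\rg_3$. So I must confirm two things: that any certificate for $Z_p$ gives the same $\sigma_p$ (via $\rel(Z_p)$), and that the $\eta$-bounded iterate $\hs'$ qualifies as an $\l$-bounded iterate in the sense of \rlem{lem: pullback consistency for sigma1}. The latter holds since $\eta<\l$ and bounded generators below $\eta$ are below $\l$. Once both are checked, the computation above is routine.
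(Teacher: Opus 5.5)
Your proposal is correct and follows essentially the same route as the paper. Both arguments rest on three ingredients: certificate-independence of each $\sigma_p$ via \rthm{thm: uniqueness of realizability witnesses}, the range inclusions $\rge(\sigma_q)\subseteq\rge(\sigma_r)\subseteq\rge(\sigma_s)$ from \rlem{lem: pullback consistency for sigma1}, and the cancellation $\sigma_r\circ\sigma_r^{-1}=\mathrm{id}$ on $\rge(\sigma_r)$. Your remark that $q\leq_x s$ serves only to make $\sigma_{q,s}$ defined matches the paper's use of it to obtain (2.3).
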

\begin{proof} 
    By \rthm{thm: uniqueness of realizability witnesses}, the maps $\sigma_{q,s}$, $\sigma_{r,s}$, and $\sigma_{q,r}$ are independent of the specific certificates used to define them. Let $\hq, \hr, \hs$ be such that
    
    \vspace{0.3cm}
    \begin{enumerate}[label=(1.\arabic*), itemsep=0.3cm]
        \item $\hq$ is an $\its(\nu)$-certificate of $Z_q$ and some $\eta$-bounded complete iterate of $\hq$ is an $\its(\nu)$-certificate of $Z_r$,
        \item $\hr$ is an $\its(\nu)$-certificate of $Z_r$ and some $\eta$-bounded complete iterate of $\hr$ is an $\its(\nu)$-certificate of $Z_s$,
        \item $\hs$ is an $\its(\nu)$-certificate for $Z_s$.
    \end{enumerate}
    \vspace{0.3cm}
    
    Let $\hq'$ be an $\eta$-bounded complete iterate of $\hq$ witnessing (1.1). We can also assume that $\hs$ is a complete $\eta$-bounded iterate of some $\hq''$ that is an $\its(\nu)$-certificate for $Z_q$.
    
    Let $u=(\hq, \l_\hq, \nu_\hq)$, $v=(\hr, \l_\hr, \nu_\hr)$, and $w=(\hs, \l_\hs, \nu_\hs)$. We then have that
    
    \vspace{0.3cm}
    \begin{enumerate}[label=(2.\arabic*), itemsep=0.3cm]
        \item $\sigma_{q, r}=\sigma_v^{-1}\circ \sigma_{u}$,
        \item $\sigma_{r, s}=\sigma_w^{-1}\circ \sigma_v$, and
        \item $\sigma_{q, s}=\sigma_w^{-1}\circ \sigma_{u}$.
    \end{enumerate}
    \vspace{0.3cm}
    
    All three equalities follow from \rlem{lem: pullback consistency for sigma1} and \rthm{thm: uniqueness of realizability witnesses}. For example, regarding (2.1), if we let $u'=(\hq', \l_{\hq'}, \nu_{\hq'})$, then by definition $\sigma_{q, r} = \sigma_{u'}^{-1} \circ \sigma_u$. However, since $\hq'$ and $\hr$ both certify $Z_r$, \rthm{thm: uniqueness of realizability witnesses} implies $\sigma_{u'} = \sigma_v$, yielding the equation. The reason we need the condition $q\leq_x s$ is so that we can conclude (2.3), which can be done in exactly the same way that we concluded (2.1).
    
    Substituting (2.1) and (2.2) into the composition, we get:
    \[
        \sigma_{r, s}\circ \sigma_{q, r} = (\sigma_w^{-1}\circ \sigma_v) \circ (\sigma_v^{-1}\circ \sigma_{u}) = \sigma_w^{-1} \circ (\sigma_v \circ \sigma_v^{-1}) \circ \sigma_u = \sigma_w^{-1} \circ \sigma_u.
    \]
    By (2.3), this is exactly $\sigma_{q, s}$.
\end{proof}

Even though $\leq_x$ is not transitive, we can define the direct limit $W_{x, \infty}$ and the embedding \[\sigma_{x, \infty}: W_{x, \infty}\rightarrow N|\l\] as follows. Consider pairs $(q, y)$ such that $y\in W_q$. We define an equivalence relation $\sim_x$ by \[(q, y) \sim_x (q', y')\] if and only if there exists $s \in \mathcal{I}_x$ such that $q, q' \leq_x s$ and \[\sigma_{q, s}(y) = \sigma_{q', s}(y').\]

Let $\mathcal{D}$ be the set of $\sim_x$-equivalence classes of pairs $(q, y)$, which we denote by $[q, y]$. Given \[A=[q, y], B=[q', y']\in \mathcal{D},\] we write $A\in^* B$ if and only if whenever $r\in \mathcal{I}_x$ is such that $q, q'\leq_x r$, \[\sigma_{q, r}(y)\in \sigma_{q', r}(y').\] 

\begin{lemma}\label{lem: in* is well defined}
    The relation $\in^*$ is well-defined. That is, if $(q, y) \sim_x (u, w)$ and $(q', y') \sim_x (u', w')$, then
    \[
        [q, y] \in^* [q', y'] \iff [u, w] \in^* [u', w'].
    \]
\end{lemma}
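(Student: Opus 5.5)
The plan is to reduce well-definedness of $\in^*$ to two facts already available: the commutativity of Lemma \ref{lem: quasi transitive} and the countable directedness of Lemma \ref{lem:Z_directed}. First I will check that the definition of $\in^*$ does not depend on the choice of the upper bound $r$. That is, if $r, r'$ both satisfy $q,q'\leq_x r, r'$, then
\[\sigma_{q,r}(y)\in\sigma_{q',r}(y')\iff\sigma_{q,r'}(y)\in\sigma_{q',r'}(y').\]
To see this, apply Lemma \ref{lem:Z_directed} to get $s$ with $r,r',q,q'\leq_x s$. Lemma \ref{lem: quasi transitive}, applied to $q\leq_x r\leq_x s$ together with $q\leq_x s$, gives $\sigma_{q,s}=\sigma_{r,s}\circ\sigma_{q,r}$, and similarly for $q'$ and for $r'$. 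Since $\sigma_{r,s}=\sigma_s^{-1}\circ\sigma_r$ is elementary (both $\sigma_r,\sigma_s$ are elementary by Theorem \ref{thm: basic reflection}), membership is preserved and reflected. So both sides are equivalent to $\sigma_{q,s}(y)\in\sigma_{q',s}(y')$.

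Next, I will handle a change of representatives. Suppose $(q,y)\sim_x(u,w)$ via some $t$ and $(q',y')\sim_x(u',w')$ via some $t'$. Apply Lemma \ref{lem:Z_directed} to the countable family $\{q,q',u,u',t,t'\}$ to obtain $s$ above all of them. By the same two lemmas,
\[\sigma_{q,s}(y)=\sigma_{t,s}(\sigma_{q,t}(y))=\sigma_{t,s}(\sigma_{u,t}(w))=\sigma_{u,s}(w),\]
and likewise $\sigma_{q',s}(y')=\sigma_{u',s}(w')$. By the first step, $[q,y]\in^*[q',y']$ can be evaluated at $s$, and so can $[u,w]\in^*[u',w']$. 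At $s$ the two statements read identically, which gives the claim.

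The main obstacle is that $\leq_x$ is not known to be transitive. Every use of Lemma \ref{lem: quasi transitive} therefore needs the extra hypothesis $q\leq_x s$ to be arranged explicitly; it cannot simply be inferred. I will deal with this by always choosing the final bound $s$, via Lemma \ref{lem:Z_directed}, above all indices involved at once: the original points and the intermediate bounds $r,r',t,t'$. Then every triple to which Lemma \ref{lem: quasi transitive} is applied comes with the required direct relation to $s$.

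If the construction in the proof of Lemma \ref{lem:Z_directed} does not directly give $p_i\leq_x p$ for an arbitrary countable family, I would fall back on the explicit $\eta$-bounded iterates built there. Those iterates witness the relation from each certificate separately, so the argument goes through in the same way.
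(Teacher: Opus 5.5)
Your proof is correct and uses the same ingredients as the paper's argument. You pass to a single upper bound from Lemma \ref{lem:Z_directed} lying above every index in play, so that each use of Lemma \ref{lem: quasi transitive} has its extra hypothesis (e.g.\ $q\leq_x s$) available, and you then use elementarity of the connecting maps $\sigma_{r,s}$. The difference is only organizational: you first isolate independence from the choice of the bound $r$, whereas the paper handles this inline by taking $t$ above the arbitrary bound $k$ and pulling back along $\sigma_{k,t}$ (and repeats the bound-independence argument in Lemma \ref{lem: in* is independent strong}). Your final fallback paragraph is unnecessary, since Lemma \ref{lem:Z_directed} as proved already gives a common bound for any countable family.
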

\begin{proof}
    Assume $[q, y] \in^* [q', y']$. We show that $[u, w] \in^* [u', w']$. Let $k \in \mathcal{I}_x$ be such that $u, u' \leq_x k$. We must show that $\sigma_{u, k}(w) \in \sigma_{u', k}(w')$.

    Let $s, s' \in \mathcal{I}_x$ be witnesses for the equivalence of the representatives, meaning
    \vspace{0.3cm}
    \begin{enumerate}[label=(1.\arabic*), itemsep=0.3cm]
        \item $q, u \leq_x s$ and $\sigma_{q, s}(y) = \sigma_{u, s}(w)$,
        \item $q', u' \leq_x s'$ and $\sigma_{q', s'}(y') = \sigma_{u', s'}(w')$.
    \end{enumerate}
    \vspace{0.3cm}

    Using \rlem{lem:Z_directed}, let $t \in \mathcal{I}_x$ be a common upper bound such that
    \[
        q, q', u, u', s, s', k \leq_x t.
    \]
    (Note that since $\leq_x$ is not transitive, we explicitly require $t$ to extend the base conditions $q, u, q', u'$ as well as the intermediate witnesses $s, s', k$.)

    Applying \rlem{lem: quasi transitive} we get
    \vspace{0.3cm}
    \begin{enumerate}[label=(2.\arabic*), itemsep=0.3cm]
        \item $\sigma_{u, t} = \sigma_{s, t} \circ \sigma_{u, s}$ and $\sigma_{q, t} = \sigma_{s, t} \circ \sigma_{q, s}$,
        \item $\sigma_{u', t} = \sigma_{s', t} \circ \sigma_{u', s'}$ and $\sigma_{q', t} = \sigma_{s', t} \circ \sigma_{q', s'}$.
    \end{enumerate}
    \vspace{0.3cm}

    Applying (1.1) and (1.2), we get:
    \[
        \sigma_{u, t}(w) = \sigma_{s, t}(\sigma_{u, s}(w)) = \sigma_{s, t}(\sigma_{q, s}(y)) = \sigma_{q, t}(y),
    \]
    and similarly $\sigma_{u', t}(w') = \sigma_{q', t}(y')$.

    Since $[q, y] \in^* [q', y']$ and $q, q' \leq_x t$, we know by definition that
    \[
        \sigma_{q, t}(y) \in \sigma_{q', t}(y').
    \]
    Substituting the values derived above, we get
    \[
        \sigma_{u, t}(w) \in \sigma_{u', t}(w').
    \]

    Finally, we pull this back to $k$. Since $u, u', k \leq_x t$, \rlem{lem: quasi transitive} implies $\sigma_{u, t} = \sigma_{k, t} \circ \sigma_{u, k}$ and $\sigma_{u', t} = \sigma_{k, t} \circ \sigma_{u', k}$. Thus,
    \[
        \sigma_{k, t}(\sigma_{u, k}(w)) \in \sigma_{k, t}(\sigma_{u', k}(w')).
    \]
    By the elementarity of $\sigma_{k, t}$, it follows that $\sigma_{u, k}(w) \in \sigma_{u', k}(w')$.
\end{proof}

We also have the following. 

\begin{lemma}\label{lem: in* is independent strong}
    $A\in^* B$ if and only if there exist pairs $(q, y)$ and $(q', y')$, and $r \in \mathcal{I}_x$ such that $[q, y]=A$, $[q', y']=B$, $q, q'\leq_x r$, and
    \[
        \sigma_{q, r}(y)\in \sigma_{q', r}(y').
    \]
\end{lemma}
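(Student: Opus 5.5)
The plan is to prove the two directions separately, using Lemma \ref{lem: in* is well defined} together with the directedness of $\leq_x$ (Lemma \ref{lem:Z_directed}) and the quasi-transitivity of the maps (Lemma \ref{lem: quasi transitive}).

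The forward direction is immediate. Suppose $A\in^* B$ and fix representatives $A=[q,y]$ and $B=[q',y']$. By Lemma \ref{lem:Z_directed} there is some $r\in\mathcal{I}_x$ with $q,q'\leq_x r$. By the definition of $\in^*$, which quantifies over every such $r$, we get $\sigma_{q,r}(y)\in\sigma_{q',r}(y')$. So the required pairs and $r$ exist.

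For the converse, suppose $(q,y)$, $(q',y')$ and $r$ are given as in the statement, with $\sigma_{q,r}(y)\in\sigma_{q',r}(y')$. We must show that for every $k\in\mathcal{I}_x$ with $q,q'\leq_x k$ we also have $\sigma_{q,k}(y)\in\sigma_{q',k}(y')$. By Lemma \ref{lem: in* is well defined}, this then transfers to any other choice of representatives of $A$ and $B$. The steps are as follows.

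\begin{enumerate}
\item Fix such a $k$. Using Lemma \ref{lem:Z_directed}, choose $t\in\mathcal{I}_x$ with $q,q',r,k\leq_x t$. Because $\leq_x$ is not known to be transitive, we explicitly include the base indices $q$ and $q'$ among the elements below $t$.
\item Apply Lemma \ref{lem: quasi transitive} to the chains $q\leq_x r\leq_x t$ and $q'\leq_x r\leq_x t$. Its extra hypotheses $q\leq_x t$ and $q'\leq_x t$ hold by the choice of $t$. This gives
\[
\sigma_{q,t}=\sigma_{r,t}\circ\sigma_{q,r}, \qquad \sigma_{q',t}=\sigma_{r,t}\circ\sigma_{q',r}.
\]
\item Elementarity of $\sigma_{r,t}$ (which holds since $\sigma_{r,t}=\sigma_t^{-1}\circ\sigma_r$ with $\rge(\sigma_r)\subseteq\rge(\sigma_t)$) turns $\sigma_{q,r}(y)\in\sigma_{q',r}(y')$ into $\sigma_{q,t}(y)\in\sigma_{q',t}(y')$.
\item Apply Lemma \ref{lem: quasi transitive} again, this time through $k$, to get $\sigma_{q,t}=\sigma_{k,t}\circ\sigma_{q,k}$ and $\sigma_{q',t}=\sigma_{k,t}\circ\sigma_{q',k}$. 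Elementarity of $\sigma_{k,t}$, used in the backward direction, then yields $\sigma_{q,k}(y)\in\sigma_{q',k}(y')$.
\end{enumerate}

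The only delicate point is the lack of transitivity of $\leq_x$. Every use of Lemma \ref{lem: quasi transitive} needs the outer pair to be $\leq_x$-comparable as well, and this is exactly why $t$ is chosen to dominate all of $q,q',r,k$. Everything else is the same elementarity bookkeeping already carried out in the proof of Lemma \ref{lem: in* is well defined}.
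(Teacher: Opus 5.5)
Your proposal is correct and follows the paper's proof essentially step for step. The forward direction comes from the definition of $\in^*$; you also note explicitly that some common upper bound $r$ exists by Lemma~\ref{lem:Z_directed}, which the paper leaves implicit. For the converse, you choose $t$ above $q,q',r,k$, apply Lemma~\ref{lem: quasi transitive} twice, pass the membership up via $\sigma_{r,t}$ and back down via the elementarity of $\sigma_{k,t}$, and finish with Lemma~\ref{lem: in* is well defined} to handle arbitrary representatives.
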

\begin{proof}
    The forward direction is immediate from the definition of $\in^*$. We prove the backward direction. Suppose there exist $(q, y)$, $(q', y')$, and $r$ witnessing the right-hand side. That is,
    \vspace{0.3cm}
    \begin{enumerate}[label=(\arabic*), itemsep=0.3cm]
        \item[(1)] $[q, y]=A, [q', y']=B, r \in \mathcal{I}_x, q, q' \leq_x r, \text{ and } \sigma_{q, r}(y) \in \sigma_{q', r}(y').$
    \end{enumerate}
    \vspace{0.3cm}

    We first claim that $[q, y] \in^* [q', y']$. To see this, we must show that for \textit{any} $k \in \mathcal{I}_x$ such that $q, q' \leq_x k$,
    \[
        \sigma_{q, k}(y) \in \sigma_{q', k}(y').
    \]
    Let $k$ be such a condition. By \rlem{lem:Z_directed}, let $t \in \mathcal{I}_x$ be a common upper bound for $q$, $q'$, $r$, and $k$. By \rlem{lem: quasi transitive}, we have the following commutative relations:
    \vspace{0.3cm}
    \begin{enumerate}[label=(\arabic*), itemsep=0.3cm]
        \setcounter{enumi}{1}
        \item[(2.1)] $\sigma_{q, t} = \sigma_{r, t} \circ \sigma_{q, r} = \sigma_{k, t} \circ \sigma_{q, k}$,
        \item[(2.2)] $\sigma_{q', t} = \sigma_{r, t} \circ \sigma_{q', r} = \sigma_{k, t} \circ \sigma_{q', k}$.
    \end{enumerate}
    \vspace{0.3cm}

    Applying the elementary embedding $\sigma_{r, t}$ to the inclusion in (1), and using the first equalities in (2.1) and (2.2), we obtain:
    \[
        \sigma_{q, t}(y) \in \sigma_{q', t}(y').
    \]
    Substituting the second equalities from (2.1) and (2.2), this becomes:
    \[
        \sigma_{k, t}(\sigma_{q, k}(y)) \in \sigma_{k, t}(\sigma_{q', k}(y')).
    \]
    By the elementarity of $\sigma_{k, t}$, we conclude that $\sigma_{q, k}(y) \in \sigma_{q', k}(y')$. Thus, $[q, y] \in^* [q', y']$.

    Finally, since $[q, y] = A$ and $[q', y'] = B$, it follows from \rlem{lem: in* is well defined} that $A \in^* B$.
\end{proof}

Because $\leq_x$ is countably directed we have that $\in^*$ is well-founded. We then let $W_{x, \infty}$ be the transitive collapse of $(\mathcal{D}, \in^*)$. For $q\in \mathcal{I}_x$, we let \[\sigma_{q, \infty}: W_q\rightarrow W_{x, \infty}\] be given by \[\sigma_{q, \infty}(y)=[q, y].\] Applying \rlem{lem: quasi transitive}, we get that:

\begin{lemma}\label{lem: sigmaqinfty} 
    $\sigma_{q, \infty}$ is an elementary embedding.
\end{lemma}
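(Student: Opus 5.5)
To show that $\sigma_{q, \infty}$ is elementary, I would run the usual direct-limit argument: a Łoś-style theorem for $(\mathcal{D}, \in^*)$, proved by induction on formulas. Then I would transfer it through the transitive collapse. The claim I would prove is the following: for every formula $\phi(v_1,\dots,v_n)$, every $q_1,\dots,q_n\in\mathcal{I}_x$ and every $y_i\in W_{q_i}$,
\[(\mathcal{D},\in^*)\models \phi[[q_1,y_1],\dots,[q_n,y_n]] \iff \exists r\,\bigl(q_1,\dots,q_n\leq_x r \wedge W_r\models \phi[\sigma_{q_1,r}(y_1),\dots,\sigma_{q_n,r}(y_n)]\bigr),\]
and moreover that in this case the right-hand side holds for every $r$ with $q_1,\dots,q_n\leq_x r$. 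Taking $n$ arbitrary and all $q_i=q$ (using $q\leq_x q$, with $\sigma_{q,q}=\mathrm{id}$, since $\hs$ is a trivial $\eta$-bounded iterate of itself), this gives that $y\mapsto[q,y]$ is elementary from $W_q$ into $(\mathcal{D},\in^*)$. Composing with the collapse then gives elementarity of $\sigma_{q,\infty}$.

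Before the induction, the "for every $r$" part needs to be well defined. Whenever $q_i\leq_x r$ and $q_i\leq_x r'$, I would pick a common bound $t$ by \rlem{lem:Z_directed} with $q_1,\dots,q_n,r,r'\leq_x t$. \rlem{lem: quasi transitive} (its hypotheses $q_i\leq_x r\leq_x t$ and $q_i\leq_x t$ hold) gives $\sigma_{q_i,t}=\sigma_{r,t}\circ\sigma_{q_i,r}=\sigma_{r',t}\circ\sigma_{q_i,r'}$. Since $\sigma_{r,t}$ and $\sigma_{r',t}$ are elementary, each being a composition $\sigma_t^{-1}\circ\sigma_r$ of elementary maps into $N|\l$ coming from \rthm{thm: basic reflection}, the truth of $\phi$ at $r$, at $t$ and at $r'$ all coincide. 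The induction itself then goes as follows.
\begin{itemize}
\item Atomic $\in$: this is exactly \rlem{lem: in* is well defined} together with \rlem{lem: in* is independent strong}.
\item Atomic $=$: this is the definition of $\sim_x$, combined with the same transfer-through-$t$ argument.
\item Boolean connectives: these are immediate from the "for every $r$" formulation.
\end{itemize}

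The only substantive case is the existential quantifier, which is the main obstacle. Suppose $W_r\models\exists v\,\phi[v,\sigma_{q_i,r}(y_i)]$. Choose a witness $w\in W_r$; then $[r,w]$ is a witness in $\mathcal{D}$ by the induction hypothesis, applied with the bound $r$ for $q_1,\dots,q_n,r$, where $\sigma_{r,r}=\mathrm{id}$. Conversely, if $[s,w]$ witnesses $\phi$ in $\mathcal{D}$, I would take an upper bound $t$ of $q_1,\dots,q_n,s$ and use the induction hypothesis to get $W_t\models\phi[\sigma_{s,t}(w),\dots]$, hence $W_t\models\exists v\,\phi$. The one point that needs care is the non-transitivity of $\leq_x$: every common bound must be taken directly over all the indices involved, never obtained by chaining, so that \rlem{lem: quasi transitive} applies. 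Choosing a single witness needs no choice beyond picking one element, so working in $M$ causes no issue. Finally, since $\in^*$ is well-founded and extensional on $\mathcal{D}$ (extensionality follows from the Łoś theorem applied to the extensionality axiom holding in each $W_r$), the collapse map is an isomorphism. Therefore $\sigma_{q,\infty}$, which is the collapse of $y\mapsto[q,y]$, is elementary.
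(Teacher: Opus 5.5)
Your proof is correct and is the same approach as the paper's: the paper simply says the lemma follows by applying \rlem{lem: quasi transitive} (with the countable directedness of \rlem{lem:Z_directed} in the background). Your induction on formulas is the standard direct-limit argument written out in full, and you correctly take every common bound directly over all the indices involved, so that the non-transitivity of $\leq_x$ never matters.
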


Finally we set \[\sigma_{x, \infty}:W_{x, \infty}\rightarrow N|\l\] to be the unique embedding with the property that \[\sigma_{x, \infty}([q, y])=\sigma_q(y).\] Again, \rlem{lem: quasi transitive} implies that $\sigma_{x, \infty}$ is well-defined and elementary. To stress the dependence on $x$, we will write $\sigma^x_{p, q}$ for $\sigma_{p, q}$ and $\sigma^x_{q, \infty}$ for $\sigma_{q, \infty}$.

\section{Reflection V: reflection of the directed system}

Notice that $\sigma_{x, \infty}: W_x\rightarrow N|\l$ is a hull of $N|\l$, and it could be used to show that $\rref$ holds in $N$ if we knew that $W_x$ is closed under $\eta$-sequences. In order to achieve this closure, we will need to build a chain \[(\sigma_{x_\alpha, \infty}, W_{x_{\alpha}}: \a<\eta)\] such that for $\a<\b$, \[\sigma_{x_\alpha, \infty} \in \rge(\sigma_{x_\b, \infty}).\] The goal of this section is to make sure that this can be done, and we use the ideas behind \rcor{cor: defining smaller hulls} to do it. In particular, we will show that for many $x=(\eta^x, \l^x, \k^x, \nu^x, Z^x)$ and $y=(\eta^y, \l^y, \k^y, \nu^y, Z^y)$ we have that 
 \[\sigma_{x, \infty} \in \rge(\sigma_{y, \infty}).\]
 We will work with $x$ and $y$ such that $\eta^x=\eta^y$ and $\l^x=\l^y$. 

\begin{definition}\label{def: mesh}\normalfont Suppose \[x=(\eta^x, \l^x, \k^x, \nu^x, Z^x)\] and \[y=(\eta^y, \l^y, \k^y, \nu^y, Z^y)\] are two ds-generators. We say that $x$ \textbf{projects} to $y$ or $y$ is a projection of $x$ (see \rfig{fig:projection_structure}) if
\begin{enumerate}
\item $\eta^x=\eta^y$ and $\lambda^x=\lambda^y$,
\item $\k^y<\nu^y<\kappa^x$, 
\item $Z^y\cap \k^y=Z^x\cap \k^y$,
\item $\k^y \in Z^x$,
\item letting 
\begin{itemize}
\item $\R=\chull^{\mH|\nu^x}(\k^y\cup Z^x)$, 
\item $i: \R\rightarrow \mH|\nu^x$ be the inverse of the transitive collapse, and
\item $\hr$ be the $i$-pullback of $\hh|\nu^x$,
\end{itemize}
 $\hh|\nu^y$ is a complete iterate of $\hr$ and 
 \[Z^y=\pi_{\hr, \hh|\nu^y}[i^{-1}[Z^x]].\]
\end{enumerate}
\end{definition}
\begin{figure}[ht]
\centering
\begin{tikzpicture}[scale=1.2, >=stealth]


    \coordinate (hr_base) at (0,0);
    \coordinate (hr_top) at (0,1.8); 
    
    \coordinate (hh_base) at (4,0);
    \coordinate (hh_top) at (4,5.0);

    \draw[thick] (hr_base) -- (hr_top);
    \draw[thick] (hh_base) -- (hh_top);

    \node[below] at (hr_base) {$\hr$};
    \node[below] at (hh_base) {$\hh$};

    \draw[dashed] (0, 1.0) -- (4, 1.0);
    \node[left] at (0, 1.0) {$\kappa^y$};

    \draw (3.9, 2.8) -- (4.1, 2.8) node[right] {$\nu^y$};
    \draw (3.9, 3.8) -- (4.1, 3.8) node[right] {$\kappa^x$};
    \draw (3.9, 4.8) -- (4.1, 4.8) node[right] {$\nu^x$};

    \draw[->, dashed] (3.8, 4.5) -- node[above, sloped, font=\footnotesize] {$i^{-1}$} (0.2, 1.9);

    \draw[->] (0.2, 1.6) -- node[below, sloped, font=\footnotesize] {$\pi_{\hr, \hh|\nu^y}$} (3.8, 2.6);

    \draw[gray!50, thick] (5.2, -0.5) -- (5.2, 5.5);

    
    \begin{scope}[shift={(6.5,0)}]
        
        \node[draw, rectangle, font=\small, inner sep=3pt] at (0, 4.5) {$Z^x$-to-$Z^y$};

        \node (Zx) at (1.5, 4.0) {$Z^x$};
        
        \node (R_node) at (-0.8, 1.2) {$\mathcal{R}$};
        
        \node (Zy) at (1.5, 2.8) {$Z^y$};

        \draw[->, dashed] (Zx) -- node[above left, font=\footnotesize] {$i^{-1}$} (R_node);
        
        \draw[->] (R_node) -- node[below right, font=\footnotesize] {$\pi_{\hr, \hh|\nu^y}$} (Zy);

    \end{scope}

\end{tikzpicture}
\caption{$Z^x$ first collapses to $R$ and then gets pushed up by $\pi_{\hr, \hh|\nu^y}$  producing $Z^y$.}
\label{fig:projection_structure}
\end{figure}
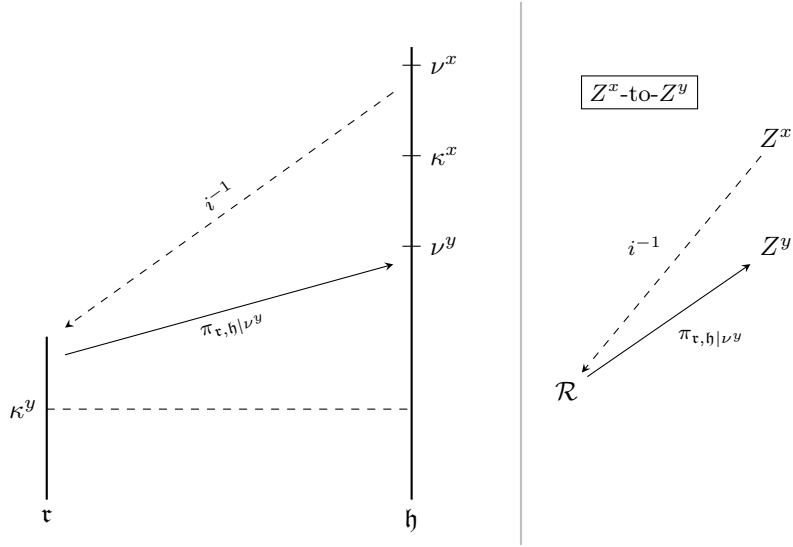

The proof of the following lemma is just like the proof of \rcl{clm: z is rge sigma1}. Below $\nu^x_\hq$ is just $(\nu^x)_\hq$, the preimage of $\nu^x$ in $\hq$.

\begin{lemma}\label{lem: certificates for generators} 
Suppose $x$ and $y$ are two ds-generators such that $x$ projects to $y$. Fix an $\its(\nu^x)$-certificate $\hq$ for $Z^x$ and let $E\in \vec{E}^{\M^\hq}$ be the extender with the least index such that $\cp(E)\in (\k^y_\hq, \nu^x_\hq)$ and $\lh(E)>\nu^x_\hq$. Then $\hq_E$ is an $\its(\nu^y)$-certificate for $Z^y$ and $\nu^x_\hq=\nu^y_{\hq_E}$ (see \rfig{fig:lemma_certificates}).
\end{lemma}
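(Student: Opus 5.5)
The plan mirrors the proof of \rcl{clm: z is rge sigma1} and of \rcor{cor: sharper bound}. Fix the $\its(\nu^x)$-certificate $\hq$ for $Z^x$, so that $Z^x=\pi_{\hq,\infty}[\M^\hq|\nu^x_\hq]$. Since $\k^y\in Z^x$, the preimage $\k^y_\hq$ exists, and $E$ is the least extender of $\M^\hq$ with $\cp(E)\in(\k^y_\hq,\nu^x_\hq)$ and $\lh(E)>\nu^x_\hq$. Because there is no strong cardinal strictly between $\k^y$ and $\nu^y$ relevant here, while $\k^x$ is one, $\cp(E)$ is a cutpoint-type measurable above $\k^y_\hq$. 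Hence $\hq_E|\lh(E)$ agrees with $\hq$ below $\nu^x_\hq$, and $\pi_E\rest \M^\hq|\cp(E)=\mathrm{id}$. So $\nu^x_\hq$ is the image of some ordinal under $\pi_E$, namely a Woodin cardinal of $\M^{\hq_E}$ lying below $\pi_E(\cp(E))$.

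First, identify $\M^{\hq_E}|\nu^x_\hq$ with the collapse $\R$ of \rdef{def: mesh}. Consider the full normalization of $(\T_{\hq,\hq_E})^\frown\T_{\hq_E,\hh}$. As in (1.1)--(1.4) of the proof of \rcor{cor: sharper bound}, this gives $\pi_{\hq_E,\infty}\rest \M^{\hq_E}|\nu^x_\hq$ as follows. It factors through $\hq^{\k^y}$, the least node agreeing with $\hh$ up to $\k^y$, which lies on the main branch by \cite[Lemma 9.4]{blue2025nairian}. Above $\k^y$ it is the iteration of the image of $\hq|\nu^x_\hq$ to $\hh|\nu^y$. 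Concretely, $\pi^{\k^y}_{\hq,\infty}[\M^\hq|\nu^x_\hq]$ collapses to $\R=\chull^{\mH|\nu^x}(\k^y\cup Z^x)$, with inverse collapse $i$, by \rlem{lem: qz is qk}(1),(3) applied to $Z^x$ (the collapse of the hull of $\k^y\cup Z^x$ is $\hq^{\k^y}|\pi^{\k^y}_{\hq,\infty}(\nu^x_\hq)$).

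Second, show that the $E$-side continues exactly as $\hr\to\hh|\nu^y$. Applying $E$ moves the image of $\nu^x_\hq$ (which was $i^{-1}(\nu^x)$-type) down to play the role of $\nu^y$. Since $\hh|\nu^y$ is a complete iterate of $\hr$ (clause 5 of \rdef{def: mesh}), the tree from $\hq_E^{\k^y}|\nu^x_\hq$ to $\hh|\nu^y$ is that iteration. Therefore
\[\pi_{\hq_E,\infty}[\M^{\hq_E}|\nu^x_\hq]=\pi_{\hr,\hh|\nu^y}[i^{-1}[Z^x]]=Z^y,\]
which is the computation $(*)$ in \rcor{cor: sharper bound} with $(\nu,\b)$ replaced by $(\nu^x,\nu^y)$. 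This also shows $\pi_{\hq_E,\infty}(\nu^x_\hq)=\nu^y$, i.e. $\nu^x_\hq=\nu^y_{\hq_E}$. Together with $\M_\infty(\hq_E)=\mH$ and the fact that $\nu^y_{\hq_E}$ is defined, this verifies \rdef{def: its} for $Z^y$.

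The main obstacle is the second step: checking that the normalized tree does not use extenders with critical point between $\k^y$ and $\cp(E)$ on the $E$-side in a way that would change the image of $\M^\hq|\nu^x_\hq$. I would handle this as in \rcl{clm: only on one side} and \rcor{cor: tech cor ii}, using \cite[Lemma 9.12, Corollary 9.11]{blue2025nairian} and clause 3 of \rdef{def: mesh} ($Z^y\cap\k^y=Z^x\cap\k^y$), to pin down that the agreement below $\k^y$ is preserved.
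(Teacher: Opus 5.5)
Your route is the paper's: normalize $(\T_{\hq,\hq_E})^\frown\T_{\hq_E,\hh}$ through the $\k^y$-node and redo the computation $(*)$ of \rcor{cor: sharper bound}, and your closing chain of equalities is the paper's. But as written there is a gap at exactly the step you call the main obstacle: the fact that makes the first equality true is never stated, and your proposed fix is aimed at the wrong thing.

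Write $\hr=\hq^{\k^y}$, $\tau=\pi_{\hq,\hr}(\nu^x_\hq)$ and $G=\pi_{\hq,\hr}(E)$, so that $\hr|\tau$ is the $i$-pullback pair of \rdef{def: mesh}. What is needed is to identify the $\k^y$-node on the $E$-side.
\begin{itemize}
\item Full normalization gives $(\hq_E)^{\k^y}=\hr_G$.
\item Since $\cp(G)>\k^y$ and $\lh(G)>\tau$, we have $\hr_G|\tau=\hr|\tau$.
\item $\pi^{\k^y}_{\hq_E,\infty}$ agrees with $\pi^{\k^y}_{\hq,\infty}$ on $\M^\hq|(\nu^x_\hq+1)$. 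So it sends $\nu^x_\hq$ to $\tau$ and $\M^\hq|\nu^x_\hq$ onto $i^{-1}[Z^x]$.
\end{itemize}
The rest of the $\hq_E$-side then acts on $\hr_G|\tau=\hr|\tau$ as the iteration onto $\hh|\nu^y$ given by clause 5 of \rdef{def: mesh}. This yields $\nu^y_{\hq_E}=\nu^x_\hq$ and $\pi_{\hq_E,\infty}[\M^{\hq_E}|\nu^x_\hq]=\pi_{\hr|\tau,\hh|\nu^y}[i^{-1}[Z^x]]=Z^y$, with no appeal to \cite[Lemma 9.12, Corollary 9.11]{blue2025nairian}.

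Your worry about ``agreement below $\k^y$'' is beside the point, since that agreement holds by the very choice of the $\k^y$-node. After that node, the $E$-side is a tree above $\k^y$ whose job is precisely to move $\R$ to $\hh|\nu^y$. At the $\k^y$-node the two sides differ only by the ultrapower by $G$.

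Several intermediate claims are false and should be removed.
\begin{itemize}
\item As you note, $\nu^x_\hq$ lies strictly between $\cp(E)$ and $\pi_E(\cp(E))$, so it is not in $\rge(\pi_E)$. Passing to $\hq_E$ leaves it fixed (as $\lh(E)>\nu^x_\hq$), so nothing ``moves down''.
\item $\R$ is $\M^{\hr}|\tau$, not $\M^{\hq_E}|\nu^x_\hq$.
\item $\pi^{\k^y}_{\hq,\infty}[\M^\hq|\nu^x_\hq]$ does not collapse to $\R$. It is the subset $i^{-1}[Z^x]$ of $\R$, and its collapse is $\M^\hq|\nu^x_\hq$.
\item $\pi_{\hq_E,\infty}$ factors through $(\hq_E)^{\k^y}=\hr_G$, not through $\hq^{\k^y}$. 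The two $\k^y$-maps coincide only after restricting to $\M^{\hq_E}|(\nu^x_\hq+1)$.
\end{itemize}
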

\begin{proof}
Let $\hr=\hq^{\k^y}$. Let also $\tau=\pi_{\hq, \hr}(\nu^x_\hq)$ and $G=\pi_{\hq, \hr}(E)$. Note that because $\hq$ is an $\its(\nu^x)$-certificate, $\nu^x_\hq$ is inaccessible in $\M^\hq$, and by the choice of $E$, $\lh(E)>\nu^x_\hq$.

We verify the properties required for $\hq_E$ to be a certificate. We have the following facts::
\vspace{0.3cm}
\begin{enumerate}[label=(1.\arabic*), itemsep=0.3cm]
    \item $(\hq_E)^{\k^y}=\hr_G$,
    \item $\cp(G)>\k^y$,
    \item $\hr_G|\tau=\hr|\tau$,
    \item $\pi_{\hq_E, \infty}^{\k^y}\rest (\M^{\hq_E}|\nu^x_\hq+1)=\pi_{\hq, \infty}^{\k^y}\rest (\M^\hq|\nu^x_\hq+1)$, and
    \item $\tau=\pi_{\hq_E, \infty}^{\k^y}(\nu^x_\hq)$.
\end{enumerate}
\vspace{0.3cm}
Item (1.4) implies immediately that $\nu^y_{\hq_E}=\nu^x_\hq$. We then have that:
\begin{align*}
\pi_{\hq_E, \infty}[\M^{\hq_E}|\nu^y_{\hq_E}] &= \pi_{\hr_G|\tau, \hh|\nu^y}\circ \pi_{\hq_E, \infty}^{\k^y} [\M^{\hq_E}|\nu^x_\hq] \\
&= \pi_{\hr|\tau, \hh|\nu^y} \circ  \pi_{\hq, \infty}^{\k^y}[\M^\hq|\nu^x_\hq] \quad \text{(by (1.3) and (1.4))}\\
&=\pi_{\hr|\tau, \infty}[U]\ \ \ \text{where $U=\pi_{\hq, \hr}[\M^\hq|\nu^x_\hq]$}\\
&=Z^y
\end{align*}
The last equality holds because $\hq$ is a certificate for $Z^x$, implying that \begin{align*}
Z^x &=\pi_{\hq, \infty}[\M^\hq|\nu^x_\hq]\\
& =\pi_{\hr, \hh}\circ \pi_{\hq, \hr}[\M^\hq|\nu^x_\hq]\\
& =\pi_{\hr, \hh}[U]
\end{align*}
so because $x$ projects to $y$, we have that $Z^y=\pi_{\hr|\tau, \hh|\nu^y}[U]$.
\end{proof}

\begin{figure}[ht]
\centering
\begin{tikzpicture}[scale=1.2, >=stealth]


    \coordinate (hq_base) at (0, 0);
    \coordinate (hq_top) at (0, 3.5);
    \draw[thick] (hq_base) -- (hq_top);
    \node[below] at (hq_base) {$\hq$};

    \draw (-0.1, 1.0) -- (0.1, 1.0);
    \node[right, font=\footnotesize] at (0.1, 1.0) {$\kappa^y_{\hq}$};

    \fill (0, 2.0) circle (1.5pt);
    \node[right] at (0.1, 2.0) {$\nu^x_{\hq}$};
    \coordinate (nu_src_x) at (0, 2.0);

    \draw[thick, blue!80!black] (-0.3, 1.2) -- (-0.5, 1.2) -- (-0.5, 2.2) -- (-0.3, 2.2);
    \node[left, blue!80!black] at (-0.5, 1.7) {$E$};

    \coordinate (hqE_base) at (3.5, 0);
    \coordinate (hqE_top) at (3.5, 3.5);
    \draw[thick] (hqE_base) -- (hqE_top);
    \node[below] at (hqE_base) {$\hq_E$};

    \fill (3.5, 2.0) circle (1.5pt);
    \node[right] at (3.5, 2.0) {$\nu^x_{\hq} = \nu^y_{\hq_E}$}; 
    \coordinate (nu_src_y) at (3.5, 2.0);

    \draw[dotted, gray] (0, 2.0) -- (3.5, 2.0);

    
    \coordinate (hh_base) at (8.0, 0);
    \coordinate (hh_top) at (8.0, 5.5);
    \draw[thick] (hh_base) -- (hh_top);
    \node[below] at (hh_base) {$\hh$};

    \coordinate (nux) at (8.0, 4.5);
    \coordinate (nuy) at (8.0, 3.0);

    \draw (7.9, 4.5) -- (8.1, 4.5) node[right] {$\nu^x$};
    \draw (7.9, 3.0) -- (8.1, 3.0) node[right] {$\nu^y$};


    \draw[->, thick] (0.8, 1.5) -- node[above, font=\footnotesize] {$\pi_E$} (2.7, 1.5);

    \draw[->] (nu_src_x) .. controls (1.5, 4.8) and (6.0, 4.8) .. node[above, sloped, near end, font=\footnotesize] {$\pi_{\hq, \hh}$} (nux);

    \draw[->] (nu_src_y) .. controls (5.0, 2.2) and (6.5, 2.8) .. node[above, sloped, font=\footnotesize] {$\pi_{\hq_E, \hh}$} (nuy);

\end{tikzpicture}
\caption{$\nu^x_\hq=\nu^y_{\hq_E}$ and $Z^y=\pi_{\hq_E, \infty}[\M^{\hq_E}|\nu^x_\hq]$.}
\label{fig:lemma_certificates}
\end{figure}

Suppose then $x$ and $y$ are two ds-generators such that $x$ projects to $y$. We say $(\hq, E)$ is an \textbf{$(x, y)$-certificate} if $\hq$ is an $\its(\nu^x)$-certificate for $Z^x$ and $E\in \vec{E}^{\M^\hq}$ is the extender with the least index such that $\cp(E)\in (\k^y_\hq, \nu^x_\hq)$ and $\lh(E)>\nu^x_\hq$. \rlem{lem: certificates for generators} shows a bit more.
\begin{lemma}\label{lem: certificates for generators1} 
Suppose $x$ and $y$ are two ds-generators such that $x$ projects to $y$. Suppose $Y\in \its(x)$ and $\hq$ is an $\its(\nu^x)$-certificate for $Z^x[Y]$. Let $E\in \vec{E}^{\M^\hq}$ be the extender with the least index such that $\cp(E)\in (\k^y_\hq, \nu^x_\hq)$ and $\lh(E)>\nu^x_\hq$. Let \[x(Y)=(\eta^x, \l^x, \k^x, \nu^x, Z^x[Y])\] and 
\[y(Y)=(\eta^y, \l^y, \k^y, \nu^y, Z^y[Y]).\] Then $(\hq, E)$ is a $(x(Y), y(Y))$-certificate and $\nu^y_{\hq_E}=\nu^x_\hq$.
\end{lemma}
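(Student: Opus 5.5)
The plan is to reduce \rlem{lem: certificates for generators1} to \rlem{lem: certificates for generators}. I would show that $x(Y)$ and $y(Y)$ are ds-generators and that $x(Y)$ projects to $y(Y)$; once that is done, the conclusion is exactly the earlier lemma applied to the pair $(x(Y),y(Y))$ and the certificate $\hq$. Indeed, $\hq$ is by hypothesis an $\its(\nu^x)$-certificate for $Z^x[Y]=Z^{x(Y)}$, and $E$ is chosen by the same recipe as there.

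That $x(Y)$ is a ds-generator is immediate, since $Y\in\its(x)$ means $Z^x[Y]\in\its(\nu^x)$. For $y(Y)$ I must check $Z^y[Y]\in\its(\nu^y)$. This will come out of the projection verification, because clause (5) of \rdef{def: mesh} exhibits $Z^y[Y]$ as a transported iteration set.

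Clauses (1)–(4) of \rdef{def: mesh} for $(x(Y),y(Y))$ are routine. Since $Y\subseteq\eta<\k^y$ and $Z^y\cap\k^y=Z^x\cap\k^y$, I expect $Z^y[Y]\cap\k^y=Z^x[Y]\cap\k^y$. The reason is that functions in $Z^x$ and $Z^y$ applied to arguments below $\eta$ agree below $\k^y$ via the collapse $i$, which is the identity on $\k^y$. Clause (4) holds because $\k^y\in Z^x\subseteq Z^x[Y]$.

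The substantive step is clause (5). Let $\R'=\chull^{\mH|\nu^x}(\k^y\cup Z^x[Y])$ with uncollapse $i'$. Since $Y\subseteq\k^y$, I claim $\hull(\k^y\cup Z^x[Y])=\hull(\k^y\cup Z^x)$. Elements $f(a)$ with $a\in Y$ are already in the hull of $\k^y\cup Z^x$, and conversely $Z^x\subseteq Z^x[Y]$. Hence $\R'=\R$, $i'=i$, and $\hr$ is unchanged, so $\hh|\nu^y$ is a complete iterate of $\hr$. It remains to compute $\pi_{\hr,\hh|\nu^y}[i^{-1}[Z^x[Y]]]$. Because $i^{-1}$ and $\pi_{\hr,\hh|\nu^y}$ are elementary and commute with application, $i^{-1}(f(a))=i^{-1}(f)(a)$ for $a<\k^y$. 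Then $\pi(i^{-1}(f)(a))=\pi(i^{-1}(f))(\pi(a))$, and $\pi(a)$ lies in $Z^y\cap\k^y$-compatible values. The main point to check is that $\pi_{\hr,\hh|\nu^y}\rest\k^y=i\rest\k^y$-consistent, i.e.\ that $\pi(a)=a$ for $a\in Y$. I would justify this by arguing that the iteration from $\hr$ to $\hh|\nu^y$ is above $\k^y$ (using $\hr|\k^y=\hh|\k^y$, as in the proof of \rlem{lem: certificates for generators} where $\hq^{\k^y}$ appears). This yields the image $\{g(a):g\in Z^y,a\in Y\}=Z^y[Y]$.

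Given this, the easier route may simply be to rerun the displayed computation in the proof of \rlem{lem: certificates for generators} with $\hq$ now certifying $Z^x[Y]$. Facts (1.1)–(1.5) there depend only on $\hq$ and $E$, not on which set $\hq$ certifies, so they give $\nu^y_{\hq_E}=\nu^x_\hq$ and $\pi_{\hq_E,\infty}[\M^{\hq_E}|\nu^x_\hq]=\pi_{\hr|\tau,\hh|\nu^y}[U]$, where now $U=\pi_{\hq,\hq^{\k^y}}[\M^\hq|\nu^x_\hq]$. The identification of this set with $Z^y[Y]$ is the clause-(5) computation above. I expect the agreement of the iteration maps with the identity below $\k^y$ to be the only delicate point.
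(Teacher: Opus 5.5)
Your proposal is correct and is essentially the paper's argument: the paper gives no separate proof, only noting that the proof of \rlem{lem: certificates for generators} goes through with $\hq$ certifying $Z^x[Y]$. Your checks that $\hull^{\mH|\nu^x}(\k^y\cup Z^x[Y])=\hull^{\mH|\nu^x}(\k^y\cup Z^x)$ and that $\pi_{\hr,\hh|\nu^y}$ fixes $Y\subseteq\eta<\k^y$ supply the step the paper leaves tacit, namely $\pi_{\hr|\tau,\hh|\nu^y}[U]=Z^y[Y]$. One point of ordering: $Z^y[Y]\in\its(\nu^y)$, and hence that $y(Y)$ is a ds-generator, comes out of that rerun computation (which exhibits $\hq_E$ as a certificate) rather than from clause (5) alone, so your ``easier route'' is the right way to organize the argument.
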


\begin{definition}\label{def: it space}\normalfont
Suppose $x$ and $y$ are two ds-generators such that $x$ projects to $y$. We say $D$ is the $(x, y)$-iteration space if it consists of $Y\in \its(x)$ for which there are
\begin{itemize}
\item an $(x, y)$-certificate $(\hq, E)$, 
\item a complete $\eta$-bounded iterate $\hx$ of $\hq$ and
\item a complete $\eta$-bounded iterate $\hy$ of $\hq_E$
\end{itemize}
such that\footnote{One can show that $\hy=\hx_F$ where $F=\pi_{\hq, \hx}(E)$. This is like the proof of embedding normalization for two extenders that can be found in \cite{SteelCom}. Since we do not really need this fact, we will not prove it.}
\[Y = \pi_{\hx, \infty}[\M^{\hx}|\eta_{\hx}] = \pi_{\hy, \infty}[\M^{\hy}|\eta_{\hy}].\] 
We then let $D_x = \{ [Y]_{=_x} : Y \in D \}$ and $D_y = \{ p|y : p \in D_x \}$, where for $p=[Y]\in D_x$, $p|y=[Y]_{=_y}$.
\end{definition}

\begin{lemma}\label{lem: rest is dense}
Suppose $x$ and $y$ are two ds-generators such that $x$ projects to $y$ and $D$ is the $(x, y)$-iteration space. Then the following holds:
\begin{enumerate}
    \item $D\in N$.
    \item $D_x\subseteq \mathcal{I}_x\cap \its(Z^y)$ (and hence, for every $p\in D_x$, $p|y$ is defined).
    \item For $Y, Y'\in D$, $Y=_xY'$ if and only if $Y=_y Y'$.
    \item If $p=[Y]_x\in D$, then \[W_{p, x}=W_{p|y, y}\] and \[\sigma_{p, x}=\sigma_{p|y, y}.\]
    \item $D_y$ is dense in $\mathcal{I}_y$. Moreover, for any $p, p' \in D_x$,
    if $p \leq_x p'$ and $p|y \leq_y p'|y$, then \[ \sigma^x_{p, p'} = \sigma^y_{p|y, p'|y}. \]
    \item For every $p, p'\in D_x$ there is $q\in D_x$ such that
    \begin{enumerate}
    \item $p\leq_x q$ and $p'\leq_x q$, and
    \item $p|y\leq_y q|y$ and $p'|y\leq_y q|y$.
    \end{enumerate}
    \item $W_{x, \infty}=W_{y, \infty}$ and $\sigma_{x, \infty}=\sigma_{y, \infty}$.
\end{enumerate}
\end{lemma}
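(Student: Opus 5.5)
The plan is to reduce everything to the single observation in \rlem{lem: certificates for generators1}: if $(\hq,E)$ is an $(x,y)$-certificate and $\hx$ is an $\eta$-bounded complete iterate of $\hq$, then, with $F=\pi_{\hq,\hx}(E)$, the pair $(\hx,F)$ is an $(x(Y),y(Y))$-certificate, and $\nu^y_{\hx_F}=\nu^x_\hx$. Consequently one and the same pair of iterates $(\hx,\hy)$ witnessing $Y\in D$ serves both $x$ and $y$. First, clause (1): $D$ is ordinal definable in $M$ from $(x,y)$, since $\its(\nu^x)$, $\its(\nu^y)$ and the certificate relation are, and all the objects involved lie in $N|\l$. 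Closure of $N$ under $OD$ subsets, as in \rprop{prop: its in n}, then gives $D\in N$. Clause (2) is immediate. If $Y\in D$ is witnessed by $\hx$ and $\hy$, then $\hx$ certifies $Z^x[Y]\in\its(\nu^x)$. By \rlem{lem: certificates for generators1} applied to $\hx$, the iterate $\hy$ (equivalently $\hx_F$) certifies $Z^y[Y]\in\its(\nu^y)$, so $Y\in\its(Z^y)$.

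For clauses (3) and (4) I would compare the two realizations directly. The model $W_{[Y],x}=N|\pi_{\hx|\nu^x_\hx,\infty}(\l)$ is computed from $\hx|\nu^x_\hx$. On the other side, $W_{[Y],y}$ is computed from $\hy|\nu^y_\hy=\hx_F|\nu^x_\hx=\hx|\nu^x_\hx$, using that $\cp(F)>\k^y$ and the agreement facts (1.3)–(1.4) from the proof of \rlem{lem: certificates for generators}. The same agreement, together with $\hx|\l_\hx=\hx_F|\l_{\hx_F}$, gives $\pi_{\hx,\infty}\rest\M^\hx|\l_\hx=\pi_{\hx_F,\infty}\rest\M^{\hx_F}|\l_{\hx_F}$, exactly as in the argument for $(*)$ in \rcor{cor: sharper bound}. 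Since $\sigma_u$ is computed only from the restriction of $\pi_{\cdot,\infty}$ below $\l$ (\rrem{rem: also bounded by l}), it follows that $\sigma_{[Y],x}=\sigma_{[Y],y}$. Clause (3) then follows, because $Z^x[Y]$ and $Z^y[Y]$ are each determined by $Y$ together with $\rel\rest\l$ of these identical restricted maps. Here I would invoke the uniqueness in \rthm{thm: uniqueness of realizability witnesses}.

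Clause (5) has two parts. For density, given $p'\in\mathcal I_y$ certified by some $\hs$, I would pull $\hs$ back through \rlem{lem: certificates for generators} to an $(x,y)$-certificate $(\hq,E)$ with $\hs=\hq_E$. I would then take the $\eta$-bounded comparison iterate produced as in \rlem{lem:Z_directed}, which yields an element of $D_y$ above $p'$. For the second part, once (4) holds, $\sigma^x_{p,p'}=\sigma_{p'}^{-1}\circ\sigma_p$ agrees with the $y$-version. Clause (6) follows the countable-directedness construction of \rlem{lem:Z_directed}, run simultaneously on the $\hx$ side and the $\hx_F$ side. Because $F$ is moved along by $\eta$-bounded iterations, the common bound stays in $D$. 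Clause (7) then follows formally: by (5) and (6), $D_x$ is cofinal in both directed systems, the embeddings agree on it, and so the two direct limits and their maps into $N|\l$ coincide. To verify this I would use \rlem{lem: in* is independent strong} and \rlem{lem: quasi transitive}.

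The main obstacle is clause (6). It requires a single common upper bound that works for $\leq_x$ and $\leq_y$ at the same time, even though $\leq_x$ is not transitive. This needs the fact that an $\eta$-bounded iterate of $\hx_F$ is of the form $\hx'_{F'}$ for an $\eta$-bounded iterate $\hx'$ of $\hx$, which the footnote suggests via embedding normalization. I would first try to avoid that fact by building the whole construction on the $\hq$ side and then applying $E$ at the end.
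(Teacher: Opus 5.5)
You have (1), (2), the second half of (5) and (7) as the paper does. Your (4) is the paper's route through Corollary~\ref{cor: sharper bound}. Note, though, that (4) needs agreement of realizations for \emph{all} $\l$-bounded iterates based on $\hx|\nu^x_\hx$, i.e.\ $\rel(Z^x[Y])\restriction\l=\rel(Z^y[Y])\restriction\l$, and not only for $\hx$ versus $\hx_F$.

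The genuine gap is clause (3). That $Z^x[Y]$ and $Z^y[Y]$ are each determined by $Y$ (plus realization data below $\l$) says nothing about whether $Y\mapsto Z^x[Y]$ and $Y\mapsto Z^y[Y]$ have the same fibers, which is what (3) asserts. Data below $\l$ cannot supply this: $\rel(Z^x[Y])\restriction\l=\rel(Z^y[Y])\restriction\l$ even though $Z^x[Y]\neq Z^y[Y]$. What you need is one $Y$-independent injection carrying $Z^x[Y]$ onto $Z^y[Y]$, and clause 5 of Definition~\ref{def: mesh} provides it. With $\hr,i$ as there, set $g=\pi_{\hr,\hh|\nu^y}\circ i^{-1}$. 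It is injective, fixes each $a\in Y$ (as $Y\subseteq\mH|\eta$ and $\eta<\k^y$), and $g[Z^x]=Z^y$. Hence $g[Z^x[Y]]=\{g(f)(a):f\in Z^x,\ a\in Y\}=Z^y[Y]$ for every $Y$, and (3) follows. This is also what makes $p\mapsto p|y$ independent of the representative, so it must come before (4), not be derived from it.

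Second, in the density half of (5) you cannot pull an arbitrary $\its(\nu^y)$-certificate $\hs$ for $Z_{p'}$ back to an $(x,y)$-certificate $(\hq,E)$ with $\hs=\hq_E$. Lemma~\ref{lem: certificates for generators} only goes from $(\hq,E)$ to $\hq_E$, and an arbitrary certificate need not be such an ultrapower. The step is also unnecessary: fix one $(x,y)$-certificate $(\hq,E)$ and run the construction of Lemma~\ref{lem:Z_directed} on $\hq$, $\hq_E$ and $\hs$ simultaneously.

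The same observation removes your ``main obstacle'' in (6). Membership in $D$ asks only for \emph{some} $\eta$-bounded iterate of $\hq$ and \emph{some} $\eta$-bounded iterate of $\hq_E$ with a common $\eta$-image. So you never need iterates of $\hq_E$ to have the form $\hx'_{F'}$. Write $p=[Y_p]_x$ and $p'=[Y_{p'}]_x$ with $Y_p,Y_{p'}\in D$. Run Lemma~\ref{lem:Z_directed} at once on $\hq$, $\hq_E$, the witnesses $\hx^p,\hx^{p'}$ (certifying $Z^x[Y_p],Z^x[Y_{p'}]$), and the witnesses $\hy^p,\hy^{p'}$ (certifying $Z^y[Y_p],Z^y[Y_{p'}]$). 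The common $\eta$-image $Y$ yields $q=[Y]_x\in D_x$ with all four inequalities, as in the paper.

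Your proposed workaround, building on the $\hq$ side and applying $E$ at the end, does not avoid the normalization fact. To get $Y\in D$ (and $p|y\leq_y q|y$) you would need $\hx'_{F'}$ to be an $\eta$-bounded iterate of $\hq_E$ (respectively of $\hx^p_{F^p}$), which is that very fact.
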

\begin{proof}
We set $\eta=\eta^x=\eta^y$ and $\l=\l^x=\l^y$. 
$D\in N$ as it is ordinal definable from $(x, y)$ (in fact, just from $x$ as $y$ is ordinal definable from $x$).\\
 
\textbf{Clause 2:} For clause $(2)$, it is clear that $D_x\subseteq \mathcal{I}_x$. Fix then $Y \in D_x$ and let $(\hq, E)$ be a $(x, y)$-certificate such that there is an $\eta$-bounded complete iterate $\hr$ of $\hq$ and an $\eta$-bounded complete iterate $\hs$ of $\hq_E$ such that
\[Y = \pi_{\hr, \infty}[\M^{\hr}|\eta_{\hr}] = \pi_{\hs, \infty}[\M^{\hs}|\eta_{\hs}].\] 
Notice now that because $\hs$ is an $\eta$-bounded complete iterate of $\hq_E$, we have that 
\[\pi_{\hs, \infty}[\M^\hs|\nu^y_\hs]=Z^y[Y].\]
This is because 
\begin{align*}
\pi_{\hs, \infty}[\M^\hs|\nu^y_\hs] & =(\pi_{\hq_E, \infty}[\M^{\hq_E}|\nu^y_{\hq_E}])[Y] \\
&= Z^y[Y].
\end{align*}
Thus,  $Y \in \its(Z^y)$, and consequently $D_y \subseteq \mathcal{I}_y$.\\

\textbf{Clause 3:} We now verify clause $(3)$. Letting $(\hr, i)$ be as in \rdef{def: mesh}, we have that
\[ Z^y[Y]=\pi_{\hr, \hh|\nu^y}[i^{-1}[Z^x[Y]]] \quad \text{and} \quad Z^y[Y']=\pi_{\hr, \hh|\nu^y}[i^{-1}[Z^x[Y']]]. \]
Therefore we have that \[Z^x[Y]=Z^x[Y']\ \text{if and only if}\ Z^y[Y]=Z^y[Y'].\]

\textbf{Clause 4:} For clause $(4)$, suppose $p=[Y]_x \in D_x$ with $Y\in D$, and let $(\hq, E)$ be an $(x, y)$-certificate witnessing that $Y\in D$. Let $\hx$ be a complete $\eta$-bounded iterate of $\hq$ and $\hy$ be a complete $\eta$-bounded iterate of $\hq_E$ such that \[Y = \pi_{\hx, \infty}[\M^{\hx}|\eta_{\hx}] = \pi_{\hy, \infty}[\M^{\hy}|\eta_{\hy}].\] We then have that 
\[W_{p, x} =N|\pi_{\hx|\nu^x_\hx, \infty}(\l_\hx),\]
and 
\[W_{p|y, y}= N|\pi_{\hy|\nu^y_\hy, \infty}(\l_\hy).\]
Let $F=\pi_{\hq, \hx}(E)$. It then follows from \rlem{lem: certificates for generators1} that $(\hx, F)$ is a $(x(Y), y(Y))$-certificate, and so 
\[Z^y[Y]=\pi_{\hx_F, \infty}[\M^\hx|\nu^x_\hx]\] 
and $\nu^x_{\hx}=\nu^y_{\hx_F}$.
Therefore, 
\begin{align*}
W_{p|y, y} &= N|\pi_{\hx_F|\nu^x_\hx, \infty}(\l_\hx) \\
&= N|\pi_{\hx|\nu^x_\hx, \infty}(\l_\hx)\\
&= W_{p, x}.
\end{align*}
Next we need to show that $\sigma_{p, x}=\sigma_{p|y, y}$. We have that $\sigma_{p, x}=\sigma_u$ where $u=(\hx, \l_\hx, \nu^x_\hx)$ and $\sigma_{p|y, y}=\sigma_v$ where $v=(\hx_F, \l_\hx, \nu^x_\hx)$ (because as shown above $\hx_F$ is an $\its(\nu^y)$-certificate for $Z^y[Y]$ and $\nu^x_\hx=\nu^y_{\hx_F}$). But it follows from \rlem{cor: sharper bound} that $\sigma_u=\sigma_v$ as \[\rel(Z^x[Y])\rest \l=\rel(Z^y[Y])\rest \l.\] Thus, we get that \[\sigma_{p, x}=\sigma_{p|y, y}.\]

\textbf{Clause 5:} To see clause $(5)$, we first show density. Fix an $(x, y)$-certificate $(\hq, E)$. Fix any $r \in \mathcal{I}_y$ and let $\hr$ be an $\its(\nu^y)$-certificate for $Z^r$. We have that $\hq_E$ is an $\its(\nu^y)$-certificate for $Z^y$. Just like in the proof of \rlem{lem:Z_directed}, we can find $\hs$, $\hx$, and $\hy$ such that 
\vspace{0.3cm}
\begin{enumerate}[label=(2.\arabic*), itemsep=0.3cm]
    \item $\hs$ is a complete $\eta$-bounded iterate of $\hq$,
    \item $\hx$ is a complete $\eta$-bounded iterate of $\hq_E$, 
    \item $\hy$ is a complete $\eta$-bounded iterate of $\hr$,
   and 
    \item $\pi_{\hs, \infty}[\M^{\hs}|\eta_{\hs}]=\pi_{\hx, \infty}[\M^{\hx}|\eta_{\hx}] = \pi_{\hy, \infty}[\M^{\hy}|\eta_{\hy}]$.
\end{enumerate}
\vspace{0.3cm}
 Let $Y=\pi_{\hx, \infty}[\M^{\hx}|\eta_{\hx}]$. (2.1) and (2.2) imply that $Y \in D$. 

Set $s_x=[Y]_x$ and $s_y=[Y]_y$. By construction, $s_y \in D_y$. Since $\hy$ is an iterate of $\hr$, we have $r \leq_y s_y$. Thus $D_y$ is dense in $\mathcal{I}_y$.

For the second part of clause $(5)$, suppose $p, p' \in D_x$,  $p \leq_x p'$, and  $p|y \leq_y p'|y$. By clause $(4)$, 
\vspace{0.3cm}
\begin{enumerate}[label=(3.\arabic*), itemsep=0.3cm]
    \item $W_{p, x}=W_{p|y, y}$ and $W_{p', x}=W_{p'|y, y}$,
    \item $\sigma_{p, x}=\sigma_{p|y, y}$, and
    \item $\sigma_{p', x}=\sigma_{p'|y, y}$.
\end{enumerate}
\vspace{0.3cm}  Thus 
\begin{align*}
\sigma^x_{p, p'} & = \sigma_{p', x}^{-1}\circ \sigma_{p, x}\\
&= \sigma_{p'|y, y}^{-1}\circ \sigma_{p|y, y}\\
&= \sigma^y_{p|y, p'|y}.
\end{align*}

\textbf{Clause 6:} For clause $(6)$, fix an $(x, y)$-certificate $(\hq, E)$, and let $p, p'\in D$. Repeating the argument given in the proof of clause (5), we can find $Y\in D$ such that for some $\eta$-bounded complete iterate $\hx$ of $\hq$ and for some $\eta$-bounded complete iterate $\hy$ of $\hq_E$, 
\vspace{0.3cm}
\begin{enumerate}[label=(4.\arabic*), itemsep=0.3cm]
    \item $Y=\pi_{\hx, \infty}[\M^\hx|\eta_\hx]=\pi_{\hy, \infty}[\M^\hy|\eta_\hy]$,
    \item $p\leq_x [Y]_x$ and $p'\leq_x [Y]_x$, and
    \item $p|y \leq_y [Y]_y$ and $p'|y \leq_y [Y]_y$.
\end{enumerate}
\vspace{0.3cm} 
Thus, $q=[Y]_x$ is as desired.\\

\textbf{Clause 7:} 
 We define $j: W_{x, \infty} \rightarrow W_{y, \infty}$. Given $a\in W_{x, \infty}$, we fix $p_0\in \mathcal{I}_x$ such that $\sigma^x_{p_0, \infty}(a_0)=a$ for some $a_0\in W_{p_0, x}$. We can now find $p\in D$ such that $p_0\leq p$. We then set \[j(a)=\sigma_{p|y, y}(\sigma^x_{p_0, p}(a_0)).\] 
 We now show that $j$ is well-defined. 

 \begin{claim}\label{cl: j is well-defined} Suppose $(p_0, a_0)$ and $(p_1, a_1)$ are such that for $i\in 2$, $p_i\in \mathcal{I}_x$, $a_i\in W_{p_i, x}$, and $\sigma^x_{p_i, \infty}(a_i)=a$. Let $q_0, q_1\in D$ be such that for $i\in 2$, $p_i\leq_x q_i$. Then \[\sigma_{q_0|y, y}(\sigma^x_{p_0, q_0}(a_0))=\sigma_{q_1|y, y}(\sigma^x_{p_1, q_1}(a_1)).\]
 \end{claim}
 \begin{proof}
 We can find $q\in D_x$ such that for $i\in 2$, $q_i\leq_x q$, $q_i|y\leq_y q|y$, $p_i\leq_x q$ and $p_i|y\leq_y q|y$. We thus have that
 \begin{align*}
 \sigma_{q_0|y, y}(\sigma^x_{p_0, q_0}(a_0)) &= \sigma_{q|y, y}(\sigma^y_{q_0|y, q|y}(\sigma^x_{p_0, q_0}(a_0)))\\
 &=\sigma_{q, x}(\sigma^x_{q_0, q}(\sigma^x_{p_0, q_0}(a_0)))\\
 &=\sigma_{q, x}(\sigma^x_{p_0, q}(a_0))\\
 &=\sigma_{p_0, x}(a_0)\\
 & =\sigma_{p_1, x}(a_1)\\
 &=\sigma_{q, x}(\sigma^x_{q_1, q}(\sigma^x_{p_1, q_1}(a_1)))\\
 &=\sigma_{q|y, y}(\sigma^y_{q_1|y, q|y}(\sigma^x_{p_1, q_1}(a_1)))\\
 &=\sigma_{q_1|y, y}(\sigma^x_{p_1, q_1}(a_1))
 \end{align*}
 \end{proof}
We now show that $j$ is onto. Indeed, fix $b\in W_{y, \infty}$ and let $p_0\in D$ be such that for some $b_0\in W_{p_0|y, y}$, $\sigma^y_{p_0|y, \infty}(b_0)=b$. Let then $a=\sigma^x_{p_0, \infty}(b_0)$. We then have that $j(a)=b$. We thus have that 
\[ W_{x, \infty} =  W_{y, \infty}.\]
A similar argument shows that $\sigma_{x, \infty}=\sigma_{y, \infty}$.
\end{proof}

We finish this section by adopting \rcor{cor: defining smaller hulls} to our current context (see \rlem{lem: agreement of hulls}). 

\begin{definition}\label{def: projection}\normalfont Suppose $x$ is a ds-generator and $\xi\in (\l^x, \k^x)$ is a strong cardinal of $\mH|\varsigma$. We then let  $x\downarrow \xi$ be the unique ds-generator $y$ such that $\eta^y=\eta^x$, $\l^y=\l^x$, $\k^y=\xi$, and $x$ projects to $y$. 
\end{definition}

\begin{lemma}\label{lem: agreement of hulls} Suppose 
\begin{itemize}
\item $\nu^0<\gg<\nu^1$ are proper cutpoint Woodin cardinals of $\mH$ relative to $\varsigma$, 
\item $\eta<\l<\k<\k'<\nu^0$ are a $<\varsigma$-strong cardinal of $\mH$, 
\item $Z\in \its(\nu^1)$ is such that \[\{\eta, \l, \k, \k', \nu^0, \gg\}\subseteq Z,\] 
\item for $i\in 2$, $x_i=(\eta, \l, \k, \nu^i, Z|\k)$ is a ds-generator. 
\end{itemize}
Then 
\begin{enumerate}
\item $x_0\downarrow \k\in \rge(\sigma_{x_1})$,
\item $\mathcal{F}_{x_0\downarrow \k}\in \rge(\sigma_{x_1})$,
\item $W_{x_0}\in W_{x_1}$, and
\item $\sigma_{x_0, \infty}\in \rge(\sigma_{x_1, \infty})$.
\end{enumerate}
\end{lemma}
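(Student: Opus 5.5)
The plan is to reduce everything to Corollary~\ref{cor: defining smaller hulls}, using Lemma~\ref{lem: rest is dense} to replace $x_0$ by its projection $y_0 := x_0\downarrow\k$.

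\textbf{Clause (1).} By Definition~\ref{def: mesh}, $y_0$ is determined from $x_0$ by ordinal-definable operations. We collapse $\hull^{\mH|\nu^0}(\k\cup Z|\k)$, take the pullback $\hr$, iterate $\hr$ to $\hh|\nu^y$, and push $Z|\k$ forward. So it suffices to see that the parameters $Z|\k$, $\nu^0$ and $\k$ lie in $\rge(\sigma_{x_1})$.

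\begin{itemize}
\item The ordinals $\eta,\l,\k,\k',\nu^0,\gg$ lie in $Z$. Fix an $\its(\nu^1)$-certificate $\hq$ for $Z$. By Definition~\ref{def: reflection tuple}, $\sigma_{x_1}$ agrees with $\sigma_u$ for $u=(\hq,\l_\hq,\nu^1_\hq)$. Its range contains $\pi_{\hq,\infty}[\M^\hq|\nu^1_\hq]\cap(\text{height})$, so in particular it contains these ordinals.
\item For $Z|\k$ itself, we argue exactly as in Claim~\ref{clm: z is rge sigma1}. With $E$ least such that $\cp(E)>\k$ and $\lh(E)>(\gg^+)$, the set $Z\cap(\gg^+)$ is realised via $\hq_E$. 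Each element of $Z$ below $\l^u$ is in $\rge(\sigma_u)$ by clause (3) of Lemma~\ref{lem: pullback consistency for sigma}. Since $Z|\k$ is countable, Theorem~\ref{thm: basic reflection}(3) places it in the range.
\end{itemize}

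\textbf{Clause (2).} The system $\mathcal{F}_{y_0}$ is definable over $N|\l$ from $y_0$, $\its(y_0)$ and $\rel(Z^{y_0})$. Here we use Corollary~\ref{cor: def its and rel}, taking the Borel codes $(S_{\nu^{y_0}},T_{\nu^{y_0}})$ below the next Woodin, which is $\le\gg$. Corollary~\ref{cor: defining smaller hulls}(3)--(4) gives $\its(\xi)$ and $\its(Z)$ in $\rge(\sigma_1)$, and Lemma~\ref{lem: catching t and s} places the codes there. The needed input is that $\nu^{y_0}<\k'<\nu^0<\gg<\nu^1$: there is room for one Woodin ($\gg$) between the generator's Woodin and $\nu^1$, which is exactly the hypothesis of that corollary. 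Elementarity of $\sigma_{x_1}$ then pulls $\mathcal{F}_{y_0}$ back into $W_{x_1}$.

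\textbf{Clauses (3) and (4).} By Lemma~\ref{lem: rest is dense}(7), $W_{x_0,\infty}=W_{y_0,\infty}$ and $\sigma_{x_0,\infty}=\sigma_{y_0,\infty}$. Both are computed from $\mathcal{F}_{y_0}$ by the direct-limit construction of the previous section, which is absolute to $N|\l$. By (2), $\sigma_{x_1}^{-1}(\mathcal{F}_{y_0})$ computes, inside $W_{x_1}$, a direct limit $W'$ and a map $\sigma'$ with $\sigma_{x_1}(\sigma')=\sigma_{y_0,\infty}$.

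To get $W_{x_0}\in W_{x_1}$ (and not merely a preimage), we use the argument at the end of Lemma~\ref{lem: defining smaller hulls}. We have $\sup\sigma_{y_0,\infty}[\text{ht}]<\eta$, and $N|\eta$ lies below the critical region of $\sigma_{x_1}$ (via $\sigma_1^-$). Hence $\sigma_{x_1}$ fixes the relevant $\eta$-part, so $W'=W_{x_0}$. Finally, passing from $\sigma_{x_1}$ to $\sigma_{x_1,\infty}$ uses $\sigma_{x_1,\infty}\circ\sigma^{x_1}_{q,\infty}=\sigma_q$ for $q\in\mathcal{I}_{x_1}$: the preimage under $\sigma_q$ is pushed to $W_{x_1,\infty}$ by $\sigma^{x_1}_{q,\infty}$, which gives (4).

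\textbf{Main obstacle.} I expect the hardest step to be (2): verifying that $\rel(Z^{y_0})$, and not just the countable $Z^{y_0}$, is in the range. This needs the Borel-code machinery and the extra Woodin $\gg$.
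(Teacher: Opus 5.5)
Your architecture matches the route the paper indicates (it gives no separate proof, only the remark that the lemma adapts \rcor{cor: defining smaller hulls}): trade $x_0$ for $y_0=x_0\downarrow\k$ via \rlem{lem: rest is dense}(7), then imitate that corollary. However, clauses (1)--(2), which carry the content, rest on a false claim.

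Every embedding you use ($\sigma_{x_1,\infty}$, or $\sigma_u$ with $u=(\hq,\l_\hq,\nu^1_\hq)$) maps into $N|\l$. So its range contains no ordinal $\geq\l$ and no set containing one. Of $\eta,\l,\k,\k',\nu^0,\gg$, only $\eta$ lies below $\l$, and $Z|\k$ contains $\l$. So none of the parameters you need is in the range. \rthm{thm: basic reflection}(3) does not help either: it only concerns elements of $N|\l^u$ definable from countable subsets of $\pi_{\hq,\infty}[\l_\hq]$.

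For the same reason $y_0$ is not even an element of $N|\l$, since it involves $\k$ and $\nu^{y_0}>\k>\l$. So (1)--(2) only make sense for an embedding whose target reaches past $\nu^{y_0}$, such as the map defined as in \rdef{def: reflection tuple} from $(\hq,\k'_\hq,\nu^1_\hq)$. That map has target $N|\k'$, and its restriction to $N|\l_u$ is $\sigma_u$. (As written, $x_0\downarrow\k$ is undefined, since $x\downarrow\xi$ requires $\xi<\k^x$; the use in \rthm{thm: closed hulls above theta} shows $\k^{x_i}$ should be $\k'$.)

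Even at that level, your reduction (``$y_0$ is ordinal definable from $x_0$, so put $x_0$'s parameters in the range'') cannot work, because $\nu^0>\k'$. The point of projecting to $\k<\k'$ is that $y_0$ lives below the target while $x_0$ does not. So $y_0$'s membership in the range must be proved directly.

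The missing idea is the argument of \rcl{clm: z is rge sigma1}, run with $\k$ in place of $\eta$ and $\k'$ in place of $\l$. By \rlem{lem: certificates for generators}, $Z^{y_0}=\pi_{\hq_E,\infty}[\M^{\hq_E}|\nu^0_\hq]$. That is, $Z^{y_0}$ is the image of $\M^\hq|\nu^0_\hq$ under $\pi^{\k}_{\hq,\infty}$, followed by the iteration of an initial segment of $\hq^{\k}$ into $\hh|\nu^{y_0}$. \rlem{lem: pullback consistency for sigma}, applied with $\k$ as its $\eta$ and to the $\k'$-level embedding, shows these maps and the relevant strategy are images of their local counterparts. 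That puts the collapsed set in the range.

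This is where $\gg$ is used. You must collapse $\M^\hq|((\gg^+)^{\mH})_\hq$, not merely $\M^\hq|\nu^0_\hq$, so that the collapsed set contains the least Woodin above $\nu^{y_0}$. Then \rlem{lem: catching t and s}, applied to $\hq_E$, puts the codes $(S_{\nu^{y_0}},T_{\nu^{y_0}})$ in the collapsed set. Only then does \rcor{cor: def its and rel} place $\its(Z^{y_0})$ and $\rel(Z^{y_0})$, hence $\mathcal{F}_{y_0}$, in the range.

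Quoting \rcor{cor: defining smaller hulls}(3)--(4) verbatim, as you do, does not give this. Its $\xi$ and $Z$ are $\eta$-collapses built from $\hr^\eta$, not $\nu^{y_0}$ and $Z^{y_0}$, and its $\sigma_1$ has target $N|\l$.

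Finally, in (3) your claim $\sup\rge(\sigma_{y_0,\infty})<\eta$ is false. $\sigma_{y_0,\infty}$ is the identity on $\eta$ and maps into $N|\l$; you are conflating it with $\sigma_0^-$ of \rlem{lem: defining smaller hulls}, whose target is $N|\eta$. Two facts are needed instead:
\begin{itemize}
\item $W_{x_0}\in N|(\eta^+)^N$ is coded by a subset of $\eta^\omega$, which $\sigma_{x_1}$ fixes because $\sigma_{x_1}\rest N|\eta=\mathrm{id}$ (cf.\ the footnote to (8) in the proof of \rthm{thm: closed hulls above theta});
\item the analogue of the last step of \rlem{lem: defining smaller hulls}: a boundedness argument moving the preimage from the $\k'$-level embedding down to the $\l$-level one.
\end{itemize}
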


\section{Reflection VI: Closed hulls above $\Theta^N$}

The goal of this section is to merge the concepts defined in the previous sections to prove the following theorem. 

\begin{theorem}\label{thm: closed hulls above theta} Suppose $\zeta<\eta<\l$ are strong cardinals of $\mH|\varsigma$ and $X\in N|\l$. There is then $W\in N|(\eta^+)^N$ and $\sigma: W\rightarrow N|\l$ such that 
\begin{enumerate}
\item $\sigma\in N$, 
\item $X\in \rge(\sigma)$, and 
\item in $N$, $W^{\zeta^\omega}\subseteq W$.
\end{enumerate}
\end{theorem}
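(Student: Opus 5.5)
We will build $W$ as the limit of a continuous chain of directed-system hulls of length $\zeta^+$ (computed appropriately in $N$; in fact a chain of length with cofinality above $\zeta$ suffices, and we will use length $\zeta'$ where $\zeta'$ is the next strong cardinal of $\mH|\varsigma$ above $\zeta$, which is below $\eta$). The basic building block is the pair $(W_{x,\infty},\sigma_{x,\infty})$ attached to a ds-generator $x=(\eta,\l,\k,\nu,Z)$. Here $\k$ is a strong cardinal above $\l$, $\nu$ is a proper cutpoint Woodin cardinal with a Woodin cardinal in $(\k,\nu)$, and $Z\in\its(\nu)$. By \rthm{thm: basic reflection} each $\sigma_p$ is in $N$ and catches anything definable from countable subsets of $\pi_{\hq,\infty}[\l]$. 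Hence $X\in\rge(\sigma_{x,\infty})$ as soon as $Z$ is chosen so that some $Y\in\its(x)$ puts $X$ into $\rge(\sigma_{[Y]})$. Also, $\sigma_{x,\infty}\in N$, since $\mathcal{F}_x$ is definable from $\its(Z)$ and $\rel(Z)$ (\rcor{cor: def its and rel}, \rcor{cor: uniqueness of realizability witnesses}). Moreover $W_{x,\infty}$ has size at most $\eta^\omega$ in $N$, because $\mathcal{I}_x$ is a quotient of $\its(Z)\cap\powerset(\eta)\subseteq(\mH|\eta)^\omega$, so $W_{x,\infty}\in N|(\eta^+)^N$ by clause (6) of \rthm{thm: summary}.

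The first key step is countable closure: $W_{x,\infty}^\omega\subseteq W_{x,\infty}$. Given $(a_n:n<\omega)\subseteq W_{x,\infty}$, each $a_n=\sigma^x_{p_n,\infty}(b_n)$. By \rlem{lem:Z_directed} there is a single $p$ above all $p_n$, so the sequence sits in the countably closed model $W_p=N|\pi(\l)$ (which is closed under $\omega$-sequences, since $N$ is). Therefore $(\sigma^x_{p_n,p}(b_n))_n\in W_p$, and applying $\sigma^x_{p,\infty}$ gives the sequence in $W_{x,\infty}$. Choosing the sequence of $p_n$'s inside $N$ uses that $\mathcal{I}_x\in N$ and that $(\mH|\eta)^\omega$ carries $N$-definable selections; this is where \rthm{thm: closure under theta sequences} and the $\infty$-Borel codes $S_\nu,T_\nu$ are used.

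The second step is to iterate. We fix Woodin cardinals $\nu^0<\gg<\nu^1<\cdots$ and take a single $Z\in\its(\nu^\ast)$ at a large Woodin $\nu^\ast$ containing all relevant parameters. We then set $x_\a=(\eta,\l,\k,\nu^\a,Z|\k)$ for $\a<\zeta'$, using an increasing sequence of proper cutpoint Woodins below $\varsigma$, each followed by a gap Woodin $\gg_\a$ as in \rlem{lem: agreement of hulls}. By \rlem{lem: agreement of hulls}, for $\a<\b$ we have $\sigma_{x_\a,\infty}\in\rge(\sigma_{x_\b,\infty})$ and $W_{x_\a}\in W_{x_\b}$, with coherence via \rlem{lem: rest is dense} (projection $x\downarrow\k$). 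The chain of ranges is then increasing. We let $\sigma$ be the union, namely the inverse collapse of $\bigcup_\a\rge(\sigma_{x_\a,\infty})$, and $W$ the collapse. Elementarity follows since this is a directed union of elementary substructures. $\sigma\in N$ because the whole sequence $(x_\a)$ is ordinal definable from $Z$ and a sequence of Woodins, and so lies in $N$ via the definability results of \rsec{sec: it sets}. Finally, $W\in N|(\eta^+)^N$ since it is a union of $\zeta'<\eta$ sets each of size at most $\eta^\omega$.

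Closure under $\zeta^\omega$-sequences is the main obstacle. A function $f:\zeta^\omega\to W$ in $N$ should be bounded into a single stage $W_{x_\a}$; this needs $\cf^N(\zeta')>|\zeta^\omega|$, i.e. that $\zeta'$ is not the surjective image of $\zeta^\omega$, which is clause (8) of \rthm{thm: summary}. Once $\rge(f)\subseteq\rge(\sigma_{x_\a,\infty})$, the preimage sequence is a $\zeta^\omega$-sequence in $W_{x_\a,\infty}$. Rather than only countable closure, we then need $W_{x_\a,\infty}$ to be $\zeta^\omega$-closed. I would try to get this by choosing $\eta$ large relative to $\zeta$: elements of $\zeta^\omega$ are countable sets absorbed into a single $\eta$-bounded iterate, and $\mathcal{I}_x$ should be directed under $\zeta^\omega$-indexed families, not only countable ones. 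This is argued as in \rlem{lem:Z_directed}, but with a $\zeta^\omega$-length simultaneous iteration inside $M[G]$ for $G\subseteq\pmax$, using \rcor{cor: omega1 closure} and the definability in $N$ of the resulting sequence of $\sigma_p$'s. Failing that, the fallback is to take $W$ to be the image of $W_{x_{\a+1},\infty}$ under $\sigma^{-1}$ and to use $\sigma_{x_\a,\infty}\in\rge(\sigma_{x_{\a+1},\infty})$ to pull $f$ back into $W$.
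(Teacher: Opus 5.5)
Your reduction of $f:\zeta^\omega\to W$ to a single stage via clause (8) of \rthm{thm: summary} is fine. A chain of length $\zeta'$ works for that step as well as the paper's chain of length $\eta$. The genuine gap is in the step that is supposed to deliver $\zeta^\omega$-closure. Your main plan, making each stage $W_{x_\a,\infty}$ itself $\zeta^\omega$-closed by showing that $\mathcal{I}_x$ is directed under $\zeta^\omega$-indexed families, cannot work. The relation $p\leq_x q$ forces $Z_p\subseteq Z_q$, because a certificate for $Z_q$ is an iterate of one for $Z_p$, and $Z_q$ is countable. So a family of conditions whose $Z_p$'s jointly contain every ordinal below $\zeta$ has no upper bound, and such a family is indexed by a surjective image of $\zeta^\omega$. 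A single directed-system hull is only known to be countably closed. That is exactly why the paper builds a chain, and passing to $M[G]$ via \rcor{cor: omega1 closure} does not change this. Your fallback points at the right transfer step, using $\sigma_{x_\a,\infty}\in\rge(\sigma_{x_{\a+1},\infty})$. But it gives no reason why the preimage sequence $s=(\sigma_{\a,\infty}^{-1}(f(Y)):Y\in\zeta^\omega)$ should lie in the next stage, and $W_{x_\a}\in W_{x_{\a+1}}$ alone does not give it.

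The missing ingredient is that all stages agree with $N$ below $\eta$: $W_\a|\eta=N|\eta$ and $\sigma_\a\rest\eta=\mathrm{id}$, in addition to $W_\a\in W_{\a+1}$ and $\sigma_{\a,\a+1}\in W_{\a+1}$. With this, the paper argues as follows.

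Fix a surjection $F:\eta^\omega\to W_\a$ in $W_{\a+1}$. Strong regularity of $\eta$ in $N$ gives $\b<\eta$ such that each $s(Y)$ has an $F$-preimage in $\b^\omega$. The set $A\subseteq\b^\omega\times\zeta^\omega$ of pairs $(Y',Y)$ with $F(Y')=s(Y)$ then lies in $N|\eta=W_{\a+1}|\eta$, so $s\in W_{\a+1}$. Since $\sigma_{\a+1,\infty}\rest W_\a=\mathrm{id}$, one gets $\sigma_{\a+1,\infty}(\sigma_{\a,\a+1})=\sigma_{\a,\infty}$. Applying $\sigma_{\a+1,\infty}$ to $\sigma_{\a,\a+1}\circ s\in W_{\a+1}$ then gives $f\in W$.

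Your setup has a second problem. A single countable $Z$ does not give ds-generators $x_\a$ for uncountably many $\a$. Certifying the fifth coordinate at $\nu^\a$ through the certificate of $Z$ needs $\nu^\a\in Z$, and $Z$ can contain the sequence $(\nu^\a)_\a$ as an element but not all of its terms. The paper handles this in three steps:
\begin{enumerate}
\item for each $\a$ it chooses an $(\a,Z)$-good set $Y\ni\a$ with $Z[Y]\in\its(\nu_\a)$;
\item it checks that the resulting hull does not depend on this choice;
\item it applies \rlem{lem: agreement of hulls} to a $Y$ that is good for both $\a$ and $\b$.
\end{enumerate}
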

\begin{proof} Let $\k'<\k$ be two strong cardinals of $\mH|\varsigma$ such that $\l<\k'$. Let $\nu\in (\k, \varsigma)$ be a proper cutpoint of $\mH|\varsigma$ such that $\nu$ is a limit of Woodin cardinals and \[\mH\models \cf(\nu)=\eta.\] Let $(\nu_\a: \a<\eta)$ be a sequence of Woodin cardinals of $\mH$ belonging to the interval $(\k, \nu)$ such that 
\begin{itemize}
\item the interval $(\k, \nu_0)$ contains a Woodin cardinal of $\mH$, 
\item for all $\a<\eta$, the interval $(\nu_\a, \nu_{\a+1})$ contains a Woodin cardinal of $\mH$,
\item and $\nu=\sup_{\a<\eta}\nu_\a$. 
\end{itemize}
Let $\nu'=(\nu^+)^\mH$, and let $(\hq, Z)$ be such that
\begin{itemize}
\item $\{\zeta,\eta, \l,\k',\k\}\subseteq \rge(\pi_{\hq, \infty})$,
\item $(\nu_\a: \a<\eta)\in \rge(\pi_{\hq, \infty})$, 
\item $Z=\pi_{\hq, \infty}[\M^\hq|\nu'_\hq]$, and
\item $X$ is definable in $N|\l$ from $Z\cap \l$ and a real.
\end{itemize}
We will now define a sequence $(W_\a, \sigma_\a, \sigma_{\a, \b}: \a<\b<\eta)\in N$ such that

\vspace{0.3cm}
\begin{enumerate}[label=(1.\arabic*), itemsep=0.3cm]
    \item[(1.1)] for every $\a<\eta$, $W_\a\in N|(\eta^+)^N$,
    \item[(1.2)] for every $\a<\eta$, $\sigma_\a: W_\a\rightarrow N|\l$ is an elementary embedding,
    \item[(1.3)] for every $\a<\b<\eta$, \[\sigma_{\a, \b}: W_\a\rightarrow W_\b\] is an elementary embedding and $(W_\a, \sigma_{\a, \b})\in W_\b$, 
    \item[(1.4)] for $\a<\b<\gamma<\eta$, \[\sigma_{\a, \gamma}=\sigma_{\b, \gamma}\circ \sigma_{\a, \b},\]
    \item[(1.5)] for every $\a<\eta$, $N|\eta=W_\a|\eta$ and $\sigma_\a\rest \eta=id$, and
    \item[(1.6)] $Z\cap \l\in \rge(\sigma_0)$.
\end{enumerate}
\vspace{0.3cm}

Assuming we have such a sequence, the following claim finishes the proof of \rthm{thm: closed hulls above theta}. Let $W$ be the direct limit of $(W_\a, \sigma_{\a, \b}: \a<\b<\eta)$ and let $\sigma: W\rightarrow N|\l$ be given by $\sigma(x)=\sigma_\a(x')$ where $(\a, x')$ is such that letting \[\sigma_{\a, \infty} : W_\a\rightarrow W\] be the direct limit embedding, $\sigma_{\a, \infty}(x')=x$. 

\begin{claim}\label{cl: finishing proof} Suppose \[(W_\a, \sigma_\a, \sigma_{\a, \b}: \a<\b<\eta)\in N\]
satisfies (1.1)-(1.6). Let $W$ be the direct limit of $(W_\a, \sigma_{\a, \b}: \a<\b<\eta)$. Then the following holds:
\begin{enumerate}
\item $W\in N|(\eta^+)^N$. 
\item In $N$, $W^{\zeta^\omega}\subseteq W$. 
\item $\sigma$ is elementary and $X\in \rge(\sigma)$. 
\end{enumerate}
\end{claim}
\begin{proof}
(1.6) implies that $X\in \rge(\sigma)$. Clauses (1) and (3) are standard; we verify clause (2). Let $f: \zeta^\omega\rightarrow W$ with $f\in N$. Then since $\eta$ is a strongly regular cardinal in $N$ (see \cite[Theorem 10.20]{blue2025nairian}), we get that

\vspace{0.3cm}
\begin{enumerate}[label=(2), itemsep=0.3cm]
    \item[(2)] for some $\a<\eta$, \[\rge(f)\subseteq \rge(\sigma_{\a, \infty}).\]
\end{enumerate}
\vspace{0.3cm}

Because \[W_\a\in W_{\a+1}|(\eta^+)^{W_{\a+1}},\] we can fix a surjection \[F:\eta^\omega\rightarrow W_\a\] with $F\in W_{\a+1}$. Again because $\eta$ is a strongly regular cardinal in $N$, we have $\b<\eta$ such that 

\vspace{0.3cm}
\begin{enumerate}[label=(3), itemsep=0.3cm]
    \item[(3)] for every $Y\in \zeta^\omega$ there is $Y'\in \b^\omega$ such that \[F(Y')=\sigma_{\a, \infty}^{-1}(f(Y)).\]
\end{enumerate}
\vspace{0.3cm}

Let then $A$ consist of pairs $(Y', Y)$ such that $Y'\in \b^\omega$, $Y\in \zeta^\omega$, and\[F(Y')=\sigma_{\a, \infty}^{-1}(f(Y)).\]
Because $W_{\a+1}|\eta=N|\eta$ and $A\subseteq \b^\omega\times \zeta^\omega$, we have 

\vspace{0.3cm}
\begin{enumerate}[label=(4.\arabic*), itemsep=0.3cm]
    \item[(4.1)] $A\in N|\eta$,
    \item[(4.2)] and hence, $A\in W_{\a+1}$.
\end{enumerate}
\vspace{0.3cm} 

Therefore, since for $Y\in \zeta^\omega$, $\sigma_{\a, \infty}^{-1}(f(Y))=F(Y')$ where $Y'\in \b^\omega$ is any set such that $(Y', Y)\in A$,

\vspace{0.3cm}
\begin{enumerate}[label=(5), itemsep=0.3cm]
    \item[(5)] $(\sigma_{\a, \infty}^{-1}(f(Y)): Y\in \zeta^\omega)\in W_{\a+1}$.
\end{enumerate}
\vspace{0.3cm} 

Notice that (1.1) and (1.5) imply that \[\sigma_{\a+1, \infty}\rest W_\a=id,\] and since $\sigma_{\a, \a+1}\in W_{\a+1}$ and $\sigma_{\a, \infty}=\sigma_{\a+1, \infty}\circ \sigma_{\a, \a+1}$, we get that

\vspace{0.3cm}
\begin{enumerate}[label=(6), itemsep=0.3cm]
    \item[(6)] $\sigma_{\a+1, \infty}(\sigma_{\a, \a+1})=\sigma_{\a, \infty}.$
\end{enumerate}
\vspace{0.3cm} 

Applying (5) and (6), we get that $f\in W$. 
\end{proof}

\begin{figure}[htbp]
    \centering
    \begin{tikzpicture}[>=stealth, thick]
        \node (W0) at (0, 0) {$W_0$};
        \node (Wa) at (3, 2) {$W_\a$};
        \node (Wb) at (6, 0) {$W_\b$};
        \node (Winf) at (9, 0) {$W$};
        \node (Nl) at (12, 0) {$N|\l$};

        \draw[->] (W0) -- node[above left] {$\sigma_{0, \a}$} (Wa);
        \draw[->] (W0) -- node[below] {$\sigma_{0, \b}$} (Wb);
        \draw[->] (Wa) -- node[above right] {$\sigma_{\a, \b}$} (Wb);
        
        \draw[->, dashed] (Wb) -- node[above] {$\sigma_{\b, \infty}$} (Winf);
        \draw[->, dashed] (Wa) to[bend left=15] node[above] {$\sigma_{\a, \infty}$} (Winf);
        \draw[->, dashed] (W0) to[bend right=25] node[below] {$\sigma_{0, \infty}$} (Winf);
        
        \draw[->, thick, blue] (Winf) -- node[above] {$\sigma$} (Nl);
        \draw[->, thick, blue, bend left=35] (Wa) to node[above] {$\sigma_\a$} (Nl);

        \node[draw=black, thick, fill=gray!10, rounded corners, inner sep=6pt, anchor=north] 
        at (6, -2.5) {
        \begin{minipage}{9cm}
            \centering
            \textbf{\underline{Key Properties of the Hulls}}
            \begin{itemize}[label={\tiny$\bullet$}, itemsep=1.2pt, topsep=3pt, leftmargin=1em, font=\scriptsize]
                \item $W = \varinjlim (W_\a, \sigma_{\a, \b})$ with $W \in N|(\eta^+)^N$.
                \item $\sigma_\a = \sigma \circ \sigma_{\a, \infty}$ for all $\a < \eta$.
                \item $\sigma_{\a, \gamma} = \sigma_{\b, \gamma} \circ \sigma_{\a, \b}$ for $\a < \b < \gamma < \eta$.
                \item $Z \cap \l \in \rge(\sigma_0) \implies X \in \rge(\sigma)$.
            \end{itemize}
        \end{minipage}
        };
    \end{tikzpicture}
    \caption{Direct limit of the closed hulls $(W_\a : \a < \eta)$ and the final embedding $\sigma$.}
    \label{fig:closed_hulls_limit}
\end{figure}

We now work towards constructing the sequence \[(W_\a, \sigma_\a, \sigma_{\a, \b}: \a<\b<\eta).\]
Fix now $\a<\eta$. We say $Y\in \powerset_{\omega_1}(\mH|\eta)$ is an $(\a, Z)$-\textit{good} set if
\begin{itemize}
\item $\a\in Y$, and
\item $Z[Y]\in \its(\nu_\a)$.
\end{itemize}
Given a ds-generator $x$, we say $x$ is an $(\a, Z)$-\textit{ds-generator} if 
\begin{itemize}
\item $\eta^x=\eta$, $\l^x=\l$, $\k^x=\k$, $\nu^x=\nu_\a$, and
\item for some $(\a, Z)$-good $Y$, $Z[Y]=Z^x$.
\end{itemize}
Notice that for every $\a<\eta$, there is an $(\a, Z)$-ds-generator. Notice that we have the following.

\vspace{0.3cm}
\begin{enumerate}[label=(7), itemsep=0.3cm]
    \item[(7)] Suppose $x$ and $y$ are two $(\a, Z)$-ds-generators. Then \[W_x=W_y\ \text{and}\ \sigma_{x, \infty}=\sigma_{y, \infty}.\]
\end{enumerate}
\vspace{0.3cm}  

(7) holds because $\mathcal{I}_x$ has a dense subset contained in $\mathcal{I}_y$ and vice versa. Indeed, the set of $Y\in \its(x)\cap \its(y)$ is dense in both $\mathcal{I}_x$ and $\mathcal{I}_y$.
We then set \[W_\a=W_x\ \text{and}\ \sigma_\a= \sigma_{x, \infty}\] where $x$ is any $(\a, Z)$-ds-generator. 

Suppose now that $\a<\b<\eta$. It follows from \rlem{lem: agreement of hulls} that 

\vspace{0.3cm}
\begin{enumerate}[label=(8), itemsep=0.3cm]
    \item[(8)] $W_\a\in W_\b$, $\sigma_\b\rest W_\a=id$ and $\sigma_{\a}\in \rge(\sigma_{\b})$.
\end{enumerate}
\vspace{0.3cm} 

Indeed, we can fix $Y\in \powerset_{\omega_1}(\mH|\eta)$ such that $Y$ is both an $(\a, Z)$-good and a $(\b, Z)$-good set. We can then apply \rlem{lem: agreement of hulls} to \[x_0=(\eta, \l, \k, \nu_\a, Z[Y]\cap \nu_\a)\] and \[x_1=(\eta, \l, \k, \nu_\b, Z[Y]\cap \nu_\b).\] $\k$ of our current theorem plays the role of $\k'$ in \rlem{lem: agreement of hulls}, and $\k'$ of our current theorem plays the role of $\k$ in \rlem{lem: agreement of hulls}. Because \[W_\a=W_{x_0}\ \text{and}\ \sigma_\a=\sigma_{x_0, \infty}\] and \[W_\b=W_{x_1}\ \text{and}\ \sigma_\b=\sigma_{x_1, \infty},\] we get (8).\footnote{$\sigma_\b\rest W_\a=id$ follows from the fact that $\sigma_\b\rest \eta^\omega=id$ and $W_\a\in N|(\eta^+)^N$, and so $\card{W_\a}^{W_{\b}}=\eta^\omega$.}

For $\a<\b<\eta$, set \[\sigma_{\a, \b}=\sigma_\b^{-1}\circ \sigma_\a.\] Because $\sigma_\b\rest W_\a=id$ and $\sigma_\a\in \rge(\sigma_\b)$, we indeed have that \[\sigma_\a=\sigma_\b\circ \sigma_{\a, \b}.\] It now follows that the sequence \[(W_\a, \sigma_\a, \sigma_{\a, \b}: \a<\b<\eta)\in N\] and satisfies (1.1)-(1.6). Let then $W$ be the direct limit of $(W_\a, \sigma_{\a, \b}: \a<\b<\eta)$ and $\sigma:W\rightarrow N|\l$ be defined as above. Because $Z\cap \l\in \rge(\sigma_0)$, it follows that $X\in \rge(\sigma)$. The rest of the clauses of \rthm{thm: closed hulls above theta} follow from \rcl{cl: finishing proof}.
\end{proof}
\section{The $\Theta$-sequence of $N$}

Let $(\k_\a: \a<\varsigma)$ be the sequence defined as follows:
\begin{enumerate}
\item $\k_0=\Theta_{\omega^\omega}$,
\item for $\a<\varsigma$, $\k_{\a+1}=\Theta_{\k_\a^\omega}$,
\item for a limit ordinal $\a<\varsigma$, $\k_\a=\sup_{\b<\a}\k_\b$.
\end{enumerate}
\cite[Theorem 10.23]{blue2025nairian} establishes something very close to the next theorem.

\begin{theorem}\label{thm: the theta seq} The sequence $(\k_\a: \a<\varsigma)$ is the increasing enumeration of the strong cardinals of $\mH|\varsigma$ and their limits.
\end{theorem}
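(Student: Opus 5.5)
Let $(\varepsilon_\a:\a<\varsigma_\infty)$ denote the increasing enumeration of the ${<}\varsigma_\infty$-strong cardinals of $\mH|\varsigma_\infty$ together with their limits. The plan is to prove $\k_\a=\varepsilon_\a$ by induction on $\a$. The limit case is immediate, since both sequences are continuous. The base case $\k_0=\Theta_{\omega^\omega}=\Theta^N$ is the statement that $\Theta^N$ is the least ${<}\varsigma_\infty$-strong cardinal of $\mH$. This is already established in \cite[Theorem 10.23]{blue2025nairian}, where the Wadge hierarchy of $N$ is computed via the derived model representation. The work is therefore in the successor step: assuming $\k_\a=\varepsilon_\a$, show that $\Theta_{\varepsilon_\a^\omega}^N=\varepsilon_{\a+1}$.

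\textbf{Lower bound, $\Theta_{\varepsilon_\a^\omega}\geq\varepsilon_{\a+1}$.} Fix $\b<\varepsilon_{\a+1}$. I will produce a surjection $\varepsilon_\a^\omega\to\b$ in $N$. Choose a complete iterate $\hq$ of $\hp$ with $\b$, $\varepsilon_\a$, $\varepsilon_{\a+1}\in\rge(\pi_{\hq,\infty})$, and let $\nu$ be a Woodin cardinal of $\M^\hq$ above $(\varepsilon_{\a+1})_\hq$ with $\rg_3$ holding. An ordinal $\g<\b$ is coded by a pair $(Y,r)$, where $Y\in\its(Z)\cap\powerset_{\omega_1}(\mH|\varepsilon_\a)$ (with $Z=\pi_{\hq,\infty}[\M^\hq|\nu]$) and $r$ is a real coding a countable $\varepsilon_\a$-bounded iterate $\hr$ together with a preimage of $\g$. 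The decoding map sends $(Y,r)$ to $\tau^{Z[Y]}_\hr$ applied to that preimage. Every $\g<\b$ is reached because $\varepsilon_\a$ is ${<}\varepsilon_{\a+1}$-strong, so $\mH|\b$ is contained in the hull of $\varepsilon_\a$ and $\rge(\pi_{\hq,\infty})$ (see \cite[Section 9.1]{blue2025nairian} and \cite[Corollary 9.10]{blue2025nairian}). The decoding map is in $N$ by \rcor{cor: uniqueness of realizability witnesses} and \rcor{cor: def its and rel}. Since countable subsets of $\mH|\varepsilon_\a$ are coded by elements of $\varepsilon_\a^\omega$ and reals, this gives $\card{\b}\lesssim\varepsilon_\a^\omega$ in $N$.

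\textbf{Upper bound, $\Theta_{\varepsilon_\a^\omega}\leq\varepsilon_{\a+1}$.} Suppose $f:\varepsilon_\a^\omega\to\ord$ is a surjection in $N$. By \rthm{thm: every set is infty borel}, $f$ has an $\infty$-Borel-type definition from a parameter in $\mH$ below some Woodin cardinal. Take $\nu$ and $\hq$ with $\rg_3(\hq,(\varepsilon_\a)_\hq,(\varepsilon_{\a+1})_\hq,\nu)$ and $f$ definable over $N|\l^u$ from parameters in $\rge(\sigma_u)$. \rthm{thm: basic reflection} gives $\sigma_u:N|\l_u\to N|\l^u$ with $\sigma_u\rest\varepsilon_\a^\omega=\mathrm{id}$ (as $\varepsilon_\a$ is small) and $f=\sigma_u(\bar f)$. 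Then $\rge(f)=\rge(\bar f)\subseteq\l_u=\pi_{\hq|\nu,\infty}((\varepsilon_{\a})_\hq)$ restricted appropriately, so $\rge(f)$ is bounded by $\pi_{\hq|\nu,\infty}((\varepsilon_{\a+1})_\hq)$. That ordinal is below $\varepsilon_{\a+1}$, because $(\varepsilon_{\a+1})_\hq$ is not strong in $\M^\hq|\nu$ in the relevant sense and the local direct limit computes it below the global image.

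The main obstacle is this last inequality. It requires choosing $\nu$ below the strength of the next strong cardinal, so that $\pi_{\hq|\nu,\infty}((\varepsilon_{\a+1})_\hq)<\pi_{\hq,\infty}((\varepsilon_{\a+1})_\hq)$, while still keeping $\sigma_u$ the identity on $\varepsilon_\a^\omega$. I would handle this by taking $\l=(\varepsilon_\a)_\hq$ in $\rg_3$ and using \rcor{cor: tech cor ii} to control the comparison, as is done in the proof of \rthm{thm: closure under theta sequences}.
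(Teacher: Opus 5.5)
\textbf{Upper bound.} This half does not go through, and it is the half the paper does not reprove: the paper takes $\Theta_{\varepsilon_\a^\omega}\le\varepsilon_{\a+1}$ directly from \cite[Theorem 10.13]{blue2025nairian} and proves only the reverse inequality.

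With $\l=(\varepsilon_\a)_\hq$ in $\rg_3$, the embedding from \rthm{thm: basic reflection} is $\sigma_u\colon N|\l_u\to N|\l^u$ with $\l^u=\pi_{\hq,\infty}(\l)=\varepsilon_\a$. So $f$, whose domain is $\varepsilon_\a^\omega$ and whose range goes past $\varepsilon_\a$, is not in the target at all.

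More fundamentally, these embeddings always have $\l_u<\Theta^N$; this is how they are used in \rthm{thm: embedding is in n} and \rthm{thm: closure under theta sequences}. Hence for $\a\geq 0$ no choice of parameters makes $\sigma_u$ the identity on $\varepsilon_\a^\omega$, since $\varepsilon_\a$ is not even in its domain. Smallness of $\l$ gives nothing of this sort.

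Even if you had $f=\sigma_u(\bar f)$, you would only get $\rge(f)=\sigma_u(\rge(\bar f))$. Knowing $\rge(\bar f)<\l_u$ says nothing about how this compares to $\varepsilon_{\a+1}$, and the ordinal $\pi_{\hq|\nu,\infty}((\varepsilon_{\a+1})_\hq)$ is never tied to $f$.

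Your proposed repair, taking $\nu$ below the strength of the next strong cardinal, contradicts $\rg_3(\hq,\l,\k,\nu)$. That hypothesis requires $\k<\nu$ with $\k=(\varepsilon_{\a+1})_\hq$. The embeddings of \rthm{thm: basic reflection} are the wrong tool here; the bound needs the analysis behind \cite[Theorem 10.13]{blue2025nairian}.

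\textbf{Lower bound.} Here you follow the paper's construction: code an ordinal by an iteration set $Y\subseteq\mH|\varepsilon_\a$ together with reals, and decode through the realizations in $\rel(Z[Y])$. However, the surjectivity claim is wrong as stated.

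If $r$ codes only $\varepsilon_\a$-bounded iterates, every decoded value lies in $\hull^{\mH}(\varepsilon_\a\cup\rge(\pi_{\hq,\infty}))$, the hull your justification appeals to. The ${<}\varepsilon_{\a+1}$-strongness of $\varepsilon_\a$ does not make this hull contain $\mH|\b$. The hull is the range of $\pi_{\hq^{\varepsilon_\a},\infty}$, and the tree from $\hq^{\varepsilon_\a}$ to $\mH$ still moves things above $\varepsilon_\a$. In the paper's proof, where $\k=\varepsilon_\a$ and $\nu$ is the next strong cardinal, $\mH|\eta$ is obtained from $\hr|\l$, with $\hr=\hq^{\k}$, by a further iteration.

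This is why the paper lets the real $a\in A_Y$ code an arbitrary complete iterate $\hs_{Y,a}$ of $\hs_Y=\hq_{Z[Y]}$. It then obtains surjectivity onto $\eta=\sup\pi_{\hq,\infty}[\nu_\hq]$ from a fact in \cite[Section 9.2]{blue2025nairian}: each $\zeta<\eta$ has a preimage in some $\hw'$, where $\hw$ is a $\k$-bounded iterate of $\hq$ and $\T_{\hw,\hw'}$ is based on $\hw|\nu_\hw$. In that scheme $Y$ encodes the $\k$-bounded part and the real encodes the iteration above $\k$.
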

\begin{proof} Let $(\nu_\a: \a<\varsigma)$ be the increasing enumeration of the strong cardinals of $\mH|\varsigma$ and their limits. We prove by induction that for all $\a<\varsigma$, $\nu_\a=\k_\a$. We have that $\k_0=\Theta^N$, and it is the least strong cardinal of $\mH|\varsigma$. So $\k_0=\nu_0$. Assume now that $\a>0$ and for all $\b<\a$, $\k_\b=\nu_\b$. If $\a$ is a limit ordinal, then clearly $\k_\a=\nu_\a$. 

Assume now that the equality holds for $\a$, and we prove it for $\a+1$. Set $\k=\k_\a=\nu_\a$ and $\nu=\nu_{\a+1}$. First notice that \cite[Theorem 10.13]{blue2025nairian} implies that $\k_{\a+1}\leq \nu$. Thus, it is enough to prove that $\nu\leq \k_{\a+1}$, and to show this, it is enough to show that if $\eta'<\nu$, then there is a surjection $f:\k^\omega\rightarrow \eta'$ such that $f\in N$. Fix then $\eta'<\nu$. 

Let $\hq$ be a complete iterate of $\hp$ such that $\k_\hq$ is defined and $\sup(\pi_{\hq, \infty}[\nu_\hq])>\eta'$. Let 
\begin{itemize}
\item $\eta=\sup(\pi_{\hq, \infty}[\nu_\hq])$, 
\item $\hr=\hq^\k=(\R, \Psi)$, 
\item $\l=\pi_{\hq, \hr}(\nu_\hq)$ and 
\item $Z=\pi_{\hq, \hr}[\M^\hq|\nu_\hq]$. 
\end{itemize}
We now have that $\hh|\eta$ is a complete iterate of $\hr|\l$ such that $\T_{\hr|\l, \hh|\eta}$ is above $\l$.

For each $Y\in \its(Z)$ let $\hs_Y=\hq_{Z[Y]}$. Set $\hs_Y=(\S_Y, \Sigma_Y)$. For each $Y$, let $A_Y$ be the set of reals that code a complete $\Sigma_Y$-iterate of $\S_Y$. For $a\in A_Y$, let $\hs_{Y, a}$ be the last model of the iteration tree coded by $a$, and set \[\hs_{Y, a}=(\S_{Y, a}, \Sigma_{Y, a}).\] 
For $a\in A_Y$, we let $\sigma_{Y, a}$ be the unique embedding such that \[(\hs_{Y, a}, \sigma_{Y, a})\in \rel(Z[Y]).\] Notice that the function $(Y, a)\mapsto (\hs_{Y, a}, \sigma_{Y, a})$ is in $N$ (see \rcor{cor: uniqueness of realizability witnesses}).

\begin{figure}[htbp]
    \centering
    \begin{tikzpicture}[>=stealth, thick]
        \node (hq) at (0, 0) {$\hq$};
        \node (hr) at (3, 2) {$\hr$};
        \node (sY) at (3, -2) {$\hs_Y$};
        \node (sYa) at (7, -2) {$\hs_{Y, a}$};
        \node (heta) at (9, 0) {$\hh|\eta$};

        \draw[->] (hq) -- node[above left] {$\k$-bounded} (hr);
        \draw[->] (hq) -- node[below left] {$Z[Y]$-certificate} (sY);
        \draw[->] (sY) -- node[below] {$a \in A_Y$} (sYa);
        \draw[->, dashed] (sYa) -- node[below right] {$\sigma_{Y, a}$} (heta);
        \draw[->, dashed] (hr) -- node[above right] {$\T_{\hr|\l, \hh|\eta}$} (heta);

        \node[draw=black, thick, fill=gray!10, rounded corners, inner sep=6pt, anchor=north] 
        at (4.5, -3.5) {
        \begin{minipage}{9cm}
            \centering
            \textbf{\underline{Key Facts for the Surjection $f$}}
            \begin{itemize}[label={\tiny$\bullet$}, itemsep=1.2pt, topsep=3pt, leftmargin=1em, font=\scriptsize]
                \item $f(Y, a, b) = \sigma_{Y, a}(\a_b) \in \eta$,
                \item Domain of $f$ bounded by $\card{\its(Z) \times A \times B} \leq \k^\omega$,
                \item Uniqueness of $\sigma_{Y, a}$ ensures $f \in N$.
            \end{itemize}
        \end{minipage}
        };
    \end{tikzpicture}
    \caption{Iteration trees and the embedding $\sigma_{Y, a}$ defining the surjection $f$.}
    \label{fig:theta_sequence_surjection}
\end{figure}

Finally, let $A=\bigcup_{Y\in \its(Z)}A_Y$ and $B\in N$ be a set of reals coding countable ordinals. For $b\in B$, let $\a_b$ be the ordinal coded by $b$. We now define a surjection \[f: \its(Z)\times A\times B\rightarrow \eta\] and show that $f\in N$. Suppose \[(Y, a, b)\in \its(Z)\times A\times B.\] If $a\not \in A_Y$ or $\a_b\not \in \S_{Y, a}$, then set $f(Y, a, b)=0$. Suppose now that $a\in A_Y$ and $\a_b\in \S_{Y, a}$. Then set \[f(Y, a, b)=\sigma_{Y, a}(\a_b).\] It now follows from the results of \cite[Section 9.2]{blue2025nairian} that $f$ is a surjection onto $\eta$,\footnote{This is because if $\zeta<\eta$ then there is a complete $\k$-bounded iterate $\hw$ of $\hq$ and a complete iterate $\hw'$ of $\hw$ such that $\T_{\hw, \hw'}$ is based on $\hw|\nu_\hw$ and $\zeta_{\hw'}$ is defined.} and since the function $(Y, a)\mapsto (\hs_{Y, a}, \sigma_{Y, a})$ is in $N$, we have that $f\in N$. Because $\card{\its(Z) \times A \times B} \leq \k^\omega$ in $N$, this induces the required surjection from $\k^\omega$ onto $\eta'$, which completes the proof.
\end{proof}

\section{Minimal Nairian models satisfy \(\NTbase\)}\label{sec: the proof of ntbase}

$\NTbase$ was introduced in \rdef{nairian theory}. The first clause of $\NTbase$ is $\ZF+\AH$, where $\AH$ is introduced in \rdef{def: ah}. Thus, to verify $\AH$ we need to show that for every $\a$, $\nN_\a$ is $H$-like. Recall that $M$ is $H$-like if 
\begin{enumerate}
\item $M$ is full (see clause 7 of \rdef{largest cardinal}) and hierarchical (see \rdef{hierarchical}), and
\item If $M$ is of successor type (see clause 2 of \rdef{largest cardinal}), then $M$ is strongly regular (see clause 8 of \rdef{largest cardinal}).
\end{enumerate}

Now $\nN_0$ is the set of hereditarily finite sets, and so clearly it is of limit type, it is full, and it is hierarchical. 

$\nN_1$ is the set of hereditarily countable sets. 
It is of successor type and is full. We need to show that it is hierarchical. For $\a<\omega_1$, let $W_\a$ be the set of all $x\in \nN_1$ such that $\ord\cap \tc(x) < \a$. $W_\a$ is transitive, since if $x\in W_\a$ and $y\in x$ then $\tc(y)\subseteq \tc(x)$, and so $\ord\cap \tc(y) < \a$, implying that $y\in W_\a$. 

Next let $(\k_\a: \a<\varsigma)$ be the sequence defined just before \rthm{thm: the theta seq}. Applying \rthm{thm: the theta seq} we get that in $N$, for every $\a<\varsigma$, $\nN_{\a+1}=H_{\kappa_\a^\omega}$. Since each $H_{\kappa_\a^\omega}$ is hierarchical as witnessed by the sequence $(N|\b: \b<\k_{\a+1})$, and limit stages trivially inherit their $H$-like properties from previous stages, we have that $N\models \AH$. 

Finally, \rthm{thm: basic reflection} and \rthm{thm: closed hulls above theta} imply that for every $\a<\varsigma$, $\rref(X, \nN_\a, \nN_{\a+1})$ holds in $N$. This finishes the proof of \rthm{thm: part 2}.

\bibliographystyle{plain}
\bibliography{main}

\bls

\noindent\address{Department of Philosophy, University of Pittsburgh, Pittsburgh, PA 15260}

\noindent\email{doug.blue@pitt.edu}

\bls

\noindent\address{Department of Mathematics, Miami University, Oxford, Ohio 45056}

\noindent\email{larsonpb@miamioh.edu}

\bls

\noindent\address{IMPAN, ul. Śniadeckich 8, 00-656 Warszawa}

\noindent\email{gsargsyan@impan.pl}

\end{document}